\renewcommand\tableofcontents{\@starttoc{toc}}
\definecolor{mylightblue}{rgb}{0.93,0.95,1.0}
\definecolor{mylightgray}{rgb}{0.95,0.95,0.95}
\definecolor{blueish}{RGB}{71,98,168}
\pgfplotsset{compat=1.14}
\newcommand{\cmark}{\ding{51}}%
\newcommand{\xmark}{\ding{55}}%
\theoremstyle{plain}
\newtheorem{Thm}{Theorem}
\newtheorem{Rmk}[Thm]{Remark}
\newtheorem*{Rmk*}{Remark}
\newtheorem{Cor}[Thm]{Corollary}
\newtheorem*{Cor*}{Corollary}
\newtheorem{Lem}[Thm]{Lemma}
\newtheorem*{Lem*}{Lemma}
\newtheorem{Pro}[Thm]{Proposition}
\newtheorem*{Pro*}{Proposition}
\newtheorem{Heu}[Thm]{Heuristic}
\numberwithin{Thm}{section}
\newtheorem{Que}{Question}
 \newenvironment{Qu}{%
   \@nameuse{collect*}{qu}{%
     \begin{Que}
   }{% 
     \end{Que}
   }{}{}%
}{%
  \@nameuse{endcollect*}%
}
\theoremstyle{definition}
\newtheorem{Def}[Thm]{Definition}
\newcommand{\CRM}{CRM\xspace} % {C\reflectbox{R}M}
\newcommand{\BISH}{\textsf{BISH}\xspace}
\newcommand{\INT}{\textsf{INT}\xspace}
\newcommand{\RUSS}{\textsf{RUSS}\xspace}
\newcommand{\CLASS}{\textsf{CLASS}\xspace}
\newcommand{\NN}{\mathbb{N}}
\newcommand{\RR}{\mathbb{R}}
\newcommand{\QQ}{\mathbb{Q}}
\DeclareMathOperator{\id}{\mathrm{id}}
\newcommand{\AWWKL}{\textrm{Anti-WWKL}\xspace}
\newcommand{\AFAN}{\textrm{Anti-FAN}\xspace}
\newcommand{\POS}{\hyperref[PR:POS]{\textrm{POS}}\xspace}
\newcommand{\POSp}[1]{\hyperref[PR:POSp]{\textrm{POS}_{#1}^{\textrm{pw}}}\xspace}
\newcommand{\FAN}{\hyperref[PR:FAN]{\textrm{FAN}}}
\newcommand{\FAND}{\hyperref[PR:FAN]{\textrm{FAN}\texorpdfstring{\ensuremath{_{\Delta}}}{Î}}\xspace}
\newcommand{\FANP}{\hyperref[PR:FAN]{\textrm{FAN}\texorpdfstring{\ensuremath{_{\smash{\Pi^{0}_{1}}}}}{Î -0-1}}\xspace}
\newcommand{\FANc}{\hyperref[PR:FAN]{\textrm{FAN}\texorpdfstring{\ensuremath{_{c}}}{c}}\xspace}
\newcommand{\FANst}{\hyperref[PR:FANst]{\texorpdfstring{\textrm{FAN}\ensuremath{_{\textrm{stable}}}}{FAN-stable}}\xspace}
\newcommand{\FANf}{\hyperref[PR:FAN]{\texorpdfstring{\textrm{FAN}\ensuremath{_{\textrm{full}}}}{FAN-full}}\xspace}
\newcommand{\BDN}{\hyperref[PR:BDN]{\textrm{BD-N}}\xspace}
\newcommand{\BD}{\hyperref[PR:BD]{\textrm{BD}}\xspace}
\newcommand{\wBDN}{\hyperref[PR:wBDN]{\textrm{wBD-N}}\xspace}
\newcommand{\LEM}{\hyperref[PR:LEM]{\textrm{LEM}}\xspace}
\newcommand{\WLEM}{\hyperref[PR:WLEM]{\textrm{WLEM}}\xspace}
\newcommand{\LLPO}{\hyperref[PR:LLPO]{\textrm{LLPO}}\xspace}
\newcommand{\LLPOn}[1]{\hyperref[PR:LLPOn]{\texorpdfstring{\ensuremath{\textrm{LLPO}_{#1}}\xspace}{LLPOn}}}
\newcommand{\LLPOnp}[1]{\hyperref[PR:LLPOn]{\texorpdfstring{\ensuremath{\textrm{LLPO}_{#1}^\prime}\xspace}{LLPOn}}}
\newcommand{\WLPO}{\hyperref[PR:WLPO]{\textrm{WLPO}}\xspace}
\newcommand{\LPO}{\hyperref[PR:LPO]{\textrm{LPO}}\xspace}
\newcommand{\nWLPO}{\ensuremath{\neg\textrm{WLPO}}\xspace}
\newcommand{\nLPO}{\texorpdfstring{\ensuremath{\neg\textrm{LPO}}\xspace}{¬LPO}}
\newcommand{\WKL}{\hyperref[PR:WKL]{\textrm{WKL}}\xspace}
\newcommand{\WKLppp}{\textrm{WKL!!!}\xspace}
\newcommand{\WKLpp}{\textrm{WKL!!}\xspace}
\newcommand{\WKLp}[1]{\texorpdfstring{\hyperref[PR:WKLp]{\ensuremath{\textrm{WKL}^{\prime}(#1)}}\xspace}{'}}
\newcommand{\WWKL}{\hyperref[PR:WWKL]{\textrm{WWKL}}\xspace}
\newcommand{\UCT}{\hyperref[PR:UCT]{\textrm{UCT}}\xspace}
\newcommand{\KT}{\hyperref[PR:KT]{\textrm{KT}}\xspace}
\newcommand{\SinC}{\hyperref[PR:SinC]{\textrm{SC}}\xspace}
\renewcommand{\SS}{\hyperref[PR:SS]{\textrm{SS}}\xspace}
\newcommand{\iSS}{\hyperref[PR:SS]{\textrm{iSS}}\xspace}
\newcommand{\KTs}[1]{\ensuremath{\textrm{KTs}(#1)}\xspace}
\newcommand{\AS}[2]{\hyperref[PR:AS]{\ensuremath{\textrm{AS}_{#1}^{\, #2}}}}
\newcommand{\iAS}{\hyperref[PR:iAS]{\ensuremath{\textrm{iAS}}}\xspace}
\newcommand{\ASL}[1]{\hyperref[PR:ASL]{\ensuremath{\textrm{AS}_{#1}^{L}}}}
\newcommand{\MPv}{\texorpdfstring{\ensuremath{\hyperref[PR:MPv]{\textrm{MP}^{\lor}}}\xspace}{MPv}}
\newcommand{\IIIa}{\texorpdfstring{\ensuremath{\hyperref[PR:IIIa]{\textrm{III}_{a}}}\xspace}{IIIa}}
\newcommand{\MP}{\hyperref[PR:MP]{\textrm{MP}}\xspace}
\newcommand{\WMP}{\hyperref[PR:WMP]{\textrm{WMP}}\xspace}
\newcommand{\WPFP}{\hyperref[PR:WPFP]{\textrm{WPFP}}\xspace}
\newcommand{\PFP}{\hyperref[PR:PFP]{\textrm{PFP}}\xspace}
\newcommand{\coWPFP}{\hyperref[PR:coWPFP]{\textrm{co-WPFP}}\xspace}
\newcommand{\coPFP}{\hyperref[PR:coPFP]{\textrm{co-PFP}}\xspace}
\newcommand{\KS}{\hyperref[PR:KS]{\textrm{KS}}\xspace}
\newcommand{\CC}{\hyperref[PR:CC]{\textrm{CC}}\xspace}
\newcommand{\WCN}{\hyperref[PR:WCN]{\textrm{WCN}}\xspace}
\newcommand{\MC}{\hyperref[PR:MC]{\textrm{MC}}\xspace}
\newcommand{\OI}[1]{\texorpdfstring{\hyperref[PR:OI]{\textrm{OI}{\ensuremath{_{#1}}}}\xspace}{OI}}
\newcommand{\DGP}{\hyperref[Pr:DGP]{\textrm{DGP}}\xspace}
\renewcommand{\mid}{\, \middle| \,}
\newcommand{\menge}[1]{\ensuremath{\left\{ #1 \right\}}}
\newcommand{\set}[2]{\mbox{\ensuremath{\left\{ \,#1 \mid #2 \,\right\}}}}
\newcommand{\ext}[1]{\llbracket #1  \rrbracket}
\newcommand{\fa}[2]{\forall {#1} : {#2}}
\newcommand{\ex}[2]{\exists {#1} : {#2}}
\newcommand{\Interior}[1]{\mathrm{Int}\left(#1\right)}
\newcommand{\Closure}[1]{\overline{#1}}
\newcommand{\Complement}[1]{{#1}^\prime}
\DeclarePairedDelimiter{\abs}{\lvert}{\rvert}
\DeclarePairedDelimiter{\bracks}{\lbrack}{\rbrack}
\newcommand{\co}{\colon\!}
\newcommand{\CS}{2^{\NN}}
\newcommand{\cS}{2^{\ast}}
\newcommand{\BS}{\NN^{\NN}}
\newcommand{\Ninf}{\NN_{\infty}}
\newcommand{\one}{111\ldots}
\newcommand{\zero}{000\ldots}
\newcommand{\ACC}{\hyperref[Sec:Choice]{\textrm{ACC}}\xspace}
\newcommand{\ADC}{\hyperref[Sec:Choice]{\textrm{ADC}}\xspace}
\newcommand{\pylist}[3]{#1[#2 \co #3]}
\newcommand{\define}[1]{\textit{#1}\index{#1}}
\newenvironment{principle}[2][]{\index{\ifx\newenvironment#1\newenvironment\else#1@\fi#2}\begin{mdframed}[leftmargin=2em,rightmargin=3em,skipabove=1em,skipbelow=1em, innerleftmargin=-1em,innerrightmargin=0em, nobreak=true, linecolor=blueish, linewidth=5pt, bottomline=false, topline=false, rightline=false, backgroundcolor=blueish!10] \begin{quote} \textbf{(#2)} \quad }{\end{quote}\end{mdframed}}
\newenvironment{bareprinciple}{\begin{mdframed}[leftmargin=2em,rightmargin=3em,skipabove=1em,skipbelow=1em, innerleftmargin=-1em,innerrightmargin=0em, nobreak=true, linecolor=blueish, linewidth=5pt, bottomline=false, topline=false, rightline=false, backgroundcolor=blueish!10] \begin{quote}}{\end{quote}\end{mdframed}}
\begin{document}

\title{Constructive Reverse Mathematics \\ {\small version 1.3}}
\author{Hannes Diener}
\date{April, 2020}
\maketitle

\newpage

\tableofcontents
\setcounter{chapter}{-1}

\chapter{Introduction}

\section{Constructive Mathematics}
Almost all proofs in traditional mathematics invoke the law of excluded middle (\LEM) at some point. Sometimes this use is as obvious as starting a proof of $\varphi$ by ``let us assume $\varphi$ is false''  followed by a derivation of a contradiction, and sometimes this use is  subtly hidden in  details, as it is in the usual interval-halving proof of the intermediate value theorem.\footnote{In each halving step one needs to decide whether at the midpoint $c$ we have $f(c) \geqslant 0$ or not.} 
One should remember though that \emph{any} application of \LEM comes at a price. Sometimes the price to pay is that what is really going on is obfuscated. Sometimes the price is as low as leading to inefficient programs, since a proof might use unbounded search. Often enough, though, the price to pay is that a proof provides \emph{no} algorithmic information at all. We believe that in all these cases it is a price we should not pay.

It is one of the criminally underrated insights of 20\textsuperscript{th} century mathematics that we can capture the idea of constructiveness not by adding layers of notions onto traditional mathematics but  simply by removing layers---namely barring the use of \LEM. Doing so one might expect to end up stuck in a barren mathematical landscape devoid of any interesting results; or in the words of Hilbert:
\begin{quote}
Taking away this law of excluded middle from the mathematician is about the same as taking away the telescope from the astronomer or forbid the boxer the use of his fists.\footnote{In the original German: ``Dieses Tertium non datur dem Mathematiker zu nehmen, w\"are etwa, wie wenn man dem Astronomen das Fernrohr oder dem Boxer den Gebrauch der F\"auste untersagen wollte.''}
\end{quote}
 
It was Bishop who showed us that this concern is unfounded, and that  mathematical life can thrive in the absence of \LEM. It can be, arguably, even a much richer existence than traditional mathematics, extending our horizons to places previously hidden by \LEM. Continuing in Hilbert's own analogy: Bishop discovered unknown galaxies without \emph{any} astronomical equipment.

\section{Constructive Reverse Mathematics}
The focus of reverse mathematics---as opposed to normal, everyday mathe\-matics---is \emph{not} to find what theorems we can prove from certain axioms, but to ask which axioms are also \emph{necessary} to prove certain theorems. As such the idea is neither new nor revolutionary. However, applying this approach systematically to some fragment of mathematics is a fairly recent development. Maybe the best known and most developed of these reverse approaches is ``Simpson style'' reverse mathematics \cite{Simpson:1999lr} (initiated by H.~Friedmann \cite{hF74}), whose goal is to examine which set existence axioms  need to be added to classical second order arithmetic to prove theorems in mathematics (where objects are coded by natural numbers).
Approaching from a very different point of view, but resulting in a somewhat similar hierarchy, is the theory of Weihrauch reducibility \cite{Brattka:2010fk}. The question there is: ``which theorems can be computably transformed into others?''

The aim of Constructive Reverse Mathematics is, to borrow Ishihara's description from \cite{hI06a},  
\begin{quote}
 "to classify [over intuitionistic logic] various theorems in intuitionistic, constructive recursive and classical mathematics by logical principles, function existence axioms and their combinations."
\end{quote}
Interestingly enough, all three of these schools of reverse mathematics  share common themes. For example, in all of these approaches one can identify a hierarchical level corresponding to \LPO and one to \WKL, which have a remarkable number of theorems being categorised into the same levels.

We will describe some of the foundational aspects of Constructive Reverse Mathematics in the next section, but before doing so, we would like to approach the question of why one would be interested in pursuing it. Naturally, most of these remarks also apply to other the kinds of reverse mathematics mentioned above.
\begin{enumerate}
	\item Probably the foremost aim of reverse mathematics is to gain insight into various theorems and principles, and in particular see whether certain assumptions are really necessary. Mathematicians generally try to be as economical with their assumptions as possible and dispense with unused ones.\footnote{This is very reminiscent of the principle of Chekhov's gun, as explained by the well-known playwright Anton Chekhov himself: ``Remove everything that has no relevance to the story. If you say in the first chapter that there is a rifle hanging on the wall, in the second or third chapter it absolutely must go off. If it's not going to be fired, it shouldn't be hanging there.''} Reverse results show that a theorem is optimal in this sense.
	\item Naturally, as with any other form of mathematics, the intrinsic challenge can be reason enough for pursuing it. In our opinion, the proofs and techniques in CRM are more diverse and can be more intricate than in standard Bishop style constructive mathematics. 
One reason for this is that, since the latter is very minimal in its assumptions most proofs are somewhat ``synthetic''---that is information gets combined straightforwardly to get some result. The absence of \LEM means  there are very few general, non-trivial disjunctions,\footnote{See \cite{hD12b} for some notable exceptions.} which means that proofs tend to be of a very ``linear shape.'' However, in CRM there are general, non-trivial disjunctions coming from the principles themselves,  thus breaking this linear pattern.

	\item (Constructive) reverse mathematics allows one to  explore the foundations of mathematics by ana\-lysing what consequences which axioms have. More importantly it allows us to do so without having to actually accept these principles. 
	\item As described in the next section, results in constructive reverse mathematics often involve a Brouwerian Counterexample. As such they  provide details of why certain statements cannot be proved constructively. Often, this gives us the exact information needed to ``constructivise'' a theorem. For example if a theorem is shown to be equivalent to the uniform continuity theorem (see Section \ref{Sec:UCT}) it is most likely to be true constructively if we assume that all functions involved are uniformly continuous. Similarly if an existence statement is equivalent to weak K\"{o}nig's Lemma (see Section \ref{Sec:LLPO}), experience has shown that the addition of the assumption that there is at most one solution frequently leads to a fully constructive theorem \cite{pS06}. Of course, these are only heuristics, but they seem to provide a very natural and fruitful way of constructivising classical results.
\end{enumerate}

\section{A very Short History of Constructive Reverse Mathematics}
It is generally accepted that the following 1984 result by Julian and Richman is the first result in constructive reverse mathematics \cite{wJ84}*{Theorem 2.4}.\footnote{A new, alternative, proof can be found in \cite{jB08b}.}
\begin{Pro*}
The fan theorem for decidable bars (\FAND) is equivalent to the statement that every uniformly continuous, positively valued function  $f:[0,1] \to \RR$ has a positive infimum. 
\end{Pro*}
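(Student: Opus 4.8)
My plan is to establish the two implications separately, in each case passing between the unit interval and the fan $\CS$.

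For the implication from \FAND{} to the positivity statement, suppose $f\colon[0,1]\to\RR$ is uniformly continuous with a modulus $\omega$ (so $|x-y|\le 2^{-\omega(k)}$ forces $|f(x)-f(y)|\le 2^{-k}$) and $f(x)>0$ for every $x\in[0,1]$. I would view $\CS$ as the fan of dyadic subdivisions of $[0,1]$: to $u\in\cS$ of length $n$ attach the dyadic interval $I_u$ of length $2^{-n}$ (with $I_{\emptyset}=[0,1]$ and $I_{u0},I_{u1}$ its two halves), and to $\alpha\in\CS$ the point $\gamma(\alpha)=\sum_i\alpha(i)2^{-i-1}$, which lies in $I_{\overline{\alpha}n}$ for every $n$. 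Define $B\subseteq\cS$ by declaring $u\in B$ iff there is some $k\le |u|$ with $|u|\ge\omega(k)$ and $\tilde f_k(\ell_u)>3\cdot 2^{-k}$, where $\ell_u$ is the left endpoint of $I_u$ and $\tilde f_k(\ell_u)$ is a rational approximation of $f(\ell_u)$ to within $2^{-k}$. The quantifier on $k$ is bounded and its matrix is a decidable comparison of rationals, so $B$ is decidable. It is a bar: given $\alpha$, pointwise positivity yields $k$ with $f(\gamma\alpha)>6\cdot 2^{-k}$, and for $n\ge\max\{\omega(k),k\}$ the interval $I_{\overline{\alpha}n}$ has length $\le 2^{-\omega(k)}$, so $f$ varies by at most $2^{-k}$ on it; hence $\tilde f_k(\ell_{\overline{\alpha}n})>3\cdot 2^{-k}$ and $\overline{\alpha}n\in B$. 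Now \FAND{} supplies $N$ with $\forall\alpha\,\exists n\le N\,(\overline{\alpha}n\in B)$, so the finitely many intervals $\{I_u : u\in B,\ |u|\le N\}$ cover $[0,1]$; on each of them the defining condition of $B$, together with its witness $k\le N$, forces $f>2^{-k}\ge 2^{-N}$. Thus $f\ge 2^{-N}$ throughout, so the infimum is $\ge 2^{-N}>0$.

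For the converse, let $B\subseteq\cS$ be a decidable bar. Since $B$ is decidable, for each $\alpha$ the least $n$ with $\overline{\alpha}n\in B$ exists by unbounded search; write $n(\alpha)$ for it and set $g(\alpha)=2^{-n(\alpha)}$. Then $g>0$, and $g$ is $1$-Lipschitz on $\CS$: if $\alpha,\beta$ agree on their first $i$ entries then either $n(\alpha),n(\beta)<i$, so $\overline{\alpha}{n(\alpha)}=\overline{\beta}{n(\alpha)}$ and hence $n(\alpha)=n(\beta)$, or $n(\alpha),n(\beta)\ge i$, in which case $|g(\alpha)-g(\beta)|\le 2^{-i}$; either way $|g(\alpha)-g(\beta)|\le d(\alpha,\beta)$. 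Now transport $g$ along the ternary Cantor embedding $e\colon\CS\to[0,1]$, $e(\alpha)=\sum_i 2\alpha(i)3^{-i-1}$, which is a uniformly continuous bijection, with uniformly continuous inverse, onto the compact (hence located) Cantor set $C=e(\CS)$; so $g\circ e^{-1}$ is uniformly continuous on $C$. Put $\hat g(x)=\inf_{y\in C}(g(e^{-1}y)+|x-y|)$. This infimum exists since the integrand is uniformly continuous on the compact set $C$, $\hat g$ is $1$-Lipschitz on $[0,1]$ hence uniformly continuous, one has $\hat g(e\alpha)\le g(\alpha)$ for all $\alpha$ (take $y=e\alpha$), and $\hat g(x)>0$ for every $x$: for $x\notin C$ because $\hat g(x)\ge d(x,C)>0$, and for $x\in C$ because $g\circ e^{-1}$ is positive and continuous at $x$. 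Applying the hypothesis to $\hat g$ produces $\varepsilon>0$ with $\hat g\ge\varepsilon$ on $[0,1]$; then $2^{-n(\alpha)}=g(\alpha)\ge\hat g(e\alpha)\ge\varepsilon$ for every $\alpha$, so $n(\alpha)$ is bounded by some $N$, which is exactly the uniform–bar conclusion of \FAND{}.

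\textbf{The main obstacle.} The delicate part is the bar $B$ in the first implication, where both hypotheses on $f$ must be used essentially: pointwise positivity is precisely what makes $B$ a bar, the fineness clause $|u|\ge\omega(k)$ converts uniform continuity into the statement that one sampled value $\tilde f_k(\ell_u)$ controls $f$ on all of $I_u$, and bounding the quantifier on $k$ by $|u|$ is what keeps $B$ decidable while still allowing the witness $k$ to survive the application of \FAND{}. The numerical thresholds have to be arranged so that the lower bound extracted at the end does not deteriorate with the subdivision depth. The converse is, by comparison, mostly bookkeeping, the one mildly surprising point being that $\alpha\mapsto 2^{-n(\alpha)}$ is already Lipschitz on the fan, so that no smoothing is needed before transporting it to $[0,1]$ and extending it off the Cantor set.
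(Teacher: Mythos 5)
Your overall strategy is sound and broadly in line with how this result is actually established (the paper itself defers to \cite{wJ84} and \cite{dB87}*{Section 6.2} for the core, plus Lemmas \ref{Lem:Fp} and \ref{Lem:extendingfunctions} to mediate between $\CS$ and $[0,1]$). Each direction has a constructive gap, however; the first is minor, the second is genuine.

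In the direction $\FAND\Rightarrow\POS$, the assertion that the finitely many intervals $\set{I_u}{u\in B,\ \abs{u}\le N}$ ``cover $[0,1]$'' is not immediate constructively, because your $\gamma\colon\CS\to[0,1]$ is not constructively surjective — not every real admits a binary expansion (that would be \LLPO). Your conclusion $\inf f\ge 2^{-N}$ survives: every dyadic rational does lie in the image of $\gamma$ and hence in some $I_u$ with $u\in B$, so $f>2^{-N}$ on a dense set, and uniform continuity then gives $f\ge 2^{-N}$ everywhere. But the covering claim as stated should be replaced by this density argument, or else the halving must be replaced by overlapping intervals as in Lemma \ref{Lem:Fp}.

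The real problem is in the converse. You argue $\hat g(x)>0$ by the case distinction ``$x\notin C$'' versus ``$x\in C$'', which is not constructively decidable. Worse, even granting $\lnot(x\in C)$, the inference to $d(x,C)>0$ is an appeal to Markov's principle: with $C$ closed and $d(x,C)\ge 0$ you only get $\lnot\neg(d(x,C)>0)$. Since $\hat g$ being positive is exactly the hypothesis you need in order to apply \POS, this cannot be waved off. The standard repair is the ``variation on Bishop's lemma'' (Lemma \ref{Lem:Fp}.\ref{Lem:Var-Bish}): for each $x\in[0,1]$ there is $\alpha\in\CS$ such that $d(x,e(\alpha))>0\implies d(x,C)>0$. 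Then, using $\delta=g(\alpha)>0$ and a modulus $\varepsilon$ of continuity of $g\circ e^{-1}$ at $e(\alpha)$ on $C$, cotransitivity gives either $\abs*{x-e(\alpha)}>\varepsilon/8$, whence $d(x,C)>0$ and $\hat g(x)\ge d(x,C)>0$, or $\abs*{x-e(\alpha)}<\varepsilon/4$, whence splitting $y\in C$ into $\abs*{y-e(\alpha)}<\varepsilon$ (where $h(y)>\delta/2$) and $\abs*{y-e(\alpha)}>\varepsilon/2$ (where $\abs*{x-y}>\varepsilon/4$) gives $\hat g(x)\ge\min\menge{\delta/2,\varepsilon/4}>0$. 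Alternatively, the paper's Lemma \ref{Lem:extendingfunctions} produces an extension by linear interpolation between consecutive Cantor-set points, for which pointwise positivity is immediate; the infimal-convolution (McShane/Whitney) extension you chose has a tidier formula but makes positivity precisely the hard part, which is the reason that lemma does the interpolation construction instead.
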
 
Julian and Richman noticed that in \RUSS (\cites{bK84,oA80}, see also Section \ref{Sec:varieties}) one could actually construct a positively valued, uniformly continuous function $f:[0,1] \to \RR$ with $\inf f =0$ (that is a uniformly continuous function $f:[0,1] \to \RR$ which, in particular, does not attain its minimum) and they were wondering  what principles \emph{exactly} were responsible for assuring the existence or ruling out the existence of such a strange object. 
After classifying this statement in a logical form it was easy to settle this question. 

Many principles we consider in \CRM these days were used long before 1984.  Historically, Brouwer started the tradition of using sequences such as 
\[ \alpha_n = \begin{cases}
	0 & a^{n+2} + b^{n+2} = c^{n+2} \text{ has no integer solutions} \\
	1 & a^{n+2} + b^{n+2} = c^{n+2} \text{ has an integer solution,} \\
\end{cases} \]
or
\[ \beta_n = \begin{cases}
	0 & 2n \text{ is the sum of two primes} \\
	1 & \text{otherwise} , \\
\end{cases} \]
to show that if a theorem $T$ constructively implies, for example, 
\begin{equation} \label{Eqn:taboo}
	\fa{n \in \NN}{\alpha_n = 0} \  \lor  \ \ex{n \in \NN}{\alpha_n = 1} \ ,
\end{equation}   
then there cannot be a constructive proof of $T$, since otherwise we would have a constructive proof of \ref{Eqn:taboo}. Since, under the BHK interpretation \cite{dD04}*{Chapter 5} of the logical connectives this implies that we have either proved Fermat's last theorem or found a counterexample, Brouwer rejected such theorems $T$. 

Sequences like $\alpha_n$ and $\beta_n$  might become obsolete for this purpose, since the underlying problem might get solved; indeed, since Andrew Wiles' 1995 proof we know that 
\[ \fa{n \in \NN}{\alpha_n = 0} \ .  \]
Most mathematicians believe that there will always be important, unsolved problems which we can use in the place of Fermat's last theorem, but that, in general, there is no algorithmic way, given an \emph{arbitrary} binary sequence $\gamma_n$, to decide whether
\[ \fa{n \in \NN}{\gamma_n = 0} \ \lor \  \ex{n \in \NN}{\gamma_n = 1}  \ . \]
It was Bishop who grandiosely named this principle the ``limited principle of omniscience'' (\LPO).

And indeed, one can see that from the 1970s on Brouwerian ``unstable'' examples of the kind
\[ T \implies \left( \fa{n \in \NN}{\alpha_n = 0} \  \lor \ \ex{n \in \NN}{\alpha_n = 1}  \right) \ , \]
were more and more replaced\footnote{There seems to be  a certain tendency to stick with the sort of specific Brouwerian counterexamples in papers with a general audience (for example the first half of \cite{mM89}), since paradoxically a classical mathematician will be more suspicious of being able to decide the Goldbach conjecture than of making the arbitrarily phrased decision in \LPO.} by ``stable'' ones of the form
\[ T \implies \left( \fa{\gamma \in \CS}{\left(  \fa{n \in \NN}{\gamma_n = 0} \  \lor  \ \ex{n \in \NN}{\gamma_n = 1} \right)} \right) \ ; \]
that is, by
\[ T \implies P  \ , \]
where $P$ is a general, constructively dubious statement such as \LPO.
Because of this, some authors have cited Brouwerian counterexamples as the first instances of \CRM. Strictly speaking, this is not \CRM since the forward direction of the classification is missing. Indeed, as Iris Loeb has argued in \cite{iL12}, Brouwer would have seen no point in proving $P \implies T$, and that these results therefore can not be claimed as the beginning of \CRM.
While agreeing that this is formally correct, we would like to point out that often the implication $P \implies T$ is very easy or even trivial and that most of the work and the interesting constructions are happening in the proof of $T \implies P$. So a Brouwerian counterexample $T \implies P$ is, with regards to mathematical content, more than half of a full equivalence. We would also like to add that if the point of an Brouwerian counterexample were only to show that some statement $T$ implied an unacceptable statement and is therefore unacceptable itself, then there would be no point in distinguishing between, say, \LPO and \LLPO and simply work with the weakest one. The fact that researchers were using different taboo-statements hints that they, at least implicitly, were thinking reversely.

The main group of results in Constructive Reverse Mathematics were proved from around 1988 on, with the---in our personal opinion---deepest results and notions (such as \BDN) being due to Hajime Ishihara. He has also authored the only overview of the area \cite{hI06a}, which is slightly harder to get hold of, but also contains more results than its predecessor \cite{hI04b}.

Apart from Ishihara, and the author of this thesis himself, many people have contributed to the area, and we hope to have cited most of their relevant articles. These people are, in  alphabetical order, J.~Berger, D.S.~Bridges, M.~Hendtlass, I.~Loeb, M.~Mandelkern, J.~Moschovakis, T.~Nemoto, F.~Richman, P.~Schuster, and  W.~Veldman.\footnote{We would like to point out explicitly that W.~Veldman, in particular, has many (sometimes unpublished) results in constructive reverse mathematics that have, unfortunately, not made it into this thesis.}
 
\section{Foundational Aspects}
In the tradition of Bishop-style constructive mathematics \cites{eB67,dB85,dBlV06} we will be working informally, in the same sense that most mathematicians work informally. That is not to say that we will be vague or imprecise, but that we are happy to skip details for the sake of readability and clarity of ideas. However, we are as sure as every other mathematician that all our results can be (almost mechanically) formalised in an appropriate system.
 
 In this vein we will abstain from choosing one of the many possible set-theoretic or type-theoretic foundations such as \cite{pA01}, or \cite{pML98}, or one of the more restricted formal systems that could serve as the basis for our endeavours such as W.~Veldmann's Basic Intuitionistic Mathematics (BIM) \cite{wV11}, or Heyting arithmetic in all finite types and related systems \cite{uK08}.

We believe that it is not necessary to choose a formal system, but at the same time believe that anybody preferring to work in one of these systems will be able to easily read our results and translate them into their preferred style. A good analogy to justify this approach might be to compare it to the use of pseudo-code to present an algorithm over choosing a specific programming language.

Choosing a particular framework also seems to be against the spirit of constructive mathematics:
As constructivists, we are very happy to live with the fact that for a real number $x$ we cannot decide whether $x \in [0,1]$ or $x \notin [0,1]$. It seems just as dubious to be able, on the meta-level, to answer the question of what, for example, a function $\NN \to \NN$ is. Notice that this does not stop us from developing interesting mathematics. Just as there are numbers which are definitely members of $[0,1]$ and numbers which are definitely not, there are things that we definitely believe are functions $\NN \to \NN$ such as primitive recursive functions, and things that we definitely cannot define a priori, such as a discontinuous function. Notice that, however, we do not rule out the existence of the latter. We would like to call this approach \define{humble foundations}. That is, we demand a maximal burden of proof for us when it comes to showing the existence of an object outright, but make minimal assumptions on arbitrary objects.

Even though there is no reason not to take a humble approach with a formal system, we believe that there is a danger of getting distracted, and, continuing the above example, switch from  assuming that every primitive recursive function $\NN \to \NN$ exists to assuming also the reverse, namely that these are the only functions. 

Related to this, there is also a danger of getting distracted and studying properties of the chosen formal system rather than doing mathematics. It is one of the enduring strengths of Bishop's approach  to skip past foundational details and ``do mathematics''. It also gave his results a  robustness to  survive changing foundational fashions.

There is one aspect in this sometimes heated debate about how much formalism is appropriate, where we have to concede that more precision is needed.  Even though the full axiom of choice is a definite constructive taboo, since it implies \LEM \cites{rD75,nG78}, traditionally the use of the axioms of countable and dependent choice has been  more or less tacitly accepted in Bishop style constructive mathematics. These are the following principles.

\begin{description}
	\item[\ACC] If $S$ is a subset of $\NN \times B$, and for each $n \in \NN$ there exists $b \in B$ such that $(n, b) \in S$, then there is a function $f: \NN \to B$ such that $(n,f(n)) \in S$ for each $n \in \NN$.
	\item[\ADC] If $a \in A$ and $S \subset A \times A$, and for each $x$ in $A$ there exists $y$ in $A$ such that $(x, y) \in S$, then there exists a sequence of elements $a_1, a_2,\dots$ of $A$ such that $a_1 = a$ and $(a_n, a_{n+1}) \in S$ for each $n \in \NN$.  
\end{description}

We conjecture that the reason why the use of these choice principles is such a divisive topic might be that there are two fundamentally different ways in which they get used in constructive mathematics:

\begin{itemize}
  \item Often \ACC or \ADC is applied to get a representation of an object.

  For example, given a real number $x$, we often need a binary sequence $(a_n)_{n \geqslant 1}$ such that 
\begin{align*}
	a_n = 0 & \implies x < \frac{1}{2^n} \ , \\
	a_n = 1 & \implies x > \frac{1}{2^{n+1}}  \ .
\end{align*}

One might assume that real numbers are Cauchy sequences of rationals with a fixed modulus of Cauchyness, or fast converging Cauchy sequences.  
Let us simply call these reals ``represented'' and denote the set of them by $\RR_r$. For these represented reals we can easily find a binary sequence $(a_n)_{n \geqslant 1}$ as above with minimal (primitive recursive) effort. 

 However, if one makes the minimal ``humble'' axiomatic assumption that the real numbers $\RR$ are not of a specific form but merely satisfy
 \[ \fa{x \in \RR, \ n \in \NN}{\ex{q \in \QQ}{ \abs*{x-q} < \frac{1}{2^n} }} \]
 then we cannot guarantee the existence a sequence $(a_n)_{n \geqslant 1}$ as above (see Section \ref{Sec:topmodels}). Of course this difference is minute, and under the assumption of \ACC we have $\RR  = \RR_r$.
 
Now, if we have a result that relies on the existence of  sequences $(a_n)_{n \geqslant 1}$ as above we can use \ACC to show it. Formally
 \[ \ACC \vdash \fa{x \in \RR}{ \dots }  \ ; \]
But if \ACC was only used to ensure that $\RR  = \RR_r$,  we could also restate our result as
 \[ \vdash \fa{x \in \RR_r}{ \dots }  \ . \]
 
These kinds of use of \ACC can therefore be seen as simply a matter of style and simplicity. Large parts of traditional Bishop style constructive mathematics could be rendered choice-free by switching to represented reals and making similar definitions and arguments for other objects such as point-wise continuous functions and so on.

  \item However, there are also uses of \ACC and \ADC of a more structural kind. The proof that \LLPO implies \WKL needs \ACC not because we do not assume that we have a nice representations of binary trees, but because we need to use \LLPO countable many times.

\end{itemize}

To gain more insights into phenomena of the second kind, attempts have been made to simply move choice principles into the list of principle studied \cite{jB12}. And indeed, the original plan for this thesis was to work choice-sensitive and distinguish, for example, between the sequential version of \LPO and the real version. However this quickly turned out to be too ambitious a project. The \hyperlink{Ch:BigPicture}{big picture} (Section \ref{Ch:BigPicture}) is already very complicated and the number of principles would multiply in the absence of choice.  Any attempt to do \CRM without the use of \ACC or \ADC would need to find a way to present  results in a way that highlights the interesting issues of the second kind and somehow manages to not give too much prominence to issues of the first kind.

\section{Overview and Plan}

Contrary to Simpson style reverse mathematics, in which most theorems fall into one of the ``big five'' categories,\footnote{Although recent work has shown that there are more than the big five.} there is a plethora of principles that have been considered in constructive reverse mathematics, with a quick count totalling about $17$ major ones. We believe that the presentation we will give is a sensible way to group them. If we consider the big three varieties \CLASS, \INT, \RUSS (see Section \ref{Sec:varieties}) there are seven possible combinations of these varieties such that a principle is true in at least one of them and possibly fails to hold in others.  Five of these combinations form our first five chapters.

\def\firstcircle{(90:0.4cm) circle (0.5cm)}
\def\secondcircle{(210:0.4cm) circle (0.5cm)}
\def\thirdcircle{(330:0.4cm) circle (0.5cm)}
\begin{tabularx}{0.7\textwidth}{X c}
& \\
\textbf{Chapter \ref{Ch:OmniPr}}: Omniscience principles which are true classically, but not in \INT or \RUSS. 
& 
  \begin{tikzpicture}[font=\tiny,baseline=(current bounding box.north)]
    \begin{scope}
      \fill[mylightblue!200]\firstcircle;
      \fill[white] \secondcircle;
      \fill[white] \thirdcircle;
    \end{scope}
    \draw \firstcircle node[above] {\CLASS};
    \draw \secondcircle node [below left=-0.1cm and -0.3cm] {\RUSS};
    \draw \thirdcircle node [below right=-0.1cm and -0.2cm] {\INT};
  \end{tikzpicture}  
\\ \midrule
\textbf{Chapter \ref{Ch:MP}}: Markov's principle and its weakenings which are true in \CLASS and \RUSS. Actually, \WMP is true everywhere, but fits better into this chapter than into the chapter about \BDN.
& 
  \begin{tikzpicture}[font=\tiny,baseline=(current bounding box.north)]
    \begin{scope}
      \fill[mylightblue!200]\firstcircle;
      \fill[mylightblue!200] \secondcircle;
      \fill[mylightblue!80] \thirdcircle;
    \end{scope}
    \draw \firstcircle node[above] {\CLASS};
    \draw \secondcircle node [below left=-0.1cm and -0.3cm] {\RUSS};
    \draw \thirdcircle node [below right=-0.1cm and -0.2cm] {\INT};
  \end{tikzpicture}  \\ \midrule
\textbf{Chapter \ref{Ch:fan}}: The Fan theorems, which are true in \CLASS and \INT. 
& 
  \begin{tikzpicture}[font=\tiny,baseline=(current bounding box.north)]
    \begin{scope}
      \fill[mylightblue!200]\firstcircle;
      \fill[mylightblue!200] \thirdcircle;
      \fill[white] \secondcircle;
    \end{scope}
    \draw \firstcircle node[above] {\CLASS};
    \draw \secondcircle node [below left=-0.1cm and -0.3cm] {\RUSS};
    \draw \thirdcircle node [below right=-0.1cm and -0.2cm] {\INT};
  \end{tikzpicture}
\\ \midrule
\textbf{Chapter \ref{Ch:BDN}}: BD-N which is true in all varieties.
& 
  \begin{tikzpicture}[font=\tiny,baseline=(current bounding box.north)]
    \begin{scope}
      \fill[mylightblue!200]\firstcircle;
      \fill[mylightblue!200] \thirdcircle;
      \fill[mylightblue!200] \secondcircle;
    \end{scope}
    \draw \firstcircle node[above] {\CLASS};
    \draw \secondcircle node [below left=-0.1cm and -0.3cm] {\RUSS};
    \draw \thirdcircle node [below right=-0.1cm and -0.2cm] {\INT};
  \end{tikzpicture}
\\ \midrule
\textbf{Chapter \ref{Ch:recside}}: Recursive principles, which are only true in \RUSS, and might be the strangest of all principles considered. 
& 
  \begin{tikzpicture}[font=\tiny,baseline=(current bounding box.north)]
    \begin{scope}
      \fill[mylightblue!200] \secondcircle;
      \fill[white]\firstcircle;
      \fill[white] \thirdcircle;
    \end{scope}
    \draw \firstcircle node[above] {\CLASS};
    \draw \secondcircle node [below left=-0.1cm and -0.3cm] {\RUSS};
    \draw \thirdcircle node [below right=-0.1cm and -0.2cm] {\INT};
  \end{tikzpicture}
\end{tabularx}
 
\bigskip

There are two combinations of varieties that are missing: Principles that are only true in \INT and principles that are only true in \INT and \RUSS. The reason is that even though there are principles that fall into these two categories---namely continuous choice (see Section \ref{Sec:Fan_collapse}) in the first and the negation of \WLPO (see Section \ref{Sec:nLPO}) in the second one---there are simply not enough equivalences known to warrant their having a chapter by themselves.

After the five principle-chapters, we talk about 

\begin{tabularx}{0.7\textwidth}{X c}

relationships between these principles in \textbf{Chapter \ref{Ch:relations}} and sketch \\ \midrule
 techniques to separate them in \textbf{Chapter \ref{Ch:separating}}. \\ \midrule
 
 We  finish with collecting some leftover but intellectually appetizing bits in the last \textbf{Chapter \ref{Ch:bitsnpieces}}.
 \end{tabularx}

\section{Version History}

The numbering of theorems and equations is kept consistent between minor version increases, for example from 1.0 to 1.1, but not between major version increases, for example from 1.9 to 2.0. Minor minor version changes, such as 1.1 to 1.1.2, are reserved for correction of typos and changes in presentation, but not mathematics. There is no guarantee that page numbers are consistent between any versions.

\begin{tabularx}{\textwidth}{l l X}
1.0 & 2018, Feb & original version, Habilitationsschrift, University of Siegen\\ \midrule
1.1 & 2018, Apr &  \begin{minipage}[t]{4in} \begin{itemize}[leftmargin=1.5 em,itemsep = 0pt]   \item Added section on \IIIa.  \item Question 6 answered positively (see Proposition \ref{Pro:FANc_equiv_Dini} and Proposition \ref{Pro:Qu6answered}). \item Added a short section on \BD. \item Added Proposition \ref{Pro:IVTunique} (IVT and \MPv). \end{itemize} \end{minipage} \\
\midrule
1.2 & 2018, May  & Rewrite of Subsection \ref{SubS:ReverseReverse}. \\
\midrule
1.3 & 2020, April  & \begin{minipage}[t]{4in} \begin{itemize}[leftmargin=1.5 em,itemsep = 0pt]   \item Clarification of the relationship of \nWLPO/\WLPO and the statements ``all functions are non-discontinuous''/``there exists discontinuous functions'': Propositions \ref{Pro:WLPO-Equiv-disc} and \ref{Pro:nWLPO_equiv_non-disc}.
\item Added Proposition 1.3.5.½%\ref{Pro:WLPO_equiv_intersection_comapct_and_located} 
~on intersection of compact and located sets.
\item One more implication (every almost Cauchy sequence is Cauchy implies the Riemann Permutation Theorem) has been added in Section \ref{Sec:belowBDN}.
\end{itemize} \end{minipage}
\end{tabularx}

%\section{On the Novelty of the Results} \todo{finish section}
%We hope that it is abundantly clear that we claim neither the complete novelty of all the results nor all their authorship. Nevertheless, we hope, that the degree of both criteria are always obvious. We include, to be precise, the following type of results.
%\begin{itemize}
%\item Many known results are cited for reasons of completeness and to  anchor and frame other results. This is mainly at the beginning of each section. For these known results we have omitted their proofs---apart from few, instructive, exceptions such as Proposition \ref{Pro:LEM_equiv_inhabited}. 
%\item There are results which are minor improvements on known results and new proofs for old results, such as the proof of Proposition \ref{Pro:BDN-equivs}.
%\item There are results that were published by the author since attaining his PhD. This refers to, in particular, the publication \cite{hD12}, which is almost entirely included as Section \ref{Sec:WKL_impl_UCT}, and the publication \cite{hD16d}, which contains many of the results of Section \ref{Sec:WMP}.
%\item There are completely new results not published anywhere else.\end{itemize}

\chapter{Omniscience Principles}\label{Ch:OmniPr}
\section{\texorpdfstring{\LEM and \WLEM}{LEM and WLEM}} 
The possibly strongest of all omniscient principles is the \define{law of excluded middle}\footnote{Often, such as in \cite{Troelstra1988a}, it is called the \define{principle of excluded middle}---PEM.\index{PEM|see {\LEM}} Of course it also well known under its Latin name \define{tertium non datur}.} itself. 
\begin{principle}[LEM]{\LEM} \label{PR:LEM}
If $\varphi$ is any syntactically correct statement, then
\begin{equation*} 
 \varphi \lor \lnot \varphi \ . 
\end{equation*}
\end{principle}
Over intuitionistic logic \LEM is equivalent\footnote{To be a tiny bit pedantic, this is not an instance-wise equivalence. If $ \varphi \lor \lnot \varphi$, then for this $\varphi$ also $\lnot \neg \varphi \implies \varphi$. The converse \[ (\lnot \neg \varphi \implies \varphi) \implies  (\varphi \lor \lnot \varphi)\ , \]   however, does not hold in intuitionistic logic. Nevertheless, for an arbitrary formula $\varphi$ one can easily see that $\lnot \neg \left( \varphi \lor \lnot \varphi \right)$ holds. So if \ref{Eqn:stability} holds for \emph{any} formula, we can obtain $\varphi \lor \lnot \varphi$.} to \define{double negation elimination}, also known as \define{stability} or \define{proof by contradiction}, that is that for any  syntactically correct statement $\varphi$
\begin{equation} \label{Eqn:stability}
 \lnot \neg  \varphi \implies \varphi \ . 
 \end{equation}
\LEM's slightly weaker sibling is the \define{weak law of excluded middle}.
\begin{principle}[WLEM]{\WLEM} \label{PR:WLEM}
If $\varphi$ is any syntactically correct statement, then
\[ \lnot \varphi \lor \lnot \neg \varphi \ . \]
\end{principle}

We will start with the well-known and simple observation that constructively the notion of a set $S$ being inhabited, that is $\ex{x}{x\in S}$ is stronger than being non-empty, that is $\neg(\emptyset = S)$; in fact the equivalence of both notions is equivalent to the law of excluded middle. 
\begin{Pro} \label{Pro:LEM_equiv_inhabited}
\LEM is equivalent to the following statement.

For all  $S \subset \menge{0}$
\[ \lnot (S = \emptyset)  \implies S \neq \emptyset \ . \] 
\end{Pro}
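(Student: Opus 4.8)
The plan is to prove both implications directly, reading ``$S \neq \emptyset$'' in the statement as ``$S$ is inhabited'' (i.e.\ $\ex{x}{x \in S}$), in accordance with the discussion preceding the proposition; the content is exactly that, for this particular shape of $S$, the gap between non-emptiness and inhabitedness closes precisely under \LEM.

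For the forward direction, assume \LEM and let $S \subset \menge{0}$ with $\lnot(S = \emptyset)$. Since the only possible member of $S$ is $0$, we have $S = \emptyset$ iff $0 \notin S$, so the hypothesis $\lnot(S = \emptyset)$ is just $\lnot\lnot(0 \in S)$. Now \LEM yields $0 \in S \lor 0 \notin S$; the right disjunct contradicts $\lnot(S = \emptyset)$, so $0 \in S$ and $S$ is inhabited. (Equivalently, one feeds $0 \in S$ directly into the stability form \ref{Eqn:stability} of \LEM.)

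For the converse, assume the displayed statement and let $\varphi$ be an arbitrary syntactically correct statement. Put $S = \set{x \in \menge{0}}{\varphi \lor \lnot \varphi}$, so $S \subset \menge{0}$. Since $\lnot\lnot(\varphi \lor \lnot\varphi)$ is a theorem of intuitionistic logic, and $S = \emptyset$ would entail $\lnot(\varphi \lor \lnot\varphi)$, we get $\lnot(S = \emptyset)$. Applying the hypothesis, $S$ is inhabited, so there is $x$ with $x \in S$, and unwinding the defining property of $S$ gives $\varphi \lor \lnot\varphi$. As $\varphi$ was arbitrary, this is \LEM.

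The only real subtlety — the step to treat carefully rather than a genuine obstacle — is the set-formation $S = \set{x \in \menge{0}}{\varphi \lor \lnot\varphi}$, which uses enough separation to carve a subset of $\menge{0}$ out of an arbitrary formula; under the informal (``humble'') foundations adopted here this is unproblematic, and one could instead take $S = \set{x \in \menge{0}}{\varphi}$ to first obtain double-negation elimination $\lnot\lnot\varphi \implies \varphi$ for all $\varphi$ and then apply it to $\varphi \lor \lnot\varphi$ as in the footnote to \ref{Eqn:stability}. One must also keep the inhabited/non-empty distinction straight throughout, since that distinction is exactly what the proposition is about.
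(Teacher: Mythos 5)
Your proof is correct and follows essentially the same route as the paper's: use \LEM to decide $0 \in S$ in the forward direction, and take $S = \set{x \in \menge{0}}{\varphi \lor \lnot\varphi}$ (the paper writes the equivalent $\set{x}{x=0 \land (\varphi \lor \lnot\varphi)}$) for the converse. The only difference is that you spell out why $\lnot(S=\emptyset)$ holds (via $\lnot\lnot(\varphi \lor \lnot\varphi)$ being an intuitionistic theorem) where the paper leaves it as "leads to a contradiction"; this is a welcome but inessential elaboration.
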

\begin{proof}
\LEM implies that either $0 \in S$ or $0 \notin S$. The second alternative is ruled out, since it implies $S=\emptyset$. Hence the first alternative holds and we are done. 

Conversely let $\varphi$ be any syntactically correct closed statement and consider the set 
\[ S = \set{x}{ x = 0 \land (\varphi \lor \lnot \varphi)} \ . \]
Then the assumption that $S=\emptyset$ leads to a contradiction and thus, $S \neq \emptyset$. So there exists $x\in S$ such that $x=0$ and more importantly  $\varphi \lor \lnot \varphi$.
\end{proof}

The construction of the set $S$ above is an instance of a very common trick in the toolbox of a constructive (reverse) mathematician. It can be found in many different variations---see for example the next proposition. Interestingly enough, in our experience, mathematicians without a strong background in formal logic are very uncomfortable when they first encounter this construction and will either dispute its validity or at best judge it ``pathological''. We  assure the reader that there is, though, no problem from a set-theoretic viewpoint as long as one has some form of set comprehension at one's disposal, which is the case in the big (constructive) mainstream set theories (ZF, IZF, CZF). 

It is also worth highlighting that $S$ is actually a family of sets $S_\varphi$, and that the fact that non-emptyness does not imply inhabitedness is not due us actually having a specific counterexample, but to it failing in a uniform fashion. Nevertheless it is common and productive to think of this sort of family of sets as one concrete example of a set, albeit a fairly fuzzy one.

A variation of this construction can also be seen in the next Proposition.
\begin{Pro}
\LEM is equivalent to the statement that the supremum\footnote{Notice that we use Bishop's definition of the supremum \cite{dB85}*{Chapter 2, Defintion 4.2}, which constructively differs from the usual least upper bound one. In fact the set from this proposition's proof shows that both definitions being equivalent is equivalent to \LEM.} of every bounded, inhabited subset of reals exists.
\end{Pro}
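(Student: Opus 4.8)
The plan is to prove the two implications separately, modelling the harder backward direction on the set-comprehension device already used in Proposition~\ref{Pro:LEM_equiv_inhabited}.

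For the forward implication I would simply note that \LEM licenses classical reasoning, so the classical least-upper-bound principle produces, for any bounded inhabited $S\subset\RR$, a least upper bound $s$; one then checks that $s$ meets Bishop's definition of supremum, the only non-trivial clause being that for each $\varepsilon>0$ the point $s-\varepsilon$ is not an upper bound and hence, by \LEM, is exceeded by some member of $S$. (If one prefers to avoid invoking classical analysis wholesale, $s$ can instead be built by interval bisection, using \LEM at each step to decide whether the current midpoint is an upper bound of $S$; this directly yields a rapidly converging Cauchy sequence.)

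For the backward implication, let $\varphi$ be an arbitrary syntactically correct closed statement and put
\[ S = \set{x\in\RR}{x = 0 \ \lor\ (x = 1 \land \varphi)} \ . \]
Since $0\in S$ this is inhabited, and $1$ bounds it, so by hypothesis $s := \sup S$ exists, with $0\leqslant s\leqslant 1$. I would then extract $\varphi\lor\neg\varphi$ from the location of $s$ via two observations. First, if $0<s$, applying the approximation clause of Bishop's supremum with $\varepsilon = s/2$ gives some $x\in S$ with $x>s/2>0$; as $x\neq 0$, the disjunction defining membership in $S$ collapses to $x = 1\land\varphi$, so $\varphi$ holds. Second, if $s<1$, then $\varphi$ is impossible, because $\varphi$ would place $1$ in $S$ and force $1\leqslant s$; hence $\neg\varphi$. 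Now cotransitivity of the strict order on $\RR$, applied to $0<1$, yields $0<s$ or $s<1$, and in either case $\varphi$ has been decided.

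The only point that needs genuine care is the backward direction: one must make sure the argument uses nothing beyond bona fide constructive facts about $\RR$---cotransitivity of $<$ together with Bishop's (deliberately weak) definition of the supremum---and that no classical completeness property has been smuggled in through the back door; the forward direction is routine once one grants that \LEM restores classical analysis (or, failing that, carries out the bisection). As the footnote to the statement records, this same set $S$ is also what shows that Bishop's supremum and the ordinary least-upper-bound notion agree for all sets exactly when \LEM holds.
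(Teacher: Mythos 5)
Your proof is correct, and the backward direction uses the same set-comprehension device the paper does, but with a small and arguably cleaner twist. The paper's test set is $S = \menge{0} \cup \set{x}{x = 1 \land (\varphi \lor \lnot\varphi)}$; the argument then has to observe that $\sup S$ cannot differ from $1$ (relying on $\lnot\lnot(\varphi\lor\lnot\varphi)$), and only afterwards invokes Bishop's approximation clause to pull out a member of $S$ equal to $1$, whose membership condition yields $\varphi\lor\lnot\varphi$. You instead put only $\varphi$ (not $\varphi\lor\lnot\varphi$) into the set, and extract the disjunction by applying cotransitivity of $<$ to $0 < 1$: the alternative $0 < s$ forces $\varphi$ via the approximation clause, while $s < 1$ forces $\lnot\varphi$ directly. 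This avoids the detour through double-negation elimination for $\varphi\lor\lnot\varphi$ and decides $\varphi$ symmetrically by comparing $s$ against both endpoints; it also illustrates nicely why Bishop's approximation clause (rather than mere least-upper-boundness) is the load-bearing part of the hypothesis. Your forward direction is the standard one the paper waves at; the one thing worth saying out loud there is that passing from "$s-\varepsilon$ is not an upper bound" to "$s-\varepsilon$ is exceeded by some element of $S$" is itself a use of \LEM (or at least \MP), which you correctly flag.
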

\begin{proof}
One direction is a well-known result (or even an axiom) in basic, classical analysis. For the other direction consider the set 
\[ S = \menge{0} \cup \set{x}{ x = 1 \land (\varphi \lor \lnot \varphi)}  \ . \qedhere \]
\end{proof}
Notice that for the set $S$ of the previous proof we can actually show that the supremum cannot be distinct from $1$; in other words the supremum of $\lnot \neg S$ exists. However, as the part \ref{Equiv2WLEM} of the next proposition will show, this is not always possible. Part \ref{Equiv3WLEM} improves upon a result of Mandelkern \cite{mM82}, who showed that it implies \WLPO.
\begin{Pro} \label{Pro:equival_WLEM}
The following are equivalent to \WLEM
\begin{enumerate}
\item  \label{Equiv2WLEM} If $S$ is a bounded, inhabited subset of real number, then $\sup \lnot \neg S$ exists.
\item \label{Equiv3WLEM} Whenever two inhabited open subsets $U,V$ of a bounded interval are disjoint there exists a point $x \notin U \cup V$.
\end{enumerate}
\end{Pro}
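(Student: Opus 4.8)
The proof will show both equivalences by routing everything through \WLEM in its "stability of negations" form: for any $\varphi$, either $\lnot\varphi$ holds or $\lnot\lnot\varphi$ holds. I will treat each item separately, and in both directions the backward implication (item $\Rightarrow$ \WLEM) will use the familiar comprehension trick already seen twice in the excerpt, while the forward implications (\WLEM $\Rightarrow$ item) are the genuinely analytic parts.

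\textbf{Item \ref{Equiv2WLEM}.} For \WLEM $\Rightarrow$ \ref{Equiv2WLEM}: given a bounded inhabited $S$, note that $\lnot\lnot S$ (i.e. $\set{x}{\lnot\lnot(x\in S)}$) is inhabited in the weak sense of being non-empty, and is bounded by the same bounds. The point is that membership in $\lnot\lnot S$ is a \emph{stable} property, so for each rational cut "$\exists x\in\lnot\lnot S: x>q$" we may apply \WLEM; stability lets the double negation collapse exactly where we need it, and Bishop's supremum (which only requires, for each $\varepsilon$, an element above $\sup - \varepsilon$ together with the upper bound property) can then be assembled. The clean way is: let $b$ be an upper bound; for each $n$ and each rational $q$ in a fixed enumeration of $[-b,b]\cap\QQ$, \WLEM decides $\lnot(\exists x\in S: x>q)$ versus $\lnot\lnot(\exists x\in S: x>q)$, and a search over this countable grid (using \ACC) produces a Cauchy sequence of rational approximations to the least upper bound of $\lnot\lnot S$. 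For \ref{Equiv2WLEM} $\Rightarrow$ \WLEM: fix $\varphi$ and set $S=\menge{0}\cup\set{x}{x=1\land\varphi}$. Then $\sup\lnot\lnot S$ exists; comparing it with $\tfrac12$ (using that it is a located real, hence $\sup\lnot\lnot S < 1$ or $\sup\lnot\lnot S > 0$ — actually one needs the weaker apartness-style information) yields $\lnot\varphi\lor\lnot\lnot\varphi$: if the supremum is $<1$ then $\varphi$ is impossible, while if it is $>\tfrac12$ we cannot have $\lnot\varphi$. This is essentially Mandelkern's argument, refined so the conclusion is \WLEM rather than \WLPO.

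\textbf{Item \ref{Equiv3WLEM}.} For \WLEM $\Rightarrow$ \ref{Equiv3WLEM}: given disjoint inhabited open $U,V$ in a bounded interval $I$, pick $u\in U$, $v\in V$; without loss of generality $u<v$. Consider $x_0=\sup\{x\in[u,v] : [u,x]\cap V=\emptyset\}$ — or, more robustly, work with the "infimum of the distance to $V$" style construction. The idea is that a separating point sits on the common boundary, and \WLEM is what lets us locate such a boundary point: for each dyadic stage we must decide whether the current test point is "not in $U$" or "not in $\lnot U$", and \WLEM supplies exactly this dichotomy, driving an interval-bisection toward a point in neither $U$ nor $V$. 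For \ref{Equiv3WLEM} $\Rightarrow$ \WLEM: fix $\varphi$ and inside $(-1,1)$ put $U=\set{x}{x<0\lor(x<1\land\varphi)}$ and $V=\set{x}{x>0\lor(x>-1\land\varphi)}$. Both are open, both inhabited, and they are disjoint: if $x\in U\cap V$ then combining the cases forces $\varphi$, but then $U=(-1,1)=V$ which is not disjoint — so one argues $\lnot(x\in U\cap V)$ directly. By \ref{Equiv3WLEM} there is $x\notin U\cup V$; from $x\notin U$ we get $\lnot(x<1\land\varphi)$, hence (since also $x\notin V$ gives $x\leqslant 0$, so $x<1$) we conclude $\lnot\varphi$; and from $x\notin U\cup V$ together with the apartness of $x$ from a genuine witness one extracts $\lnot\lnot\varphi$ in the complementary region. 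The bookkeeping of which sub-case yields $\lnot\varphi$ and which yields $\lnot\lnot\varphi$ is the delicate part.

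\textbf{Main obstacle.} The routine comprehension tricks make the $\Rightarrow\WLEM$ directions short; the real work — and where I expect to spend the most care — is the forward direction of \ref{Equiv3WLEM}, namely extracting a point provably outside $U\cup V$ from \WLEM. Unlike the supremum construction in \ref{Equiv2WLEM}, here we are asserting the \emph{existence of a point}, so the bisection has to converge to something we can prove lies in neither open set, which means at each stage we need a decision of the form $\lnot(y\in U)\lor\lnot\lnot(y\in U)$ and must show the limit inherits "$\notin U$" and "$\notin V$" — using that the complement of an open set is closed (stable under limits) is what rescues this. I would also double-check that \ACC (tacitly available per the foundational discussion) is enough for the sequence construction in \ref{Equiv2WLEM}, and that Bishop's — not the classical least-upper-bound — notion of supremum is the one that makes \ref{Equiv2WLEM} exactly \WLEM and not something stronger.
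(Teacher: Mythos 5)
The most important problem is in your counterexample for $\ref{Equiv3WLEM} \Rightarrow \WLEM$: your proposed sets $U=\set{x}{x<0\lor(x<1\land\varphi)}$ and $V=\set{x}{x>0\lor(x>-1\land\varphi)}$ are \emph{not} disjoint. If $\varphi$ holds, both equal $(-1,1)$; the paragraph in which you try to argue disjointness is circular, since ``$\varphi$ would make them non-disjoint'' is not a contradiction but exactly what prevents you from asserting disjointness. The paper's construction places the $\varphi$-controlled part in $U$ and the $\lnot\varphi$-controlled part in $V$, confined to overlapping sub-intervals, so that a point in $U\cap V$ would satisfy both $\varphi$ and $\lnot\varphi$ (or lie in two disjoint intervals) --- that is genuinely absurd and gives disjointness outright. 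Your version needs to be rebuilt along those lines before the backward direction of \ref{Equiv3WLEM} can go through.

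The second, structural, point is that you are making the proof harder than it needs to be. You decompose as two separate biconditionals $\WLEM \iff \ref{Equiv2WLEM}$ and $\WLEM \iff \ref{Equiv3WLEM}$, and then correctly identify $\WLEM \Rightarrow \ref{Equiv3WLEM}$ as the awkward analytic step (a bisection you must push through while controlling double negations at every stage). The paper arranges a cycle $\WLEM \Rightarrow \ref{Equiv2WLEM} \Rightarrow \ref{Equiv3WLEM} \Rightarrow \WLEM$, and the link $\ref{Equiv2WLEM} \Rightarrow \ref{Equiv3WLEM}$ is a two-line argument: the point $x=\sup\lnot\neg U$ already works, because $x\in U$ (open) produces an element of $\lnot\neg U$ above the supremum, and $x\in V$ (open, disjoint from $U$) forces a neighbourhood of $x$ to miss $\lnot\neg U$, contradicting the approximating part of Bishop's supremum. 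This entirely removes the bisection you were dreading. Likewise, for $\WLEM \Rightarrow \ref{Equiv2WLEM}$ the paper does not build a Cauchy sequence by enumerating rationals and invoking \ACC; it applies \WLEM once to $\fa{s\in S}{s\leqslant a}$ to establish order-locatedness and then invokes the Bishop--Bridges characterisation of existence of suprema, which is cleaner and more modular than your rational grid search.

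Your backward direction for item \ref{Equiv2WLEM} (the Mandelkern-style set $S=\menge{0}\cup\set{x}{x=1\land\varphi}$) is a sound idea, and could be made to yield $\WLEM$ directly with some care about what ``comparing $\sup\lnot\neg S$ with $\tfrac12$'' means constructively (one uses cotransitivity of $<$, not locatedness), but the paper does not need it since the cycle already closes via item \ref{Equiv3WLEM}.
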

\begin{proof}
We will first show that \WLEM implies \ref{Equiv2WLEM}. To this end let $S \subset \RR$ and $B\in \NN$ be such that \[ \fa{s \in S}{-B \leqslant s \leqslant B} \ . \]
By \cite{dB85}*{Proposition 4.3, Chapter 1} it suffices to show that $\lnot \neg S$ is order located; that is for all $a,b \in \RR$ with $a<b$
\[  \left( \lnot \fa{s \in S}{s \leqslant a} \right) \ \lor  \  \left( \fa{s \in S}{s < b} \right) \ ; \] but this follows easily from \WLEM applied to the formula $\fa{s \in S}{s \leqslant a}$.

It is also easy to see that \ref{Equiv2WLEM} implies \ref{Equiv3WLEM}: if $U$ and $V$ are as stated, then  $x=  \sup \lnot \neg U$ exists, by \ref{Equiv2WLEM}. Since $U$ and $V$ are open, both $x \in U$ as well as $x \in V$ lead to contradictions, and hence $x \notin U \cup V$. 

Finally let $\varphi$ be any syntactically correct statement. Let
\[ U = \left(0,\frac{1}{3}\right) \cup \set{x \in \left(0,\frac{2}{3}\right)}{\varphi} \]
and
\[ V =  \set{x \in \left(\frac{1}{3}, 1\right)}{\lnot \varphi} \cup \left(\frac{2}{3},1\right)  \  . \]
Then $0 \in U$ and $1 \in V$ so they are inhabited. They are also easily seen to be  open and disjoint.  
Now assume there exists $x \notin U \cup V$. If $x < \frac{2}{3}$, then $\lnot \varphi$ leads to the contradiction $x \in V$, so $\lnot \neg \varphi$.  Similarly $x > \frac{1}{3}$ implies $\lnot \varphi$. Hence \WLEM holds.
\end{proof}

The rules known as \define{De'Morgan's laws} are
\begin{equation} \label{DM1}
	\tag{DM1} \lnot (\varphi \land \psi) \iff \lnot \varphi \lor \lnot \psi \ ,
\end{equation}
and
\begin{equation} \label{DM2}
	\tag{DM2} \lnot (\varphi \lor \psi) \iff \lnot \varphi \land \lnot \psi \ .
\end{equation}
where $\varphi$ and $\psi$ are syntactically correct statements. It is easy to see that \ref{DM2} is provable in intuitionistic logic. The same is true for the direction from the right to the left of \ref{DM1}. However, we have
\begin{Pro} \label{Pro:WLEM_equiv_DM1}
\WLEM is equivalent to \ref{DM1}.
\end{Pro}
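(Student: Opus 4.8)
The plan is to prove the two implications separately, using the fact that \ref{DM2} and the right-to-left direction of \ref{DM1} are already available in intuitionistic logic; so the only real content is the left-to-right direction of \ref{DM1} together with the converse reduction back to \WLEM.

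First I would derive \WLEM from \ref{DM1}. This is the easy half: given any syntactically correct $\varphi$, apply \ref{DM1} with $\psi := \lnot \varphi$. Since $\varphi \land \lnot \varphi$ is contradictory, $\lnot(\varphi \land \lnot \varphi)$ holds intuitionistically, and \ref{DM1} then yields $\lnot \varphi \lor \lnot \neg \varphi$, which is precisely \WLEM.

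Next, assuming \WLEM, I would prove the non-trivial direction of \ref{DM1}, namely $\lnot(\varphi \land \psi) \implies \lnot \varphi \lor \lnot \psi$. From the hypothesis one extracts, intuitionistically, the implication $\varphi \implies \lnot \psi$. Now apply \WLEM to $\varphi$: in the case $\lnot \varphi$ we are immediately done; in the case $\lnot \neg \varphi$, contraposing $\varphi \implies \lnot \psi$ twice gives $\lnot \neg \varphi \implies \lnot \neg \lnot \psi$, and since $\lnot \neg \lnot \psi \implies \lnot \psi$ is intuitionistically valid, we conclude $\lnot \psi$. Either way $\lnot \varphi \lor \lnot \psi$ holds. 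The reverse implication $\lnot \varphi \lor \lnot \psi \implies \lnot(\varphi \land \psi)$ is routine intuitionistic reasoning, so \ref{DM1} follows.

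The only subtlety worth flagging is that in the second branch of the \WLEM split one does not actually have $\varphi$ in hand, but merely $\lnot \neg \varphi$; the point is that this still suffices to force $\lnot \psi$, because of the triple-negation collapse $\lnot \neg \lnot \psi \iff \lnot \psi$. Beyond that, the whole argument is a short exercise in propositional intuitionistic logic and requires no auxiliary set constructions of the kind used in the preceding propositions.
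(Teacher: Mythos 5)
Your proof is correct and takes essentially the same route as the paper's: both directions hinge on applying \ref{DM1} to $\varphi \land \lnot\varphi$ for one half and on a case split by \WLEM on $\varphi$ for the other. The only cosmetic difference is that where you unwind the second case via a double contraposition and the triple-negation collapse $\lnot\neg\lnot\psi \iff \lnot\psi$, the paper simply notes that assuming $\psi$ under $\lnot\neg\varphi$ and $\lnot(\varphi\land\psi)$ yields a contradiction; the two arguments are logically identical.
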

\begin{proof}
First assume \ref{DM1} and let $\varphi$ be arbitrary. Since $\neg(\lnot \varphi \land \varphi)$ is provable in intuitionistic logic we have $\lnot \neg \varphi \lor \lnot \varphi  $; that is \WLEM holds. 

Conversely assume that $\neg(\varphi \land \psi)$. By \WLEM either $\lnot \varphi$ or $\lnot \neg \varphi$. It is easy to see that in the second case the assumption that $\psi$ holds leads to a contradiction. Hence $\lnot \psi$ and we are done.	
\end{proof}

Sometimes De Morgan's laws are stated slightly differently---in its ``substitution form''---in which case we get an equivalence to \LEM:
\begin{Pro} \label{Pro:WLEM_equiv_DM12p}
\LEM is equivalent to the following versions of De Morgan's law
\begin{equation} \label{DM1p}
	\tag{DM1$^\prime$} \lnot (\lnot \varphi \land \lnot \psi) \iff  \varphi \lor  \psi
\end{equation}
as well as to
\begin{equation} \label{DM2p}
	\tag{DM2$^\prime$} \lnot (\lnot \varphi \lor \lnot \psi) \iff  \varphi \land  \psi
\end{equation}	
\end{Pro}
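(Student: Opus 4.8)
The plan is to reduce both equivalences to two facts already in hand: that \ref{DM2} is provable intuitionistically (both directions), and that over intuitionistic logic \LEM is equivalent to the double negation elimination schema \ref{Eqn:stability}. As in the footnote attached to \LEM, ``equivalent'' here means an equivalence between the \emph{schemata} (ranging over all syntactically correct $\varphi$, $\psi$), not an instance-wise equivalence; I would flag this explicitly so the reader is not misled.

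First I would treat \ref{DM1p}. Using the intuitionistically valid \ref{DM2} one may rewrite the left-hand side: $\lnot(\lnot \varphi \land \lnot \psi)$ is intuitionistically equivalent to $\lnot\lnot(\varphi \lor \psi)$. Since $\varphi \lor \psi \implies \lnot\lnot(\varphi \lor \psi)$ is itself intuitionistically valid, the remaining content of \ref{DM1p} is precisely the schema $\lnot\lnot(\varphi \lor \psi) \implies \varphi \lor \psi$. Now every formula $\chi$ is intuitionistically equivalent to $\chi \lor \chi$, so this schema is in turn equivalent to the full stability schema \ref{Eqn:stability}, hence to \LEM. Conversely, \LEM gives \ref{Eqn:stability}, from which \ref{DM1p} follows in that same chain of equivalences.

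Next I would treat \ref{DM2p}. Again by \ref{DM2}, applied with $\lnot\varphi$, $\lnot\psi$ in place of $\varphi$, $\psi$, the left-hand side $\lnot(\lnot \varphi \lor \lnot \psi)$ is intuitionistically equivalent to $\lnot\lnot\varphi \land \lnot\lnot\psi$, and $\varphi \land \psi \implies \lnot\lnot\varphi \land \lnot\lnot\psi$ is intuitionistically valid; so the content of \ref{DM2p} is the schema $\lnot\lnot\varphi \land \lnot\lnot\psi \implies \varphi \land \psi$. Specialising to $\psi := \varphi$ yields \ref{Eqn:stability}, so \ref{DM2p} implies \LEM; conversely \LEM yields \ref{Eqn:stability} applied separately to $\varphi$ and to $\psi$, whence the schema holds.

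I do not expect a genuine obstacle: the only thing needing care is the schema-versus-instance subtlety already present for \LEM itself, together with bookkeeping of which halves of De Morgan's laws come for free intuitionistically. If one prefers to sidestep \ref{DM2} entirely, one can argue even more directly: for \ref{DM1p} the instance $\psi := \lnot\varphi$ makes the left-hand side intuitionistically true (as $\lnot(\lnot\varphi \land \lnot\lnot\varphi)$ is a theorem) while the right-hand side reads $\varphi \lor \lnot\varphi$; for \ref{DM2p} the instance $\psi := \varphi$ collapses to $\lnot\lnot\varphi \iff \varphi$; and both converse implications are the routine case distinctions available under \LEM.
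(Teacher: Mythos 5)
Your proposal is correct, and you have in fact given two arguments. The \emph{alternative} you sketch at the end is essentially the paper's proof. For \ref{DM1p} you both pick the instance $\psi:=\lnot\varphi$ and observe that $\lnot(\lnot\varphi\land\lnot\neg\varphi)$ is an intuitionistic theorem, so \ref{DM1p} delivers $\varphi\lor\lnot\varphi$ directly. For \ref{DM2p} you differ in one small detail: the paper instantiates both placeholders with $\varphi\lor\lnot\varphi$, uses that $\lnot\bigl(\lnot(\varphi\lor\lnot\varphi)\lor\lnot(\varphi\lor\lnot\varphi)\bigr)$ is an intuitionistic theorem (essentially $\lnot\neg(\varphi\lor\lnot\varphi)$), and reads off \LEM in one stroke; you instead take $\psi:=\varphi$, obtain the stability schema $\lnot\neg\varphi\implies\varphi$, and then invoke the already-established equivalence of stability with \LEM. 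That extra hop through stability is harmless — it is exactly the bootstrapping the paper's footnote authorises — but the paper's instantiation is more economical.

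Your \emph{main} argument takes a genuinely different and somewhat more structural route: rather than cleverly instantiating, you use the intuitionistically valid \ref{DM2} (and its instance at $\lnot\varphi,\lnot\psi$) to normalise the left-hand sides of \ref{DM1p} and \ref{DM2p} into $\lnot\lnot(\varphi\lor\psi)$ and $\lnot\lnot\varphi\land\lnot\lnot\psi$ respectively, strip off the intuitionistically trivial directions, and reduce both schemata to full stability (via $\chi\iff\chi\lor\chi$ in the first case and $\psi:=\varphi$ in the second). This buys transparency about \emph{why} each primed De Morgan law is a double-negation-elimination principle in disguise, at the cost of being a few lines longer than the paper's one-line instantiation. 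Both arguments are sound, and your explicit remark about schema-versus-instance equivalence is a worthwhile clarification that the paper relegates to a footnote.
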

\begin{proof}
Of course, \LEM implies both versions. Conversely let $\varphi$ be a syntactically correct statement. In intuitionistic logic we have $\neg(\lnot \varphi \land \lnot \neg \varphi)$, so \ref{DM1p} implies $\varphi \lor \lnot \varphi$. We also have 
\[ \lnot \left ( \lnot ( \varphi \lor \lnot \varphi  ) \lor \lnot ( \varphi \lor \lnot \varphi  )   \right) \ , \]
so \ref{DM2p} also implies $\varphi \lor \lnot \varphi$. Thus in both cases \LEM holds.
\end{proof}

There are quite a few other basic logical principles equivalent to \LEM over intuitionistic logic. (See, however, Section \ref{Sec:PimplQveeQimplP} for $(\varphi \implies \psi) \lor (\psi \implies \varphi)$)

\begin{Pro} \label{Pro:paradoxes_of_material_impl}
The following are equivalent to \LEM
\begin{enumerate}
  \item \label{LEMequiv1} $((\varphi \implies \psi) \implies \varphi) \implies \varphi$  \quad (Peirce's law)
  \item \label{LEMequiv1b} $(\varphi \implies \vartheta) \implies (((\varphi \implies \psi) \implies \vartheta) \implies \vartheta)$  \quad (generalisation of Peirce's law)

  \item \label{LEMequiv2} $(\lnot \varphi  \implies \varphi) \implies \varphi $
  \item \label{LEMequiv3} $(\psi \implies \varphi) \lor (\varphi \implies \vartheta)$ (Linearity)
  \item \label{LEMequiv4} $(\varphi \implies \psi) \lor \lnot \psi$
  \item \label{LEMequiv5} $(\lnot (\varphi \implies \psi)) \implies (\varphi \land \lnot \psi)$ \quad (the counterexample principle)
\end{enumerate}
\end{Pro}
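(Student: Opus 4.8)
The plan is the standard one for such ``bestiary'' results. The implications from \LEM are routine classical tautology checks, and for each converse I would recover \LEM by feeding the principle a well-chosen instance of its metavariables --- typically a provable atom such as $0=0$, the false atom $0=1$, or an instance $\varphi \lor \lnot \varphi$ of excluded middle itself. Throughout I read $\bot$ as $0=1$ and $\top$ as $0=0$, so that $\varphi \implies (0=1)$ is literally $\lnot\varphi$ and $(0=0) \implies \psi$ is (intuitionistically) equivalent to $\psi$.

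First the easy direction: each of \ref{LEMequiv1}--\ref{LEMequiv5} is a tautology of classical propositional (respectively first-order) logic, so under \LEM each follows by the obvious case distinction on the truth of its constituent statements; I would dispatch this in a sentence. For instance, for \ref{LEMequiv3} one notes that if $\varphi$ holds then $\psi \implies \varphi$, and if $\lnot\varphi$ holds then $\varphi \implies \vartheta$.

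For the converses I would proceed as follows. From \ref{LEMequiv1b} one recovers \ref{LEMequiv1} by putting $\vartheta := \varphi$ and discharging the then-trivial antecedent $\varphi \implies \varphi$. From \ref{LEMequiv1} one recovers \ref{LEMequiv2} by putting $\psi := (0=1)$, so that $\varphi \implies \psi$ becomes $\lnot\varphi$ and Peirce's law becomes verbatim $(\lnot\varphi \implies \varphi) \implies \varphi$. From \ref{LEMequiv2} one gets \LEM by applying it with $\varphi$ replaced by $\varphi \lor \lnot\varphi$: the required hypothesis $\lnot(\varphi \lor \lnot\varphi) \implies (\varphi \lor \lnot\varphi)$ holds intuitionistically, since $\lnot(\varphi \lor \lnot\varphi)$ yields $\lnot\varphi$ and hence $\varphi \lor \lnot\varphi$. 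For \ref{LEMequiv3} (linearity), put $\psi := (0=0)$ and $\vartheta := \lnot\varphi$: the first disjunct $(0=0) \implies \varphi$ gives $\varphi$ (apply it to $0=0$), while the second, $\varphi \implies \lnot\varphi$, intuitionistically gives $\lnot\varphi$, so in either case $\varphi \lor \lnot\varphi$. For \ref{LEMequiv4}, put $\varphi := (0=0)$: the first disjunct yields $\psi$ and the second is $\lnot\psi$. Finally, for \ref{LEMequiv5} (the counterexample principle), put $\psi := (0=1)$, turning it into $\lnot\lnot\varphi \implies \varphi$ for all $\varphi$, i.e.\ double negation elimination, which yields \LEM as recorded in the discussion of stability above.

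I expect no genuine obstacle; the only subtlety worth flagging is that \ref{LEMequiv2} and \ref{LEMequiv5} a priori deliver stability / double negation elimination rather than \LEM on the nose, so one should either invoke (as in the footnote accompanying \eqref{Eqn:stability}) that stability for all formulas implies \LEM because $\lnot\lnot(\varphi \lor \lnot\varphi)$ is an intuitionistic theorem, or --- cleaner --- apply those two principles directly to the formula $\varphi \lor \lnot\varphi$, as indicated for \ref{LEMequiv2} above.
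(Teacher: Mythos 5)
Your proof is correct and takes essentially the same route as the paper: you reduce \ref{LEMequiv1b} to \ref{LEMequiv1} to \ref{LEMequiv2}, and then instantiate \ref{LEMequiv2}--\ref{LEMequiv5} at well-chosen atoms, exactly as the paper does. The only (cosmetic) differences are that for \ref{LEMequiv2} the paper derives stability $\lnot\neg\varphi \implies \varphi$ and then appeals to the footnote, whereas you instantiate directly at $\varphi \lor \lnot\varphi$, and for \ref{LEMequiv3} the paper takes $\vartheta \equiv \bot$ while you take $\vartheta := \lnot\varphi$; both work, and the paper's choice for \ref{LEMequiv3} is marginally more direct.
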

\begin{proof}
	Clearly all of these can be proved in classical logic, i.e.\ follow from \LEM. Furthermore \ref{LEMequiv2} is a special case of \ref{LEMequiv1} ($\psi \equiv \bot$), which in turn is a special case of \ref{LEMequiv1b}. To see that \ref{LEMequiv2} implies \LEM we will show that it implies \ref{Eqn:stability}. Clearly $\lnot \neg \varphi$ implies $(\lnot \varphi \implies \varphi)$, and hence $\varphi$. To see that \ref{LEMequiv3} implies \LEM, we simply take $\psi \equiv \top$ and $\vartheta \equiv \bot$. Similarly \ref{LEMequiv4} implies \LEM by choosing $\varphi \equiv \top$. Lastly, \ref{LEMequiv5} implies \ref{Eqn:stability} and therefore \LEM if one chooses $\psi \equiv \bot$ (and weakens the consequence).
\end{proof}

\section{\texorpdfstring{\LPO}{LPO}}
The ubiquitousness of the \define{limited principle of omniscience} (\LPO) in analysis might only be rivalled by \LLPO's. This is mainly due to the fact, that real numbers and sequences feature prominently in analysis, and \LPO tells us everything we want to know about both.
\begin{principle}[LPO]{\LPO}  \label{PR:LPO}
For every binary sequence $(a_n)_{n \geqslant 1}$ we can decide whether \[ \fa{n \in \NN}{a_n = 0}  \lor \ex{n \in \NN}{a_n = 1}  . \]
\end{principle}

\subsection{Basic equivalencies of \texorpdfstring{\LPO}{LPO}}
Mostly taken directly from \cite{hI06} are the following equivalences:
\begin{Pro} \label{Thm:LPO-equivs} The following are equivalent to \LPO
\begin{enumerate}
\item \label{LPOEquiv2} $\fa{x \in \RR}{x < 0 \, \lor \, x = 0 \, \lor \, 0 < x}$
\item \label{LPOEquiv1} For every binary sequence $(a_n)_{n \geqslant 1}$ we can decide whether \[ \ex{n \in \NN}{\fa{n \geqslant N}{a_n = 0}}  \lor \ex{k_n \in \BS}{\fa{n \in \NN}{a_{k_n} = 1}}  . \]
%\item \label{LPOEquiv3} $\RR = (-\infty,0) \cup \menge{0} \cup (0, \infty)$
\item \label{LPOEquiv4} Every bounded monotone sequence of real numbers converges.
\item \label{LPOEquiv5} (Bolzano-Weierstra\ss theorem) Every sequence in a compact (totally bounded and complete) set has a convergent subsequence.
\item \label{LPOEquiv6} Every sequence of closed subsets of a compact metric space with the finite intersection property has nonempty intersection.
\item \label{LPOEquiv7} Ascoli's Lemma.
\end{enumerate}
\end{Pro}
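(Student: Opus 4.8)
The plan is to prove each item equivalent to \LPO by exhibiting both directions, handling item~\ref{LPOEquiv2} (trichotomy) first as the ``numerical'' core and then using it as a working hypothesis for the analytic statements. For \LPO~$\Rightarrow$~\ref{LPOEquiv2} there is almost nothing to do once a positive lower bound on $\abs{x}$ is in hand: using \ACC (or, equivalently, working with represented reals --- cf.\ the discussion of humble foundations) pick a binary sequence $(c_n)$ with $c_n = 0 \Rightarrow \abs{x} < 2^{-n}$ and $c_n = 1 \Rightarrow \abs{x} > 2^{-n-1}$; \LPO gives either $\fa{n}{c_n = 0}$, whence $x = 0$, or a concrete $n$ with $\abs{x} > 2^{-n-1}$, and then comparing a rational $2^{-n-3}$-approximant of $x$ against $\pm 2^{-n-2}$ decides whether $x < 0$ or $0 < x$. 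For the converse, given a binary sequence $(a_n)$ form $x = \sum_{n \geqslant 1} a_n 2^{-n} \in [0,1]$: trichotomy rules out $x < 0$, while $x = 0$ is exactly $\fa{n}{a_n = 0}$ and $0 < x$ is exactly $\ex{n}{a_n = 1}$. The reformulation \ref{LPOEquiv1} is then immediate both ways --- it asks us to decide ``eventually $0$'' versus ``some $a_n = 1$'', each of which is interderivable with \LPO after inspecting an initial segment (and, for the forward direction, using \ADC actually to list a subsequence of ones).

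For the implications from \LPO to \ref{LPOEquiv4}--\ref{LPOEquiv7} the idea is that the classical proofs survive once the decisions they tacitly make are legitimised by \LPO (via trichotomy). For \ref{LPOEquiv4} run the usual interval-halving search for the limit $\ell$: with $\ell$ trapped in a dyadic interval of length $2^{-k}$ with midpoint $c$, trichotomy makes the Boolean sequence $e_n$ ($=1$ if $x_n > c$, else $0$) well defined, so \LPO applied to $(e_n)$ decides which half survives; the nested intervals pin down $\ell$, the invariant $x_n \leqslant b_m$ for all $m$ (hence $x_n \leqslant \ell$) together with monotonicity gives a modulus, and the recursion is powered by \ACC. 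For \ref{LPOEquiv5} mimic the totally-bounded pigeonhole: for each $k$ take a finite $2^{-k}$-net of $K$ and (by \ACC) net-approximants $q^{k}_n$ of the $y_n$; now ``$q^{k}_n = s$'' is decidable, and although ``$s$ is hit infinitely often'' is $\Pi^0_2$, the sequence recording for each $N$ whether $\ex{n \geqslant N}{q^{k}_n = s}$ is non-increasing, so one \LPO computes it and a second \LPO reads off its eventual value; a finite case analysis over the net (the ``all net-elements used only finitely often'' branch being contradictory) selects an infinite class of indices on which the $y_n$ cluster near one net-point, and an \ADC-recursion over $k$ nests these classes with rapidly converging centres, whose limit (completeness of $K$) is the limit of a modulated subsequence. \ref{LPOEquiv7} then follows by the standard diagonal argument over a countable dense subset together with \ref{LPOEquiv5}, the closing ``equicontinuity $+$ dense pointwise convergence $\Rightarrow$ uniform convergence'' step being constructive since at each precision it touches only finitely many of the dense points; and \ref{LPOEquiv6} follows from \ref{LPOEquiv5} by choosing $z_k \in F_1 \cap \dots \cap F_k$ (\ACC, using the finite intersection property), extracting a convergent subsequence, and noting its limit lies in every closed $F_m$.

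For the back-implications I would not reduce each analytic statement to \LPO separately but funnel everything through \ref{LPOEquiv4}~$\Rightarrow$~\LPO by way of a single gadget: the \emph{latching sequence} of a binary sequence $(a_n)$, namely $x_n = 0$ as long as $a_1 = \dots = a_n = 0$ and $x_n = \tfrac12$ once some $a_j = 1$. This is an honest, choice-free, non-decreasing, $\{0, \tfrac12\}$-valued sequence in $[0,1]$ whose Cauchy modulus is exactly as hard to produce as the \LPO instance. Applying \ref{LPOEquiv4} yields $\ell$ and a modulus $\mu$; but a $\{0, \tfrac12\}$-valued sequence that varies by less than $\tfrac14$ beyond index $\mu(3)$ is \emph{constant} beyond $\mu(3)$, so evaluating the single number $x_{\mu(3)}$ settles the dichotomy ($\tfrac12$ means some $a_j = 1$; $0$ forces $x_m = 0$ for all $m \geqslant \mu(3)$, hence --- as $(x_n)$ latches and is non-decreasing --- $x_m = 0$ for all $m$, i.e.\ $\fa{n}{a_n = 0}$). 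The remaining back-implications are constructive reductions among the analytic statements: \ref{LPOEquiv5}~$\Rightarrow$~\ref{LPOEquiv4} because a bounded monotone sequence, having a convergent subsequence, converges (with a modulus); \ref{LPOEquiv6}~$\Rightarrow$~\ref{LPOEquiv5} via $F_k = \overline{\{y_n : n \geqslant k\}}$ and an \ADC-extraction of a subsequence converging to a point of the intersection; and \ref{LPOEquiv7}~$\Rightarrow$~\ref{LPOEquiv5} by reading a bounded sequence in $\RR$ as a (trivially equicontinuous, uniformly bounded) sequence of constant functions on $[0,1]$.

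The main obstacle is \LPO~$\Rightarrow$~\ref{LPOEquiv5}. The temptation to ``use \LPO to decide which $2^{-k}$-ball holds infinitely many $y_n$'' fails because that statement is $\Pi^0_2$, and even after passing to a finite net --- which makes the membership predicate decidable --- the ``infinitely often'' quantifier is still $\Pi^0_2$; the point that makes it go through is that for a \emph{fixed} net-element the relevant predicate in $N$ is non-increasing, so one \LPO gives the sequence and a second its limiting value, recovering the $\Pi^0_2$ statement from two $\Sigma^0_1$ decisions. Everything else is bookkeeping, plus the usual tacitly-accepted appeals to \ACC and \ADC. Dually, the one thing to watch in the back-implications is never to try to decide an undecidable property of the object handed to us by the hypothesis; the ``constant beyond a computable index'' observation is precisely what circumvents this.
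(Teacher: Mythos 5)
Your proposal is correct in substance, but it is organised very differently from the paper's own proof, which for the interesting directions mostly delegates: the equivalence with \ref{LPOEquiv2} is declared ``standard,'' \ref{LPOEquiv4}, \ref{LPOEquiv5}, \ref{LPOEquiv6}, \ref{LPOEquiv7} are each dispatched by a single citation, and only \ref{LPOEquiv1} is argued in the text (via a binary sequence $b_k$ recording, for each $k$, whether some $a_n=1$ with $n\geqslant k$ --- obtained by applying \LPO countably many times with unique choice --- followed by one more \LPO application to decide $(b_k)$, with \ADC then extracting the subsequence of ones in the second branch). You instead give self-contained sketches of every implication, and in the back-implications you funnel everything through \ref{LPOEquiv4}~$\Rightarrow$~\LPO via the latching sequence; that is a clean and genuinely more economical organisation than the paper's ``one citation per item.'' Your treatment of \ref{LPOEquiv5} is also more explicit than the reference the paper cites, and your observation that ``hits infinitely often'' must be decomposed into a non-increasing family of $\Sigma^0_1$ decisions (one \LPO to build the sequence, a second to read off its tail) is precisely the subtlety that matters.

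One spot where you are too glib is \LPO~$\Rightarrow$~\ref{LPOEquiv1}. You write that it is ``immediate'' and that \ADC lists the subsequence of ones, but this elides the key step: the first single application of \LPO only tells you whether some $a_n=1$ exists, not whether there are infinitely many. You need, as the paper does, to first apply \LPO countably many times (with \ACC/unique choice) to get the auxiliary sequence $b_k$ marking whether a $1$ occurs at or after position $k$, and only then apply \LPO to $(b_k)$; the second branch of that outer application is where \ADC enters to actually realise an increasing witness $k_n$. Your sketch is consistent with this but does not say it, and ``interderivable after inspecting an initial segment'' only describes the easy direction \ref{LPOEquiv1}~$\Rightarrow$~\LPO. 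With that filled in, the argument is complete, and everything else you wrote is a sound (if compressed) expansion of what the paper outsources to the literature.
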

\begin{proof} 
The equivalence of \LPO to \ref{LPOEquiv2} is standard.
\ref{LPOEquiv1} obviously implies \LPO. Conversely we can show \ref{LPOEquiv1} by applying \LPO  countably many times:
using \LPO (and unique choice) construct a binary sequence $b_n$ such that
\begin{align*}
b_k = 0 & \implies \ex{n \geqslant k}{a_n = 1} \ , \\
b_k = 1 & \implies \fa{n \geqslant k}{a_n = 0}  \ .
\end{align*}
Now, using \LPO again, either $\ex{N \in \NN}{b_N = 1}$ or $\fa{k}{b_k = 0}$. In the first case $\fa{n \geqslant N}{a_n = 0}$. In the second case we can use dependent choice to find $k_n \in \BS$  such that $\fa{n\in \NN}{a_{k_n} = 1}$.

The equivalence of \LPO to \ref{LPOEquiv4} can be found in \cite{mM88a}, the one to \ref{LPOEquiv5} in \cite{dB02b},  the one to \ref{LPOEquiv6} in \cite{hI04b}, and the one to \ref{LPOEquiv7} in \cite{hD08}.
\end{proof}

\begin{Pro} \label{Pro:LPO<->ratveeirrat} \LPO is equivalent to the following statements: 
\begin{itemize}
\item Every real number is either rational or irrational. 
\item Every real number is either algebraic or transcendental. 
\item For all sequences $r_n$ of reals and for all $x \in \RR$ either there exists $n $ such that $x=r_n$ or $x \neq r_n$ for all $n \in \NN$.
\end{itemize}
\end{Pro}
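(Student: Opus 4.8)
The plan is to route everything through the third (sequence) statement, which I will abbreviate $(\star)$, used as a hub. The plan is to establish \LPO $\Rightarrow (\star)$, then $(\star)$ $\Rightarrow$ ``every real is rational or irrational'', $(\star)$ $\Rightarrow$ ``every real is algebraic or transcendental'', and $(\star)$ $\Rightarrow$ \LPO, and finally to give the two ``direct'' reversals ``rational or irrational'' $\Rightarrow$ \LPO and ``algebraic or transcendental'' $\Rightarrow$ \LPO. Since \LPO $\Rightarrow (\star) \Rightarrow$ each of the other two, and each of the other two $\Rightarrow$ \LPO, this closes the loop and gives all four equivalences.

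For \LPO $\Rightarrow (\star)$: given $x \in \RR$ and a sequence $(r_n)$ of reals, I would apply the trichotomy form of \LPO (Proposition~\ref{Thm:LPO-equivs}\,(\ref{LPOEquiv2})) to each $x - r_n$ and use \ACC to package the outcomes into a binary sequence $(a_n)$ with $a_n = 1 \Rightarrow x = r_n$ and $a_n = 0 \Rightarrow x \neq r_n$; a further application of \LPO to $(a_n)$ then yields either some $n$ with $x = r_n$, or $x \neq r_n$ for all $n$. From $(\star)$ the two ``positive'' statements follow by instantiating $(r_n)$: with an enumeration of $\QQ$ one gets ``rational or (apart from every rational =) irrational'', and with an enumeration of the real algebraic numbers one gets ``algebraic or transcendental'' — in fact both are the special case of $(\star)$ in which $(r_n)$ enumerates a fixed countable set. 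The only side fact needed is that the real algebraic numbers can be enumerated, which follows by isolating the real roots of each integer polynomial in rational intervals via Sturm's theorem. For $(\star) \Rightarrow \LPO$: given a binary sequence $(a_n)$, I would set $x = \sum_{n\geq 1} a_n 2^{-n}$ and apply $(\star)$ with $r_n \equiv 0$, obtaining $x = 0 \lor x \neq 0$; here $x = 0$ forces $a_n = 0$ for all $n$, while $x \neq 0$ forces some partial sum, hence some $a_n$, to be positive.

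The substantial part is the direct reversal ``rational or irrational'' $\Rightarrow \LPO$, with the transcendence version identical. I would fix an irrational $\theta = \sum_{n\geq 1} d_n 10^{-n} \in (0,1)$ with computable digits that is apart from every rational (for instance $\theta = \sqrt{2} - 1$; for the transcendence version take $\theta$ a Liouville number, so that by Liouville's theorem $\theta$ is effectively apart from every algebraic number). Given a binary $(a_n)$, put $P_n = \prod_{m\leq n}(1 - a_m) \in \{0,1\}$ and
\[
 x \;=\; \sum_{n\geq 1} d_n 10^{-n} P_n \ ,
\]
which converges with a modulus not depending on $(a_n)$. If $a_n = 0$ for all $n$ then $x = \theta$; if $N$ is the least index with $a_N = 1$ then $x = \sum_{n < N} d_n 10^{-n} \in \QQ$. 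Now apply the hypothesis to this $x$. If $x$ is irrational, then for no $N$ can $a_N = 1$ (else, with $N$ least such, $x$ would equal the rational $\sum_{n<N} d_n 10^{-n}$), so $a_n = 0$ for all $n$. If $x$ is rational, say $x = p/q$, then $\theta$ is apart from $p/q = x$, so $\theta - x > 0$ (it is $\geq 0$ and apart from $0$); since $\theta - x = \sum_{n\geq 1} d_n 10^{-n}(1 - P_n)$ is a sum of nonnegative rationals, some finite partial sum is positive, hence some term $d_n 10^{-n}(1 - P_n)$ is positive, hence $P_n = 0$, hence $a_m = 1$ for some $m \leq n$. Either way \LPO holds, and the transcendence case is the same argument with $\theta$ a Liouville number.

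The hard part will be getting the ``$x$ rational'' branch exactly right: one must genuinely exploit the exhibited witness $p/q$ for the rationality of $x$ together with the known irrationality of $\theta$ to extract the honest existential $\exists n\,(a_n = 1)$; a more careless argument produces only $\neg\neg\exists n\,(a_n = 1)$, which would establish no more than \WLPO. Everything else — the trichotomy input, the routine real-number estimates, and the enumeration of the algebraic reals — I expect to be routine.
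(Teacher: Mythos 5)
Your proof is correct, and at the level of overall architecture it coincides with the paper's: both directions go through the sequence statement as the hub, both forward directions decide $x=r_n$ versus $x\neq r_n$ term-by-term with \LPO plus countable choice and then apply \LPO once more, and both reversals encode an arbitrary binary sequence into a "stopped" series whose limit is a fixed irrational (resp.\ transcendental) number if the sequence is identically zero and an explicit rational otherwise. The paper phrases the reversal for a decreasing binary sequence and sums $\sum a_n/n!$; your $P_n=\prod_{m\leqslant n}(1-a_m)$ performs exactly the same decreasing-to-arbitrary reduction, so there is no difference there.

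The one genuine difference is the choice of test number, and it is a small but real improvement in self-containedness. The paper uses $e$ for both reversals, so the "algebraic" branch silently requires that $e$ be \emph{effectively} apart from every algebraic real given with its witness polynomial --- an effective transcendence measure for $e$, i.e.\ a quantitative Hermite--Lindemann bound, which is a non-elementary analytic input. You separate the two cases: $\sqrt{2}-1$ for rational/irrational, where the effective apartness from $p/q$ is the elementary $|p^2-2q^2|\geqslant 1$ estimate, and a Liouville number for algebraic/transcendental, where the effective apartness comes from Liouville's inequality rather than from transcendence theory. You also correctly identify the pitfall that the naive argument only yields $\lnot\neg\exists n\,(a_n=1)$ (hence \WLPO), and you avoid it by extracting the explicit witness $p/q$ (resp.\ witness polynomial) from the disjunction and turning the resulting apartness $\theta - x > 0$ into a positive term of a nonnegative series. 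One thing worth being a little more explicit about if you flesh this out: the effective Liouville bound needs the minimal polynomial (or at least root separation) rather than an arbitrary witness polynomial, so a factoring step over $\QQ$ should be mentioned --- but this is still more elementary than what the paper's use of $e$ requires and is at the same level of informality as the paper's own proof.
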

\begin{proof} 
First notice that using \LPO and countable choice, we can, for every $n \in \NN$ and $x \in \RR$,  decide whether $x = r_n$ or $x\neq r_n$. So using (unique) countable choice there exists a binary sequence $(a_n)_{n \geqslant 1}$ such that $a_n=0 \iff \lnot (x\neq r_n) $. By \LPO either $a_n=0$ for all $n \in \NN$ or there exists $n\in \NN$ such that $a_n=1$. That is we can decide whether $x \neq r_n$ for all $n\in \NN$ or whether there exists $n \in \NN$ such that $x=r_n$. 

Conversely let $(a_n)_{n \geqslant 1}$ be a decreasing binary sequence. Define \[ a = \sum_{n \geqslant 1} \frac{a_n}{n!} \ . \]
Notice that if $a_n = 1$ for all $n \in \NN$, then $a = e$, and thus transcendental (irrational). If $a$ is algebraic (rational) there exists $\varepsilon > 0$ such that $\abs{e-a}> \varepsilon$. Hence we can easily find $n$ such that $a_n =0$. 
\end{proof}
\begin{Pro}
\LPO is equivalent to the statement that the transitive closure of a decidable relation $R \subset \NN \times \NN$ is, again, decidable.
\end{Pro}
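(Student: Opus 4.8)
The plan is to prove the two implications separately, taking the transitive closure $R^{+}$ of $R$ to be $\bigcup_{k\geqslant 1}R^{k}$, so that $(m,n)\in R^{+}$ means precisely that there is a finite chain $m = x_{0},x_{1},\dots,x_{k}=n$ with $k\geqslant 1$ and $(x_{i},x_{i+1})\in R$ for every $i<k$; identifying $R^{+}$ with this ``reachability'' set is routine (it is the usual definition in many treatments, and in any case it gives the least transitive relation extending $R$).

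\emph{From \LPO.} Fix a decidable $R\subset\NN\times\NN$ and a pair $(m,n)$. Using a fixed primitive recursive coding of the finite sequences over $\NN$, enumerate them as $\sigma_{0},\sigma_{1},\dots$ and set $c_{i}=1$ if $\sigma_{i}$ codes a chain of length $\geqslant 1$ from $m$ to $n$ through $R$, and $c_{i}=0$ otherwise; since $R$ comes with a characteristic function, this is a decidable test, so $(c_{i})_{i\geqslant 0}$ is a genuine binary sequence. Applying \LPO, either $c_{i}=0$ for all $i$---and then no such chain exists, so $(m,n)\notin R^{+}$---or $c_{i}=1$ for some $i$, whence $(m,n)\in R^{+}$. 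As $(m,n)$ was arbitrary, $R^{+}$ is decidable.

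\emph{Towards \LPO.} Given a binary sequence $(a_{n})_{n\geqslant 1}$, let
\[ R = \set{(n,n+1)}{n\geqslant 1}\ \cup\ \set{(n,0)}{n\geqslant 1\ \wedge\ a_{n}=1}\ \subset\ \NN\times\NN\ . \]
To decide $(p,q)\in R$ we check whether $q=p+1$ (with $p\geqslant 1$), or $q=0$ (with $p\geqslant1$ and $a_{p}=1$), both decidable; so $R$ is decidable. The point $0$ is a sink, so any $R$-chain from $1$ to $0$ must end with an edge $(x,0)\in R$, which forces $a_{x}=1$; conversely $a_{n}=1$ for some $n\geqslant 1$ yields the $R$-chain $1,2,\dots,n,0$. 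Hence $(1,0)\in R^{+}\iff\ex{n\geqslant 1}{a_{n}=1}$, and the assumed decidability of $R^{+}$ lets us decide whether $\ex{n}{a_{n}=1}$ or $\fa{n}{a_{n}=0}$. So \LPO holds.

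I do not expect a real obstacle here; the only care needed is bookkeeping---that ``codes a chain from $m$ to $n$'' is a decidable predicate once $R$ is, that $R^{+}$ really is the reachability set, and, in the converse, that the auxiliary $R$ is decidable with $0$ a sink so that landing on $0$ genuinely witnesses some $a_{n}=1$. (If one wants an actual characteristic function for $R^{+}$ rather than a pointwise decision, one invokes \ACC, as is customary in this text.)
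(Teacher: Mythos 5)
Your proof is correct and follows essentially the same strategy as the paper's: in the forward direction you use \LPO to decide the existence of a witnessing chain (the paper searches over chains with entries $\leqslant n$ and you search over codes of finite sequences, but it is the same unbounded-search-plus-\LPO argument, with the same caveat that \ACC is needed if one wants an actual characteristic function for $R^{+}$ rather than a pointwise decision). In the converse direction you both encode the binary sequence into a decidable relation and ask a single reachability question; your construction (a ray $n \to n+1$ with edges dropping to a sink $0$ exactly when $a_n = 1$, and the query $(1,0)\in R^{+}$) is a slightly cleaner variant of the paper's (a complete graph on $\{2,3,\dots\}$ with edges $1 \to n$ when $a_n = 1$, and the query $(1,2)\in R^{+}$), but the underlying idea is identical.
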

\begin{proof}
Let $(a_n)_{n \geqslant 1}$ be a binary sequence. We define a decidable relation $R$ on $\NN$ by $(n,m) \in R$ for all $n,m \geqslant 2$ and $(1,n) \in R$ if $a_n=1$. Now it is easy to see that $(1,2) \in R^{+}$ (the transitive closure) if and only if there exists  $n$ such that $a_n=1$. 

Conversely let $R$ be a decidable relation on $\NN$ and assume that \LPO holds. Furthermore let $k,\ell$ be arbitrary natural numbers. Now define a binary sequence $(a_n)_{n \geqslant 1}$ by
\[
a_n = \begin{cases}
1 & \ex{k_{1}, \dots, k_{j} \leqslant n}{(k,k_{1}) \in R \land (k_{1},k_{2}) \in R \land \dots \land (k_{j},\ell) \in R}\\
0 & \text{otherwise.}  
\end{cases}
\]
Then, if there exists $n$ such that $a_n=1$ we have $(k,\ell) \in R^{+}$ and if $a_n=0$ for all $n \in \NN$ we have $(k,\ell) \notin R^{+}$.
\end{proof}

\begin{Pro} \label{Pro:LPO<->IPP} 
\LPO is equivalent to the statement, that if $A \cup B$ is infinite (i.e.\ there is an injection $\NN \to A \cup B$), then either $A$ or $B$ is infinite.
\end{Pro}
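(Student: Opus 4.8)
To see that (IPP) implies \LPO, I would use the set-comprehension trick already employed in Propositions~\ref{Pro:LEM_equiv_inhabited} and~\ref{Pro:equival_WLEM}. Given a binary sequence $(a_n)_{n \geqslant 1}$, put
\[ A = \set{n \in \NN}{\fa{m \leqslant n}{a_m = 0}} \qquad \text{and} \qquad B = \set{n \in \NN}{\ex{m \leqslant n}{a_m = 1}} . \]
For each $n$ the statement $\fa{m \leqslant n}{a_m = 0}$ is decidable (a finite conjunction), and its failure gives $\ex{m \leqslant n}{a_m = 1}$, so $n \in A \lor n \in B$; hence $A \cup B = \NN$ and the identity witnesses that $A \cup B$ is infinite. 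Now (IPP) yields that $A$ or $B$ is infinite. If $B$ is infinite it is in particular inhabited, so some $n$ lies in $B$ and therefore $\ex{m}{a_m = 1}$. If $A$ is infinite, then for every $k$ it contains some $n \geqslant k$, for which $\fa{m \leqslant n}{a_m = 0}$, and in particular $a_k = 0$; so $\fa{n}{a_n = 0}$. Either way $\LPO$ holds.

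For the converse, assume \LPO, and let $f \co \NN \to A \cup B$ be an injection. Since $f(n) \in A \lor f(n) \in B$ for every $n$, countable choice (\ACC) supplies a binary sequence $(a_n)$ with $a_n = 0 \implies f(n) \in A$ and $a_n = 1 \implies f(n) \in B$. Using \LPO together with unique choice, form the binary sequence $(b_N)$ with $b_N = 1 \iff \fa{n \geqslant N}{a_n = 0}$, and apply \LPO to $(b_N)$. If $b_N = 1$ for some $N$, then $f(n) \in A$ for all $n \geqslant N$, so $k \mapsto f(N + k)$ is an injection $\NN \to A$ and $A$ is infinite. If $b_N = 0$ for all $N$, then for every $N$ there is $n \geqslant N$ with $a_n = 1$; repeatedly searching for the least index with value $1$ beyond those already chosen produces a strictly increasing sequence $m_0 < m_1 < \cdots$ with $a_{m_j} = 1$, whence $j \mapsto f(m_j)$ is an injection $\NN \to B$ and $B$ is infinite. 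In either case $A$ or $B$ is infinite, so (IPP) holds. (This second half is essentially the argument used for Proposition~\ref{Thm:LPO-equivs}.)

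The proof is short and the points needing attention are bookkeeping ones. One must keep in mind that ``infinite'' here means ``admits an injection from $\NN$'', so in the forward direction actual injections built from a tail of $f$ or from a subsequence of $f$ have to be exhibited rather than cardinalities argued informally; and one genuine application of \ACC is used to turn the pointwise disjunction $f(n) \in A \lor f(n) \in B$ into a single binary sequence, which is unproblematic given the standing acceptance of countable choice in this text. The one place where a naive attempt fails, and hence the main---though very mild---obstacle, is in the reverse direction: the set $A$ must be defined with the \emph{bounded} condition $\fa{m \leqslant n}{a_m = 0}$ rather than simply $a_n = 0$, since the latter controls only a tail of $(a_n)$ and would leave $A$ infinite even when the sequence contains a one.
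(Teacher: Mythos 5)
Your proof is correct and follows essentially the same route as the paper: in the forward direction, countable choice produces the flagging sequence $(a_n)$, and a second application of \LPO (via the auxiliary sequence $(b_N)$) decides whether a tail of $f$ lands in $A$ or in $B$; in the converse, you build a decidable partition of $\NN$ from the given sequence. The only cosmetic difference is that the paper assumes WLOG that $(a_n)$ is increasing and sets $A = \set{n}{a_n = 0}$, whereas you build the same effect into the definitions with bounded quantifiers; these are equivalent.
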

\begin{proof}
Let $f:\NN \to A \cup B$ an injection. Using countable choice we can assume that there is a binary sequence such that
\[ a_n = \begin{cases} 0 & \implies f(n) \in A \\ 1 & \implies f(n) \in B 
\end{cases}
\]
Using \LPO and countable choice iteratively we can fix another binary sequence $(b_n)_{n \geqslant 1}$ such that
\[ b_n = \begin{cases} 0 & \ex{k > n}{ a_{k} = 0} 
\\ 1 & \fa{k > n}{ a_{k} = 1}
\end{cases}
\]
One final application of \LPO yields that either there is $N\in \NN$ such that $b_n =1$, in which case $n \mapsto f(N+n+1)$ is an injection of $\NN$ into $B$, or $b_n =0$ for all $n \in \NN$, in which case we can find an injection of $\NN$ into $A$.
Conversely assume that $(a_n)_{n \geqslant 1}$ is an, without loss of generality increasing, binary sequence. Define the sets  $A = \set{n}{a_n=0 } $ and $B = \set{n}{a_n=1}$. Then $A \cup B = \NN$ is infinite and $A$ and $B$ are disjoint. Now if $A$ is infinite there cannot be a $n$ with $a_n=1$, and thus $a_n=0$ for all $n\in \NN$. If $B$ is infinite it is, in particular, inhabited and so we can find $n \in \NN$ with $a_n=1$.
\end{proof}
The last proposition iterated finitely many times also shows that \LPO is equivalent to the \define{infinite pigeonhole principle}.

\begin{principle}{IPP}
For every $f:\NN \to \menge{1, \dots, k}$ there exists an infinite set $A \subset \NN$ and $1 \leqslant K \leqslant k$ such that $f(x)=K$ for all $x\in A$.
\end{principle}

\begin{Pro}
\LPO is equivalent to the statement that every countable subset of the natural numbers is decidable.
\end{Pro}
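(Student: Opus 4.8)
The plan is to make this a one-line observation once the definition is pinned down. I would read ``countable subset of $\NN$'' in the usual constructive sense: a set $A\subseteq\NN$ which is empty or is the range $\set{a_n}{n\in\NN}$ of some sequence $(a_n)_{n\geqslant 1}$ of naturals (equivalently, the image of a decidable subset of $\NN$ under a function $\NN\to\NN$), and ``decidable'' as detachable, i.e.\ $\fa{m\in\NN}{m\in A\lor m\notin A}$. Both directions then rest on the single fact that, for such an $A$, the statement $m\in A$ has the form $\ex{n\in\NN}{a_n=m}$ — an existential quantifier over $\NN$ with a \emph{decidable} matrix, since equality on $\NN$ is decidable.

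For $\LPO\implies$ (decidability): given such an $A$ and an arbitrary $m\in\NN$, set $b_n:=1$ if $a_n=m$ and $b_n:=0$ otherwise. This is a genuine binary sequence, and applying $\LPO$ to $(b_n)_{n\geqslant 1}$ decides $\ex{n\in\NN}{a_n=m}$ versus $\fa{n\in\NN}{a_n\neq m}$, that is, decides $m\in A$. Hence $A$ is detachable (and, using unique choice, its characteristic function exists). The empty case, and the variant where the index domain is a decidable $D\subseteq\NN$, are handled identically.

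For the converse I would simply feed an arbitrary binary sequence through the hypothesis. Given $(c_n)_{n\geqslant 1}\in\CS$, the set $A:=\set{c_n}{n\in\NN}\subseteq\menge{0,1}$ is countable, hence detachable by assumption, so in particular $1\in A\lor 1\notin A$. Since $1\in A$ is exactly $\ex{n\in\NN}{c_n=1}$ and $1\notin A$ is $\fa{n\in\NN}{c_n=0}$, this is precisely $\LPO$.

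There is essentially no obstacle beyond fixing the convention: the mathematical content is nil once ``countable'' is read as ``enumerable''. The one point deserving a remark is that if instead one insists a countable set carry a bijection with $\NN$ (so as to be genuinely infinite), then in the converse I would first pass from $(c_n)$ to a sequence enumerating an \emph{infinite} set — e.g.\ interleaving $(c_n)$ with a fixed enumeration of the even numbers — which preserves exactly the information $\LPO$ needs. So the write-up is: state the chosen notion of ``countable'', then give the two short arguments above.
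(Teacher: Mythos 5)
Your proof is correct and takes essentially the same approach as the paper: the forward direction decides $m\in A$ by applying \LPO to the flagging sequence $b_n$ (the paper merely calls this ``clear''), and the converse feeds an arbitrary binary sequence into the hypothesis via $A=\set{a_n}{n\in\NN}$ and tests membership of $1$, exactly as in the paper. Your extra remark on the ``bijection with $\NN$'' reading of countability is a sensible aside but not needed under the paper's conventions.
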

\begin{proof}
It is clear that \LPO is enough to prove the statement. Conversely, if $(a_n)_{n\geqslant 1}$ is a binary sequence, then the set $A=\set{a_n}{n \in \NN}$ countable. If this set is decidable, then either $1 \in A$ or $1  \notin A$. In the first case there exists $n\in \NN$ with $a_n=1$. In the second case there cannot be such an $n$. Hence, in that case, $\fa{n \in \NN}{a_n=0}$.
\end{proof}

\subsection{Metastability} 

In a program suggested by  Tao \cite{tT07}, it is proposed to recover the ``finite'' (constructive) content of theorems by replacing them with logically (using classical logic) equivalent ones that can be proved by ``finite methods.'' For example, since there often is no way to attain the Cauchy condition it is replaced with the following notion of meta-stability. A sequence $(x_n)_{n \geqslant 1}$ in a metric space $(X,d)$ is called \emph{metastable} if 
\[ \fa{ \epsilon >0, f}{ \ex{m}{ \fa{ i,j \in [m, f(m)] }{ \; d( x_{i}, x_{j}) < \varepsilon }}} \ , \]
where $[k,\ell]$ denotes the set $\menge{k,k+1, \dots, \ell}$. 
One can trivially show  that a Cauchy sequence is metastable. However, as we will see metastability contains almost no constructive content.

As noted in \cite{jA12} every  non-decreasing sequence of reals bounded by $B \in \RR$ is metastable since it is impossible that $d (x_{m},x_{f(m)}) > \frac{\varepsilon}{2}$ for all $1 \leqslant m \leqslant \frac{2B}{\varepsilon}$.
The converse fails constructively, since we can show:
\begin{Pro}
\LPO is equivalent to the statement that every metastable, increasing sequence of rationals is bounded.
\end{Pro}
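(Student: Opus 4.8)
The plan is to prove both directions. For the easy direction, assume \LPO and let $(x_n)_{n\geqslant 1}$ be a metastable increasing sequence of rationals; I want to show it is bounded. Applying \LPO to the (decidable) question ``does there exist $n$ with $x_n > k$?'' for each $k \in \NN$, and using unique choice, produce a binary sequence $(b_k)$ recording the answers. Either all $b_k = 1$, in which case the sequence is unbounded, or some $b_K = 0$, in which case $k = K$ is a bound. So it remains to rule out the first case \emph{using metastability}. If the sequence were genuinely unbounded, pick $\varepsilon = 1$ and build (by dependent choice, walking up the sequence) a function $f$ such that for every $m$ the sequence climbs by more than $1$ between index $m$ and index $f(m)$ — this is possible precisely because at every stage we can find a later index whose value exceeds the current one by more than $1$. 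Then no $m$ witnesses $\fa{i,j\in[m,f(m)]}{d(x_i,x_j)<\varepsilon}$, contradicting metastability. Hence the sequence is bounded.

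For the converse, assume every metastable increasing sequence of rationals is bounded, and let $(a_n)_{n\geqslant 1}$ be an arbitrary binary sequence; I may assume it is increasing (replace $a_n$ by $\max_{k\leqslant n} a_k$, which does not change the disjunction we want to decide). Define the increasing sequence of rationals by a partial-sum-type construction that stays bounded as long as the $a_n$ are $0$ but escapes to infinity once some $a_n = 1$: for instance set $x_n = \sum_{k \leqslant n} a_k$, so $x_n \in \{0,1,2,\dots\}$, with $x_n = 0$ for all $n$ if the sequence is all zeros and $x_n \to \infty$ if some $a_n = 1$ (since then $a_k = 1$ for all $k$ large). I must check $(x_n)$ is metastable \emph{without} any omniscience: given $\varepsilon > 0$ and $f$, if $\varepsilon > 1$ take $m$ arbitrary; otherwise, the only way to fail $\fa{i,j\in[m,f(m)]}{d(x_i,x_j)<\varepsilon}$ at index $m$ is for the sequence to increase between $m$ and $f(m)$, i.e.\ for some $a_k = 1$ with $m < k \leqslant f(m)$ — but once that has happened the sequence is constant $0$-step from then on except it keeps adding $1$'s; this needs a slightly more careful design. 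A cleaner choice: let $x_n = 1 - 2^{-N}$ where $N = \sum_{k\leqslant n}a_k$, so $(x_n)$ is increasing, bounded by $1$ unconditionally (hence metastable trivially, with no choice needed), yet $\sup x_n = 1$ iff some $a_n = 1$. Then boundedness of $(x_n)$ is automatic and gives nothing; so this design is wrong too. The right design must make \emph{unboundedness} equivalent to $\ex{n}{a_n = 1}$ while keeping metastability choice-free — e.g.\ $x_n = \sum_{k\leqslant n} a_k \cdot c$ for a fixed constant, or better, make the jump sizes summable when eventually $0$ but the tail divergent when a $1$ appears: since once $a_k=1$ it stays $1$, set $x_n = \sum_{k\leqslant n} a_k$, which diverges precisely in that case, and is metastable because for $\varepsilon\leqslant 1$ and any $f$, I can take $m$ so large that $[m,f(m)]$ lies beyond the place where the sequence has "decided" — but that requires \LPO.

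The genuine obstacle — and the crux of the argument — is exactly this: \textbf{showing $(x_n)$ is metastable constructively.} The escape is that metastability is so weak that it does \emph{not} require knowing whether the sequence is bounded; one argues directly that for any $\varepsilon$ and $f$ a suitable $m$ exists by a finite search. Concretely, with $x_n = \sum_{k\leqslant n}a_k$: given $\varepsilon$ and $f$, pick $M$ with $M\varepsilon > 1$ (if $\varepsilon > 1$ take $M=1$), and among the $M$ candidate windows $[m_0, f(m_0)], [m_1, f(m_1)], \dots$ obtained by iterating $m_{i+1} = f(m_i)+1$ starting from $m_0 = 1$, at least one window must have $x$ constant on it — otherwise $x$ would have increased by at least $1$ on each, forcing $x_{m_M} \geqslant M$, but we can just observe that the sequence of differences $x_{f(m_i)} - x_{m_i}$ are non-negative integers and cannot all be $\geqslant 1$ without... — this still smuggles in a decision. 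The clean finish, which I expect the author uses, is: metastability of \emph{any} $\{0,1\}$-partial-sum sequence follows because such a sequence, restricted to any finite window, is eventually constant \emph{on that window} for a reason one can exhibit by bounded search — and the precise formulation that makes this go through constructively, turning the apparent need for \LPO into a finite combinatorial fact, is the one delicate point of the proof; everything else is the routine $\sum a_k$ bookkeeping.
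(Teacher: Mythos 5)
Your forward direction matches the paper's argument in spirit: use \LPO to produce either an explicit upper bound or a function $f$ climbing by more than $1$, then contradict metastability. That part is fine.

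The converse direction has a genuine gap, and you have in fact correctly diagnosed it yourself: you never produce a sequence that is simultaneously (a) provably metastable in \BISH, (b) bounded iff the target binary sequence is all zeros, and (c) increasing. Each candidate you try fails on one of these. With $(a_n)$ increasing and $x_n=\sum_{k\leqslant n}a_k$ the sequence, once it starts climbing, climbs by $1$ forever, so on \emph{every} window $[m,f(m)]$ with $f(m)>m$ the oscillation is $\geqslant 1$ once the escape has begun; there is no bounded search that produces a suitable $m$, and indeed this sequence is not constructively metastable. Your bounded candidate $1-2^{-N}$ is metastable but gives no information. Your closing paragraph gestures at "a finite combinatorial fact" without exhibiting it.

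The missing idea is to start not from an increasing binary sequence but from one with \emph{at most one} $1$ (the standard normalisation suffices for \LPO), and to take
\[ x_n = \sum_{i=1}^{n} i\, a_i\ . \]
This sequence is nondecreasing, and it is \emph{eventually constant}: it is $0$ until the unique $1$ (if any) appears at some index $i_0$, at which point it jumps to $i_0$ and stays there. Eventual constancy is what makes metastability a finite matter. Given increasing $f$, inspect $a_1,\dots,a_{f(1)}$: if no $1$ appears there, $x$ is constant (equal to $0$) on $[1,f(1)]$, so $m=1$ works; if a $1$ does appear there, then since there is at most one $1$ the sequence is constant from index $f(1)$ onward, in particular on $[f(2),f(f(2))]$, so $m=f(2)$ works. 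This is a decidable disjunction requiring only a search over $f(1)$ terms — exactly the "finite combinatorial fact" you were reaching for. Boundedness then decides the sequence: if $x_n<N$ for all $n$ then no $i>N$ has $a_i=1$ (else $x_i=i>N$), so one checks $a_1,\dots,a_N$. Without the "at most one $1$" normalisation and the index-weighted sum, the argument does not close, which is why your drafts kept running aground.
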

\begin{proof}
Assume $(x_n)_{n \geqslant 1}$ is non-decreasing and metastable. Then, using \LPO, we can, for every $n$ decide whether $n$ is an upper bound or not. Using \LPO again, we can thus either find an upper bound or  a function $f: \NN \to \NN$ such that $x_{f(n)} > x_n + 1$ for all $n \in \NN$. Without loss of generality we may assume that $f$ is increasing. Furthermore \[  d(x_n,x_{f(n)}) > 1 \ ; \]  a contradiction to the metastability, and hence $x_n$ is bounded.

Conversely, let $(a_n)_{n \geqslant 1}$ be a binary sequence that has, without loss of generality, at most one $1$. Now consider \[ x_n = \sum_{i=1}^n i a_{i} \ . \] It is easy to see that $x_n$ is metastable: if $f: \NN \to \NN$ is increasing, then either $a_{i} = 0$ for all $i \in [1, f(1)]$ or  $a_{i} = 0$ for all $i \in [f(2), f(f(2))]$. In both cases $x_{i}$ is constant.

Now if $x_n$ is bounded, there is $N \in \NN$ with $x_n < N$. If there was $i > N$ with $a_{i} = 1$, then $x_{i} = i > N$ which is a contradiction. Hence $a_{i}=0$ for all $ i > N$, that is we only need to check finitely many entries to see if $(a_n)_{n \geqslant 1}$ consists of $0$s or whether there is a term equalling $1$.
\end{proof}

One might thus hope that there is maybe a chance that  every bounded, metastable sequence converges. However, also this statement is equivalent to \LPO.

\begin{Pro}
\LPO is equivalent to the statement that every bounded, metastable sequence of reals converges.
\end{Pro}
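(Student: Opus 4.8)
The plan is to prove the two implications separately; as in the last two propositions, the mathematical content lies in deriving \LPO, while \LPO $\Rightarrow$ (the convergence statement) is routine.

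\emph{From the statement to \LPO.} Given a binary sequence $(a_n)_{n\geqslant1}$, I would set $b_n = \max\menge{a_1,\dots,a_n}$, so that $(b_n)_{n\geqslant1}$ is a $\menge{0,1}$-valued, non-decreasing sequence of rationals; in particular it is bounded. It is also metastable, and \emph{provably so without \LPO}: given $\varepsilon>0$ and (w.l.o.g.) an increasing $f$ with $f(m)>m$, compute $b_1$ and $b_{f(1)}$; if they are equal then $b$ is constant on $[1,f(1)]$ and $m=1$ witnesses metastability for this $\varepsilon,f$, while if they differ then $b_{f(1)}=1$, hence $b\equiv1$ on $[f(1),f(f(1))]$ and $m=f(1)$ does. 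By the hypothesis $(b_n)_{n\geqslant1}$ converges, say to $L$, so one may fix $N$ with $\abs{b_n-L}<\tfrac12$ for all $n\geqslant N$. A finite computation decides whether $b_N=1$ or $b_N=0$. In the first case $a_i=1$ for some $i\leqslant N$. In the second case $a_1=\dots=a_N=0$ and $\abs{L}<\tfrac12$; moreover if $b_n=1$ held for some $n\geqslant N$ we would get the contradiction $1\leqslant\abs{1-L}+\abs{L}<1$, so $b_n=0$ and thus $a_n=0$ for every $n\geqslant N$. Either way we have decided $\fa{n\in\NN}{a_n=0}\lor\ex{n\in\NN}{a_n=1}$.

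\emph{From \LPO to the statement.} Assume \LPO, so also \MP holds and every statement $\ex{k}{\varphi(k)}$ with decidable $\varphi$ is decidable. Let $(x_n)_{n\geqslant1}$ be metastable; it suffices to show it is a Cauchy sequence, since $\RR$ is Cauchy complete. Fix $\varepsilon>0$. For each $m$ the statement $P(m):\equiv\fa{i,j\geqslant m}{\abs{x_i-x_j}\leqslant\varepsilon}$ is the negation of $\ex{i,j\geqslant m}{\abs{x_i-x_j}>\varepsilon}$, which after unwinding the reals is of the form $\ex{k}{\varphi(k)}$ with $\varphi$ decidable; hence \LPO decides $P(m)$. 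Suppose, towards a contradiction, that $\lnot P(m)$ for every $m$. Then for each $m$ there are $i,j\geqslant m$ with $\abs{x_i-x_j}>\varepsilon$, and \LPO even lets us compute such a pair; let $f(m)$ be its maximum. Applying metastability to $\varepsilon$ and this $f$ produces an $m^*$ with $\abs{x_i-x_j}<\varepsilon$ for all $i,j\in[m^*,f(m^*)]$, contradicting the choice of $f(m^*)$. Hence $\lnot\lnot\ex{m}{P(m)}$, and since $P$ is decidable, \MP gives $\ex{m}{P(m)}$. Doing this for $\varepsilon=\tfrac1k$, $k\in\NN$, and collecting the resulting $m$'s (by \ACC, or by always taking the least one) yields a modulus of Cauchyness for $(x_n)_{n\geqslant1}$, so it converges.

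\emph{Main obstacle.} The only genuinely delicate point is the second direction: one cannot simply ``decide whether $(x_n)_{n\geqslant1}$ is Cauchy'', since being Cauchy---and being metastable---is a $\forall\exists$-statement far beyond the reach of \LPO. The argument above only ever decides the predicates $P(m)$ and extracts Cauchyness indirectly, by feeding a cunningly chosen ``escape'' function $f$ back into the metastability hypothesis; the final passage from $\lnot\lnot\ex{m}{P(m)}$ to $\ex{m}{P(m)}$ is exactly where Markov's principle (available from \LPO) enters. On the first side the thing to be careful about is that the witnessing sequence be \emph{provably}---that is, without \LPO---metastable, which is why one uses a non-decreasing $\menge{0,1}$-valued sequence rather than one that merely ``settles once a $1$ has appeared''; the latter cannot be shown metastable constructively.
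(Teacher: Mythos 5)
Your converse (from the convergence statement to \LPO) is essentially the paper's argument with a cosmetic change of witness: the paper works with a binary sequence with at most one $1$, you work with its running maximum $b_n=\max\menge{a_1,\dots,a_n}$; both are shown to be metastable by inspecting the interval $[1,f(1)]$ and, if necessary, jumping to $[f(1),f(f(1))]$, and both extract a tail of zeros from a putative limit. (Your parenthetical remark about why one cannot use an arbitrary binary sequence is well taken, though the paper's ``at most one $1$'' form is equally fine.)

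Your forward direction, however, is a genuinely different and cleaner route. The paper assumes boundedness and invokes Bolzano--Weierstra\ss\ (Proposition~\ref{Thm:LPO-equivs}) to extract a convergent subsequence $x_{k_n}\to x$, then uses the ``eventually $0$ or infinitely-often $1$'' dichotomy of \LPO\ to show the whole sequence is eventually $\varepsilon$-close to $x$, feeding the two index sequences $k_n$ and $\ell_n$ into the escape function. You instead go directly for Cauchyness: you observe that for fixed $\varepsilon$ the predicate $P(m)\equiv\fa{i,j\geqslant m}{\abs{x_i-x_j}\leqslant\varepsilon}$ is $\Pi^0_1$ and hence \LPO-decidable, assume all $P(m)$ fail, extract witnesses, feed their maxima back in as the escape function, and then use \MP\ (obtained from \LPO) to turn the resulting $\neg\neg\ex{m}{P(m)}$ into an actual modulus. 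This avoids Bolzano--Weierstra\ss\ entirely and never touches the boundedness hypothesis, which incidentally shows that under \LPO\ boundedness is a redundant assumption in the proposition. Both directions of your proposal are correct; the forward direction is the interesting divergence, and it is if anything a tidier argument than the one in the text, at the mild cost of needing the metastability-vs-Cauchy bookkeeping to be spelled out (which you do). The use of \ACC\ (or least witnesses) to assemble $f$ and the final modulus is appropriately flagged.
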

\begin{proof}
Assume that \LPO holds and that $(x_n)_{n \geqslant 1}$ is a bounded and metastable sequence of reals.  
Since \LPO implies the Bolzano Weierstra\ss\xspace theorem (see Theorem \ref{Thm:LPO-equivs}) there exists $x \in \RR$ and $k_n \in \BS$ such that $x_{k_n}$ converges to $x$. Now let $\varepsilon >0$ be arbitrary and using \LPO fix a binary sequence $(\lambda_n)_{n \geqslant 1}$ such that 
\begin{align*}
\lambda_n = 0 & \implies \abs*{x-x_n} < \varepsilon \ ,  \\
\lambda_n = 1 & \implies \abs*{x-x_n} \geqslant \varepsilon \ .
\end{align*}
By Theorem \ref{Thm:LPO-equivs} either there exists $N$ such that $\lambda_n=0$ for all $n \geqslant N$ or there exists a strictly increasing $\ell_n \in \BS$ such that $\lambda_{\ell_n} =1$ for all $n \in \NN$. We will show that the second alternative is ruled out by the metastability: fix $M$ such that $\abs*{x_{k_n} - x} < \frac{\varepsilon}{2}$ for $n \geqslant M$ and hence 
\begin{equation} \label{Eqn:metstable}
\abs*{x_{k_n} - x_{\ell_n}} \geqslant \frac{\varepsilon}{2} \text{ for } n \geqslant M \ .
\end{equation}
 Now define $f: \NN \to \NN$ by $f(n) = \max \menge{k_{n+M},\ell_{n+M}}$.
Then $f$ is increasing. Since $(x_n)_{n \geqslant 1}$ is metastable there exists $m$ such that for all $i,j \in [m,f(m)]$ we have $\abs*{x_{i} - x_{j}} < \frac{\varepsilon}{2}$. Since $k_{m+M},\ell_{m+M} \in [m+M,f(m)]$ we get the desired contradiction to \ref{Eqn:metstable}.

Conversely let $(a_n)_{n \geqslant 1}$ be a binary sequence with at most one $1$. We will show that $(a_n)_{n \geqslant 1}$ is metastable. So let $f : \NN \to \NN$ an increasing function. Now either there exists $i \in [1,f(1)]$ such that $a_{i}=1$ or for all $i \in [1,f(1)]$ we have $a_{i}=0$. In the first case, since $(a_n)_{n \geqslant 1}$ has at most one $1$, for all $i \in [f(1)+1,f(f(1)+1)]$ we have $a_{i}=0$. In both cases there exists $m$ such that, regardless of $\varepsilon >0$, we have
 \[ \fa{i,j \in [m,f(m)]}{\abs*{a_{i} - a_{j}} = 0 < \varepsilon} \ ;  \]
 that is $(a_n)_{n \geqslant 1}$ is metastable. Now if this sequence converges it must converge to $0$. So there exists $N \in \NN$ such that for all $n \geqslant N$ we have $a_n=0$. So we only need to check finitely many indices $n \in \NN$ for $a_n=1$, and hence \LPO holds.
\end{proof}

\subsection{Hillam's theorem}
Hillam's theorem \cite{bH76} states that if $f:[0,1] \to [0,1]$ is a continuous map and one defines a sequence by choosing an arbitrary $x_{0} \in [0,1]$ and then taking $x_{n+1} = f(x_n)$ for all $n > 0$, then 
\[ \abs*{x_n - x_{n+1}} \to 0  \iff (x_n)_{n \geqslant 1} \text{ is Cauchy} \ .\]
The interesting direction here is the one from the left to the right, since the converse holds trivially. Notice that if the sequence in the theorem converges, it converges to a fix-point. Hillam's theorem is therefore very interesting, since it is a fix-point theorem not equivalent to \LLPO (cf.\ Section \ref{SSec:Minima}).
\begin{Pro} \label{Pro:Hillam}
Hillam's theorem is equivalent to \LPO. (Even for uniformly continuous functions).
\end{Pro}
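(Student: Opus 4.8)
The plan is to establish both directions. For the easy direction ($\LPO \implies$ Hillam's theorem), assume $\LPO$ and suppose $\abs{x_n - x_{n+1}} \to 0$. Since $\LPO$ implies the Bolzano--Weierstra\ss{} theorem (Proposition \ref{Thm:LPO-equivs}), the bounded sequence $(x_n)$ has a convergent subsequence $x_{k_n} \to x^{*}$. By continuity of $f$ and $\abs{x_n - x_{n+1}} \to 0$, the limit point $x^{*}$ is a fixed point of $f$. The combinatorial heart of the classical Hillam argument is that for an iteration sequence with $\abs{x_n - x_{n+1}} \to 0$ the set of limit points is a single point (intuitively because the sequence cannot ``jump over'' a fixed point in small steps); I would run this argument using $\LPO$ to make the requisite case distinctions (e.g.\ deciding whether $x_n$ ever gets within a given distance of $x^{*}$, or whether it stays on one side), concluding that $(x_n)$ itself converges and hence is Cauchy.

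For the reverse direction ($\text{Hillam} \implies \LPO$) — which, following the pattern of the earlier propositions in this section, is where the real content lies — I would encode an arbitrary binary sequence into an iteration. Given a binary sequence $(a_n)_{n \geqslant 1}$, assumed without loss of generality to have at most one $1$, I would build a (uniformly) continuous $f : [0,1] \to [0,1]$ and a starting point so that the orbit $x_0, x_1 = f(x_0), \dots$ has the following behaviour: as long as all $a_i = 0$ that we have seen so far, the orbit creeps along in smaller and smaller steps toward some target point $p$ (so $\abs{x_n - x_{n+1}} \to 0$ is forced); but if some $a_N = 1$, then from that point on $f$ is rigged so the orbit is eventually pushed into a region where it oscillates with amplitude bounded below (e.g.\ a little ``flip'' sending a neighbourhood of $p$ across to a neighbourhood of $1-p$ and back), destroying the Cauchy property while \emph{still} keeping $\abs{x_n - x_{n+1}}$ controlled in the small-step regime before $N$ — the key is that the oscillation only switches on after index $N$, and the step sizes in the oscillatory regime must still be arranged to tend to $0$. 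Concretely one can interleave: have $f$ act as a contraction-like map toward $p$ on the ``$a=0$ so far'' part of the orbit, and modify $f$ on a tiny interval $I_N$ (activated only when $a_N=1$) so that the orbit, upon reaching $I_N$, gets kicked into a $2$-cycle-like pattern whose gaps shrink but whose partial sums diverge — a standard trick is to use a sequence of smaller and smaller ``back-and-forth'' excursions $\sum 1/k$-style so that $\abs{x_n-x_{n+1}} \to 0$ yet $(x_n)$ is not Cauchy. Then: if Hillam's theorem gives that $(x_n)$ is Cauchy, the oscillation never turned on, so $a_n = 0$ for all $n$; and if $(x_n)$ is not Cauchy (which we can detect once Hillam is assumed, since non-Cauchyness of such an explicitly-built orbit is witnessed by a concrete gap), then some $a_N = 1$. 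Applying $\LPO$... wait — rather, Hillam's theorem itself does the deciding: it asserts the biconditional, so from $\abs{x_n - x_{n+1}}\to 0$ we \emph{get} that $(x_n)$ is Cauchy, and then a finite search for the witnessing index recovers whether any $a_n = 1$.

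The main obstacle, and where I would expect to spend the most effort, is the explicit construction of $f$ in the reverse direction: one must make $f$ genuinely (uniformly) continuous and self-mapping on $[0,1]$, ensure the orbit from the chosen $x_0$ does what is claimed in \emph{both} the all-zero and the eventually-one scenarios, and — the delicate point — guarantee $\abs{x_n - x_{n+1}} \to 0$ unconditionally (since this is the hypothesis of Hillam's theorem) while still arranging non-convergence exactly when some $a_n = 1$. Getting the ``small steps but divergent'' oscillatory tail to splice continuously onto the ``creeping toward $p$'' head, with the splice point depending on the \emph{a priori} unknown location of the first $1$, is the technical crux; a clean way to handle the dependence on the unknown index is to define $f$ piecewise on a sequence of nested intervals $I_1 \supset I_2 \supset \cdots$ shrinking to $p$, with $f$ on $I_n \setminus I_{n+1}$ determined by $a_n$, so that $f$ is well-defined and continuous regardless of the sequence, and only the \emph{orbit} distinguishes the cases. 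The final extraction of $\LPO$ from the asserted Cauchy conclusion is then a routine finite search, exactly as in the converse halves of the preceding propositions in this subsection.
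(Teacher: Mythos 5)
Your forward direction is fine and matches the paper's treatment (invoke \LPO to make the case distinctions in Hillam's original classical argument).

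The reverse direction, however, is built on an idea that cannot work. You propose to construct a continuous $f:[0,1]\to[0,1]$ and an orbit $x_{n+1}=f(x_n)$ with $\abs{x_n-x_{n+1}}\to 0$ that is \emph{non}-Cauchy when some $a_N=1$ (``small steps but divergent'', ``$\sum 1/k$-style''). But Hillam's theorem is \emph{true classically}: for an orbit of a continuous self-map of $[0,1]$, step sizes tending to $0$ already force convergence. There is no such object as an orbit with $\abs{x_n-x_{n+1}}\to 0$ that fails to be Cauchy; the examples you have in mind (harmonic-style excursions) are not realizable as $x_{n+1}=f(x_n)$, because $x_{n+1}$ is determined by $x_n$ alone and continuity of $f$ prevents the orbit from revisiting a small neighbourhood and then heading off in a different direction. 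So the ``oscillation switches on after $N$'' machinery has no instance. You half-notice this at the end and pivot to ``Hillam always gives Cauchy, so read off the answer from the limit'' --- but once you are reading the answer off the limit, the entire oscillation apparatus is dead weight, and the construction needn't mention binary sequences at all.

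The paper's construction is both simpler and actually works, and it is worth internalizing as a template: encode \LPO via a nonnegative real $a$ (WLOG $a<1$) and take the single linear map $f(x)=(1-a)x+a$. The orbit from $x_0=0$ is $x_n=1-(1-a)^n$, increasing. One shows $\abs{x_n-x_{n+1}}\to 0$ \emph{constructively, without knowing whether $a=0$ or $a>0$}: given $\varepsilon>0$, either $a<\varepsilon$ (then every step is $\leqslant a<\varepsilon$) or $a>\varepsilon/2$ (then $(1-a)^n$ is eventually $<\varepsilon$, and steps are bounded by $1-x_n=(1-a)^n$). Hillam's theorem now yields a limit $x_\infty$, and a cotransitivity step --- $x_\infty<1$ or $x_\infty>0$ --- decides $a=0$ versus $a>0$ (the latter via Bernoulli's inequality). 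Notice how all the weight is carried by the \emph{limit}, not by Cauchyness failing; the orbit is always Cauchy (as it must be), and the non-constructivity being extracted is precisely the location of that limit.
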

\begin{proof}
It is easy to see that the non-constructive steps in original proof \cite{bH76} hold under the assumption of \LPO; and that therefore the latter is enough to prove Hillam's theorem. To prove the converse let $a \in \RR$ be a non-negative real number. Without loss of generality assume that $a<1$.  Now define $f: [0,1] \to [0,1]$ by 
\[ f(x) =  (1- a)x +a \ ; \]
 that is $f$ is the linear function through $(0,a)$ and $(1,1)$. Now define a sequence of real numbers by $x_n = f^n(0)$. It is easily shown by induction that $ x_n = 1 - (1-a)^n$ and that $x_n \leqslant x_{n+1}$. We want to show that $\abs*{x_n - x_{n+1}} \to 0$ as $n \to \infty$.  For an arbitrary $\varepsilon >0$ either $a < \varepsilon$ or $a > \frac{\varepsilon}{2}$. In the first case, since for any $x \in [0,1] $   \[ \abs*{x-f(x)} \leqslant a \ , \] for any $n \in \NN$ also $\abs*{x_n - x_{n+1}} < \varepsilon$. In the second case choose $N$ such that $(1-a)^n < \varepsilon$. Now for any $n \geqslant  N$ we get 
 \[ \abs*{x_n - x_{n+1}} \leqslant  x_{n+1} - x_n \leqslant 1 - x_n = (1-a)^n \leqslant (1-a)^n < \varepsilon \ . \] 
 In both cases there exists $N \in \NN$, such that $\abs*{x_n - x_{n+1}} < \varepsilon$ for all $n \geqslant N$; therefore, by assumption, $(x_n)_{n \geqslant 1}$ is a Cauchy sequence and hence converges to a point $x_{\infty}$. Either $x_{\infty} < 1$, in which case $a = 0$, or $x_{\infty} > 0$. In the second case we can find $N$ such that $x_n > x_{\infty}/2$. Then, by Bernoulli's inequality 
 \[ x_{\infty}/2  < x_n = 1 - (1-a)^n \leqslant 1 - 1 -  N(-a) = Na  \ . \] Hence $a > x_{\infty}/(2N)$, after dividing by $N$, and therefore $a >0$.
\end{proof}
\subsection{Cardinality}
It is a well known exercise in first year mathematics to prove that there is a bijection between $[0,1)$, and $(0,1)$ and that therefore both sets have the same cardinality.

\begin{Pro} \label{Pro:LPO_equiv_extbij}
\LPO is equivalent to the existence of a strongly extensional bijection\footnote{To be precise: by bijection we obviously mean surjective and injective. Furthermore, a function $f:X \to Y$ is injective, if $f(x) \neq f(y)$ whenever $x \neq y$. This is a stronger requirement than  $x=y$ whenever $f(x) = f(y)$. The latter is called \emph{weakly injective} by Bishop \cite{dB85}*{p. 63}.} $f:[0,1) \to (0,1)$.
\end{Pro}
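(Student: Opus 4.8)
The plan is to show both directions by carefully analysing the obvious classical bijection between $[0,1)$ and $(0,1)$. Classically one fixes a countable subset — say the sequence $t_k = 1/2^{k}$ for $k \geqslant 1$ (together with $t_0 = 0$) — and ``shifts'' along it: map $0 \mapsto 1/2$, map $1/2^k \mapsto 1/2^{k+1}$, and leave every other point fixed. This is a bijection $[0,1) \to (0,1)$ classically, and it is strongly extensional where it is defined pointwise. The key point is that to \emph{apply} this rule to a given $x \in [0,1)$ one must decide, for each $k$, whether $x = t_k$ or $x \neq t_k$; and by Proposition~\ref{Pro:LPO<->ratveeirrat} (third bullet) this dichotomy for the sequence $(t_k)$ at an arbitrary real is exactly \LPO. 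So \LPO clearly suffices to build $f$: decide for the input whether it equals some $t_k$, and if so which one (this is a further \LPO-style search, legitimate once we know such a $k$ exists), then apply the shift; the resulting $f$ is readily checked to be a strongly extensional bijection with inverse the reverse shift.

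For the converse — the interesting direction — suppose we are handed a strongly extensional bijection $f:[0,1) \to (0,1)$. Given a binary sequence $(a_n)_{n \geqslant 1}$, which we may assume has at most one $1$, I would form the real $a = \sum_{n \geqslant 1} a_n/2^n \in [0,1)$ built from it, and also consider the point $0 \in [0,1)$. The idea is to use $f$ to detect whether $a = 0$ or $a > 0$. Apply $f$ to both $0$ and $a$; since $f(0) \in (0,1)$ is a genuine positive real we can compare it with, say, $f(a)$ and with the $t_k$'s. The crux is this: strong extensionality tells us $a \neq 0 \implies f(a) \neq f(0)$, while the surjectivity/injectivity of $f$ should let us pin down $f(0)$ well enough (e.g.\ that it is apart from all but one point of some fixed located sequence in the range) to convert ``$f(a)$ is apart from $f(0)$'' into a decidable statement about the value of $f(a)$, which we then transfer back. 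If $f(a)$ lands (close to) $f(0)$ we conclude $\lnot(a \neq 0)$, hence $a_n = 0$ for all $n$ (using that the sequence has at most one $1$ and Markov-type reasoning is avoided by the at-most-one-$1$ normalisation); otherwise $a \neq 0$, so some $a_n = 1$ and a finite search produces it. This yields \LPO.

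The main obstacle is making the converse genuinely constructive: a strongly extensional bijection gives us the \emph{one-directional} implication $a \neq 0 \Rightarrow f(a) \neq f(0)$, but deciding the disjunction ``$f(a) = f(0)$ or $f(a) \neq f(0)$'' is not automatic and is in fact the very thing we are trying to extract. The trick will be to exploit the \emph{surjectivity} of $f$ more aggressively — for instance, each of the two distinct range-points $f(0)$ and any second fixed value has a unique preimage, and one should arrange the test real $a$ (or a companion real) so that $f$ maps a whole located interval around $a$ into a region where the comparison with $f(0)$ becomes decidable, leveraging that $f^{-1}$ of a point is a single point while $f^{-1}$ of an interval is inhabited. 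I expect the clean route is to run essentially the same construction as in the proof of Proposition~\ref{Pro:LPO<->ratveeirrat}: encode $(a_n)$ into a real whose image under $f$ is forced to be a specific rational precisely when all $a_n = 0$, so that algebraicity/rationality-style case analysis on $f(a)$ — now legitimate because we are comparing with a \emph{fixed} rational, not performing an arbitrary real equality test — collapses to \LPO.
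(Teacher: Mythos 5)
The forward direction of your proposal matches the paper's: both use the classical shift construction (yours on $1/2^k$, the paper's on $1/n$) and invoke \LPO to decide whether a given $x$ equals any of the shifted points. No issue there.

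The converse is where your proposal has a genuine gap — one you yourself flag but do not close. You try to run a standard Brouwerian counterexample: encode a binary sequence into $a = \sum a_n/2^n$, apply $f$ to both $a$ and $0$, and decide whether $f(a)=f(0)$ or $f(a)\neq f(0)$. But strong extensionality only gives $a \neq 0 \Rightarrow f(a) \neq f(0)$, and the decision $f(a)=f(0) \ \lor\ f(a)\neq f(0)$ you need is literally \LPO$_\RR$ for the number $f(a)-f(0)$ — it is exactly the thing you are trying to derive, so the argument is circular. Your attempted fix (arrange $a$ so that $f(a)$ is ``forced to be a specific rational'' when all $a_n=0$) cannot work: $f$ is a fixed but arbitrary strongly extensional bijection, and you have no control over the value $f(a)$; there is no reason $f(a)$ should be rational or otherwise special.

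The paper's proof takes a different route and never feeds in an arbitrary binary sequence. It uses surjectivity to produce $a,b$ with $0 < f(a) < f(0) < f(b) < 1$, uses strong extensionality and injectivity (so that $f(x) < f(0)$ or $f(x) > f(0)$ for $x \neq 0$) to run an interval-halving construction, producing nested $[a_n,b_n]$ with $f(a_n) < f(0) < f(b_n)$ and $|a_n - b_n| < 2^{-n}$; both endpoints converge to a single $z > 0$. By injectivity $f(z) \neq f(0)$, say $f(z) - f(0) = \varepsilon > 0$. One then computes $|f(a_n) - f(z)| = (f(0) - f(a_n)) + \varepsilon > \varepsilon$ for all $n$. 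Now Ishihara's tricks (the lemma that for a strongly extensional map on a complete space and a sequence $a_n \to z$, either $|f(a_n) - f(z)| < \varepsilon$ infinitely often or \LPO holds) kick in: the first alternative is ruled out, so \LPO holds outright. The crucial ingredient you are missing is this use of Ishihara's tricks, which is what converts ``$f$ is strongly extensional on a complete space'' into an omniscience principle without needing to decide any real equality up front.
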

\begin{proof}
It is easy to see that with the help of \LPO
\[ f(x) =\begin{cases} \frac{1}{2} & \text{ if } x = 0 \\
\frac{1}{n+1} & \text{ if } x = \frac{1}n \\
x & \text{ else}
\end{cases}
\]
is a well-defined, strongly extensional bijection from $[0,1) \to (0,1)$. 

Conversely assume that such a function $f$ exist. Then $0<f(0) <1$. Since $f$ is surjective there exist $a,b$ such that $0<f(a)<f(0)<f(b)<1$. Since $f$ is strongly extensional either $0<a<b$ or $0<b <a$. Without loss of generality assume the first. Setting $a_{1}=a$ and $b_{1} = b$ and using the usual interval-halving technique (notice that since $f$ is injective we have $f(x)<f(0) \lor f(x)>f(0)$ for  $x \neq 0$ ), we  construct sequences $(a_n)_{n \geqslant 1}$ and $(b_n)_{n \geqslant 1}$ such that
\begin{itemize}
\item $ 0< a_n \leqslant a_{n+1}<b_{n+1} \leqslant b_n$
\item $f(a_n) < f(0) < f(b_n)$
\item $\abs*{a_n - b_n} < \left(\frac{1}{2}\right)^n$
\end{itemize}
Both sequences converge to the same limit $z > 0$. Now, by injectivity, $f(z) \neq f(0)$, so either $f(z) > f(0)$ or $f(z) <  f(0)$. Without loss of generality assume the first alternative holds and let $\varepsilon = f(z) - f(0) > 0$. Using Ishihara's tricks (see \cite{hD12b} or the original \cite{hI91}) either $\abs*{f(a_n) - f(z)} < \varepsilon$ infinitely often or \LPO holds. The first alternative is ruled out, since
\[ \abs*{f(a_n) - f(z)} =  f(0) -f(a_n) + \varepsilon  > \varepsilon \ , \] 
 and hence \LPO holds.
\end{proof}

\section{\texorpdfstring{\WLPO}{WLPO}}
Slightly weaker than \LPO is the \emph{weak limited principle of omniscience}.
\begin{principle}[WLPO]{\WLPO} \label{PR:WLPO}
For every binary sequence $(a_n)_{n \geqslant 1}$ we can decide whether \[ \fa{n \in \NN}{a_n = 0}  \lor \lnot \fa{n \in \NN}{a_n = 0}  . \]
\end{principle}
It was also called $\forall$-$\mathrm{PEM}$ in \cite{Troelstra1988a}. \index{$\forall$-$\mathrm{PEM}$|see {\WLPO}}
\WLPO has  fewer equivalences than \LPO or \LLPO. The following are taken straight from \cite{hI06a}.  
\begin{Pro} \label{Pro:WLPO-Equiv}
The following are equivalent to \WLPO:
\begin{enumerate}
\item For all $x,y \in \RR$ we have $x \geqslant y$ or $\lnot (x \geqslant y) $.
\item For all $x,y \in \RR$ we have $x = y$ or $\lnot (x = y) $.
\end{enumerate}
\end{Pro}

\WLPO is also intrinsically linked with the existence of discontinuous \index{discontinuous} functions.\footnote{We call a function \(f:X \to Y \) between metric spaces \define{discontinuous}, if there exists \(x_n \to x\) and \(\varepsilon >0\) such that \(\rho(f(x_n),f(x)) > \varepsilon \) for all \( n \).} This is interesting from a purely logical form perspective, since it is one of the few places where a universal statement is equivalent to an existential one.
\begin{Pro*}[\textbf{\theThm{}½}] \label{Pro:WLPO-Equiv-disc}
The following are equivalent to \WLPO:
\begin{enumerate}
\item \label{WLPO_equiv_disc_CS} The existence of a discontinuous function from $\CS \to \menge{0,1}$.
\item \label{WLPO_equiv_disc_BS} The existence of a discontinuous function from $\BS \to \NN$.
\item \label{WLPO_equiv_disc_01} The existence of a discontinuous function $[0,1] \to \RR$. 
\end{enumerate}
\end{Pro*}
\begin{proof}
The proofs are well known or straightforward, for example the proof of the equivalence of \WLPO to \ref{WLPO_equiv_disc_BS} can be found in  \cite{mA02}*{Section 3.1} and the  proof of the equivalence of \WLPO to \ref{WLPO_equiv_disc_01} is spelled out in \cite{hD17c}. Interestingly enough the specific space hardly matters at all, as the following lemma, which also subsumes all of the cases above, shows.
\end{proof}
\begin{Lem*} $ $
\begin{enumerate}
  \item  If there exists a discontinuous function $f:X \to Y$, where $X$ is complete, then \WLPO holds.
  \item If $X$ has an accumulation point and $Y$ has two distinct points, then \WLPO implies that there is a discontinuous function $f:X \to Y$.
\end{enumerate}
\end{Lem*}
\begin{proof}
\begin{enumerate}
  \item Let $X$ be complete and \(f:X \to Y \) discontinuous. So there exists \(x_n \to x\) and \(\varepsilon >0\) such that \(\rho(f(x_n),f(x)) > \varepsilon \) for all \( n \). Now let $(a_n)_{n \geqslant 1}$ be a binary sequence that is, without loss of generality, increasing. Consider the sequence $(z_n)_{n \geqslant 1}$ defined by
\begin{equation*}
	z_n = \begin{cases}
		x & \text{if } a_n = 0 \\
		x_m & \text{if }  a_{n} = 1 \text{ and } a_{m}=1- a_{m+1} \ .
	\end{cases}
\end{equation*}
In the notation of \cite{hD17} this is just the sequence $(a \circledast x)$, which is therefore Cauchy. (This can also, straightforwardly, be established by a direct argument.) It therefore converges to a point $z \in X$.\footnote{This construction shows that this proposition could be generalised to the class of spaces that are \emph{complete enough}, as introduced in \cite{hD17}.} Notice that
\begin{align}
\label{Eqn:WLPOdisc1} 	\fa{n \in \NN}{a_n =0} & \implies z = x \\
\label{Eqn:WLPOdisc2}	\ex{n \in \NN}{a_n =1} & \implies z = a_m \ ,
\end{align}
where $m$ is chosen minimal, as above. Now either $\abs{f(z) - f(x)} > 0$ or $\abs{f(z) - f(x)} < \varepsilon$. In the first case we have $\lnot(z = x)$, which implies that $\lnot \fa{n \in \NN}{a_n =0}$ by \eqref{Eqn:WLPOdisc1}. In the second case we have $\lnot \ex{n \in \NN}{a_n =1}$ from \eqref{Eqn:WLPOdisc2}, so $\fa{n \in \NN}{a_n =0}$. Altogether \WLPO holds.
  \item Assume that $x\in X$ and $(x_n)_{n \geqslant 1}$  is a sequence of points  in $X$ all distinct from $x$ that are such that $\abs*{x_n-x} < \nicefrac{1}{n}$. Also assume that $y_1,y_2 \in Y$ are such that $y_1 \neq y_2$. Since we assume \WLPO the function $f:X \to Y$, given by 
\begin{equation*}
	f(t) = \begin{cases}
		y_1 & \text{if } t = x \\
		y_2 & \text{if }  \lnot(t - x)  \ ,
	\end{cases}
\end{equation*}
is well-defined. Since $\lnot(x_n = x)$ for all $n\in \NN$, we have $\abs*{f(x_n) -f(x)} =  \abs*{y_1 -y_2} > 0$ for all $n \in \NN$. That means $f$ is discontinuous.
\end{enumerate}
\end{proof}

The following is the analogue of Proposition \ref{Pro:LPO<->ratveeirrat}.
\begin{Pro} \WLPO is equivalent to the statement that every real number is either irrational or not. More general: if $r_n$ is a sequence of real numbers, then for all $x \in \RR$ either 
\[ \fa{n \in \NN}{\lnot (x =r_n)}  \lor \lnot \fa{n \in \NN}{\lnot (x =r_n)}  \] 
\end{Pro}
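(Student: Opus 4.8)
The plan is to prove the two implications separately: that \WLPO gives the general statement, and that the special case (``every real is irrational or not'') already gives \WLPO back. Since the general statement trivially specialises to the special one by taking $(r_n)$ to be an enumeration of $\QQ$, this closes the loop.

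For the direction from \WLPO, fix $x \in \RR$ and a sequence $(r_n)$ of reals. By the second item of Proposition~\ref{Pro:WLPO-Equiv}, \WLPO yields for each $n$ the disjunction $x = r_n \lor \lnot(x = r_n)$. Using \ACC, choose for every $n$ one of these alternatives and record the choice in a binary sequence $(c_n)_{n \geqslant 1}$, setting $c_n = 1$ in the first case and $c_n = 0$ in the second. Then $c_n = 0 \implies \lnot(x = r_n)$, and conversely $\lnot(x = r_n)$ is incompatible with the first alternative and so forces $c_n = 0$; hence $\fa{n}{c_n = 0} \iff \fa{n}{\lnot(x = r_n)}$. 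Applying \WLPO to $(c_n)$ gives $\fa{n}{c_n = 0} \lor \lnot\fa{n}{c_n = 0}$, which is exactly the required disjunction. (One could also argue directly, without citing Proposition~\ref{Pro:WLPO-Equiv}, by building a binary sequence coding the approximations of $|x - r_n|$.)

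For the converse, assume that for every $x$ we have $\fa{n}{\lnot(x = r_n)} \lor \lnot\fa{n}{\lnot(x = r_n)}$, where now $(r_n)$ is a fixed enumeration of $\QQ$. Let $(a_n)_{n \geqslant 1}$ be a binary sequence. Passing to $b_n = a_n \prod_{k < n}(1 - a_k)$, we may assume there is at most one $n$ with $b_n = 1$, and since $\fa{n}{b_n = 0} \iff \fa{n}{a_n = 0}$ it suffices to decide the \WLPO-instance for $(b_n)$. Fix an irrational real $\theta$ (say $\theta = \sqrt{2}$) and put
\[ x = \sum_{n \geqslant 1} b_n \, \frac{\theta}{2^{n}} \ . \]
As at most one summand is nonzero and the coefficients tend to $0$ with an explicit modulus, $x$ is a genuine real number; if $\fa{n}{b_n = 0}$ then $x = 0 \in \QQ$, while if $b_{n_0} = 1$ then $x = \theta/2^{n_0}$ is distinct from every rational (otherwise $\theta = 2^{n_0} q$ would be rational). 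Feeding $x$ into the hypothesis: if $\fa{n}{\lnot(x = r_n)}$ holds, then $\fa{n}{b_n = 0}$ is impossible — it would give $x = 0 = r_m$ for the index $m$ with $r_m = 0$ — so $\lnot\fa{n}{b_n = 0}$; and if $\lnot\fa{n}{\lnot(x = r_n)}$ holds, then no $n_0$ can have $b_{n_0} = 1$ — that would make $x$ distinct from all rationals, i.e.\ $\fa{n}{\lnot(x = r_n)}$ — so, $\lnot\ex{n}{b_n = 1}$ and hence $\fa{n}{b_n = 0}$. In either case we have decided $\fa{n}{b_n = 0} \lor \lnot\fa{n}{b_n = 0}$, and therefore $\fa{n}{a_n = 0} \lor \lnot\fa{n}{a_n = 0}$, so \WLPO holds.

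The one genuinely delicate point is ensuring that $x$ really is a real number (a Cauchy sequence with a modulus), which is precisely why one passes to the sequence $(b_n)$ with at most one $1$ and damps the coefficients by $2^{-n}$: for an unrestricted binary sequence the sum $\sum a_n \theta/2^{n}$ equals $\theta \cdot s$ with $s \in [0,2]$ an uncontrolled real, and there is no reason for that product to be irrational. By contrast, the fact that ``irrational'' here means only $\fa{n}{\lnot(x = r_n)}$ — negation of equality, not apartness — costs nothing: the argument never extracts a witness $n$ with $a_n = 1$, it uses only the implication ``if $\fa{n}{b_n = 0}$ then $x = r_m$'', for which the weak form of irrationality is exactly enough.
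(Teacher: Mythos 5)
Your proof is correct, and the forward direction matches the paper's: use Proposition~\ref{Pro:WLPO-Equiv} to decide $x = r_n$ or $\lnot(x=r_n)$ for each $n$, collect those decisions into a binary sequence by countable choice, and apply \WLPO to that sequence. Where you diverge from the paper is in the converse. The paper states that the proof of Proposition~\ref{Pro:LPO<->ratveeirrat} adapts, which means taking a decreasing $(a_n)$ and forming $a = \sum a_n/n!$, so that ``all ones'' yields (a shift of) $e$ and irrationality of $a$ forces $\fa{n}{a_n=1}$, while ``not irrational'' excludes it. You instead reduce to a sequence with at most one $1$ and set $x = \sum b_n \sqrt{2}/2^n$: ``all zeros'' gives $0 \in \QQ$, and a single $1$ at position $n_0$ gives $\sqrt{2}/2^{n_0}$. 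The two constructions are mirror images of each other (irrational in the ``full'' case vs.\ in the ``inhabited'' case), and both succeed; yours has the small advantage that the irrationality of $\sqrt{2}/2^{n_0}$ is a one-line algebraic fact rather than an appeal to irrationality of $e$.

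One wording nit on your closing paragraph: you attribute the need for the ``at most one $1$'' reduction to ensuring $x$ is a modulus-equipped real, but $\sum b_n \theta/2^n$ converges with an explicit modulus for \emph{any} binary $(b_n)$, since the terms are dominated by $\theta/2^n$. The damping handles convergence; the ``at most one $1$'' handles irrationality, by making the sum, when nonzero, collapse to $\theta/2^{n_0}$ — which is the point you go on to make correctly. The two concerns are independent, and attributing the reduction to the former blurs slightly what the construction is actually buying you.
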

\begin{proof}
The proof of Proposition \ref{Pro:LPO<->ratveeirrat} easily adapts to this case.
\end{proof}
The proof of the next proposition requires a lemma.
\begin{Lem}
A set $A \subset \RR$ is located if and only if it is inhabited and for all $a<b$ and $\varepsilon > 0$ we can decide
\[ \fa{x \in [a,b]}{x \notin A} \ \lor \ \ex{x \in [a-\varepsilon,b+\varepsilon]}{x \in A}  \ . \]
\end{Lem}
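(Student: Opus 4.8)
The plan is to use the standard definition that a set $A\subset\RR$ is \emph{located} precisely when $A$ is inhabited and, for every $x\in\RR$, the distance $d(x,A):=\inf\set{\abs{x-a}}{a\in A}$ exists in Bishop's sense --- that is, it is a lower bound of $\set{\abs{x-a}}{a\in A}$ and for every $\delta>0$ there is $a\in A$ with $\abs{x-a}<d(x,A)+\delta$. For the forward implication, suppose $A$ is located and fix $a<b$ and $\varepsilon>0$. I would pick $N\in\NN$ with $h:=(b-a)/N<\varepsilon$, set $x_i=a+ih$ for $i=0,\dots,N$, and note that each $d(x_i,A)$ is a genuine real number; hence by cotransitivity of $<$ (using the cut at $h<\varepsilon$) we may decide, for each of the finitely many $i$, whether $d(x_i,A)<\varepsilon$ or $d(x_i,A)>h$, and therefore decide whether the second alternative holds for \emph{all} $i$ or the first for \emph{some} $i$. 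In the first case, any $x\in[a,b]$ lies within $h/2<h$ of some grid point $x_i$, so $x\in A$ would give $d(x_i,A)\le\abs{x-x_i}<h$, a contradiction; hence $\fa{x\in[a,b]}{x\notin A}$. In the second case, locatedness supplies $a_0\in A$ with $\abs{x_i-a_0}<\varepsilon$ for the relevant $i$, and since $x_i\in[a,b]$ we get $a_0\in[a-\varepsilon,b+\varepsilon]$.

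For the converse, assume $A$ is inhabited, say $a^*\in A$, and that the stated decisions are available; fix $x\in\RR$ and construct $d(x,A)$. The key observation is that for rationals $0<p<q$, applying the hypothesis to the interval $[x-p,x+p]$ with the gap $q-p$ yields one of: (i) $\fa{y\in[x-p,x+p]}{y\notin A}$, whence every $a\in A$ satisfies $\abs{x-a}>p$ (since $\abs{x-a}\le p$ would put $a$ into the interval); or (ii) a point $y\in[x-q,x+q]$ with $y\in A$, an explicit witness for $d(x,A)\le q$. I would then run a trisection: start with $\ell_0:=0$ and a rational $u_0>\abs{x-a^*}$, keeping the invariant that $\abs{x-a}\ge\ell_n$ for all $a\in A$ and that there is $y_n\in A$ with $\abs{x-y_n}\le u_n$ (take $y_0=a^*$). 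Given $\ell_n<u_n$, apply the observation with $p=\ell_n+(u_n-\ell_n)/3$ and $q=\ell_n+\tfrac{2}{3}(u_n-\ell_n)$ (both rational, and $p>0$ as $u_n>\ell_n\ge0$): in case (i) put $(\ell_{n+1},u_{n+1})=(p,u_n)$, in case (ii) put $(\ell_{n+1},u_{n+1})=(\ell_n,q)$; either way $u_{n+1}-\ell_{n+1}=\tfrac{2}{3}(u_n-\ell_n)$ and the invariant is preserved. Then $(\ell_n)$ and $(u_n)$ are Cauchy with a common limit $D$, the invariants pass to the limit ($\abs{x-a}\ge D$ for all $a\in A$; and for each $\delta>0$, picking $n$ with $u_n<D+\delta$ gives $y_n\in A$ with $\abs{x-y_n}<D+\delta$), so $D=\inf\set{\abs{x-a}}{a\in A}=d(x,A)$ and $A$ is located.

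I expect the converse to be the actual work: the trick is to realise that the decision procedure, fed a shrinking interval centred at $x$, simultaneously produces a certified lower bound (from the first disjunct) and an explicit near-optimal member of $A$ (from the second), so that a single convergent search can carry both pieces of data to the limit and hand back exactly Bishop's infimum. The remaining points --- keeping all interval endpoints rational, and making sure the left endpoint stays strictly positive so that the hypothesis is applicable --- are routine. The forward direction is straightforward once one recalls that $d(\cdot,A)$ is a bona fide real-valued function and applies finitely many instances of cotransitivity over a sufficiently fine grid.
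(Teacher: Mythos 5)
Your proof is correct and supplies exactly what the paper elides by labelling this lemma ``Straightforward'': the forward direction via a finite grid and cotransitivity of the order on $\RR$, the converse via a nested (trisection) search whose lower half certifies a lower bound and whose upper half carries an explicit near-minimiser. One small slip: in case (i) of the converse, from $\fa{y\in[x-p,x+p]}{y\notin A}$ you only obtain $\abs{x-a}\geq p$ for $a\in A$ (since $a\notin[x-p,x+p]$ is the negation $\neg(\abs{x-a}\leq p)$, which yields $\neg(\abs{x-a}<p)$ but not the strict inequality), not $\abs{x-a}>p$ as written --- but your invariant already uses $\geq$, so nothing breaks.
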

\begin{proof}
Straightforward.
\end{proof}

\begin{Pro} \label{Pro:WLPO-equivs} The following are equivalent to \WLPO.
\begin{enumerate}

  \item  \label{Equ:WLPO2} The zero-set 
  \[Z_{f} =  \set{x \in [0,1]}{f(x) = 0}  \]
  of a point-wise continuous function $f:[0,1] \to \RR$ is located (and therefore compact since it is closed).
\item \label{Equ:WLPO3}The ``weak support'' 
  \[ S^w_f = \set{x \in [0,1]}{\lnot \left(f(x) = 0 \right)}  \]
of a point-wise continuous function $f:[0,1] \to \RR$ is located whenever it is inhabited.
\item \label{Equ:WLPO4} (Strong Intermediate Value Theorem) For any point-wise continuous function $f: [0,1] \to \RR$ such that
  $f(0) \leqslant 0$ and $f(1) \geqslant 0$  \[ \inf f^{-1}(\menge{0}) \ , \] that is the left-most root, exists.
  \end{enumerate}

\end{Pro}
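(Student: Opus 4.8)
The plan is to show that each of \ref{Equ:WLPO2}, \ref{Equ:WLPO3}, \ref{Equ:WLPO4} is equivalent to \WLPO. The backward directions (each item $\Rightarrow\WLPO$) all rest on the same device, and I am most confident about these; the three forward directions share one method. One can also note that the three statements are closely linked ($S^w_f$ is, up to double negations, the metric complement of $Z_f$, and reflecting and truncating a function around a base point turns a ``leftmost root'' problem into a ``distance to the zero set'' problem and back), so morally only one forward direction needs to be done carefully.

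\textbf{Backward direction.} Given a binary sequence $(a_n)_{n\geqslant 1}$, put $s=\sum_{n\geqslant 1}a_n2^{-n}$, a well-defined real with $s\geqslant 0$, $s=0\iff\fa{n\in\NN}{a_n=0}$, $\lnot(s=0)\iff\lnot\fa{n\in\NN}{a_n=0}$, and $s>0\iff\ex{n\in\NN}{a_n=1}$. Define a Lipschitz (hence point-wise continuous) $f:[0,1]\to\RR$ by $f(x)=-s(1-2x)$ on $[0,\tfrac12]$ and $f(x)=2x-1$ on $[\tfrac12,1]$; then $f(0)=-s\leqslant 0$, $f(1)=1\geqslant 0$, and $f^{-1}(\{0\})=[0,\tfrac12]$ when $s=0$, while $f^{-1}(\{0\})=\{\tfrac12\}$ when $s>0$. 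In particular $\tfrac12\in Z_f$ always, so $Z_f$ is inhabited. Assuming \ref{Equ:WLPO2} the real $t=d(0,Z_f)$ exists, and $t=0$ if $s=0$ while $t=\tfrac12$ if $s>0$; assuming \ref{Equ:WLPO4} the real $t=\inf f^{-1}(\{0\})$ exists, with the same two values. Apply cotransitivity of $<$ to $0<\tfrac12$ and $t$: we get $t<\tfrac12$ or $t>0$. If $t<\tfrac12$ then $\lnot(t=\tfrac12)$, hence $\lnot(s>0)$, hence $s=0$ (as $s\geqslant 0$), hence $\fa{n\in\NN}{a_n=0}$; if $t>0$ then $\lnot(t=0)$, hence $\lnot(s=0)$, hence $\lnot\fa{n\in\NN}{a_n=0}$. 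That is precisely \WLPO. For \ref{Equ:WLPO3} one runs the same argument with a variant $f$ whose zero set is a \emph{fat} interval: build $f$ so that $Z_f=[\tfrac14,\tfrac34]$ when $s=0$, while no neighbourhood of $\tfrac12$ is contained in $Z_f$ when $s>0$ (near $\tfrac12$ take $f(x)=\max(s-\abs{x-\tfrac12},0)$, patched linearly to be positive at $0$ and $1$ so that $S^w_f$ is always inhabited); then $d(\tfrac12,S^w_f)$ equals $\tfrac14$ if $s=0$ and $0$ if $s>0$, and cotransitivity finishes as before.

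\textbf{Forward direction.} \WLPO makes $f(x)=0\lor\lnot(f(x)=0)$ and $f(x)\geqslant 0\lor f(x)<0$ decidable at every real $x$. To verify the condition of the preceding Lemma for $Z_f$ at rationals $a<b$ and rational $\varepsilon>0$: fix an enumeration $(r_k)$ of $\QQ\cap[a,b]$, for each $k,m$ decide by cotransitivity whether $\abs{f(r_k)}<1/m$ or $\abs{f(r_k)}>1/2m$, and apply \WLPO (with countable choice) to the resulting binary sequences to decide, for each $m$, whether $\fa{k}{\abs{f(r_k)}>1/2m}$ or not; as these alternatives are monotone in $m$, one more application of \WLPO yields: \emph{either} $f$ is bounded away from $0$ on $\QQ\cap[a,b]$ at some scale --- and then point-wise continuity of $f$ at a hypothetical zero $x_0\in[a,b]$ produces a nearby rational of $[a,b]$ violating that bound, so $\fa{x\in[a,b]}{\lnot(f(x)=0)}$ --- \emph{or} $\fa{m}{\lnot\neg\,\ex{r\in\QQ\cap[a,b]}{\abs{f(r)}<1/m}}$. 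In the latter case one runs a nested-interval argument (using dependent choice, and \WKL, available since $\WLPO\Rightarrow\LLPO\Rightarrow\WKL$, to handle the branching when both halves might host an approximate zero) to produce a point $x^{\ast}\in[a,b]\subseteq[a-\varepsilon,b+\varepsilon]$; the invariant kept at level $n$ (``$\lnot\neg$ there is a rational in the current interval with $\abs f<1/n$'') forces $\lnot(\abs{f(x^{\ast})}>0)$ --- and hence $f(x^{\ast})=0$ with \emph{no} appeal to Markov's principle --- because $\abs{f(x^{\ast})}>0$ together with continuity would bound $f$ away from $0$ on a whole neighbourhood of $x^{\ast}$, contradicting the invariant. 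For \ref{Equ:WLPO3} one uses the analogous reasoning for $S^w_f$, and for \ref{Equ:WLPO4} one carries out the same bisection on the detachable closed set $\{x:f(x)\geqslant 0\}$, whose infimum then is the leftmost root.

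\textbf{Where the difficulty lies.} The delicate point throughout the forward direction is that we only have \emph{point-wise} continuity: a zero of $f$ need not be detectable by sampling rationals (it can sit isolated at an irrational point), and one cannot extract a convergent subsequence from a sequence of approximate zeros, Bolzano--Weierstra\ss{} being of \LPO-strength (Theorem~\ref{Thm:LPO-equivs}). I expect the real work to be in making the nested-interval construction actually land on an \emph{exact} zero --- keeping all the double negations under control and using the $\varepsilon$-slack of the Lemma --- rather than in any of the backward directions, which are essentially the short computation above.
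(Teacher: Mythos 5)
Your backward directions (each item $\Rightarrow\WLPO$) are correct and are essentially the paper's own device: build a parametrised piecewise-linear function whose zero set (respectively, distance to the weak support, leftmost root) sits at one of two locations depending on whether a non-negative parameter vanishes, then apply cotransitivity to the located distance/infimum. The only cosmetic difference is that you parametrise by the real $s=\sum a_n2^{-n}$ where the paper parametrises by an arbitrary real $a$; these are interchangeable. Also, the paper gets item~3 for free from item~1 (the infimum of a totally bounded subset of $\RR$ exists), while you re-prove it directly; either works.

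The forward direction is where you diverge from the paper, and where your sketch has genuine gaps. The paper's route is short: $\WLPO\Rightarrow\LLPO\Rightarrow\UCT$ (Proposition~\ref{Pro:LLPO_impl_UCT}), so $f$ restricted to $[a,b]$ is \emph{uniformly} continuous; then $\LLPO$ (i.e.\ \WKL, via Proposition~\ref{Pro:WKL_Min}) gives a point $z$ attaining $\min_{[a,b]}\abs{f}$; then one application of $\WLPO$ decides $\abs{f(z)}=0$ or $\lnot(\abs{f(z)}=0)$, and the Lemma immediately gives locatedness. You instead try to build locatedness by sampling rationals and bisecting, avoiding an explicit appeal to $\UCT$. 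Two steps of that plan do not go through as written. First, the claimed monotonicity in $m$ of the per-$m$ decisions fails: the per-$(k,m)$ decisions come from cotransitivity and are not coherent across $m$ (knowing the decision at $m$ was ``$>\nicefrac{1}{2m}$'' does not force the decision at $m+1$ to be ``$>\nicefrac{1}{2(m+1)}$''), so the ``one more application of \WLPO'' has no monotone binary sequence to act on. Second, the nested-interval/$\WKL$ step is not available in the form you describe: the invariant you want to carry down (a $\lnot\neg\exists$ inside a $\forall m$) is a $\Pi^0_2$-shaped condition, so it cannot serve as the membership condition of a \emph{decidable} tree, and you give no decidable surrogate for it. What you are trying to re-derive by bisection is, in effect, a special case of the uniform continuity plus minimum-attainment that the paper cites outright; making your version rigorous would amount to reproving those. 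The economical fix is to invoke $\UCT$ and minimum attainment exactly as the paper does, and then your single $\WLPO$ dichotomy on the attained minimum (respectively maximum, for the weak support) closes the argument.
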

\begin{proof}
We will show the implications in the following order: 
\begin{center}
\begin{tikzpicture}[>=stealth,shorten >=1pt, node distance=1.5cm]
\node (A)  {\WLPO};
\node[above right of=A] (B) {\ref{Equ:WLPO2}};
\node[below right of=A] (C) {\ref{Equ:WLPO3}};
\node[right of=B] (D) {\ref{Equ:WLPO4}};
\node[right = 3cm of A] (F) {\WLPO};
\draw[double,->] (A) -- (B);
\draw[double,->] (A) -- (C);
\draw[double,->] (C) -- (F);
\draw[double,->] (B) -- (D);
\draw[double,->] (D) -- (F);
\end{tikzpicture}
\end{center} 
First assume \WLPO. Note that since \WLPO implies \LLPO which in turn implies that $f$ is uniformly continuous (see Proposition \ref{Pro:LLPO_impl_UCT}). \LLPO also implies that a uniformly continuous $\abs*{f}$ attains its minimum $z$ and maximum $z^\prime$ (see section \ref{SSec:Minima}).
Now either $\abs*{f(z)} =0$ or $\lnot (\abs*{f(z)} = 0 )$. In the second case, since $z$ is minimal, there cannot be an $x \in [a,b]$ such that $\abs*{f(z)} = 0$. In particular, we can therefore decide  
\[\fa{x \in [a,b]}{ x \notin Z_f} \ \lor \ \ex{x \in [a,b]}{ x \in Z_f}  \ .\]
By the previous lemma $Z_f$ is located. 

Similarly either $\abs*{f(z^\prime)} =0$ or $\lnot (\abs*{f(z^\prime)} = 0 )$ and therefore, again by the previous Lemma $S^w_f$ is located.
Altogether we have shown that \WLPO implies \ref{Equ:WLPO2} as well as \ref{Equ:WLPO3}.

Since the infimum of a totally bounded set of real numbers exists \cite{dBlV06}*{Proposition 2.2.5} we have $\ref{Equ:WLPO2} \implies \ref{Equ:WLPO4}$.

To see that $\ref{Equ:WLPO4} \implies \WLPO$ holds, consider an arbitrary real number $a$. Construct the piecewise linear function $f$ by 
  \[f(x) = \begin{cases} 
     (3-3\abs{a}) x - 1& x \in \bracks*{0,\frac{1}{3}} \ , \\
    -\abs{a} & x \in \bracks*{\frac{1}{3},\frac{2}{3}} \ , \\
     3(1+\abs{a}) x -2 - 3\abs{a} & x \in \bracks*{\frac{2}{3},1} \ .
   \end{cases} \]
 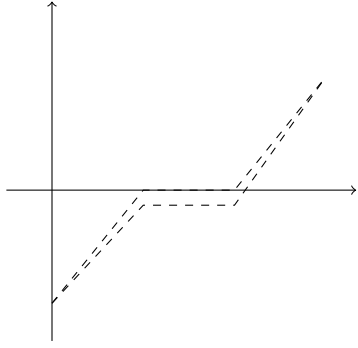
\begin{figure}[ht]
\centering
\begin{tikzpicture}[y=-1cm]
\draw[->] (3,5.5) -- (7.6,5.5);
\draw[->] (3.6,7.5) -- (3.6,3);
\draw[dashed] (3.6,7) -- (4.8,5.5) -- (6,5.5) -- (7.2,4);
\draw[dashed] (3.6,7) -- (4.8,5.7) -- (6,5.7) -- (7.2,4);
\end{tikzpicture}
\caption{We think of $a \geqslant 0$ as being very small. The function $f$ is a variation on the standard example that shows that the intermediate value theorem implies \LLPO.}
\end{figure}   
   
  By our assumption there exists $x$ such that $f(x) =0$ and for all $y $ with $y<x$   we have $\neg(f(y)=0)$. Now either $x < \frac{2}{3}$ or $x > \frac{1}{3}$. In the first case $a$ must be $0$ as the assumption $\abs{a} > 0$ implies that $f(x) < -\abs{a} < 0$, which is a contradiction to $f(x) = 0$. In the second case assume that $a = 0$, but then $f(\frac{1}{3}) = 0$ which contradicts $x > \frac{1}{3}$. Thus we have decided whether $a =0$ or $\neg(a =0)$.
  
Finally, to see that $\ref{Equ:WLPO3} \implies \WLPO$ we can use the same construction as before. Since $S^w_f$ is located the distance $r=d(1,S^w_f)$ exists. If $r < \frac{1}{3}$ we cannot have $a=0$, which would imply that $r=\frac{2}{3}$. If $r > 0$, then $a=0$, since $\abs{a} > 0$ implies $r=0$.
 \end{proof}
Similarly, we can get the following equivalence.
\begin{Pro}
\LPO is equivalent to the statement that if $f:[0,1] \to \RR$ is a point-wise continuous function, then its support
\[ S_{f} = \set{x \in [0,1]}{f(x) \neq 0} \]
 is located, provided it is inhabited.
\end{Pro}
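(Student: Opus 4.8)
The plan is to prove the two implications separately; the statement is the ``apartness analogue'' of item~(\ref{Equ:WLPO3}) of Proposition~\ref{Pro:WLPO-equivs}, and the reason full \LPO (and not just \WLPO) is involved is that membership of a point in the genuine support $S_f$ is a \emph{positive} apartness assertion rather than a double negation. Both directions would run through the Lemma preceding Proposition~\ref{Pro:WLPO-equivs} --- recall that $A\subseteq\RR$ is located precisely when it is inhabited and, for all $a<b$ and $\varepsilon>0$, one can decide $\fa{x\in[a,b]}{x\notin A}\lor\ex{x\in[a-\varepsilon,b+\varepsilon]}{x\in A}$ --- together with the characterisation of \LPO by trichotomy of the reals (item~(\ref{LPOEquiv2}) of Proposition~\ref{Thm:LPO-equivs}). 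Since $S_f\subseteq[0,1]$, in applying the Lemma it is enough to treat $0\leqslant a<b\leqslant 1$.

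For the implication from \LPO to the support statement I would assume \LPO, let $f$ be point-wise continuous with $S_f$ inhabited, and fix $a<b$ in $[0,1]$ and $\varepsilon>0$. Enumerate $\QQ\cap[a,b]$ as $(q_i)_{i\geqslant 1}$. By trichotomy we can, for each $i$, decide whether $f(q_i)=0$ or $f(q_i)\neq 0$, and hence (using countable choice as usual) form a binary sequence $(b_i)_{i\geqslant 1}$ with $b_i=1\iff f(q_i)\neq 0$. A further application of \LPO to $(b_i)$ then gives: either $b_i=0$ for all $i$, whence $f$ vanishes on a dense subset of $[a,b]$ and so, by point-wise continuity, $f\equiv 0$ on $[a,b]$ and $\fa{x\in[a,b]}{x\notin S_f}$; or $b_i=1$ for some $i$, whence $q_i\in S_f\cap[a,b]$. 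In either case the disjunction of the Lemma is decided, so the inhabited set $S_f$ is located.

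For the converse it suffices to decide, for an arbitrary real $a\geqslant 0$, whether $a=0$ or $a>0$. Reusing the construction from the proof of Proposition~\ref{Pro:WLPO-equivs} (with $a$ in place of $\abs{a}$), let $f\colon[0,1]\to\RR$ be the piecewise-linear function equal to $(3-3a)x-1$ on $[0,\tfrac{1}{3}]$, to $-a$ on $[\tfrac{1}{3},\tfrac{2}{3}]$, and to $3(1+a)x-2-3a$ on $[\tfrac{2}{3},1]$. It is point-wise continuous and $f(0)=-1$, so $0\in S_f$; hence, by hypothesis, $S_f$ is located and $r:=d(\tfrac{1}{2},S_f)$ exists. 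By cotransitivity of $<$, either $r<\tfrac{1}{6}$ or $r>0$. If $r>0$: were $a>0$ we would have $f(\tfrac{1}{2})=-a\neq 0$, hence $\tfrac{1}{2}\in S_f$ and $r=0$; so $\lnot(a>0)$, which together with $a\geqslant 0$ yields $a=0$. If $r<\tfrac{1}{6}$: since $d(\tfrac{1}{2},S_f)$ is the infimum of $\set{\abs{\tfrac{1}{2}-z}}{z\in S_f}$, there is $y\in S_f$ with $\abs{\tfrac{1}{2}-y}<\tfrac{1}{6}$, i.e.\ $y\in(\tfrac{1}{3},\tfrac{2}{3})$; there $f(y)=-a$, and $y\in S_f$ means $f(y)\neq 0$, so $a\neq 0$ and therefore $a>0$. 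Either way $a=0\lor a>0$ has been decided, which is \LPO.

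The crux --- and the step I expect to need the most care --- is precisely why the converse yields \LPO rather than merely \WLPO. In the \WLPO argument for the weak support the corresponding branch produces only $\lnot(f(y)=0)$, i.e.\ here $\lnot(a=0)$, and turning that into $a>0$ would require \MP; but because ``$d(\tfrac{1}{2},S_f)<\tfrac{1}{6}$'' genuinely exhibits a point $y$ \emph{together with} a witness that $f(y)\neq 0$, we obtain the positive conclusion $a>0$ outright. Dually, in the forward direction the decision ``$f(q_i)=0$ or $f(q_i)\neq 0$'' must use the full trichotomy and not merely $f(q_i)=0\lor\lnot(f(q_i)=0)$, for otherwise the second alternative would not place $q_i$ in $S_f$. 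Everything else is routine, the only mild nuisance being the bookkeeping for intervals that may stray outside $[0,1]$ when the Lemma is invoked, handled exactly as in the cited proofs.
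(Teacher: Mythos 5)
Your proposal is correct, but it takes a genuinely different route from the paper's at both ends, and the comparison is instructive.

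For the forward direction, the paper factors through already-established machinery: $\LPO\iff\WLPO\land\MP$, then Proposition~\ref{Pro:WLPO-equivs}.\ref{Equ:WLPO3} gives that the weak support $S^w_f$ is located under \WLPO, and \MP supplies $S_f=S^w_f$, so $S_f$ is located. You instead give a self-contained argument: enumerate $\QQ\cap[a,b]$, use trichotomy (plus countable choice) to build the indicator sequence of $\set{q_i}{f(q_i)\neq 0}$, and apply \LPO once more to decide between ``$f$ vanishes on a dense subset, hence on $[a,b]$'' and ``some $q_i\in S_f$''. Both are fine; the paper's is shorter because it leans on prior results, yours is more elementary and makes the role of \LPO visible at each step.

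For the converse, the paper reduces to showing that the statement implies \MP (using a different function, constant $a$ on $[0,\tfrac12]$ and linear up to $1$, locating $S_f$ relative to the point $0$), and then implicitly combines \MP with the weak-support $\Rightarrow\WLPO$ result to recover \LPO. You instead reuse the piecewise-linear function from the \WLPO argument, restrict to $a\geqslant 0$, and decide $a=0\lor a>0$ outright, which is \LPO via the usual $\abs{x}$ trick. This avoids the detour through the $\WLPO\land\MP$ decomposition and delivers \LPO in a single case split, which is a genuine simplification. The point you emphasise --- that the locatedness of the genuine support $S_f$, as opposed to $S^w_f$, hands you a witness $y$ together with the \emph{positive} assertion $f(y)\neq 0$ rather than merely $\lnot(f(y)=0)$ --- is exactly the correct explanation of why the classification jumps from \WLPO to \LPO, and is the same observation driving the paper's reduction to \MP, just packaged differently.

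One small omission worth flagging: the paper's converse argument only explicitly proves \MP and leaves it to the reader to see why that, together with Proposition~\ref{Pro:WLPO-equivs}, yields \LPO; your direct derivation of $a=0\lor a>0$ sidesteps that gap and is arguably the cleaner write-up.
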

\begin{proof}
One direction is clear by Proposition \ref{Pro:WLPO-equivs}, the fact that $\LPO \iff \WLPO \land \MP$ (see Section \ref{Sec:MP} for details on \MP), and the fact that under the assumption of \MP
\[ S_{f}  = \set{x \in [0,1]}{\neg(f(x) = 0)} \ .\]
For the other direction it is enough to show that the statement implies \MP . To this end let $a \in \RR$ be a real number such that $\neg(a = 0)$. Now consider $f:[0,1] \to \RR$ defined by 
\[
f(x) = \begin{cases}
a & \text{for } x \in \bracks*{0,\frac{1}{2}} \\
(1-a)2x -1 +2a & \text{for } x \in \bracks*{\frac{1}{2},1} 
\end{cases}
\]
Then $f(1)=1$, so $1 \in S_{f}$.  If the support is located that means that $\delta=d(0,S_{f})$ exists. Now either $\delta > 0$ or $\delta < \frac{1}{2}$. In the first case we must have $f(0) = 0$, since $f(0) \neq 0$ implies $\delta = 0$; but this cannot happen since $f(0)=a$. So we must have $\delta < \frac{1}{2}$. That means, by the definition of the infimum, that there exists $x \in [0,\frac{1}{2}]$ with $x \in S_{f}$; and hence $a = f(x) \neq 0$.
\end{proof} 

\begin{Pro*}[\textbf{\theThm{}½}] \label{Pro:WLPO_equiv_intersection_comapct_and_located}
The following are equivalent to \WLPO.
\begin{enumerate}
  \item \label{WLPO_comp_loc1} The (inhabited) intersection of a compact set $C \subset [0,1]$ with an interval is compact.
  \item \label{WLPO_comp_loc2} The intersection of a compact and a closed, located set is compact.\footnote{Notice that the \emph{union} of two located, inhabited sets are easily seen to be located: \[ \rho(x,A \cup B) = \inf_{y \in A \cup B}\menge{\rho(x,y)} = \min \menge{\rho(x,A), \rho(x,B)} \ . \]}
\end{enumerate}
\end{Pro*}
\begin{proof}
The Brouwerian counterexample part (\ref{WLPO_comp_loc1} implies \WLPO) is easy. Let $a \geqslant 0$. We may assume that $a \leqslant \frac{1}{2}$. Both $A = [-1,0]$ and $B = [a,1]$ are compact and therefore also closed and located. If $A \cap B$ is empty $\lnot (a =0)$, and if there is $x \in A \cap B$ then $a = 0$. We can, of course, also make sure that the intersection is always inhabited, by taking $A^\prime = [-1,0] \cup [1,2]$ and $B^\prime = [a,2]$.

Obviously \ref{WLPO_comp_loc2} implies \ref{WLPO_comp_loc1}. To prove the missing implication (\WLPO implies \ref{WLPO_comp_loc2}) let $A$ be a compact and $B$ be a closed and located subset of a metric space $(X,\rho)$. If we can show that $A \cap B$ is totally bounded, we are done. To this end let $\varepsilon > 0$ be arbitrary, and $x_1, \dots, x_n$ be a $\nicefrac{\varepsilon}{2}$-approximation to $A$. For every $i \in \menge{1, \dots,n}$ consider the function $g_i: X \to \RR^+_0$, defined by
\[ g_i(x) = \max \menge{\rho(x,x_i) - \nicefrac{\varepsilon}{2}, 0   } ,\] 
so that $g_i(x) = 0$ if and only if $\rho(x,x_i) \leqslant \nicefrac{\varepsilon}{2}$. Now consider the function $f: A \to \RR$ defined by $f(x) = \rho(x,B)$. This is well-defined, since $B$ is located.
Finally let $h_i = f + g_i$. Notice that, since $f$ and $g_i$ are both non-negative $h_i(x) = 0$ if and only if both $f(x) = 0$ and $g_i(x) = 0$. Since $h_i$ is continuous, by \WLPO (via \LLPO/\WKL) it follows from Proposition \ref{Pro:WKL_Min}.\ref{Equiv:WKL0b} that it attains its minimum at some $y_i \in X$. Now, by \WLPO, either $h_i(y_i) = 0$ or $\lnot h_i(z) = 0$. In the first case $y_i \in B$ and $\rho(x_i,y_i) \leqslant \nicefrac{\varepsilon}{2}$. In the second case there cannot be $z$ such that $z \in A \cap B$, such that $\rho(x_i,z) \leqslant \nicefrac{\varepsilon}{2}$.

We claim that $F = \set{y_i}{h_i(y_i) = 0}$ is an $\varepsilon$-approximation to $A \cap B$. For let $z \in A \cap B$, and let $i$ be such that $\rho(z,x_i) < \nicefrac{\varepsilon}{2}$. For that $i$ we have $g_i(z) = 0$. Since $z \in B$ we also have $f(z) = 0$ and  hence $h_i(z) = 0$. That means that $h_i(y_i) = 0$, by virtue of $y_i$ being a minimum. But then also $g_i(y_i) = 0$, which means that $\rho(x_i,y_i) \leqslant \nicefrac{\varepsilon}{2}$. Together $\rho(z,y_i) < \varepsilon$, and $y_i \in F$, which proves our claim.
\end{proof}

A result by Richman from \cite{fR02} characterises a well know classical result about functions of bounded variation.
\begin{Pro} If every uniformly continuous function on $[0, 1]$ of bounded variation\footnote{Here a function $f:[a,b] \to \RR$ has bounded variation if the set 
\[ S = \set{\sum_{i=1}^{n-1}  \abs*{f(x_i) - f(x_{i+1})}}{a = x_1 \leqslant x_2 \leqslant \dots \leqslant x_n = b}\] is bounded. A result by Bridges \cite{dB00c} shows that if one assumes that the supremum (in Bishop's sense) of the set $S$ exists   and that the function $f$ is strongly extensional (no continuity assumption), then it can be written as the sum of two increasing functions. This result was improved upon by Richman, who showed that one can work without the assumption of strong extensionality \cite{fR02}.
} is the difference of two increasing functions, then \WLPO holds. The converse holds in the presence of countable choice.
\end{Pro}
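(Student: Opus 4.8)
The plan is to prove the two implications separately; the forward direction carries all the content, while the converse is a constructive (choice-dependent) rendering of the classical Jordan decomposition.

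\emph{Forward direction.} Let $(b_n)_{n\geqslant 1}$ be a binary sequence. Replacing $b_n$ by the running maximum $a_n=\max\{b_1,\dots,b_n\}$ produces a non-decreasing binary sequence with $\fa{n}{b_n=0}\iff\fa{n}{a_n=0}$, so it suffices to decide, for such an $(a_n)$ (which is $\zero$ or else eventually $1$), whether $\fa{n}{a_n=0}$ or $\lnot\fa{n}{a_n=0}$; this is precisely \WLPO. I would manufacture from $(a_n)$ a uniformly continuous $f\co[0,1]\to\RR$ by installing, on the interval $I_n=[1-2^{-(n-1)},\,1-2^{-n}]$ and activated exactly when $a_n=1$, a rescaled up-and-down excursion of height of order $2^{-n}$, so that the slopes stay uniformly bounded and $f$ is Lipschitz (extending continuously by $f(1)=0$). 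Then every partial variation of $f$ is bounded by a fixed constant --- so $f$ has bounded variation in the sense of the footnote --- irrespective of where the switch in $(a_n)$ falls, and $f\equiv 0$ exactly when $\fa{n}{a_n=0}$. Applying the hypothesis, write $f=g-h$ with $g,h$ increasing; the excursion pattern then has to be arranged so that \emph{any} such pair $g,h$ delivers exactly the verdict on the dichotomy for $(a_n)$.

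\emph{The main obstacle.} This last step is the crux, and it is delicate for two reasons. A decomposition is far from unique, since $f=(g+u)-(h+u)$ for every increasing $u$, so the only data one can hope to extract from an arbitrary $g,h$ is what is pinned down by the minimal decomposition $f=V^{+}(f)-V^{-}(f)$ into positive and negative variation. Moreover, if $f$ were simply a sum $\sum_n a_n\varphi_n$ of disjointly supported bumps, then $g=\sum_n a_n V^{+}(\varphi_n)$, $h=\sum_n a_n V^{-}(\varphi_n)$ would be computable outright from $(a_n)$ and the hypothesis would tell us nothing --- so the excursions must be interlocked with the sequence in a genuinely non-additive way, making the minimal decomposition not computable from $(a_n)$. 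One then has to squeeze out of the (possibly padded) $g,h$ handed over by the hypothesis either a proof that $\fa{n}{a_n=0}$ or a proof that $\lnot\fa{n}{a_n=0}$; getting this bookkeeping to land at exactly \WLPO-strength, rather than the stronger \LPO, is where the real work lies.

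\emph{Converse.} Assume \WLPO and \ACC, and let $f\co[0,1]\to\RR$ be uniformly continuous with all partial variations bounded by some $B$. I would mimic the classical Jordan decomposition: take $g$ to be the positive variation function $x\mapsto V^{+}_{[0,x]}(f)$ and put $h=g-f$. Monotonicity of positive variation in the upper endpoint makes $g$ increasing, and $h(y)-h(x)=V^{+}_{[x,y]}(f)-\bigl(f(y)-f(x)\bigr)\geqslant 0$ makes $h$ increasing, with $f=g-h$; the only non-constructive point is the existence of the suprema defining $V^{+}$. Here \WLPO enters: by Proposition~\ref{Pro:WLPO-equivs} it makes zero sets of uniformly continuous functions located and supplies the strong intermediate value theorem, and these let one locate, at each dyadic level and each $x$, the boundedly many up-crossings of that level by $f$ on $[0,x]$, hence assemble $V^{+}_{[0,x]}(f)$ as a genuine real (a Banach indicatrix argument), with \ACC supplying the required choices uniformly in $x$ and in the precision parameter.
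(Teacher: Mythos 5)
The paper states this proposition by citation to Richman~\cite{fR02} and does not prove it, so there is no argument in the paper to compare against; judged on its own your sketch has real gaps in both halves. In the forward direction you have correctly diagnosed the central difficulty --- $f=(g+u)-(h+u)$ for any increasing $u$, so one application of the hypothesis to one function leaks, a priori, nothing beyond what is already computable from $f$ --- but you stop at exactly the point where a proof would have to begin, and you say so (``where the real work lies''). Moreover the particular excursion pattern you suggest does not obviously help: if the heights on $I_n$ decay geometrically, so does $V^{+}(f)$, and the single real $g(1)-g(0)\geqslant V^{+}(f)$ handed back by the hypothesis is no easier to interrogate; if the heights are bounded below, then a modulus of uniform continuity for $f$ is not available independently of the unknown switch position, so $f$ is not demonstrably uniformly continuous. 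This half records the problem rather than solving it.

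The converse is aimed at the right object ($g=V^{+}_{[0,\cdot]}(f)$, $h=g-f$), but the Banach-indicatrix route is doubtful: locatedness of $Z_{f-q}$ and the strong IVT do not yield a decidable \emph{count} of up-crossings at a fixed dyadic level $q$, since a BV function may meet a single level on a set with infinitely many components and tangentially, and the passage from crossing counts at dyadic levels to a real-valued function of $x$ is not sketched. A cleaner route, which also makes visible why \WLPO rather than \LPO is what is needed (convergence of bounded increasing sequences is \LPO by Proposition~\ref{Thm:LPO-equivs}.\ref{LPOEquiv4}, so one cannot simply pass to the limit of dyadic-mesh variations), is to build $V^{+}_{[0,x]}(f)$ directly as a located Dedekind cut. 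Let $g_n(x)$ be the positive variation of $f$ over the dyadic mesh $D_n$ of $[0,x]$ together with $x$, so that $0\leqslant g_n(x)\leqslant B$ and $g_n$ is non-decreasing in $n$, and set
\[ L_x=\set{p\in\QQ}{\ex{p''>p}{\lnot\fa{n}{g_n(x)\leqslant p''}}}, \qquad U_x=\set{q\in\QQ}{\ex{q'<q}{\fa{n}{g_n(x)<q'}}}. \]
Locatedness of $(L_x,U_x)$ at a pair $p<p'$ is one application of \WLPO to a flagging sequence built with \ACC from the reals $g_n(x)$: the branch $\fa{n}{\mu_n=0}$ puts $p'$ in $U_x$, and the branch $\lnot\fa{n}{\mu_n=0}$ puts $p$ in $L_x$ --- crucially, one never needs to exhibit an $n$ with $g_n(x)>p$, which is exactly why \WLPO suffices and Markov's principle is not used. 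That $g$ and $g-f$ are non-decreasing then follows from the inequality $g_m(y)-g_m(x)\geqslant\bigl(f(y)-f(x)\bigr)^{+}$ for every $m$ (positive variation over abutting intervals is additive and increases under refinement) together with $g(x)$ being $\geqslant$ every $g_n(x)$ and $\leqslant$ every common upper bound of the $g_n(x)$.
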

 
As the last result of this Section we have the following, which is the obvious analogue of Proposition \ref{Pro:LPO_equiv_extbij}. What is interesting though is that the function $f$ is injective in the sense that $f(x) \neq f(y) \rightarrow x \neq y$, and one would not expect to see that $x \neq y$ without the help of the \MP-part of \LPO.
\begin{Pro} 
\WLPO is equivalent to the existence of a  bijection\footnote{See the footnote in Proposition \ref{Pro:LPO_equiv_extbij} for the exact definition of bijection.} $f:[0,1) \to (0,1)$.
\end{Pro}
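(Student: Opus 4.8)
The plan is to prove both directions by closely mirroring Proposition~\ref{Pro:LPO_equiv_extbij}, keeping track of where the ``$\MP$-part'' of $\LPO$ was tacitly used there and replacing it by the weaker decisions that $\WLPO$ supplies.

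For ``$\WLPO \Rightarrow$ the bijection exists'' I would keep the same underlying Hilbert-hotel map, sending $0\mapsto\tfrac12$, $\tfrac1n\mapsto\tfrac1{n+1}$ and being the identity elsewhere, but define it so as to avoid the case split ``$x=0$, or $x=\tfrac1n$, or neither'' (which $\WLPO$ cannot perform, and which in the ``neither'' branch would still not yield $x>0$). Concretely: for $x\in[0,1)$ let $a_0(x)=1\iff x=0$ and $a_n(x)=1\iff x=\tfrac1n$; this binary sequence is well defined because $\WLPO$ gives ``$x=q\ \lor\ \lnot(x=q)$'' for every rational $q$ (Proposition~\ref{Pro:WLPO-Equiv}). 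Put $f(x)=x+\sum_{n\geq 0}a_n(x)\,\delta_n$ with $\delta_0=\tfrac12$ and $\delta_n=\tfrac1{n+1}-\tfrac1n$ for $n\geq 1$. At most one $a_n(x)$ is $1$ and $|\delta_n|\leq n^{-2}$, so the series converges with an explicit modulus and $f$ is a genuine function on $[0,1)$. One then checks that $f$ lands in $(0,1)$ (the only delicate point is a ``generic'' $x$, where $f(x)=x$ and one uses $x\geq 0$ together with $\lnot(x=0)$), that $f$ is onto — given $y\in(0,1)$ decide by the same $\WLPO$-instance whether $y$ is one of $\tfrac12,\tfrac13,\dots$ and read off its unique preimage — and that $f$ is injective and even strongly extensional. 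This last property is the bookkeeping flagged in the remark before the proposition; it goes through because distinct special points are quantitatively separated while the displacements $\delta_n$ are correspondingly small, so apartness or equality of $f(x)$ and $f(x')$ can be traced back to the sequences $a_n(x),a_n(x')$ and then to $x,x'$ themselves.

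For the converse I would run the interval-halving argument of Proposition~\ref{Pro:LPO_equiv_extbij}: from $0<f(0)<1$ and surjectivity pick $a,b$ with $f(a)<f(0)<f(b)$, then bisect — deciding for the relevant arguments whether $f(x)<f(0)$ or $f(x)>f(0)$ — to obtain nested intervals closing in on a point $z$ with $f(a_n)<f(0)<f(b_n)$ for all $n$. The main obstacle is that the $\LPO$-proof converted an apartness of $f$-values into an apartness of arguments via strong extensionality and then used an Ishihara trick whose output is $\LPO$; here one can only lean on the weak injectivity that is automatic, so the bisection points must be arranged so that the apartnesses actually needed are available beforehand, and the statement one decides at the end is the dichotomy $\forall n\,(a_n=0)\ \lor\ \lnot\forall n\,(a_n=0)$, i.e.\ $\WLPO$ and not $\LPO$. (Alternatively one can show that such an $f$ cannot be pointwise continuous — a constructive rendering of the classical non-homeomorphism of $[0,1)$ and $(0,1)$ — so that $f$ yields a discontinuous function and $\WLPO$ follows from Proposition~\ref{Pro:WLPO-Equiv} or~\ref{Pro:WLPO-equivs}; but manufacturing a concrete discontinuity witness from an abstract such $f$ is itself the delicate step.) I expect the genuine difficulties to be the strong extensionality of the constructed $f$ in the forward direction and, in the backward direction, carrying the classical argument through on only weak injectivity.
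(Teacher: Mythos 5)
Your strategy --- mirror Proposition \ref{Pro:LPO_equiv_extbij}, downgrade the $\LPO$-driven decisions to $\WLPO$-driven ones, and adapt Ishihara's tricks for the converse --- is the paper's intended route, but your forward direction has a gap at exactly the point you flag and then claim to dispatch. For a ``generic'' $x$ with $f(x)=x$, membership of $f(x)$ in $(0,1)$ requires $x>0$ in the constructive sense of a positive rational lower bound. From $\WLPO$ you only get $x=0 \lor \lnot(x=0)$, and in the second branch the data $x\geqslant 0 \land \lnot(x=0)$ amounts to $0\lessdot x$ (almost positive), not $0<x$. Passing from almost positive to positive is, by the paper's own definition at the start of Chapter \ref{Ch:MP}, exactly $\MP$; and since $\LPO \iff \WLPO + \MP$ while $\WLPO$ does not imply $\LPO$, $\WLPO$ cannot supply that step. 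So ``use $x\geqslant 0$ together with $\lnot(x=0)$'' does not close the gap, and the $f$ you set up is not shown to land in $(0,1)$.

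A second, related problem: you assert that your $f$ is ``even strongly extensional.'' If that were provable from $\WLPO$, then by Proposition \ref{Pro:LPO_equiv_extbij} you would have $\WLPO\Rightarrow\LPO$, which is false; so whatever bookkeeping the remark alludes to, it cannot be strong extensionality. The delicate property for a bijection in the footnote's sense is the other arrow, $x\neq y \Rightarrow f(x)\neq f(y)$, and even there the work is to derive the apartness $f(x)\neq f(y)$ rather than merely $\lnot(f(x)=f(y))$. For the converse your sketch --- bisect and end with the $\WLPO$ dichotomy, or alternatively extract a discontinuous function and invoke Proposition \ref{Pro:WLPO-Equiv} --- is consistent with the paper's one-line hint of a variation of Ishihara's tricks, but it remains a plan: you do not show how to run the bisection without the strong extensionality the $\LPO$ proof leans on, nor how to manufacture a concrete discontinuity witness from the abstract bijection.
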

\begin{proof}
The proof will be omitted, since it is basically the same as the one of Proposition \ref{Pro:LPO_equiv_extbij}. One technical point worth pointing out is that one needs to use a variation of Ishihara's tricks \cite{hD12b}.
\end{proof}

  As a direct consequence of this we also have that the Cantor-Bernstein-Schr\"ader theorem implies \WLPO, even when restricted to functions between subsets of $\RR$, since $x \mapsto \frac{1+x}{2}$ and the identity are injections between $[0,1)$ and $(0,1)$.
\begin{Qu}
Which principle is the Cantor-Bernstein-Schr\"ader theorem equivalent to? It seems likely that the answer heavily depends on the precise formulation.
\end{Qu}

\subsection{The Rising Sun Lemma, and \texorpdfstring{\LPO and \WLPO}{LPO and WLPO}}
We can use Proposition \ref{Pro:WLPO-equivs} to show two more equivalences  of \LPO and \WLPO; namely (versions of) the Rising Sun Lemma, which can be used to prove the Hardy-Littlewood maximal inequality \cite{tT11}*{pp. 143}. 
\begin{Pro} \label{Pro:rising_sun}
\LPO is equivalent to the following statement. 

Consider  a continuous function $f:[0,1] \to \RR$, and let $E$ be the set
\[ E = \set{x \in [0,1]}{\ex{y > x}{f(y) > f(x) }} \ . \]  Then one can find a, at most countable, family of disjoint, open, non-empty intervals $I_n$ in $[0, 1]$  such that $E \cap (0,1) = \bigcup I_n$.
\end{Pro}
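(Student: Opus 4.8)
The plan is to prove both implications. For the forward direction, I would assume \LPO and carefully analyze the set $E$. The key observation is that $E$ is open (by continuity of $f$): if $x \in E$ witnessed by $y > x$ with $f(y) > f(x)$, then nearby points also lie in $E$. Hence $E \cap (0,1)$ is an open subset of $(0,1)$. The real work is to decompose this open set into its connected components in a \emph{constructively effective} way, producing an at most countable family of disjoint open intervals whose union is exactly $E \cap (0,1)$. First I would enumerate the rational points of $(0,1)$ as $q_1, q_2, \dots$; for each $q_k$, use \LPO (applied to a suitable binary sequence encoding, via a countable search, whether $\exists y > q_k : f(y) > f(q_k)$ where $f$ is effectively given on a countable dense set) to decide whether $q_k \in E$. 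For those $q_k$ that are in $E$, I would use \LPO again — iteratively, via countable choice — to compute the connected component of $E$ containing $q_k$, i.e.\ the maximal open interval $(a_k, b_k) \subseteq E$ with $q_k \in (a_k, b_k)$; the endpoints $a_k, b_k$ exist as infima/suprema of explicitly described located sets, using \LPO to handle the order decisions. Discarding duplicates (two rationals in the same component give the same interval, and \LPO lets us check $a_k = a_j$) yields the desired countable family $\{I_n\}$, and $E \cap (0,1) = \bigcup I_n$ because every point of the open set $E \cap (0,1)$ lies in some basic rational neighbourhood inside $E$, hence in the component of some $q_k$.

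For the converse, I would show that the Rising Sun statement implies \LPO by reducing to one of the equivalences in Proposition \ref{Pro:WLPO-equivs} or by a direct Brouwerian construction. Given a binary sequence $(a_n)_{n \geqslant 1}$ — without loss of generality with at most one $1$ — I would build a continuous (indeed piecewise linear) ``staircase'' function $f:[0,1] \to \RR$ on a partition of $[0,1]$ into subintervals $J_n$, designed so that: if $a_n = 0$ for all $n$, then $f$ is (weakly) decreasing and $E \cap (0,1) = \emptyset$; whereas if some $a_N = 1$, then $f$ has a strict upward jump located in $J_N$, forcing $E \cap (0,1)$ to contain a nonempty interval near that location. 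Concretely one can arrange $f$ to slope gently downward on each $J_n$ with a small upward bump of height proportional to $a_n$ placed at the right end of $J_n$, scaled so total variation stays bounded; then $E$ is nonempty iff some bump is nonzero iff some $a_n = 1$. Now apply the hypothesis: we get a family $\{I_n\}$ of disjoint open intervals with $\bigcup I_n = E \cap (0,1)$. The countability/explicitness of this family is exactly what gives the decision: either the family is empty (equivalently, by locatedness-type arguments the first listed interval has length $0$), in which case $E \cap (0,1) = \emptyset$ and hence $\fa{n}{a_n = 0}$; or the first interval $I_1$ is a genuine nonempty open interval, in which case we pick a rational point in it, which by definition of $E$ yields a witness $y$ with $f(y) > f(x)$, and tracing back through the construction of $f$ this locates an index $n$ with $a_n = 1$.

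The main obstacle I expect is the forward direction: assembling the components into an \emph{at most countable} family requires care, because naively we'd produce uncountably many (one per point of $E$) or we'd need to decide interval equality, which is where \LPO is genuinely needed — and one must be careful that the ``at most countable'' family is presented as a function $\NN \to \{\text{intervals}\}$ together with the knowledge that distinct indices either give disjoint intervals or we've deduced something. A secondary subtlety is making precise what ``continuous'' means here (point-wise continuity on $[0,1]$) and confirming that \LPO suffices to evaluate $f$ and compare values at the countably many rationals involved; since \LPO implies that all relevant real comparisons are decidable and that $f$ restricted to a countable set is effectively given, this should go through, but it needs to be stated cleanly. I would also double-check the boundary behaviour at $0$ and $1$, since the statement is about $E \cap (0,1)$, not $E$, and the endpoints need separate (easy) treatment.
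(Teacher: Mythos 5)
Your forward direction takes a genuinely different route from the paper's: you decide membership of rationals in $E$ and build the connected components directly via \LPO, whereas the paper defines $S_x = \sup_{[x,1]} f$, uses the strong intermediate value theorem (Proposition \ref{Pro:WLPO-equivs}) to produce the endpoints $a_{r_n}, b_{r_n}$, and filters duplicates by a decidable condition. Both are plausible under \LPO, though yours would need the located-set and de-duplication details filled in; the paper's choice of $S_x$ has the advantage that the intervals come with the ``rising sun'' interpretation built in, making the verification $E = \bigcup I_n$ more transparent.

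Your converse direction has a genuine gap. Your function is designed so that $E \cap (0,1) = \emptyset$ when $\fa{n}{a_n=0}$ and is inhabited otherwise. When you then get the family $\{I_n\}$, the case split you propose is ``family empty'' versus ``pick a rational in $I_1$''. The first part of that dichotomy is actually fine (the index set is $\lambda^{-1}(\{0\})$ for an increasing binary $\lambda$, so emptiness is read off from $\lambda(1)$). The problem is the second half: $I_1 = (\alpha,\beta)$ is only guaranteed \emph{non-empty}, i.e.\ $\lnot(\beta \leqslant \alpha)$, which does not constructively yield $\alpha < \beta$ and hence does not let you ``pick a rational point in it''. Converting non-empty to inhabited is exactly \MP, and since we are trying to \emph{prove} \LPO here, \MP is not available (the paper's own comment before the proof makes precisely this distinction, but only invokes it in the forward direction where \LPO and hence \MP is assumed). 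The paper's construction avoids this: its $f$ is chosen so that $(0,\nicefrac{1}{2}) \subseteq E$ unconditionally, which forces the family to be the single interval $(0,b)$ with $b \geqslant \nicefrac{1}{2}$, and the decision is then extracted from the endpoint via the always-valid disjunction $b > \nicefrac{1}{2} \ \lor\ b < 1$; the point $z = (b+\nicefrac{1}{2})/2$ is computed explicitly from $b$ rather than pulled from a merely non-empty interval. To repair your argument you would need to redesign the ``staircase'' so that $E$ always contains a known fixed interval, and then read the answer off a comparison of endpoints rather than off an emptiness/inhabitedness test.
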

The name ``Rising Sun Lemma'' is due to imagining the graph of $f$ as a mountain range which the rising sun shines on to from the right. Viewed this way, the set $E$  consists of the areas which are in the shade. 

Before we prove the above proposition, we will make a couple of comments. 
\begin{itemize}
\item Since we assume \LPO (and therefore \MP) in the forward direction it doesn't make a difference whether we assume that our intervals are non-empty or inhabited. In fact, if $(a,b)$ is nonempty, then $\neg(b \leqslant a )$, and hence because of \MP $a<b$. Thus $\frac{b-a}{2} \in (a,b)$.
\item Unlike in many other results in Constructive Reverse Mathematics the Brouwerian counterexample part (i.e.\ the proof that the Rising Sun Lemma implies \LPO) is fairly straightforward, but the reverse, classical direction takes some effort. This is due to the fact that the standard proof (like the one in \cite{tT11}) uses the fact that classically ``any open subset U of $\RR$ can be written as the union of at most countably many disjoint non-empty open intervals, whose endpoints lie outside of U.'' However, this statement is equivalent to \LEM: Consider the set \[ U = \set{x \in (0,1)}{ x< \frac{1}{2} \lor \varphi} \ , \] for a syntactically correct statement $\varphi$. Now let  $U = \bigcup_{n \in I} I_n$, as above, where $I$ is either finite or countable. It is easy to see that $I$ must have (exactly) one element, say $i$ and $I_i = (a,b)$. If $b> \frac{1}{2}$ we must have $\varphi$ if $b <1 $ the assumption that $\varphi$ holds leads to a contradiction.
\item Notice that even classically we cannot assume that the $I_n$ are increasing or decreasing. To see this consider the piece-wise linear function $f:[0,1] \to [0,1]$ defined by
	  \[f(x) = \begin{cases} 
     -2x +1 & x \in \bracks*{0,\frac{1}{3}} \\
     x & x \in \bracks*{\frac{1}{3},\frac{2}{3}} \\
     -2x + 2&  x \in \bracks*{\frac{2}{3},1} \ . 
   \end{cases} \]
   For this function the set $E$ from the Rising Sun Lemma is $\bracks*{\frac{1}{6},\frac{2}{3}}$. Now define a functions $g,h:[0,1] \to [0,1]$ by placing a copy of $f$ in each of the squares $G_n$ and $H_n$ indicated in Figure \ref{Fig:rising_sun}.
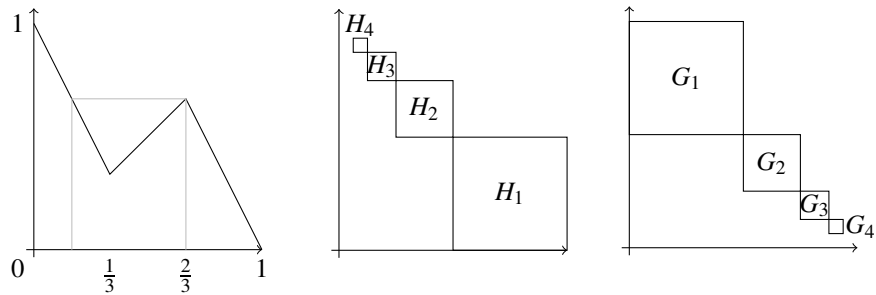
\begin{figure}[ht] 
\centering
\subfloat{\begin{tikzpicture}
\draw[->] (-0.1,0) -- (3,0);
\draw[->] (0,-0.1) -- (0,3.2);
\path (3,0) node[anchor=north] {$1$};
\path (1,0) node[anchor=north] {$\frac{1}{3}$};
\path (2,0) node[anchor=north] {$\frac{2}{3}$};
\path (0,0) node[anchor=north east] {$0$};
\path (0,3) node[anchor=east] {$1$};

\draw (0,3) -- (1,1) -- (2,2) -- (3,0);
\draw[gray!50] (2,0) -- (2, 2) -- (0.5,2) -- (0.5,0);
\end{tikzpicture}}
\qquad
\subfloat{\begin{tikzpicture}
\draw[->] (-0.1,0) -- (3,0);
\draw[->] (0,-0.1) -- (0,3.2);
\draw (1.5,1.5) rectangle node {$H_1$} (3,0);
\draw (0.75,2.25) rectangle node {$H_2$} (1.5,1.5);
\draw (0.375,2.625)  rectangle node {$H_3$} (0.75,2.25);
\draw (.1875,2.8125)  rectangle node[anchor=south] {$H_4$} (0.375,2.625);
\end{tikzpicture}}
\qquad
\subfloat{\begin{tikzpicture}
\draw[->] (-0.1,0) -- (3,0);
\draw[->] (0,-0.1) -- (0,3.2);
\draw (1.5,1.5) rectangle node {$G_1$} (0,3);
\draw (2.25,0.75) rectangle node {$G_2$} (1.5,1.5);
\draw (2.625,0.375)  rectangle node {$G_3$} (2.25,0.75);
\draw (2.8125,.1875)  rectangle node[anchor=west] {$G_4$} (2.625,0.375);
\end{tikzpicture}}
 \caption{An iterated construction.\label{Fig:rising_sun}}
\end{figure} 
It is clear that $g$'s $E$-set cannot be written as a sequence of open disjoint increasing intervals, and the same holds for $h$ and decreasing intervals. Obviously, we could iterate this construction to see that we cannot write $E$ as a decreasing union of increasing disjoint, open, non-empty intervals, or even further.
\end{itemize}
   
Finally, it is time to prove Proposition \ref{Pro:rising_sun}.
\begin{proof}
Assume the Rising Sun Lemma holds and let $a \in \RR$ be a real number. Now define a piece-wise linear function $f:[0,1] \to \RR$ by
	  \[f(x) = \begin{cases} 
     (2 - 2\abs{a})x  & x \in \bracks*{0,\frac{1}{2}} \\
      2\abs{a}x + 1 - 2\abs{a} & x \in \bracks*{\frac{1}{2},1} 
   \end{cases} \]
   
 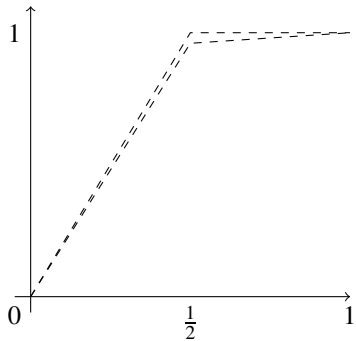
\begin{figure}[ht]
\centering
\begin{tikzpicture}[scale=0.7]
\draw[->] (-0.3,0) -- (6,0);
\draw[->] (0,-0.3) -- (0,5.5);
\path (6,0) node[anchor=north] {$1$};
\path (3,0) node[anchor=north] {$\frac{1}{2}$};
\path (0,0) node[anchor=north east] {$0$};
\path (0,5) node[anchor=east] {$1$};

\draw[dashed] (0,0) -- (3,5) -- (6,5);
\draw[dashed] (0,0) -- (3,4.8) -- (6,5);
\end{tikzpicture}
\caption{Depending on $a$ the function $f$ is either ``in the shadow'' or flat on $[\nicefrac{1}{2},1]$.}
\end{figure}
Notice that if $x \in \bracks*{\frac{1}{2},1}$, then $1-\abs{a} \leqslant f(x) \leqslant 1$.
Now let $E$ be the ``areas in the shade'' as in the Rising Sun Lemma, and assume that $E= \bigcup I_n$, where $I_n = (a_n,b_n)$ are open, nonempty, and disjoint. It is easy to see that the family $(I_n)_{n \geqslant 1}$ must consist of exactly one element, say $(a,b)$. Now either $b > \nicefrac{1}{2}$ or $b < 1$. In the second case the assumption that $\abs{a} >0$ leads to the contradiction that $E = [0,\frac{1}{2})$, so in that case $a = 0$. In the first case we have $z = \frac{b+\nicefrac{1}{2}}{2} \in E$. So by definition there exists $y > z$ such that $f(y) > f(z)$. But that means that 
\[ 1-\abs{a} \leqslant f(z) < f(y) \leqslant 1  \ , \] which implies that $\abs{a} > 0$. With both cases taken together we have decided \LPO.

Conversely let $f:[0,1] \to \RR$ be a continuous function. First notice that \LPO allows us to make the following decision. Since \LPO implies that $f$ is uniformly continuous (see \ref{Pro:LLPO_impl_UCT}) and therefore $S_x = \sup_{[x,1]} f$ exists, we can decide whether $f(x) = S_x$, or $f(x) < S_x$. In the second case, using Proposition \ref{Pro:WLPO-equivs}.\ref{Equ:WLPO4}, we can find $b_x = \inf \set{z \geqslant x }{ f(z) = S_x}$. Using Proposition \ref{Pro:WLPO-equivs}.\ref{Equ:WLPO4} again we can also find $a_x = \sup \set{z \leqslant x}{f(z) = S_x}$. 

Now let $r_n$ be an enumeration of all rationals in $[0,1]$ and consider 
\[ F = \set{n \in \NN}{f(r_n) < S_{r_n} \land \fa{i < n}{r_n \notin (a_{r_i},b_{r_i})}} \ .\] 
Using \LPO we see that $F$ is decidable and using \LPO again we can decide whether $F$ is finite or countable. By construction the intervals $((a_{r_n},b_{r_n}))_{n \in F}$ are non-empty and disjoint. We also have that if $x \in (a_{r_n},b_{r_n})$ for some $n$, then $f(x) < S_{r_n}$, since, by the definition of $b_{r_n}$ we cannot have $f(x) \geqslant S_{r_n}$. Using (an approximate version of) the intermediate value theorem we can find a point $y$ such that $x<y<b_{r_n}$ and $f(x)<f(y) < S_{r_n}$. 

It remains to show that if $x \in E$ there exists $n \in F$ such that $x \in (a_{r_n},b_{r_n})$. If $x \in E$, then there exists $y > x$ with $f(x) < f(y) < S_x$. Now find (again an approximate version of the intermediate value theorem suffices) a rational $r_n$ such that $x<r_n < y$ and $f(x)<f(r_n)<f(y)$. If for all $i < n$ we have $r_n \notin (a_{r_i},b_{r_i})$, then $x \in (a_{r_n},b_{r_n})$ and therefore $x \in \bigcup_{n \in F} (a_{r_n},b_{r_n})$. If there is $j < n$ with $r_n \in (a_{r_j},b_{r_j})$, then also $x \in (a_{r_j},b_{r_j})$ and therefore also in this case $x \in \bigcup_{n \in F} (a_{r_n},b_{r_n})$.
\end{proof}

The classification of the Rising Sun Lemma is very dependent on its precise formulation. If we had followed Tao's formulation \cite{tT11} more closely we get an equivalence to \WLPO. Luckily, we can reuse most of the proof above.

\begin{Pro} \label{Pro:rising_sun_WLPO}
\WLPO is equivalent to the following statement.

Consider  a continuous function $f:[0,1] \to \RR$.  Then one can find a, at most countable\footnote{This is one of the places we actually have to be tediously precise. In this case, by a ``at most countable'' set $A$ we mean that $A = \lambda^{-1}(\menge{0}) $  for an increasing binary sequence $\lambda$.}, family of disjoint, open, non-empty intervals $I_n$ in $[0, 1]$ such that
\begin{enumerate}
  \item $f(a_n) = f(b_n)$ unless $a_n = 0$, in which case $f(b_n) \geqslant f(a_n)$, and
  \item if $x \notin \bigcup I_n$ and $x >0$, then $f(x) \geqslant f(y)$ for all $y \geqslant x$.
\end{enumerate}
\end{Pro}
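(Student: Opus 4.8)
The plan is to prove the two implications by re-running, respectively, the two halves of the proof of Proposition \ref{Pro:rising_sun}, with \WLPO replacing \LPO and with a correspondingly weaker conclusion.

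For the direction ``\WLPO implies the statement'' I would keep the construction of the \LPO-proof of Proposition \ref{Pro:rising_sun} essentially unchanged. Since \WLPO implies \LLPO, every continuous $f:[0,1]\to\RR$ is uniformly continuous (Proposition \ref{Pro:LLPO_impl_UCT}), so $S_x=\sup_{[x,1]}f$ exists; \WLPO decides ``$f(x)=S_x$ or $\lnot(f(x)=S_x)$'' (Proposition \ref{Pro:WLPO-Equiv}); and in the second case the points $a_x=\sup\{z\leqslant x:f(z)=S_x\}$ and $b_x=\inf\{z\geqslant x:f(z)=S_x\}$ can be located, because under \WLPO such zero-sets are located (Proposition \ref{Pro:WLPO-equivs}.\ref{Equ:WLPO2}), equivalently by the strong intermediate value theorem (Proposition \ref{Pro:WLPO-equivs}.\ref{Equ:WLPO4}). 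Running through an enumeration $(r_n)$ of the rationals of $[0,1]$ and committing, in order, the interval $(a_{r_n},b_{r_n})$ for each $r_n$ with $\lnot(f(r_n)=S_{r_n})$ that is not yet covered produces the family; each decision needed (``$f(r_n)=S_{r_n}$'' and ``$r_n\notin(a_{r_i},b_{r_i})$'') is of $\Pi^{0}_{1}$ form, which is exactly why \WLPO suffices and why the index set comes out in the footnote's sense of ``at most countable'' ($A=\lambda^{-1}(\menge{0})$ with $\lambda$ increasing binary). Continuity gives $f(a_{r_n})=f(b_{r_n})=S_{r_n}$, which is~(1); and since ``$f(x)=S_x$'' is $\lnot\lnot$-stable, a short $\lnot\lnot$-argument shows that a point $x>0$ missed by every committed interval satisfies $f(x)=S_x$, which is~(2).

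For ``the statement implies \WLPO'' the \LPO-proof cannot simply be copied: conditions~(1)--(2), unlike $E\cap(0,1)=\bigcup I_n$, do not determine the family (a flat piece of $f$ at the level of a local maximum may be cut into extra intervals satisfying~(1) without harming~(2)), so one cannot read the sign of a real off an endpoint. Instead I would exploit that ``the family is empty or not'' \emph{is} decidable, since $\lambda_0\in\menge{0,1}$. Given a binary sequence $(a_n)_{n\geqslant 1}$, put $a=\sum_{n\geqslant 1}a_n 2^{-n-2}\in[0,\tfrac{1}{4}]$ (so $a=0$ iff $\fa{n}{a_n=0}$) and define $f_a(x)=-x+2\max\bigl(0,\,a-\lvert x-\tfrac{1}{2}\rvert\bigr)$, which is jointly continuous in $(a,x)$ and requires no case distinction on $a$. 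If $a=0$, then $f_0(x)=-x$ is strictly decreasing; since~(1) entails the $\lnot\lnot$-stable inequality $f(b_n)\geqslant f(a_n)$, no interval $(a_n,b_n)$ with $a_n<b_n$ can belong to the family, so the family is empty and $\lambda_0=1$. If $a>0$, the added bump has slope $2$, so $f_a$ is strictly increasing on $(\tfrac{1}{2}-a,\tfrac{1}{2})$, and $x_0=\tfrac{1}{2}-\tfrac{a}{2}$ has the witness $y_0=\tfrac{1}{2}$ with $f_a(y_0)>f_a(x_0)$; hence by~(2) the case $x_0\notin\bigcup I_n$ is impossible, so $\lnot\lnot(\lambda_0=0)$, and since $\lambda_0=0$ is decidable, $\lambda_0=0$. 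Thus $\lambda_0=1$ forces $\lnot(a>0)$, i.e.\ $a=0$, i.e.\ $\fa{n}{a_n=0}$, while $\lambda_0=0$ forces $\lnot(a=0)$, i.e.\ $\lnot\fa{n}{a_n=0}$; inspecting the single bit $\lambda_0$ therefore decides \WLPO for $(a_n)$.

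I expect the real work to be in the forward direction: making the construction of Proposition \ref{Pro:rising_sun} emit a family indexed in exactly the footnote's restrictive sense, and checking line by line that only \WLPO (and not \LPO) is used, so that the output is precisely~(1) and~(2) and not the stronger $E\cap(0,1)=\bigcup I_n$. The reverse direction, once one has noticed that~(1)--(2) are too weak for an endpoint argument and has pivoted to the emptiness test above, is a routine Brouwerian counterexample.
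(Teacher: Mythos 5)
Your reverse direction is correct and it repairs a genuine gap in the paper's own (very terse) proof. The paper applies the proposition to its auxiliary function $h$ and then reasons about the right endpoint $b$ of ``the interval $(0,b)$'' as though conditions~(1)--(2) forced the family to be the shade region $E\cap(0,1)$; but that identification is part of the hypothesis in Proposition~\ref{Pro:rising_sun}, not of~(1)--(2). When $a=0$, so that $h$ is flat on $[\nicefrac{1}{3},1]$, the one-interval family $\menge{(0,\nicefrac{1}{2})}$ already satisfies both~(1) (with $a_1=0$ and $f(\nicefrac{1}{2})=\nicefrac{1}{2}\geqslant f(0)=0$) and~(2) (for $x\geqslant\nicefrac{1}{2}$, $f(x)=\nicefrac{1}{2}\geqslant f(y)$ for all $y\geqslant x$), and then $b=\nicefrac{1}{2}>\nicefrac{1}{3}$ makes the paper's rule ``$b>\nicefrac{1}{3}\implies\lnot(a=0)$'' give the wrong verdict. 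There is also a constructive subtlety the paper glosses over: from~(2) one only gets $\lnot\lnot(x\in\bigcup I_n)$, so one cannot in general \emph{locate} the interval abutting $0$. Your pivot neutralises both issues: you pick $f_a$ so that the family must be empty precisely when $a=0$, and then the only datum you need is the single bit $\lambda_0$, which the footnote guarantees to be decidable; all the $\lnot\lnot$'s coming out of~(2) and out of the $\Pi^0_1$-stable inequality in~(1) are absorbed by that decidability, and you land exactly on $\lnot(a>0)\lor\lnot(a=0)$, which is \WLPO once $a\geqslant 0$. For the forward direction the paper says only ``analogous,'' and your sketch is at least as explicit; the one point you flag but do not resolve---producing a family indexed by $\lambda^{-1}(\menge{0})$ for an \emph{increasing} binary $\lambda$, rather than by an arbitrary decidable subset of $\NN$---is a real issue, because in Proposition~\ref{Pro:rising_sun} this re-indexing is handled by an explicit \LPO-decision that $F$ is finite or infinite, which is not available under \WLPO alone; either the footnote's notion of ``at most countable'' should be relaxed to allow an arbitrary decidable index set, or the segment has to be produced by a different device, and neither you nor the paper spell out which.
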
  
\begin{proof}
We will only proof the ``counterexample direction'', since the converse direction is analogous to the one above.

So let $a \in \RR$ be a real number. Now define a piece-wise linear function $h:[0,1] \to \RR$ by
\[h(x) = \begin{cases} 
     \frac{3}{2}x  & x \in \bracks*{0,\frac{1}{3}} \ , \\
     \frac{1}{2}  & x \in \bracks*{\frac{1}{3},\frac{2}{3}} \ , \\
      \frac{1}{2}  + \abs{a}(3x-2) & x \in \bracks*{\frac{2}{3},1}  \ . 
   \end{cases} \]
   
\begin{figure}[ht]
\centering
\begin{tikzpicture}[scale=0.7]
\draw[->] (-0.3,0) -- (6,0);
\draw[->] (0,-0.3) -- (0,5.5);
\path (6,0) node[anchor=north] {$1$};
\path (2,0) node[anchor=north] {$\frac{1}{3}$};
\path (4,0) node[anchor=north] {$\frac{2}{3}$};
\path (0,0) node[anchor=north east] {$0$};
\path (0,5) node[anchor=east] {$1$};

\draw[dashed] (0,0) -- (2,3) -- (4,3) -- (6,3);
\draw[dashed] (0,0) -- (2,3) -- (4,3) -- (6,3.4);
\end{tikzpicture}
\end{figure}

Again it is easy to see that the ``area in the shade'' consists of exactly one interval $(0,b)$. If $b < \frac{2}{3}$, then $\abs{a}>0$ leads to a contradiction, since then $E = (0,1)$. So we must have $a=0$. If $b> \frac{1}{3}$, then $a=0$ leads to a contradiction, since that in that case $E=(0,\frac{1}{3})$. Together we can decide whether $a=0$ or $\neg(a=0)$, which means that \WLPO holds. 
\end{proof}

\section{\texorpdfstring{\LLPO and \WKL}{LLPO and WKL}} \label{Sec:LLPO}
The final limited omniscience principle is the \define{lesser limited principle of omniscience}.
\begin{principle}[LLPO]{\LLPO} \label{PR:LLPO}
If $(a_n)_{n \geqslant 1}$ is a binary sequence with at most one $1$, then 
\[ \fa{n \in \NN}{a_{2n} = 0}  \lor \fa{n \in \NN}{a_{2n+1} = 0}  . \]
\end{principle}
It is also known as $\mathrm{SEP}$ in \cite{Troelstra1988a}. \index{SEP|see {\LLPO}}

\LLPO is often presented in the following form which is often actually used as its definition; for example in \cite{hI06a}. 
\begin{Pro} \label{Pro:LLPO_equiv_deMorg}
\LLPO is equivalent to one of the De Morgan's laws for simply existential statements. That is for $\alpha, \beta \in \CS$ 
\[ \lnot \left( \ex{n \in \NN}{\alpha(n) = 1} \land \ex{n \in \NN}{\beta(n) = 1} \right) \implies \lnot \left( \ex{n \in \NN}{\alpha(n) = 1}\right)  \lor \lnot \left(\ex{n \in \NN}{\beta(n) = 1} \right)  \]
\end{Pro}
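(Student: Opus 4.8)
I would prove the equivalence by a direct translation between binary sequences with at most one $1$ and the form of the De Morgan statement for existential sentences. The key observation is that a binary sequence $(a_n)_{n\geqslant 1}$ with at most one $1$ can be split into even- and odd-indexed subsequences $\alpha(n) = a_{2n}$ and $\beta(n) = a_{2n+1}$, and then $\neg(\exists n : \alpha(n)=1 \land \exists n : \beta(n)=1)$ holds automatically (since at most one of the two can ever witness a $1$). Conversely, given arbitrary $\alpha,\beta \in \CS$ with $\neg(\exists n:\alpha(n)=1 \land \exists n:\beta(n)=1)$, I interleave them into a single binary sequence and modify it so that it has at most one $1$.

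First I would show \LLPO{} implies the De Morgan law. Given $\alpha,\beta \in \CS$ with the disjointness hypothesis, define $c_{2n} = 1$ iff $n$ is the least index with $\alpha(n)=1$ \emph{and} no $\beta$-witness has appeared at an earlier stage, and similarly for $c_{2n+1}$ with $\beta$; more carefully, build $c$ by scanning $\alpha,\beta$ together and setting the single $1$ (if any) at the first place either sequence takes value $1$, on the even side if it was $\alpha$ and the odd side if it was $\beta$. The disjointness hypothesis guarantees at most one $1$ overall. Apply \LLPO{} to $(c_n)$: if $\forall n : c_{2n}=0$ then no $\alpha$-witness ever ``wins'', which combined with the disjointness hypothesis forces $\neg\exists n:\alpha(n)=1$ (if $\alpha$ had a witness it would have been recorded unless a $\beta$-witness preceded it, but then $\exists n:\beta(n)=1$, contradicting disjointness once we also note $\alpha$'s witness); symmetrically for the odd side. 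This case analysis needs a little care but is routine.

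For the converse, assume the De Morgan law and let $(a_n)_{n\geqslant 1}$ be a binary sequence with at most one $1$. Set $\alpha(n) = a_{2n}$ and $\beta(n) = a_{2n+1}$. Then $\exists n:\alpha(n)=1$ says ``the $1$ sits at an even index'' and $\exists n:\beta(n)=1$ says ``the $1$ sits at an odd index''; since there is at most one $1$, these cannot both hold, so $\neg(\exists n:\alpha(n)=1 \land \exists n:\beta(n)=1)$. The De Morgan law then yields $\neg\exists n:\alpha(n)=1 \lor \neg\exists n:\beta(n)=1$, i.e. $\forall n: a_{2n}=0 \lor \forall n: a_{2n+1}=0$, which is exactly \LLPO.

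**Main obstacle.** The only subtle point is the forward direction: I must massage arbitrary $\alpha,\beta$ (which need \emph{not} have at most one $1$ individually) into a sequence with at most one $1$ without losing information, and then correctly back out the two negated existentials from the two \LLPO{} alternatives. The disjointness hypothesis $\neg(\exists n:\alpha(n)=1 \land \exists n:\beta(n)=1)$ is what makes this possible, and the proof hinges on using it at exactly the right moment in the case split. I expect this to be straightforward once the interleaving is set up, but it is where all the actual content lies.
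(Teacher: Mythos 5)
Your proof is correct, and it is the natural fleshing-out of the argument the paper dismisses with ``Straightforward.'' The backward direction (De Morgan $\Rightarrow$ \LLPO) via the even/odd split of $(a_n)$ is immediate. For the forward direction, your interleaved sequence $c$ is well-defined and has at most one $1$: if $c_i=c_j=1$ with $i<j$ then either they share the same scan index $n$ (ruled out since $c_{2n}=1$ and $c_{2n+1}=1$ make contradictory demands on $\alpha(n)$), or the later one asserts that all earlier indices of both $\alpha$ and $\beta$ are $0$, contradicting the earlier one. The case split after applying \LLPO to $c$ also checks out: in the case $\forall n\,c_{2n}=0$, a least witness $n$ for $\alpha$ together with a finite (hence decidable) search over $m<n$ for a $\beta$-witness either produces $c_{2n}=1$ (contradiction) or produces witnesses for both existentials (contradicting the disjointness hypothesis); symmetrically for the odd case. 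This is precisely the ``little care'' you flagged, and you handled it correctly.
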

\begin{proof}
Straightforward.	
\end{proof}

\begin{Pro} \label{Pro:equivs_of_LLPO}
The following are equivalent to \LLPO
\begin{enumerate}
\item \label{item:LLPO-1} $\fa{x \in \RR}{0 \leqslant x \lor x \leqslant 0}$
%\item \label{item:LLPO-2} $\RR = (-\infty,0] \cup [0, \infty)$
\item \label{item:IVT} (The intermediate value theorem) For every  non-discontinuous function $f:[0,1] \to \RR$ with $f(0) \leqslant 0 \leqslant f(1)$ there exists $x \in [0,1]$ such that $f(x) =0$
\item \label{item:LLPO-4} Every real number has a binary expansion.
\item If $x,y$ are real numbers such that $xy=0$, then either $x=0$ or $y=0$; that is $\RR$ is an integral domain. \label{item:LLPO-5}
\item \label{item:LLPO-6} If $x,y$ are real numbers, then $\menge{x,y}$ is a closed set.
\end{enumerate}
\end{Pro}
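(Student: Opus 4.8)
The plan is to take item~\ref{item:LLPO-1}, $\fa{x\in\RR}{0\le x\lor x\le 0}$, as the canonical ``arithmetical'' form of \LLPO and prove everything against it. First I would dispatch \LLPO$\iff$\ref{item:LLPO-1}. For ``$\Leftarrow$'', given a binary sequence $(a_n)$ with at most one $1$, apply \ref{item:LLPO-1} to $x=\sum_{n\ge 1}(-1)^{n}a_{n}2^{-n}$: since there is at most one $1$, $x\ge 0$ forces that no odd-indexed term is $1$ and $x\le 0$ forces that no even-indexed term is $1$. For ``$\Rightarrow$'', given $x\in\RR$ rescaled so $\abs{x}\le 1$, take rationals $q_{n}$ with $\abs{x-q_{n}}<2^{-n-2}$ and record in a single binary sequence, which then has at most one $1$, the first stage at which some $q_{n}$ is provably bounded away from $0$, together with its sign; \LLPO then reads off $x\ge 0$ or $x\le 0$. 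This is the standard argument, which I would only sketch.

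Next I would show \ref{item:LLPO-1} is equivalent to each of \ref{item:LLPO-4}, \ref{item:LLPO-5} and \ref{item:LLPO-6} by short ``arithmetic'' arguments. For \ref{item:LLPO-4} (binary expansion, say of reals in $[0,1]$): in the forward direction peel off digits $d_{k+1}$ by applying \ref{item:LLPO-1} to $x_{k}-2^{-(k+1)}$, maintaining the invariant $x_{k}\in[0,2^{-k}]$ and building the sequence recursively (dependent choice); conversely, for a real $a$ rescaled into $[-1,1]$, the first digit of a binary expansion of $(a+1)/2$ is $1$ only if $(a+1)/2\ge\tfrac12$, i.e.\ $a\ge 0$, and $0$ only if $(a+1)/2\le\tfrac12$, i.e.\ $a\le 0$. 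For \ref{item:LLPO-5} (integral domain): if $xy=0$, apply \ref{item:LLPO-1} to $\abs{x}-\abs{y}$; in the case $\abs{x}\ge\abs{y}$, $y\ne 0$ would make $\abs{x}\abs{y}>0$, so $y=0$, and symmetrically; conversely, for a real $a$ the numbers $\max(a,0)$ and $\max(-a,0)$ multiply to $(\abs{a}^{2}-a^{2})/4=0$, so one of them vanishes, which gives $a\le 0$ or $a\ge 0$. For \ref{item:LLPO-6}, reading ``closed'' as ``$\menge{x,y}$ equals $\set{z}{\fa{\varepsilon>0}{\abs{z-x}<\varepsilon\lor\abs{z-y}<\varepsilon}}$'': forwards, take $z$ in the closure, apply \ref{item:LLPO-1} to $\abs{z-x}-\abs{z-y}$, and note that $\abs{z-y}\ge\abs{z-x}$ forces $z=x$ (a positive lower bound on $\abs{z-x}$ would contradict the closure condition at a small $\varepsilon$), and symmetrically; conversely, given a real $a$, take $x=\min(a,0)$ and $y=\max(a,0)$, check that $0$ lies in the closure (by cotransitivity, for each $\varepsilon$ either $\abs{a}<\varepsilon$, whence both $\abs{x},\abs{y}<\varepsilon$, or $\abs{a}>0$, whence one of $\abs{x},\abs{y}$ is $0$), and conclude from $0=x$ that $a\ge 0$ or from $0=y$ that $a\le 0$.

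Finally, the intermediate value theorem \ref{item:IVT}. For ``$\ref{item:LLPO-1}\Rightarrow\ref{item:IVT}$'' I would use interval halving: at each midpoint $c$ use \ref{item:LLPO-1} to decide $f(c)\ge 0$ or $f(c)\le 0$ and pass to the appropriate half (dependent choice); the nested intervals converge to a point $x$, and sequential non-discontinuity together with the stability of $\le$ pushes $f(a_{n})\le 0\le f(b_{n})$ to the limit, giving $f(x)=0$. The reverse direction is the only step that needs real care, and is the main obstacle: a root produced by \ref{item:IVT} is merely some real number, and deciding on which side of $\tfrac12$ it lies is itself \LLPO. I would get around this with a flat-middle, piecewise-linear counterexample: given a real $a$ rescaled so $\abs{a}\le 1$, let $f\colon[0,1]\to\RR$ rise linearly from $-1$ at $0$ to $a$ at $\tfrac13$, be constantly $a$ on $[\tfrac13,\tfrac23]$, and rise linearly from $a$ at $\tfrac23$ to $1$ at $1$. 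Then $f$ is (uniformly) continuous, $f(0)\le 0\le f(1)$, and if $a>0$ then $f>0$ on $[\tfrac13,1]$ so every root is $<\tfrac13$, whereas if $a<0$ then $f<0$ on $[0,\tfrac23]$ so every root is $>\tfrac23$. Given a root $x$, cotransitivity yields $x<\tfrac23$ or $x>\tfrac13$; in the first case $a<0$ is impossible, so $a\ge 0$; in the second $a>0$ is impossible, so $a\le 0$. This recovers \ref{item:LLPO-1}, closing the circle. Throughout I would use countable and dependent choice freely, as elsewhere in the text, and flag the two places where the precise formulation matters: the meaning of ``closed'' in \ref{item:LLPO-6} and of ``sequentially non-discontinuous'' in \ref{item:IVT}.
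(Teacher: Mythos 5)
Your proof is correct, and all the arithmetic verifications check out (for instance $\max(a,0)\cdot\max(-a,0)=(\abs{a}^2-a^2)/4=0$ in the \ref{item:LLPO-5}$\Rightarrow$\ref{item:LLPO-1} step, and the flat-middle function together with cotransitivity in the \ref{item:IVT}$\Rightarrow$\ref{item:LLPO-1} step). The paper itself gives no proof here — it simply points to \cite{dB87} and \cite{mM88}, and defers the integral-domain equivalence to the more general Proposition~\ref{Pro:LLPOn-equiv-real} — so there is no argument in the text to compare against directly. Your hub-and-spoke decomposition around item~\ref{item:LLPO-1} is a clean, self-contained way to organise the five equivalences, and your reverse direction for item~\ref{item:LLPO-5} (producing two reals with product zero directly from an arbitrary $a$, rather than via a binary sequence as in Proposition~\ref{Pro:LLPOn-equiv-real}) is a genuinely more elementary argument than the one the paper appeals to. One small caveat worth keeping is the one you already flag: the meaning of ``closed'' in item~\ref{item:LLPO-6} matters, and your chosen reading as the metric closure $\set{z}{\fa{\varepsilon>0}{\abs{z-x}<\varepsilon\lor\abs{z-y}<\varepsilon}}$ is the one that makes the proof go through and matches Mandelkern's usage.
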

\begin{proof} 
All equivalencies  are very well known; see for example \cite{dB87} for the first 4 and \cite{mM88}*{Theorem 4.2} for the last one.
The equivalence $\LLPO \iff \ref{item:LLPO-5}$ is also proved in a more general form in Proposition \ref{Pro:LLPOn-equiv-real}. 
\end{proof}

\subsection{Completeness of Finite Sets}
In Proposition \ref{Pro:equivs_of_LLPO} we cited a paper by Mandelkern \cite{mM88} that among many other insights shows that \LLPO is equivalent to every two-element set of reals being closed.
That is constructively we cannot show that for any two reals $a,b$
\[ \overline{\menge{a,b}} = \menge{a,b}  \ .\]
Analysing Mandelkern's proof one might hope that constructively we have at least 
\[ \overline{\menge{a,b}} = \menge{a,b, \inf \menge{a,b}, \sup \menge{a, b} } \ .\]
However, as the next result shows this statement is still equivalent to \LLPO.
\begin{Pro}
   \LLPO is equivalent to the statement that for all $a,b \in \RR$ the set
   \[ \menge{a,b, \inf \menge{a,b}, \sup \menge{a, b} } \]
   is the closure of $\menge{a,b}$.	
\end{Pro}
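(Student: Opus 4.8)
The statement to prove is that LLPO is equivalent to: for all $a,b\in\RR$, the set $T=\menge{a,b,\inf\menge{a,b},\sup\menge{a,b}}$ is the closure of $\menge{a,b}$. The forward direction should be nearly immediate: under LLPO, by Proposition \ref{Pro:equivs_of_LLPO}.\ref{item:LLPO-6}, $\menge{a,b}$ is already closed, and since $\inf\menge{a,b},\sup\menge{a,b}\in\menge{a,b}$ (under LLPO we can decide $a\leqslant b$ or $b\leqslant a$), we get $T=\menge{a,b}=\overline{\menge{a,b}}$. So all the content is in the reverse direction, and the main task is to design two reals $a,b$ from an arbitrary binary sequence with at most one $1$ so that the hypothesis ``$T$ is closed'' forces the LLPO-disjunction.

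**The reverse direction.** Let $(a_n)_{n\geqslant 1}$ be a binary sequence with at most one $1$; I want to conclude $\fa{n}{a_{2n}=0}\lor\fa{n}{a_{2n+1}=0}$. Following the usual template, I would build the two points as partial sums that ``snap away from $0$'' in one direction if an even-indexed $1$ appears and in the other direction if an odd-indexed $1$ appears. Concretely, set
\[
 a=\sum_{n\geqslant 1}(-1)^{?}\,c_n,\qquad b=\sum_{n\geqslant 1}(-1)^{?}\,c_n
\]
is the wrong shape; instead I would try $x=\sum_n u_n$ where $u_n=2^{-n}$ if $a_{2n}=1$ and $u_n=0$ otherwise, and $y=-\sum_n v_n$ with $v_n=2^{-n}$ if $a_{2n+1}=1$. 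Then $x\geqslant 0$, $y\leqslant 0$, and $\inf\menge{x,y}=y$, $\sup\menge{x,y}=x$ only after one knows the signs — which is exactly what's at issue. The cleaner route is Mandelkern-style: pick $a,b$ both converging to a common value $\ell$ (say $\ell=0$), with $a$ eventually jumping to some $\ell+\delta>\ell$ if the ``even'' clause ever fires and $b$ eventually jumping to $\ell-\delta<\ell$ if the ``odd'' clause fires. Build a sequence of points $p_k\to 0$ that lies in $\menge{a,b}$ for each $k$ (one can always name a point of $\menge{a,b}$: it is either $a$ or $b$, so take $p_k=a$ or $p_k=b$ according to a decidable rule on the first $k$ terms, consistently). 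Then $0\in\overline{\menge{a,b}}$, so by hypothesis $0\in T$, i.e.\ $0$ equals one of $a,b,\inf\menge{a,b},\sup\menge{a,b}$. Now case on which: $0=a$ forces no even $1$; $0=b$ forces no odd $1$; $0=\inf\menge{a,b}$ forces no odd $1$ (an odd $1$ would push the inf strictly below $0$); $0=\sup\menge{a,b}$ forces no even $1$. In every case one of the two LLPO-disjuncts holds, and since ``equals one of four things'' is a four-way disjunction we can decide among them, we are done.

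**The main obstacle.** The genuinely delicate point is manufacturing the convergent sequence of \emph{bona fide} members of $\menge{a,b}$ that approaches the ``ghost'' value $0$ — i.e.\ exhibiting $0\in\overline{\menge{a,b}}$ constructively. This requires that, no matter how few terms of $(a_n)$ we have inspected, we can honestly write down a point equal to $a$ or equal to $b$ that is within $2^{-k}$ of $0$; since $a$ and $b$ are both within $2^{-k}$ of $0$ until a $1$ shows up (and if a $1$ has shown up we know everything anyway and can finish directly), this works, but the bookkeeping — defining $a,b$ as fast-converging Cauchy sequences with the right jump behaviour and verifying $d(0,\menge{a,b})=0$ without deciding whether any $a_n=1$ — is where the care goes. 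A secondary subtlety: I must make the four members $a,b,\inf,\sup$ genuinely \emph{distinct} enough that ``$0=\inf\menge{a,b}$'' really does rule out the odd clause (and symmetrically), so the two jump magnitudes and directions must be chosen so that each of the four equalities refutes exactly one of the two LLPO-disjuncts. I expect roughly half a page once the functions are pinned down; the rest is routine, mirroring the arguments in Proposition \ref{Pro:equivs_of_LLPO} and the located-set lemmas already in the text.
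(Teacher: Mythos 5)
Your proof is correct and follows essentially the same strategy as the paper's: construct a pair of reals from a binary sequence $(a_n)$ with at most one $1$, exhibit a point in the closure of $\menge{a,b}$, and then unfold the hypothesis as a four-fold disjunction, each case killing one parity. The only structural difference is the choice of pair and approached point: the paper fixes the pair as $\menge{0,a}$ with $a = \sum a_n/2^n$ and constructs a Cauchy sequence $z_n \in \menge{0,a}$ (tracking whether an even-indexed $1$ has appeared) whose limit $z_\infty$ is then automatically in $\overline{\menge{0,a}}$, whereas you take two reals straddling $0$ and argue directly that the fixed point $0$ lies in the closure; this buys you a slightly cleaner case split at the cost of having to verify $0 \in \overline{\menge{a,b}}$ by hand (which you handle correctly via the decidable rule selecting $a$ or $b$ at each stage). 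Your worry about making the four members of $T$ ``genuinely distinct'' is unnecessary: with $b \leqslant 0 \leqslant a$ one has $\inf\menge{a,b} = b$ and $\sup\menge{a,b} = a$ outright, so the four slots collapse to two and the case analysis simply repeats itself — the argument still goes through because the conclusion of each case is what matters, not the distinctness of the slots (the same collapse occurs in the paper's construction, where $\inf\menge{0,a} = 0$ and $\sup\menge{0,a} = a$ unconditionally).
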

\begin{proof}
	As mentioned above it is well known \cite{mM88}*{Theorem 4.2} that \LLPO implies (is equivalent to) that for all $a,b$ the set $\menge{a,b}$ is closed. 
	
	Conversely let $(a_n)_{n \geqslant 1}$ be a binary sequence with at most one $1$. Let $a = \sum \frac{a_n}{2^n}$ and define a sequence $(z_n)_{n \geqslant 1}$ in $\menge{0,a}$ by 
\[ z_n = \begin{cases}
 	0 & \fa{m < n}{a_m = 0} \\
 	0 & \ex{m<n}{a_m=1 \land m \text{ is odd}} \\
 	a & \ex{m<n}{a_m=1 \land m \text{ is even}} \ .
 \end{cases}
\]
Using the fact that $(a_n)_{n \geqslant 1}$ has at most one term equal to $1$, it is easy to see that $z_n$ is a Cauchy sequence converging to a point $z_\infty$. 
Now if $z_\infty \in \menge{a,b, \inf \menge{a,b}, \sup \menge{a, b}}$ we can make the following decisions:
\begin{description}
 \item[$z_\infty = 0$:] there cannot be an even $m$ such that $a_m = 1$, since in that case $a \neq 0$ and $z_n \to a$.
 \item[$z_\infty = a$:] there cannot be an odd $m$ such that $a_m = 1$, since in that case $a \neq 0$ and $z_n \to 0$.
 \item[$z_\infty = \inf \menge{a,b}$:] there cannot be an even $m$ such that $a_m = 1$, since in that case $a \neq 0 = \inf \menge{a,b} = 0$ and $z_n \to a$.
 \item[$z_\infty = \sup \menge{a,b}$:] there cannot be an odd $m$ such that $a_m = 1$, since in that case $a \neq 0 = \sup \menge{a,b} = a$ and $z_n \to 0$.
\end{description}
In any case we can decide whether $\fa{n \in \NN}{a_{2n}=0} \lor \fa{n \in \NN}{a_{2n+1}=0} $. So we have shown \LLPO.
\end{proof}

Actually we can even show the similar result that 

\begin{Pro}
   \LLPO is equivalent to the statement that for all $a,b \in \RR$  the closure of $\menge{a,b}$ is a finitely enumerable set with $3$ elements.		
\end{Pro}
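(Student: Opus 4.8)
The plan is to prove the two implications separately.

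For the direction from \LLPO, I would invoke Mandelkern's theorem, cited as item~\ref{item:LLPO-6} of Proposition~\ref{Pro:equivs_of_LLPO}, which once \LLPO is assumed gives $\overline{\menge{a,b}} = \menge{a,b}$ for all $a,b \in \RR$; since $\menge{a,b}$ may be presented as $\menge{a,b,a}$, this already exhibits $\overline{\menge{a,b}}$ as a finitely enumerable set with three elements, so this direction is immediate.

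For the converse I would argue as follows. Given a binary sequence $(a_n)_{n\geqslant 1}$ with at most one $1$, set
\[ b = \sum_{n\geqslant 1} a_{2n}\,2^{-n} - \sum_{n\geqslant 1} a_{2n+1}\,2^{-n}\ . \]
Because $(a_n)_{n\geqslant 1}$ has at most one $1$, the number $b$ equals $2^{-k}$ when $a_{2k}=1$, equals $-2^{-k}$ when $a_{2k+1}=1$, and equals $0$ otherwise, so that $b\geqslant 0 \iff \fa{n\geqslant 1}{a_{2n+1}=0}$ and $b\leqslant 0 \iff \fa{n\geqslant 1}{a_{2n}=0}$; hence deriving \LLPO for $(a_n)_{n\geqslant 1}$ is exactly the task of deriving $b\geqslant 0 \lor b\leqslant 0$. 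I would then apply the hypothesis to the pair $0,b$, obtaining reals $c_1,c_2,c_3$ with $\overline{\menge{0,b}} = \menge{c_1,c_2,c_3}$, and observe that the \emph{four} numbers $0$, $b$, $\inf\menge{0,b}$, $\sup\menge{0,b}$ all lie in $\overline{\menge{0,b}}$ --- the first two trivially, and $\inf\menge{0,b},\sup\menge{0,b}$ by the elementary remark that for every $\varepsilon>0$ one of them is within $\varepsilon$ of $0$ or of $b$ (either $\abs{b}<\varepsilon$, or $\abs{b}>0$, in which case the sign of $b$ is decidable and $\inf\menge{0,b},\sup\menge{0,b}$ are literally members of $\menge{0,b}$). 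Each of these four numbers therefore equals one of $c_1,c_2,c_3$, and a finite case analysis --- distribute the four disjunctions ``$x=c_1\lor x=c_2\lor x=c_3$'' and apply the pigeonhole principle to the resulting index tuples --- shows that two of the four coincide. To finish, I would check that each of the six possible coincidences, namely $0=b$, $0=\inf\menge{0,b}$, $0=\sup\menge{0,b}$, $b=\inf\menge{0,b}$, $b=\sup\menge{0,b}$, $\inf\menge{0,b}=\sup\menge{0,b}$, implies $b\geqslant 0$ or $b\leqslant 0$, using only the inequalities $\inf\menge{0,b}\leqslant 0\leqslant\sup\menge{0,b}$ and $\inf\menge{0,b}\leqslant b\leqslant\sup\menge{0,b}$; this completes the derivation of \LLPO.

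The only step that needs care is the pigeonhole argument: constructively one cannot single out an index $i$ with $x=c_i$, so the proof must keep the ternary disjunctions intact and split into the (finitely many) combined cases before applying pigeonhole to the natural-number indices. I do not expect any genuine obstacle; conceptually the whole content is the small observation that three enumeration slots are exactly enough to force one of $\inf\menge{0,b}$, $\sup\menge{0,b}$ to collapse onto $0$ or $b$, whereas the four slots of the previous proposition are not.
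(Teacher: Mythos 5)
Your proposal is correct, but it takes a genuinely different route from the paper's. The paper reuses the auxiliary Cauchy sequences $(z_n)$ and $(z_n')$ from the preceding proposition (each tracking whether an even or odd index with $a_m=1$ has occurred), and then carries out a $3\times 3$ case analysis on which of the three closure elements each limit $z_\infty$, $z_\infty'$ equals; the genuinely new case is $z_\infty = z_\infty' = b$, where the existence of any $m$ with $a_m=1$ would contradict $z_\infty = z_\infty'$. You dispense with the detector sequences entirely: after encoding the binary sequence as a signed real $b$ (so that \LLPO for the sequence is exactly $b\geqslant 0 \lor b\leqslant 0$), you observe that the four numbers $0,b,\inf\menge{0,b},\sup\menge{0,b}$ all lie in the three-element closure, distribute the four ternary membership disjunctions into $3^4$ conjunctive cases, and apply pigeonhole to the index tuple in each case. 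You then verify that each of the six possible coincidences among the four numbers forces $b\geqslant 0$ or $b\leqslant 0$ using only $\inf\menge{0,b}\leqslant 0,b \leqslant \sup\menge{0,b}$; all six checks go through. What your version buys is a more transparent explanation of why ``three slots'' suffice here where ``four slots'' (in the preceding proposition) do not, and it avoids the Cauchy-sequence machinery; it also ties in nicely with the paper's own remark, just before these two propositions, that $\menge{a,b,\inf\menge{a,b},\sup\menge{a,b}}$ is the natural candidate for the closure. One small point worth making explicit in a write-up: the closure claims $\inf\menge{0,b},\sup\menge{0,b}\in\overline{\menge{0,b}}$ are established exactly as in the paper's preceding proposition (for each $\varepsilon>0$ one has $\abs{b}<\varepsilon$ or $\abs{b}>0$), and you correctly identified that the pigeonhole must be applied inside each of the $81$ conjunctive cases, where the coinciding pair of indices is decidable, rather than to an undischarged disjunction — which is the only place a careless constructive reading could go wrong.
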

\begin{proof}
	Again, as above \LLPO implies that $\menge{a,b}$ is closed, so the only interesting direction is the converse. Let $a_n$ and $a$ be as above.
	Since $\menge{0,a} \subset \overline{\menge{0,a}}$ we may assume that the closure of $\menge{0,a}$ is of the form $\menge{0,a,b}$. Now define $z_n$ as above, but also $z_n^\prime$ analogously, but with the odd and even case switched. If $z_\infty = 0$ or $z_\infty=a$ we are done as above. Similarly we are done if $z_\infty^\prime = 0$ or $z_\infty^\prime = a$. The only interesting and genuinely new case is that $z_\infty = z_\infty^\prime = b$. But in that case the assumption that there is $m$ with $a_m = 1$ leads to the contradiction $z_\infty \neq z_\infty^\prime$, both in the odd and the even case. Again in all cases we have decided $\fa{n \in \NN}{a_{2n}=0} \lor \fa{n \in \NN}{a_{2n+1}=0} $. So we have shown \LLPO.
\end{proof}

We conjecture that the above proposition can be generalised to any finite number and possibly the countable case. Notice that there exists a realisability model (based on infinite time Turing machines) in which there is a surjection $\NN \to \BS$ \cite{aB15}, which means that the closure of $\menge{a,b}$ is countable, but that in that model also \LPO and therefore \LLPO holds.

\subsection{\texorpdfstring{\WKL}{WKL}, Minima, and Fixed Points} \label{SSec:Minima}

The one principle in Constructive Reverse Mathematics that is possibly best known by non-constructivists is Weak K\H{o}nig's Lemma.
\begin{principle}[WKL]{\WKL} \label{PR:WKL}
Every infinite and decidable tree admits an infinite path.
\end{principle}
A decidable tree is a subset $T \subset \cS$ of all binary finite sequences which is \define{closed under restrictions}, that is if $u \in T$ and $v$ is a prefix of $u$, then also $u \in T$. A tree is infinite, if it is infinite as a set, which in this case is equivalent to containing arbitrarily long finite sequences. Finally, a tree \define{admits an infinite path}, if there exists an infinite binary sequence $\alpha \in \CS$ such that every finite prefix of $\alpha$ is in $T$.

It was, to our knowledge, Ishihara who first pointed out the equivalence of \LLPO and \WKL over intuitionistic logic and countable choice. If one is thinking in a resource sensitive way, then one can view \WKL as countable many versions of \LLPO \cite{Brattka:2010fk}. If one wants to factor out choice, then one can view \WKL as \LLPO plus a choice principle \cite{jB12}. 
\begin{Pro} \label{Pro:WKL_equiv_LLPO}
\WKL and \LLPO are equivalent.
\end{Pro}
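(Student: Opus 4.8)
The plan is to prove the two implications separately, using countable choice freely as the surrounding text allows. For $\WKL \implies \LLPO$, I would argue contrapositively-flavoured but directly: given a binary sequence $(a_n)_{n \geqslant 1}$ with at most one $1$, build a decidable subtree $T$ of $\cS$ consisting of those finite binary strings $u$ that are ``admissible'' in the sense that $u$ stays identically $0$ up to the first position (if any) where some $a_m = 1$ is detected, and thereafter is forced to follow a fixed branch depending on the parity of that $m$: if the witnessing index is even, the string must eventually read $1$ at its first coordinate (encoding ``$\exists n: a_{2n}=1$''), and if odd, read $1$ at its second coordinate. Concretely, I would design $T$ so that an infinite path through $T$ decodes into exactly the information ``$\fa{n}{a_{2n}=0}$'' or ``$\fa{n}{a_{2n+1}=0}$''. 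Decidability of $T$ follows because, at length $k$, one only needs to inspect $a_1,\dots,a_k$; infinitude follows because whatever the sequence does, the ``all zeros so far'' string of each length lies in $T$. Apply \WKL to get a path $\alpha$, read off its first two coordinates, and this yields the disjunction in the statement of \LLPO.

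For the converse $\LLPO \implies \WKL$, I would use the standard bisection argument, invoking \ACC to iterate \LLPO countably often. Given an infinite decidable tree $T$, I want to construct an infinite path $\alpha$ one bit at a time. At stage $k$, having committed to a finite string $u = \alpha(0)\cdots\alpha(k-1)$ that has infinitely many extensions in $T$, consider the two subtrees $T_{u0} = \set{v}{u0v \in T}$ and $T_{u1} = \set{v}{u1v \in T}$. Form two binary sequences $\beta, \gamma$ (with at most one $1$ each, after the usual trick of looking only at first-occurrences) encoding ``$T_{u0}$ is finite'' and ``$T_{u1}$ is finite''. Since $T_u$ is infinite, not both subtrees are finite, so $\lnot(\ex{n}{\beta(n)=1} \land \ex{n}{\gamma(n)=1})$; by the De Morgan form of \LLPO (Proposition~\ref{Pro:LLPO_equiv_deMorg}), either $T_{u0}$ is not finite or $T_{u1}$ is not finite — and ``not finite'' for a decidable tree means infinite (it has arbitrarily long strings, since a tree that is not infinite is bounded, and being bounded is decidable). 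Pick the corresponding bit for $\alpha(k)$; this is the step that consumes one instance of \LLPO, and \ACC packages the whole sequence of choices into the path $\alpha$. Each finite prefix of $\alpha$ lies in $T$ by construction, so $\alpha$ is the desired infinite path.

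The main obstacle I anticipate is the careful bookkeeping on the \LLPO side to ensure that ``$T_{u0}$ is finite versus infinite'' really can be phrased as a simply-existential statement suitable for the De Morgan form, and that one genuinely has the premise $\lnot(\dots \land \dots)$ rather than merely the (constructively stronger, hence unavailable) disjunction of ``at least one is infinite''. The point is that from $T_u$ infinite one gets, constructively, that it is \emph{impossible} for both immediate subtrees to be finite — each length-$k$ extension of $u$ in $T$ must go through one of them — and that impossibility is exactly what Proposition~\ref{Pro:LLPO_equiv_deMorg} needs. A secondary subtlety on the \WKL side is making the tree $T$ in the first direction honestly decidable and honestly infinite simultaneously; the ``at most one $1$'' hypothesis on $(a_n)$ is what makes the forced-branch construction coherent (otherwise two different witnessing parities could conflict). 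Everything else — the interval/subtree recursion, the use of \ACC, the extraction of bits — is routine.
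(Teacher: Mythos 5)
Your \LLPO $\implies$ \WKL argument is essentially the paper's own proof: you apply the De Morgan form of \LLPO (Proposition~\ref{Pro:LLPO_equiv_deMorg}) to sequences flagging that a subtree has run out of nodes at some level, conclude that $T_{u0}$ or $T_{u1}$ is infinite, and iterate. The only cosmetic differences are that the paper interleaves the two flag sequences into a single one $a_{2n}, a_{2n+1}$ whereas you keep two, and that the paper defines the flags positively as ``has an extension of length $n$'' so it never needs your (correct, but extra) lemma that a decidable tree that is not bounded is infinite. One small slip: you say \ACC packages the construction into a path, but because each application of \LLPO is made at the node you just chose, this recursion needs \ADC rather than \ACC; the paper says ``dependent choice.''

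Your \WKL $\implies$ \LLPO direction, however, has a genuine gap, and this is the direction the paper does not spell out. As you describe the tree, admissible strings ``stay identically $0$ up to the first position where some $a_m=1$ is detected,'' with the signal about the parity of $m$ only showing up \emph{after} that detection point. That places the information late in the path. But you then propose to ``read off its first two coordinates.'' Those first two coordinates are forced to be $0$ in your construction, so they carry no information; and recovering a late signal from an infinite path (i.e.\ deciding whether the path is $\zero$ or eventually turns to $1$) is itself \LPO, not something we may assume. The correct construction puts the signal at the \emph{root}: take
\[ T = \set{0u}{\fa{m \leqslant \abs{u}}{a_{2m}=0}} \cup \set{1u}{\fa{m \leqslant \abs{u}}{a_{2m+1}=0}} \ ,\]
which is exactly the tree the paper builds in the proof that $\WKLp{k} \implies \LLPO$. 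Here the first bit of a string is a free ``guess'' at which disjunct holds, and the corresponding branch is pruned only if that guess is later refuted. Decidability is immediate, and since $(a_n)_{n \geqslant 1}$ has at most one $1$, at least one of the two branches is never pruned, so $T$ is infinite. A \WKL-path then decides \LLPO by its \emph{first} bit alone, which is what you want. Your remark that the ``at most one $1$'' hypothesis is what keeps the construction coherent is on target, but it is doing its work in guaranteeing infinitude of the pruned tree, not in rescuing the forced-spine construction you wrote down.
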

\begin{proof}
This proof has been given many times; see for example \cite{jB12}. It relies on repeatedly using the fact that, given an infinite tree, \LLPO allows us to decide whether the left or the right subtree are infinite. We will repeat the proof for completeness sake, and since we are going to reuse the argument in similar situations further on. Let $T \subset \cS$. For $u \in \cS$ let $T_u$ denote the set $\set{w \in \cS}{u\ast w \in T}$; i.e.\ $T_u$ is the subtree ``below'' $u$. Now if $T$ is an infinite decidable tree, then we can define a binary sequence $(a_n)_{n \geqslant 1}$ by
\begin{align} 
a_{2n} = 0 \iff \ex{u \in 2^n}{0 \ast u \in T} \ , \\
a_{2n+1} = 0 \iff \ex{u \in 2^n}{1 \ast u \in T} \ . 
\end{align}
Notice that, since $T$ is infinite there cannot $n,m \in \NN$ such that $a_{2n}=1$ and $a_{2m+1}=1$. Hence by \ref{Pro:LLPO_equiv_deMorg} \LLPO  implies that either all even or all odd terms of $a_n$ are zero. That means that either $T_0$ or $T_1$ is infinite. Of course, both are also still decidable trees. Thus we can, using dependent choice, iteratively define a sequence $\alpha \in \CS$ such that $T_{\overline{\alpha}n}$ is a infinite, decidable tree for all $n \in \NN$. In particular $\overline{\alpha}n \in T$.
\end{proof}
\begin{Rmk} \begin{enumerate}
  \item Analysing the above proof we can see that \WLPO implies \WKL, only assuming unique choice. 
  \item  \WLPO is also equivalent to finding the left-most minimum.
\end{enumerate}
\end{Rmk}
\begin{proof}
\begin{enumerate}
  \item Clear.
  \item The only interesting part is the reverse direction. To that end let $(a_n)_{n \geqslant 1}$ be an increasing binary sequence. We can define a tree $T\subset \cS$ by $T = \set{1u }{u \in \cS} \cup \set{0w}{a_{\abs{w}+1} = 0}$. Now assume that $T$ admits an infinite path $\alpha \in \CS$. If $\alpha(0)=0$ we cannot have $n \in \NN$ with $a_n = 1$, since then $\overline{\alpha}(n) \notin T$. Hence, in that case, $\fa{n \in \NN}{a_n =0}$. If $\alpha(0)=1$ then the assumption $\fa{n \in \NN}{a_n=0}$ leads to a contradiction, since in that case, $0^n \in T$ for all $n \in \NN$, and therefore the left-most minimum $\alpha = \zero$. Together we can decide whether 
$\fa{n \in \NN}{a_n=0}$ or $\lnot \fa{n \in \NN}{a_n=0}$.  \qedhere
\end{enumerate}
\end{proof}

\begin{Pro} \label{Pro:WKL_Min}
The following are equivalent to \WKL
\begin{enumerate}
\item \label{Equiv:WKL0} Every uniformly continuous function $f:\CS \to \RR$ attains its minimum.
\item \label{Equiv:WKL0b} Every uniformly continuous function $f:X \to \RR$ on a compact metric space $X$ attains its minimum.
\item \label{Equiv:WKL1} Every uniformly continuous function $f:[0,1] \to \RR$ attains its minimum.
\item \label{Equiv:WKL3} (Brouwer's fix point theorem) Every uniformly continuous function $f:[0,1]^{2} \to [0,1]^{2}$ has a fixed point.
\item \label{Equiv:WKL4} (Peano's existence theorem for ordinary differential equations) If $A \subset \RR \times \RR$ and $r>0$ such that .
\end{enumerate}
\end{Pro}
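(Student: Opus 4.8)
The plan is to establish that \ref{Equiv:WKL0}, \ref{Equiv:WKL0b} and \ref{Equiv:WKL1} are equivalent to \WKL through the cycle
\[
  \WKL \implies \ref{Equiv:WKL0} \implies \ref{Equiv:WKL0b} \implies \ref{Equiv:WKL1} \implies \WKL \ ,
\]
and to treat Brouwer's fixed point theorem \ref{Equiv:WKL3} (and, in the same spirit, Peano's theorem \ref{Equiv:WKL4}) by a separate forward-and-backward pair of arguments. Throughout I would use that $\WKL \iff \LLPO$ (Proposition~\ref{Pro:WKL_equiv_LLPO}); note also that \ref{Equiv:WKL0} and \ref{Equiv:WKL1} are trivial instances of \ref{Equiv:WKL0b}, since $\CS$ and $[0,1]$ are compact metric spaces, so the cycle really does the job.

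\emph{The main step, $\WKL \implies \ref{Equiv:WKL0}$.} Let $f \colon \CS \to \RR$ be uniformly continuous with modulus $\omega$. Then $f(\CS)$ is totally bounded, so $m := \inf f$ exists, and for each $k$ a finite search over the values $f(u \ast \zero)$, $u \in 2^{\omega(k)}$, yields (via rational approximations) a rational $m_k$ with $m \leqslant m_k \leqslant m + 2^{-k}$. I would define a decidable tree $T \subseteq \cS$ by putting $u$ into $T$ exactly when some length-$\omega(|u|)$ string $v \supseteq u$ has $f(v \ast \zero) \leqslant m_{|u|} + 2^{-|u|}$, measured through a rational approximation and with enough slack in the threshold that $T$ comes out closed under restriction (coarsening the schedule of levels or using a faster-growing modulus if necessary). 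The tree is infinite because $m_k \leqslant m + 2^{-k}$ always furnishes a witnessing string of each length. By \WKL fix an infinite path $\alpha$; for every $k$ the prefix $\overline{\alpha}\,\omega(k)$ lies in $T$, which pins $f$ to within $O(2^{-k})$ of $m$ on the cylinder around $\alpha$, so $f(\alpha) \leqslant m + O(2^{-k})$. Letting $k \to \infty$ gives $f(\alpha) = m$, and $f$ attains its minimum. I expect the bookkeeping of the slack — so that $T$ is genuinely a tree yet still infinite, and branches realize the infimum — to be the main technical obstacle of the whole proof.

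\emph{The soft implications and $\ref{Equiv:WKL1} \implies \WKL$.} For $\ref{Equiv:WKL0} \implies \ref{Equiv:WKL0b}$: every compact (complete and totally bounded) metric space $X$ is the image of $\CS$ under a uniformly continuous surjection $\pi$, built from a nested family of finite $2^{-n}$-nets of $X$; so for uniformly continuous $g \colon X \to \RR$, the composite $g \circ \pi$ attains its minimum at some $\alpha \in \CS$, whence $g$ attains its minimum at $\pi(\alpha)$. For $\ref{Equiv:WKL1} \implies \WKL$ I would derive \LLPO. Given a binary sequence $(a_n)_{n \geqslant 1}$ with at most one $1$, let $f_k \colon [0,1] \to \RR$ be $\equiv 0$ as long as $a_j = 0$ for all $j \leqslant k$, and, once the unique $n_0 \leqslant k$ with $a_{n_0} = 1$ has appeared, let $f_k(t) = \min(t, 2^{-n_0})$ if $n_0$ is even and $f_k(t) = \min(1 - t, 2^{-n_0})$ if $n_0$ is odd. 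Splitting only on the decidable predicate ``$a_j = 0$ for all $j \leqslant k$'' one checks $\lVert f_k - f_\ell \rVert_\infty \leqslant 2^{-\min(k,\ell)}$, so $f := \lim_k f_k$ is a nonnegative uniformly continuous function with $\min\{f(0),f(1)\} = 0$, hence $\inf f = 0$. Let $t^\ast$ be a point where $f$ attains its minimum, so $f(t^\ast) = 0$. By cotransitivity, $t^\ast < 1$ or $t^\ast > 0$. If $t^\ast < 1$, no odd-indexed term can equal $1$ (it would give $f(t) = \min(1-t, 2^{-n_0})$ and force $t^\ast = 1$), so $\fa{n \in \NN}{a_{2n+1} = 0}$; symmetrically $t^\ast > 0$ yields $\fa{n \in \NN}{a_{2n} = 0}$. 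This is \LLPO.

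\emph{Brouwer and Peano.} $\WKL \implies \ref{Equiv:WKL3}$: since $[0,1]^2$ is a compact metric space, \ref{Equiv:WKL0b} gives that $x \mapsto d(x, F(x))$ attains its minimum at some $x_0$; that minimum is $0$, because triangulating $[0,1]^2$ finely relative to the modulus of $F$ and applying Sperner's lemma — a finite, constructively valid combinatorial fact — produces approximate fixed points to any accuracy; hence $F(x_0) = x_0$. Conversely, $F(s,t) = (\phi(s), 0)$ shows that \ref{Equiv:WKL3} implies the one-dimensional Brouwer fixed point theorem for $[0,1]$, from which \LLPO follows by the now-standard Brouwerian counterexample (essentially the one used for the intermediate value theorem; cf.\ Proposition~\ref{Pro:equivs_of_LLPO}), so $\ref{Equiv:WKL3} \iff \WKL$; here the mild obstacle is that the naive reduction via $\phi(\cdot) - \id$ must avoid spurious fixed points on the boundary, which a small billiard-type modification (or the classical rotating vector field in two dimensions) handles. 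Finally, Peano's theorem \ref{Equiv:WKL4} is obtained from the Euler-polygon approximations, whose limit is extracted using \WKL (compactness of the family of polygons), together with a Brouwerian counterexample whose ordinary differential equation has a branching solution set; this equivalence is essentially classical and I would only indicate the construction.
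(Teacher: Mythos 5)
Your proof is correct, but it takes a noticeably different route from the one in the paper, so let me compare the two. The paper's proof of this proposition is much thinner than yours: it establishes only the cycle $\ref{Equiv:WKL0} \implies \ref{Equiv:WKL0b} \implies \ref{Equiv:WKL1} \implies \ref{Equiv:WKL0}$, closing the loop with Lemma~\ref{Lem:extendingfunctions} (the extension lemma: a uniformly continuous $f\colon\CS\to\RR$ extends to a uniformly continuous $\widetilde f\colon[0,1]\to\RR$ preserving boundedness, positivity of infimum, and attainment of infimum), and it cites \cite{mH13} and \cite{dB12} for the equivalences of \ref{Equiv:WKL3} and \ref{Equiv:WKL4} with \WKL; the equivalence of $\ref{Equiv:WKL1}$ itself with \WKL is treated as a known result (cf.\ \cite{hI90}, cited elsewhere in the text). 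You, by contrast, supply the two missing links directly: a tree-of-near-minimizers argument for $\WKL \implies \ref{Equiv:WKL0}$, and a concrete Brouwerian counterexample for $\ref{Equiv:WKL1} \implies \LLPO$. What each approach buys: the paper's version is short and reuses infrastructure (the $\CS\hookrightarrow[0,1]$ extension lemma does work in several other propositions, so proving $\ref{Equiv:WKL1}\implies\ref{Equiv:WKL0}$ that way is essentially free), at the price of leaning on external references; your version is self-contained and makes the constructive content explicit, at the price of more bookkeeping.

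Two small technical remarks on your sketch. First, in the tree $T$ for $\WKL \implies \ref{Equiv:WKL0}$, the stated threshold $m_{|u|} + 2^{-|u|}$ does not quite close under restriction as written: if $u'\sqsubseteq u$ and you truncate the witness $v$ to length $\omega(|u'|)$, the modulus estimate contributes an extra $2^{-|u'|}$ on top of $m_{|u|}+2^{-|u|}$, and the resulting bound $m + 2\cdot2^{-|u|} + 2^{-|u'|}$ does not sit below $m_{|u'|} + 2^{-|u'|}$. You flag this yourself; it is fixable by loosening the threshold by a constant factor (say $m_{|u|} + 3\cdot 2^{-|u|}$) or by thinning to a subsequence of levels, but it is a genuine constant to get right, not a cosmetic one. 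Second, in your $\ref{Equiv:WKL1}\implies\LLPO$ counterexample, the observation that $\min\{f(0),f(1)\}=0$ and hence $\inf f = 0$ is correct but deserves a sentence — constructively one shows $\min\{f(0),f(1)\}\leqslant 2^{-k}$ for every $k$ by the case split on whether a $1$ has appeared among $a_1,\dots,a_k$, rather than by asserting that one of $f(0),f(1)$ is exactly $0$. With those two details tightened, the argument is sound and arguably the more instructive of the two.
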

\begin{proof}
That \ref{Equiv:WKL0} implies \ref{Equiv:WKL0b} follows that for every compact space $X$ there exists a surjection $G:\CS \to X$ \cite{Troelstra1988a}*{Corollary 4.4}.
Clearly \ref{Equiv:WKL0b} implies \ref{Equiv:WKL1}.
That \ref{Equiv:WKL1} implies \ref{Equiv:WKL0} follows from Lemma \ref{Lem:extendingfunctions}.
The proof of the equivalence of \ref{Equiv:WKL3} and \WKL can be found in \cite{mH13}. Finally the equivalence of \ref{Equiv:WKL4} and \WKL can be found in  \cite{dB12} (and relies on the equivalence of \WKL and  \ref{Equiv:WKL0b}).
\end{proof}

\begin{Rmk}
Using Proposition \ref{Pro:LLPO_impl_UCT} we can replace uniform by point-wise continuity in the above proposition.
\end{Rmk}

This last proposition above also allows the following heuristic.
\begin{Heu}
	Many fix point theorems are equivalent to \WKL.
\end{Heu}
Of course this is just a heuristic, which actually fails in some situations such as Proposition \ref{Pro:Hillam} or \ref{Pro:FANDfixp}.  It does, however, work for many well known ones such as Brouwer's fix point theorem mentioned above and many others \cite{mH13}. We can even give a ``proof'' for our heuristic. 
\begin{proof}
The reason that our heuristic is a good one is that often fix point theorems consider a point-wise continuous function $f$ on a compact space $X$, and one can construct approximate fix points with constructive, often discrete, arguments. Thus if we consider the function $g:X \to \RR$ defined by $g(x) = d(x,f(x))$ we can see that having approximate fix points translates into $\inf g =0$, and thus the above proposition means that \WKL implies the existence of a fix point. Since \WKL also implies \UCT (see Proposition \ref{Pro:LLPO_impl_UCT}) and therefore \FAND, this argument is still valid, even if our original function is only point-wise continuous, or if we need a mild version of the fan theorem to prove the existence of approximate fix points. 

Conversely most fix point theorems imply the intermediate value theorem as a very simple instance. Thus, by \ref{Pro:equivs_of_LLPO}, they imply \LLPO and therefore \WKL.
\end{proof}
As the ``proof'' above shows this also implies that if we add the additional assumption that there is uniformly at most one solution to a fix point theorem we often get a purely constructive theorem. This phenomenon is exhaustively treated in \cite{jB06c} and \cite{pS06}.

\subsection{Space-filling curves}
\begin{quote}
\textit{The content of this section might well be folklore among some constructivists. We do not claim to be the first to prove these results, but we are also not aware that they have been written down anywhere else.}
\end{quote}

One of the stranger objects in (classical) analysis are \define{space-filling curves}, which are continuous functions $[0,1] \to [0,1]^2$ that are surjective. The first such curve was discovered by Peano in 1890, and since then numerous variants have been described. One of these variants which is very similar but slightly easier than Peano's one, is a construction by Hilbert from 1891. We are not going to repeat the well-known ideas here, but simply include a pretty picture of the first 4 steps in the construction and otherwise refer to the exhaustive monograph \cite{hS94} for details. 

\newdimen\HilbertLastX
\newdimen\HilbertLastY
\newcounter{HilbertOrder}

\def\DrawToNext#1#2{%
   \advance \HilbertLastX by #1
   \advance \HilbertLastY by #2
   \pgfpathlineto{\pgfqpoint{\HilbertLastX}{\HilbertLastY}}
}

\def\Hilbert[#1,#2,#3,#4,#5,#6,#7,#8] {
  \ifnum\value{HilbertOrder} > 0%
     \addtocounter{HilbertOrder}{-1}
     \Hilbert[#5,#6,#7,#8,#1,#2,#3,#4]
     \DrawToNext {#1} {#2}
     \Hilbert[#1,#2,#3,#4,#5,#6,#7,#8]
     \DrawToNext {#5} {#6}
     \Hilbert[#1,#2,#3,#4,#5,#6,#7,#8]
     \DrawToNext {#3} {#4}
     \Hilbert[#7,#8,#5,#6,#3,#4,#1,#2]
     \addtocounter{HilbertOrder}{1}
  \fi
}

\def\hilbert((#1,#2),#3){%
   \advance \HilbertLastX by #1
   \advance \HilbertLastY by #2
   \pgfpathmoveto{\pgfqpoint{\HilbertLastX}{\HilbertLastY}}
   \setcounter{HilbertOrder}{#3}
   \Hilbert[1mm,0mm,-1mm,0mm,0mm,1mm,0mm,-1mm]
   \pgfusepath{stroke}%
}

\begin{figure}[ht]%
    \centering
    \subfloat[$n=1$]{\tikz[scale=18] \hilbert((0mm,0mm),1);}~~
    \subfloat[$n=2$]{\tikz[scale=6] \hilbert((0mm,0mm),2);}~~
    \subfloat[$n=3$]{\tikz[scale=2.6] \hilbert((0mm,0mm),3);}~~
    \subfloat[$n=4$]{\tikz[scale=1.2] \hilbert((0mm,0mm),4);}~~
    \caption{The first 4 steps in Hilbert's construction. (Latex code from \href{http://www.texample.net/tikz/examples/hilbert-curve/}{www.texample.com}).}
\end{figure}%

Technically speaking, both the Hilbert curve and the Peano curve are described as the uniform limit of uniformly continuous functions. It is not hard to see that even constructively we can follow Hilbert's or Peano's construction, and obtain uniformly continuous functions $H, P:[0,1] \to [0,1]^2$. One can also easily see that points in $[0,1]^2$ that have a binary expansion have a pre-image.  Therefore $P([0,1])$ and $H([0,1])$ are dense in $[0,1]^2$. However, we cannot prove that $H$ and $P$ are surjective. As a matter of fact: 
\begin{Pro}
\LLPO is equivalent to the statement that Hilbert's space-filling curve or Peano's space-filling curve is surjective.
\end{Pro}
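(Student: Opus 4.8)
The plan is to prove the two implications separately, where the easy direction is constructing a Brouwerian counterexample (surjectivity $\implies$ \LLPO) and the harder direction is showing \LLPO (equivalently \WKL) suffices to prove surjectivity.

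For the direction surjectivity $\implies$ \LLPO, I would use the characterisation from Proposition \ref{Pro:equivs_of_LLPO}\ref{item:LLPO-4}, that \LLPO is equivalent to every real having a binary expansion. Given an arbitrary real $x \in [0,1]$, consider the point $(x,x) \in [0,1]^2$ (or, more carefully, a point chosen so that deciding which ``cell'' of the Hilbert/Peano subdivision it lies in encodes the binary-expansion decision). If $H$ (or $P$) is surjective, there is $t \in [0,1]$ with $H(t) = (x,x)$. Since $t$ has a binary expansion (rationals are dense and we can approximate $t$), and since the Hilbert/Peano construction maps dyadic subintervals of $[0,1]$ onto dyadic subsquares of $[0,1]^2$ in a way that is explicitly computable, reading off enough binary digits of $t$ tells us which subsquare $(x,x)$ lies in, hence gives a binary expansion of $x$. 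I would need to be a little careful at the ``boundary'' between adjacent subsquares — this is exactly where the \LLPO-content lives — but the point is that having \emph{any} preimage $t$ resolves the dichotomy that an arbitrary real cannot otherwise be placed on one side or the other of a dyadic rational.

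For the converse, I would invoke Proposition \ref{Pro:WKL_equiv_LLPO} to work with \WKL instead, and also note that \LLPO implies \UCT and hence \FAND (via Proposition \ref{Pro:LLPO_impl_UCT}), so all the uniform continuity and compactness machinery is available. Given a target point $p = (p_1,p_2) \in [0,1]^2$, build the decidable binary tree $T \subset \cS$ whose nodes at level $n$ are the length-$n$ dyadic addresses $\sigma$ of subsquares $Q_\sigma$ in the $n$-th stage of the construction such that $p$ is within distance $2^{-n}$ (say) of $Q_\sigma$; closure under restriction holds because the subdivision is nested, and the tree is infinite because $p$ is always close to \emph{some} cell at every level. \WKL gives an infinite path, which determines a point $t \in [0,1]$ (the nested dyadic subintervals corresponding to the path shrink to a point, using completeness), and uniform continuity of $H$ forces $H(t) = p$ since $H(t)$ and $p$ both lie in (the closure of) the shrinking cells $Q_{\overline{\alpha}n}$. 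I would phrase this uniformly in $p$ so that the same argument works for both $H$ and $P$, the only difference being the arity of the subdivision ($2\times 2$ versus $3 \times 3$) and the adjacency pattern of the cells.

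The main obstacle I expect is the bookkeeping in the counterexample direction: to extract a \emph{clean} binary-expansion statement I must choose the target point and the ``which cell'' decision so that it lines up exactly with a single application of \LLPO rather than an iterated/countable one — in particular the subsquares of the Hilbert curve are traversed in a rotated/reflected order, so a naive choice of $p$ would make the digit-reading depend on branch decisions at \emph{every} level, which is stronger than \LLPO. The fix is to pick $p$ lying on a single dyadic ``fault line'' (e.g. a point of the form $(\tfrac12 + \tfrac12 a, \text{fixed})$ for the real $a$ in question, or a point straddling exactly one cell boundary at the first stage and pinned down thereafter), so that exactly one binary choice is in question. The converse direction is more routine: the only care needed is to verify that the tree is genuinely decidable — distances to dyadic subsquares are between rationals and hence the relevant comparisons with $2^{-n}$ can be made decidable by the usual trick of leaving a little slack — and that the path-to-point map is well defined, which is just completeness plus the nesting of the subdivision.
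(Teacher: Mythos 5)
Your counterexample direction is, after the fix you yourself propose, essentially the paper's proof: you correctly identify that a naive choice of target point would require resolving a cell-boundary decision at every level of the recursion (which is $\omega$-iterated \LLPO, i.e.\ \WKL, not a single instance), and you correctly observe that the remedy is to place the target on a single first-level fault line. The paper implements exactly this with $p = (0, \nicefrac{1}{2}+a)$ and the three-piece decomposition $H([0,\nicefrac14]) \subset [0,\nicefrac12]^2$, $H([\nicefrac14,\nicefrac34]) \subset [\nicefrac12,1]\times[0,1]$, $H([\nicefrac34,1]) \subset [0,\nicefrac12]\times[\nicefrac12,1]$, after which ``$t < \nicefrac34$ or $t > \nicefrac14$'' delivers the \LLPO disjunction directly. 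The only difference is that you frame the target as ``every real has a binary expansion'', whereas the paper just decides $a \leqslant 0 \lor a \geqslant 0$ outright, which is cleaner and avoids any spurious reliance on the expansion equivalence in this direction.

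For \LLPO $\implies$ surjectivity you take a genuinely different route. The paper simply invokes ``\LLPO $\iff$ every real has a binary expansion'' (Proposition \ref{Pro:equivs_of_LLPO}) and then appeals to the observation, already made in the paragraph before the proposition, that any point of $[0,1]^2$ whose coordinates have binary expansions has a preimage under $H$ (read off the dyadic subsquare at each level, translate to the corresponding dyadic subinterval via the known cell permutation, and take the limit point). You instead build an explicit decidable tree of cell addresses at bounded distance from $p$, invoke \WKL, and then use the nested-cell argument plus uniform continuity to conclude. This works — closure under restriction follows from nesting, infiniteness from $p$ always lying in some closed cell, and decidability from the standard ``slack'' comparison, which you flag. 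The encoding issue (Hilbert's subdivision is quaternary, Peano's is $9$-ary, so the tree is not binary as written in $\cS$) is a routine re-indexing you also anticipate. The net effect is a longer proof of the same fact: the binary-expansion formulation hands you the path through the cells directly, with no need to set up \WKL from scratch, whereas your tree construction re-derives essentially that path via an appeal to \WKL. Both are correct; the paper's is shorter because it piggybacks on a pre-established equivalence and a prior remark in the text.
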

\begin{proof}
We will only focus on $H$, since the case of $P$ can be treated similarly.  
\LLPO implies that every real number has a binary expansion (Proposition \ref{Pro:equivs_of_LLPO}). Thus, as already remarked above we can actually find the pre-image of any point in $[0,1]^2$.

 Conversely note that 
\begin{align*}
  H\left( \bracks*{0,\nicefrac{1}{4}} \right) & \subset \bracks*{0,\nicefrac{1}{2}} \times \bracks*{0,\nicefrac{1}{2}} \ , \\
  H\left(\bracks*{\nicefrac{1}{4},\nicefrac{3}{4}}\right) & \subset \bracks*{\nicefrac{1}{2},1} \times \bracks*{0,1} \ ,   \\
  H\left(\bracks*{\nicefrac{3}{4},1}\right) & \subset \bracks*{0,\nicefrac{1}{2}} \times \bracks*{\nicefrac{1}{2},1}  \ ,
\end{align*}
since these relations are satisfied at every stage of the construction of $H$ and preserved under limits. Now let $a \in \RR$ arbitrary. Without loss of generality $\abs{a} < \nicefrac{1}{2}$. Consider the point $p = (0,\nicefrac{1}{2}+a)$. If $H$ is surjective, then there exists $t$ such that $H(t) = p$. Now either $t < \nicefrac{3}{4}$ or $t > \nicefrac{1}{4}$. In the first case it is impossible that $a > 0$, since then $p \notin H([0,\nicefrac{3}{4}])$. Similarly in the second case it is impossible that $ a< 0$. Thus we can decide whether $a \leqslant 0$ or $a \geqslant 0$; hence \LLPO holds.
\end{proof}

Since  a space-filling curve cannot be injective classically it is also not injective constructively. The question of whether we can prove a stronger version of non-injectiveness for an arbitrary space-filling curve constructively appears to be a tricky one.
\begin{Qu}
	Is every space-filling curve non-injective? More precise:  if $f$ is a space-filling curve, then does there exist $x \neq y$ with $f(x) = f(y)$?
\end{Qu}

It seems feasible to alter the constructions of $H$ and $P$ slightly by using overlapping square decompositions to obtain constructively surjective space-filling curves. However, there is a more straightforward and cleaner construction, which follows an idea of Lebesgue.

\begin{Lem} \label{Lem:spacefilling}
There exists a space-filling curve.	
\end{Lem}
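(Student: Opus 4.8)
The plan is to run Lebesgue's classical construction of a space-filling curve, being careful at the two places where constructive content is at stake: one starts from a uniformly continuous surjection $\psi\colon\CS\to[0,1]^2$, transports it into $[0,1]$ along the standard middle-thirds Cantor set, and fills the complementary gaps by affine interpolation; surjectivity is then automatic. \emph{First} I would fix $\psi$. Since $[0,1]$ is a compact metric space, \cite{Troelstra1988a}*{Corollary 4.4} supplies a uniformly continuous surjection $g\colon\CS\to[0,1]$ (alternatively $g$ can be built by hand: let $I_{()}=[0,1]$ and, for $u\in\cS$ with $I_u=[a,b]$ of length $\ell$, put $I_{u0}=[a,a+\tfrac23\ell]$ and $I_{u1}=[a+\tfrac13\ell,b]$, and let $g(c)$ be the unique point of $\bigcap_nI_{\overline cn}$; uniform continuity is clear from $\abs{I_u}=(\tfrac23)^{\abs u}$, and surjectivity follows because cotransitivity applied to $a+\tfrac13\ell<a+\tfrac23\ell$ always places a given $x\in I_u$ into $I_{u0}$ or into $I_{u1}$, so by \ADC one builds $c$ with $x\in I_{\overline cn}$ for all $n$, whence $x=g(c)$). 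Then $\psi(c):=\bigl(g(c_1,c_3,c_5,\dots),\,g(c_2,c_4,c_6,\dots)\bigr)$ is a uniformly continuous surjection onto $[0,1]^2$; fix a modulus $\delta\colon\NN\to\NN$ with $\delta_n\to0$ and such that $\overline cn=\overline{c'}n$ implies $\abs{\psi(c)-\psi(c')}\le\delta_n$.

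\emph{Next} I would set up the Cantor picture and the approximating curves. Let $e\colon\CS\to[0,1]$, $e(c)=\sum_{n\geqslant1}2c_n3^{-n}$, and for $u\in\cS$ let $J_u:=\bracks*{\sum_{k\leqslant\abs u}2u_k3^{-k},\ \sum_{k\leqslant\abs u}2u_k3^{-k}+3^{-\abs u}}$, a closed interval of length $3^{-\abs u}$ whose left and right thirds are $J_{u0}$ and $J_{u1}$; note $\bigcap_nJ_{\overline cn}=\menge{e(c)}$. Define $L_n\colon[0,1]\to[0,1]^2$ to be the (evidently well-defined and continuous) piecewise-affine map that is affine on each $J_u$ with $\abs u=n$ and affine across each of the $2^n-1$ complementary intervals, with $L_n$ taking the value $\psi(u\ast\zero)$ at the left endpoint of $J_u$ and $\psi(u\ast\one)$ at its right endpoint (so between $J_u$ and the next interval $J_{u'}$ it runs affinely from $\psi(u\ast\one)$ to $\psi(u'\ast\zero)$). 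Each $L_n$ is Lipschitz, hence uniformly continuous, and maps into the convex set $[0,1]^2$.

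\emph{Then} I would show $(L_n)$ is uniformly Cauchy and pass to the limit. Fix $m\geqslant n$. The map $L_n-L_m$ is continuous and piecewise affine, and every one of its finitely many, explicitly named breakpoints is an endpoint of some $J_v$ with $v$ of length $n$ or $m$, hence lies in the level-$n$ interval $J_u$ whose label $u$ consists of the first $n$ bits of $v$. For any $x\in J_u$ with $\abs u=n$, both $L_n(x)$ and $L_m(x)$ lie in the closed ball about $\psi(u\ast\zero)$ of radius $\delta_n$: on $J_u$ the map $L_n$ interpolates only $\psi(u\ast\zero)$ and $\psi(u\ast\one)$, while $L_m$ interpolates only values $\psi(w)$ whose first $n$ bits are $u$, and an affine map takes values in the convex hull of its endpoint values, which here lies in that (convex) ball. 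Hence $\abs{(L_n-L_m)(p)}\le2\delta_n$ at every breakpoint $p$, and since $\abs{L_n-L_m}$ is convex on each affine piece, $\sup_{x\in[0,1]}\abs{L_n(x)-L_m(x)}\le2\delta_n\to0$. Therefore $L:=\lim_nL_n$ exists and is uniformly continuous from $[0,1]$ to $[0,1]^2$. For surjectivity, observe that for each $c\in\CS$ we have $e(c)\in J_{\overline cn}$, so the estimate just proved gives $\abs{L_n(e(c))-\psi(\overline cn\ast\zero)}\le\delta_n$ and also $\abs{\psi(\overline cn\ast\zero)-\psi(c)}\le\delta_n$; letting $n\to\infty$ yields $L(e(c))=\psi(c)$. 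Thus $L\circ e=\psi$, and since $\psi$ is onto $[0,1]^2$, so is $L$; that is, $L$ is a space-filling curve.

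The step I expect to be the real obstacle is turning $L$ into an honest uniformly continuous function on all of $[0,1]$: one cannot simply declare ``$L=\psi$ on the Cantor set and affine on the gaps'', because a given $x\in[0,1]$ is in general neither decidably in the Cantor set nor decidably in any one gap. The way around this is exactly the uniform-limit presentation above — the sup-norm bound $\sup_{x}\abs{L_n(x)-L_m(x)}\le2\delta_n$ is established by evaluating at the finitely many explicitly given breakpoints of $L_n$ and $L_m$, each of which is patently inside one particular level-$n$ Cantor interval, so no undecidable case distinction is ever needed. The remaining ingredients — cotransitivity, \ADC for the surjectivity of $g$, and routine estimates with geometrically shrinking intervals and a fixed modulus of continuity — are all unproblematic.
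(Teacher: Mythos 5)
Your proof is correct, and it takes a genuinely different route on the extension step. The paper's own proof also begins by assembling a uniformly continuous surjection $\CS\to[0,1]^2$ (via $F^p$ with $p>\nicefrac12$ and the interleaving bijection $\CS\to\CS\times\CS$, exactly as you do), but then it discharges the hard work in one stroke by invoking Lemma~\ref{Lem:extendingfunctions}, which in turn defers to Berger's construction in the cited literature for how to extend a continuous $f:\CS\to\RR$ along $F^p$ (with $p<\nicefrac12$) to all of $[0,1]$ without the forbidden ``in the Cantor set or in a gap'' case split. You instead inline a direct, self-contained proof of precisely the instance of that extension lemma you need: build piecewise-affine approximants $L_n$ breaking only at the explicitly given endpoints of the level-$n$ Cantor intervals, bound $\sup\abs{L_n-L_m}$ by evaluating only at those finitely many decidable breakpoints and using convexity of $\abs{\,\cdot\,}$ on each affine piece, and pass to the uniform limit. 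This is exactly the device the paper gestures at but outsources. The trade-off is clear: the paper's version is one line and rests on an external lemma (which the paper itself only sketches by reference); yours is longer but exhibits the mechanism and makes the argument self-contained, which is arguably more in the spirit of a constructive treatment. Both yield the same object $\tilde f = L$ with $\tilde f\circ F = \psi$ (your $L\circ e = \psi$), and both conclude surjectivity the same way. One small remark: you should say explicitly that the modulus $\delta_n\to 0$ is obtained constructively from the uniform continuity of $\psi$ together with the compactness of $\CS$ (the supremum defining $\delta_n$ exists because it is the sup of a uniformly continuous function on a compact set); this is routine but is the kind of point worth flagging in a constructive setting, just as you flagged the gap-decidability issue.
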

\begin{proof}
As we will see in a later section (Lemma \ref{Lem:Fp}) there exists a uniformly continuous, surjective function $F:\CS \to [0,1]$. If we compose this function with the uniformly continuous bijection $\varphi:\CS \to \left(\CS\right)^2$ defined by \[\varphi(\alpha_1 \alpha_2 \alpha_3 \dots) = (\alpha_1 \alpha_3 \dots , \alpha_2 \alpha_4 \dots)  \]
we get a uniformly continuous surjection $f:\CS \to [0,1]^2$. By Lemma \ref{Lem:extendingfunctions} we can extend this to a uniformly continuous function $\tilde{f}:[0,1] \to [0,1]^2$. Obviously $\tilde{f}$ is still surjective.
\end{proof}
\begin{Lem}
There exists a uniformly continuous surjection $f: \RR \to \RR^2$	
\end{Lem}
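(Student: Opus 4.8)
The plan is to spread the space-filling curve of Lemma~\ref{Lem:spacefilling} over the integer lattice, tiling $\RR^2$ by unit squares so that the pieces all have the same modulus of continuity. First I would normalise that curve. Writing $\tilde f\colon[0,1]\to[0,1]^2$ for the uniformly continuous surjection produced in Lemma~\ref{Lem:spacefilling} and $c=(\tfrac12,\tfrac12)$ for the centre of the unit square, define $g^{\ast}\colon[0,1]\to[0,1]^2$ by letting $g^{\ast}$ traverse the straight segment from $c$ to $\tilde f(0)$ on $[0,\tfrac13]$, by $g^{\ast}(t)=\tilde f(3t-1)$ on $[\tfrac13,\tfrac23]$, and by the straight segment from $\tilde f(1)$ back to $c$ on $[\tfrac23,1]$. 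Since $[0,1]^2$ is convex these segments stay inside it, the three pieces agree at the junctions, and the middle piece is already onto; hence $g^{\ast}$ is uniformly continuous, surjective, and satisfies $g^{\ast}(0)=g^{\ast}(1)=c$.

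Next I would fix the standard spiral enumeration $(p_k)_{k\geqslant 0}$ of $\ZZ^2$ (start at $(0,0)$, then $(1,0),(1,1),(0,1),(-1,1),\dots$), which is given by a primitive recursion, hits every lattice point, and has the key property that consecutive points $p_k,p_{k+1}$ differ by a unit vector. Put $Q_k=p_k+[0,1]^2$ and $c_k=p_k+c$; then $\bigcup_k Q_k=\RR^2$, the squares $Q_k$ and $Q_{k+1}$ share an edge, and the straight segment from $c_k$ to $c_{k+1}$ has length $1$ and crosses that shared edge. Now define $h\colon[0,\infty)\to\RR^2$ piecewise on intervals of length $1$: on $[2k,2k+1]$ set $h(t)=p_k+g^{\ast}(t-2k)$, a surjection onto $Q_k$ starting and ending at $c_k$; on $[2k+1,2k+2]$ let $h$ traverse the segment from $c_k$ to $c_{k+1}$ at unit speed. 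The two formulas agree at every integer (both give the appropriate $c_k$), so $h$ is well defined and continuous; it is surjective already because the pieces on the intervals $[2k,2k+1]$ cover $\bigcup_k Q_k=\RR^2$. Finally set $f(t)=h(\abs{t})$ for $t\in\RR$.

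For uniform continuity I would exhibit an explicit modulus. Every piece of $h$ is, up to translation, either the fixed map $g^{\ast}$ (with some modulus $\omega$) or a $1$-Lipschitz segment; given $\varepsilon>0$ take $\delta=\min\{\omega(\varepsilon/2),\varepsilon/2,1\}$. If $0\leqslant s\leqslant t$ with $t-s<\delta$, then $s$ and $t$ lie in at most two consecutive pieces; inserting the integer $n$ lying between them (when they lie in different pieces) and using $n-s\leqslant t-s<\delta$ and $t-n\leqslant t-s<\delta$ together with the estimates on each piece gives $\abs{h(s)-h(t)}\leqslant\varepsilon/2+\varepsilon/2=\varepsilon$. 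Hence $h$ is uniformly continuous on $[0,\infty)$, and since $t\mapsto\abs{t}$ is a $1$-Lipschitz surjection $\RR\to[0,\infty)$, the composite $f=h\circ\abs{\,\cdot\,}$ is a uniformly continuous surjection $\RR\to\RR^2$.

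The only delicate point I anticipate is the bookkeeping that makes the glued curve genuinely continuous with a \emph{single} modulus; this is handled by normalising the space-filling curve to begin and end at the centre of its square and by routing between squares along the unit segments joining their centres, so that every piece of $h$ is a translate of one of just two fixed uniformly continuous maps. No choice beyond what Lemma~\ref{Lem:spacefilling} already uses is required, since the spiral, the connecting segments, and $g^{\ast}$ are all given explicitly.
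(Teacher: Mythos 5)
Your construction is correct and fills in exactly what the paper declares "straightforward with the previous lemma": normalise $\tilde f$ so that it starts and ends at the centre of its square, spiral through $\ZZ^2$, traverse each unit square by a translate of the normalised curve, and route between consecutive squares along the unit segment joining their centres. The one genuinely delicate point — that the infinitely many glued pieces admit a \emph{single} modulus of continuity — is handled correctly, since every piece is a translate of one of two fixed maps ($g^{\ast}$ or a unit-speed segment), and your $\delta=\min\{\omega(\varepsilon/2),\varepsilon/2,1\}$ argument, inserting the integer between $s$ and $t$ when they straddle a joint, is exactly right. Extending from $[0,\infty)$ to $\RR$ via the $1$-Lipschitz map $t\mapsto\abs{t}$ is also fine. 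All ingredients (the spiral enumeration, $g^{\ast}$, the segment pieces) are given by explicit primitive recursions, so no choice beyond what Lemma~\ref{Lem:spacefilling} uses is invoked — which matters for the constructive setting of the paper.
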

\begin{proof}
Straightforward with the previous lemma.	
\end{proof}

\begin{Cor}
In \RUSS there exists a continuous surjection $f:[0,1] \to \RR^2$.	
\end{Cor}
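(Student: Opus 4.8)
The plan is to produce, in \RUSS, a pointwise continuous surjection $\psi\colon[0,1]\to\RR$ and then compose it with the uniformly continuous surjection $\sigma\colon\RR\to\RR^{2}$ of the previous lemma; $f\coloneqq\sigma\circ\psi$ is then the required continuous surjection $[0,1]\to\RR^{2}$.

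For $\psi$ I would use the characteristically \RUSS fact (a ``singular cover'' of the interval, a form of the failure of Heine--Borel / the fan theorem that is established in the later \RUSS chapter): there is a sequence of pairwise disjoint non-empty open intervals $(p_{n},q_{n})\subseteq(0,1)$ such that every point of $[0,1]$ lies in one of the $(p_{n},q_{n})$. On the $n$-th interval define $\psi$ to be the piecewise linear ``wiggle'' determined by $\psi(p_{n})=0$, $\psi\bigl(p_{n}+\tfrac13(q_{n}-p_{n})\bigr)=-n$, $\psi\bigl(p_{n}+\tfrac23(q_{n}-p_{n})\bigr)=n$, $\psi(q_{n})=0$, and set $\psi(x)=0$ for $x\notin\bigcup_{n}(p_{n},q_{n})$. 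Since the intervals are open and pairwise disjoint, each $x\in[0,1]$ has a neighbourhood on which $\psi$ agrees with a single such (continuous, piecewise linear) wiggle, so $\psi$ is pointwise continuous. And $\psi$ is surjective: given $y\in\RR$ choose, by the Archimedean property, an $n$ with $\lvert y\rvert<n$; the middle third of $(p_{n},q_{n})$ is mapped linearly onto $[-n,n]\ni y$, so a preimage is obtained by inverting a single linear function, with no need to decide the sign of $y$ --- in particular without \LLPO.

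Finally, with $\sigma$ the uniformly continuous surjection $\RR\to\RR^{2}$ of the previous lemma, $f=\sigma\circ\psi$ is onto $\RR^{2}$ as a composite of surjections, and it is pointwise continuous: given $x\in[0,1]$ and $\varepsilon>0$, take $\delta>0$ witnessing uniform continuity of $\sigma$ at scale $\varepsilon$, then $\eta>0$ witnessing continuity of $\psi$ at $x$ at scale $\delta$; then $\lvert y-x\rvert<\eta$ gives $\lvert\psi(y)-\psi(x)\rvert<\delta$ and hence $\lvert f(y)-f(x)\rvert<\varepsilon$. The one genuinely \RUSS-specific input is the singular cover; the only subtlety in the elementary part is arranging surjectivity onto all of $\RR$ without a case distinction on the sign of $y$, which the symmetric amplitude-$n$ wiggles take care of. (One could also just quote the known \RUSS result that $[0,1]$ carries a pointwise continuous map onto $\RR$ and pass directly to the composition.)
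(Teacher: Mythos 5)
Your overall architecture matches the paper exactly: obtain a point-wise continuous surjection $[0,1]\to\RR$ and compose it with the uniformly continuous surjection $\RR\to\RR^2$ from the preceding lemma. The paper simply cites Proposition~\ref{Pro:equivs_of_SS} (the equivalence of \SS with the existence of a point-wise continuous surjection $[0,1]\to\RR$) for the first step; your parenthetical fallback is exactly that, and it is the clean route.

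Your attempt to build the first map from scratch, however, has a genuine error. You posit a family of \emph{pairwise disjoint} non-empty open intervals $(p_n,q_n)\subseteq(0,1)$ covering $[0,1]$. No such family can exist, constructively or classically, and for elementary reasons: each $p_n$ lies in $[0,1]$ but not in $(p_n,q_n)$, so the covering property forces $p_n\in(p_m,q_m)$ for some $m\neq n$; but then $p_m<p_n<q_m$ and $p_n<q_n$ give a common point of $(p_n,q_n)$ and $(p_m,q_m)$, contradicting disjointness. (The same observation already kills $0\in[0,1]$, which lies in no $(p_n,q_n)\subseteq(0,1)$.) Your own description is internally inconsistent on this point, since you also stipulate a value $\psi(x)=0$ for $x$ outside the union.

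The singular covers of the \RUSS chapter are made of \emph{closed} intervals that may share endpoints (condition SC3), and that changes the continuity argument: the wiggles must be pinned to $0$ at the shared endpoints so the piecewise definition agrees there, and even then ``each $x$ has a neighbourhood lying in a single interval'' fails, since $x$ may sit at (or arbitrarily near) a shared endpoint where three or more pieces meet and the neighbouring amplitude $k$ is not under your control. Establishing point-wise continuity there is precisely the content of the cited construction in \cite{hD08b}; it is not a one-line local-constancy observation. So stick with the fallback you already mention: invoke Proposition~\ref{Pro:equivs_of_SS} for the surjection $[0,1]\to\RR$ and compose.
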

\begin{proof}
In \RUSS there exists an unbounded point-wise continuous surjective map $h:[0,1] \to \RR$  (see Proposition \ref{Pro:equivs_of_SS}). That map, composed with the one from the previous lemma gives us the desired result.
\end{proof}

We can use the above construction of a space-filling curve to partially answer a question posed by K.~Weihrauch, who asked whether in TTE \cite{Weihrauch:2000uq} there exist computable curves connecting pairs of diagonally opposite points of the unit square that do not intersect at a computable point. Such functions exist in \RUSS, however the original paper (1976 by S.N.~Manukyan) is in Russian and hard to obtain, so the details of the construction are not clear. Recently K.~Weihrauch has answered his own question and has shown that in TTE there cannot be a strong counterexample such as in \RUSS \cite{kW17}.

 The result below is transferrable to TTE and shows the weaker statement that that there cannot be an algorithm taking such curves as inputs and computing the point of intersection.
\begin{Pro}
\LLPO is equivalent to the following statement:
If $g,h:[0,1] \to [0,1]^2$	are point-wise continuous such that $g(0) =(0,0)$, $g(1) =(1,1)$, $h(0) =(0,1)$, and $h(1) =(1,0)$, then they intersect, that is there exist points $t_1,t_2$ such that $g(t_1) = h(t_2)$.
\end{Pro}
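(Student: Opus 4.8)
The plan is to prove an equivalence between \LLPO and the curve-intersection statement. The reverse direction ($\LLPO$ implies the statement) should be reasonably straightforward: under \LLPO we have \WKL, hence all the compactness machinery, so one can run the classical proof that two such curves must cross. Concretely, consider the function $\delta:[0,1]^2 \to \RR$ given by $\delta(t_1,t_2) = \lvert g(t_1) - h(t_2)\rvert$ on the compact square $[0,1]^2$; by Proposition \ref{Pro:WKL_Min} (item \ref{Equiv:WKL0b}) $\delta$ attains its minimum, so it suffices to show $\inf \delta = 0$, i.e.\ that there are approximate intersection points. This last fact is the genuinely classical core: the two paths, one joining $(0,0)$ to $(1,1)$ and the other joining $(0,1)$ to $(1,0)$ inside the square, must come arbitrarily close — a statement provable from \WKL (for instance via a planar combinatorial/parity argument, or by reducing to Brouwer's fixed point theorem / the intermediate value theorem, both available once we have \WKL). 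Since $\inf\delta = 0$ and the minimum is attained, there are $t_1,t_2$ with $\delta(t_1,t_2)=0$, i.e.\ $g(t_1)=h(t_2)$.

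For the forward direction (the statement implies \LLPO), the plan is a Brouwerian-counterexample construction in the spirit of the earlier propositions in this section (e.g.\ the space-filling-curve proposition and the Rising Sun proofs). Given an arbitrary real $a$ with, without loss of generality, $\lvert a\rvert < \tfrac14$, I would build two piecewise-linear curves $g,h$ with the prescribed corner values whose images are forced apart unless $a=0$ in a direction depending on the sign of $a$. The idea is to make $g$ travel mostly along the bottom/right of the square while $h$ travels along the top/left, so that in the ``middle'' region the only possible crossing point has a second coordinate near $\tfrac12$, and then perturb by $a$: if $a>0$, push $g$'s middle segment slightly downward (staying in $\{y \le \tfrac12 - \varepsilon(a)\}$) so $g$'s image avoids $\{y = \tfrac12\}$; if $a<0$, push $h$'s middle segment slightly upward similarly. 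An intersection point $g(t_1)=h(t_2)$ then pins down $t_1,t_2$ to the respective middle segments and yields a contradiction to $a>0$ or to $a<0$, so we can decide $a\le 0$ or $a\ge 0$, which is \LLPO by Proposition \ref{Pro:equivs_of_LLPO}.\ref{item:LLPO-1}.

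The main obstacle I expect is making the ``two crossing paths in a square must meet'' step fully precise and honestly reducing it to \WKL rather than silently invoking a classical topological fact. The cleanest route is probably to cite or re-derive it from Brouwer's fixed point theorem in $[0,1]^2$ (Proposition \ref{Pro:WKL_Min}.\ref{Equiv:WKL3}): given $g$ and $h$, cook up a uniformly continuous self-map of $[0,1]^2$ whose fixed points correspond to pairs $(t_1,t_2)$ with $g(t_1)=h(t_2)$ — or, failing a direct map, first get \emph{approximate} intersections from the intermediate value theorem applied to a suitable real-valued function of one variable (fixing a discretization scale, cross a ``horizontal'' separating curve), and then pass to the exact minimum via Proposition \ref{Pro:WKL_Min}. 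A secondary, more bookkeeping-heavy obstacle is verifying that the piecewise-linear $g,h$ in the converse genuinely have the claimed image-containment properties and that the perturbation by $a$ does not accidentally create or destroy an intersection elsewhere; this is the same style of careful case analysis as in Proposition \ref{Pro:rising_sun}, so it should be routine but needs to be written out with a picture.
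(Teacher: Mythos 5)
Your reverse direction (\LLPO{} implies the statement) is essentially the paper's proof: both define a distance function on a compact domain, invoke Proposition \ref{Pro:WKL_Min} to attain the minimum, and separately argue $\inf = 0$. The paper routes through the space-filling curve to reduce to $[0,1]$ rather than working on $[0,1]^2$ directly, but this is a cosmetic choice. You correctly flag that ``$\inf = 0$'' is the genuinely classical content; the paper glosses it as ``easy to see,'' so you are, if anything, more honest about the load-bearing step.

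Your forward direction is where you genuinely diverge, and there is a gap. The paper does not build a custom Brouwerian counterexample: it reduces directly to the intermediate value theorem by taking a continuous $f$ with $f(0)<0<f(1)$, making $h$ a graph that is \emph{flat} at $y=\tfrac12$ on the middle third, and making $g$ a graph that carries a scaled copy of $f$ in the middle third. Any intersection of the two graphs must lie in the middle third (on the outer thirds they are clearly separated), and there it forces $f(3t-1)=0$. Since \LLPO{} $\iff$ IVT is Proposition \ref{Pro:equivs_of_LLPO}.\ref{item:IVT}, this immediately gives \LLPO. Your plan instead perturbs the middle segments of $g$ and $h$ by an amount depending on $a$ and hopes to read off the sign of $a$ from the intersection. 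The trouble is that the two curves must intersect regardless of $a$ --- pushing $g$'s middle segment below $y=\tfrac12-\varepsilon(a)$ does not stop them from meeting, it just moves the meeting point to one of the outer thirds near a corner, and nothing you have said forces the intersection back into the ``middle segments'' where the $a$-perturbation is visible. As described, the location of the intersection ends up near $x=1$ for both signs of $a$, so you cannot make the decision you want. You could try to repair the construction, but the cleaner and already-validated route is the paper's: embed an arbitrary $f$ witnessing IVT, let one curve carry $f$ and the other be flat, and lean on the known equivalence IVT $\iff$ \LLPO{}. Your secondary worry about bookkeeping of the perturbed piecewise-linear curves would also then disappear, since the paper's version requires only checking that the graphs are strictly separated on the outer thirds.
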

\begin{proof}
For one direction we will show that the statement implies the intermediate value theorem (see Proposition \ref{Pro:equivs_of_LLPO}.\ref{item:IVT}). So let $f:[0,1] \to \RR$ be continuous such that $f(0) <0$ and $f(1) > 0$. Without loss of generality we may assume that $f([0,1]) \subset [-1,1]$ and that $f(0)=-1$ and $f(1) = 1$. Now define $h,g:[0,1] \to [0,1]^2$ piecewise linearly by 
\[ h(x) = \begin{cases} 
 -\frac{3}{2}x+1 & \text{ if }	0 \leqslant x \leqslant \frac{1}{3} \\
 \frac{1}{2}  & \text{ if }	\frac{1}{3} \leqslant x \leqslant \frac{2}{3} \\
 -\frac{3}{2}x + \frac{3}{2}  & \text{ if }	\frac{2}{3} \leqslant x \leqslant 1 \\
 \end{cases}
\]
and
\[ g(x) = \begin{cases} 
 x & \text{ if }	0 \leqslant x \leqslant \frac{1}{3} \\
 \frac{1}{6}f(3x-1) + \frac{1}{2}  & \text{ if }	\frac{1}{3} \leqslant x \leqslant \frac{2}{3} \\
 x  & \text{ if }	\frac{2}{3} \leqslant x \leqslant 1  \ .\\
 \end{cases}
\]
 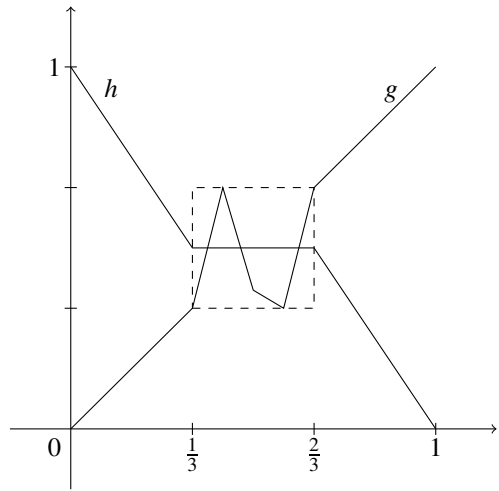
\begin{figure}[ht]
\centering
\begin{tikzpicture}[x=0.8cm,y=0.8cm]
\draw[->] (-1,0) -- (7,0);
\draw[->] (0,-1) -- (0,7);
\draw (2,0.1) -- (2,-0.1);
\draw (4,0.1) -- (4,-0.1);
\draw (6,0.1) -- (6,-0.1);
\draw (-0.1,2) -- (0.1,2);
\draw (-0.1,4) -- (0.1,4);
\draw (-0.1,6) -- (0.1,6);
\draw (0,6) -- (2,3) -- (4,3) -- (6,0);
\draw (0,0) -- (2,2) -- (2.5, 4) -- (3,2.3) -- (3.5, 2) -- (4,4) -- (6,6);

\draw[dashed] (2,2) -- (4,2) -- (4,4) -- (2,4) -- (2,2);

\path (6,0) node[anchor=north] {$1$};
\path (2,0) node[anchor=north] {$\frac{1}{3}$};
\path (4,0) node[anchor=north] {$\frac{2}{3}$};
\path (0,0) node[anchor=north east] {$0$};
\path (0,6) node[anchor=east] {$1$};

\path (0.4,5.5) node[anchor=base west] {$h$};
\path (5,5.5) node[anchor=base west] {$g$};

\end{tikzpicture}
\caption{A scaled copy of $f$ is placed in the middle third square of the unit square. The roots of $f$ are then linearly related to the points of intersection of $h$ and $g$.}
\end{figure}

It is easy to see that if $t \mapsto (t,h(t))$ and $t \mapsto (t,g(t))$ intersect, then they must do so for  $t \in [\frac{1}{3},\frac{2}{3}]$, which implies that $f(3t-1) = 0$. Thus we have found a root of $f$.

Conversely assume that \LLPO holds, and let $h$ and $g$ be as in the statement of the proposition. Then we can use the space-filling curve $\tilde{f}$ constructed in the previous lemma to consider the function  and define the function $M: [0,1] \to \RR$ by
\[ M(x) = d(h (\tilde{f}_1(x) ),g(\tilde{f}_2(x)) \ . \]
By construction $M$ is uniformly continuous. It is also easy to see that $\inf M = 0$. Thus by \ref{Pro:WKL_Min} there exists $x \in [0,1]$ such that $M(x) = 0$, and hence the two curves intersect.
\end{proof}

\subsection{The Greedy Algorithm}
A matroid $(E, \mathcal{F})$ consists of a finite set $E$ together with a collection $\mathcal{F}$ of subsets $E$, such that 
\begin{enumerate}[label=I.\arabic*]
\item $\emptyset \in \mathcal{F}$
\item $A \in \mathcal{F} \land B \subset A \implies B \in \mathcal{F}$
\item $A,B \in \mathcal{F} \land \abs{B}=\abs{A}+1 \implies \ex{x \in B}{ A \cup \menge{x} \in \mathcal{F}}$.
\end{enumerate}
In the following, we will also assume that all the sets in  $\mathcal{F}$ are decidable. 
A nice introduction to matroids is \cite{jO06}.
The standard example of a matroid are the linear independent subsets of a finite set of vectors in a vector space. Property (I.3) then is the Steinitz exchange lemma. 
This also justifies the terminology of calling a set $A\in \mathcal{F}$ independent. A maximal independent set $A$ is one that is independent and such that there is no $B \in \mathcal{F}$ with $A \subsetneq B$.
Matroids are also important since they provide the minimal setting\footnote{One can actually do without the  property (I2). Such structures are known as greedoids.} on which a Greedy algorithm yields an optimal solution. A Greedy strategy is  the most na\"\i{}ve strategy possible: build up your solution by always choosing the most tantalising looking element next. In the following we assume that there is also a weighting function $w : E \to \RR$. The goal is to find a maximal independent set $A \in \mathcal{F}$ that has minimal weight i.e.\ that is such that the sum \[ \sum_{a \in A} w(a) \]
is smaller than that for any other maximal independent set. 
\begin{Pro}
The following are equivalent:
\begin{enumerate}
\item \label{greedy1} LLPO 
\item \label{greedy2} If $\menge{x_{1}, \dots,x_n} \subset \RR$, then there exists $1 \leqslant i \leqslant n$ with $x_{i} \leqslant x_{j}$ for all $1 \leqslant j \leqslant n$.
\item \label{greedy3} The Greedy algorithm. I.e.\ if $(E, \mathcal{F})$ is a matroid and $w:E \to \RR$ a function, then there exists a maximal independent subset with a minimal weight.
\end{enumerate}
\end{Pro}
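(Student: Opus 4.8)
The plan is to prove the cycle $\ref{greedy1}\Rightarrow\ref{greedy2}\Rightarrow\ref{greedy3}\Rightarrow\ref{greedy1}$, so that all the genuine content sits in the correctness of the greedy algorithm while the Brouwerian-counterexample work stays in two cheap steps. For $\ref{greedy1}\Rightarrow\ref{greedy2}$: Proposition~\ref{Pro:equivs_of_LLPO}.\ref{item:LLPO-1} tells us \LLPO gives, for any reals $u,v$, the dichotomy $u\leqslant v\lor v\leqslant u$ (apply it to $v-u$); given $x_{1},\dots,x_{n}$ I would compare pairwise $n-1$ times, carrying an index that witnesses ``minimal so far'', and finish by an obvious induction. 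The converse half $\ref{greedy2}\Rightarrow\ref{greedy1}$ is immediate --- instantiate $\ref{greedy2}$ with $n=2$, $x_{1}=x$, $x_{2}=0$ to recover Proposition~\ref{Pro:equivs_of_LLPO}.\ref{item:LLPO-1} --- so it suffices to close the loop through $\ref{greedy3}$.

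For $\ref{greedy2}\Rightarrow\ref{greedy3}$ I would first use $\ref{greedy2}$ repeatedly to sort $E=\menge{e_{1},\dots,e_{m}}$ by weight as $e_{\sigma(1)},\dots,e_{\sigma(m)}$ with $w(e_{\sigma(1)})\leqslant\cdots\leqslant w(e_{\sigma(m)})$, then run the textbook greedy pass ($A_{0}=\emptyset$, and $A_{i}=A_{i-1}\cup\menge{e_{\sigma(i)}}$ precisely when that set lies in $\mathcal F$, otherwise $A_{i}=A_{i-1}$), which is legitimate since $\mathcal F$ is decidable; set $A=A_{m}$. A short argument from~(I.2) shows $A$ is a maximal independent set, and the standard matroid facts (maximal independent sets all have the same cardinality; the exchange property for independent $X,Y$ with $\abs{X}<\abs{Y}$; see \cite{jO06}) force $\abs{B}=\abs{A}=:k$ for any maximal independent $B$, using only that equality on $\NN$ is decidable. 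Sorting $B$ as $b_{1},\dots,b_{k}$ with weakly increasing weights and writing $A$ as $a_{1},\dots,a_{k}$ in order of selection (hence already sorted), the crux is the inequality $w(a_{i})\leqslant w(b_{i})$ for every $i$; summing it gives $\sum_{a\in A}w(a)\leqslant\sum_{b\in B}w(b)$, which is the required minimality.

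I would prove $w(a_{i})\leqslant w(b_{i})$ contrapositively. Assume $w(b_{i})<w(a_{i})$; applying the exchange property to $A'=\menge{a_{1},\dots,a_{i-1}}$ and $B'=\menge{b_{1},\dots,b_{i}}$ yields $b_{j}\in B'\setminus A'$ with $A'\cup\menge{b_{j}}\in\mathcal F$. A weight comparison rules out $b_{j}\in A$ (if $b_{j}=a_{\ell}$ then either $\ell<i$, contradicting $b_{j}\notin A'$, or $\ell\geqslant i$, forcing $w(b_{j})=w(a_{\ell})\geqslant w(a_{i})>w(b_{j})$). But since $w(b_{j})\leqslant w(b_{i})<w(a_{i})$, at the moment the greedy pass examines $b_{j}$ its current set --- consisting of elements selected earlier, each of weight $\leqslant w(b_{j})<w(a_{i})$, hence lying among $a_{1},\dots,a_{i-1}$ --- is still contained in $A'$, so by~(I.2) adding $b_{j}$ keeps the set in $\mathcal F$ and $b_{j}$ would be selected; hence $b_{j}\in A$, a contradiction. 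As $\leqslant$ on $\RR$ is by definition the negation of $<$, we may pass from $\neg\bigl(w(b_{i})<w(a_{i})\bigr)$ to $w(a_{i})\leqslant w(b_{i})$. This constructive audit is the step I expect to be the main obstacle: the textbook proof begins by picking ``the first index at which greedy falls behind'', an existential not available here, so one must argue index by index and in \emph{negative} form, and double-check that every remaining case split (on $\NN$, on membership in $\mathcal F$, and the bookkeeping that $b_{j}$ is examined before the current set leaves $A'$) is decidable or reduces to the pairwise real comparison granted by $\ref{greedy2}$.

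For $\ref{greedy3}\Rightarrow\ref{greedy1}$ I would use the uniform matroid on $E=\menge{1,2}$ with $\mathcal F=\menge{\emptyset,\menge{1},\menge{2}}$ (the axioms are immediate), whose maximal independent sets are exactly $\menge{1}$ and $\menge{2}$; for an arbitrary $x\in\RR$ put $w(1)=x$ and $w(2)=0$. A minimal-weight maximal independent set $A$ must be $\menge{1}$ or $\menge{2}$ --- decidably, since $\mathcal F$ is decidable --- and comparing its weight with those of both $\menge{1}$ and $\menge{2}$ gives $x\leqslant 0$ when $A=\menge{1}$ and $0\leqslant x$ when $A=\menge{2}$; this is Proposition~\ref{Pro:equivs_of_LLPO}.\ref{item:LLPO-1}, hence \LLPO.
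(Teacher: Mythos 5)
Your cycle $\ref{greedy1}\Rightarrow\ref{greedy2}\Rightarrow\ref{greedy3}\Rightarrow\ref{greedy1}$ matches the paper's decomposition exactly, and both endpoint implications ($\ref{greedy1}\Rightarrow\ref{greedy2}$ by pairwise \LLPO comparison and induction, $\ref{greedy3}\Rightarrow\ref{greedy1}$ via the uniform matroid on $\menge{1,2}$) are the same argument up to the cosmetic choice $w(2)=0$ versus $w(2)=y$. Where you genuinely diverge --- and add value --- is on $\ref{greedy2}\Rightarrow\ref{greedy3}$: the paper states the ``best-in'' greedy pass (pick a minimum-weight admissible element from $Z_i$ at each stage, observing that step~6 consumes $\ref{greedy2}$) and then simply \emph{asserts} that the output is optimal, leaving the correctness of greedy-on-matroids as background knowledge. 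You instead use the Kruskal-style ``sort first, then filter'' formulation and then actually carry out the constructive audit: establishing $w(a_i)\leqslant w(b_i)$ for every $i$ contrapositively so that the conclusion lands on the negatively-defined $\leqslant$, replacing the classical ``first index at which greedy falls behind'' (an existential not available here) by an index-by-index argument in which every case split is either on natural numbers or follows from the one strict inequality $w(b_i)<w(a_i)$ provisionally assumed, and using decidability of $\mathcal F$ so that the greedy pass itself is a bona fide algorithm. One small presentation point: the paper's axiom~(I.3) quantifies $\exists x\in B$ rather than $\exists x\in B\setminus A$, so you should note (as is standard) that the useful witness may be taken in $B\setminus A$ --- otherwise the axiom is vacuous whenever $B\cap A\neq\emptyset$ --- or simply invoke the finite decidable search through $B'\setminus A'$ directly; this is a one-line remark, not a gap. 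Overall your proof is correct and supplies precisely the justification the paper's terse proof leaves to the reader.
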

\begin{proof}
$(\ref{greedy1}) \implies (\ref{greedy2})$ can be proved using induction and \ref{Pro:equivs_of_LLPO}. 
$(\ref{greedy2}) \implies (\ref{greedy3})$: a maximal independent set with minimal weight is just constructed by the Greedy algorithm: 
\begin{algorithmic}[1]
\State Let $F_{0} = \emptyset$.
\For{$i \geqslant 0$}
\State Let $Z_{i} = \set{z_{i} \in E \setminus F_{i}}{F_{i} \cup \menge{z_{i}} \in \mathcal{F}}$.
\If{$Z_{i} = \emptyset$} terminate and \textbf{return} $F_{i}$ .
\Else ~\State choose $y \in Z_{i}$ such that $w(y) \leqslant  w(z_{i})$ for all  $z_{i} \in Z_{i}$ \State let $F_{i+1} = F_{i} \cup \menge{y}$
\EndIf
\EndFor
\end{algorithmic}
\bigskip
Notice, that in step 6 we need (\ref{greedy2}).
For $(\ref{greedy3}) \implies (\ref{greedy1})$ let $x,y \in \RR$. Then 
\[ \mathcal{M} = \left(\menge{1,2}, \menge{\emptyset, \menge{1} ,\menge{2}}\right)  \]
is a matroid. Now define $w: \menge{1,2} \to \RR$ by setting $w(1)=x$ and $w(2)=y$. If the Greedy algorithm works, then either $\menge{1}$ or $\menge{2}$ has minimal weight. But that means that either $x = w(1) \leqslant w(2) = y$ or $  w(2) = y \leqslant x = w(1)$.
\end{proof}
\begin{Rmk}
It is almost trivial to see that adapting the Greedy algorithm, we can, for every $\varepsilon > 0$  efficiently find a maximal independent set $A$ such that 
\[  \sum_{a \in A} w(a) - \varepsilon < \sum_{b \in B} w(b) \ . \]
\end{Rmk}

\subsection{Sharkovskii's Theorem}
Sharkovskii's\footnote{There are various alternative ways of spelling Sharkovskii. Among them Sharkovsky \cite{sE07}, Sarkovskii \cite{rD87}, Sarkovski \cite{jB12}.} Theorem from 1964 is, in our opinion, one of the most entertaining results in mathematics: simple to state yet utterly surprising. It concerns itself with a very rudimentary type of dynamical system, namely a point-wise continuous function $f$ acting on $\RR$. We are interested in fix points of $f$ and \define{(prime) $k$-periodic points} which are points $x$ such that $f^{k}(x) = x $ but $\lnot (f^{i}(x)= x) $. 
Let us now define a total order $\triangleright$ on the natural numbers as indicated below
\begin{multline*}
3  \triangleright 5 \triangleright 7 \triangleright 9 \triangleright \dots \triangleright 2 \cdot 3 \triangleright 2 \cdot 5 \triangleright 2 \cdot 7 \triangleright \dots 
2^{2} \cdot 3 \triangleright 2^{2} \cdot 5 \triangleright 2^{2} \cdot 7 \triangleright \phantom{2} \\\dots 2^{3} \cdot 3 \triangleright 2^{3} \cdot 5 \triangleright 2^{3} \cdot 7  \triangleright \dots 2^{n+1} \triangleright 2^n \triangleright \dots 8 \triangleright 4 \triangleright 2 \triangleright 1 \ .
\end{multline*}
Sharkovskii's Theorem states:
\begin{principle}[ST]{\textrm{ST}}
If $f$ has a $k$-periodic point and $k \triangleright \ell$, then $f$ has points of prime period $\ell$.
\end{principle}
In particular, if a function has a $3$-periodic point it has points of all periods.
The only non-constructive part in the usual proof \cite{rD87} is the following technicality, which is actually equivalent to \LLPO.
\begin{Pro} 
\LLPO is equivalent to the statement that if $f: [0,1] \to \RR$ is a uniformly continuous map and $[a,b]$ an interval such that $f([0,1]) \supset [a,b]$, then there exists $[c,d] \subset [0,1]$ such that $f([c,d]) = [a,b]$.
\end{Pro}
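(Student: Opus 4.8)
The statement to prove is: \LLPO is equivalent to the "interval-restriction" lemma: if $f:[0,1]\to\RR$ is uniformly continuous and $[a,b]$ is an interval with $f([0,1])\supset[a,b]$, then there exists $[c,d]\subset[0,1]$ with $f([c,d])=[a,b]$.

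\textbf{Forward direction (\LLPO $\implies$ lemma).}  Assume $f([0,1])\supset[a,b]$. The classical recipe is to pick a point $c$ mapping to $a$ and a point $d$ mapping to $b$, chase them toward each other, and use the intermediate value theorem on the resulting subinterval.  The plan is: since \LLPO holds, \WKL holds (Proposition \ref{Pro:WKL_equiv_LLPO}), hence \UCT and \FAND; in particular a uniformly continuous function on a compact interval attains its infimum and supremum (Proposition \ref{Pro:WKL_Min}), and the intermediate value theorem holds (Proposition \ref{Pro:equivs_of_LLPO}, item \ref{item:IVT}).  Using \WKL-minima, let $U=f^{-1}(\{a\})\cap[0,1]$ and $V=f^{-1}(\{b\})\cap[0,1]$ — by \LLPO and the located-zero-set machinery of Proposition \ref{Pro:WLPO-equivs} these are compact and inhabited (they are inhabited because $a,b\in f([0,1])$).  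By \LLPO applied to the reals ``$\inf U\leqslant\inf V$ or $\inf V\leqslant\inf U$'' (item \ref{item:LLPO-1} of Proposition \ref{Pro:equivs_of_LLPO}), pick the smaller one as $c$; take $d$ to be the closest point of the other preimage-set on the far side of $c$ (i.e.\ $\inf\{x\geqslant c: f(x)=b\}$ if $c\in U$, symmetric if $c\in V$).  Then on $[c,d]$ the function $f$ takes the values $a$ and $b$ at the endpoints and, by the intermediate value theorem, every value in between; and by minimality of $c$ and $d$ it cannot exceed the range $[a,b]$ — any excursion above $b$ or below $a$ on $(c,d)$ would, by IVT, produce an interior point of the opposite preimage-set, contradicting the choice of $d$ (resp.\ $c$).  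Hence $f([c,d])=[a,b]$.

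\textbf{Reverse direction (lemma $\implies$ \LLPO).}  This is the usual Brouwerian-counterexample construction, and should be routine.  Given a binary sequence $(a_n)_{n\geqslant1}$ with at most one $1$, set $a=\sum_n a_n/2^n$ and build a piecewise-linear uniformly continuous $f:[0,1]\to\RR$ whose image is forced to contain a fixed target interval, say $[0,1]$, but which is ``flat'' on a middle third at a height governed by $\abs a$ and whether the first $1$ sits at an even or odd index.  Concretely, arrange that $f$ maps $[0,\nicefrac13]$ onto $[0,1]$ (the "$a=0$" behaviour) and separately maps $[\nicefrac23,1]$ onto $[0,1]$ in a way that only happens when the relevant parity holds.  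Applying the lemma with $[a,b]=[0,1]$ yields an interval $[c,d]$ with $f([c,d])=[0,1]$; by examining whether $d<\nicefrac23$ or $c>\nicefrac13$ one decides, in the style of Proposition \ref{Pro:rising_sun}, between "$a_{2n}=0$ for all $n$" and "$a_{2n+1}=0$ for all $n$", which is exactly \LLPO.  (A slightly cleaner variant: use two target intervals or reduce directly to the intermediate value theorem, which is \LLPO by Proposition \ref{Pro:equivs_of_LLPO}.)

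\textbf{Main obstacle.}  The delicate point is the forward direction: making sure, constructively, that the chosen $c$ and $d$ actually bound $[a,b]$ from \emph{both} sides, i.e.\ that $f$ does not overshoot on $(c,d)$.  This needs the minimality (leftmost-root) properties, which in turn rest on \WLPO-level consequences of \LLPO via Proposition \ref{Pro:WLPO-equivs}; one must be careful that $c$ and $d$ are genuinely the extremal preimages and that the IVT is applied only on the closed subinterval where monotone control is available.  Handling the case distinction on $\inf U$ vs.\ $\inf V$ (which requires \LLPO, not just \WKL-minima) so that the two symmetric sub-cases are genuinely symmetric is where care is needed; the rest is bookkeeping.
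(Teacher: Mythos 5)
Your forward direction has a genuine gap, and it is exactly where you yourself flag a concern. You want to take $c=\inf U$ and $d=\inf\{x\geqslant c: f(x)=b\}$ where $U,V$ are the preimages of $a,b$, and you invoke the located-zero-set machinery of Proposition \ref{Pro:WLPO-equivs} to justify that these exist. But that proposition shows that ``the zero set of a pointwise continuous function is located'' and ``the leftmost root exists'' are each \emph{equivalent} to \WLPO, which is strictly stronger than \LLPO; they are not ``\WLPO-level consequences of \LLPO.'' So under \LLPO alone you do not get $\inf U$ or $\inf V$ at all. (And even classically your overshoot argument is off: an excursion below $a$ on $(c,d)$ produces a point of $U$ that is \emph{greater} than $c$, which does not contradict $c=\min U$; the classical argument needs $c$ to be the \emph{last} $a$-preimage before $d$, i.e.\ a supremum, not the global minimum.)

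The paper avoids this entirely. Instead of locating the zero set, it fixes $a_0,b_0$ with $f(a_0)=a$, $f(b_0)=b$, $a_0\leqslant b_0$, and runs a bisection on $[a_0,b_0]$ using \LLPO at each step to decide whether $\sup f([a_0,\tfrac{1}{2}(a_n+b_n)])\geqslant b$ or $\leqslant b$, shrinking toward a point $d$ with $f(d)=b$ and $\sup f([a_0,d])=b$ simultaneously; a symmetric bisection inside $[a_0,d]$ then produces $c$. Each comparison is a single application of \LLPO to two real numbers (together with the minimum principle for uniformly continuous functions, which \LLPO does give), so the full zero set never needs to be located. That is the key idea your plan is missing. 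Your reverse direction is in the right spirit (piecewise-linear Brouwerian counterexample, case split on whether $d<\tfrac{2}{3}$ or $c>\tfrac{1}{3}$), though as sketched it is confused: if $f$ already maps $[0,\tfrac{1}{3}]$ onto $[0,1]$ then $[c,d]=[0,\tfrac{1}{3}]$ trivially works and nothing is decided. The paper's $f$ is tuned so that the middle over- or under-shoots $[0,1]$ by $\lvert a\rvert$ depending on the parity, forcing the witnessing $[c,d]$ to reveal the sign of $a$.
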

\begin{proof}
  To see that this principle implies LLPO let $(a_n)_{n \geqslant}$ be a binary sequence with at most one $1$. Let \[ a = \sum_{n \geqslant 1}
  \frac{(-1)^n a_n}{2^n} \] and define a piecewise linear function $f: [0,1] \to [0,1]$
  by \[ f(x) = \begin{cases}
    (3 +3a) x, & x \in \bracks*{0,\frac{1}{3}} \ ; \\
    -(3 + 6a)x +2 + 3a, & x \in \bracks*{\frac{1}{3}, \frac{2}{3}} \ ; \\
     (3+3a) x -2 -3a, & x \in \bracks*{\frac{2}{3},1} \ . \\
  \end{cases} \] \\
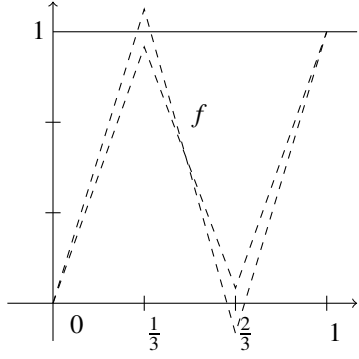
\begin{figure}[ht]
\centering
\begin{tikzpicture}[y=-1cm]
\draw[->] (3,7) -- (7.6,7);
\draw[->] (3.6,7.5) -- (3.6,3);
\draw (4.8,6.9) -- (4.8,7.1);
\draw (6,6.9) -- (6,7.1);
\draw (7.2,6.9) -- (7.2,7.1);
\draw (3.7,5.8) -- (3.5,5.8);
\draw (3.5,4.6) -- (3.7,4.6);
\draw (3.6,3.4) -- (7.6,3.4);
\draw[dashed] (3.6,7) -- (4.8,3.6) -- (6,6.8) -- (7.2,3.4);
\draw[dashed] (3.6,7) -- (4.8,3.1) -- (6,7.4) -- (7.2,3.4);
\draw (5.4,5.6) -- cycle;
\path (3.2,3.5) node[anchor=base west] {$1$};
\path (4.7,7.5) node[anchor=base west] {$\frac{1}{3}$};
\path (5.9,7.5) node[anchor=base west] {$\frac{2}{3}$};
\path (3.7,7.4) node[anchor=base west] {$0$};
\path (7.1,7.5) node[anchor=base west] {$1$};
\path (5.3,4.6) node[anchor=base west] {$f$};
\end{tikzpicture}
\caption{We think of $a$ as being very small. Depending on  $a$ $f$ crosses the lines $y = 0$ and $y=1$ or misses both.}
\end{figure} 
As $f([0,\frac{1}{3}]) \supset [0, \frac{2}{3}] $ and $f([\frac{2}{3},1]) \supset [\frac{1}{3},1] $ we see that $f([0,1]) \supset [0,1]$. So assume there exists an interval $[c,d] \subset [0,1]$ such that $ f([c,d]) = [0,1] $. Now either $c > 0 $ or $c < \frac{1}{6} $. In the first case $a \geqslant 0$ as the assumption $a < 0$ implies that for all $x \in [c,1]$ $f(x) > 0 $. In the second case either $d < 1$ or $d> \frac{5}{6} $. If $d < 1$ we can conclude as before that $ a \geqslant 0$. In the only case left that $d> \frac{5}{6} $ and $c < \frac{1}{6} $ we must have that $a \leqslant 0$ as the assumption $a > 0$ would imply that there are $x \in [c,d]$ such that $f(x) > 1$.
So we can decide whether $a \leqslant 0$ or $a \geqslant 0$. In the first case we must have that for all $n$ $\alpha_{2n} = 0$.
In the second case we must have that for all $n$ $\alpha_{2n+1} = 0$.

To see that the converse holds assume \LLPO holds. Furthermore let $f: [0,1] \to \RR$ be a uniformly continuous map and $[a,b]$ an interval such that $f([0,1]) \supset [a,b]$. Choose $a_0$ and $b_0$ such that $f (a_0) = a$ and $f(b_0) =b$. Without loss of generality $a_0 \leqslant b_0$.

We will define a sequence of nested intervals $([a_n,b_n])$ such that
  \begin{enumerate}
    \item $\abs*{a_{n+1} - b_{n+1}} \leqslant \frac{1}{2} \abs*{a_n - b_n}$
    \item $\sup \menge{ f \left( \bracks*{a_0, a_n} \right)} \leqslant b$ and $\sup \menge{ f \left(\bracks*{a_0, b_n}\right)} \geqslant b$
    \item $f(a_n) \leqslant b$ and $f(b_n) \geqslant b$.
  \end{enumerate}
So assume we have constructed $a_n$ and $b_n$. As we assume \LLPO either \[ \sup \menge{ f \bracks*{a_0, \frac{1}{2} (a_n + b_n)}} \geqslant b \text{ or } \sup \menge{ f \left(\bracks*{a_0, \frac{1}{2} (a_n + b_n)} \right)} \leqslant b \ . \] In the first case set $a_{n+1} = a_n $ and as we assumed \LLPO the minimum principle holds \cite{hI90} and so we can choose $b_{n+1} \in  (a_0,\frac{1}{2} (a_n + b_n))$ such that $f(b_{n+1}) \geqslant b$. In the second case set $b_{n+1} = b_n$ and choose $a_{n+1} \in \left(a_0, \frac{1}{2} (a_n + b_n) \right)$, such that $f(a_{n+1}) \leqslant b$. Both sequences converges to the same limit $d = a_\infty = b_\infty$ and furthermore $\sup \menge{f ([a_0, d])} \leqslant b$, $\sup \menge{f( [a_0, d])} \geqslant b$, $f(d) \leqslant b$ and $f(d) \geqslant b$. Hence $\sup \menge{f ([a_0, d])} = b$ and $f(d) = b$. A similar argument shows that there is a $c \in [a_0,d]$ such that $\inf \menge{f [c, d]} = a$ and $f(c) = a$. Hence $f [c,d] = [a,b]$.
\end{proof}
Of course this still leaves the possibility that there is a fully constructive proof of Sharkovskii's theorem. However, as the next Brouwerian counterexample shows such a proof does need to use \LLPO.

\begin{Pro}
For all natural numbers $n$ the following holds: If every (uniformly) continuous map that has a $3$-periodic point has a fixpoint, then \LLPO holds.
\end{Pro}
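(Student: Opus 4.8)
The plan is to follow the template of the two preceding propositions: from a binary sequence $(a_n)_{n\geqslant 1}$ with at most one $1$ I build a uniformly continuous, piecewise linear map $f=f_a\colon\RR\to\RR$ that \emph{unconditionally} possesses a prime $3$-periodic point — so that the hypothesis may legitimately be applied to it — and whose every fixed point is confined to a short interval whose position reveals the sign of the real number
\[ a \;=\; \sum_{n\geqslant 1}\frac{(-1)^{n}a_{n}}{2^{n}}\;\in\;\bigl[-\tfrac{1}{2},\tfrac{1}{2}\bigr]. \]
Deciding that sign settles \LLPO, since if $a\leqslant 0$ there can be no even‑indexed $1$ (a $1$ at even position $2k$ would force $a=2^{-2k}>0$), so $a\leqslant 0$ gives $\fa{n \in \NN}{a_{2n}=0}$, and symmetrically $a\geqslant 0$ gives $\fa{n \in \NN}{a_{2n+1}=0}$.

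For the construction it is convenient to prescribe $g:=f-\id$. Let $g(x)=\tfrac{1}{2}-x$ for $x\leqslant 0$ and $g(x)=1-x$ for $x\geqslant 3$; on $[0,3]$ let $g$ be the piecewise linear function with successive vertex values $g(0)=\tfrac{1}{2}$, $g(1)=g(2)=1$, $g(7/3)=g(8/3)=a$, $g(3)=-2$ — so that $g$ has a plateau at height $a$ on $[7/3,8/3]$. Then $f$ is piecewise linear with finitely many pieces and constant off a bounded set, hence uniformly continuous, and $f(1)=2$, $f(2)=3$, $f(3)=1$; since $2\neq 1$ and $3\neq 1$ hold trivially, $1$ is a prime $3$-periodic point of $f$ and no omniscience is used to see this. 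Inspecting the pieces one checks at once that if $a>0$ then $g>0$ on all of $(-\infty,8/3]$, so every fixed point of $f_a$ lies in $(8/3,3)$; if $a<0$ then $g<0$ on all of $[7/3,\infty)$, so every fixed point lies in $(2,7/3)$; (and the fixed‑point set equals $[7/3,8/3]$ when $a=0$).

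Now apply the hypothesis: as $f_a$ has a $3$-periodic point it has a fixed point $x^{\ast}$, i.e.\ $g(x^{\ast})=0$. By cotransitivity of the order on $\RR$ (using $7/3<8/3$) we may decide whether $x^{\ast}<8/3$ or $x^{\ast}>7/3$. In the first case, suppose $a>0$; then $g(x^{\ast})>0$ because $g>0$ on $(-\infty,8/3]$, contradicting $g(x^{\ast})=0$, so $\lnot(a>0)$, i.e.\ $a\leqslant 0$, whence $\fa{n \in \NN}{a_{2n}=0}$. In the second case, suppose $a<0$; then $g(x^{\ast})<0$ because $g<0$ on $[7/3,\infty)$, again a contradiction, so $\lnot(a<0)$, i.e.\ $a\geqslant 0$, whence $\fa{n \in \NN}{a_{2n+1}=0}$. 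Either way a disjunct of \LLPO has been produced.

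The single delicate point — and, I expect, the only real obstacle — is keeping every step constructive. The position of $x^{\ast}$ is read off by the admissible dichotomy ``$x^{\ast}<8/3$ or $x^{\ast}>7/3$'', and from it one deduces only the \emph{negative} statements $\lnot(a>0)$, respectively $\lnot(a<0)$; one never asserts the positive $a<0$ or $a>0$, so no instance of Markov's principle intrudes, and passing from, say, $a\leqslant 0$ to $\fa{n \in \NN}{a_{2n}=0}$ is routine because each $a_{2n}\in\{0,1\}$ is decidable. (The quantifier over $n$ in the statement is inert as printed; if the intended formulation is instead that every such map has a point of prime period $n$, the same $f_a$ should work, one only having to check in addition that the points of prime period $n$ of $f_a$ — like its fixed points — lie in the $a$-dependent region $(2,3)$.)
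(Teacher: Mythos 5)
Your proof is correct and follows essentially the same strategy as the paper's: from a binary sequence with at most one $1$, form the signed real $a=\sum_{n\geqslant 1}(-1)^{n}a_{n}/2^{n}$, build a piecewise‑linear, uniformly continuous map that always has a genuine $3$‑cycle and whose fixed points are forced into an $a$‑dependent subinterval, then read off a disjunct of \LLPO from the dichotomy ``$x^{\ast}<8/3$ or $x^{\ast}>7/3$'' (the paper uses $[0,1]$ with breakpoints at $1/4,2/4,3/4$, but the mechanism is identical). You also correctly observe that the leading ``for all $n$'' in the statement is vestigial.
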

\begin{proof}
  Again let $\alpha_n$ be binary sequence with at most one $1$. Let \[a = \sum_{n \geqslant 1} \frac{(-1)^n \alpha_n}{2^n} \] and define a piecewise linear function $f: [0,1] \to [0,1]$
  by \[ f(x) = \begin{cases}
    3 x +\frac{1}{4} , & x \in \bracks*{0,\frac{1}{4}} \ ; \\
     (4a -2)x+ \frac{3}{2} - a, & x \in \bracks*{\frac{1}{4}, \frac{2}{4}} \ ; \\
     x+ a, & x \in \bracks*{\frac{2}{4}, \frac{3}{4}} \ ; \\
    (-3 -4 a)x + 3 +4a , & x \in \bracks*{\frac{3}{4},1} \ . \\
  \end{cases}   \]
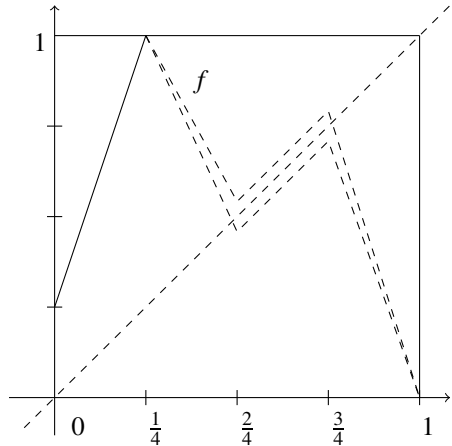
\begin{figure}[ht]
\centering
\begin{tikzpicture}[y=-1cm]
\draw[->] (3,7) -- (8.8,7);
\draw[->] (3.6,7.5) -- (3.6,1.8);
\draw (4.8,6.9) -- (4.8,7.1);
\draw (6,6.9) -- (6,7.1);
\draw (7.2,6.9) -- (7.2,7.1);
\draw (3.7,5.8) -- (3.5,5.8);
\draw (3.5,4.6) -- (3.7,4.6);
\draw (8.4,6.9) -- (8.4,7.1);
\draw (3.6,2.2) -- (8.4,2.2);
\draw (8.4,2.2) -- (8.4,7);
\draw (3.5,3.4) -- (3.7,3.4);
\draw[dashed] (3.2,7.4) -- (8.8,1.8);
\draw[dashed] (4.8,2.2) -- (6,4.4) -- (7.2,3.2) -- (8.4,7);
\draw (3.6,5.8) -- (4.8,2.2);
\draw[dashed] (4.8,2.2) -- (6,4.8) -- (7.2,3.6) -- (8.4,7);
\path (4.7,7.5) node[anchor=base west] {$\frac{1}{4}$};
\path (5.9,7.5) node[anchor=base west] {$\frac{2}{4}$};
\path (7.1,7.5) node[anchor=base west] {$\frac{3}{4}$};
\path (8.3,7.5) node[anchor=base west] {$1$};
\path (3.2,2.4) node[anchor=base west] {$1$};
\path (3.7,7.5) node[anchor=base west] {$0$};
\path (5.3,2.9) node[anchor=base west] {$f$};
\end{tikzpicture}
    \caption{We cannot decide whether the graph of $f$ between $\frac{2}{4}$ and $\frac{3}{4}$ lies above or below the diagonal.}
  \end{figure}
  $f(0) = \frac{1}{4}$, $f(\frac{1}{4}) = 1$  and $f(1)=0$ so $f$ has a 3--periodic point. If $f$ has a fixed point $x$, then
  either $x < \frac{3}{4}$ or $x > \frac{2}{4}$. In the first case $a \leqslant 0$, as the assumption
  $a > 0$ implies that $f(x) > x$ for all $x \in \bracks*{0,\frac{3}{4}}$. Then for all $\alpha_{2n+1} = 0$ for all $n$. Similar in the second case $a \geq
  0$ and $\alpha_{2n} = 0 $ for all $n$.
\end{proof}
Putting these two results together we get the following corollary.
\begin{Cor}
  Sharkovskii's Theorem is equivalent to LLPO.
\end{Cor}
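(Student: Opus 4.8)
The plan is to assemble this corollary directly from the two propositions preceding it, so that essentially no new work is needed.

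For the direction $\LLPO \implies \textrm{ST}$, I would take the classical proof of Sharkovskii's theorem (for instance the one in \cite{rD87}) and check that every appeal to classical logic in it reduces to a consequence of \LLPO. The paper has already isolated the one genuinely subtle point --- that a uniformly continuous $f$ with $f([0,1]) \supseteq [a,b]$ restricts to some subinterval $[c,d] \subseteq [0,1]$ with $f([c,d]) = [a,b]$ --- and the penultimate proposition shows this step is exactly \LLPO. The remaining non-constructive ingredients of the standard argument are: that a uniformly continuous function on a compact interval attains its minimum (which follows from \LLPO via \WKL and Proposition \ref{Pro:WKL_Min}, or directly from \cite{hI90}), and the intermediate value theorem for continuous $f:[0,1]\to\RR$ (which follows from \LLPO by Proposition \ref{Pro:equivs_of_LLPO}). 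Since \LLPO also yields \UCT (Proposition \ref{Pro:LLPO_impl_UCT}), there is no loss of generality in phrasing everything for uniformly continuous maps. Hence, assuming \LLPO, the whole construction producing periodic points of all prescribed periods goes through, which is precisely \textrm{ST}.

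For the direction $\textrm{ST} \implies \LLPO$, observe that $3 \triangleright 1$, since $1$ is the last element of the Sharkovskii order $\triangleright$. Thus \textrm{ST}, applied with $k = 3$ and $\ell = 1$, says in particular that any continuous map with a $3$-periodic point has a point of prime period $1$, i.e.\ a fixed point. So \textrm{ST} implies the hypothesis of the last proposition, which in turn yields \LLPO. Combining the two directions gives the stated equivalence.

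The main obstacle is the first direction, and it is not a single clever argument but a careful audit of the classical proof of Sharkovskii's theorem: one must verify that each of its case distinctions and existence claims can be discharged by \LLPO (or by the minimum-principle and intermediate-value consequences thereof) and by nothing stronger. Once the penultimate proposition is in hand this is largely bookkeeping, but it is the part that carries the mathematical content, whereas the converse is immediate from the definition of $\triangleright$.
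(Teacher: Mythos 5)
Your proposal is correct and follows the same decomposition the paper uses: the forward direction rests on the observation (stated above the first proposition) that the only non-constructive ingredient in the standard proof of Sharkovskii's theorem is the subinterval lemma, which that proposition shows is exactly \LLPO; the reverse direction applies the last proposition to the instance $3 \triangleright 1$ of Sharkovskii's theorem. Your audit of the auxiliary non-constructive steps (minimum attainment, IVT, \UCT) is a useful elaboration of what the paper compresses into "putting these two results together," but it does not constitute a different route.
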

\subsection{Graph colourings}
We assume that the reader is familiar with basic graph theoretic definitions. A nice little theorem by Erd\H{o}s and Bruijn \cite{rD05}*{Theorem 8.1.3}, reminiscent of the compactness theorem in logic. is
\begin{principle}[EBk]{$\textrm{EB}_{k}$} Let $G=(V,E)$ be a countable graph and $k \in \NN$. If every finite subgraph of $G$ is $k$-colourable, then so is $k$.
\end{principle}
We will show that the countable case is equivalent to \WKL. 
\begin{Pro}
$\textrm{EB}_{k}$ is equivalent to \WKL for every $k \geqslant 2$. 
\end{Pro}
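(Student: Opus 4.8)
The plan is to prove the two implications separately, reducing the more substantial direction $\textrm{EB}_k \Rightarrow \WKL$ to \LLPO via Proposition \ref{Pro:WKL_equiv_LLPO}.

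For $\WKL \Rightarrow \textrm{EB}_k$ I would run the usual compactness argument, taking care that it stays inside decidable \emph{binary} trees. Fix a countable graph $G$ with vertices $v_0, v_1, \dots$ and a decidable edge relation, all of whose finite induced subgraphs are $k$-colourable. Let $T$ be the set of tuples $(c_0,\dots,c_{n-1}) \in \{1,\dots,k\}^n$ such that $j \mapsto c_j$ properly colours the subgraph induced on $\{v_0,\dots,v_{n-1}\}$; since restrictions of proper colourings are proper, $T$ is a decidable tree, and it contains a tuple of every length precisely because each such finite subgraph is $k$-colourable. Encoding the colour $i$ by the block $0^{i-1}10^{k-i} \in \cS$ and a tuple by the concatenation of its blocks, the prefix-closure $T^{*} \subseteq \cS$ of the set of these encodings is again decidable (membership requires only a finite search over the at most $k$ ways of completing the final, possibly partial, block) and infinite. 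Applying \WKL to $T^{*}$ produces an infinite path; read blockwise it decodes to a \emph{total} function $c : \NN \to \{1,\dots,k\}$, and since every prefix of the path lies in $T^{*}$, for each edge $\{v_i,v_j\}$ a long enough prefix witnesses $c(i) \neq c(j)$. Hence $c$ is a proper $k$-colouring of $G$.

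For $\textrm{EB}_k \Rightarrow \WKL$, by Proposition \ref{Pro:WKL_equiv_LLPO} it suffices to derive \LLPO. Let $(a_n)_{n\geqslant 1}$ be a binary sequence with at most one $1$. Build a graph $G$ on the vertices $u, w_1, w_2, \dots$ whose edges are $\{w_n,w_{n+1}\}$ for every $n$, together with $\{u,w_n\}$ exactly for those $n$ with $a_n = 1$ --- a decidable relation, one placing at most one edge at $u$. Then let $G'$ be $G$ with $k-2$ further vertices $z_1,\dots,z_{k-2}$ adjoined, pairwise adjacent and each adjacent to every vertex of $G$. Every finite subgraph of $G'$ is $k$-colourable: inside $G$ it is a disjoint union of sub-paths with a pendant-or-isolated vertex $u$, hence $2$-colourable (colour $w_j$ by the parity of $j$, then colour $u$ away from its at most one neighbour), and the at most $k-2$ vertices among the $z_i$ can take $k-2$ fresh colours. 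So $\textrm{EB}_k$ yields a $k$-colouring $c$ of $G'$. Since the $z_i$ form a clique, $c(z_1),\dots,c(z_{k-2})$ are $k-2$ distinct colours, and every vertex of $G$ avoids them, so $c$ restricted to $G$ takes values in the computable two-element set $B = \{1,\dots,k\} \setminus \{c(z_1),\dots,c(z_{k-2})\}$. Along the path, induction gives $c(w_n) = c(w_1)$ iff $n$ is odd. Since $c(u) \in B$, decide whether $c(u) = c(w_1)$ or $c(u)$ is the other element of $B$: in the first case every $n$ with $a_n=1$ satisfies $c(w_n)\neq c(w_1)$ via the edge $\{u,w_n\}$, so $n$ is even, whence $a_m = 0$ for every odd $m$; in the second case, symmetrically, $a_m = 0$ for every even $m$. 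Either way \LLPO holds, and Proposition \ref{Pro:WKL_equiv_LLPO} gives $\textrm{EB}_k \iff \WKL$.

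There is no deep obstacle here: the forward direction is pure compactness, and the content of the reverse direction is simply the choice of the path-plus-pendant gadget (with the clique of size $k-2$ to force the colouring down to two colours). The only points needing a little care --- and which I would spell out --- are the decidability of the binary tree $T^{*}$ together with the fact that its paths decode to \emph{total} colourings, and, in the reverse direction, the genuinely constructive extraction of the two-element palette $B$ and of the decidable alternative "$c(u)=c(w_1)$ or not".
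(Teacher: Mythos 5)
Your proof is correct, and both directions share the same skeleton as the paper's, but there are two points worth comparing. In the forward direction you are more careful than the paper: the paper just takes the tree of admissible $k$-colourings and applies \WKL, silently passing over the fact that this is a $k$-branching rather than binary tree, whereas you explicitly code each colour as a length-$k$ block of $\cS$ and verify decidability of the resulting binary tree --- a genuine gap in the paper's sketch that your version repairs. In the reverse direction your gadget (one infinite path $w_1 w_2 \dots$ with a pendant vertex $u$ attached at the unique index $n$ with $a_n = 1$, plus a clique $K_{k-2}$ joined to everything) is a mild variant of the paper's two-strands-that-merge construction; both read off the parity of the location of the unique $1$ from a $2$-colouring and then collapse the $k$-colour case to the $2$-colour case via a $K_{k-2}$ clique, so this is the same idea in different clothing. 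The one substantive difference is that you stop at $\textrm{EB}_k \Rightarrow \LLPO$ and invoke $\LLPO \Rightarrow \WKL$, and the latter step requires dependent choice (the paper's proof of that proposition says so explicitly). The paper flags exactly this issue and then goes further: it builds a single disjoint union $H = \bigoplus_{u \in T} G^{(u)}$ of such gadgets --- one for each node of the given tree $T$ --- so that one application of $\textrm{EB}_k$ yields one colouring that decides, for every infinite subtree $T_u$, which child is infinite; this produces a path through $T$ instance-wise and without countable choice. In a paper on constructive reverse mathematics this refinement matters, so if you want your argument to have the full strength of the paper's you should state the choice you are using, or better, adapt your single-sequence gadget to the all-nodes-at-once construction.
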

\begin{proof}
To see that \WKL implies EB$_{k}$ is straightforward. Since we assume countability, we can write $V=\menge{v_{1}, v_{2}, \dots} $. Every $u \in k^{\ast}$ of length $m$ can be seen as a (not necessarily admissible) colouring of the subgraph $G[v_{1}, \dots, v_{m}]$ and every $\alpha \in k^{\NN}$ as a (not necessarily admissible) colouring of $G$. Now simply consider 
\[ T = \set{u \in \cS}{u \text{ is an admissible colouring of } G } \ .\]
$T$ is easily seen to be a tree and the assumption that every finite subset of $G$ has an admissible $k$-colouring translates into $T$ being infinite. By \WKL $T$ has an infinite path $\alpha$, which represents an admissible colouring of $G$.
Conversely, we will first show that EB$_{2}$ implies \LLPO.
So let $(a_n)_{n \geqslant 1}$ be a binary sequence with at most one term equal to $1$. Now, let $G=(\NN,E)$, with 
\[ E= \set{ \menge{n,n+2}}{ \fa{m \leqslant n}{a_{m} = 0}} \cup \set{ \menge{n,n+1} }{ a_n = 1 } \ . \]
That is, as long as $a_n=0$, $G$ can be drawn as two parallel lines. Should we hit $a_n=1$, we connect the upper and the lower strand. The way this graph is constructed if there is an odd (even) $n$ with $a_n=1$, then the vertices $1$ and $2$ are connected by a path of odd (even) length.
\begin{figure}[ht]
\centering
\begin{tikzpicture}
  \node [circle,fill=black,inner sep=1pt,label={above:$1$}] at (1,6) {};
  \node [circle,fill=black,inner sep=1pt,label={above:$3$}] at (2,6) {};
  \node [circle,fill=black,inner sep=1pt,label={above:$5$}] at (3,6) {};
	\draw (1,6) -- (2,6) -- (3,6);
	\draw[dotted] (3,6) -- (4,6) ;
  \node [circle,fill=black,inner sep=1pt,label={below:$2$}] at (1,5) {};
  \node [circle,fill=black,inner sep=1pt,label={below:$4$}] at (2,5) {};
  \node [circle,fill=black,inner sep=1pt,label={below:$6$}] at (3,5) {};
	\draw (1,5) -- (2,5) -- (3,5);
	\draw[dotted] (3,5) -- (4,5) ;
  \node [circle,fill=black,inner sep=1pt] at (6,7) {};
  \node [circle,fill=black,inner sep=1pt] at (7,7) {};
	\draw (6,7) -- (7,7) -- (7,6);
	\draw[dotted] (5,7) -- (6,7) ;
  \node [circle,fill=black,inner sep=1pt] at (6,6) {};
  \node [circle,fill=black,inner sep=1pt] at (7,6) {};
	\draw (6,6) -- (7,6) ;
	\draw[dotted] (5,6) -- (6,6) ;
  \node [circle,fill=black,inner sep=1pt] at (6,5) {};
  \node [circle,fill=black,inner sep=1pt] at (7,5) {};
  \node [circle,fill=black,inner sep=1pt] at (8,5) {};
	\draw (6,5) -- (7,5) -- (8,5);
	\draw[dotted] (5,5) -- (6,5) ;
  \node [circle,fill=black,inner sep=1pt] at (6,4) {};
  \node [circle,fill=black,inner sep=1pt] at (7,4) {};
	\draw (6,4) -- (7,4) -- (8,5) ;
	\draw[dotted] (5,4) -- (6,4) ;
\end{tikzpicture}
\caption{$G$ consists of two separate strands as long as $a_n=0$. If there is $n$ with $a_n=1$, then $G$ is one strand of even or odd length depending whether $n$ is odd or even.}
\end{figure}
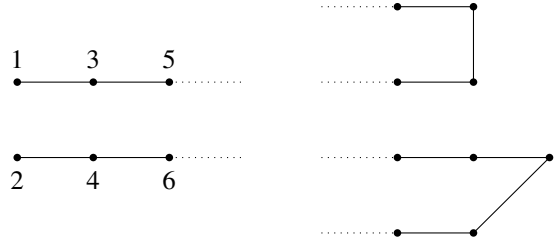
 It is easy to see that every finite sub-graph can be $2$-coloured. If all of $G$ can be coloured, then if the vertices $1$ and $2$ get the same colour there cannot be an even $n$ with $a_n=1$, which means $\fa{n \in \NN}{a_{2n}=0}$. Similarly if $1$ and $2$ are coloured with different colours, then $\fa{n \in \NN}{a_{2n+1}=0}$. 
For an arbitrary $k>2$ we simply add one copy of the complete graph $K_{n-2}$, and connect every one of its vertices with every vertex of $G$. Thus we ensure, that the ``original'' $G$ still gets coloured with exactly two colours.
If one is happy to use countable choice, then \LLPO implies \WKL and we are done. We can, however, adapt the construction above to show that EB$_{k}$ implies \WKL instance-wise and without countable choice. To this end assume that $T \subset \cS$ is a decidable infinite tree. We can fix a binary sequence $\cramped{(b^{(u)}_n)_{n \geqslant 1}}$ by 
\[ 
b^{(u)}_n = 1 \iff  \fa{w \in 2^n}{ u \ast w \notin T} \ . 
\]
 To turn the $b^{(u)}$ into sequences with at most one 1, we consider 
\[
d_{2n}^{(u)} = b_n^{(u0)} \text{ and } d_{2n+1}^{(u)} = b_n^{(u1)}
\]
and
\[
a_n^{(u)} = d_n^{(u)} - \max_{i < n} \menge{d_i^{(u)}} \ .
\]
If $T_u = \set{v \in \CS}{u \ast v \in T}$ is infinite, then there cannot be $n,m$ such that $b_n^{(u0)} =1$ and $b_m^{(u1)} =1$. Therefore if $T_u$ is infinite and $a^{(u)}_{2n} = 0$ for all $n \in \NN$, then $b^{(ui)}_n =0$ for all $n \in \NN$, and  if $a^{(u)}_{2n+1} = 0$ for all $n \in \NN$, then $b^{(ui)}_n =0$ for all $n \in \NN$. 

We can construct a graph $G^{(u)}$ for every $a^{(u)}$ as done above. The formal sum (disjoint union) of these graphs 
\[ H = \bigoplus_{u \in T} G^{(u)} \]
is also such that every finite subgraph can be coloured with $k$ colours. 

If there is an admissible colouring of $H$, then we can, as above, decide whether $a^{(u)}_n$ is zero for all even or for all odd terms, which means that if $T_u$ is infinite we can decide  whether $T_{u0}$ or $T_{u1}$ is infinite.

So we can recursively construct $\alpha$ such that $T_{\overline{\alpha}n}$ is infinite. In particular $\overline{\alpha}n \in T$ for all $n \in \NN$, which means we have found a path through $T$.
\end{proof}
\subsection{Variations of \texorpdfstring{\WKL}{WKL}} \label{Ssec:wklvariants}
Reminiscent of the way  \WWKL (Section \ref{sec:WWKL}) is a weakening of \FAND is the following weakening of \WKL.\footnote{This version was communicated to us by M.\ Hendtlass by Email. We do not know who deserves credit for proposing it.}
Let $k<1$. We now require that for each $n$ our tree not only have at least one sequence of length $n$, but for a certain percentage of all sequences of length $n$ to be in the tree.
\begin{principle}[WKLp]{\WKLp{k}}  \label{PR:WKLp}
If $T$ is a binary decidable tree such that 
\[ \frac{\abs*{\set{u \in T}{\abs{u} = n}}}{2^n} > k \ , \] then $T$ admits an infinite path.
\end{principle}
Of course, if $r<k<1$, then $\WKLp{r} \implies \WKLp{k}$, and \WKLp{k} is implied by \WKL. We can prove the following converse.
\begin{Pro}
For any $k \leqslant \frac{1}{2}$, we have $\WKLp{k} \implies$ \LLPO.
\end{Pro}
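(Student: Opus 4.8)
The plan is to convert the given \LLPO-instance into a decidable binary tree dense enough for \WKLp k to apply, and then to read the required decision off any infinite path. Let $(a_n)_{n\geq 1}$ be binary with at most one $1$, and write $u(0)$ for the first entry of a finite sequence $u$. For $k<\tfrac12$ I would use
\[
 T=\{\,u\in\cS : (u(0)=0\text{ and }a_i=0\text{ for all odd }i\leq\abs{u})\text{ or }(u(0)=1\text{ and }a_i=0\text{ for all even }i\leq\abs{u})\,\}.
\]
This $T$ is decidable and closed under restrictions (a prefix of $u$ satisfies whichever of the two clauses $u$ does). Since $(a_n)$ has at most one $1$, for every $n$ at least one of the hypotheses ``$a_i=0$ for all odd $i\leq n$'' and ``$a_i=0$ for all even $i\leq n$'' holds, and then $T$ contains every length-$n$ sequence beginning with the corresponding bit; hence $\abs{\{u\in T:\abs{u}=n\}}\geq 2^{n-1}$ (in fact $2^{n}$ until a $1$ appears in $a$ and exactly $2^{n-1}$ after), so the density always exceeds $k$ and \WKLp k yields an infinite path $\alpha$. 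Finally, decide whether $\alpha(0)=0$ or $\alpha(0)=1$. If $\alpha(0)=0$ then $\fa{n\in\NN}{a_{2n+1}=0}$: otherwise $a_m=1$ for some odd $m$, and then for $n\geq m$ the prefix $\overline{\alpha}n$ begins with $0$, so it can lie in $T$ only through the first clause, which demands $a_m=0$ — a contradiction. Symmetrically $\alpha(0)=1$ forces $\fa{n\in\NN}{a_{2n}=0}$, and the instance is decided.

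For the boundary value $k=\tfrac12$ the tree above has density exactly $\tfrac12$ as soon as a $1$ occurs in $a$, so it fails the strict hypothesis of \WKLp{1/2} and must be fattened while keeping the read-off sound. The idea is to replace the one-bit split by a short gadget on the first few coordinates: once a $1$ is seen at an even position, keep every sequence with first entry $0$ \emph{except} those in the cone above $011$ (this removes the would-be path $01^{\infty}$) and adjoin the first-entry-$1$ cone above $11$; use the mirror image once a $1$ is seen at an odd position; and before any $1$ has appeared use the common coarsening of these, namely the complement of the cones above $011$ and $100$. Each of these trees has density strictly above $\tfrac12$ at every level, and one checks that they glue into a single decidable restriction-closed $T$; the decision then inspects $\alpha(0),\alpha(1),\alpha(2)$, using that the prefixes $011$ and $100$ never occur along a path.

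The real obstacle is precisely this boundary construction. Density strictly above $\tfrac12$ forces $T$ to contain, at \emph{every} level, sequences with either possible first entry, so by K\H{o}nig's lemma a path with the ``wrong'' first entry is never classically excluded; moreover the na\"ive fattenings leave short prefixes that are compatible simultaneously with a $1$ at an even position, with a $1$ at an odd position, and with $a\equiv 0$, and a path through such a prefix decides nothing. The gadget must therefore be engineered so that every prefix of the fixed length one is allowed to inspect is incompatible with at least one of these three alternatives, and the finitely many exceptional cases in which the $1$ of $a$ falls among the first few coordinates have to be absorbed by hand. None of this is needed when $k<\tfrac12$, where the construction and read-off above already give the full proof.
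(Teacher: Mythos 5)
Your tree and read-off for $k < \tfrac12$ are correct and are, up to a reindexing, exactly the paper's own argument: the paper sets
\[ T = \set{ 0u }{ \fa{ m \leqslant \abs{u}}{a_{2m}=0 }} \cup \set{ 1u }{ \fa{ m \leqslant \abs{u}}{a_{2m+1}=0 }} \ , \]
notes $\abs*{\set{u\in T}{\abs{u}=n}}/2^n \geqslant \tfrac12 > k$, extracts a path by $\WKLp{k}$, and decides the \LLPO instance from the path's first coordinate, exactly as you do. So for $k < \tfrac12$ you have the intended proof.

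Your unease about the boundary value $k=\tfrac12$ is well founded, and in fact it points at a slip in the paper itself. As you compute, once the single $1$ in $(a_n)$ has been reached, only one of the two cones survives, so the density of this tree is eventually \emph{exactly} $\tfrac12$, not strictly greater; the paper's displayed chain $\geqslant \tfrac12 > k$ thus presupposes $k < \tfrac12$. The text immediately after the proposition concurs, speaking of ``at least for $k < \tfrac12$'' and flagging $k \geqslant \tfrac12$ as an open question, so the $\leqslant$ in the proposition's hypothesis should be read as strict. Your attempted fattening does not rescue the endpoint, as you already concede: in the glued tree a path beginning $00$ (and symmetrically $11$) is compatible simultaneously with $a\equiv 0$, with a $1$ at an even position of $a$, and with a $1$ at an odd position, so no bounded prefix of the path decides \LLPO, and what you describe at the end is a list of requirements for a gadget, not a construction. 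The right conclusion is that the proposition should assume $k < \tfrac12$; the boundary case is, as far as this line of argument goes, open.
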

\begin{proof}
Let $(a_n)_{n \geqslant 1}$ be a binary sequence with at most one term equal to $1$. Now define a set $T \subset \cS$ by
\[ T = \set{ 0u }{ \fa{ m \leqslant \abs{u}}{a_{2m}=0 }} \cup \set{ 1u }{ \fa{ m \leqslant \abs{u}}{a_{2m+1}=0 }} \ . \] Then it is easy to see that $T$ is a tree and that 
$ | \set{ u \in T }{ \abs{u} = n } | /2^n \geqslant \frac{1}{2} > k$. Hence, by our assumptions, $T$ admits an infinite path $\alpha \in \CS$. Now if $\alpha(1)=0$, then there cannot be an even $n$ with $a_n = 1$.  Similarly if  $\alpha(1)=1$  there cannot be an odd $n$ with $a_n = 1$.
\end{proof}
So under the assumption of countable choice \WKLp{k} is actually equivalent \WKL, at least for $k < \frac{1}{2}$. It remains an open question whether the same holds for $k \geqslant \frac{1}{2}$. 
\begin{Qu}
To what principle is \WKLp{k} equivalent to for $k > \frac{1}{2}$?
\end{Qu}

Notice that this question seems to be related to the question whether \LLPOn{n} (which will be introduced in Section \ref{Sec:LLPOn}).

\subsection{Compactness of Propositional logic}

The language of first order logic is, as usual, defined inductively via
\begin{enumerate}
\item Every proposition symbol $A_{0},A_{1},A_{2}, \dots$ is a formula.
\item If $\alpha, \beta$ are formulas, then so is $\lnot \alpha$, $\alpha \land \beta$, $\alpha \lor \beta$, $\alpha \rightarrow \beta$.
\end{enumerate} 
It is straightforward to show that an truth assignment $\mathcal{B}$ from the set of all proposition symbols to the set of truth values $\menge{\mathbf{f}, \mathbf{t}}$ can be uniquely extended to one defined on all formulas. As usual we also assume that $\mathcal{B}(A_{0}) = \mathbf{f}$, and write $\bot$ instead of $A_{0}$. A truth assignment is called a \define{model} of a set of formulas $\Gamma$, when $\mathcal{B} (\alpha) = \mathbf{t}$ for all $\alpha \in \Gamma$. In this case we write $\mathcal{B} \vDash \Gamma$. Notice that in this propositional case we have  $\mathcal{B} \vDash \varphi \lor \lnot \varphi$ for any formula $\varphi$ and any $\mathcal{B}$.

If a set $\Gamma$ has a model it is called \define{satisfiable}. A basic result of logic, known under the name compactness, is that if every finite subset of a set $\Gamma$ is satisfiable, then so is $\Gamma$ itself. The proof generally uses K\"{o}nig's Lemma. It is thus not surprising that:

\begin{Pro} \WKL is equivalent to the statement that if every finite subset of a set $\Gamma$ is satisfiable, then so is $\Gamma$.
\end{Pro}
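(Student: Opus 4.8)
I would prove the equivalence via the usual dictionary between infinite paths through a binary tree and propositional truth assignments; the only point requiring care is keeping every object decidable.

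\emph{\WKL{} implies compactness.} Fix an enumeration $(\psi_i)_{i\in\NN}$ of all propositional formulas and regard $\Gamma$ as a decidable (equivalently here, enumerable) subset of $\set{\psi_i}{i\in\NN}$, the case $\Gamma=\emptyset$ being trivial. Identifying a string $u\in\cS$ of length $n$ with the partial assignment $A_j\mapsto u(j)$ for $j<n$, I would set $T$ to consist of those $u$ such that for every $i<\abs{u}$, if $\psi_i\in\Gamma$ and all proposition symbols of $\psi_i$ lie among $A_0,\dots,A_{\abs{u}-1}$, then $u$ satisfies $\psi_i$. The index bound $i<\abs{u}$ is the crucial device: it reduces the consistency condition to finitely many decidable checks, whereas ``$u$ satisfies every $\psi\in\Gamma$ with symbols below $\abs{u}$'' would range over the infinitely many formulas syntactically built from $A_0,\dots,A_{\abs{u}-1}$. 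One then checks that $T$ is closed under restriction (the value of a formula depends only on the symbols occurring in it) and that $T$ is infinite: for each $n$ the formulas demanded form a finite subset $\Delta_n\subseteq\Gamma$, which has a model $\mathcal B$ by hypothesis, and the length-$n$ restriction of $\mathcal B$ lies in $T$. Applying \WKL{} to this infinite decidable tree yields a path $\alpha\in\CS$; define $\mathcal B(A_j)=\mathbf{t}$ iff $\alpha(j)=1$. For any $\psi_i\in\Gamma$, choosing $m>i$ with all symbols of $\psi_i$ among $A_0,\dots,A_{m-1}$ and using $\overline{\alpha}m\in T$ shows $\psi_i$ is satisfied by $\overline{\alpha}m$, hence by $\mathcal B$; so $\mathcal B\vDash\Gamma$.

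\emph{Compactness implies \WKL{}.} Given an infinite decidable tree $T\subseteq\cS$, I would use proposition symbols $A_0,A_1,\dots$ (reading $A_n$ as ``$\alpha(n)=1$''), and for $u\in\cS$ of length $n$ form $\theta_u=L_0\land\dots\land L_{n-1}$ with $L_j=A_j$ if $u(j)=1$ and $L_j=\lnot A_j$ otherwise, so a truth assignment satisfies $\theta_u$ exactly when the sequence it codes has $\overline{\alpha}n=u$. Since $T$ is decidable, $\set{u\in T}{\abs{u}=n}$ is a finite subset of $2^n$, so $\gamma_n:=\bigvee_{u\in T,\ \abs{u}=n}\theta_u$ is a genuine finite formula; put $\Gamma=\set{\gamma_n}{n\in\NN}$. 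A finite subset of $\Gamma$ is contained in $\menge{\gamma_0,\dots,\gamma_N}$ for some $N$; since $T$ is infinite it has a node $u$ of length $N$, and any $\mathcal B$ with $\overline{\alpha}N=u$ satisfies $\gamma_N$ and, by closure of $T$ under restriction, every $\gamma_m$ with $m\leqslant N$, so that finite subset is satisfiable. By compactness $\Gamma$ has a model $\mathcal B$; define $\alpha\in\CS$ by $\alpha(n)=1$ iff $\mathcal B(A_n)=\mathbf{t}$. Then for each $n$, from $\mathcal B\vDash\gamma_n$ and the fact that the truth value of a finite disjunction of formulas is a decidable function of $\mathcal B$, we get $\overline{\alpha}n\in T$, i.e.\ $\alpha$ is an infinite path through $T$. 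Neither direction uses a choice principle beyond what is implicit in the statement of \WKL{}.

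\emph{Expected obstacle.} The one genuine subtlety is decidability of $T$ in the first half: the naive ``$u$ is consistent with $\Gamma$'' predicate is not decidable and must be truncated via the bound $i<\abs{u}$, after which closure under restriction and infinitude have to be re-verified for the truncated notion. The remaining ingredients --- extending a truth assignment to all formulas, evaluating a fixed formula, and handling finite conjunctions and disjunctions --- are all decidable and routine.
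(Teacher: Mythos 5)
Your proof is correct and follows essentially the same route as the paper's. For the direction compactness $\implies$ WKL you build exactly the same set $\Gamma=\set{\gamma_n}{n\in\NN}$ of disjunctions, over nodes of $T$ at level $n$, of conjunctions of literals, and extract the path from a model of $\Gamma$ in the same way; for the direction WKL $\implies$ compactness the paper simply cites a textbook, and your truncated ``consistent partial assignment'' tree, with the bound $i<\abs{u}$ to keep membership decidable, is precisely that standard argument spelled out.
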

\begin{proof}
One direction is the usual proof, such as the one in \cite{rS95}*{Chapter III.1}, which apart from using \WKL, is actually constructive.

Conversely assume $T$ is an infinite decidable tree, so that there exists $u \in T$ with $\abs{u} = n$ for each $n$, and define 
\[ \alpha_n = \bigvee_{\substack{u \in T \\ \abs{u} = n}} \bigwedge_{1 \leqslant i \leqslant n} B_{u,i} \ , \]
where
\[B_{u,i} =  \begin{cases} 
A_{i} & \text{if } u(i) = 1 \\ 
\lnot A_{i} & \text{otherwise};
\end{cases}
\]
We are going to show that for every $m \in \NN$ the subset $\menge{\alpha_{1}, \dots, \alpha_{m}}$ of $\Gamma$ has a model. Since $T$ is infinite there exists $v \in T$ with $\abs{v}=m$. Define $\mathcal{B}^{\prime}$ by 
\[
\mathcal{B^{\prime}}(A_{i}) = \begin{cases}
\mathbf{t} & \text{if } v(i)=1 \text{ and } i \leqslant m  \\
\mathbf{f} & \text{otherwise}.
\end{cases}
\]
For all $j \leqslant i \leqslant m$, since$T$ is a tree $\overline{u}i \in T$, and therefore we have that $\mathcal{B^{\prime}}\left(B_{\overline{u}i,j} \right) = \mathbf{t}$.
Hence $\mathcal{B^{\prime}}(\alpha_{i}) = \mathbf{t}$ for all $i \leqslant m$, which means that $\mathcal{B^{\prime}}$ is a model of $\menge{\alpha_{1}, \dots, \alpha_{m}}$.

Now assume that $\mathcal{B}$ is a model of $\Gamma = \menge{\alpha_{1}, \alpha_{2}\dots}$. We claim that $\sigma \in \CS$ defined by 
\[ \sigma(i) = 1 \iff \mathcal{B}(A_{i}) = \mathbf{t} \  \]
is a path through $T$.
\end{proof}

\chapter{\MP and Below} \label{Ch:MP}
We will start this chapter by introducing some notation. If a number $x \in \RR$ is such that $\lnot \neg(0 < x)$ we say that $x$ is \define{almost positive} and write $0 \lessdot x$. Furthermore, we write  $y \lessdot x$ instead of $0\lessdot  x - y$ and $x \gtrdot y$ interchangeably with $y \lessdot x$. Note that since $x \leqslant y$  is, constructively, defined as $\lnot (y < x)$ \cite{dB85}, we have that 
\[  x \lessdot y \iff \lnot (y \leqslant x) \ . \]

Real numbers $x$ that satisfy 
\begin{equation} \label{Eqn:pspos}
\fa{y \in \RR}{0 \lessdot y \lor y \lessdot x}
\end{equation} are called \define{pseudo-positive}. If we apply the pseudo-positiveness property to a number $x$ itself the right disjunct is always ruled out and we get that $x$ is almost positive.

 So we have the following notions for a number being positive.
\[ \text{positive} \implies \text{pseudo-positive} \implies \text{almost positive} \implies \text{non-negative}  \ ,\]
or in symbols, for $x\in \RR$
\[ 0 < x \implies ( x  \text{ pseudo-positive} \implies) \quad 0 \lessdot x \implies 0 \leqslant x \ . \]

\begin{Lem}  \label{Lem:pos-order}
For all $x,y,z \in \RR$
\begin{enumerate}
\item \label{Lem:inequ1} $ \lnot \left( \neg(x > y) \land \neg(x = y) \land \neg(x < y) \right)$.
\item \label{Lem:inequ3} if $x \leqslant y$ and $y \lessdot z$, then $x \lessdot z$. (This implies that $\lessdot$ is transitive).
\item \label{Lem:inequ2} If $x = \max \menge{y,z}  $ and $\neg(x =y)$, then $x =z$.
\end{enumerate}

\end{Lem}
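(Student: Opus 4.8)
The plan is to treat the three parts independently, each being a short deduction from the constructive definitions $x\leqslant y :\equiv \lnot(y<x)$ and $x=y:\equiv (x\leqslant y\land y\leqslant x)$, together with two facts about $\RR$ that one may take as standard (cited to \cite{dB85}): transitivity of $\leqslant$ (a consequence of cotransitivity of $<$) and the defining property of $\max$, namely that $\max\menge{y,z}\geqslant y$, $\max\menge{y,z}\geqslant z$, and $\max\menge{y,z}\leqslant w$ whenever $y\leqslant w$ and $z\leqslant w$. Throughout I will use the remark just before the lemma that $u\lessdot v\iff\lnot(v\leqslant u)$.

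For \ref{Lem:inequ1}, I would assume $\lnot(x>y)\land\lnot(x=y)\land\lnot(x<y)$ and derive $\bot$. Since $x>y$ abbreviates $y<x$, the first conjunct is literally $x\leqslant y$ and the third is literally $y\leqslant x$; hence $x=y$ by the definition of equality of reals, contradicting the middle conjunct. So the triple conjunction is absurd.

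For \ref{Lem:inequ3}, the hypothesis $y\lessdot z$ reads $\lnot(z\leqslant y)$, and the goal $x\lessdot z$ reads $\lnot(z\leqslant x)$. Assuming $z\leqslant x$ and combining with $x\leqslant y$ by transitivity of $\leqslant$ gives $z\leqslant y$, a contradiction. For the parenthetical transitivity of $\lessdot$: if $x\lessdot y$, i.e. $\lnot\lnot(x<y)$, then $x\leqslant y$ (were $y<x$ we would have $\lnot(x<y)$, contradicting $\lnot\lnot(x<y)$), so the already-proved statement applies with this $x\leqslant y$.

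For \ref{Lem:inequ2}, I would first show $\lnot(z<y)$: if $z<y$ then $z\leqslant y$, and since also $y\leqslant y$, the number $y$ is an upper bound of $\menge{y,z}$, so $\max\menge{y,z}\leqslant y$; with $\max\menge{y,z}\geqslant y$ this gives $\max\menge{y,z}=y$, i.e. $x=y$, contradicting $\lnot(x=y)$. Hence $\lnot(z<y)$, i.e. $y\leqslant z$; now $z$ is an upper bound of $\menge{y,z}$, so $\max\menge{y,z}\leqslant z$, and with $\max\menge{y,z}\geqslant z$ we conclude $x=\max\menge{y,z}=z$. There is no real obstacle here; the only care needed is to invoke $\max$ only through its constructively valid universal property (not through the non-constructive disjunction $\max\menge{y,z}=y\lor\max\menge{y,z}=z$), and to keep the negative readings of $\leqslant$, $=$ and $\lessdot$ straight. (If one wishes to avoid the lemma $z<y\Rightarrow\max\menge{y,z}=y$, part \ref{Lem:inequ2} can instead be obtained from \ref{Lem:inequ1}: from $x\geqslant y$ and $\lnot(x=y)$ part \ref{Lem:inequ1} yields $\lnot\lnot(y<x)$, and then cotransitivity of $<$ applied to a hypothetical $z<x$ closes the case — but the direct argument above is shorter.)
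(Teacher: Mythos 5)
Your proposal is correct. Parts \ref{Lem:inequ1} and \ref{Lem:inequ3} essentially match the paper: in \ref{Lem:inequ1} you unfold $\neg(x>y)$ and $\neg(x<y)$ to $x\leqslant y$ and $y\leqslant x$ and get $x=y$; in \ref{Lem:inequ3} you assume $z\leqslant x$, chain with $x\leqslant y$ to get $z\leqslant y$, contradicting $y\lessdot z$ (and your add-on showing $\lessdot$ is transitive via $x\lessdot y\Rightarrow x\leqslant y$ is a correct and useful expansion of the parenthetical remark).

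For \ref{Lem:inequ2} you take a genuinely different route. The paper assumes $x>z$ for contradiction, then rules out $y>x$ (from $x\geqslant y$) and $y<x$ (by interpolating an $r$ with $x>r>z$ and $x>r>y$, forcing $x>r\geqslant\max\menge{y,z}=x$), concluding $y=x$ and contradicting $\neg(x=y)$; then $\neg(x>z)$ plus $x\geqslant z$ gives $x=z$. You instead prove $\neg(z<y)$ first --- if $z<y$, then $y$ is an upper bound, so $\max\menge{y,z}=y$, i.e.\ $x=y$, contradiction --- so $y\leqslant z$, whence $z$ is an upper bound and $\max\menge{y,z}=z=x$. Your version buys a shorter argument that uses only the universal (least-upper-bound) property of $\max$ twice, whereas the paper additionally invokes interpolation between strictly separated reals. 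Both are constructively sound; yours is arguably cleaner, and has the incidental advantage of avoiding any appeal to density/cotransitivity of $<$.
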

\begin{proof}
\begin{enumerate}
\item If $\neg(y > x)$ and $\neg(y < x)$, then $x=y$, since both the assumption that $x<y$ and $y <x$ lead to contradictions. But this in turn contradicts $\lnot (x = y)$.
\item Assume that $x \leqslant y$ and $y \lessdot z$. Furthermore assume that $z \leqslant x$. Then $z \leqslant y$, which is a contradiction. Hence $ \lnot (z \leqslant x)$, which is what we wanted to show.
\item By definition $x \geqslant z$. Assume that $x > z$. Then $y = x$: since either $y >x$ as well as $y<x$ leads to a contradiction. The first one since $x \geqslant y$, the second one as then we could find $r$ such that $x>r>z$ and $x > r > y$ and therefore $x> r \geqslant \max\menge{y,z} = x$. Altogether $x >z$ is a contradiction and therefore $z \leqslant x$, which means that $x=z$. \qedhere
\end{enumerate}
\end{proof}

\section{\texorpdfstring{\MP}{MP}} \label{Sec:MP}
Markov's principle is a weak form of double negation elimination.
\begin{principle}[MP]{\MP} \label{PR:MP}
Every almost positive number is positive: $x \gtrdot 0 \implies x > 0$.

\end{principle}
It was also called LPE\footnote{Limited Principle of Existence} \index{LPE|see {\MP}} by Mandelkern. It is generally accepted by recursive schools of constructive mathematics, where it embodies the strategy of an ``unbounded search''.
\begin{Pro}
The following are equivalent to \MP:
\begin{enumerate}
\item \label{MPequiv1} If $(a_n)_{n \geqslant 1}$ is a binary sequence such that $\lnot \left( \fa{n \in \NN}{a_n=0} \right)$, then $\ex{n \in \NN}{a_n=1}$.
\item \label{MPequiv2} $\fa{x\in\RR}{\lnot (x = 0) \implies x \neq 0}$. \label{Eqn:MPreal}
\item[2½] \label{MPequiv2b} $\fa{x\in\RR}{\lnot (x = 0) \implies \ex{n \in \NN}{n \abs{x} > 1}}$ (The Archimedean Property). 
\item \label{MPequiv3} If $A \subset \NN$ is countable and so is its complement $\overline{A}$, then $A$ is decidable.
\item \label{MPequiv4} Every weakly injective\footnote{A map is called weakly injective if $f(x)=f(y) \implies x =y$, as opposed to injective which means $x\neq y \implies f(x) \neq f(y)$. Notice that a map is weakly injective if and only if $\neg(x=y) \implies \lnot (f(x)=f(y)) $,  by the stability of equality. } map $f:[0,1] \to \RR$ is injective.
\item \label{MPequiv5} Every mapping from a  metric space into a metric space is strongly extensional.
\end{enumerate}
\end{Pro}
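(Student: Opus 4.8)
The plan is to route everything through the ``real'' form \ref{MPequiv2}, which is the most convenient hub, and to begin by pinning down the equivalences of \MP with the sequential form \ref{MPequiv1}, the real form \ref{MPequiv2}, and the Archimedean form \ref{MPequiv2b}. The equivalence $\MP \iff \ref{MPequiv1}$ is the familiar dictionary between the real and sequential phrasings of Markov's principle: given a binary sequence $(a_n)$ with $\lnot \fa{n}{a_n = 0}$, put $x = \sum_n a_n 2^{-n} \geqslant 0$ and note $x = 0 \iff \fa{n}{a_n = 0}$, so $0 \lessdot x$; conversely, from $x$ with $0 \lessdot x$ produce, using countable choice, a binary sequence $(a_n)$ with $a_n = 0 \implies x < 2^{-n}$ and $a_n = 1 \implies x > 2^{-n-1}$, so that $\lnot \fa{n}{a_n = 0}$ and a witnessing $a_n = 1$ gives $x > 2^{-n-1} > 0$. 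The same passage yields $\ref{MPequiv1} \iff \ref{MPequiv2}$. For $\ref{MPequiv2} \iff \ref{MPequiv2b}$ one only needs the constructively valid Archimedean property of the rationals: if $\lnot (x = 0)$ then $x \neq 0$, hence $\abs{x} > q$ for some positive rational $q$, hence $n \abs{x} > 1$ for suitable $n$; the converse is immediate.

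For the remaining three items the forward implications $\ref{MPequiv2} \implies \ref{MPequiv3}, \ref{MPequiv4}, \ref{MPequiv5}$ are short and all follow the same pattern: the hypothesis together with the (stable) negation of the relevant equality yields a contradiction, so that negation fails, and \ref{MPequiv2} upgrades it to an apartness. For \ref{MPequiv5}: if $f \co X \to Y$ and $d_Y(f(x), f(y)) > 0$, then $d_X(x,y) = 0$ would force $x = y$, hence $f(x) = f(y)$, a contradiction, so $\lnot(d_X(x,y) = 0)$, whence $d_X(x,y) > 0$. For \ref{MPequiv4}: weak injectivity already gives $\lnot(f(x) = f(y))$ from $x \neq y$, and \ref{MPequiv2} turns this into $f(x) \neq f(y)$. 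For \ref{MPequiv3}: interleave the two given enumerations of $A$ and $\overline{A}$ into a single binary sequence which, for a fixed $m$, cannot be identically zero (else $m \notin A$ and $\lnot(m \notin A)$), and then \ref{MPequiv1} locates a $1$ that tells us whether $m \in A$ or $m \notin A$.

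The content is in the reverse implications, which are small Brouwerian constructions. For $\ref{MPequiv4} \implies \ref{MPequiv2}$: given $a$ with $\lnot(a = 0)$, consider the function $f \co [0,1] \to \RR$, $f(x) = \abs{a} x$; it is weakly injective, since $f(x) = f(y)$ means $\abs{a}(x-y) = 0$, and if $x \neq y$ then $x - y$ is invertible, forcing $\abs{a} = 0$, i.e.\ $a = 0$, a contradiction, so $\lnot(x \neq y)$, i.e.\ $x = y$. By \ref{MPequiv4}, $f$ is injective, so from $0 \neq 1$ we get $\abs{a} = f(1) \neq f(0) = 0$, i.e.\ $a \neq 0$. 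For $\ref{MPequiv5} \implies \ref{MPequiv2}$: given $a$ with $\lnot(a = 0)$, equip the genuinely two-element set $X = \{0,1\} \subset \NN$ with the metric $d(0,1) = \abs{a}$ --- this is a legitimate metric space precisely because $\lnot(\abs{a} = 0)$ --- and apply \ref{MPequiv5} to the mapping $\iota \co X \to \RR$, $\iota(i) = i$; strong extensionality of $\iota$ converts the trivial apartness $\abs{\iota(0) - \iota(1)} = 1 > 0$ into $d(0,1) = \abs{a} > 0$, so again $a \neq 0$.

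The step I expect to be the obstacle is $\ref{MPequiv3} \implies \MP$. The naive attempt --- encode a Markov instance $(a_n)$ as a subset $A \subseteq \NN$ whose decidability would reveal whether $\ex{n}{a_n = 1}$ --- fails because any set built from the \emph{binary} sequence $(a_n)$ is already decidable, so \ref{MPequiv3} gives nothing; one really needs $A$ countable but not evidently decidable, and then $\overline{A}$, being the complement of a ``$\Sigma^0_1$-hard'' set, is ``$\Pi^0_1$-hard'', which fights the requirement that $\overline{A}$ be countable too. Thus the construction must be chosen with care and is sensitive to the exact reading of ``countable'' in the statement; getting it right --- or else routing $\ref{MPequiv3} \implies \MP$ through one of the other items --- is where the real work lies.
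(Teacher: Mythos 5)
Your structure and most of the implications are sound, and they largely mirror the paper. The loop $\MP \iff \ref{MPequiv1} \iff \ref{MPequiv2} \iff 2\text{½}$ is the standard dictionary. Your proofs of $\ref{MPequiv2} \implies \ref{MPequiv4}$ and $\ref{MPequiv4} \implies \ref{MPequiv2}$ are essentially the paper's; using $f(x) = \abs{a}x$ rather than $f(y) = xy$ changes nothing. Your $\ref{MPequiv5} \implies \ref{MPequiv2}$ via the two-point metric space $\{0,1\}$ with $d(0,1) = \abs{a}$ is a nice explicit reduction; the paper simply cites the literature for the \MP--\ref{MPequiv1}--\ref{MPequiv5} equivalences, so this is a genuine small contribution.

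The real issue is $\ref{MPequiv3} \implies \MP$, which you flag as an obstacle and leave open. Your diagnosis is partly off. You claim that ``any set built from the binary sequence $(a_n)$ is already decidable''; that is false. A set such as $A = \{1\} \cup \{2 : \ex{n}{a_n = 1}\}$ is countable (enumerate $1$ repeatedly, switching to $2$ as soon as you see a $1$) but is \emph{not} evidently decidable, since deciding $2 \in A$ amounts to deciding $\ex{n}{a_n = 1}$. What you correctly sense is the tension with $\overline{A}$: its membership condition for $2$ is the $\Pi^0_1$ statement $\fa{n}{a_n=0}$, which looks fatal for countability. But the construction is meant to run only on sequences for which $\lnot\fa{n}{a_n=0}$ is already a hypothesis. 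Use it: then $\lnot(2 \in \overline{A})$, so $\overline{A}$ is extensionally equal to $\{3,4,5,\dots\}$, which is plainly countable. Now apply \ref{MPequiv3} to decide $2 \in A \lor 2 \notin A$; the second disjunct would give $\fa{n}{a_n=0}$, contradicting the hypothesis, so $2 \in A$ and hence $\ex{n}{a_n=1}$. The moral you are missing is that the Markov hypothesis should be consumed \emph{in the construction of $A$} (to make $\overline{A}$ countable), not only at the end. This is the one genuine gap; with it filled, the remainder of your argument goes through.
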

\begin{proof}
The equivalences between \MP, \ref{MPequiv1}, and \ref{MPequiv5} are from \cite{dB90} and \cite{hI06a}. 
To see the equivalence between \ref{MPequiv2} and \ref{MPequiv2b} note that, for $x$ we have
\[  x \neq 0 \iff \abs*{x} > 0 \iff \ex{n \in \NN}{\abs{x}> \frac{1}n} \iff \ex{n \in \NN}{n \abs{x}> 1} \ . \]

The equivalence between \ref{MPequiv2} and \MP is obvious. 

To see that \ref{MPequiv3} is equivalent to \ref{MPequiv1} consider a set $A \subset \NN$ such that $A = \menge{a_1, a_2, \dots}$ and $\Complement{A} = \menge{b_1, b_2, \dots}$. Given $m \in \NN$ we can consider the following binary sequence $c_n$ defined by
\[ c_n = \begin{cases} 0 & \text{if } m \notin \menge{a_1, b_1, \dots, a_n,b_n} \\ 1 & \text{otherwise} \ . \end{cases} \]	
Now we have $\lnot \fa{n \in \NN}{c_n =0}$, since the assumption $\fa{n \in \NN}{c_n =0}$ leads to the contradiction that $m \notin A \land m \notin \Complement{A}$. So we can use \MP to get $n$ such that $c_n = 1$, which immediately tells us that $m \in \menge{a_1, b_1, \dots, a_n,b_n}$ which allows us to decide whether $m \in A$ or not.
Conversely let $a_n$ be a binary sequence such that $\lnot \fa{n \in \NN}{a_n = 0}$. We may assume that $a_1 = 0$. Then the set $A = \set{2}{\ex{n \in \NN}{a_n = 1}} \cup \menge{ 1}$ is countable, as well as its complement. So we can decide whether $2 \in A$. But the assumption that $2 \notin A$ implies that $\fa{n \in \NN}{a_n = 0}$. Thus we must have $2 \in A$, which means $\ex{n \in \NN}{a_n = 1}$.

Finally we are going to show that \ref{MPequiv4} is equivalent to \ref{MPequiv2}. Notice that if $f$ is a weakly injective map and $x,y$ are such that $f(x) \neq f(y)$, then $\lnot (x = y)$ which implies that also $\lnot (x-y = 0)$. Thus, using \ref{MPequiv2} we have $x-y \neq 0$, which means that $x \neq y$. So $f$ is injective. Conversely let $x$ be such that $\neg(x = 0)$. Now consider the linear map $f(y) = xy$. It is easy to see that this map is weakly injective, for consider $y,z$ with $\lnot (y =z)$. Now assume $f(y) =f(z)$, which means $xy = xz$. But then the assumption that $y \neq z$ implies that $x =0$, which is a contradiction. So we must have $\lnot (y \neq z))$ which is equivalent to $y=z$. This contradicts our initial assumption that $\lnot (y =z)$, so we have proved $ \lnot (f(y) =f(z))$. By \ref{MPequiv4} $f$ is injective, and thus $0 = f(0) \neq f(1) = x$.
\end{proof}

\section{\texorpdfstring{\WMP}{WMP}} \label{Sec:WMP}
Weak Markov's Principle states that every pseudo-positive number is positive:
\begin{principle}[WMP]{\WMP} \label{PR:WMP}
For all $x \in \RR$, if 
\begin{equation} 
\fa{y \in \RR}{\left( y \gtrdot 0  \ \lor \ y \lessdot x \right)} \ ,
\end{equation}
or equivalently, if 
\begin{equation} 
\fa{y \in \RR}{\lnot (y \leqslant 0) \ \lor \  \lnot (y \geqslant x)} \ ,
\end{equation}

then $0< x$.
\end{principle}
This principle was also called WLPE\footnote{Weak Limited Principle of Existence}\index{WLPE|see {\WMP}}  and ASP\footnote{Almost Separating Principle}\index{ASP|see {\WMP}} by Mandelkern. U. Kohlenbach has shown that it does not hold in a certain intuitionistic formal system \cite{uK02}. This was improved upon by M.\ Hendtlass and B.\ Lubarsky who have recently given a topological model satisfying full IZF as well as dependent choice in which \WMP fails (see Corollary \ref{Cor:Countermodel_WMP}).

\begin{Pro}
The following are equivalent to \WMP:
\begin{enumerate}
\item Every pseudo-positive number is positive.
\item If $\neg(a=b)$ and $\menge{a,b}$ is complete, then $\abs{a-b}>0$.
\item \label{WMP-strongext} Every mapping from a complete metric space into a metric space is strongly extensional.
\item Every real-valued function which is non-decreasing and approximates intermediate values is point-wise continuous. 
\end{enumerate}
\end{Pro}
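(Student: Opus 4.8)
The first item needs no work at all: \emph{pseudo-positive} is by definition an abbreviation for the hypothesis $\fa{y \in \RR}{(y \gtrdot 0 \lor y \lessdot x)}$ of \WMP, and \emph{positive} just means $0 < x$, so item~1 is \WMP restated in words. Everything else I would organise as a hub around item~1, reducing each of (2), (3), (4) to \WMP by relating the objects in question to pseudo-positive reals. The single real lemma that does all the work is: a $\gtrdot 0$ quantity becomes \emph{pseudo}-positive once the appropriate completeness hypothesis is imposed, and conversely pseudo-positivity of a real $x$ makes the two-point subspace $\{0,x\}\subseteq\RR$ complete.

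For (2) and (3): in the direction \WMP $\Rightarrow$ (2),(3) one is handed a completeness hypothesis (on $\{a,b\}$, resp.\ on the domain $X$ of $f$) together with $d(f(a),f(b))>0$; from $\neg(a=b)$ one gets $\neg(d(a,b)=0)$, i.e.\ $d(a,b)\gtrdot 0$, and the substantive step is to promote this to pseudo-positivity of $d(a,b)$ — this is exactly where completeness is spent, via the Ishihara-style device of coding an arbitrary real $y$ into a Cauchy sequence whose limit forces $\neg(y\le 0)$ or $\neg(d(a,b)\le y)$ — after which \WMP yields $d(a,b)>0$. Conversely, given a pseudo-positive $x$ (so already $x\ge 0$ and $\neg(x=0)$, since applying the disjunction at $y=x$ kills the right disjunct), one passes to the subspace $\{0,x\}$ of $\RR$; pseudo-positivity is used to show this subspace is complete, and since $\neg(0=x)$ the assignment $g(0)=0$, $g(x)=1$ is a \emph{well-defined} map $\{0,x\}\to\RR$ (well-definedness is vacuous, as ``$0=x$'' entails $\bot$) with $d(g(0),g(x))=1>0$; strong extensionality (item~3), or directly item~2 applied with $a=0$, $b=x$, then returns $x>0$.

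For (4) the same dichotomy drives both directions. From a pseudo-positive $x$ I would build a non-decreasing $f\colon[0,1]\to\RR$ with a jump whose height is governed by $x$, arranged so that the disjunction $\fa{y}{(y\gtrdot 0\lor y\lessdot x)}$ — invoked with $y$ the intermediate value one is trying to hit — is precisely what certifies that $f$ approximates intermediate values, while pointwise continuity of $f$ at the jump point returns $x>0$; the converse, that \WMP makes every such $f$ continuous, is the adaptation of Ishihara's argument for the corresponding \MP-equivalence, upgrading ``$f$ has no positive oscillation at the bad point'' to continuity there. The references for the constructions and their verification are \cite{hI06a} together with the proof of the analogous proposition for \MP above, the only genuinely new ingredient being the completeness hypotheses. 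The main obstacle, common to (2)--(4), is the constructive bookkeeping inside that one lemma — turning a completeness assumption into the universally quantified disjunction defining pseudo-positivity (and back) — where the modulus of the coding Cauchy sequence has to be chosen with care so that the ``switch'' in the sequence occurs late enough for it to remain Cauchy.
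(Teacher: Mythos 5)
A preliminary point: the paper's own ``proof'' of this proposition is the single sentence \emph{``The proofs for these equivalences can be found in \cite{mM88}, \cite{hI91}, \cite{mM83}, respectively,''} so there is no argument in the paper to compare against; the question is whether your sketch would reconstruct the cited proofs. For items~1--3 it would. Item~1 is \WMP verbatim. For (2) and (3) you have isolated the right lemma driving both directions: completeness of $\menge{a,b}$ together with $\neg(a=b)$ (hence the well-defined map $a\mapsto 0$, $b\mapsto 1$) makes $d(a,b)$ pseudo-positive --- this is the unnumbered Corollary in Section~\ref{Sec:WMP}, attributed to \cite{hI91}*{Lemma~4} --- and conversely pseudo-positivity of $x$ makes $\menge{0,x}$ complete. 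You assert the latter but give no argument; the required one takes a Cauchy sequence in $\menge{0,x}$ with limit $z_\infty$, notes $z_\infty(x-z_\infty)=0$ by continuity of multiplication, splits via Lemma~\ref{Lem:pseudposequiv} into $\neg(x=z_\infty)$ or $\neg(z_\infty=0)$, and in each branch recovers the missing equation from the intuitionistic incompatibility of $z_\infty(x-z_\infty)=0$ with $\neg(z_\infty=0)\land\neg(x-z_\infty=0)$.

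Item~4 is where there is a genuine gap, and it is not just bookkeeping. You justify the direction \WMP~$\Rightarrow$~(4) by appeal to ``the corresponding \MP-equivalence,'' but no such item exists: the paper's proposition listing the equivalents of \MP contains nothing about monotone functions or intermediate values, so you are adapting an argument that is not there. For (4)~$\Rightarrow$~\WMP, ``build a non-decreasing $f\colon[0,1]\to\RR$ with a jump whose height is governed by $x$'' cannot be taken at face value constructively: a discontinuous map $[0,1]\to\RR$ cannot be exhibited from a pseudo-positive real alone (by Proposition~\ref{Pro:WLPO-Equiv} the existence of \emph{any} discontinuous function on $[0,1]$ is already \WLPO, which \WMP does not imply), and the functions one actually writes down from the coding sequence $\lambda_n$ for $x$ are uniform limits of continuous functions, hence continuous with jump identically zero, giving no information about $x$. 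Mandelkern's argument in \cite{mM83} does not reduce to the pseudo-positive/complete-two-point-set lemma that serves (2) and (3); item~4 needs its own construction, and your sketch does not contain one.
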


\begin{proof}
The proofs for these equivalences can be found in \cite{mM88}, \cite{hI91}, \cite{mM83}, respectively. 	
\end{proof}

The following lemmas provide a stepping stone to characterise \WMP in terms of elements of Cantor space.

\begin{Lem} \label{Lem:pseudposequiv}
A non-negative real number $x$ is pseudo-positive if and only if
\[ \fa{y \in \RR}{\neg(x=y) \lor \neg(y=0)} \ . \]
\end{Lem}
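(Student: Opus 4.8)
The claim to prove is \textbf{Lemma \ref{Lem:pseudposequiv}}: for a non-negative real $x$, being pseudo-positive is equivalent to $\fa{y \in \RR}{\neg(x=y) \lor \neg(y=0)}$.

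\textbf{Plan.} Both directions amount to rewriting one disjunction into the other using the elementary order facts already collected in Lemma \ref{Lem:pos-order} and the defining identities $0 \lessdot z \iff \neg(z \leqslant 0)$ and $a \neq b \iff \abs{a-b} > 0$. Recall that $x$ pseudo-positive means $\fa{y \in \RR}{0 \lessdot y \lor y \lessdot x}$, i.e.\ $\fa{y}{\neg(y\leqslant 0) \lor \neg(x \leqslant y)}$.

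\textbf{($\Rightarrow$)} Assume $x$ is pseudo-positive and let $y \in \RR$ be arbitrary. I would feed a suitably shifted argument into the pseudo-positivity hypothesis. A natural choice is to apply it to $\abs{x-y}$ (which is $\geqslant 0$), but the cleanest route is probably to use pseudo-positivity at two points and combine: apply it to $y$ itself to get $\neg(y \leqslant 0) \lor \neg(x\leqslant y)$, and separately apply it to $x - y$ (or to $-(x-y)$) to control the sign of $x-y$. In the first disjunct $\neg(y \leqslant 0)$ gives $y \gtrdot 0$, hence $\neg(y = 0)$, so we are done. In the second disjunct $\neg(x \leqslant y)$ gives $x \gtrdot y$, hence $x > y$ is almost true, in particular $\neg(x = y)$. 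So in both cases $\neg(x=y) \lor \neg(y=0)$ holds; since $y$ was arbitrary we get the right-hand statement.

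\textbf{($\Leftarrow$)} Conversely assume $\fa{y}{\neg(x=y)\lor \neg(y=0)}$ and let $y\in\RR$; we must show $0 \lessdot y \lor y \lessdot x$. Here I would apply the hypothesis not to $y$ directly but to $x - y$ (or some affine combination), so that the conclusion $\neg(x = (x-y)) \lor \neg((x-y)=0)$ unwinds to $\neg(y = 0)$ or $\neg(x - y = 0)$, i.e.\ $y \neq 0$ or $x \neq y$. From $y\neq 0$, i.e.\ $\abs{y} > 0$, together with $x \geqslant 0$ (hence $y \geqslant 0$ is not automatic, so one needs a small case split on the sign of $y$ using $\abs y>0 \Rightarrow y>0 \lor y<0$): if $y > 0$ then certainly $0 \lessdot y$; if $y < 0$ then $y < 0 \leqslant x$ so $y \lessdot x$. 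From $x \neq y$, i.e.\ $\abs{x-y}>0$, similarly $x > y$ or $x < y$: if $x > y$ then $y \lessdot x$; if $x < y$ then, since $x \geqslant 0$, we get $y > x \geqslant 0$, so $0 \lessdot y$. Either way the required disjunction holds.

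\textbf{Main obstacle.} The only subtlety is choosing the argument to which the universally quantified hypothesis is applied so that the resulting raw disjunction $\neg(\cdot = \cdot)\lor\neg(\cdot = 0)$ mentions the right quantities, and then discharging the ``wrong-side'' cases (e.g.\ $y<0$ or $x<y$) using non-negativity of $x$. None of this needs \MP or \WMP itself — it is pure intuitionistic logic plus the bookkeeping of Lemma \ref{Lem:pos-order}\ref{Lem:inequ3} for transitivity of $\lessdot$. I would keep the write-up to a few lines per direction, citing Lemma \ref{Lem:pos-order} for the order manipulations.
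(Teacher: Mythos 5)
The forward direction of your proof is fine: applying pseudo-positivity to $y$ itself gives $\lnot(y \leqslant 0) \lor \lnot(x \leqslant y)$, and each disjunct refutes the corresponding equality. (The extra remarks about also feeding $x-y$ into the hypothesis are unnecessary in this direction.)

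The converse direction has a genuine gap, and it is exactly the one you claim to have avoided. From the hypothesis you obtain $\lnot(y=0) \lor \lnot(x=y)$, and you then write ``from $y \neq 0$, i.e.\ $\abs y > 0$'' and ``from $x \neq y$, i.e.\ $\abs{x-y} > 0$.'' But $\lnot(y=0)$ is not the same thing as $y \neq 0$: passing from $\lnot(y=0)$ to $\abs y > 0$ is precisely \MP for the reals (see the equivalence $\ref{MPequiv2}$ in the characterisation of \MP). In \BISH the former does not yield the case split $y>0 \lor y<0$, so your argument does not go through; the same remark applies to the step $\lnot(x=y) \Rightarrow \abs{x-y} > 0$. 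This is not a cosmetic issue: if $\lnot(y=0)$ did give $\abs y > 0$ then almost-positive and positive would coincide and \WMP would be trivial.

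The paper avoids \MP by exploiting the additional sign information available inside the interval $[0,x]$. For $y$ with $0 \leqslant y \leqslant x$, the conjunction of $\lnot(y=0)$ with $y \geqslant 0$ (i.e.\ $\lnot(y<0)$) yields $\lnot\lnot(y>0)$, that is $0 \lessdot y$, via Lemma \ref{Lem:pos-order}.\ref{Lem:inequ1}; symmetrically $\lnot(x=y)$ with $y \leqslant x$ gives $y \lessdot x$. No case split on the sign of anything is needed. The general $y \in \RR$ is then reduced to this special case in two steps, first replacing $y$ by $\max\menge{0,y}$ and then by $\min\menge{x,y}$, using Lemma \ref{Lem:pos-order}.\ref{Lem:inequ2} to recover $y$ from the clamp and Lemma \ref{Lem:pos-order}.\ref{Lem:inequ3} to transport the conclusion through $\leqslant$. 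If you want to keep your presentation, you must replace the ``$\abs y > 0$, hence $y>0 \lor y<0$'' step by this sort of clamp-and-transport argument; as it stands the converse direction is not a constructive proof.
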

\begin{proof}
Since $a\lessdot b \implies \neg(a=b)$ one direction is clear. To prove the other direction let us first assume that $y \in [0,x]$, that is $y \geqslant 0$ and $y \leqslant x $. By our assumption either $\lnot (y = 0)$ or $ \lnot (x=y)$. In the first case $y \gtrdot 0$ by Lemma \ref{Lem:pos-order}. In the second case $y \lessdot x$ as well by Lemma \ref{Lem:pos-order}. Next consider $y \in (-\infty, x]$. Then $y^{\prime} = \max\menge{0,y} \in [0,x]$. Hence by what was just proved either $ y^{\prime} \gtrdot 0 $ or $ y^{\prime} \lessdot x $. In the first case, by part \ref{Lem:inequ2} of Lemma \ref{Lem:pos-order}, $y^{\prime} = y$ and therefore $y \gtrdot 0$.
In the second case, since $y \leqslant y^{\prime}$ also $y \lessdot x$ by part \ref{Lem:inequ3} of Lemma \ref{Lem:pos-order}. So we can perform the final step and consider an arbitrary $y \in \RR$. Consider a $x^{\prime} = \min \menge{x,y} \in (- \infty, x]$. With the previous step we know that either $x^{\prime} \gtrdot 0$ or $x^{\prime} \lessdot x$. In the first case, since $y \geqslant x^{\prime}$, by part \ref{Lem:inequ3} of Lemma \ref{Lem:pos-order}, we have $y \gtrdot 0$.
In the second case, by  part \ref{Lem:inequ2} of  Lemma \ref{Lem:pos-order}, we have $x^{\prime} = y$ and therefore $y \lessdot x$.  
\end{proof}

Even though it seems most of the work for the next proposition is contained in the last lemma, its proof is not at all that straightforward and, interestingly, we need countable choice in \emph{both} directions.
\begin{Pro}
\WMP is equivalent to the following statement. For every $\alpha \in \CS$ such that 
\begin{equation} \label{Eqn:psposvariant}
 \fa{\beta \in \CS}{\left( \neg(\alpha = \beta) \ \lor \ \lnot (\beta = 0) \right)}  
 \end{equation}
 there exists $n$ such that $\alpha(n)=1$.
\end{Pro}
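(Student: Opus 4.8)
The plan is to establish the equivalence in two directions, both going through the characterization of pseudo-positivity in terms of Cantor space elements and both, as the remark before the statement warns, using countable choice. For the forward direction, assume \WMP and let $\alpha \in \CS$ satisfy \ref{Eqn:psposvariant}. First I would associate to $\alpha$ a non-negative real number, the natural candidate being $x_\alpha = \sum_{n \geqslant 1} \alpha(n) 2^{-n}$, or better yet (to make the ``at most one $1$''-type bookkeeping cleaner) a real built so that $x_\alpha > 0$ exactly captures $\ex{n}{\alpha(n)=1}$. Then I would show that the hypothesis \ref{Eqn:psposvariant} on $\alpha$ translates, via Lemma \ref{Lem:pseudposequiv}, into $x_\alpha$ being pseudo-positive: given arbitrary $y \in \RR$, I need $\neg(x_\alpha = y) \lor \neg(y = 0)$, and the idea is to approximate $y$ well enough by a tail-behaviour of a binary sequence $\beta$ so that the disjunction for $\beta$ (either $\neg(\alpha = \beta)$ or $\neg(\beta = 0)$) feeds back to the disjunction for $y$. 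Here countable choice is needed to actually produce such a $\beta$ from the approximations to $y$. Once $x_\alpha$ is pseudo-positive, \WMP gives $x_\alpha > 0$, and from a positive lower bound on $x_\alpha$ one reads off an $n$ with $\alpha(n) = 1$.

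For the converse, assume the Cantor-space statement and let $x \in \RR$ be pseudo-positive; I want $x > 0$. Without loss of generality $x$ is non-negative (replace $x$ by $\max\{0,x\}$, noting pseudo-positivity is inherited). Using countable choice, I would extract from $x$ a binary sequence $\alpha \in \CS$ coding the dyadic information about $x$ — for instance $\alpha(n) = 1$ iff some rational approximant witnesses $x > 2^{-n}$, arranged so that $\ex{n}{\alpha(n)=1}$ is equivalent to $x > 0$ and so that $\alpha = \zero$ forces $x = 0$. The work is then to verify that $\alpha$ satisfies the premise \ref{Eqn:psposvariant}: given $\beta \in \CS$, I form the associated real $y_\beta$ and apply the pseudo-positivity of $x$ (in the guise of Lemma \ref{Lem:pseudposequiv}: $\neg(x = y_\beta) \lor \neg(y_\beta = 0)$), then translate this back to $\neg(\alpha = \beta) \lor \neg(\beta = 0)$ using that equality of the coded reals is tied to equality of the coding sequences. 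Applying the hypothesis to this $\alpha$ yields $n$ with $\alpha(n) = 1$, hence $x > 0$.

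The main obstacle I expect is the translation step in each direction — matching ``$\neg(x = y)$'' with ``$\neg(\alpha = \beta)$'' and ``$\neg(y=0)$'' with ``$\neg(\beta = 0)$'' — because a single real has many binary codes and the naive encodings are not quite extensional enough to make these biconditionals hold on the nose. The trick, as in Lemma \ref{Lem:pseudposequiv} and its proof, is to work with $\max$ and $\min$ to reduce to the cases $y \in [0,x]$ first, and to choose the encoding of reals as binary sequences carefully (with the leading behaviour controlled) so that the relevant (in)equalities are decidable enough to transfer. Both applications of countable choice are of the ``extract a representation'' kind described in the introduction, and I would flag explicitly where each is used, since the proposition is one of the rare places where choice is genuinely needed on \emph{both} sides of an equivalence.
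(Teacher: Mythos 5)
Your proposal follows essentially the same route as the paper's proof: in the forward direction you build $x_\alpha = \sum \alpha(n)2^{-n}$ (the paper takes $\alpha$ increasing first), verify pseudo-positivity via Lemma \ref{Lem:pseudposequiv} by constructing a witness $\beta\in\CS$ from $y$ with countable choice so that $y=0 \Rightarrow \beta=\zero$ and $y=x_\alpha \Rightarrow \beta=\alpha$, and then apply \WMP; in the converse you extract a flagging sequence $\alpha$ for $x$ by countable choice, reduce the check of \ref{Eqn:psposvariant} for a given $\beta$ to the disjunction $\neg(x=y)\lor\neg(y=0)$ for a suitable $y$ via Lemma \ref{Lem:pseudposequiv}, and conclude. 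The only mild imprecision in your sketch is framing the forward $\beta$ as an "approximation of $y$"; in the paper's construction $\beta(n)=\min\{\alpha(n),\lambda(n)\}$ is not an approximation of $y$ but a sequence whose agreement with $\alpha$ and with $\zero$ is engineered to track $y$'s closeness to $x$ and to $0$ respectively — which is the logical requirement you do correctly state ("the disjunction for $\beta$ feeds back to the disjunction for $y$"), so this is a matter of wording rather than a gap.
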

\begin{proof}
Let us assume \WMP holds and let $\alpha \in \CS$ be as described. Without loss of generality we may assume that $\alpha$ is increasing. We claim that $x = \sum_{n \in \NN} \frac{\alpha(n)}{2^n}$ is pseudo-positive. To this end let $y \in \RR$ be arbitrary. Using countable choice construct a flagging sequence $(\lambda_n)_{n \geqslant 1}$, such that if $\alpha(n)=1$ and therefore $x > \frac{1}{2^{n+1}}$, then
\begin{align*}
\lambda_n = 0 & \implies \abs*{x-y} < \frac{1}{2^{n+2}} \ , \\ 
\lambda_n = 1 & \implies \abs*{y} > \frac{1}{2^{n+3}} \ . 
\end{align*}
Again, without loss of generality we may assume that $\lambda$ is increasing. Define $\beta\in \CS$ by 
\[ \beta(n)=\min\menge{\alpha(n),\lambda(n)}\]
If $y=0$, then $\beta=0 $ and if $y=x$, then $\beta = \alpha$.
So either $\neg(\beta = 0)$ or $\neg(\alpha=\beta)$. 

Conversely let $x$ be a pseudo-positive number. Using countable choice construct $\alpha$ such that
\begin{align*}
\alpha(n) = 0 & \implies \abs{x} < \frac{1}{2^n}  \ , \\ 
\alpha(n) = 1 & \implies \abs{x} > \frac{1}{2^{n+1}} \ . 
\end{align*}
We claim that $\alpha$ has the property \ref{Eqn:psposvariant}. To see this consider $\beta \in \CS$. 
Define a sequence $(y_n)_{n \geqslant 1}$
\[ 
y_n = \begin{cases}
x & \text{if } \fa{i \leqslant n}{\alpha(i) = \beta(i) } \\
0 & \text{otherwise} 
\end{cases}
 \ . \]
It is easy to see that $(y_n)_{n \geqslant 1}$ is a Cauchy sequence converging to a limit $y$.  Since $x$ is pseudo-positive by Lemma \ref{Lem:pseudposequiv} either $\lnot (0 = y)$ or $\lnot (x = y)$. 
In the first case $\beta = \alpha$, since otherwise $y=0$. Hence $\neg(\beta = 0)$ since it is easy to see that $\lnot (\alpha = 0)$.
In the second case $\neg(\alpha = \beta)$, since $\alpha=\beta$ implies that $y_n \to x$ and therefore $x=y$  a contradiction.
\end{proof}

As mentioned above \WMP is equivalent to the statement that every map on a complete metric space is strongly extensional. In fact, there are many specific spaces $X$ such that if every map defined on $X$ is strongly extensional, then \WMP holds. 

\begin{Lem}
Let $X$ be an arbitrary metric space.
Every function $f:X \to \RR$ is strongly extensional if and only if for every metric space $Y$ we have that every function $f:X \to Y$ is strongly extensional.	
\end{Lem}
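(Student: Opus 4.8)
The plan is to prove the non-trivial direction — that real-valuedness suffices — by composing an arbitrary target-valued function with a distance functional; the reverse implication is immediate, since if every function $X \to Y$ is strongly extensional for \emph{every} metric space $Y$, then specialising to $Y = \RR$ gives the hypothesis verbatim.

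So assume every function $X \to \RR$ is strongly extensional, let $(Y,d_Y)$ be an arbitrary metric space, let $f : X \to Y$, and suppose $x,y \in X$ satisfy $f(x) \neq f(y)$, i.e.\ $d_Y(f(x),f(y)) > 0$; we must derive $x \neq y$. First I would fix the point $f(x) \in Y$ and form $g : X \to \RR$ by $g(z) = d_Y(f(z),f(x))$. This is well-defined as the composite of $f$ with the map $w \mapsto d_Y(w,f(x))$, which is a genuine (indeed $1$-Lipschitz) function $Y \to \RR$; crucially, no continuity or extensionality of $f$ is invoked in forming this composite, so the construction is available for a completely arbitrary $f$. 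By the hypothesis applied to $g$, the function $g$ is strongly extensional.

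Finally, $g(x) = d_Y(f(x),f(x)) = 0$ while $g(y) = d_Y(f(y),f(x)) > 0$, so $\abs{g(x) - g(y)} = g(y) > 0$; that is, $g(x) \neq g(y)$ in $\RR$. Strong extensionality of $g$ then yields $x \neq y$, which is precisely $d_X(x,y) > 0$, as required. There is no genuine obstacle here; the only points that deserve a moment's care are that $w \mapsto d_Y(w,f(x))$ is a bona fide function on $Y$ (so that $g$ exists as a function with no regularity assumption on $f$), and that the apartness $f(x) \neq f(y)$ in $Y$ is transported, through $g$, to the numerical apartness $g(x) \neq g(y)$ — both of which are immediate from the definition of the metric apartness relation.
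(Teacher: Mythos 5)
Your proof is correct and follows essentially the same route as the paper: the easy direction is specialisation to $Y = \RR$, and the substantive direction composes $f$ with the ($1$-Lipschitz) distance functional $w \mapsto d_Y(w, f(x))$ to produce a real-valued $g$ with $g(x) = 0 \neq g(y)$, then invokes strong extensionality of $g$. The paper's proof is this argument verbatim, merely with different letter choices.
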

\begin{proof}
	One direction is trivial. To prove the converse let $f:X \to Y$ be an arbitrary function and let $a,b \in X$ be such that $f(a) \neq f(b)$. Then $g(x) = d(f(a),f(x))$ defines a function $g:X \to \RR$. By assumption $g$ is strongly extensional, and $g(a) = 0 \neq g(b) = d(f(a),f(b))$. Hence $a \neq b$.
\end{proof}

\begin{Lem}
Assume $f:\Closure{\menge{a,b}} \to \RR$ is a function such that $f(a) \neq f(b)$. 
\begin{enumerate}
	\item If $0<p<q<1$, then we can, for any  $x \in \RR$, decide whether \[ x\geqslant p d(a,b) \text{ or } x \leqslant q d(a,b)  \ .\]
	\item For any $x \in [0,d(a,b)]$ and for any $n \in \NN$ there exists $0 \leqslant i \leqslant 2^n -2$ such that \[ x \in \bracks*{\frac{i}{2^n} d(a,b),\frac{i+2}{2^n} d(a,b)} \ .\]
	\item There exists a function $g:[0,1] \to \RR$ such that $g(0)=f(a)$ and $g(d(a,b)) = f(b)$.
\end{enumerate}	
\end{Lem}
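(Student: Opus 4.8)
The thread tying the three parts together is that one would like to equip $[0,d(a,b)]$ with a \emph{normalised} coordinate --- informally, to work with $x/d(a,b)$ --- which is illegitimate constructively, since $d(a,b)$ cannot be shown to be positive. The only thing the function $f$ contributes is that $f(a)\neq f(b)$ forces $\neg(a=b)$, hence $\neg(d(a,b)=0)$: that is, $d(a,b)$ is \emph{almost} positive, $0\lessdot d(a,b)$, but not provably $>0$. The plan is to show that this weak hypothesis already suffices to manufacture the coordinate, after which the three items follow. Throughout write $D:=d(a,b)$ and fix a rational $B$ with $0\leqslant D\leqslant B$.

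I would prove part~(2) first, since parts~(1) and~(3) both rest on it. Fix $x\in[0,D]$ and $n$, and run the usual interval--halving bookkeeping driven by the comparison property of the reals (``for reals $u<v$ and any $w$, $w<v$ or $w>u$''): start at $k:=1$ with the invariant $x\geqslant\frac{k-1}{2^{n}}D$ (true for $k=1$ since $x\geqslant 0$); at stage $k$ decide whether $x<\frac{k+1}{2^{n}}D$, in which case halt with $i:=k-1$, or $x>\frac{k}{2^{n}}D$, in which case pass to $k+1$; by stage $k=2^{n}-1$ the bound $x\leqslant D$ forces the first alternative, returning $i=2^{n}-2$. (The two comparison points $\frac{k}{2^n}D$ and $\frac{k+1}{2^n}D$ coincide only when $D=0$, which $\neg(D=0)$ excludes.) Thus for every $n$ we obtain an $i_{n}\in\{0,\dots,2^{n}-2\}$ with $\frac{i_{n}}{2^{n}}D\leqslant x\leqslant\frac{i_{n}+2}{2^{n}}D$.

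For part~(1), given $x\in\RR$ and $0<p<q<1$, put $x':=\max\{0,\min\{x,D\}\}\in[0,D]$ and apply part~(2) at every resolution $n$ to get indices $i_{n}$ for $x'$. The sequence $r_{n}:=i_{n}/2^{n}\in[0,1]$ is Cauchy with modulus $2^{-n+1}$: were two of its terms farther apart than that, then multiplying through by $D\geqslant 0$ and using that both associated intervals contain $x'$ would give $D\leqslant 0$, hence $D=0$, a contradiction. So $\rho:=\lim r_{n}\in[0,1]$ exists, and since $\abs{x'-\frac{i_{n}}{2^{n}}D}\leqslant\frac{2}{2^{n}}B\to 0$ we get $x'=\rho D$. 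The comparison property applied to $\rho$ (using $p<q$) now yields $\rho>p$ or $\rho<q$. If $\rho>p$ then $x'=\rho D\geqslant pD$, and this transfers to $x\geqslant pD$: were $x<pD$, then $\min\{x,D\}\leqslant x<pD\leqslant x'=\max\{0,\min\{x,D\}\}$, so $\max\{0,\min\{x,D\}\}>\min\{x,D\}$, whence $\min\{x,D\}<0$, so $x'=0\geqslant pD$, so $pD\leqslant 0$, so $D=0$, a contradiction. If $\rho<q$ then $x'=\rho D\leqslant qD$, and symmetrically $x\leqslant qD$: were $x>qD$, then $\min\{x,D\}\leqslant x'\leqslant qD<x$, so $D<x$, so $\min\{x,D\}=D\leqslant qD$, so $D=0$, a contradiction.

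For part~(3) I would reuse this: for every $x\in[0,D]$ the construction of part~(2) produces, exactly as in part~(1), a real $\rho(x)\in[0,1]$ with $x=\rho(x)D$, and I set $g(x):=f(a)+\rho(x)\bigl(f(b)-f(a)\bigr)$ on $[0,D]$ --- and, if one insists on the domain $[0,1]$ of the statement, one first normalises $D\leqslant 1$ and then extends $g$ by the constant $f(b)$ on $[D,1]$. This $g$ is a genuine function: if $x=y$ then $(\rho(x)-\rho(y))D=0$, and were $\abs{\rho(x)-\rho(y)}>0$ we could cancel to get $D=0$, contradicting $\neg(D=0)$; hence $\rho(x)=\rho(y)$. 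Finally $\rho(0)D=0$ with $\neg(D=0)$ gives $\rho(0)=0$, so $g(0)=f(a)$; and $(\rho(D)-1)D=0$ with $\neg(D=0)$ gives $\rho(D)=1$, so $g(D)=f(b)$; at no point does one divide by $D$. The main obstacle is precisely this last refrain: the naive choice $g(x)=f(a)+\tfrac{x}{D}(f(b)-f(a))$ is unavailable, and the substance of the lemma is that $\neg(D=0)$ --- all that $f(a)\neq f(b)$ buys --- still lets one (i)~build the coordinate $\rho$ at all in part~(2), (ii)~see that it is single-valued, and (iii)~push the comparisons from the clamped $x'$ back to $x$ in part~(1); each of (i)--(iii) is where this doubly-negated hypothesis has to be spent carefully.
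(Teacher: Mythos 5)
There is a genuine gap in the proof of part~(2), and since you build everything else on it, the gap propagates.

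You invoke the comparison (cotransitivity) property of the reals in the form ``for reals $u<v$ and any $w$, $w<v$ or $w>u$'' with $u=\tfrac{k}{2^n}D$ and $v=\tfrac{k+1}{2^n}D$. But that principle has the \emph{strict} hypothesis $u<v$, i.e.\ $0<\tfrac{1}{2^n}D$, i.e.\ $D>0$. The hypothesis $f(a)\neq f(b)$ only gives $\neg(D=0)$, which for a non-negative real is strictly weaker than $D>0$: passing from $\neg(D=0)$ to $D>0$ is precisely Markov's principle, which is not available here. Your parenthetical remark (``the two comparison points coincide only when $D=0$, which $\neg(D=0)$ excludes'') conflates $\neg(u=v)$ with $u<v$; the former does not license the comparison. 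This is not a detail one can repair by being careful --- the whole content of the lemma is to overcome exactly this obstruction, and your argument for part~(2) never uses $f$ beyond extracting $\neg(D=0)$, so it cannot do so.

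The paper's proof goes the other way around: it proves part~(1) first, and this is where $f$ does real work. One builds a Cauchy sequence $(x_n)$ with values in $\{a,b\}$: it stays at $a$ while a flagging sequence $\lambda$ shows $D$ small, and at the first moment $\lambda$ certifies $D>0$ one performs the (now legitimate) comparison of $x$ against $pD$ and $qD$ and commits to $a$ or $b$ accordingly. One then reads out which branch happened by applying cotransitivity of $\neq$ \emph{to the values} $f(a),f(b),f(x_\infty)$, which is available because $f(a)\neq f(b)$ is a genuine apartness. Each of the two possible answers $x\geqslant pD$, $x\leqslant qD$ is a negative statement, so the case $\lambda\equiv 0$ (which would force $D=0$) can be discharged ex falso from $\neg(D=0)$ inside a proof by contradiction. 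Part~(2) is then a finite iteration of part~(1), and part~(3) uses part~(2) plus clamping $x$ to $[0,D]$ via $\min\{x,D\}$ rather than by a non-constructive case split $x\leqslant D$ vs.\ $x\geqslant D$ (your suggested extension ``by the constant $f(b)$ on $[D,1]$'' has the same problem as the main gap, though in your scheme it is moot anyway). So the structural choice to prove part~(2) first, using only $\neg(D=0)$, is where the proposal goes wrong; the rest of your bookkeeping (the Cauchy estimate for $i_n/2^n$, the well-definedness of $\rho$ via stability of equality) is fine once a correct part~(2) is in hand, and is in the same spirit as the paper's part~(3).
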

\begin{proof}
\begin{enumerate}
	\item Use the axiom of countable choice to get a (increasing) sequence $\lambda_n$ such that 
\begin{align*}
	\lambda_n = 0 & \implies d(a,b) < \frac{1}{2n}  \ , \\
	\lambda_n = 1 & \implies d(a,b) > \frac{1}{2n+1} \ .
\end{align*}
Define a sequence $(x_n)_{n \geqslant 1}$ in $\menge{a,b}$ by the following algorithm. As long as $\lambda_n=0$ set $x_n=a$. If $\lambda_n=1$ for the first time we can decide whether $x > p d(a,b)$ or $x < q d(a,b) $. In the first case we set $x_n=a$ from then on, in the second case we set $x_n = b$ from then on. It is easy to check that $(x_n)_{n \geqslant 1}$ is Cauchy. Considering its limit $x_\infty$ we can check whether $f(x_\infty) \neq f(a)$ or $f(x_\infty) \neq f(b)$. In the first case the assumption that $x < pd(a,b)$ leads to the contradiction that $d(x_\infty,a) =0$ and therefore $x_\infty= a$. So $x \geqslant pd(a,b)$. Similarly in the second case $x \leqslant qd(a,b)$. 
\item This can be decided by a finite iteration of the previous part.
\item Given $x \in [0,1]$ we consider $z_x = \max \menge{x,d(a,b)}$. Using countable choice and the preceding part of the lemma we fix a sequence $h:\NN \to \NN$ such that $z_x \in \bracks*{\frac{h(n)}{2^n} d(a,b),\frac{h(n)+2}{2^n} d(a,b)}$. It is easy to see that the limit 
		\[ r_x= \lim_{n \to \infty} \frac{h(n)}{2^n} \in [0,1] \] 
		is independent of the choice of $h$. In this way we can define a function $g(x) = r_x f(b) + (1-r_x)f(a)$. Furthermore, it is easy to check that $r_0 = 0$ and $r_{d(a,b)} =1$ and therefore $g(0) = f(a)$ and $g(d(a,b))=f(b)$. \qedhere
\end{enumerate}		
\end{proof}

As a corollary of the first part we also get the following insight.
\begin{Cor}
	Assume $f:\overline{\menge{a,b}} \to \RR$ is a function such that $f(a) \neq f(b)$, then $d(a,b)$ is pseudo-positive.
\end{Cor}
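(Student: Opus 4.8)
The plan is to reduce pseudo-positivity of $d(a,b)$ to the characterisation in Lemma \ref{Lem:pseudposequiv}. Since $d(a,b) \geqslant 0$ holds trivially for a metric, it suffices to verify that
\[ \fa{y \in \RR}{\neg(d(a,b) = y) \ \lor \ \neg(y = 0)} \ . \]
Two preliminary observations make this routine. First, from $f(a) \neq f(b)$ we get $\neg(a = b)$: were $a = b$, then $d(f(a),f(b)) = 0$, contradicting $f(a) \neq f(b)$; hence $\neg(d(a,b) = 0)$. Note this needs no strong extensionality of $f$, only irreflexivity of $\neq$. Second, I will feed $f$ into part (1) of the Lemma immediately preceding this corollary, with the concrete choice $p = \tfrac13$ and $q = \tfrac23$.

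Now fix $y \in \RR$. By part (1) of that preceding Lemma (applicable since $f(a) \neq f(b)$ and $0 < \tfrac13 < \tfrac23 < 1$) we may decide whether $y \geqslant \tfrac13 d(a,b)$ or $y \leqslant \tfrac23 d(a,b)$. In the first case, assuming $y = 0$ would give $\tfrac13 d(a,b) \leqslant 0$, hence $d(a,b) \leqslant 0$, hence $d(a,b) = 0$ (using $d(a,b) \geqslant 0$), contradicting $\neg(d(a,b) = 0)$; so $\neg(y = 0)$. In the second case, assuming $d(a,b) = y$ would give $d(a,b) \leqslant \tfrac23 d(a,b)$, hence again $d(a,b) \leqslant 0$ and $d(a,b) = 0$, the same contradiction; so $\neg(d(a,b) = y)$. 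In either case the required disjunction holds, and Lemma \ref{Lem:pseudposequiv} then yields that $d(a,b)$ is pseudo-positive.

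I do not expect any real obstacle: all of the genuine content (and the use of countable choice) is already contained in part (1) of the preceding Lemma, and what remains is just to unwind its dichotomy against the characterisation of pseudo-positivity. The only point requiring a little care is the derivation of $\neg(a=b)$ from $f(a) \neq f(b)$ — which, as noted above, uses only that a function respects equality and that $\neq$ is irreflexive, so no appeal to \MP or strong extensionality is needed.
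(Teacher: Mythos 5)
Your proof is correct and takes essentially the paper's intended route: the whole content is one application of part (1) of the preceding Lemma with concrete $p,q$, combined with $\neg(d(a,b)=0)$, and routing through Lemma \ref{Lem:pseudposequiv} rather than unwinding the $\lessdot$ form of pseudo-positivity directly is just a cosmetic difference since that lemma is already established for non-negative reals.
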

 This has already been proved by Ishihara \cite{hI91}*{Lemma 4}. In the same paper he has also shown that \WMP is equivalent to the statement that every  every real-valued function on a complete metric space is strongly extensional. In the following we are going to extend this result.

\begin{Pro}
The following are equivalent to \WMP.
\begin{enumerate}
	\item \label{SE-all} Every real-valued function on a complete metric space is strongly extensional.
	\item \label{SE-BS} Every real-valued function on $\BS$ is strongly extensional.
	\item \label{SE-CS} Every real-valued function on $\CS$ is strongly extensional.
	\item \label{SE-NNast} Every real-valued function on $\Ninf$ is strongly extensional; where $\Ninf$ denotes the space of all increasing binary sequences. 
	\item \label{SE-01} Every real-valued function on $[0,1]$ is strongly extensional.
	\item \label{SE-ab} If $X= \Closure{ \menge{a,b}}$, then any function $f:X \to \RR$ is strongly extensional.
\end{enumerate}	
\end{Pro}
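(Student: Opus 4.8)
The plan is to prove the chain of implications
$\ref{SE-all} \implies \ref{SE-BS} \implies \ref{SE-CS} \implies \ref{SE-NNast} \implies \ref{SE-01} \implies \ref{SE-ab} \implies \ref{SE-all}$,
most of which are either trivial or a direct application of the previous lemma. First note $\ref{SE-all} \implies \ref{SE-BS}$ and $\ref{SE-BS} \implies \ref{SE-CS}$ are immediate since $\BS$ is a complete metric space and $\CS$ is a closed (hence complete) subspace of $\BS$; similarly $\ref{SE-CS} \implies \ref{SE-NNast}$ because $\Ninf$, the set of increasing binary sequences, is a closed subspace of $\CS$. For $\ref{SE-01} \implies \ref{SE-ab}$ we use part (3) of the preceding lemma: given $f : \Closure{\menge{a,b}} \to \RR$ with $f(a) \neq f(b)$, that lemma produces $g : [0,1] \to \RR$ with $g(0) = f(a)$ and $g(d(a,b)) = f(b)$ — wait, one should be careful that $d(a,b)$ may exceed $1$, so one should rescale the lemma's conclusion to a function on $[0,1]$ with $g(0) = f(a)$, $g(1) = f(b)$; then $\ref{SE-01}$ gives $g(0) \neq g(1)$, i.e. $0 \neq 1$ in a suitable sense, but what we actually want is $a \neq b$. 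The honest route here is: apply $\ref{SE-01}$ to $g$ to conclude $0 \neq 1$ cannot be what we extract; instead we should feed the \emph{rescaled} $g$ the points $0$ and $1$ and use that $g$ was built so that $g$ being strongly extensional forces $d(a,b) > 0$, hence $a \neq b$. Since the Corollary already records that $d(a,b)$ is pseudo-positive, the cleanest formulation is: $\ref{SE-ab} \implies \ref{SE-all}$ is the step that recovers \WMP, and the four ``easy'' restrictions plus the lemma handle the rest.

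Concretely, $\ref{SE-ab} \implies \ref{SE-all}$: by the Corollary (Ishihara), whenever $f : \Closure{\menge{a,b}} \to \RR$ satisfies $f(a) \neq f(b)$, the distance $d(a,b)$ is pseudo-positive, so by \WMP — but we do not yet have \WMP; rather, we are proving it. So instead run it as $\ref{SE-ab} \implies \WMP \implies \ref{SE-all}$, where the second implication is exactly item \ref{SE-all} of the earlier \WMP-proposition (``every real-valued function on a complete metric space is strongly extensional''), which is already known. For $\ref{SE-ab} \implies \WMP$: let $x \geqslant 0$ be pseudo-positive; we must show $x > 0$. Take $a = 0$, $b = x$ in $\RR$ (so $\Closure{\menge{0,x}}$ is a two-point-closure space), and build an explicit $f : \Closure{\menge{0,x}} \to \RR$ with $f(0) \neq f(x)$ — e.g. using countable choice to flag the approximations of $x$, define $f$ on the closure so that $f$ separates $0$ from $x$ precisely when $x$ is pseudo-positive, mirroring the construction in the proof of the earlier \WMP-characterisation via Cantor space. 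Then $\ref{SE-ab}$ forces $0 = a \neq b = x$, hence $x \neq 0$, and since $x \geqslant 0$ this gives $x > 0$.

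The step $\ref{SE-NNast} \implies \ref{SE-01}$ is where the real content sits, and I expect it to be the main obstacle. The idea is that a real-valued function on $[0,1]$ that is \emph{not} strongly extensional gives, via the standard coding, a function on a space of (increasing) binary sequences that is not strongly extensional, because a real in $[0,1]$ can be presented by an increasing binary sequence tracking which dyadic half it lies in. More precisely, given $f : [0,1] \to \RR$ and a failure of strong extensionality at $a, b \in [0,1]$ with $f(a) \neq f(b)$, we want to transport this to $\Ninf$; the delicate point is that the surjection $\Ninf \to [0,1]$ (or the relevant coding map) must be used \emph{in the contrapositive direction} and one has to be careful that $\Ninf$ only carries increasing sequences, so the coding must be arranged accordingly — which is presumably why the statement singles out $\Ninf$ rather than all of $\CS$. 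I would handle this by composing $f$ with the canonical uniformly continuous map $\Ninf \to [0,1]$ and checking that strong extensionality pulls back along a surjection that reflects apartness, or, failing a clean abstract argument, by adapting the explicit flagging-sequence construction used in the Cantor-space characterisation of \WMP earlier in the excerpt. The remaining implications should each be a couple of lines.
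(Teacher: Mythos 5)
Your implication chain $\ref{SE-all} \Rightarrow \ref{SE-BS} \Rightarrow \ref{SE-CS} \Rightarrow \ref{SE-NNast} \Rightarrow \ref{SE-01} \Rightarrow \ref{SE-ab} \Rightarrow \ref{SE-all}$ has two gaps, one small and one fatal.

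The small one: you assert that $\ref{SE-BS} \Rightarrow \ref{SE-CS}$ and $\ref{SE-CS} \Rightarrow \ref{SE-NNast}$ are ``immediate'' because $\CS$ is a closed subspace of $\BS$ (and $\Ninf$ of $\CS$). That reasoning goes the wrong way. Being a closed subspace lets you \emph{restrict} functions from the big space to the small one; but to deduce strong extensionality on the small space from strong extensionality on the big one, you must \emph{extend} a given $f$ on the small space to the big one, which requires a retraction, not merely inclusion. The retractions do exist here ($\gamma \mapsto \gamma^{01}$ with $\gamma^{01}(n)=\min\{\gamma(n),1\}$ for $\BS \to \CS$, and $\gamma \mapsto \gamma^{\uparrow}$ with $\gamma^{\uparrow}(n)=\max_{i\leqslant n}\gamma(i)$ for $\CS \to \Ninf$), so the implications are true, but ``closed subspace'' is not the reason and the step is not immediate as you present it.

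The fatal one is $\ref{SE-NNast} \Rightarrow \ref{SE-01}$. You propose to compose $f:[0,1]\to\RR$ with a ``canonical uniformly continuous map $\Ninf\to[0,1]$'' and use that it is a surjection reflecting apartness. No such surjection exists: $\Ninf$ is (classically) countable --- every element is $0^n\one$ for some $n$ or $\zero$ --- while $[0,1]$ is not. The paper avoids this entirely by routing through $\ref{SE-ab}$ instead of $\ref{SE-01}$: it proves $\ref{SE-NNast}\Rightarrow\ref{SE-ab}$ directly, using countable choice to build, from the particular points $a,b$ with $f(a)\neq f(b)$, a flagging sequence $\lambda\in\Ninf$ for $d(a,b)$ and a map $g:\Ninf\to\RR$ with $g(\zero)\neq g(\lambda)$; strong extensionality on $\Ninf$ then gives $\lambda\neq\zero$, hence $a\neq b$. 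That construction encodes the pair $(a,b)$ itself into $\Ninf$ rather than trying to map $\Ninf$ onto a continuum. The implication $\ref{SE-01}\Rightarrow\ref{SE-ab}$ is then a separate side branch, handled by the preceding lemma exactly as you sketch.

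Finally, your treatment of $\ref{SE-ab}\Rightarrow\ref{SE-all}$ is needlessly roundabout: given $f:Y\to\RR$ on a complete $Y$ with $f(a)\neq f(b)$, simply restrict $f$ to $\Closure{\{a,b\}}$ and apply $\ref{SE-ab}$ to get $a\neq b$. There is no need to detour through a from-scratch derivation of \WMP; the equivalence of item $\ref{SE-all}$ with \WMP is already supplied by the preceding proposition in the text.
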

\begin{proof}
Obviously \ref{SE-all} implies \ref{SE-BS}. 

Assume $f:\CS \to \RR$ is a function with $f(\alpha) \neq f(\beta)$  If $\gamma \in \BS$, then let $\gamma^{01} \in \CS$ be the sequence $\gamma(n) = \min \menge{\gamma(n),1}$. Then $\gamma^{01}=\gamma$ for any $\gamma \in \CS$. So define $\tilde{f}(\gamma) = f(\gamma^{01})$. Altogether $\tilde{f}(\alpha) \neq \tilde{f}(\beta)$ and if we assume \ref{SE-BS} we get $\alpha \neq \beta$ and thus in this case also \ref{SE-CS} holds.

Now assume \ref{SE-CS} and let $f:\Ninf \to \RR$ be such that $f(\alpha) \neq f(\beta)$ for some $\alpha$ and $\beta$. For any $\gamma \in \CS$ let $\gamma^\uparrow$ denote the binary sequence defined by $\gamma^\uparrow (n) = \max_{i \leqslant n} \gamma(i)$. It is obvious that $\gamma^\uparrow \in \Ninf$ and that for $\gamma \in \Ninf$ we have $\gamma=\gamma^\uparrow$. Thus by setting $g(\gamma) = f(\gamma^\uparrow)$ we get a function $g:\CS \to \RR$ with $g(\alpha) \neq g(\beta)$. Hence $\alpha \neq \beta$ and \ref{SE-NNast} holds.

To see that \ref{SE-NNast} implies \ref{SE-ab} we need to use the axiom of countable choice to get a (increasing) sequence $\lambda_n$ such that 
\begin{align*}
	\lambda_n = 0 & \implies d(a,b) < \frac{1}{2n} \ , \\
	\lambda_n = 1 & \implies d(a,b) > \frac{1}{2n+1} \ .
\end{align*}
To a given sequence $\mu \in \Ninf$ we associate the sequence $(x^\mu_n)_{n \geqslant 1}$ in $\menge{a,b}$ by
\begin{align*}
	\lambda_n = 0 \land \mu_n =0 & \implies x_n = a \ , \\
	\lambda_n = 1 \lor \mu_n = 1 & \implies x_n = b  \ .
\end{align*}
It is easy to see that $x^\mu_n$ is a Cauchy sequence and thus converges to a limit $x^\mu_\infty$. Now we can consider the function $g:\Ninf \to \RR$ defined by $g(\alpha) = f(x^\alpha_\infty)$. Furthermore, it is easy to see that $x^{(\zero)}_\infty = a$ and $x^{\lambda}_\infty = b$. Hence $\lambda \neq \zero$ and thus $a \neq b$.

Next, we will show that \ref{SE-ab} implies \ref{SE-all}. So let $f:Y \to \RR$ be a function and $a,b \in Y$ such that $f(a) \neq f(b)$. Considering the restriction $f\restriction_{\Closure{\menge{a,b}}}$ we can therefore conclude that $a \neq b$.

Of course, \ref{SE-all} implies \ref{SE-01}, so if we can prove that \ref{SE-01} implies \ref{SE-ab} we are done. Unfortunately this is the trickiest part of the implications, however most of the work has been done in the preceding lemma. Given $f:X \to \RR$ with $f(a) \neq f(b)$ we can find a $g$ as in that lemma. By our assumption $g(0) \neq g(d(a,b))$, which means that $0 \neq d(a,b)$ which in turn means that $a \neq b$.
\end{proof}

\section{\MPv} \index{LLPE|see {\MPv}}
The \define{disjunctive version of Markov's principle} \MPv can be seen as an instance of de'Morgan's laws (see Section \ref{Sec:Pi01andSigma01}) as well as another weakening of \MP. It states that every almost positive number is pseudo-positive.

\begin{principle}[MPor]{\MPv} \label{PR:MPv}
For all $x\in \RR$
\[ 0 \lessdot x  \implies   \left( \fa{y \in \RR}{0 \lessdot y} \lor y \lessdot x  \right) \ .  \]
\end{principle}

This principle was also called LLPE\footnote{Lesser Limited Principle of Existence} by Mandelkern.  Berger, Ishihara, and Schuster have suggested WLLPO \index{WLLPO|see {\MPv}} as a name \cite{jB12}, since it can be viewed as a weakening of \LLPO. Finally in \cite{Troelstra1988a} it also appears under the name of SEP. \index{SEP|see {\MPv}} 

Notice that \MPv bears resemblance to the well-known statement 
\[ 0 < x  \implies   \left( \fa{y \in \RR}{0 < y} \lor y < x  \right) \ ,  \]
which is---in contrast---provable in Bishop style constructive mathematics \cite{dB85}*{Corollary 2.17}, or included as one of the axioms of constructive real analysis \cite{dB99b}

Before we state some equivalences of \MPv we would like to point out the fact that
\[ \MP \iff \MPv + \WMP  \ , \]
which is obvious, considering the following diagram illustrating which versions of Markov's principle allow one to ``upgrade'' between the different notions of positiveness of a real number.

\begin{center}
\begin{tikzpicture}[node distance=4 cm, auto]
  \node (P)  {\text{positive}};
  \node (PP) [right of=P] {\text{pseudo-positive}};
  \node (AP) [right of=PP] {\text{almost positive}};
  \draw[<-] (PP) to node {\MPv} (AP);
  \draw[<-] (P) to node {\WMP} (PP);
  \draw[->,bend left=25] (AP) to node {\MP} (P);
\end{tikzpicture}
\end{center}

\begin{Pro} \label{Pro:equivs_of_MPv}
The following are equivalent to \MPv:
\begin{enumerate}
\item \label{MPo:3} For any binary sequence $(a_n)_{n \geqslant 1}$ with at most one $1$ and $\lnot \fa{n \in \NN}{a_n = 0}$ either 
\[ \fa{n \in \NN}{a_{2n} = 0} \lor \fa{n \in \NN}{a_{2n+1} = 0} \ . \]
\item \label{MPo:4} If $\alpha$ is a binary sequence such that $\lnot \fa{n \in \NN}{\alpha(n)=0}$, then for any $\beta$ we have 
\[  \fa{\beta \in \CS}{\neg(\alpha = \beta) \lor \lnot (\beta = 0)}  \]
\item \label{MPo:8} For $\alpha,\beta \in \CS$ 
		 \[ \lnot (\fa{n \in \NN}{\alpha_n =0} \land \fa{n \in \NN}{\beta_n =0}) \implies \lnot \fa{n \in \NN}{\alpha_n =0} \lor \lnot \fa{n \in \NN}{\beta_n =0}  \ .\] 
		 (That is the De Morgan law for $\Pi^0_1$ formulae).
\item \label{MPo:5} For all $x,y \in \RR$ with $\neg(x=y)$ we have $\lnot (x \leqslant y) \lor \lnot (y \leqslant x)$

\item \label{MPo:1} For all $x,y \in \RR$ with $\neg(x=y)$ we have $x \leqslant y \lor y \leqslant x$ 
\item[\theenumi ½] \label{MPo:1b} For all $x \in \RR$ with $\neg(x=0)$ we have $x \leqslant 0 \lor 0 \leqslant x$
\item \label{MPo:2} For all $x,y,z \in \RR$ we have $\lnot (x=y) \implies \neg(x=z) \lor \neg(y=z)$.
\item \label{MPo:6} For all $x,y \in \RR$ with $x \gtrdot y$, $\menge{x,y}$ is a closed subset of $\RR$
\item \label{MPo:7} For any $x\in \RR$ with $\abs{x} \gtrdot 0 $ either $\abs{x}=x$ or $ \abs{x} = -x$.
\end{enumerate}
\end{Pro}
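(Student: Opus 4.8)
The plan is to prove the chain of equivalences in a cycle, grouping the statements into "combinatorial" ones on Cantor space (\ref{MPo:3}, \ref{MPo:4}, \ref{MPo:8}), "real-number order" ones (\ref{MPo:5}, \ref{MPo:1}, \ref{MPo:1b}, \ref{MPo:2}), and the "geometric/absolute-value" ones (\ref{MPo:6}, \ref{MPo:7}), and to connect each cluster back to the definition of \MPv (every almost positive number is pseudo-positive). First I would show \MPv implies \ref{MPo:3}: given a binary sequence $(a_n)$ with at most one $1$ and $\lnot\fa{n}{a_n=0}$, form $x = \sum_n a_n/2^n$, which is almost positive since $\lnot(x\leqslant 0)$; apply \MPv to get that $x$ is pseudo-positive, then feed the specific witness $y = \sum_n a_{2n}/2^{2n}$ into the pseudo-positivity to decide which of the even/odd subsequences vanishes (the case $y\lessdot x$ forces $\fa{n}{a_{2n}=0}$ because otherwise $y=x$, and $y\gtrdot 0$ forces the existence of an even index with $a_{2n}=1$, hence by the at-most-one-$1$ hypothesis $\fa{n}{a_{2n+1}=0}$). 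The converse \ref{MPo:3} $\Rightarrow$ \MPv runs like the proof of the previous proposition in the excerpt: given almost positive $x$ and arbitrary $y$, use countable choice to flag $x$ and $y$ against $1/2^n$, build a binary sequence with at most one $1$ that is non-vanishing by almost positivity, and read off pseudo-positivity of $x$ from \ref{MPo:3}.

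Next I would tie in \ref{MPo:4} and \ref{MPo:8}. The equivalence \ref{MPo:3} $\Leftrightarrow$ \ref{MPo:4} is essentially Lemma \ref{Lem:pseudposequiv} transported to Cantor space: $\alpha$ with $\lnot\fa{n}{\alpha(n)=0}$ corresponds to a non-vanishing almost positive real, and the condition $\fa{\beta}{\neg(\alpha=\beta)\lor\lnot(\beta=0)}$ is exactly the Cantor-space rendering of pseudo-positivity (compare the proposition just before this one). For \ref{MPo:8}, the De Morgan law for $\Pi^0_1$ formulae, I would derive it from \ref{MPo:3} by the standard "diagonal merge" trick: from two decreasing-to-$0$-or-not sequences whose conjunction of "all zero" is absurd, interleave them into a single binary sequence with at most one $1$ (taking differences of running maxima, as in the \WKL/colouring arguments earlier in the text) and apply \ref{MPo:3}; conversely \ref{MPo:8} instantiated at $\beta = $ (the even part) and $\alpha = $ (the odd part) gives back \ref{MPo:3}.

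For the real-order cluster: \ref{MPo:5} is just \MPv rephrased using $0\lessdot z \iff \lnot(z\leqslant 0)$ and $\neg(x=y)\iff \neg(x-y=0)$, applied to $z=x-y$ and noting that $\neg(x=y)$ gives $0\lessdot \abs{x-y}$ so one of $\lnot(x\leqslant y)$, $\lnot(y\leqslant x)$ holds; \ref{MPo:1} and \ref{MPo:1b} follow from \ref{MPo:5} because $\lnot(y\leqslant x)$ is by definition $x\lessdot y$ hence $x\leqslant y$ (wait — more carefully: $\lnot(y<x)$ is $x\leqslant y$, and constructively $\lnot(y\leqslant x)\iff x\lessdot y\iff \lnot(y\leqslant x)$; since $\lnot(y\leqslant x)\implies x\leqslant y$ requires no choice, this direction is immediate), and the reverse specialisations are trivial. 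Statement \ref{MPo:2} is \ref{MPo:1b}/\ref{MPo:5} with $z$ subtracted off. For the geometric cluster, \ref{MPo:6} ("$\menge{x,y}$ closed when $x\gtrdot y$") is handled by a Cauchy-sequence argument as in the \LLPO-closedness proofs in Section \ref{Sec:LLPO}: a convergent sequence in $\menge{x,y}$ together with pseudo-positivity of $x-y$ pins the limit to one of the two points; conversely build the usual two-point Cauchy sequence from a binary sequence with at most one $1$ to recover \ref{MPo:3}. Finally \ref{MPo:7}: $\abs{x}\gtrdot 0$ means $\abs{x}$ is almost positive, so $\abs{x}$ is pseudo-positive by \MPv; testing the witness $y = x$ (respectively applying \ref{MPo:5} to $x$ and $-x$, which are distinct since $\neg(x=0)$) decides $\abs{x}=x$ or $\abs{x}=-x$; conversely, from $\abs{x}=\pm x$ one separates $x$ from $0$ on the appropriate side, yielding \ref{MPo:1b}.

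\textbf{Main obstacle.} The routine parts are the "reshuffling" of inequalities and the De Morgan rephrasings; the genuinely delicate points, both needing countable choice, are the two directions linking \MPv (or \ref{MPo:3}) with the full pseudo-positivity statement and with \ref{MPo:4} — exactly the subtlety flagged for the preceding proposition, namely that one must choose flagging sequences $(\lambda_n)$ comparing the given reals against $2^{-n}$, then carefully merge them with the original sequence so that the merged sequence still has at most one $1$ while faithfully encoding "$y=0$" versus "$y=x$". Getting the bookkeeping of the thresholds right (so that $\lambda_n=0$ really forces closeness and $\lambda_n=1$ forces separation, consistently with $\alpha(n)=1\implies x>2^{-(n+1)}$) is where I expect to spend the most care; everything else is assembling Lemma \ref{Lem:pos-order} and Lemma \ref{Lem:pseudposequiv} in the obvious order.
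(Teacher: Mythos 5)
Your proposal takes a genuinely different route from the paper's, and it also has two concrete problems.

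On the route: the paper delegates $\MPv\iff\ref{MPo:3},\ref{MPo:6},\ref{MPo:7}$ to \cite{mM88} and $\ref{MPo:1}\iff\ref{MPo:2}\iff\ref{MPo:8}$ to Berger, and only writes out the cycle $\ref{MPo:3}\Rightarrow\ref{MPo:4}\Rightarrow\ref{MPo:8}\Rightarrow\ref{MPo:3}$. For $\ref{MPo:3}\Rightarrow\ref{MPo:4}$ it builds $\gamma$ with $\gamma_{2n}=1\iff\alpha_n\neq\beta_n$ and $\gamma_{2n+1}=1\iff\beta_n=1$, chops to at most one $1$, and applies \ref{MPo:3}; for $\ref{MPo:4}\Rightarrow\ref{MPo:8}$ it compares $\gamma=\alpha_0\beta_0\alpha_1\dots$ against $\mu=0\beta_0 0\beta_1\dots$. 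You instead close the loop $\ref{MPo:3}\iff\ref{MPo:8}$ directly (same merge-and-chop device) and treat $\ref{MPo:3}\iff\ref{MPo:4}$ only as a "transport via Lemma \ref{Lem:pseudposequiv}" — that buys nothing over the paper's clean combinatorial step, which you should just reproduce. Your explicit witness $y=\sum_n a_{2n}/2^{2n}$ for $\MPv\Rightarrow\ref{MPo:3}$ is a nice elementary filling-in of the Mandelkern citation.

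There is a slip and a genuine gap. The slip: in the $\MPv\Rightarrow\ref{MPo:3}$ argument you say $y\gtrdot 0$ "forces the existence of an even index with $a_{2n}=1$". That is Markov's principle, which you do not have. From $0\lessdot y$ (and $y\geqslant 0$) you may only conclude $\lnot\fa{n}{a_{2n}=0}$; this still delivers $\fa{n}{a_{2n+1}=0}$ via the at-most-one-$1$ hypothesis (an $m$ with $a_{2m+1}=1$ would force $\fa{n}{a_{2n}=0}$), so the conclusion is reachable, but not by your stated step. The genuine gap: you dismiss $\ref{MPo:2}$ as "\ref{MPo:1b}/\ref{MPo:5} with $z$ subtracted off". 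After normalising $z=0$, $\ref{MPo:2}$ reads $\lnot(x=y)\Rightarrow\lnot(x=0)\lor\lnot(y=0)$, and from $\lnot(x=y)$ alone one only obtains the conjunction $\lnot(x=0\land y=0)$; passing from $\lnot(P\land Q)$ to $\lnot P\lor\lnot Q$ for $\Pi^0_1$ statements is precisely the content of $\ref{MPo:8}$, not a free rearrangement of $\ref{MPo:5}$. Indeed the paper flags that $\ref{MPo:1}\Rightarrow\ref{MPo:2}$ "seems to necessitate countable choice": one has to convert $x=0$ and $y=0$ into $\Pi^0_1$ form with flagging sequences before $\ref{MPo:8}$ can be applied, and that conversion is exactly where the choice is used. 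Your plan elides this and would fail as written.
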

\begin{proof} 
Most of these equivalences to \MPv (\ref{MPo:3},\ref{MPo:6},\ref{MPo:7}) are taken from \cite{mM88}. The equivalences between $\ref{MPo:1} ,\ref{MPo:2}, \ref{MPo:8}$ are due to Josef Berger\footnote{currently unpublished, talk at CiE2012 in Cambridge}. The equivalence of \ref{MPo:1b}½ and \ref{MPo:1} is due to the fact that $\lnot(x=y) \iff \lnot(x-y = 0)$.
 The proof of $\ref{MPo:1} \implies \ref{MPo:2}$, surprisingly, seems to necessitate countable choice.  The equivalence that $\ref{MPo:5} \iff \ref{MPo:1}$ is simply because $\neg(x \leqslant y) \iff x \geqslant y$ given that $\lnot (x = y)$.
Clearly, \ref{MPo:8} implies \ref{MPo:3}.
To see that $\ref{MPo:3} \implies \ref{MPo:4}$ let $\alpha$ be a binary sequence such that $\lnot \fa{n \in \NN}{\alpha_n=0}$, and $\beta \in \CS$ be arbitrary. Now define $\gamma \in \CS$ by
\begin{align*}
	\gamma_{2n}=1 & \implies \alpha_n \neq \beta_n  \ , \\
	\gamma_{2n+1}=1 & \implies \beta_n = 1 \ .	
\end{align*} 
We have that $\lnot (\fa{n \in \NN}{\gamma_n=0})$, since the assumption that $\fa{n \in \NN}{\gamma_n=0}$ implies that $\alpha = \beta$ and $\beta = \zero $, which contradicts the assumption that $\lnot (\alpha = \zero)$.
Now consider $\gamma^\prime$ defined by 
\[ \gamma^\prime(n) = \gamma(n) - \max_{i<n} \menge{ \gamma(i)} \ ,\]
so that $\gamma^\prime$ contains at most one $1$. Furthermore we also have $\neg(\gamma^\prime = \zero)$. So applying \ref{MPo:3} we get that either $\fa{n \in \NN}{\gamma^\prime_{2n}=0}$ or $\fa{n \in \NN}{\gamma^\prime_{2n+1}=0}$. In the first case the assumption that $\beta = \zero$ leads to the following contradiction. Then $\gamma_{2n+1} =0$ for all $n \in \NN$, which means that we must have $\lnot (\fa{n \in \NN}{\gamma_{2n}=0})$. But this contradicts $\fa{n \in \NN}{\gamma^\prime_{2n}=0}$. In the second case the assumption that $\alpha = \beta$ leads to the following contradiction. Then $\gamma_{2n} =0$ for all $n \in \NN$ which means that we must have $\lnot (\fa{n \in \NN}{\gamma_{2n+1}=0})$. But this contradicts $\fa{n \in \NN}{\gamma^\prime_{2n+1}=0}$. So we have decided 
\[ \lnot (\alpha = \beta) \lor  \lnot (\beta = \zero) \ .\]

To see that $\ref{MPo:4} \implies \ref{MPo:8}$ let $\alpha, \beta$ be a binary sequences such that \[ \lnot (\fa{n \in \NN}{\alpha_n =0} \land \fa{n \in \NN}{\beta_n =0}) \ . \]
Now consider the sequences 
\[ \gamma = \alpha_0 \beta_0 \alpha_1 \beta_1 \alpha_2 \beta_2 \dots \]
and 
\[ \mu = 0 \beta_0 0 \beta_1 0 \beta_2 \dots \ . \]
Then we can apply \ref{MPo:4} to decide whether $\lnot (\mu = \zero)$ which means that $\lnot (\beta = \zero)$ or $\lnot (\gamma = \mu)$ in which case $\lnot (\alpha = \zero) $.
 Thus we have shown \ref{MPo:8}.
\end{proof}

\begin{Rmk}
	From part \ref{MPo:1b}½ of the above proposition we immediately get that \LLPO $\implies$ \MPv
\end{Rmk}

As \LLPO is equivalent to the intermediate value theorem, it is straightforward to adapt that proof and get the following \MPv-variant.
\begin{Pro}
\label{Pro:IVTunique} \MPv is equivalent to the statement that for every  non-discontinuous function $f:[0,1] \to \RR$ which is such that $f(0)f(1) \leqslant 0$ there is at most one root in the sense that 
\[ x \neq y \implies \lnot (f(x) = f(y) = 0) \ , \]
 there exists $x \in [0,1]$ such that $f(x) =0$
\end{Pro}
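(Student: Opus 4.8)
The plan is to mirror the proof of the equivalence \LLPO $\iff$ intermediate value theorem (Proposition \ref{Pro:equivs_of_LLPO}.\ref{item:IVT}), weakening each appeal to \LLPO to one to \MPv that the at-most-one-root hypothesis makes legitimate. Where the classical bisection proof of the IVT decides, for a midpoint $c$, the \LLPO-instance $0\leqslant f(c)\lor f(c)\leqslant0$, I would instead work with two points $c_1\neq c_2$ at once: uniqueness gives $\lnot(f(c_1)=0\land f(c_2)=0)$, the \MPv-form of De Morgan for $\Pi^0_1$-statements (Proposition \ref{Pro:equivs_of_MPv}.\ref{MPo:8}, applied after encoding $f(c_i)=0$ as a statement about an element of Cantor space via countable choice) turns this into $\neg(f(c_1)=0)\lor\neg(f(c_2)=0)$, and for whichever point, say $c_1$, satisfies $\neg(f(c_1)=0)$, \MPv (Proposition \ref{Pro:equivs_of_MPv}.\ref{MPo:1b}) yields $f(c_1)\leqslant0\lor f(c_1)\geqslant0$ --- exactly the decision needed to shrink a bracketing interval, only now made at a point whose value is known to be apart from $0$.

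For the Brouwerian-counterexample direction (the displayed statement implies \MPv) I would establish Proposition \ref{Pro:equivs_of_MPv}.\ref{MPo:1b}. Given $a\in\RR$ with $\neg(a=0)$, assume (after scaling) $\abs{a}<1$ and let $f\colon[0,1]\to\RR$ be piecewise linear, equal to $3(1+a)x-1$ on $[0,\frac13]$, equal to the constant $a$ on $[\frac13,\frac23]$, and equal to $3(1-a)x-2+3a$ on $[\frac23,1]$; then $f(0)=-1$, $f(1)=1$ (so $f(0)f(1)\leqslant0$), and $f$ is continuous, hence sequentially non-discontinuous, and non-decreasing. To see $f$ has at most one root, suppose $x<y$ with $f(x)=f(y)=0$; it suffices to derive $\lnot\lnot(a=0)$, which contradicts $\neg(a=0)$. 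If $a\neq0$ then $a>0$ or $a<0$; if $a>0$ then $f\geqslant a>0$ on $[\frac13,1]$, so $x,y<\frac13$, where $f$ is strictly increasing, contradicting $f(x)=f(y)$, and $a<0$ is symmetric using that $f\leqslant a<0$ on $[0,\frac23]$. Applying the statement gives $x^*$ with $f(x^*)=0$; by cotransitivity $x^*<\frac23$ or $x^*>\frac13$. In the first case $\lnot(a<0)$, since $a<0$ would force $f(x^*)<0$; in the second case $\lnot(a>0)$. Hence $a\leqslant0\lor a\geqslant0$, which is \MPv.

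For the forward direction (\MPv implies the statement) I would first normalise the sign hypothesis. Since $0\neq1$, at-most-one-root gives $\lnot(f(0)=0\land f(1)=0)$, so by \MPv we may assume, say, $\neg(f(0)=0)$, and then $f(0)\leqslant0\lor f(0)\geqslant0$. If $f(0)\leqslant0$ then $f(1)\geqslant0$: otherwise $f(1)<0$, and dividing $f(0)f(1)\leqslant0$ by $f(1)$ gives $f(0)\geqslant0$, hence $f(0)=0$, contradicting $\neg(f(0)=0)$; so $f(0)\leqslant0\leqslant f(1)$. If $f(0)\geqslant0$ the symmetric argument gives $f(1)\leqslant0\leqslant f(0)$, and replacing $f$ by $x\mapsto f(1-x)$ (which preserves the two hypotheses) reduces to the previous case. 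So assume $f(0)\leqslant0\leqslant f(1)$. Now build nested intervals $[a_n,b_n]$ with $f(a_n)\leqslant0\leqslant f(b_n)$ and $b_n-a_n\leqslant(2/3)^n$: starting from $[0,1]$, trisect at $c_1<c_2$, get $\neg(f(c_i)=0)$ for some $i$ by uniqueness plus \MPv, then $f(c_i)\leqslant0\lor f(c_i)\geqslant0$ by \MPv, and accordingly replace $a_n$ by $c_i$ or $b_n$ by $c_i$, preserving the bracketing while shrinking the length by $\frac13$ or $\frac23$. The sequences converge to a common $x^*\in[0,1]$; since $f$ is sequentially non-discontinuous, $\lnot\lnot\bigl(f(a_n)\to f(x^*)\bigr)$ together with $f(a_n)\leqslant0$ forces $f(x^*)\leqslant0$ (stability of $\leqslant$), and likewise $f(x^*)\geqslant0$, so $f(x^*)=0$, exactly as in Proposition \ref{Pro:equivs_of_LLPO}.\ref{item:IVT}.

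The main obstacle is the bookkeeping forced by the weakness of \MPv: unlike \LLPO it yields only the non-strict alternative $f(c)\leqslant0\lor f(c)\geqslant0$, and only once $\neg(f(c)=0)$ is available, so every case split must be routed through the at-most-one-root hypothesis (to produce the $\Pi^0_1$-De Morgan premise) and one must accept geometric rather than dyadic shrinking. One must also check carefully that the stated sign hypothesis $f(0)f(1)\leqslant0$, genuinely weaker than $f(0)\leqslant0\leqslant f(1)$, really does reduce to it --- this is precisely where at-most-one-root is used at the endpoints $0,1$ --- and that ``sequentially non-discontinuous'' is invoked only in the doubly-negated form already used in the \LLPO case, which is harmless since the conclusion $f(x^*)=0$ is $\Pi^0_1$ and hence stable. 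Everything else is routine.
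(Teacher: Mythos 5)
Correct, and this is the paper's own argument: the Brouwerian counterexample uses the same piecewise-linear $f$ through $(0,-1)$, $(\nicefrac{1}{3},a)$, $(\nicefrac{2}{3},a)$, $(1,1)$ with the same $z<\nicefrac{2}{3}$ versus $z>\nicefrac{1}{3}$ split, and the forward direction is the paper's ``trisect instead of bisect,'' with \MPv used in its $\Pi^0_1$-De-Morgan form (\ref{MPo:8}) at each pair of trisection points. You are considerably more explicit than the paper's one-sentence forward direction --- in particular, you correctly identify and handle the reduction of $f(0)f(1)\leqslant 0$ to $f(0)\leqslant 0\leqslant f(1)$ by running the at-most-one-root/\MPv argument at the endpoints, a step the paper passes over --- and the one loose phrase, ``so $x,y<\nicefrac{1}{3}$'' in the uniqueness check (constructively you only get $\lnot\lnot(x<\nicefrac{1}{3})$ and $\lnot\lnot(y<\nicefrac{1}{3})$), does go through because the goal there is $\bot$ and $\lnot\lnot P\land\lnot\lnot Q\vdash\lnot\lnot(P\land Q)$, though it is worth saying so explicitly given how carefully you track double negations elsewhere.
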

\begin{proof}
This is almost the same as the equivalence of IVT and \LLPO. Instead of interval halving, we have to trisect the interval. For the converse direction we will show that \ref{MPo:1b} of Proposition \ref{Pro:equivs_of_MPv} holds. Take $a$ such that $\neg(a=0)$. We can  define a piecewise linear function $f$ connecting the points  $(0,-1)$, $(\nicefrac{1}{3},a)$, $(\nicefrac{2}{3},a)$, and $(1,1)$. It is easy to see that if $t \neq s$ then $\neg(f(t) = f(s) = 0)$, for assume that there is $t \neq s$ with $f(t) = f(s) = 0$. Then $a=0$, since if $a \neq 0$ there must be a unique root. But $\neg (a =0)$, so we have our desired contradiction. Now, as usual, if $f$ has a root $f(z) = 0$, then we can check whether $f(z) < \nicefrac{2}{3}$ or $f(z) > \nicefrac{1}{3}$. In the first case we cannot have $a < 0$ and in the second case we cannot have $a > 0$. Together $x \leqslant 0 \lor x \geqslant 0$.
\end{proof}

As we have seen above, \WKL is equivalent to \LLPO. Because of the similarity of \LLPO and \MPv it is not surprising that we can find a weakening of \WKL that is equivalent to the latter. 
\begin{principle}[WKL!!!]{\WKLppp} If $T$ is an infinite decidable tree that
\begin{enumerate}
\item has at most one path in the weak sense that if $\alpha$ and $\beta$ are paths through the tree, then $\alpha=\beta$, 
\item and for every infinite subtree it is \emph{impossible not to} admit an infinite path, \label{Enu:wkluniunicond}
\end{enumerate}
 then it actually has an infinite path.
\end{principle}
Notice that our notion of \WKLppp differs only slightly from the one of J.\ Moschovakis, who first considered it \cite{jM13}. In her version, named \WKLpp,  condition (\ref{Enu:wkluniunicond}) is missing. She also showed that 
\[ \WKLpp \iff \lnot \neg \WKL \land \MPv \ . \]
By, vaguely speaking, moving  Condition (\ref{Enu:wkluniunicond}) from $\lnot \neg \WKL$ into \WKLppp, we can strengthen this result: in this way we get a version of weak K\H{o}nig's Lemma that holds in recursive models!
\begin{Pro}
\MPv is equivalent to \WKLppp.
\end{Pro}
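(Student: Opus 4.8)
The statement to prove is that \MPv is equivalent to \WKLppp. The natural strategy is to reuse the machinery already assembled in the chapter: we have the equivalence $\MPv \iff$ part \ref{MPo:3} of Proposition \ref{Pro:equivs_of_MPv} (the ``\LLPO-for-sequences-with-at-most-one-$1$-that-are-not-all-zero'' principle), and we have the standard tree construction from the proof of Proposition \ref{Pro:WKL_equiv_LLPO} which translates the two subtrees of an infinite binary tree into a binary sequence with at most one $1$. So I would prove the two directions by adapting those two ingredients, being careful about the two side conditions (uniqueness of path, and ``impossible not to have a path'') built into \WKLppp.

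\textbf{Direction $\WKLppp \implies \MPv$.} First I would show \WKLppp implies part \ref{MPo:3} of Proposition \ref{Pro:equivs_of_MPv}. Given a binary sequence $(a_n)_{n\geqslant 1}$ with at most one $1$ and with $\lnot\fa{n\in\NN}{a_n=0}$, I would build essentially the tree from the proof of the \WKLp{k} proposition:
\[ T = \set{0u}{\fa{m\leqslant\abs u}{a_{2m}=0}} \cup \set{1u}{\fa{m\leqslant\abs u}{a_{2m+1}=0}} \cup \menge{\varepsilon}\ . \]
This is a decidable tree; it is infinite (there is always a branch of each length, since the $a$'s have at most one $1$). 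I must check the two hypotheses of \WKLppp. For uniqueness: if $\alpha,\beta$ are both paths, then both start $0$ or both start $1$ — because the assumption $\lnot\fa{n}{a_n=0}$ rules out $T$ having branches through both $0$ and $1$ of every length (if $a_{2k}=1$ then no branch of length $>k$ through $0$; similarly on the other side; and the ``at most one $1$'' clause means these can't both be vacuously escaped)... actually one needs $\lnot\lnot$-style reasoning here, which is fine since we only need $\alpha=\beta$, an equality, which is $\lnot\lnot$-stable. For the ``impossible not to have a path'' condition: $\lnot\lnot\ex{}{}$ a path follows because $\lnot\fa{n}{a_n=0}$ excludes the branch-free scenario. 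Then \WKLppp gives a genuine path $\alpha$, and $\alpha(0)=0$ forces $\fa{n}{a_{2n}=0}$ (else a long branch through $0$ fails to be in $T$) while $\alpha(0)=1$ forces $\fa{n}{a_{2n+1}=0}$; so part \ref{MPo:3} holds, hence \MPv. The delicate point, and the one I expect to be the main obstacle, is verifying conditions (1) and (2) of \WKLppp \emph{constructively} from just $\lnot\fa{n}{a_n=0}$ and ``at most one $1$'' — one has to be careful which negations one can discharge.

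\textbf{Direction $\MPv \implies \WKLppp$.} Here I would use \MPv in the form of part \ref{MPo:3} and mimic the \LLPO $\implies$ \WKL proof (proof of Proposition \ref{Pro:WKL_equiv_LLPO}), but I need to feed in the two extra hypotheses to get the two side conditions of part \ref{MPo:3} satisfied at each step. Given an infinite decidable $T$ satisfying (1) and (2), I would build $\alpha\in\CS$ recursively so that $T_{\overline\alpha n}$ is an infinite decidable tree that still satisfies (1) and (2) (uniqueness and impossibility-of-no-path pass to subtrees below an extendible node). At each stage, with $T_u$ infinite-and-(1)-and-(2), I form the sequence $a_{2n}=0\iff\ex{v\in 2^n}{0\ast v\in T_u}$, $a_{2n+1}=0\iff\ex{v\in 2^n}{1\ast v\in T_u}$, turned into a sequence with at most one $1$ by subtracting the running maximum as in the graph-colouring proof; $T_u$ infinite gives ``at most one $1$'', and condition (2) for $T_u$ gives $\lnot\fa{n}{a_n=0}$ (if all $a_n=0$, then for each $n$ some branch of length $n$ goes left and some goes right, but a path couldn't exist only if ... — here I use that (2) says it's impossible $T_u$ has no path, and a tree where eventually only one side survives has a path, so the all-zero case is genuinely excluded). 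Then part \ref{MPo:3} decides whether $T_{u0}$ or $T_{u1}$ is infinite; dependent choice lets us iterate. Uniqueness of the path in $T$ guarantees the $\alpha$ we produce is well-defined independent of the choices (or at least that the resulting path is the path). I would remark that the use of countable/dependent choice here parallels the $\LLPO\implies\WKL$ argument, and note — as the text promises — that the payoff is a version of weak König's Lemma valid in recursive models, since \MPv holds there.
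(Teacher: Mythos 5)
Your forward direction ($\MPv \implies \WKLppp$) is essentially the paper's argument: use the equivalence of \MPv with the \LLPO-shaped form for sequences with at most one $1$, derive $\lnot\fa{n}{a_n=0}$ from condition (2) together with condition (1) (two non-equal $\lnot\lnot$-existent paths would violate uniqueness), apply the equivalence, iterate with dependent choice, and check that conditions (1) and (2) pass to the subtrees $T_{u0},T_{u1}$. The "subtract the running maximum" step is a harmless detour (the paper plugs $a_n$ straight into the de Morgan $\Pi^0_1$ form, item \ref{MPo:8}, rather than the "at most one $1$" form \ref{MPo:3}); your version is slightly more work but correct.

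The reverse direction ($\WKLppp \implies \MPv$) has a genuine gap: your tree does not satisfy hypothesis (1) of \WKLppp. You set
\[ T = \set{0u}{\fa{m\leqslant\abs u}{a_{2m}=0}} \cup \set{1u}{\fa{m\leqslant\abs u}{a_{2m+1}=0}} \cup \menge{\varepsilon}\ . \]
Here $u$ ranges over \emph{all} finite binary strings, so whenever $\fa{m}{a_{2m}=0}$ holds the whole set $\set{0u}{u \in \cS}$ lies in $T$; i.e.\ the left half is a full binary subtree with continuum-many infinite paths. This situation is perfectly compatible with $\lnot\fa{n}{a_n=0}$ and with "at most one $1$" (e.g.\ $a_3=1$ and all other $a_n=0$), so you cannot rule it out. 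Your suggested fix — arguing $\lnot\lnot(\alpha=\beta)$ and then stabilising — fails because here $\alpha \neq \beta$ is true outright for two distinct paths $0\gamma$, $0\delta$; $\lnot\lnot(\alpha = \beta)$ is simply false. What is needed is a much thinner tree. The paper uses the "two-chain" tree
\[ u \in T \iff \ex{n \in \NN}{u = 0^n \land \fa{i \leqslant n}{a_{2i} = 0 }} \lor \ex{n \in \NN}{u = 1^n \land \fa{i \leqslant n}{a_{2i+1} = 0 }} \ , \]
whose only candidate paths are $\zero$ and $\one$. With this tree condition (1) holds (under $\lnot\fa{n}{a_n=0}$ both cannot be paths), condition (2) is checked directly (for infinite subtrees too, which your sketch also glossed over), and an infinite path immediately decides $\fa{n}{a_{2n}=0}$ versus $\fa{n}{a_{2n+1}=0}$. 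Your instinct to reuse the \WKLp{k} tree imports precisely the feature (high branching) that the uniqueness hypothesis forbids.
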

\begin{proof}
The proof is essentially the same as the one for Proposition \ref{Pro:WKL_equiv_LLPO}. However it is worth noticing that this one  only requires unique choice. Consider a tree $T$ satisfying the conditions of \WKLppp. As in the proof of Proposition \ref{Pro:WKL_equiv_LLPO} define
\begin{align*} 
a_{2n} = 0 \iff \ex{u \in 2^n}{0 \ast u \in T} \ , \\
a_{2n+1} = 0 \iff \ex{u \in 2^n}{1 \ast u \in T}  \ .
\end{align*}
Notice that, since $T$ is infinite there cannot $n,m \in \NN$ such that $a_{2n}=1$ and $a_{2m+1}=1$. However it is also not possible that $a_n =0$ for all $n \in \NN$, since that would imply (second condition) that it is impossible for $T_0$ and $T_1$ not to admit infinite paths. Thus it is impossible that there not two infinite and not-equal paths through $T$, which would contradict the first condition of \WKLppp.

Hence by \ref{Pro:equivs_of_MPv} \MPv implies that either all even or all odd terms of $a_n$ are zero. That means that either $T_0$ or $T_1$ is infinite. Also both subtrees satisfy the first and second condition of  \WKLppp and are decidable and infinite. Thus we can, using dependent choice, iteratively define a sequence $\alpha \in \CS$ such that $T_{\overline{\alpha}n}$ is a infinite, decidable tree satisfying both conditions of \WKLppp for all $n \in \NN$. In particular $\overline{\alpha}n \in T$.

Conversely consider a binary sequence $(a_n)_{n \geqslant 1}$ with at most one $1$ and such that $\lnot \fa{n \in \NN}{a_n = 0}$. Now consider the decidable tree $T$ defined by
\[ u \in T \iff \ex{n \in \NN}{u = 0^n \land \fa{i \leqslant n}{a_{2i} = 0 }} \lor \ex{n \in \NN}{u = 1^n \land \fa{i \leqslant n}{a_{2i+1} = 0 }} \ . \]
It is straightforward to see that $T$ satisfies the conditions of \WKLppp. Thus, by \WKLppp, it admits an infinite path $\alpha$. Now either $\alpha(1)=0$, in which case we have $\alpha = \zero$ and therefore $\fa{n \in \NN}{a_{2n} =0}$ or $\alpha(1)=1$, in which case we have $\alpha = \one$ and therefore $\fa{n \in \NN}{a_{2n+1} =0}$.  Hence \MPv holds.
\end{proof}
\begin{Rmk}
Condition (\ref{Enu:wkluniunicond}) cannot be weakened by removing ``for every infinite subtree''.
\end{Rmk}
\begin{proof}
Let us assume that there exists a Kleene tree $K$ (for example in \RUSS). So there exists $u_n \in K$ with $\abs*{u_n}=n $. Furthermore let $(a_n)_{n \geqslant 1}$ be an increasing binary sequence.

Define a set $T \subset \cS$ by $u \in T$ if and only iff
\begin{equation*}
 \left( a_n=0 \land \left( u =1^n \lor \ex{w \in T}{ u=0w}  \right)  \right) \lor   \left(a_n=1 \land \left(u=u_{m}1^{n-m}   \lor \ex{w \in T}{u = 1^{m}w  }  \right) \right)
\end{equation*} 
where $\abs{u}=n$ and $m$ is the smallest number $i \leqslant n$ such that $a_{i}=1$.
The idea is that as long as $a_n=0$, $T$ looks like a copy of $K$ below $0$ and just a single path $\one$ on the right hand side. If we ever hit an $m$ such that $a_{m}=1$ we continue only one path on the left hand side and glue a copy of $K$ on the path $1^{m}$ on the right hand side of the tree. Now, clearly, $T$ is an infinite tree. It is also impossible not to admit an infinite path (if $\fa{n \in \NN}{a_n=0}$ there is an infinite path, and if $\ex{n \in \NN}{a_n=1}$ there is an infinite path; furthermore, by the double negation translation, constructively  we have $\lnot \neg (\fa{n \in \NN}{a_n=0} \lor \ex{n \in \NN}{a_n=1}$). Finally, it is also easy to see that $T$ has at most one path. However if $T$ actually admits a path $\alpha$ we can check whether $\alpha(1)=0$, in which case $\lnot \fa{n \in \NN}{a_n=0}$, or $\alpha(1)=1$, in which case $\fa{n \in \NN}{a_n=0}$. Thus \WLPO holds, which contradicts the existence of a Kleene tree.
\begin{figure}[ht]
\centering
\tikzset{external/export next=true}
\begin{tikzpicture}[y=0.80pt,x=0.80pt,yscale=-0.5,xscale=0.6, inner sep=0pt, outer sep=0pt]
  \path[draw=black,line width=0.6pt] (252,82) -- (192,22) -- (132,82) -- (102,142);
  \path[draw=black,line width=0.6pt]  (132,82) -- (162,142);
  \path[draw=black,line width=0.6pt] (82,192) -- (102,142) -- (122,192);
  \path[draw=black,line width=0.6pt] (162,142) -- (182,192) -- (172,232);
  \path[draw=black,line width=0.6pt] (112,232) -- (122,192) -- (132,232);
  \path[draw=black,line width=0.6pt] (82,192) -- (92,232);
  \path[draw=black,line width=0.6pt] (252,82) -- (282,142) -- (302,192) -- (312,232);
  \path[draw=black,dash pattern=on 1.6pt off 0.8pt, opacity=0.220,line
    join=miter,line cap=butt,miter limit=4.00,line width=0.6pt]
    (42,242) .. controls (93,3) and (156,2) .. (222,242);
  \path[draw=black,line width=0.6pt] (543,81) -- (483,21) -- (423,81) -- (393,141);
  \path[draw=black,line width=0.6pt] (423,81) -- (453,141);
  \path[draw=black,line width=0.6pt] (373,191) -- (393,141) -- (413,191);
  \path[draw=black,line width=0.6pt] (453,141) -- (473,191);
  \path[draw=black,line width=0.6pt] (463,231) -- (473,191);
  \path[draw=black,line width=0.6pt] (403,231) -- (413,191) -- (423,231);
  \path[draw=black,line width=0.6pt] (373,191) -- (383,231);
  \path[draw=black,line width=0.6pt] (543,81) -- (573,141) -- (593,191) -- (603,231);
  \path[fill=black] (190,267) node[above right] (text3073) {$\vdots$};
  \path[fill=black] (490,267) node[above right] (text3115) {$\vdots$};
  \path[draw=black,line width=0.6pt] (625,323) -- (595,383);
  \path[draw=black,line width=0.6pt] (625,323) -- (655,383)-- (675,433) -- (665,473);
  \path[draw=black,line width=0.6pt] (595,383) -- (615,433);
  \path[draw=black,line width=0.6pt] (585,473) -- (575,433) -- (595,383);
  \path[draw=black,line width=0.6pt] (605,473) -- (615,433) -- (625,473);
  \path[draw=black,dash pattern=on 1.60pt off 0.80pt,opacity=0.220,line
    join=miter,line cap=butt,miter limit=4.00,line width=0.80pt]
    (536.4034,484.3617) .. controls (586.9381,244.8980) and (650.5344,244.1294) ..
    (716.3896,484.3617);
  \path[draw=black,line width=0.6pt] (430.9939,322) -- (462,382) -- (482,432) -- (492,472);
  \path[draw=black,line width=0.6pt] (431,322) -- (439,284);
  \path[draw=black,line width=0.6pt] (610,282) -- (625,325);
  \path[fill=black] (393,330) node[above right] (t2) {$u_m$};
  \path[fill=black] (54.066982,229.97491) node[above right] (t3) {$K$};
  \path[fill=black] (549,470) node[above right] (t4) {$K$};
\end{tikzpicture}
\caption{As long as $a_n=0$ the tree consists of a copy of $K$ on the left side and a single path on the right. If there is $m$ with $a_{m}=1$ from then on a copy of $K$ is attached to the single path on the right and another single path to one of the elements $u_{m}$ on the left side.}
\end{figure}
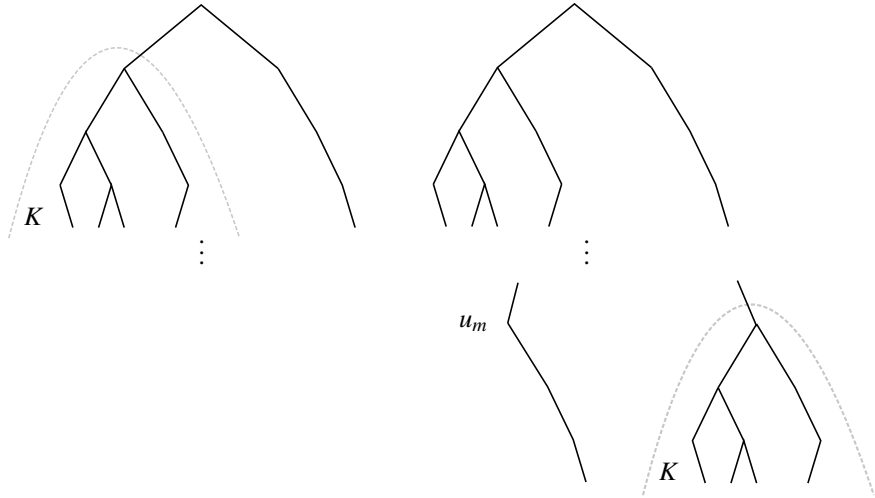
\end{proof}

\section{\IIIa}

The following principle, which is weaker than \MPv has been investigated in \cite{hI15} and \cite{uK15}. It can be seen as \LEM restricted to $\Delta_0^1$ formulas (also see Section \ref{Sec:Pi01andSigma01}). 

\begin{principle}[IIIa]{\IIIa} \label{PR:IIIa} 
\[ \fa{ \alpha, \beta}{ ( \alpha \neq 0 \iff \beta = 0) \implies \alpha \neq 0 \lor  \alpha = 0 } \ . \]
\end{principle}

There are a few equivalent formulations.
\begin{Pro}
The following are equivalent to \IIIa:
\begin{enumerate}
\item \( \fa{ \alpha, \beta}{ (\alpha \neq 0 \iff \beta = 0) \implies \lnot \alpha = 0 \lor  \alpha = 0 }\).
\item \( \fa{ \alpha, \beta}{ (\alpha = 0 \iff \beta \neq 0) \implies \lnot \alpha = 0 \lor  \alpha = 0 } \)
\end{enumerate}
\end{Pro}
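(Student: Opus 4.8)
The plan is to prove the two equivalences by simple propositional manipulations, using the stability of equality for elements of Cantor space (i.e.\ $\neg\neg(\alpha = 0) \implies \alpha = 0$), which holds because equality of binary sequences is a $\Pi^0_1$ statement. The first observation is that the hypotheses in the two items are equivalent to the hypothesis of \IIIa: for (1), the hypothesis $\alpha \neq 0 \iff \beta = 0$ is literally the same; for (2), note that $\alpha = 0 \iff \beta \neq 0$ is, by contraposition and stability, equivalent to $\alpha \neq 0 \iff \neg\neg(\beta = 0)$, and one checks this is interderivable with $\alpha \neq 0 \iff \beta = 0$ after replacing $\beta$ by a suitable sequence, or more directly by swapping the roles of the two $\Pi^0_1$/$\Sigma^0_1$ sides. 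So the real content is comparing the \emph{conclusions} $\alpha \neq 0 \lor \alpha = 0$ and $\neg\neg(\alpha = 0) \lor \alpha = 0$, together with matching up the hypotheses.

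First I would show \IIIa implies (1). Given $\alpha,\beta$ with $\alpha \neq 0 \iff \beta = 0$, apply \IIIa to get $\alpha \neq 0 \lor \alpha = 0$. In the first case $\alpha \neq 0$ gives $\neg(\alpha = 0)$, so the left disjunct of (1) holds; in the second case the right disjunct holds directly. Next, (1) implies (2): suppose $\alpha = 0 \iff \beta \neq 0$. I want to feed \IIIa-style data into (1). From $\alpha = 0 \iff \beta \neq 0$ we get, contrapositively, $\neg(\alpha = 0) \iff \neg\neg(\beta = 0)$, i.e.\ $\alpha \neq 0 \iff \neg(\beta \neq 0)$. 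Now consider a sequence $\gamma$ with $\gamma = 0 \iff \beta \neq 0$ (this exists: e.g.\ $\gamma(n) = 1$ iff $n$ is least with $\beta(n) = 1$, so that $\gamma$ has at most one $1$ and $\gamma = 0 \iff \forall n\,\beta(n) = 0$ is false, wait — we need $\gamma = 0$ iff $\beta \neq 0$, which is a $\Sigma^0_1$ condition, not decidable); instead the cleaner route is: by hypothesis $\alpha = 0 \iff \beta \neq 0$, hence $\alpha \neq 0 \iff \neg(\beta \neq 0)$, and since $\neg(\beta \neq 0)$ is equivalent to $\neg\neg(\beta = 0)$, which by stability is equivalent to $\beta = 0$... no: $\neg(\beta \neq 0) \iff \neg\neg(\beta = 0)$, and \emph{that} is not intuitionistically $\beta = 0$ unless we use stability of $\Pi^0_1$ statements — which we do have. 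So $\alpha \neq 0 \iff \beta = 0$, and applying (1) gives $\neg(\alpha = 0) \lor \alpha = 0$; in the second case we are done, and in the first case $\neg(\alpha = 0)$ combined with stability of $\alpha = 0$ is \emph{not} enough to conclude $\alpha \neq 0$ — so to land back in \IIIa's conclusion we instead simply note that the statement being proved for (2) already has $\neg(\alpha = 0)$ as its left disjunct, so we are done.

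Finally, I would close the loop by deriving \IIIa from (2) (or from (1)): given $\alpha,\beta$ with $\alpha \neq 0 \iff \beta = 0$, I want $\alpha \neq 0 \lor \alpha = 0$. Rewrite the hypothesis as $\beta = 0 \iff \alpha \neq 0$; by stability of $\Pi^0_1$ formulae, $\beta = 0 \iff \neg\neg(\beta = 0)$, and $\alpha \neq 0 \iff \alpha \neq 0$, so I can find a suitable $\Pi^0_1$ partner — concretely, take $\delta$ with $\delta = 0 \iff \alpha \neq 0$ (again definable: the condition $\alpha \neq 0$ is $\Sigma^0_1$, so $\neg(\alpha \neq 0)$ is $\Pi^0_1$; set $\delta$ to be a sequence that has a $1$ at position $n$ iff $\alpha(n) = 1$ and no earlier entry of $\alpha$ is $1$, giving $\delta = 0 \iff \forall n\,\alpha(n)=0 \iff \neg(\alpha \neq 0)$, which is \emph{not} $\delta = 0 \iff \alpha \neq 0$). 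The cleanest fix is to run the argument symmetrically: from $\alpha \neq 0 \iff \beta = 0$ we immediately also get $\beta \neq 0 \iff \neg(\beta = 0) \iff \neg(\alpha \neq 0)$, wait — $\neg(\beta = 0)$ means $\beta \neq 0$ only trivially, and $\neg(\alpha \neq 0) \iff \neg\neg(\alpha = 0) \iff \alpha = 0$ by stability. So $\beta \neq 0 \iff \alpha = 0$, i.e.\ $\alpha = 0 \iff \beta \neq 0$, which is exactly the hypothesis of (2); applying (2) yields $\neg(\alpha = 0) \lor \alpha = 0$, and in the first case $\neg(\alpha = 0)$ together with the original hypothesis gives $\neg(\beta = 0)$, hence $\beta \neq 0$... and then from $\beta \neq 0 \iff \alpha = 0$ — contradiction. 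So actually $\neg(\alpha = 0)$ is impossible, meaning the disjunction collapses to $\alpha = 0$; then from $\alpha = 0$ and $\alpha \neq 0 \iff \beta = 0$ nothing more is needed, but we still owe $\alpha \neq 0 \lor \alpha = 0$ — we have the right disjunct. Done.

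The main obstacle, and the point I would be most careful about, is the bookkeeping of double negations: keeping straight which of $\alpha \neq 0$, $\neg(\alpha = 0)$, $\neg\neg(\alpha = 0)$, $\alpha = 0$ are intuitionistically interderivable (the middle two are equivalent always; they reduce to $\alpha = 0$ by stability of $\Pi^0_1$ formulae; but $\alpha \neq 0$, being $\Sigma^0_1$, is genuinely stronger than $\neg(\alpha = 0)$ and this gap is exactly what \MPv would bridge but \IIIa does not). In particular one must resist the temptation to turn $\neg(\alpha = 0)$ into $\alpha \neq 0$ "for free"; the equivalences above all go through precisely because each item's conclusion is formulated with $\alpha \neq 0$ on the left of a disjunction whose right side is $\alpha = 0$, and one of the two cases always hands us that right disjunct. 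I expect the whole proof to be four or five lines once the $\Pi^0_1$-stability rewriting is set up cleanly; the temptation to over-complicate by constructing auxiliary sequences should be avoided, since the hypotheses of the three statements are directly interderivable by contraposition plus stability of equality.
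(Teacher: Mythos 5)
Your direction \IIIa $\implies$ (1) is fine: the hypotheses coincide, and $\alpha \neq 0 \implies \lnot(\alpha = 0)$ lets you weaken the left disjunct. The gap is in the remaining directions, where you repeatedly try to pass between the hypotheses $\alpha \neq 0 \iff \beta = 0$ and $\alpha = 0 \iff \beta \neq 0$ for the \emph{same} pair $(\alpha,\beta)$. These are not interderivable intuitionistically: from $\alpha \neq 0 \iff \beta = 0$ one does get $\beta \neq 0 \implies \lnot(\beta = 0) \implies \lnot(\alpha \neq 0) \implies \alpha = 0$ (stability of $\Pi^0_1$ formulae), but the converse $\alpha = 0 \implies \beta \neq 0$ would require $\lnot(\beta = 0) \implies \beta \neq 0$ --- an instance of Markov for $\beta$, which is exactly the strength \IIIa lacks. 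You name this trap yourself in your last paragraph and then assert ``$\beta \neq 0 \iff \alpha = 0$'' anyway, feeding the unjustified biconditional into (2). A later step also misfires: you claim $\lnot(\alpha = 0)$ together with the \IIIa-hypothesis yields $\lnot(\beta = 0)$, but contraposing and using stability gives $\lnot(\alpha = 0) \iff \beta = 0$, i.e.\ the opposite. Your (1) $\implies$ (2) step has the same defect.

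The repair is the swap you gesture at with ``run the argument symmetrically'' but never actually carry out: apply the target principle to the pair $(\beta,\alpha)$ rather than trying to convert the hypothesis on $(\alpha,\beta)$. The hypothesis of \IIIa, $\alpha \neq 0 \iff \beta = 0$, read as $\beta = 0 \iff \alpha \neq 0$, is \emph{verbatim} the hypothesis of (2) for the pair $(\beta,\alpha)$; no conversion is needed. Applying (2) yields $\lnot(\beta = 0) \lor \beta = 0$: if $\beta = 0$, then $\alpha \neq 0$ by the hypothesis; if $\lnot(\beta = 0)$, then $\lnot(\alpha \neq 0)$, hence $\alpha = 0$ by stability. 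The same instantiation trick closes (1) $\iff$ (2). For reference, the paper gives no in-text argument for this proposition and simply cites \cite{hI15}, so there is no in-paper proof to compare against.
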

\begin{proof}
See \cite{hI15}.	
\end{proof}

In \cite{hI15} it is shown that \MPv implies \IIIa, that this cannot be derived in certain formal systems, that itself \IIIa cannot be derived in certain formal systems, and that it is independent from \WMP. In Section \ref{Sec:topmodels} we will show that it fails in the topological model over $\BS$ (Corollary \ref{Cor:BS_nval_IIIa}), thus showing that it is not provable in $\mathrm{IZF}+\mathrm{DC}$, and that it is not implied by \WMP. 

It is clear, that, using countable choice, we have the following equivalences.
\begin{Pro}
The following are equivalent to \IIIa.
\begin{enumerate}
\item \( \fa{ x,y \in \RR}{ (x > 0 \iff y = 0) \implies x > 0 \lor  y=0}\).
\item \( \fa{ x,y \in \RR}{ (x > 0 \iff y = 0) \implies \lnot (x = 0) \lor  y=0}\).
\end{enumerate}
\end{Pro}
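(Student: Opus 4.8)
Here I sketch how I would prove the final proposition (that, using countable choice, each of the two real statements is equivalent to \IIIa).

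The plan is to run the standard ``countable-choice dictionary'' between Cantor space and the non-negative reals that was used repeatedly above (for instance in Proposition~\ref{Pro:LPO<->ratveeirrat} and throughout Section~\ref{Sec:WMP}): under this dictionary the $\Sigma^0_1$ predicate ``$\alpha\neq 0$'' matches ``$x>0$'' while the stable, $\Pi^0_1$ predicate ``$\alpha=0$'' matches ``$x=0$''. With this in hand the two real statements follow from \IIIa (together with the variant formulations in the preceding Proposition) by a routine transfer, in both directions; the only point needing care is that the $\Delta^0_1$-hypothesis $x>0\iff y=0$ must be matched faithfully with $\alpha\neq 0\iff\beta=0$.

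For the direction from \IIIa to the real statements I would argue as follows. Given reals $x,y$ — which I may take to be non-negative, replacing them by $\max\{x,0\}$ and $\lvert y\rvert$ — with $x>0\iff y=0$, I use countable choice to build an increasing $\alpha\in\CS$ with $\alpha(n)=1$ only if $x>2^{-n-1}$ and $\alpha(n)=0$ only if $x<2^{-n}$, and likewise a $\beta\in\CS$ from $y$. A short check (using cotransitivity and $x,y\geqslant 0$) shows $\alpha\neq 0\iff x>0$, $\alpha=0\iff x=0$ and $\beta=0\iff y=0$, so the hypothesis transports to $\alpha\neq 0\iff\beta=0$. Now \IIIa gives $\alpha\neq 0\lor\alpha=0$, and translating the two disjuncts back through the dictionary yields the required disjunction; the second statement comes out identically from the equivalent form $(\alpha\neq 0\iff\beta=0)\implies\lnot(\alpha=0)\lor\alpha=0$ of \IIIa, reading $\lnot(\alpha=0)$ back as $\lnot(x=0)$. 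Conversely, given $\alpha,\beta\in\CS$ with $\alpha\neq 0\iff\beta=0$, I would first pass to $\alpha^{\uparrow},\beta^{\uparrow}$ (as in Section~\ref{Sec:WMP}) so as to assume them increasing, and then set $x=\sum_{n}\alpha(n)2^{-n}$ and $y=\sum_{n}\beta(n)2^{-n}$ — this direction needs no choice. Then $x,y\geqslant 0$, and $x>0\iff\alpha\neq 0$, $x=0\iff\alpha=0$, $y=0\iff\beta=0$, so $x>0\iff y=0$; feeding this pair into either real statement and reading the conclusion back recovers $\alpha\neq 0\lor\alpha=0$ (resp.\ $\lnot(\alpha=0)\lor\alpha=0$), which by the preceding Proposition is \IIIa.

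I expect the main obstacle to be bookkeeping rather than any genuine difficulty: one has to be scrupulous that the negative side of every biconditional is paired with a \emph{stable} statement on the other side — this is precisely where the $\Delta^0_1$-character of \IIIa enters — and that monotonising $\alpha$ and $\beta$ does not break the equivalences ``$\alpha\neq 0\iff x>0$'' and ``$\alpha=0\iff x=0$''. This is exactly the routine-but-delicate verification the paper signals by saying ``it is clear, that, using countable choice''.
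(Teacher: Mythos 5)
Your dictionary approach---building $\alpha,\beta$ from $x,y$ by countable choice so that $\alpha\neq 0\iff x>0$, $\alpha=0\iff x=0$, $\beta=0\iff y=0$, and conversely reading increasing binary sequences back as $\sum_n\alpha(n)2^{-n}$---is exactly what the paper's ``it is clear, that, using countable choice'' is pointing at, and your verification of the three correspondences is the right core of the proof; noting that the reverse direction needs no choice is a nice extra.

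What you have not flagged, and should, is that the proposition as printed is simply false: both conclusions end in ``$\lor\ y=0$'', and with that reading the instance $x=0$, $y=1$ refutes both already classically---the hypothesis $x>0\iff y=0$ is then a provable biconditional of two refutable statements, while $x>0\lor y=0$ and $\lnot(x=0)\lor y=0$ are disjunctions of refutables. (Indeed, under the hypothesis the disjunct $y=0$ is just $x>0$ again, so the first conclusion collapses to $y=0$ alone, which cannot follow.) Matching the sequence form $\alpha\neq 0\lor\alpha=0$ disjunct-for-disjunct, the conclusions must read $x>0\lor x=0$ and $\lnot(x=0)\lor x=0$, and this corrected form is in fact what your argument proves; a proof attempt should make the discrepancy explicit rather than silently paper over it. Relatedly, replacing $x$ by $\max\{x,0\}$ changes what ``$x=0$'' says about the original $x$ (it becomes $x\leqslant 0$), so you should either state the result over $x\geqslant 0$ or phrase the conclusion as $x>0\lor x\leqslant 0$ for arbitrary $x$; under either convention the rest of your argument is sound.
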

There are no other, more interesting, equivalences known, at this stage.

\chapter{The Fan Theorems} \label{Ch:fan}
As a way to re-capture the unit interval's compactness---that is cover compactness---which was lost when rejecting the law of excluded middle, L.E.J.\ Brouwer made generous use of the fan theorem. Since he also made free use of the principle of continuous choice the complexity of the sets involved did not make a difference to his mathematics---as we will see in Section \ref{Sec:Fan_collapse}. In the absence of continuous choice we do, however, have to make some careful distinctions. This explains why we talk about the fan theorems in the plural, since we are going to distinguish between different versions, all of which are interesting in constructive (reverse) mathematics. What all fan theorems have in common is that they enable one to conclude that a given bar is uniform. 
Here a \define{bar} is a subset $B \subset \cS$ that ``bars'' every infinite path in $\CS$, that is 
\[  \fa{\alpha \in \CS}{\ex{n \in \NN}{\overline{\alpha}n \in B}} . \]
A bar is called \define{uniform} if this ``barring'' occurs before a uniform height $N$, that is if
\[ \ex{N \in N}{\fa{\alpha \in \CS}{\ex{n \leqslant N}{\overline{\alpha}n \in B}}} \] 
If a bar $B$ is \define{closed under extensions}, that is if 
\[ u \in B \implies \fa{w \in \cS}{u \ast w \in B} \ , \]
then it is uniform if and only if 
\[ \ex{N \in N}{\fa{\alpha \in \CS}{\overline{\alpha}N \in B}} \ . \] 

The difference between the fan theorems lies in the required complexity of the bar $B$. This ranges from the very strongest requirement---decidable---to no restriction on the bar at all. Here, a set $S$ is decidable, if, for every $x$, we can decide whether
\[ x \in S \lor x \notin S \ .\]
Of course, in the absence of the law of excluded middle, this might not always be possible. A weaker requirement than decidability is the notion of a $c$-bar.
A bar $C \subset 2^ \ast $ is called a \define{$c$-{bar}}, if there exists a decidable set $C' \subset 2^ \ast $ such that 
	\[ u \in C \iff \fa{w \in \cS}{\left(u \ast w \in C'\right)} \ . \] 
A bar $B \subset 2^ \ast $ is called \define{$\Pi_1^0$-bar}, if there exist a countable family of decidable sets $(B_n)_{n \geqslant 1}$ that are all closed under extension such that \[B = \bigcap_{n \geqslant 1} B_n \ .  \] 
The $\Pi^0_n$-nomenclature allures to the arithmetical hierarchy in recursion theory. 
We can now formally state the four versions of the fan theorem, that are going to be of interest to us.
\begin{principle}{Fan theorems} \label{PR:FAN} $ $ \\[0.5em]
\begin{tabular}{ll}
	\FAND: & Every decidable bar is uniform. \\[0.1em]
	\FANc: & Every $c$-bar is uniform. \\[0.1em]
	\FANP: & Every $\Pi^0_1$-bar is uniform. \\[0.1em]
	\FANf: & Every bar is uniform.
\end{tabular}
\end{principle}\index{FAND@\FAND}\index{FANc@\FANc}\index{FANP@\FANP}\index{FANfull@\FANf}
It is mostly trivial that 
\[ \FANf \implies \FANP  \implies \FANc \implies \FAND  \ , \]
with the exception of the middle implication which does require a modicum of trickery. It follows indirectly from \cite{hD08b}*{Proposition 4.1.6}, but we can also give a direct construction.
\begin{Pro}
	For every $c$-bar $C$ there exists is a $\Pi^0_1$-bar $B$ such that $C$ is a uniform bar only if $B$ is.
\end{Pro}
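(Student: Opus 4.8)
The plan is to turn the given $c$-bar $C$ into a $\Pi^0_1$-bar $B$ by writing the single "universal" quantifier defining $C$ as a countable intersection of decidable, extension-closed sets, and then to check that this transformation preserves (and reflects) uniformity in the way stated. Recall $C$ comes with a decidable $C' \subset \cS$ such that $u \in C \iff \fa{w \in \cS}{u \ast w \in C'}$. The natural candidate is to set, for each $n \geqslant 1$,
\[ B_n = \set{u \in \cS}{\fa{w \in \cS \text{ with } \abs{w} \leqslant n}{u \ast w \in C'} \ \lor \ \ex{v \text{ a prefix of } u}{v \in B_{n}'}} \ , \]
but it is cleaner to first define the "cut-off" set $C_n = \set{u}{\fa{w, \abs{w}\leqslant n}{u \ast w \in C'}}$, which is decidable (a finite check), and then close each $C_n$ under extensions by setting $B_n = \set{u}{\ex{v \preceq u}{v \in C_n}}$. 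Each $B_n$ is decidable and closed under extensions by construction, and $B_{n+1} \subseteq B_n$ is not needed; what I would verify is that $B := \bigcap_{n \geqslant 1} B_n$ is exactly the extension-closure of $C$, i.e.\ $u \in B \iff \ex{v \preceq u}{v \in C}$. The forward direction: if $u \in B_n$ for all $n$, use a small argument (the prefix $v$ witnessing membership in $B_n$ can be taken among the finitely many prefixes of $u$, so by a pigeonhole one prefix $v$ lies in $C_n$ for infinitely many — hence, since the $C_n$ are decreasing in $n$, for all — $n$, so $v \in C$). The backward direction is immediate. Thus $B$ is a $\Pi^0_1$-bar (it bars because $C$ does).

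Next I would establish the two implications relating uniformity of $C$ and of $B$. Since $B$ is (up to extension-closure) the same bar as $C$ and a bar is uniform iff its extension-closure is, $C$ uniform $\iff$ $B$ uniform; but the statement only asks for "$C$ uniform only if $B$ uniform", i.e.\ $C$ uniform $\implies$ $B$ uniform, which follows since every $\overline{\alpha}N \in C$ gives $\overline{\alpha}N \in B$ (as $C \subseteq B$, noting $C$ is closed under extensions so $C$ equals its own extension-closure — this is where I use that $C$ is given as a \emph{bar that is closed under extensions}, or if not, I replace $C$ by its extension-closure first, which does not change the $c$-bar status). Either way the witness $N$ for $C$ works verbatim for $B$.

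The main obstacle is the pigeonhole/limit step in proving $B = \text{ext-closure}(C)$ constructively: from "$\fa{n}{\ex{v \preceq u}{v \in C_n}}$" I must extract a single $v$ with $\fa{n}{v \in C_n}$, and since $u$ has only finitely many prefixes and the predicate "$v \in C_n$" is decidable, this is a finite disjunction being true for every $n$, so one disjunct is true for every $n$ — a valid constructive inference precisely because the index set of prefixes is finite and each instance is decidable. I would spell that out carefully, since it is the one place where naive reasoning could smuggle in a choice or omniscience principle; everything else is routine bookkeeping with decidable predicates.
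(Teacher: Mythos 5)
Your construction---truncating $C'$ at depth $n$ to get the decidable $C_n$, then closing each $C_n$ under extensions to get $B_n$---is a perfectly serviceable way to produce a $\Pi^0_1$-bar, and it differs from the paper's only in detail: the paper builds closure under extensions directly into the definition via the sliding threshold $\abs{w} \leqslant n - \abs{u}$, whereas you take a fixed threshold $\abs{w} \leqslant n$ and close under extensions afterward. The resulting $B_n$'s are not identical, but both work. The problem is your intermediate claim that $B = \bigcap_n B_n$ equals the extension-closure of $C$, and in particular the pigeonhole step justifying it. From ``for every $n$ some prefix $\overline{u}i$ of $u$ lies in $C_n$'' (each $C_n$ decidable and decreasing) you want ``some prefix $\overline{u}i$ lies in every $C_n$''. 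Setting $N_i = \set{n \in \NN}{\overline{u}i \in C_n}$, this says: a finite family of decidable, downward-closed subsets of $\NN$ whose union is $\NN$ has a member equal to $\NN$. That is \emph{not} constructively valid: for a binary sequence $(a_n)_{n\geqslant 1}$ with at most one $1$, take $N_0 = \set{n}{\fa{m\leqslant n}{a_{2m}=0}}$ and $N_1 = \set{n}{\fa{m\leqslant n}{a_{2m+1}=0}}$; both are decidable and downward closed with union $\NN$, yet ``$N_0 = \NN$ or $N_1 = \NN$'' is exactly an instance of \LLPO. The finiteness of the prefix set does not save you; the trouble is the unbounded quantifier over $n$.

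Fortunately the false step is avoidable. First, note that ``only if'' in the statement is misleading: the implication that the paper actually proves---and the one needed to deduce $\FANP \implies \FANc$---is the converse, ``if $B$ is uniform then $C$ is uniform''. For your $B$ this direction goes through directly, without ever identifying $B$ with the extension-closure of $C$: suppose $\overline{\alpha}M \in B$ for all $\alpha$; fix $\alpha$ and $w' \in \cS$ and put $n = M + \abs{w'}$. Since $\overline{\alpha}M \in B_n$, there is a prefix $v$ of $\overline{\alpha}M$ with $v \ast w \in C'$ whenever $\abs{w} \leqslant n$; writing $\overline{\alpha}M = v \ast z$ with $\abs{z} \leqslant M$, we have $\abs{z \ast w'} \leqslant M + \abs{w'} = n$ and hence $\overline{\alpha}M \ast w' = v \ast (z \ast w') \in C'$. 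As $w'$ was arbitrary, $\overline{\alpha}M \in C$, so $C$ is uniform. This is the argument you should have given; it parallels the paper's. The direction you did write out in detail ($C \subset B$, hence $C$ uniform implies $B$ uniform) is correct but is not the content the proposition is used for, and the ``iff'' you assert on the way rests on the non-constructive pigeonhole.
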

\begin{proof}
Let $C \subset 2^ \ast $ be a $c$-bar. So there exists a decidable set $C' \subset 2^ \ast $ such that 
	\[ u \in C \iff \fa{w \in \cS}{\left(u \ast w \in C^\prime \right)} \ . \]
Now, for every $n \in \NN$ define decidable sets $B_n \subset \cS$ by
\[ B_n = \set{u \in \CS}{\fa{w \in \cS}{\abs{w} \leqslant n - \abs{u} \implies w \in C^\prime}} \ . \] 
The antecedent ``$\abs{w} \leqslant n - \abs{u} $'' ensures that $B_n$ is closed under extensions (this is the modicum of trickery that we referred to earlier). The set 
\[  B = \bigcap_{n \geqslant 1} B_n \]
is a bar. For let $\alpha \in \CS$ be arbitrary. Since $C$ is a bar there exists $m$ such that $\overline{\alpha}m \in C$, which, by definition means that $\overline{\alpha}m \ast w \in C^\prime$ for all $w \in \cS$. So this is the case, in particular, for any $n$ and any $w$ such that $\abs{w} \leqslant n - m$. Hence $\overline{\alpha}m \in B_n$ for all $n$, which means that $\overline{\alpha}m \in B$. Thus $B$ is a $\Pi^0_1$-bar. 

Now if $B$ is uniform there exists $M$ such that \[ \fa{\alpha \in \CS}{\overline{\alpha}M \in B} \ ,\]
since $\Pi^0_1$-bars are closed under extensions. But now let $\alpha \in CS$ and $w \in \cS$ be arbitrary. Since $\overline{\alpha}M \in B$, in particular, $\overline{\alpha}M \in B_{M+\abs{w}}$. Since $\abs{w} \leqslant (M + \abs{w}) - M$, we have $\overline{\alpha}M \ast w \in C^\prime$, and since $w$ was arbitrary that means that $\overline{\alpha}M \in C$. Thus $C$ is a uniform bar.
\end{proof}

\FAND, and therefore all of the fan theorems, fail in \RUSS, since there one can construct a Kleene tree, that is an infinite decidable binary tree that blocks every infinite path \cite{aB06}. The complement of such a tree is by itself a ready-made counterexample to \FAND.
One implication in analysis that the existence of a Kleene tree has (see Section \ref{Sec:KT}) is the existence of a Specker sequence, that is a sequence of real numbers in a compact set $X$ that is bounded away from every point of $X$.
To distinguish whether such a strange object exists in a space or not we say that a subspace $X$ of a metric space $Y$ satisfies the \define{anti-Specker property relative to $Y$} if 
\begin{principle}[ASXY]{\AS{X}{Y}} \label{PR:AS}
 any sequence in $Y$ that is eventually bounded away from any point in $X$ is eventually bounded away from the entire set $X$.
\end{principle}
Here a sequence $(x_n)_{n \geqslant 1}$ in a metric space $(Y,d)$ is \define{eventually bounded away} from a point $x$ (a set $X$), if there exists a natural number $N$ such that $d(x_n,x) > 2^{-N}$ for all $n \geqslant N$ (and for all $x \in X$). It is easily seen that \AS{X}{Y} holds, whenever $X$ is a subspace of $Y$ and satisfies the Heine-Borel property.\footnote{Depending on the variety of constructive mathematics, the class of spaces satisfying the Heine-Borel property may, of course, be drastically smaller than in classical mathematics. Nevertheless, maybe surprising, even in \BISH there are infinite spaces that satisfy the Heine-Borel property: a prototypical example of such a space is the closure of  $\set{2^{-n}}{n \in \NN}$} 
Notice that for any two one-point extensions $Y$ and $Y^\prime$ of a space $X$ the principles $\AS{X}{Y}$ and $\AS{X}{Y^\prime}$ are equivalent. So in that case we can combine them into a generic principle labeled $\AS{X}{1}$. Notice that some early papers on the topic have used a more refined notation: what is labeled \AS{}{} there is what we label \AS{[0,1]}{\RR}, and which is equivalent to \AS{[0,1]}{1}.
\section{\texorpdfstring{Linking $\CS$ and $[0,1]$}{Linking Cantor Space and [0,1]}} \label{Sec:linking-cantor-and-unitint}
In the following we want to establish strong links between $\CS$ and $[0,1]$. 
We will first adapt Cantor's middle third set construction for our purposes.
First consider a fixed $p \in (0,1)$. 
Let 
\[ I^{p}_{u} = [a^{p}_{u},b^{p}_{u}] = \bracks*{ (1-p) \sum_{n \leqslant \abs{u}} p^{n-1}u(n),  (1-p) \sum_{n \leqslant \abs{u}} p^{n-1}u(n) + p^n} \ . \]
Furthermore, let $I^{p}_{()}$ be the unit interval. It is easy to see that $I^{p}_{u0}$ is the left $p$ of $I^{p}_{u}$, and $I^{p}_{u1}$ is the right $p$ of $I^{p}_{u}$. With this notation the Cantor middle third set is
\[ \mathcal{C}^{p} = \bigcap_{n \geqslant 0} \bigcup_{u \in 2^n} I^{p}_{u} \ , \]
where $p=\frac{1}{3}$. It is easy to see that $F^{p}:\CS \to [0,1]$ defined by
\[ F^{p}(\alpha) = (1-p) \sum_{n \geqslant 1} p^{n-1}\alpha(n)\] is well-defined. It has many nice properties.
\begin{Lem} \label{Lem:Fp} $ $
\begin{enumerate}
\item   For $\alpha,\beta \in \CS$ and $n \geqslant 1$ we have \[ \overline{\alpha}n = \overline{\beta}n \implies \abs*{F^{p}(\alpha) - F^{p}(\beta)} \leqslant  p^n \ . \] In particular, $F^{p}$ is uniformly continuous. 
\item \label{Lem:Var-Bish} (Variation on Bishop's Lemma) \\ 
Assume $0< p <\frac{1}{2}$. For all $x \in [0,1]$ there exists $\alpha \in \CS$ such that  
 \begin{equation} \label{Eqn:variationbish}
   \fa{ \beta <  \alpha}{ F^{p}(\beta) < x} \quad \text{ and } \quad  \fa{  \alpha < \beta}{ x< F^{p}(\beta) } \ .
   \end{equation}
In particular, $F^{p}$ is injective and
\begin{equation} \label{Eqn:OI-BL}
d(x,F^{p}(\alpha)) > 0 \implies d(x,F^{p}(\CS)) >0 \ . 
\end{equation}

 \item  \label{Fp_surj} Assume $\frac{1}{2}<p<1$. Then for every $u \in \cS$ and $y \in I_{u}$ there exists $\alpha \in \CS$ such that $\overline{\alpha}\abs{u} = u$ and $F^{p}(\alpha) = y$; in particular $F^{p}$ is surjective.
\end{enumerate}
\end{Lem}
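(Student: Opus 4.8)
The plan is to treat the three parts in turn; a single elementary observation about the child intervals $I^{p}_{u0},I^{p}_{u1}$ of $I^{p}_{u}$ drives parts (2) and (3).

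\textbf{Part 1.} Once $\overline{\alpha}n=\overline{\beta}n$ the first $n$ terms cancel, so $F^{p}(\alpha)-F^{p}(\beta)=(1-p)\sum_{k>n}p^{k-1}\bigl(\alpha(k)-\beta(k)\bigr)$, and bounding $\lvert\alpha(k)-\beta(k)\rvert\leqslant 1$ gives $\lvert F^{p}(\alpha)-F^{p}(\beta)\rvert\leqslant(1-p)\sum_{k>n}p^{k-1}=p^{n}$. Uniform continuity is immediate since $p^{n}\to 0$. This part is routine.

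\textbf{Part 2.} I would first record that for $0<p<\frac{1}{2}$ the intervals $I^{p}_{u0}=[a^{p}_{u},a^{p}_{u}+p^{\lvert u\rvert+1}]$ and $I^{p}_{u1}=[a^{p}_{u}+p^{\lvert u\rvert}(1-p),b^{p}_{u}]$ are separated by a gap of positive length $p^{\lvert u\rvert}(1-2p)$, i.e.\ $b^{p}_{u0}<a^{p}_{u1}$. Construct $\alpha$ recursively: given $u=\overline{\alpha}n$, apply cotransitivity of $<$ to the reals $b^{p}_{u0}<a^{p}_{u1}$ and the point $x$ to decide ``$x>b^{p}_{u0}$'' (set $\alpha(n+1)=1$) or ``$x<a^{p}_{u1}$'' (set $\alpha(n+1)=0$). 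Since $b^{p}_{u0}$ (resp.\ $a^{p}_{u1}$) is the supremum (resp.\ infimum) of $F^{p}$ over the extensions of $u0$ (resp.\ $u1$), the two clauses of \ref{Eqn:variationbish} hold: any $\beta<\alpha$ branches left from $\alpha$ at a single position where the construction recorded $x>b^{p}_{u0}$, whence $F^{p}(\beta)\leqslant b^{p}_{u0}<x$, and symmetrically for $\beta>\alpha$. The same supremum/infimum remark shows $F^{p}$ is strictly order preserving for the lexicographic order on $\CS$, hence injective: if $\alpha\neq\beta$ first differ at position $k+1$ their images lie in the two separated intervals below $\overline{\alpha}k\,0$ and $\overline{\alpha}k\,1$.

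\textbf{Part 2, the hard part.} The implication \ref{Eqn:OI-BL} is the genuinely non-trivial point. Given $d(x,F^{p}(\alpha))>0$, use cotransitivity to decide whether $F^{p}(\alpha)<x$ or $x<F^{p}(\alpha)$; take the first case (the other is symmetric) and fix $\varepsilon>0$ with $x-F^{p}(\alpha)>\varepsilon$ and $N$ with $p^{N}<\varepsilon$. Since $F^{p}(\alpha)\in I^{p}_{\overline{\alpha}N}$ and that interval has length $p^{N}<\varepsilon$, we get $x>b^{p}_{\overline{\alpha}N}$. Now for arbitrary $\gamma\in\CS$ one may \emph{decide}, by comparing the finite strings, whether $\overline{\gamma}N=\overline{\alpha}N$: if so, $F^{p}(\gamma)\in I^{p}_{\overline{\alpha}N}$ and $\lvert x-F^{p}(\gamma)\rvert\geqslant x-b^{p}_{\overline{\alpha}N}>0$; if not, the first disagreement occurs at some position $k+1\leqslant N$ and, according as it is a left- or right-branch relative to $\alpha$, $F^{p}(\gamma)$ is confined below $b^{p}_{(\overline{\alpha}k)0}$ or above $a^{p}_{(\overline{\alpha}k)1}$, each already strictly separated from $x$ by the construction. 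The minimum over the finitely many prefixes of length $N$ is a single $\delta>0$ with $\lvert x-F^{p}(\gamma)\rvert\geqslant\delta$ for all $\gamma$, i.e.\ $d(x,F^{p}(\CS))\geqslant\delta>0$. I expect this finite-prefix bookkeeping --- keeping everything decidable while invoking only finitely many of the gaps --- to be the main obstacle; it is not deep, but it is where care is needed.

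\textbf{Part 3.} Here the picture runs in reverse: for $\frac{1}{2}<p<1$ the child intervals \emph{overlap}, with $a^{p}_{u1}<b^{p}_{u0}$ and $I^{p}_{u0}\cup I^{p}_{u1}=I^{p}_{u}$. Given $u\in\cS$ and $y\in I^{p}_{u}$, set $\overline{\alpha}\lvert u\rvert=u$ and recurse, maintaining the invariant $y\in I^{p}_{\,\overline{\alpha}n}$ for $n\geqslant\lvert u\rvert$: with $v=\overline{\alpha}n$ apply cotransitivity to $a^{p}_{v1}<b^{p}_{v0}$ and $y$ to decide ``$y<b^{p}_{v0}$'' (so $y\in I^{p}_{v0}$; set $\alpha(n+1)=0$) or ``$y>a^{p}_{v1}$'' (so $y\in I^{p}_{v1}$; set $\alpha(n+1)=1$). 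Then $\lvert y-F^{p}(\alpha)\rvert\leqslant p^{n}\to 0$, so $F^{p}(\alpha)=y$ and $\overline{\alpha}\lvert u\rvert=u$ by construction; surjectivity of $F^{p}$ is the case $u=()$, $I^{p}_{()}=[0,1]$.
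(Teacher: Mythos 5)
Your proof is correct, and for Part 2 it takes a genuinely more direct route than the paper's. The paper introduces auxiliary ``fattened'' intervals $J^p_u$ tiling $[0,1]$ together with uniform separation constants $\varepsilon^p_n$, constructs $\alpha$ so that $x \in J^p_{\overline{\alpha}n}$ for all $n$ (by dependent choice), and deduces \ref{Eqn:OI-BL} from the uniform $\varepsilon^p_n$-spacing between distinct level-$n$ intervals. You dispense with this scaffolding: the single observation that the gap $a^p_{u1}-b^p_{u0}=(1-2p)p^{|u|}$ is positive for $p<\frac{1}{2}$ lets you build $\alpha$ by cotransitivity directly, and for \ref{Eqn:OI-BL} you replace the uniform constant by the minimum of the finitely many positive separations actually recorded in the first $N$ stages --- which is all the claim needs. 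Your injectivity argument (strict order-preservation from the gap at the first disagreement) is also cleaner than the paper's rather circuitous deduction via the Bishop-type property, and your Part 3 proves the full $u$-relative statement, whereas the paper only spells out the surjectivity special case $u=()$. Both approaches are valid; yours removes the $J$-interval machinery at the cost of keeping track of stage-dependent (rather than uniform) gap bounds. One thing you should make explicit, as the paper does: the stage-by-stage cotransitivity decisions in Parts 2 and 3 are choices from disjunctions that need not be exclusive, so the recursion tacitly invokes countable/dependent choice.
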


\begin{proof}
\begin{enumerate}
\item Assume $\alpha,\beta \in \CS$ and $n \in \NN$ such that $ \overline{\alpha}n = \overline{\beta}n$. Then
\begin{align*}
 \abs*{F^{p}(\alpha) - F^{p}(\beta)}  & \leqslant  (1-p) \sum_{i \geqslant 1} p^{i-1}\abs*{\alpha(i) - \beta(i)} \\ & \leqslant  (1-p) \sum_{i > n} p^{i} \\ & =  (1-p) p^n \sum_{i \geqslant 0} p^{i} \\
 & =  \frac{1-p}{1-p} p^n = p^n \ .  
 \end{align*}

\item Consider $0< p <\frac{1}{2}$. We will define subsets $J^{p}_{u}$ and positive numbers $\varepsilon^{p}_n$ such that 
\begin{equation}\label{Eqn:partitionintervala}  J^{p}_{u} = J^{p}_{u0} \cup J^{p}_{u1} \ ,
\end{equation}
and 
\[ I^{p}_{u} \subset J^{p}_{u} \quad \text{ as well as } \quad I^{p}_{u} + \varepsilon^{p}_n< J^{p}_{v} < I^{p}_{w}- \varepsilon^{p}_n \ , \]
for $u<v<w$ of   length $n$; according to the lexicographic order.
First we will introduce (partial) functions $\mathrm{next}:\cS \to \cS$ and $\mathrm{prev}:\cS \to \cS$, mapping $u \in \cS$ to the element of the same length succeeding and preceding it, again in the lexicographical order.
Next, set 
\begin{itemize}
\item $c^{p}_{0^{\abs{u}}} = 0$,  $d^{p}_{1^{\abs{u}}} = 1$,
\item $c^{p}_{u} = a_{u} - \frac{2}{3} (a^{p}_{u} - b^{p}_{\mathrm{prev}(u)})  $, and
\item $d^{p}_{u} = b^{p}_{u} + \frac{1}{3} (a^{p}_{\mathrm{next}(u)} - b^{p}_{u} )$ for $0^{\abs{u}} \neq u \neq 1^{\abs{u}}$.
\end{itemize}
It is easy to see that the intervals $J^{p}_{u} = [c^{p}_{u},d^{p}_{u}]$ have the desired properties. \\[1em]
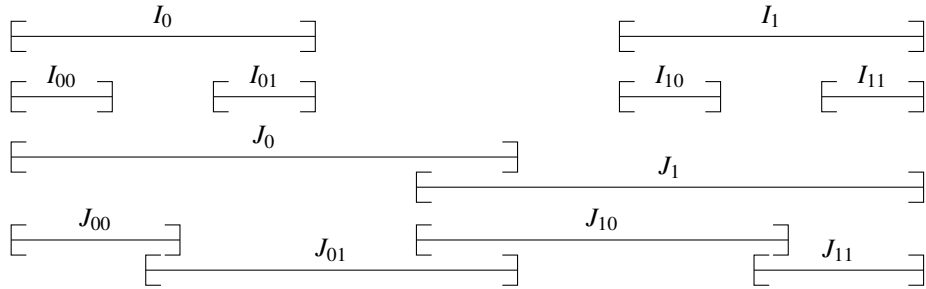
\begin{figure}[h]
\centering
\begin{tikzpicture}
\foreach \xa / \xb / \y / \lab in 
{0 / 4 / 7.6 /  $I_{0}$, 8 / 12 / 7.6 / $I_{1}$, 0 / 1.33 / 6.8 / $I_{00}$, 2.66 / 4 / 6.8 / $I_{01}$, 8 / 9.33 / 6.8 / $I_{10}$, 10.66 / 12 / 6.8 / $I_{11}$, 0 / 6.66 / 6 /  $J_{0}$, 5.33 / 12 / 5.6 / $J_{1}$,  0 / 2.22 / 4.9 /  $J_{00}$, 1.77 / 6.66 / 4.5 / $J_{01}$, 5.33 / 10.22 / 4.9 /  $J_{10}$, 9.77 / 12 / 4.5 / $J_{11}$}
{
\path ({\xa+0.5*(\xb-\xa)},\y) node [above] {\lab};
\draw (\xa,\y) -- (\xb,\y);
\draw ({\xa+0.2},{\y+0.2}) -- (\xa,{\y+0.2}) -- (\xa,{\y-0.2}) -- ({\xa+0.2},{\y-0.2});
\draw ({\xb-0.2},{\y+0.2}) -- (\xb,{\y+0.2}) -- (\xb,{\y-0.2}) -- ({\xb-0.2},{\y-0.2});
}
\end{tikzpicture}
\caption{Schematic of the first couple of intervals for $p=\frac{1}{3}$.}
\end{figure}

Given $x \in [0,1]$, we can, using dependent choice and Property \ref{Eqn:partitionintervala}, find $\alpha \in \CS$ such that 
\[ \fa{n \in \NN}{x \in J^{p}_{\overline{\alpha} n}} \ .\]
Now let $\beta \in \CS$ be such that $\beta < \alpha$, which means there exists $n \in \NN$ such that $\overline{\beta}n < \overline{\alpha}n$. Since $F(\beta) \in I^{p}_{\overline{\beta}n}$ and $x \in I^{p}_{\overline{\alpha}n}$, by \ref{Eqn:variationbish} we have $F(\beta)< x$. The case $\alpha<\beta$ can be treated analogously. 
To see that $F^{p}$ is injective let $\gamma \neq \beta$. Now let $\alpha$ be as constructed above for $x=F^{p}(\beta)$. Then the assumption that $\alpha \neq \beta$ leads to a contradiction: if $\alpha < \beta$ we get $ F^{p}(\beta) < x = F^{p}(\beta)$ and if $\beta < \alpha$ we get $ F^{p}(\beta)= x < F^{p}(\beta)$. Therefore $F^{p}(\alpha) = F^{p}(\beta)$, and therefore, again by \ref{Eqn:variationbish}, also $F^{p}(\gamma) \neq F^{p}(\beta)$.
Finally assume that $d(x,F^{p}(\alpha)) > 0$. Since $F^{p}$ is uniformly continuous by the first part of the proposition, there exists $n$ such that 
\[\overline{\beta}n=\overline{\alpha}n \implies d(F^{p}(\beta),F^{p}(\alpha)) < \frac{d(x,F^{p}(\alpha))}{2} \ . \]  
Now for any $\gamma \in \CS$ either $\overline{\gamma}n \neq \overline{\alpha}n$, or $\overline{\gamma}n = \overline{\alpha}n$. In the first case either  $\overline{\gamma}n < \overline{\alpha}n$ or $\overline{\gamma}n > \overline{\alpha}n$, but in both cases $d(x,F^{p}(\beta)) > \varepsilon^{p}_n$. In the second case 
\[ d(x,F^{p}(\beta)) > \abs*{d(F^{p}(\beta),F^{p}(\alpha)) - d(x,F^{p}(\alpha))} > \frac{d(x,F^{p}(\alpha))}{2} \ .  \]
In any case \[ d(x,F^{p}(\beta))>  \min \menge{\frac{d(x,F^{p}(\alpha))}{2}, \varepsilon^{p}_n} > 0 \ . \]
\item Let $\frac{1}{2}<p<1$. Notice that 
\begin{equation}\label{Eqn:partitionintervalb}  I^{p}_{u} = I^{p}_{u0} \cup I^{p}_{u1} \ .
\end{equation}
Similar to above, given $x \in [0,1]$, we can, using dependent choice and Property \ref{Eqn:partitionintervalb}, find $\alpha \in \CS$ such that 
\[ \fa{n \in \NN}{x \in I^{p}_{\overline{\alpha} n}} \ .\]  It is easy to see that therefore $F^{p}(\alpha) = x$, which means we have shown surjectivity.  \qedhere
\end{enumerate}
\end{proof}

\begin{Rmk}
Notice that for $p=\frac{1}{2}$ the function $F^{p}$  cannot be shown to be  surjective, constructively, since that would be a restatement of the fact that every real number $x\in [0,1]$ has a binary expansion and therefore equivalent to \LLPO (see Proposition \ref{Pro:equivs_of_LLPO}). 
\end{Rmk}

We are now in the position to prove a technical but utterly central lemma. 

\begin{Lem} \label{Lem:extendingfunctions} 
Assume $0<p<\frac{1}{2}$. If $f:\CS \to \RR$ is a point-wise continuous function, then there exists a point-wise  continuous function $\widetilde{f}:[0,1] \to \RR$ such that
\[  f = \widetilde{f} \circ F^{p} \]
Moreover, $\widetilde{f}$ is  uniformly continuous, (bounded, has a positive infimum, attains its infimum) if $f$ is (does).
\end{Lem}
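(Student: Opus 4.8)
The plan is to extend $f$ across the complement of the Cantor-like set $\mathcal{C}^{p}=F^{p}(\CS)$ by affine interpolation. For $0<p<\frac12$ the map $F^{p}$ is uniformly continuous (first part of Lemma~\ref{Lem:Fp}) and injective (Lemma~\ref{Lem:Fp}, part~\ref{Lem:Var-Bish}), and since $\CS$ is compact its image $\mathcal{C}^{p}$ is a located closed subset of $[0,1]$. The complement $[0,1]\setminus\mathcal{C}^{p}$ is the union of the open ``gap'' intervals $G_{u}=(b^{p}_{u0},a^{p}_{u1})$, $u\in\cS$, each of length $(1-2p)p^{\abs{u}}>0$, and a short computation with the formula for $F^{p}$ identifies their endpoints: $b^{p}_{u0}=F^{p}(\gamma^{-}_{u})$ and $a^{p}_{u1}=F^{p}(\gamma^{+}_{u})$, where $\gamma^{-}_{u}$ is the sequence which is $u$, then $0$, then constantly $1$, and $\gamma^{+}_{u}$ is $u$, then $1$, then constantly $0$. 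I then set $\widetilde f(F^{p}(\alpha))=f(\alpha)$ on $\mathcal{C}^{p}$ (well defined since $F^{p}$ is injective) and let $\widetilde f$ be, on each $\overline{G_{u}}$, the affine map sending $b^{p}_{u0}\mapsto f(\gamma^{-}_{u})$ and $a^{p}_{u1}\mapsto f(\gamma^{+}_{u})$. These prescriptions agree at the shared endpoints, so $\widetilde f$ is a total function on $[0,1]$, and $f=\widetilde f\circ F^{p}$ holds by construction.

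Since constructively one cannot decide whether a given $x$ lies on $\mathcal{C}^{p}$ or in a gap, the definition must be turned into an honest pointwise one: given $x\in[0,1]$, use the variation of Bishop's Lemma (Lemma~\ref{Lem:Fp}, part~\ref{Lem:Var-Bish}) and dependent choice to obtain $\alpha=\alpha_{x}\in\CS$ with $F^{p}(\beta)<x$ for all $\beta<\alpha$ and $x<F^{p}(\beta)$ for all $\beta>\alpha$. By \ref{Eqn:OI-BL}, either $x=F^{p}(\alpha)$, or $x$ is bounded away from all of $\mathcal{C}^{p}$, in which case $x$ lies in a single $G_{u}$ whose depth $\abs{u}$ is bounded in terms of $d(x,\mathcal{C}^{p})$, so that only finitely many gaps are candidates and the correct one can be located. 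In either case the value assigned agrees with the piecewise prescription, and it does not depend on the choices, since the two admissible $\alpha$'s for a given gap point are exactly the endpoint sequences $\gamma^{-}_{u},\gamma^{+}_{u}$ of the gap containing it. (Alternatively, one may present $\widetilde f$ as the pointwise limit of the piecewise-affine functions $f_{n}$ with level-$n$ breakpoints $a^{p}_{u},b^{p}_{u}$, $u\in 2^{n}$, and values $f(u\ast\zero),f(u\ast\one)$ there; the $f_{n}$ stabilise on every gap and converge at every $F^{p}(\alpha)$ by point-wise continuity of $f$ at $\alpha$.)

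For continuity: at a gap point $\widetilde f$ is affine on a neighbourhood, hence continuous. At $x_{0}=F^{p}(\alpha_{0})\in\mathcal{C}^{p}$, given $\varepsilon>0$ use point-wise continuity of $f$ at $\alpha_{0}$ to find $n$ with $\abs{f(\gamma)-f(\alpha_{0})}<\varepsilon$ whenever $\overline{\gamma}n=\overline{\alpha_{0}}n$; since every value taken by $\widetilde f$ on $I^{p}_{u}$, where $u=\overline{\alpha_{0}}n$, is a convex combination of such values $f(\gamma)$, one obtains $\abs{\widetilde f(x)-\widetilde f(x_{0})}\le\varepsilon$ for all $x\in I^{p}_{u}$, and a routine estimate (using that $f$ is bounded on a neighbourhood of $\alpha_{0}$) extends this to a full neighbourhood of $x_{0}$. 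The one delicate subcase — $x_{0}$ an endpoint of some $I^{p}_{u}$ (e.g.\ $\alpha_{0}$ ending in all $0$'s or all $1$'s) — is handled by noting that the gap adjoining $x_{0}$ from the outside takes the value $f(\alpha_{0})$ at $x_{0}$ itself and has length bounded below, so the affine piece there stays close to $\widetilde f(x_{0})$ near $x_{0}$. I expect this step — point-wise continuity at the points of $\mathcal{C}^{p}$ that are accumulation points of infinitely many gaps — to be the main obstacle; it is precisely what is smoothed over by the Bishop's Lemma variation, which locates $x$ from both sides at once rather than through a non-constructive case split.

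Finally, for the ``moreover'' clause: every value of $\widetilde f$ lies in the convex hull of a pair of values of $f$, so $\abs{\widetilde f}\le M$ whenever $\abs{f}\le M$; moreover $\widetilde f\ge\inf f$ everywhere while $\widetilde f$ takes values arbitrarily close to $\inf f$ on $\mathcal{C}^{p}$, so $\inf\widetilde f=\inf f$, whence $\widetilde f$ has a positive infimum, resp.\ attains its infimum (at $F^{p}(\alpha^{\ast})$ if $f$ attains it at $\alpha^{\ast}$), whenever $f$ does. And if $f$ is uniformly continuous, then it is bounded, say $\abs{f}\le M$, since $\CS$ is compact, and for $\varepsilon>0$ one picks $n$ with $\overline{\gamma}n=\overline{\gamma'}n\Rightarrow\abs{f(\gamma)-f(\gamma')}<\varepsilon$: on gaps $G_{w}$ with $\abs{w}\ge n$ the oscillation of $\widetilde f$ is $<\varepsilon$, on the finitely many gaps with $\abs{w}<n$ the map $\widetilde f$ is affine over a length $\ge(1-2p)p^{n-1}$ hence Lipschitz with a fixed constant, and on each level-$n$ interval the oscillation of $\widetilde f$ is $\le\varepsilon$; combining these with the fact that consecutive level-$n$ intervals are separated by gaps of length $\ge(1-2p)p^{n-1}$ yields a modulus of uniform continuity for $\widetilde f$.
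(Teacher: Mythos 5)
Your approach — affine interpolation across the gaps of the Cantor set, with $\widetilde f=f\circ(F^{p})^{-1}$ on $\mathcal{C}^{p}$ — is exactly the idea the paper describes, and the paper itself punts the constructive construction of $\widetilde f$ to a reference (Bridges~\cite{dB07}). So structurally you are on the right track, and your continuity estimate and the verification of the ``moreover'' clause are fine once the construction is in hand. The problem is with the construction itself, and it is precisely at the place you flag as the ``main obstacle''.

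The sentence ``By \ref{Eqn:OI-BL}, either $x=F^{p}(\alpha)$, or $x$ is bounded away from all of $\mathcal{C}^{p}$'' is the non-constructive dichotomy you set out to avoid. Equation \ref{Eqn:OI-BL} is an \emph{implication}, $d(x,F^{p}(\alpha))>0\implies d(x,F^{p}(\CS))>0$; it does not produce the disjunction $x=F^{p}(\alpha)\lor d(x,F^{p}(\alpha))>0$, which constructively is not available for an arbitrary $x$. Everything that follows that sentence (``in either case the value assigned agrees with the piecewise prescription\ldots'') is reasoning inside an undecidable case split, so the map $\widetilde f$ has not actually been constructed. Your parenthetical alternative has the same defect: to show that $(f_n(x))_n$ is Cauchy you argue ``the $f_n$ stabilise on every gap and converge at every $F^{p}(\alpha)$'', which again splits on whether $x$ is a gap point or a Cantor point. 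The alternative can be repaired: fix $N$ from pointwise continuity of $f$ at $\alpha_x$, note that for $n\geqslant N$ the function $f_n$ agrees with $f_N$ on the gap portions of $J^{p}_{\overline{\alpha_x}N}$ and that on $I^{p}_{\overline{\alpha_x}N}$ both $f_n$ and $f_N$ take values in the convex hull of $\set{f(\gamma)}{\overline{\gamma}N=\overline{\alpha_x}N}$, so $\lvert f_n(y)-f_N(y)\rvert$ is bounded by the oscillation of $f$ on that cylinder for \emph{every} $y\in J^{p}_{\overline{\alpha_x}N}$ — in particular at $y=x$, without deciding which kind of point $x$ is. That single uniform estimate on $J^{p}_{\overline{\alpha_x}N}$ is what replaces the case split, and it is missing from your write-up; as it stands, the central construction is asserted but not carried out constructively.
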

\begin{proof}

The idea of the embedding were simple if we assumed classical logic: for elements $F^{p}(\alpha)$ in the Cantor set $\mathcal{C}^{p}\subset [0,1]$ we obviously want to set $\widetilde{f}(F^{p}(\alpha))= f(\alpha)$. If an element is not in $\mathcal{C}^{p}$ there exist unique $\alpha, \beta \in \CS$ such that $F^{p}(\alpha) < x < F^{p}(\beta)$ and $\mathcal{C}^{p} \cap \left( F^{p}(\alpha), F^{p}(\beta) \right) = \emptyset$. For such an $x$ we set 
\begin{equation} \label{Eqn:extfunc} \widetilde{f}(x) = t_{\alpha,\beta}(x) =  f(\alpha) + \frac{x-F^{p}(\alpha)}{ F^{p}(\beta) - F^{p}(\alpha)}f(\beta) \ ; 	
\end{equation}

that is we simply interpolate linearly between $F^{p}(\alpha)$ and $F^{p}(\beta)$. However, constructively, we can, of course, not show that $x \in [0,1]$ is either in the Cantor set or is in its metric complement. Nevertheless one can, surprisingly, give a construction of $\widetilde{f}$ without the use of \LEM. The construction can be found in \cite{dB07}. 

Since $\mathcal{C}^p \cup -\mathcal{C}^p$ is dense in $[0,1]$ it easily follows from Equation \eqref{Eqn:extfunc} that all the properties listed above are preserved.
\end{proof}

It is well known (\cite{Troelstra1988}*{Chapter 7 Corollary 4.4}) that constructively every compact, that is a complete and totally bounded, space is the uniformly continuous image of $\CS$. Below, we need a slightly stronger version of this result.

\begin{Def}
A map $g:X \to Y$	between to metric spaces $X$ and $Y$ is called \define{uniformly surjective} if 
\begin{equation} \label{Equ:unif_surj}
\fa{\varepsilon >0}{\ex{ \delta >0}{ \fa{x,y \in Y}{d(x,y)<\delta \implies \ex{\alpha,\beta \in Y }{ d(\alpha,\beta)<\varepsilon \land x=g(\alpha) \land y = g(\beta)}}   }} 
\end{equation}
\end{Def}

\begin{Lem} \label{Lem:cmpact-image-of-2N}
For $p > \frac{1}{2}$ the function $F^p:\CS \to [0,1]$ is uniformly surjective. 
\end{Lem}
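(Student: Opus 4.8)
The plan is to reduce the uniform surjectivity of $F^p$ to a Lebesgue-number statement for the finite cover of $[0,1]$ by the basic intervals $I^p_u$ with $u$ of a fixed length, and then to feed in the prefix-preserving surjectivity provided by part~\ref{Fp_surj} of Lemma~\ref{Lem:Fp}. Concretely: fix $\varepsilon>0$ and choose $m\in\NN$ large enough that any two elements of $\CS$ with the same first $m$ terms are at distance less than $\varepsilon$. I claim one can then produce a $\delta>0$ such that any $x,y\in[0,1]$ with $\abs{x-y}<\delta$ lie in a common $I^p_u$ with $\abs{u}=m$. Granting this, given such $x,y$ I pick that $u$ and apply part~\ref{Fp_surj} of Lemma~\ref{Lem:Fp} twice, to $(u,x)$ and to $(u,y)$, obtaining $\alpha,\beta\in\CS$ with $\overline{\alpha}m=\overline{\beta}m=u$, $F^p(\alpha)=x$ and $F^p(\beta)=y$. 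Since $\alpha$ and $\beta$ agree on their first $m$ coordinates, $d(\alpha,\beta)<\varepsilon$, which is exactly what \eqref{Equ:unif_surj} demands.

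For the Lebesgue number, first the geometry. Listing $2^m$ in lexicographic order as $u_0<\dots<u_{2^m-1}$, the intervals $I^p_{u_i}$ are closed, of common length $p^m$, with $a^p_{u_0}=0$ and $b^p_{u_{2^m-1}}=1$, and their left endpoints are increasing. Because $p>\tfrac12$, consecutive intervals overlap: a routine computation shows that the largest increment of a left endpoint when passing from $u_i$ to $u_{i+1}$ is $(1-p)p^{m-1}$ (the worst case being when only the last coordinate is flipped), so $b^p_{u_i}-a^p_{u_{i+1}}\geqslant p^m-(1-p)p^{m-1}=p^{m-1}(2p-1)=:\ell_m>0$. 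Now shave each interval down to $I'_i:=[a^p_{u_i}+\tfrac{\ell_m}{3},\,b^p_{u_i}-\tfrac{\ell_m}{3}]$, and at the two ends put $I'_0:=[0,\,b^p_{u_0}-\tfrac{\ell_m}{3}]$, $I'_{2^m-1}:=[a^p_{u_{2^m-1}}+\tfrac{\ell_m}{3},\,1]$. Since $\ell_m<\tfrac32 p^m$, each $I'_i$ is a nondegenerate closed subinterval of $I^p_{u_i}$ with margin $\tfrac{\ell_m}{3}$ to each endpoint; and since consecutive $I^p_{u_i}$ overlap by at least $\ell_m$ while trimming removes only $\tfrac{\ell_m}{3}$ from each side, consecutive $I'_i$ still overlap, so $\bigcup_i I'_i=[0,1]$.

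The one step that is not purely formal is then carried out constructively. Given $x\in[0,1]$, each $I'_i$ is a compact interval, hence located, so $d(x,I'_i)$ exists; as the $I'_i$ cover $[0,1]$ we have $\min_i d(x,I'_i)=0$. Computing the finitely many reals $d(x,I'_i)$ to a precision small relative to $\ell_m$ and searching, I obtain --- with termination and success guaranteed --- an index $i$ with $d(x,I'_i)<\tfrac{\ell_m}{4}$. Then $x$ lies within $\tfrac{\ell_m}{4}$ of $I'_i$, so by the trimming margin $x\in[a^p_{u_i}+\tfrac{\ell_m}{12},\,b^p_{u_i}-\tfrac{\ell_m}{12}]\subseteq I^p_{u_i}$, with at least $\tfrac{\ell_m}{12}$ of room to each endpoint of $I^p_{u_i}$; for $u_i=0^m$ or $1^m$ one argues the same way after intersecting the relevant ball with $[0,1]$. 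Setting $\delta:=\tfrac{\ell_m}{12}$, any $y$ with $\abs{x-y}<\delta$ then also lies in $I^p_{u_i}$, which gives the claimed Lebesgue number and hence the lemma.

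The genuine work is thus confined to this last step: turning the classical fact ``$x$ lies in some $I'_i$'' into an algorithm choosing the index $i$. This goes through precisely because the $I'_i$ are located and their pairwise overlaps are bounded below by the single positive quantity $\tfrac{\ell_m}{3}$; everything else is supplied by Lemma~\ref{Lem:Fp} (uniform continuity, and the ability to realise any point of $I^p_u$ as $F^p(\alpha)$ with $\alpha$ extending $u$). The particular fractions $\tfrac13,\tfrac14,\tfrac1{12}$ above are immaterial and need only be mutually consistent.
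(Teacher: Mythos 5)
Your proposal is correct and follows essentially the same route as the paper: fix a depth, trim each $I^p_u$ by a third of the overlap width so the trimmed intervals $I'_u$ still cover $[0,1]$, use this to locate a single $u$ containing both $x$ and $y$ inside $I^p_u$, and then invoke Lemma~\ref{Lem:Fp}.\ref{Fp_surj} twice with the common prefix $u$. The only difference is that you spell out the constructive content of ``find $u$ with $x\in I'_u$'' (via locatedness of the finitely many compact $I'_i$ and an approximate search), whereas the paper leaves that step implicit; a marginally tidier way to phrase that same content is a finite cotransitivity argument on the overlapping chain of left endpoints, but your version is sound and the fractions you chose are indeed consistent.
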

\begin{proof}
Let $n \in \NN$ be arbitrary. Since $p > \nicefrac{1}{2}$, neighbouring intervals $I_u$ with $u \in 2^n$ are overlapping by $(2p-1)p^n$. Now  consider the slightly smaller intervals 
   \[ I^\prime_u = [a_u + \delta , b_u -\delta] \]
   where $\delta = \nicefrac{(2p-1)p^n}{3}$, as a special case we also set $I^\prime_{0^n} = [0,b_{0^n}]-\delta$ and $I^\prime_{1^n} = [a_{1^n}+\delta,1]$. By our choice of $\delta$ we still have 
   \[ [0,1] = \bigcup_{u \in 2^n} I^\prime_u \ . \]
   Consider $x,y \in Y$  such that $d(x,y)<\delta$. First, find $u \in 2^n$ such that $x \in I^\prime_u$. Since $d(x,y)<\delta$ we have $y \in I^\prime_u$, and also by  construction, $x \in I^\prime_u \subset I_u$. So using Lemma \ref{Lem:Fp}.\ref{Fp_surj} we can find $\alpha, \beta$ with $\overline{\alpha}n = \overline{\beta}n = u$, and $F(\alpha) =x$ and $f(\beta) = y$.
\end{proof}
More generally we have:
\begin{Pro} \label{Pro:compXunifsurj}
If $X$ is a compact set, then it is the uniformly surjective image of a uniformly continuous function $f: \CS \to X$.
\end{Pro}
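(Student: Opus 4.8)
The plan is to bootstrap from Lemma~\ref{Lem:cmpact-image-of-2N} (the case $X = [0,1]$) plus the classical fact that every compact metric space is the uniformly continuous image of $\CS$, and to upgrade the latter map to a uniformly surjective one. First I would recall the standard construction: given a compact metric space $X$, choose for each $n$ a finite $2^{-n}$-approximation $\{x^{(n)}_u : u \in 2^n\}$, arranged coherently so that $d(x^{(n)}_u, x^{(n+1)}_{u0}) < 2^{-n}$ and $d(x^{(n)}_u, x^{(n+1)}_{u1}) < 2^{-n}$. This yields, for each $\alpha \in \CS$, a Cauchy sequence $(x^{(n)}_{\overline\alpha n})_{n\geqslant 1}$ whose limit we call $g(\alpha)$; the map $g\co \CS \to X$ is uniformly continuous (indeed $\overline\alpha n = \overline\beta n$ forces $d(g(\alpha),g(\beta)) \leqslant 2^{-n+1}$), and it is surjective by totally-boundedness of $X$.

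The substantive point is uniform surjectivity in the sense of Equation~\eqref{Equ:unif_surj}. The key extra ingredient I would build into the approximation is \emph{overlap with slack}: I would arrange that for each $n$, every point of $X$ lies in some ball $B(x^{(n)}_u, 2^{-n-2})$ \emph{and} that whenever $d(x^{(n)}_u, x^{(n)}_v) < 2^{-n-1}$ one can still descend through the tree from both $u$ and $v$ to hit any prescribed point within that combined ball --- exactly mirroring the way neighbouring intervals $I_u$ overlap in Lemma~\ref{Lem:cmpact-image-of-2N}. Concretely: given $\varepsilon > 0$ pick $n$ with $2^{-n+1} < \varepsilon$; set $\delta = 2^{-n-2}$; given $x,y \in X$ with $d(x,y) < \delta$, find $u \in 2^n$ with $d(x, x^{(n)}_u) < \delta$, so that also $d(y, x^{(n)}_u) < 2\delta = 2^{-n-1}$; then use the slack built into the construction to find $\alpha,\beta \in \CS$ with $\overline\alpha n = \overline\beta n = u$, $g(\alpha) = x$, $g(\beta) = y$, and hence $d(\alpha,\beta) \leqslant 2^{-n} < \varepsilon$ in the Cantor-space metric. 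An honest way to secure the descent step without appealing to an ad hoc ``slack'' axiom is to route everything through $[0,1]$: compose a uniformly surjective $F^p\co \CS \to [0,1]$ from Lemma~\ref{Lem:cmpact-image-of-2N} with a uniformly continuous surjection $h\co [0,1] \to X$; but since a uniformly continuous surjection $[0,1] \to X$ is not available in general constructively, I would instead take the uniformly surjective $F^p\co \CS \to [0,1]$ and compose with the standard uniformly continuous surjection $[0,1] \to \CS$-free route, i.e.\ directly redo the tree construction on $\CS$ with the overlapping-ball refinement described above.

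The cleanest route, and the one I would ultimately write up, is: take the standard uniformly continuous surjection $q\co \CS \to X$, and precompose it with the uniformly surjective $\varphi$-type reindexing. More precisely, I would observe that $q$ factors through a uniformly surjective map after one ``fattening'' step: define $G\co \CS \to X$ by $G = q \circ F$ where $F\co \CS \to \CS$ is built exactly as $F^p$ was built for $[0,1]$ but using the combinatorial tree of $X$'s approximations in place of the interval subdivision, using $p > \tfrac12$ so that successive cylinder-images overlap. The verification of \eqref{Equ:unif_surj} is then word-for-word the proof of Lemma~\ref{Lem:cmpact-image-of-2N}, with ``$I_u$'' replaced by ``the set of points of $X$ coded below $u$'' and ``$\delta = \nicefrac{(2p-1)p^n}{3}$'' replaced by the corresponding overlap radius of the approximation.

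The main obstacle, and where I expect the real care to be needed, is constructing the coherent family of approximations with genuine overlap: classically one just picks finite $\varepsilon$-nets, but to get \eqref{Equ:unif_surj} constructively one needs that from $u \in 2^n$ and a point $x$ within the overlap radius of $x^{(n)}_u$ one can effectively continue the choice of $\alpha$ at level $n+1$ — i.e.\ the descent must be ``total'' on an open neighbourhood, not merely on the net points. This is precisely the role played in Lemma~\ref{Lem:cmpact-image-of-2N} by the explicit interval arithmetic and the use of $p > \tfrac12$; for a general compact $X$ one recovers it from total boundedness by choosing the level-$(n+1)$ net to be a $2^{-n-3}$-net (say) refining the level-$n$ net, and checking that the triangle inequality leaves enough room. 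Dependent choice is used, as in Lemma~\ref{Lem:Fp}.\ref{Fp_surj}, to actually assemble the sequence $\alpha$ from the levelwise choices; I would flag this explicitly. Everything else — uniform continuity of the resulting map, surjectivity — is routine and I would state it as such.
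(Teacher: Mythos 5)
The paper's own proof is a bare citation to Troelstra--van Dalen, so you are doing something the paper does not: attempting to actually carry out the construction. Your central idea---redo the $F^p$-with-overlap construction of Lemma~\ref{Lem:cmpact-image-of-2N} for a general compact $X$ by building coherent finite approximations with enough slack---is the right one, and it is almost certainly what the cited result amounts to.

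However, the ``cleanest route'' you settle on does not work as stated. You propose $G = q \circ F$ where $q\co \CS \to X$ is the standard uniformly continuous surjection and $F\co \CS \to \CS$ is a ``fattening'' map. First, the description of $F$ does not type-check: $F^p$ maps $\CS$ to $[0,1]$, and ``built exactly as $F^p$ but using the combinatorial tree of $X$'s approximations'' describes a map into $X$, not into $\CS$. Second, even granting some uniformly surjective $F\co \CS \to \CS$, precomposition cannot rescue a $q$ that is not itself uniformly surjective: given $x,y\in X$ close, preimages $\alpha',\beta'$ under $q$ may be far apart in $\CS$ (think of the binary expansions of $\tfrac12\pm\epsilon$ under $F^{1/2}$), and no reindexing $F$ of the domain brings $F^{-1}(\alpha')$ and $F^{-1}(\beta')$ together if $\alpha'$ and $\beta'$ themselves are $2^{-1}$-separated. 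So the uniform-surjectivity defect of $q$ cannot be repaired by a map ``in front of'' it; it has to be repaired in the way $q$ itself is built.

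The honest version---which you do gesture at---is to rebuild the approximation scheme from scratch: choose finite nets $\{x^{(n)}_u : u \in 2^n\}$ so that at each level the cylinder-images $\overline{g(\{u\}\ast\CS)}$ not only cover $X$ but do so with a quantified overlap margin, and so that from any point $z$ within that margin of a level-$n$ net point one can continue the coding at level $n+1$. You correctly identify the two delicate points (the overlap must be \emph{effective}, i.e.\ the descent works on a neighbourhood rather than just on net points; and dependent choice is needed to assemble $\alpha$), but you do not carry them out. As it stands, the only fully specified route in your writeup is the one that doesn't work; the one that does is sketched at a level of detail comparable to the informal ``slack'' remark you yourself flagged as unsatisfactory.
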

\begin{proof}
This follows from \cite{Troelstra1988}*{Proposition 7.4.3.vi}.	
\end{proof}

It is worth taking the time for a little detour via constructive analysis here. We remind the reader that even classically the continuous image of complete space is not necessarily complete. However the continuous image of a cover-compact space is again cover-compact. Now constructively we traditionally use totally boundedness + completeness as a useful notion of compactness in metric spaces. Unfortunately neither totally boundedness, nor completeness are preserved under point-wise continuous maps. If we switch to uniformly continuous maps at least totally boundedness is preserved, but  completeness  is still not (the graph of $\frac{1}{x}$ is complete, but its image under the projection onto the $x$-axis is not). As the standard recursive counterexample (cf.\ Proposition \ref{Pro:KT_equiv}) shows we cannot even rule out the existence of a uniformly continuous function $f:[0,1] \to \RR$ such that $f([0,1]) = (0,1]$. But the situation is actually much worse since even a function as well behaved and canonical as $F^p$ with $p > \nicefrac{1}{2}$ doesn't have the property of mapping complete sets to complete sets.
\begin{Lem} 
If, for $p > \nicefrac{1}{2}$, the function $F^p:\CS \to [0,1]$ maps complete sets to complete sets (i.e.\ is a closed map), then \LLPO holds.
\end{Lem}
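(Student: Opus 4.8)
The goal is to derive \LLPO from the assumption that $F^p$ (with $p>\nicefrac{1}{2}$) is a closed map. By Proposition \ref{Pro:equivs_of_LLPO} it suffices to show that every real number $x \in [0,1]$ has a binary expansion, or equivalently to decide, for an arbitrary binary sequence $(a_n)_{n \geqslant 1}$ with at most one $1$, whether $\fa{n}{a_{2n}=0}$ or $\fa{n}{a_{2n+1}=0}$. The natural strategy is to build, from such a sequence, a \emph{complete} subset $S \subseteq \CS$ whose image $F^p(S)$ fails to be complete \emph{unless} we can make the relevant decision. Since we are assuming $F^p$ is closed, $F^p(S)$ must be complete, and that forces the decision.

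First I would fix the overlap structure: for $p>\nicefrac12$ we have $I^p_{u0} \cup I^p_{u1} = I^p_u$ with a genuine overlap of width $(2p-1)p^{\abs{u}+1}$ in the middle. The key point is that for a real $x$ sitting \emph{strictly inside} this overlap region there are (at least) two sequences $\alpha < \beta$ in $\CS$ with $\overline{\alpha}\,\abs{u} = u0^\ast$-flavoured and $\overline{\beta}\,\abs{u}$ continuing with a $1$, both mapping to $x$ — i.e.\ $F^p$ is genuinely non-injective exactly on the overlaps. I would exploit this at the very first level: the overlap of $I^p_0$ and $I^p_1$ is the interval around $\nicefrac12$, and a point $y$ there has preimages starting with $0$ and preimages starting with $1$.

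The construction I have in mind: from the sequence $(a_n)$ with at most one $1$, define a Cauchy sequence $(\gamma_k)_{k\geqslant 1}$ in $\CS$ — roughly, $\gamma_k$ stays at some fixed ``overlap'' sequence as long as $a_m=0$ for $m<k$, and jumps to a sequence starting $0$ (resp.\ $1$) if the first $1$ occurs at an even (resp.\ odd) index. Because there is at most one $1$, $(\gamma_k)$ is Cauchy; let $S = \Closure{\set{\gamma_k}{k \geqslant 1}}$, which is complete by construction. The real $z = \lim F^p(\gamma_k)$ exists by uniform continuity of $F^p$ (Lemma \ref{Lem:Fp}), and it lies in the level-$1$ overlap. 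Now $z \in \Closure{F^p(S)}$; if $F^p$ is closed, $F^p(S) = \Closure{F^p(S)}$ so there is $\alpha \in S$ with $F^p(\alpha) = z$. The decision comes from inspecting $\alpha(1)$: if $\alpha(1) = 0$ then (because $z$ is in the overlap and by how $S$ was built) there cannot be an odd $m$ with $a_m = 1$ — such an $m$ would have pushed the limiting preimage to start with $1$ and $F^p$ restricted to sequences starting $0$ versus $1$ disagrees off the overlap in a way I can pin down with Lemma \ref{Lem:Fp}.\ref{Lem:Var-Bish}-style monotonicity. Symmetrically $\alpha(1)=1$ rules out an even $m$ with $a_m=1$. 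Either way we have decided $\fa{n}{a_{2n}=0} \lor \fa{n}{a_{2n+1}=0}$, i.e.\ \LLPO holds.

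The main obstacle, I expect, is the bookkeeping that makes ``$S$ is complete'' genuinely hold and simultaneously makes the case analysis on $\alpha(1)$ decisive. One has to be careful that the limit point of $S$ under $F^p$ is pinned to the overlap region robustly (so that $\alpha(1)$ is not itself undecidable), and that the two families of preimages (those starting $0$, those starting $1$) are separated by a \emph{positive} distance away from the exact overlap — this is where the quantitative overlap width $(2p-1)p^{\abs{u}+1}>0$ and the injectivity/monotonicity content of Lemma \ref{Lem:Fp} do the real work. A secondary subtlety is the use of dependent choice to extract $(\gamma_k)$, but that is available throughout the excerpt and is harmless here.
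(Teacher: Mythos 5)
Your high-level plan — exploit the overlap of $F^p$ for $p > \nicefrac12$ and use the closedness hypothesis to extract a preimage that encodes a decision — is the right idea, and is also what the paper does. But the claim ``because there is at most one $1$, $(\gamma_k)$ is Cauchy'' does not hold constructively, and this is a genuine gap. Your $\gamma_k$ sits at a fixed overlap sequence $\gamma^*$ and then possibly jumps to a sequence whose first digit differs from that of $\gamma^*$; such a jump has a \emph{fixed} positive size in $\CS$, independent of how late the first $1$ in $(a_n)$ appears, so producing a modulus of Cauchyness would already require deciding whether a jump ever happens. (Compare the paper's argument that ``$\overline{\{0,a\}}$ is a three-element finitely enumerable set'' implies \LLPO: there the jump is to $a = \sum a_n/2^n$, which shrinks as the index of the first $1$ grows, so Cauchyness is genuine.) There is also a degeneracy you should worry about: $\gamma^* = \gamma_1 \in S$ always, so if $z = F^p(\gamma^*)$ then $z \in F^p(S)$ trivially, the closedness hypothesis is never used, and inspecting the first digit of $\alpha$ is inconclusive because the default witness $\alpha = \gamma^*$ is always available.

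The paper's proof avoids both problems by not building a Cauchy sequence at all. Starting from an arbitrary real $x$ (using $\LLPO \iff \fa{x \in \RR}{x \leqslant 0 \lor 0 \leqslant x}$ rather than the sequence form), with $x^{\pm} = \max\{\pm x, 0\}$, it takes $A = (F^p)^{-1}([0, \nicefrac12 - x^+]) \cap 0*\CS$ and $B = (F^p)^{-1}([\nicefrac12 + x^-, 1]) \cap 1*\CS$. Each is closed (preimage of a closed interval under continuous $F^p$, intersected with a clopen set), hence complete, and so is $A \cup B$; the images are exactly $[0, \nicefrac12 - x^+]$ and $[\nicefrac12 + x^-, 1]$, whose union has $\nicefrac12$ in its closure. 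Closedness then gives $\alpha \in A \cup B$ with $F^p(\alpha) = \nicefrac12$, and the first digit of $\alpha$ decides $x \leqslant 0 \lor 0 \leqslant x$. The structural point worth internalising is that a complete set straddling $0*\CS$ and $1*\CS$ cannot be packaged as the closure of a single Cauchy sequence in $\CS$; the paper instead assembles it as a union of two closed pieces, one in each half.
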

\begin{proof}
Let $x \in \RR$ be arbitrary. We may assume, without loss of generality that $\abs*{x}< \frac{1}{2}$. Let $x^+ = \max \menge{x,0}$ and $x^- = \max \menge{-x,0}$. Consider 
\[ A = (F^{p})^{-1} \left(\bracks*{0, \frac{1}{2} - x^+} \right) \cap \menge{0} \ast \CS  \]
and
\[ B = (F^{p})^{-1} \left(\bracks*{\frac{1}{2} + x^-,1 }\right) \cap \menge{1} \ast \CS  \ . \]
Both sets are closed and by Lemma \ref{Lem:Fp}.\ref{Fp_surj} we have $F^p(A) = [0, \frac{1}{2} - x^+]$ and $F^p(B) = [\frac{1}{2} + x^-,1]$. It is easy to see that $\frac{1}{2} \in \Closure{F^p(A \cup B)}$. However, assume that there exists $\alpha \in A \cup B$ with $F^p(\alpha) = \frac{1}{2}$. If $\alpha(0) = 0$, we cannot have $x > 0 $, since then $F(\beta) < \frac{1}{2}$. Similarly, if $\alpha(0)=1$ we cannot have $x< 0$. Thus we can decide $x \leqslant 0$ or $x \geqslant 0$, which means \LLPO holds.
\end{proof}
\begin{Qu}
Is there a general principle that ensures that complete sets are mapped to complete sets? The above example suggests that connectedness may play a role.	
\end{Qu}

\begin{Def}
We call a sequence $(x_n)_{n \geqslant 1}$ \define{tail-located} if the distances $d(x,\menge{x_n, x_{n+1}, \dots})$  exist for every $n \geqslant 1$ and $x \in \RR$.
\end{Def}

\begin{Lem} \label{Lem:seq-to-bar}
If $(x_n)_{n \geqslant 1}$ is a sequence of real numbers that is eventually bounded away from every point in $[0,1]$, then there exists a $c$-bar $B \subset \cS$ such that
\begin{enumerate}
\item $B$ is uniform, if $(x_n)_{n \geqslant 1}$ is eventually bounded away from $[0,1]$,
\item $B$ misses arbitrarily long finite sequences, that is 
\[ \fa{n \in \NN}{\ex{v \in \cS}{v \notin B \land \abs{v} \geqslant n}} \ , \] if $x_n \in [0,1]$ infinitely often, and
\item  if $(x_n)_{n \geqslant 1}$ is tail-located, then $B$ is decidable.
\end{enumerate}
\end{Lem}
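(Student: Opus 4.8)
The plan is to code the sequence $(x_n)$ into the binary tree $\cS$ using the Cantor-type embedding $F^{p}$ of Lemma~\ref{Lem:Fp} for a fixed rational $p$ with $\frac12 < p < 1$ (say $p=\frac23$), chosen so that the intervals $I^{p}_{u}$ at each level overlap, cover $[0,1]$, and $F^{p}$ is uniformly surjective. First I would use countable choice to fix rational approximations $q(m,k)$ with $\abs{x_m-q(m,k)}<2^{-k}$, and then define a decidable set $D\subset\cS$ by putting $v\in D$ exactly when $d\big(q(\abs{v},3\abs{v}),\,I^{p}_{v}\big)<2^{-\abs{v}}$; this is a decidable comparison of rationals because $p$, hence the endpoints of $I^{p}_v$, are rational. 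Setting $C':=\cS\setminus D$ and letting
\[
  B:=\set{u\in\cS}{\fa{w\in\cS}{u\ast w\in C'}}=\set{u\in\cS}{\text{no extension of }u\text{ lies in }D}
\]
automatically presents $B$ as a $c$-bar (with witness $C'$), \emph{provided} we verify it is a bar. The informal idea is that $v\in D$ records that $x_{\abs{v}}$ has come back close to the interval $I^{p}_v$, so $u\in B$ says that from height $\abs{u}$ on the sequence stays away from every interval in the subtree below $u$.

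Next I would check that $B$ is a bar together with property~(1). Given $\alpha\in\CS$, put $y=F^{p}(\alpha)\in[0,1]$ and take $N$ with $d(x_m,y)>2^{-N}$ for all $m\geqslant N$. For any $v$ extending $\overline{\alpha}n$ one has $I^{p}_v\subseteq I^{p}_{\overline{\alpha}n}\ni y$ and $\operatorname{diam}I^{p}_{\overline{\alpha}n}=p^{n}$, so $d(x_{\abs{v}},I^{p}_v)\geqslant d(x_{\abs{v}},y)-p^{n}>2^{-N}-p^{n}$ once $\abs{v}\geqslant N$; choosing $n$ large enough that $p^{n}+2^{-n}<2^{-N}$ makes this exceed $2^{-\abs{v}}$ even after the approximation error, so no extension of $\overline{\alpha}n$ is in $D$, i.e.\ $\overline{\alpha}n\in B$. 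Running the same estimate with the whole interval $[0,1]$ in place of the point $y$ shows that if $(x_n)$ is eventually bounded away from $[0,1]$ then \emph{every} $u$ of some fixed length (computable from $N$) lies in $B$, which is precisely uniformity.

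For property~(2), suppose $x_m\in[0,1]$ infinitely often and let $n$ be given. I would pick $m\geqslant n$ with $x_m\in[0,1]$ and apply Lemma~\ref{Lem:Fp}.\ref{Fp_surj} (this is where $p>\frac12$ is needed) to get $\alpha\in\CS$ with $F^{p}(\alpha)=x_m$; then $v:=\overline{\alpha}m$ has $x_m\in I^{p}_{v}$ and $\abs{v}=m$, so $d(q(m,3m),I^{p}_v)\leqslant 2^{-3m}<2^{-m}$, whence $v\in D$, hence $v\notin B$, with $\abs{v}=m\geqslant n$. This produces arbitrarily long finite sequences outside $B$.

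The hard part will be property~(3). Deciding $u\in B$ means deciding the $\Sigma^{0}_{1}$-statement ``some extension of $u$ lies in $D$''. For each fixed level $\ell$ the condition ``$\ex{v}{\abs{v}=\ell\land v\supseteq u\land v\in D}$'' is decidable (a finite search), and, using the overlap of the intervals $I^{p}_v$ below $u$, the existence of such a $v$ is sandwiched between ``$d(x_\ell,I^{p}_u)<2^{-\ell-1}$'' and ``$d(x_\ell,I^{p}_u)<2^{-\ell+1}$'', so the whole question reduces to detecting whether the tail $\{x_m:m\geqslant\abs{u}\}$ ever returns to within the prescribed (shrinking) tolerance of $I^{p}_u$. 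Tail-locatedness enters here: the distances $\rho^{(u)}_{n}:=\inf_{z\in I^{p}_u}d\big(z,\{x_m:m\geqslant n\}\big)$ exist, since the relevant set of reals is totally bounded and hence has an infimum by \cite{dBlV06}*{Proposition~2.2.5}; they are non-decreasing in $n$; and the standard located-real case split comparing $\rho^{(u)}_{n}$ with the thresholds $2^{-n}$ lets one either pin down a level beyond which no return is possible — collapsing the search to a finite, decidable one — or else witness a return. The genuinely delicate point, which I expect to be where the real work lies, is calibrating the thresholds in the definition of $D$ and the precisions in $q$ so that this case analysis provably terminates in a decision; a cleaner route may be to derive the decidability clause from \cite{hD08b}*{Proposition~4.1.6}, in the same spirit as the indirect proof of the middle implication among the fan theorems $\FANf\Rightarrow\FANP\Rightarrow\FANc\Rightarrow\FAND$.
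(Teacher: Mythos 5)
Your coding is a genuine variant of the paper's, and the first two clauses go through. The paper works with $F^{\nicefrac{1}{2}}$ and builds $D$ from per-level approximants $v_n$ with $\abs{v_n}=n$ (one per level, from countable choice and density of $F^{\nicefrac{1}{2}}(\CS)$), whereas you use $F^{p}$ with $p>\frac12$ and a decidable distance condition on rationals; your choice buys exact surjectivity for clause (2) (take $\alpha$ with $F^p(\alpha)=x_m$) where the paper instead uses density plus a $2^{-n}$--approximation. Your bar argument and the uniformity argument for clause (1) are correct modulo routine calibration of constants.

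Clause (3), however, has a genuine gap exactly where you flagged it, and I do not see how to close it with the case split as stated. You propose comparing $\rho^{(u)}_n$ with thresholds of the form $2^{-n}$. The "finite search" branch is fine: if the tail stays at distance $>\rho>0$ from $I^{p}_u$, then for $m$ large the level-$m$ tolerance $2^{-m}+2^{-3m}$ drops below $\rho$ and rules out members of $D$ at that level, so only finitely many levels need checking. But the other branch does not produce a witness: from $\rho^{(u)}_M < 2^{-M}$ you only learn that \emph{some} $x_m$ with $m \geqslant M$ is within $2^{-M}$ of $I^{p}_u$; since $m$ may be much larger than $M$, the relevant tolerance at level $m$ is $\approx 2^{-m}$, which can be far smaller than $2^{-M}$, so you cannot conclude that any extension $v$ of $u$ of length $m$ lies in $D$. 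The monotonicity of $\rho^{(u)}_n$ does not help here.

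The paper's proof closes this gap with one extra preparatory step that is absent from your sketch: before the case split, it uses tail-locatedness to find $N$ and $\delta>0$ bounding the tail away from \emph{both endpoints} $a,b$ of the interval $I^{\nicefrac{1}{2}}_u$, and only then forms $\rho = d\bigl(F^{\nicefrac{1}{2}}(B_u), (x_n)_{n \geqslant K}\bigr)$ with $K=\max\{M,N\}$ and $2^{-(M+1)}<\delta$. In the near branch, $x_n$ is within $2^{-K}$ of $[a,b]$ but at distance $>\delta$ from $a$ and from $b$, which forces $x_n$ strictly inside $(a+\delta,b-\delta)$; and then the per-level approximant $v_n$, being within $2^{-n+1}$ of $x_n$, is forced to extend $u$, so $u \notin B$ is witnessed. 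It is this "bounded away from the endpoints" manoeuvre that makes the case split decide membership, and it is what your calibration discussion is missing. (The reference to \cite{hD08b}*{Proposition 4.1.6} is not the right escape route either: that result is about reducing $c$-bars to $\Pi^0_1$-bars, not about decidability of $B$.)
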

\begin{proof}
Assume that $(x_n)_{n \geqslant 1}$ is a sequence that is eventually bounded away from every point in $[0,1]$.
Notice first, that since $(x_n)_{n \geqslant 1}$ is eventually bounded away from $0$ and $1$ we can, for a tail but without loss of generality for the whole sequence, decide whether $x_n \in [0,1]$ or not.
 Since $F^{\nicefrac{1}{2}}(\CS)$ is dense in $[0,1]$ we can, using countable choice, find a sequence $w_n \in \cS \cup \menge{\omega}$ such that 
\[w_n \in \cS \implies x_n \in [0,1] \land \abs*{F^{\nicefrac{1}{2}}(w_n) - x_n} < 2^{-n} \ , \] and
\[ w_n = \omega \implies x_n \notin [0,1] \ .\]
Define 
\[ v_n = \begin{cases}
\overline{w_n}n & \text{ if } w_n \in \cS \land \abs*{w_n} \geqslant n \ , \\
w_n 0^{n-\abs*{w_n}} & \text{ if } w_n \in \cS \land  \abs*{w_n} < n , \\
\omega & \text{ if }w_n = \omega \ .
\end{cases}
 \]
that is if $w_n$ is not $\omega$ either chop of $w_n$ after $n$ places or fill it up to length $n$ with zeroes. Then, by part 1 of Lemma \ref{Lem:Fp} \[w_n \in \cS \implies \abs*{F^{\nicefrac{1}{2}}(v_n) - F^{\nicefrac{1}{2}}(w_n)} \leqslant 2^{-n}  \] for all $n \in \NN$. Hence 
\begin{equation} \label{Eqn:tailloc}	
w_n \in \cS \implies \abs*{F^{\nicefrac{1}{2}}(v_n) - x_n} < 2^{-n+1} \ 
\end{equation}
for all $n \in \NN$. Since $\abs*{v_n} = n$ for all $n$, the set  $D=\menge{v_{1}, v_{2}, \dots} \cap \cS$ is decidable. Therefore the set \[ B = \set{u \in \cS}{\neg\ex{w \in \cS}{u \ast w \in D} } \] is a $c$-set. It is also a bar. To see this let $\alpha \in \CS$ be arbitrary. There exists $M,N \in \NN$ such that $\abs*{F^{\nicefrac{1}{2}}(\alpha) - x_n} > 2^{-M}$ for all $n \geqslant N$. Let $K = \max\menge{N,M}+2$. We are going to show that for all $w \in \cS$ we have $\overline{\alpha}K\ast w \notin D$. For assume there was such a $w$ with $\overline{\alpha}K\ast w \in D$ In this case $\overline{\alpha}K \ast w = v_{j}$ for some $j \geqslant K$ (for $j = K+\abs{w}$ to be precise). But then
 \begin{align*}
 2^{-M} & < \abs*{F^{\nicefrac{1}{2}}(\alpha) - x_{j}}   \\
 & \leqslant  \abs*{F^{\nicefrac{1}{2}}(v_{j}) - x_{j}} +  \abs*{F^{\nicefrac{1}{2}}(v_{j}) - F^{\nicefrac{1}{2}}(\alpha)} \\
 & \leqslant  2^{-j+1} + 2^{-K} \\
 & \leqslant 2^{-K+1} +2^{-K} \\
 & \leqslant 2^{-K+2} \leqslant 2^{-M} \ ;
 \end{align*}
a contradiction. Thus $\overline{\alpha}K\ast w \notin D$ for all $w \in \cS$ and therefore $\overline{\alpha}K \in B$. 

\begin{enumerate}
\item Assume now that $B$ is uniform. That is there $N \in \NN$ such that $\overline{\alpha}N \in B$ for all $\alpha \in \CS$. Now there cannot be $n \geqslant N$ such that $x_n \in [0,1]$, since in that case $w_n \in \cS$ and therefore $v_n \in D$. Since $\abs*{v_n} = n \geqslant N$ this would be a contradiction.
\item By construction, if $(x_n)_{n \geqslant 1}$ is in $[0,1]$ infinitely often, then $v_n \in \cS$ infinitely often. This implies that $v_n \notin B$ infinitely often. Since, furthermore, $\abs*{v_n} =n$ we are done.
\item Assume that $(x_n)_{n \geqslant 1}$ is tail-located, and let $u \in \cS$ be arbitrary. Let $a,b$ be the endpoints of $F^{\nicefrac{1}{2}}(B_u)$ that is $a = F^{\nicefrac{1}{2}}(u)$ and $b =F^{\nicefrac{1}{2}}(u\ast 1 \ast 1 \ast \dots)$.\footnote{That is with the notation of Section \ref{Sec:linking-cantor-and-unitint} $a=a^{\nicefrac{1}{2}}_u$ and $b=b^{\nicefrac{1}{2}}_u$.}
Choose $N$ and $\delta>0$ such that $x_n$ is bounded away from $a$ and $b$ by $\delta$, that is such that
\[  \fa{n \geqslant N}{\abs*{x_n - a} > \delta \land \abs*{x_n - b} > \delta} \ .  \]
Choose $M$ such that $2^{-{M+1}} < \delta$, and let $K = \max\menge{M,N}$
 Since 
\[ F^{\nicefrac{1}{2}}(B_u) = \set{F^{\nicefrac{1}{2}}(u \ast w)}{w \in \cS} \] is compact (as the uniform image of a compact set) the distance 
\[\rho = d\left(F^{\nicefrac{1}{2}}(B_u),  (x_n)_{n \geqslant K}\right) \] exists. 
Now either $\rho > 2^{-(K+1)}$ or $\rho < 2^{-K}$. In the first case there cannot be $n > K$ with  $u = \overline{v_n}\abs{u}$, since that would imply that  $v_n \in \cS$, which implies that $w_n \in \cS$ and therefore
\[ \abs*{F^{\nicefrac{1}{2}}(v_n) - x_n} < 2^{-n+1} < \delta  \ .\] 
This in turn implies that $d(x_n,F^{\nicefrac{1}{2}}(B_u)) < 2^{-(K+1)}$ which is a contradiction. Thus we only need to check $v_1, \dots , v_K$ to check whether $u \in B$ or not.
In the second case there must be $n$ such that $v_n \in cS$ and $F^{\nicefrac{1}{2}}(v_n) \in [a+\delta,b-\delta]$. This ensures that $u = \overline{v_n}\abs{u}$, which means that $u \in B$. 
Altogether we have shown that $B$ is decidable. \qedhere
\end{enumerate}
\end{proof}

\begin{Lem} \label{Lem:bar-to-seq}
If $B$ is a $c$-bar, then there exists $(x_n)_{n \geqslant 1}$  a sequence of real numbers that is eventually bounded away from every point in $[0,1]$  such that
\begin{enumerate}
\item if $(x_n)_{n \geqslant 1}$ is eventually bounded away from $[0,1]$, then $B$ is uniform,
\item  if $x_n \in [0,1]$ infinitely often, then there are arbitrarily large $w$ such that $w \notin B$, and
\item if $B$ is decidable and closed under extensions, then $(x_n)_{n \geqslant 1}$ is tail-located.
\end{enumerate}
\end{Lem}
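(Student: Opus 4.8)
The plan is to essentially run the construction of Lemma \ref{Lem:seq-to-bar} in reverse, but using the surjectivity half of Lemma \ref{Lem:Fp} rather than its injectivity half. Let $B$ be a $c$-bar, witnessed by a decidable set $C' \subset \cS$ with $u \in B \iff \fa{w \in \cS}{u \ast w \in C'}$. Fix once and for all a $p$ with $\frac{1}{2} < p < 1$, so that $F^p : \CS \to [0,1]$ is uniformly continuous and surjective by Lemma \ref{Lem:Fp}.\ref{Fp_surj}. The first step is to enumerate the \emph{finite} sequences that are \emph{not} yet in $B$ up to bounded depth: concretely, enumerate as $(u_n)_{n \geqslant 1}$ all pairs $(u,w)$ with $u \ast w \notin C'$, say in order of increasing $|u| + |w|$, recording the ``candidate'' $u$. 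Since each such $u$ satisfies $u \notin B$ (as $u \ast w \notin C'$), this gives us a sequence of finite sequences avoiding $B$, and because $B$ misses arbitrarily long sequences (it is a bar that need not be uniform), this enumeration produces $u_n$ with $|u_n| \to \infty$ along a subsequence. Then define $x_n = F^p(u_n \ast 1 \ast 1 \ast \dots) = a^p_{u_n}$, the left endpoint of $I^p_{u_n}$ (or, more robustly, pick any point of $I^p_{u_n}$). When there is no new ``out of tree'' sequence at stage $n$ — i.e. the enumeration has exhausted the relevant finite data — we instead set $x_n = 2$ (a fixed point outside $[0,1]$), so that $x_n \in [0,1]$ exactly when $u_n$ is a genuine finite sequence avoiding the bar.

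Next I would verify the three properties. For the main claim — that $(x_n)$ is eventually bounded away from every point of $[0,1]$ — fix $y \in [0,1]$ and, by surjectivity of $F^p$, choose $\alpha \in \CS$ with $F^p(\alpha) = y$; since $B$ is a bar there is $m$ with $\overline{\alpha}m \in B$, so $\overline{\alpha}m \ast w \in C'$ for all $w$. The uniform continuity estimate of Lemma \ref{Lem:Fp}.1 then confines all $F^p$-images of sequences extending $\overline{\alpha}m$ to an interval of radius $p^m$ around $y$; but every $u_n$ with $|u_n| > m$ and $u_n \notin B$ cannot extend $\overline{\alpha}m$ (else it, together with its continuation, would lie in $C'$ while $u_n \notin B$ forces $u_n \ast w' \notin C'$ for some $w'$). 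Choosing $m$ large enough that $2p^m$ is smaller than the target resolution, and noting only finitely many $u_n$ have length $\leqslant m$, we get $x_n$ bounded away from $y$ for all large $n$. For (1), if $x_n$ is eventually bounded away from all of $[0,1]$, then since $x_n \in I^p_{u_n}$ whenever $u_n$ is a real finite sequence, eventually no $u_n \notin B$ of length $\geqslant$ (that tail index) survives, which — using that $C'$ is decidable and that $I^p_u$ covers $[0,1]$ as $u$ ranges over $2^N$ for each fixed $N$ (Equation \ref{Eqn:partitionintervalb}) — forces $\overline{\alpha}N \in B$ for all $\alpha$, i.e. $B$ is uniform. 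Property (2) is immediate from the construction: if $x_n \in [0,1]$ infinitely often then infinitely many $u_n \notin B$ occur, and by the length-ordering these have unbounded length. Property (3): if $B$ is decidable and closed under extensions, then tail-locatedness of $(x_n)$ follows by the same compactness-and-case-split argument as in part 3 of Lemma \ref{Lem:seq-to-bar} — the tail $(x_n)_{n \geqslant K}$ lands in a computable union of the closed (hence compact, as uniform images of compact) sets $F^p(B_u)$, so distances can be computed.

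The main obstacle I anticipate is getting the bookkeeping of the enumeration exactly right so that it is \emph{constructive}: I must enumerate ``finite sequences not in $B$'' without ever deciding membership in $B$ (which is generally undecidable for a $c$-bar). The trick, exactly as in Lemma \ref{Lem:seq-to-bar}, is that I only ever test membership in the \emph{decidable} set $C'$, and ``$u \ast w \notin C'$'' is a decidable condition witnessing $u \notin B$ — I never need the converse. A secondary subtlety is handling the ``no new data'' stages cleanly so that the resulting sequence has a well-defined behaviour (placing it at the fixed value $2 \notin [0,1]$ takes care of this), and ensuring in part (1) that the finitely-many short $u_n$ don't spoil the conclusion, which is handled by passing to a tail. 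Beyond that, parts (2) and (3) are essentially transcriptions of the corresponding parts of the previous lemma, so I would state them briefly and refer back.
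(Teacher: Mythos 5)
Your proposal has a genuine gap in the argument for the main claim (that $(x_n)$ is eventually bounded away from every $y \in [0,1]$), and the root cause is the decision to work with $p > \nicefrac{1}{2}$. You chose the surjective regime in order to be able to write $y = F^p(\alpha)$, but in that regime $F^p$ is \emph{not} injective: intervals $I^p_u$ and $I^p_v$ for distinct $u,v$ of the same length overlap by $(2p-1)p^{|u|}$. So your step ``every $u_n$ with $|u_n| > m$ and $u_n \notin B$ cannot extend $\overline{\alpha}m$, hence $x_n$ is bounded away from $y$'' does not follow: when $y$ lies in an overlap region, a string $v$ of length $m$ distinct from $\overline{\alpha}m$ can have $y \in I^p_v$, and unbarred descendants of $v$ will map arbitrarily close to $y$. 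To fix this you would need a single uniform bar depth covering \emph{all} preimages of $y$, but the set of preimages is a compact subtree whose uniform barring is precisely the kind of statement we are trying to classify, not one we may assume. The paper works with $p = \nicefrac{1}{3} < \nicefrac{1}{2}$ exactly so that distinct strings of equal length map to intervals separated by a positive gap $\varepsilon^p_n$; surjectivity is then \emph{not} needed because Lemma \ref{Lem:Fp}.\ref{Lem:Var-Bish} (the Variation on Bishop's Lemma) produces a canonical $\alpha$ such that either $y$ is close to $F^{\nicefrac{1}{3}}(\alpha)$ (and then the gap estimate bounds $|x_n - y|$) or $y$ is bounded away from the whole image $F^{\nicefrac{1}{3}}(\CS)$ (and then boundedness-away is trivial). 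Both alternatives resolve the question; no preimage enumeration is required.

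A second, smaller but still fatal issue is your enumeration scheme. Enumerating \emph{pairs} $(u,w)$ with $u \ast w \notin C'$ and recording the first component $u$ allows the recorded sequence $(u_n)$ to repeat a fixed short $u$ infinitely often (for each of its infinitely many witnesses $w$). If, say, $\epsilon \notin B$, you will record $u_n = \epsilon$ along an infinite subsequence, giving $x_n = b^p_\epsilon = 1$ infinitely often, which is not bounded away from $1 \in [0,1]$. The paper instead enumerates $\cS$ once via a bijection $\eta$ with $|\eta(n)|$ non-decreasing and sets $x_n = F^{\nicefrac{1}{3}}(\eta(n))$ precisely when $\eta(n) \notin C'$ (and $x_n = 2$ otherwise). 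No pair bookkeeping is needed: if $u \notin B$ is missed at its own stage (because $u \in C'$ even though $u \notin B$), the witnessing extension $u \ast w \notin C'$ is reached at a later stage and contributes a point of $I^{\nicefrac{1}{3}}_{u \ast w} \subset I^{\nicefrac{1}{3}}_u$, so no information is lost. Your parts (2) and (3) as sketched would follow in spirit once the construction is repaired, but as written the construction they rest on does not yield a Specker-type sequence.
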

\begin{proof}
Assume that $B \subset \cS$ is a $c$-bar, and $C^{\prime}$ is a decidable set as in the definition. 
Let $\eta: \NN \to \cS$ be a bijection. In particular, that means that we have
\begin{equation} \label{Eqn:tail-loc-seq}
\fa{n \in \NN}{\ex{m \in \NN}{i \geqslant m \implies \abs*{\eta(i)} \geqslant n}} \ .
\end{equation}
We may also assume that $i \leqslant j \implies \abs*{\eta(i)} \leqslant \abs*{\eta(j)}$.\footnote{The function mapping a natural number $n \geqslant 1$ to the string of its binary expansion without the leading $1$ would be a suitable bijection.}
Define 
\begin{equation*}
x_n = 
\begin{cases}
F^{\nicefrac{1}{3}}(\eta(n)) & \text{if }   \eta(n) \notin C^{\prime} \\
2 & \text{otherwise.}
\end{cases}
\end{equation*} 
We want to show that this sequence is eventually bounded away from every $x\in [0,1]$. To this end choose $\alpha \in \CS$ as in Lemma \ref{Lem:Fp}.2. Since $B$ is a bar there exists $n \in \NN$ such that $\overline{\alpha}n \in B$, which means, that for all $w \in \cS$ we have $\overline{\alpha}n \ast w \in C^{\prime}$. Furthermore, let $m$ be as in Equation \eqref{Eqn:tail-loc-seq}.
For every $i \geqslant m$ we  have $\overline{\alpha}n \neq \overline{\eta(i)}n$ whenever $\eta(i) \notin C^{\prime}$ (notice that $\abs*{\eta(i)} \geqslant n$), which implies that 
\[ \abs*{F^{\nicefrac{1}{3}}(\eta(i)) - F^{\nicefrac{1}{3}}(\alpha)} > 3^{-(n+1)}  \ ,\] for such $\eta(i)$. Since $x_{i}=2$ when $w_{i} = \omega$ we get that 
\[ \abs*{x_{i} - F^{\nicefrac{1}{3}}(\alpha)} > 3^{-(n+1)}  \ ,\] for all $i \geqslant m$.
 Now either $\abs*{F^{\nicefrac{1}{3}}(\alpha) - x} < 3^{-(n+2)} $ or $\abs*{F^{\nicefrac{1}{3}}(\alpha) - x} > 3^{-(n+3)} $. In the first case 
\[ \abs*{x_{i} - x} > 3^{-n+1}  \]
for all $i \geqslant m$. In the second case by Lemma \ref{Lem:Fp}.2 we have that $d(x,F^{\nicefrac{1}{3}}(\CS)) > \delta$ for some $\delta > 0$. In both cases $\abs*{x_{i} - x} > \min \menge{\delta, 3^{-(n+2)}}$ for all $i \geqslant m$, so $(x_n)_{n \geqslant 1}$ is eventually bounded away from every $x \in [0,1]$. 

So let us tackle the three numbered assertions.
\begin{enumerate}
\item Assume that $(x_n)_{n \geqslant 1}$  is eventually bounded away from the entire set $[0,1]$; say  $x_n\notin [0,1]$ for all $n \geqslant M$ on. 
Set $K = \max\set{\abs*{\eta(j)}}{1 \leqslant j < M}$.
Then for all $v \in \cS$ such that $\abs{v} \geqslant K$  we must have $v \notin C^{\prime}$, since otherwise $w_{\eta^{-1}(v)} \in \cS$, and therefore $x_{\eta^{-1}(v)} \in [0,1]$. Hence for all $\alpha \in \CS$ we have $\overline{\alpha}K \in B$; that is $B$ is uniform.
\item If $x_n \in [0,1]$ infinitely often we must have $w_n \in \cS$ infinitely often. This in turn implies that $w_n \notin C^{\prime}$ infinitely often. Since  $C^{\prime} \subset B$ we have that there are arbitrarily large $w$ with $w \notin B$.
\item  It suffices to show that the sets 
 \[A_n = \set{\eta(i)}{i \geqslant n \land \eta(i) \notin B} \]
 are either empty or totally bounded as subsets of $\CS$ , since 
 \[ \menge{ x_n, x_{n+1}, \dots} = F^{\nicefrac{1}{3}}(A_n) \cup \menge{2} \ , \]
 and the uniformly continuous image of a totally bounded set is also totally bounded  and therefore located
\cite{dBlV06}*{Proposition 2.2.6 and 2.2.9}. 
Choose 
\[ M=1 + \set{\abs*{\eta(i)}}{1 \leqslant i < n} \ , \]
which ensures that 
\begin{equation} \label{Eqn:eta-prop}
\abs*{\eta(j)} \geqslant M \implies j \geqslant n \ .
\end{equation}
Furthermore for any $m$ we can choose $k_{m} \geqslant k$ large enough such that 
\begin{equation} \label{Eqn:eta-prop2}
 k_{m} \geqslant \max\set{i \in \NN}{\abs*{\eta(i)} = m} \ . 
\end{equation}
Set
\[ F_{n,m}=\set{\eta(i)}{k_{m} \geqslant i \geqslant n \land \eta(i) \notin B } \ .\] 
The sets $F_{n,m}$ are, for $m \geqslant M$,  a finite $2^{-m}$-approximation of $A_n$: for let $i \geqslant n$ with $\eta(i) \notin B$ and let $j$ be such that $\eta(j) = \overline{\eta(i)} m$. By \ref{Eqn:eta-prop} and \ref{Eqn:eta-prop2} we have $n \leqslant j \leqslant k_{m}$. Since $B$ is closed under extensions $\eta(j) \notin B$. Hence $\eta(j) \in F_{n,M}$ and $d(\eta(j),\eta(i)) \leqslant 2^{-m}$. 

The same argument shows that  $A_n$ is empty if and only if $F_{n,M}$ is. \qedhere
\end{enumerate}
\end{proof}

\section{\texorpdfstring{\WWKL}{WWKL}} \label{sec:WWKL}
We start with a principle that is a weakening of \FAND. 
The so called \define{weak weak K\"{o}nig's lemma} (\WWKL) plays a role in Simpson style reverse mathematics \cite{sS90}. The name is somewhat misleading since it is not resembling weak K\"{o}nig's lemma (see Section \ref{Sec:LLPO}) but rather its contrapositive i.e.\ the fan theorem.\footnote{Of course in Simpson style reverse mathematics which is based on classical logic this distinction is unimportant.} \WWKL does not assure that a decidable bar is uniform, that is that there is a level at which all sequences of that length have been barred, \WWKL only assures that the ratio of all the sequences of a given length that are in the bar over all sequences of a given length tends to $0$.

\begin{principle}[WWKL]{\WWKL} \label{PR:WWKL} If $B \subset \cS$ is a decidable bar that is closed under extensions, then \[ \lim_{n \rightarrow \infty} \frac{\abs*{\set{ u \notin B }{ \abs{u} = n}}}{2^n} = 0 \ .\] 
\end{principle}
In constructive reverse mathematics two publications have involved \WWKL. Their main results are combined in the next proposition.
\begin{Pro}
The following are equivalent to \WWKL
\begin{enumerate}
\item Every positive, uniformly continuous function $f: [0,1] \to \RR$ satisfies the following property: For any $\varepsilon >0 $, there exists $\delta > 0$ such that 
\[ \mu (\set{x}{f(x) < \delta}) \] is defined and 
\[ \mu (\set{x}{f(x) < \delta}) < \varepsilon \ .\]
\item Vitali's theorem: Let $\varepsilon > 0$ be arbitrary. If $\mathcal{V}$ is a countable Vitali cover of $[a,b]$, then there exists a finite set $\menge{I_{1}, \dots,I_{m}}$ of pairwise disjoint intervals of $\mathcal{V}$ such that \[ \mu \left( [a,b] \setminus \bigcup_{i=1}^{m} I_{i} \right) < \epsilon \ .\footnote{Where by $\mu$ we denote the usual measure, which we can obviously define, at least for sets that are finite unions of intervals.}
\]
\end{enumerate}
\end{Pro}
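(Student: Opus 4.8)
The plan is to prove the equivalence of \WWKL with the two analytic statements by the usual "linking" strategy developed earlier in the chapter, using the functions $F^{p}$ and Lemma \ref{Lem:extendingfunctions} to pass between decidable bars closed under extension and uniformly continuous functions on $[0,1]$. The three statements naturally form a cycle: I would show \WWKL $\Rightarrow$ (1) $\Rightarrow$ (2) $\Rightarrow$ \WWKL, though a slicker route may be to prove \WWKL $\Leftrightarrow$ (1) directly and then (1) $\Leftrightarrow$ (2) by classical-style measure-theoretic manipulation that happens to be constructive (Vitali's theorem is, after all, morally the same statement about measures of complements).

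For \WWKL $\Rightarrow$ (1): given a positive uniformly continuous $f:[0,1]\to\RR$ and $\varepsilon>0$, I would form, for each rational threshold $\delta$, the decidable bar $B_\delta = \set{u \in \cS}{\fa{x \in I^{\nicefrac13}_u}{f(x) \geqslant \delta}}$ (using uniform continuity of $f$ to make membership decidable after a finite search, and closing under extension by passing to subintervals). Since $f$ is positive and $I^{\nicefrac13}_u$ shrinks, $B_\delta$ is a bar for some $\delta$; in fact one wants to run this for a sequence $\delta_k \to 0$ and argue that the complement ratios control $\mu(\set{x}{f(x)<\delta})$. \WWKL then says the ratio $\abs*{\set{u \notin B_\delta}{\abs{u}=n}}/2^n \to 0$, and one translates this into $\mu(\set{x}{f(x)<\delta})<\varepsilon$ for suitable $\delta$, using that the measure of a finite union of the (overlapping, but controllably so) intervals $I^{\nicefrac13}_u$ with $u\notin B_\delta$ bounds the sublevel set. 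Conversely, for (1) $\Rightarrow$ \WWKL, given a decidable bar $B$ closed under extensions, I would build a uniformly continuous positive $f$ on $[0,1]$ whose sublevel sets $\set{x}{f(x)<\delta}$ correspond to the intervals indexed by the complement of $B$ at the appropriate level — essentially using Lemma \ref{Lem:extendingfunctions} applied to a function on $\CS$ that is bounded below by $2^{-n}$ on $\overline{\alpha}n \in B$ and small on the complement — and read off the ratio bound from (1).

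For (1) $\Leftrightarrow$ (2), I would cite or adapt the argument from the two papers referenced just before the proposition (the ones "combined in the next proposition"); this is the part where I expect the routine but lengthy measure bookkeeping, converting a Vitali cover into a sublevel-set estimate and back. The key translation is: a countable Vitali cover of $[a,b]$ picks out, for each point, arbitrarily small intervals containing it; the "positive distance to a finite disjoint subfamily" is governed by the same kind of $\delta$-threshold on an auxiliary function measuring distance to the union, so (1) applied to that function gives the measure-of-complement bound in (2), and conversely (2) applied to a Vitali cover refining the sublevel sets of $f$ gives (1).

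The main obstacle will be the constructive measure theory: "$\mu(\set{x}{f(x)<\delta})$ is defined" is a nontrivial hypothesis/conclusion, and one must be careful that the sets involved are genuinely measurable (finite unions of intervals, or complements thereof, as the footnote hints) and that the limits and infima used actually exist constructively. The overlap of neighbouring $I^{p}_u$ for $p>\nicefrac12$ versus the disjointness for $p=\nicefrac13$ matters here: I would use $p=\nicefrac13$ so the Cantor intervals at a fixed level are disjoint and the measure of the union of those indexed by $\overline{B}$ is exactly the sum, making the ratio-to-measure translation clean. A secondary subtlety is ensuring all the bars constructed are genuinely decidable and closed under extensions, which requires the now-familiar "$\abs{w}\leqslant n-\abs{u}$"-style truncation trick seen in the $\Pi^0_1$-bar proposition above.
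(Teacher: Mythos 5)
Your overall architecture (pass back and forth between decidable bars and uniformly continuous functions via the $F^{p}$ machinery, then equate (1) and (2) by measure bookkeeping) is reasonable, but be aware that the paper gives no proof of this proposition at all — it simply cites \cite{tN08} for the first equivalence and \cite{hD12a} for the second — so there is no ``paper approach'' to compare yours against; any actual argument you supply is new content.

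There is, however, a concrete gap in your sketch: the choice $p=\nicefrac13$ is wrong for this proposition, and your stated reason for making that choice (``so the Cantor intervals at a fixed level are disjoint and the measure of the union ... is exactly the sum'') points in the wrong direction. With $p=\nicefrac13$ the intervals $I^{\nicefrac13}_{u}$, $u\in 2^{n}$, are indeed pairwise disjoint, but they cover only the Cantor middle-third set, so \[ \mu\Bigl(\textstyle\bigcup_{\substack{u\notin B \\ \abs{u}=n}} I^{\nicefrac13}_{u}\Bigr) = \abs*{\set{u\notin B}{\abs{u}=n}}\cdot 3^{-n} = \left(\tfrac{2}{3}\right)^{n}\cdot\frac{\abs*{\set{u\notin B}{\abs{u}=n}}}{2^{n}} , \] which tends to $0$ regardless of whether the bar is uniform — so the \WWKL ratio is invisible. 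The translation between tree ratios and Lebesgue measure on $[0,1]$ is clean only for $p=\nicefrac12$, where the $I^{\nicefrac12}_{u}$, $u\in 2^{n}$, tile $[0,1]$ (overlapping only at finitely many endpoints) and the measure of the union is \emph{exactly} the \WWKL ratio. But this in turn costs you the chapter's $p<\nicefrac12$ lemmas: Lemma \ref{Lem:Fp}.\ref{Lem:Var-Bish} and Lemma \ref{Lem:extendingfunctions} both require $0<p<\nicefrac12$ (linear interpolation on the complement of the Cantor set makes no sense when the Cantor set has full measure), so you cannot ``essentially use Lemma \ref{Lem:extendingfunctions}'' as you propose for the direction (1)$\implies$\WWKL. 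You need a self-contained construction of the uniformly continuous positive $f:[0,1]\to\RR$ from the bar (piecewise-linear over the dyadic tiling, say), not the general extension lemma. Finally, a smaller but real point: the set $B_{\delta}=\set{u}{\fa{x\in I_{u}}{f(x)\geqslant\delta}}$ is not decidable as written, even for uniformly continuous $f$, since the comparison $\inf_{I_{u}}f\geqslant\delta$ cannot be decided constructively; you need the usual flagging construction with a two-sided tolerance, and you then have to check that the resulting set is still closed under extensions — the ``$\abs{w}\leqslant n-\abs{u}$'' truncation trick you mention handles a different problem ($\Pi^{0}_{1}$ versus stable bars), not this one.
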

\begin{proof}
The first equivalence can be found in \cite{tN08}, the second one in \cite{hD12a}.
\end{proof}
It seems reasonable to conjecture that WWKL has more equivalents in measure theory, which at the moment has not received much attention in constructive analysis.

\begin{Pro}
\WWKL is equivalent to the following ``weaker'' version for every $k>0$
\begin{principle}[WWKL(k)]{\WWKL(k)} If $B \subset \cS$ is a decidable bar that is closed under extensions, then \[ \frac{ \abs*{\set{ u \notin B}{\abs{u} = n }}}{2^n} < k \]
eventually. 
\end{principle}
\end{Pro}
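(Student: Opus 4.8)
Both directions need to be checked; the forward one is a triviality and all the content is in a single amplification construction for the converse.

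\medskip
\noindent\emph{Forward direction.} For any $k$, the statement $\WWKL \implies \WWKL(k)$ is immediate from the definition of the limit in $\WWKL$: if the ratios tend to $0$ they are in particular eventually below $k$. So everything reduces to showing $\WWKL(k) \implies \WWKL$, and here the only interesting case is $0<k<1$ (for $k\geqslant 1$ the principle $\WWKL(k)$ is vacuous, exactly as in the discussion of $\WKLp{k}$ above). Fix such a $k$ and assume $\WWKL(k)$.

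\medskip
\noindent\emph{Two preliminary observations.} For a decidable bar $B$ closed under extensions write $c_n = \abs*{\set{u \notin B}{\abs{u}=n}}$ and $r_n = c_n/2^n$. First, since $B$ is closed under extensions, the set $\set{u}{u \notin B}$ is closed under restrictions, so every non-barred sequence of length $n+1$ restricts to a non-barred sequence of length $n$; as each length-$n$ sequence has exactly two one-step extensions, $c_{n+1}\leqslant 2c_n$, whence $r_{n+1}\leqslant r_n$. Thus $(r_n)$ is non-increasing, and to prove $r_n \to 0$ it suffices to produce, for each $\varepsilon>0$, a single $N$ with $r_N < \varepsilon$. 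Second, note that $\WWKL(k)$ itself carries an ``eventually'': for a decidable bar $C$ closed under extensions there is $N_0$ with (non-barred ratio of $C$ at level $n$) $<k$ for all $n\geqslant N_0$.

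\medskip
\noindent\emph{The amplification.} Given $B$ and a fixed $m\in\NN$, I would build a new bar $\widehat{B}$ by splitting binary sequences into $m$ interleaved subsequences — in the notation of the map $\varphi$ above, the $i$-th subsequence of $\alpha$ is $j\mapsto \alpha(mj+i-1)$ — and declaring $u\in\widehat{B}$ iff \emph{all $m$} of its subsequences lie in $B$. Then $\widehat{B}$ is again decidable and closed under extensions (extending a sequence can only extend each of its subsequences, and $B$ is closed under extensions), and, crucially, $\widehat{B}$ is a bar: for an infinite path $\alpha$, each subsequence $\alpha^{(i)}$ is barred by $B$ at some height $L_i$, and by closure under extensions all of them are barred by the common height $L=\max_i L_i$, so the prefix of $\alpha$ of length $mL$ lies in $\widehat{B}$. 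A direct count of the $m$-tuples of length-$\ell$ sequences shows that at level $n=m\ell$ the ratio of sequences \emph{not} in $\widehat{B}$ is $1-(1-r_\ell)^m$.

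\medskip
\noindent\emph{Finishing, and the main obstacle.} Given $\varepsilon>0$ (without loss of generality $\varepsilon<1$), choose $m$ with $(1-\varepsilon)^m < 1-k$ — possible since $1-\varepsilon<1$. Applying $\WWKL(k)$ to $\widehat{B}$ yields $N_0$ with $1-(1-r_\ell)^m < k$ whenever $m\ell\geqslant N_0$, i.e.\ $(1-r_\ell)^m > 1-k$, i.e.\ $r_\ell < 1-(1-k)^{1/m}$. Since $(1-k)^{1/m} > \bigl((1-\varepsilon)^m\bigr)^{1/m} = 1-\varepsilon$, this gives $r_\ell < \varepsilon$ for all $\ell\geqslant N_0/m$; taking $N=\lceil N_0/m\rceil$ we get $r_N<\varepsilon$, and by monotonicity $r_n<\varepsilon$ for all $n\geqslant N$. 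As $\varepsilon$ was arbitrary, $r_n\to 0$, which is $\WWKL$; and the argument uses no choice. The one real obstacle is getting the amplification to point in the right direction: one must define $\widehat{B}$ by ``all blocks barred'' (so that the non-barred ratio \emph{grows}, like $1-(1-r_\ell)^m\approx m\,r_\ell$, and hence crossing the fixed threshold $k$ forces $r_\ell$ to be tiny), not by the more natural-looking ``some block barred'', which would only shrink the ratio and make $\WWKL(k)$ on $\widehat B$ a weaker statement than on $B$; the accompanying subtlety is verifying that this $\widehat B$ is still genuinely a bar, which is precisely where closure of $B$ under extensions is used to synchronise the heights $L_i$.
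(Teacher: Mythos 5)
Your proof is correct, and it takes a genuinely different route from the paper's. The paper amplifies by \emph{iterated subtree refinement}: it applies $\WWKL(k)$ once to find a level $N$ where the non-barred ratio of $B$ is below $k$, then applies $\WWKL(k)$ again to each of the (at most $k2^N$) subtrees $B^{(i)}=\set{w}{u_i\ast w\in B}$ rooted at the non-barred nodes $u_i\in 2^N$, and combines the counts at level $N+M$ to get the ratio below $k^2$; iterating yields $\WWKL(k^{2^j})$ and hence the limit, since $k^{2^j}\to 0$. Your argument amplifies in \emph{parallel}: for a target $\varepsilon$ you build a single auxiliary bar $\widehat{B}$ by interleaving $m$ independent copies of $B$ (with $m$ chosen so $(1-\varepsilon)^m<1-k$), observe that at level $m\ell$ the non-barred ratio of $\widehat{B}$ is exactly $1-(1-r_\ell)^m$, and let one application of $\WWKL(k)$ to $\widehat{B}$ force $r_\ell<\varepsilon$ eventually. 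Both constructions hinge on the same facts — that closure under extensions makes $(r_n)$ non-increasing (the paper leaves this implicit; you spell it out) and is what lets you verify the auxiliary object is still a bar — but they buy different things. The paper's version stays entirely inside the subtree picture and never leaves the original tree, at the cost of an unbounded iteration to reach a given $\varepsilon$; yours needs the interleaving bijection $2^{m\ell}\cong(2^\ell)^m$ but reaches any $\varepsilon$ in a single step, and the explicit formula $1-(1-r_\ell)^m$ makes the quantitative dependence transparent. Your observation that the ``all blocks barred'' definition of $\widehat{B}$ is the one that pushes the ratio \emph{up} toward the threshold (whereas ``some block barred'' would go the wrong way) is exactly the crux, and you identify it correctly.
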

\begin{proof}
We will show that if $\WWKL(k)$ holds, then also $\WWKL(k^{2})$. This then shows that $\WWKL(k)$ is independent of the choice of $k$, by a suitable number of iterations. So let $B$ be a decidable bar that is closed under extensions. A first application of $\WWKL(k)$ yields a natural number $N$ such that 
 \[ \frac{\abs*{\set{ u \notin B }{\abs{u} = N }}}{2^n} < k \ .\]
 Let $u_{1}, \dots, u_{m} \in 2^n$ be all the finite sequences with $u_{1}, \dots, u_{m} \notin B$; so we have $m < k 2^n$. It is clear that for all of these $u_{1}, \dots, u_{m}$ the sets
 \[ B^{(i)} = \set{w \in \cS}{ u_{i}w \in B }\]
are again decidable bars that are closed under extensions. So applying $\WWKL(k)$ $m$ times we can find $N_{i}$ such that 
 \[ \frac{ \abs*{\set{ u \notin B^{(i)} }{\abs{u} = N_i}}}{2^{N_{i}}} < k \ .\]
Set $M = \max\menge{N_{1}, \dots, N_{m}}$. Since all $B^{(i)}$ are closed under extensions this implies that 
\[  \abs*{ \set{ u \notin B^{(i)} }{\abs{u} = M }}  < k2^{M} \ . \]
Now, if $u \in 2^{N+M}$ is such that $u \notin B$, then it must be of the form $u = u_{i}w$ with $w \notin B$ and $w \in 2^{M}$.  Hence 
 \begin{align*}
 \abs*{\set{u \in 2^{N+M}}{ u \notin B }}
 & =  \abs*{ \bigcup_{i=1}^{m} \set{ u_{i}w \in 2^{N+M} }{ w \notin B^{(i)}}} \\
 & =  \sum_{i=1}^{m}\abs*{\set{ w \in 2^{M} }{ w \notin B^{(i)}}} \\
  & < m k2^{M} <  k2^nk2^{M} = k^{2}2^{N+M} \ ,    \end{align*}
and we are done.
\end{proof}

As conjectured above \WWKL is likely to play an important role in measure theory. One could, of course, also consider the principles which are weakenings of \FANc and \FANP (and even \UCT) in the same way that \WWKL is a weakening of \FAND.
\begin{Qu}
What are the following principles equivalent to? \\ If $B \subset \cS$ is a $c$-bar (or $\Pi^0_1$-bar), then \[ \lim_{n \to \infty} \frac{\abs*{\set{ u \notin B}{ \abs*{u} = n } } }{2^n} = 0 \ . \]
eventually. 
\addtocounter{Que}{1}
\end{Qu}

\section{\texorpdfstring{\FAND}{FANÎ}}  \label{Sec:FAND}
As mentioned in the introduction \FAND deserves the prominence of being involved in the first ``proper'' equivalence of constructive reverse mathematics. It is also a fairly robust statement, as the next lemma shows.

\begin{Lem} \label{Lem:dec_bar_closedunderextensions}
If $B$ is decidable bar, then there exists a decidable bar $B^{\prime}$ that is closed under extensions, such that $B$ is uniform only if $B^{\prime}$ is.
\end{Lem}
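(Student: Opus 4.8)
The plan is to let $B'$ be the upward closure of $B$ under the extension relation, namely
\[ B' = \set{u \in \cS}{\ex{v, w \in \cS}{u = v \ast w \;\land\; v \in B}} , \]
i.e. the set of finite sequences having a prefix in $B$. First I would check the three required properties of $B'$. Decidability: deciding whether $u \in B'$ requires only testing the finitely many (at most $\abs{u}+1$) prefixes of $u$ for membership in $B$, and since $B$ is decidable a finite disjunction of decidable statements is again decidable. Closure under extensions: if $v$ is a prefix of $u$ with $v \in B$, then $v$ is also a prefix of $u \ast w$ for every $w \in \cS$, so $u \ast w \in B'$. The bar property: $B \subseteq B'$ (take $w$ empty in the defining condition), so given $\alpha \in \CS$, choosing $n$ with $\overline{\alpha}n \in B$ we get $\overline{\alpha}n \in B'$; hence $B'$ is a bar.

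It then remains to prove that $B$ uniform implies $B'$ uniform, which is exactly the ``only if'' asserted. So suppose $N \in \NN$ witnesses the uniformity of $B$, that is, $\fa{\alpha \in \CS}{\ex{n \leqslant N}{\overline{\alpha}n \in B}}$. Fix $\alpha \in \CS$ and pick such an $n \leqslant N$; since $\overline{\alpha}n$ is a prefix of $\overline{\alpha}N$ we have $\overline{\alpha}N \in B'$. As $\alpha$ was arbitrary and $B'$ is closed under extensions, the same $N$ witnesses the uniformity of $B'$.

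For completeness I would also record the converse direction, since that is what makes $B'$ a genuine stand-in for $B$ in arguments about the fan theorem: if $B'$ is uniform then, $B'$ being closed under extensions, there is $N$ with $\overline{\alpha}N \in B'$ for all $\alpha$; unwinding the definition of $B'$, every such $\overline{\alpha}N$ has a prefix $\overline{\alpha}m$ with $m \leqslant N$ lying in $B$, so the same $N$ witnesses the uniformity of $B$. The lemma is genuinely soft; there is no substantive obstacle, and the only points that need a little care are keeping the search defining $B'$ bounded (so that decidability is preserved constructively) and keeping the definition of ``uniform'' straight in the case where $B$ itself is not assumed closed under extensions.
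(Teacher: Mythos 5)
Your construction of $B'$ as the set of finite sequences with a prefix in $B$ is exactly the construction the paper uses, and your verification of decidability, closure under extensions, the bar property, and the transfer of uniformity is correct. The extra converse direction you record is a useful observation but not required by the statement.
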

\begin{proof}
If $B$ is a decidable bar, then it is easy to see that 
\[ B^{\prime} = \set{u \in \cS}{\ex{n \leqslant \abs{u}} {\overline{u}n} \in B} \]
is also a decidable bar that has the required properties.
\end{proof}
Another interesting result is the following lemma. Surprisingly it has, to our knowledge, not found many uses apart from \cite{dB87}*{Chapter 6.2}.

\begin{Lem} \label{countbar}
If $B$ is a countable bar, then there exists a decidable bar $B'$ which is uniform only if $B$ is.
\end{Lem}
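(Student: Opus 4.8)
The plan is to fix an enumeration of $B$ and replace it by the decidable bar consisting of those finite sequences $u$ that already have one of the \emph{first $\abs{u}$} enumerated elements of $B$ as a prefix. Since any bar is inhabited (it bars $\zero$), we may write $B = \set{b_m}{m \in \NN}$ for some $b : \NN \to \cS$, and define
\[ B^{\prime} = \set{u \in \cS}{\ex{m \leqslant \abs{u}}{b_m \text{ is a prefix of } u}} \ . \]
For fixed $u$, membership in $B^{\prime}$ is a finite disjunction over $m \leqslant \abs{u}$ of the decidable conditions ``$b_m$ is a prefix of $u$'' (check $\abs{b_m} \leqslant \abs{u}$ and compare entries), so $B^{\prime}$ is decidable; and it is plainly closed under extensions, which makes it fit the hypotheses of the fan theorems. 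The only delicate point in the definition is the bound $m \leqslant \abs{u}$: it is exactly what keeps $B^{\prime}$ decidable while still leaving it large enough to be a bar.

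Next I would verify that $B^{\prime}$ is indeed a bar. Given $\alpha \in \CS$, since $B$ bars $\alpha$ there is $n$ with $\overline{\alpha}n \in B$, say $\overline{\alpha}n = b_m$. Taking $N = \max \menge{n,m}$, the sequence $\overline{\alpha}N$ has length $N \geqslant m$ and has $b_m = \overline{\alpha}n$ as a prefix, so $\overline{\alpha}N \in B^{\prime}$. Hence $B^{\prime}$ bars $\alpha$; as $\alpha$ was arbitrary, $B^{\prime}$ is a bar. No choice principle is used here: we are proving the $\forall\exists$ statement ``$B^{\prime}$ is a bar'' and simply extract a witness for $B^{\prime}$ from a witness for $B$.

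Finally I would show that uniformity of $B^{\prime}$ transfers to $B$. Suppose $B^{\prime}$ is uniform; since $B^{\prime}$ is closed under extensions this yields $N \in \NN$ with $\overline{\alpha}N \in B^{\prime}$ for every $\alpha \in \CS$. Fix $\alpha$: then there is $m \leqslant N$ with $b_m$ a prefix of $\overline{\alpha}N$, whence $\abs{b_m} \leqslant N$ and $\overline{\alpha}\abs{b_m} = b_m \in B$. So every $\alpha$ is barred by $B$ at some height $k = \abs{b_m} \leqslant N$, i.e.\ $B$ is uniform, with the very same bound $N$. This establishes that $B^{\prime}$ is uniform only if $B$ is. I do not expect a genuine obstacle; the one thing to get right is the bookkeeping between the index bound $m \leqslant \abs{u}$ and the length $\abs{b_m}$ of the enumerated strings, which is precisely what makes both the ``$B^{\prime}$ is a bar'' step and the ``uniformity descends'' step go through cleanly and choice-free.
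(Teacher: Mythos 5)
Your proof is correct and takes essentially the same approach as the paper: replace the countable bar by the decidable set of finite sequences $u$ that extend one of the first $\abs{u}$ enumerated elements of $B$. The paper merely states the defining formula for $B'$ and defers the verification to a thesis reference; your write-up supplies exactly the missing checks (decidability, closure under extensions, bar, descent of uniformity). One thing worth flagging: as printed, the paper's formula reads $u \ast v = b_i$, which says ``$u$ is a prefix of $b_i$'' rather than ``$b_i$ is a prefix of $u$'' — under that reading $B'$ is neither closed under extensions nor a bar (e.g.\ for $B = \menge{(0),(1)}$ it misses $\one$), so this appears to be a typo and your $B'$, with the prefix relation the other way round, is the intended and correct one.
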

\begin{proof}
  Let $B =\menge{b_n}_{n \geqslant 1} $ a countable bar. Then it is easy to see that
  \[
  B^{\prime} = \set{u \in \cS}{\ex{ i \in \NN}{ \ex{ v \in 2^*}{ \left( i \leqslant \abs{u} \land u \ast v = b_{i} \right)}}}.
  \]
 has the desired properties.\footnote{The full proof can be found in \cite{hD08b}*{Lemma 4.1.1}.}
\end{proof}

\begin{Pro} \label{Pro:Fand-equivalents}
The following are equivalent to  \FAND
\begin{enumerate}
\item (\POS). \label{PR:POS} Every uniformly continuous, positive-valued function $f:[0,1] \to \RR^{+}$ has a positive infimum.
\item \label{fand-equiv-pos2} Every uniformly continuous, positive-valued function $f:\CS \to \RR^{+}$ has a positive infimum.
\item \label{fand-equiv-pos3} Two compact subsets $A,B$ of a metric space that are such that $d(a,b)>0$ for all $a\in A$ and $b \in B$ are a  positive distance apart.
\item \label{fand-equiv-pos4} The Heine-Borel theorem for compact metric spaces and countable coverings with open balls.
\item \label{fand-equiv-dini} Dini's theorem: If $(f_n)_{n \geqslant 1}:[0,1] \to \RR$ is an increasing sequence of uniformly continuous functions converging point-wise to a uniformly continuous $f:[0,1] \to \RR$, then the convergence is uniform.
\end{enumerate}
\end{Pro}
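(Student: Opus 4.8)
The plan is to establish the cycle of implications
\[
\text{\FAND} \iff \text{(\ref{fand-equiv-pos2})} \iff \text{(\ref{PR:POS})}, \qquad \text{\FAND} \iff \text{(\ref{fand-equiv-pos3})}, \qquad \text{\FAND} \iff \text{(\ref{fand-equiv-pos4})}, \qquad \text{\FAND} \iff \text{(\ref{fand-equiv-dini})},
\]
using the translation machinery between $\CS$ and $[0,1]$ from Section~\ref{Sec:linking-cantor-and-unitint} to move freely between the unit-interval formulations and the Cantor-space ones. The central tool is Lemma~\ref{Lem:extendingfunctions}: a point-wise continuous $f:\CS\to\RR$ extends to $\widetilde f:[0,1]\to\RR$ with $f=\widetilde f\circ F^{p}$ ($0<p<\tfrac12$), preserving uniform continuity, boundedness, positivity of the infimum, and attainment of the infimum. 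Since $F^{p}$ is surjective onto its range and $\mathcal C^{p}\cup(-\mathcal C^{p})$ is dense, $\inf_{[0,1]}\widetilde f=\inf_{\CS}f$, which gives $\text{(\ref{PR:POS})}\iff\text{(\ref{fand-equiv-pos2})}$ immediately (one direction is the extension, the other is restriction of $\widetilde f$ along $F^{p}$).

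Next I would prove $\text{\FAND}\iff\text{(\ref{fand-equiv-pos2})}$ directly on $\CS$, which is the historically classical Julian--Richman argument and the conceptual heart of the proposition. For $\text{\FAND}\Rightarrow\text{(\ref{fand-equiv-pos2})}$: given a uniformly continuous positive $f:\CS\to\RR^{+}$, for each $\alpha$ pick $n$ with $f(\beta)>0$ on the basic neighbourhood $\overline\alpha n$ (using uniform continuity to get a modulus and positivity of $f(\alpha)$); the set $B=\set{u\in\cS}{f\text{ is bounded below by a positive rational on the cylinder }[u]}$ — made decidable by comparing a rational lower bound obtained from the modulus against $0$, or rather by the standard trick of letting $B$ record ``some rational $q>0$ with $f>q$ on $[u]$ has been certified'' — is a decidable bar; \FAND makes it uniform, and the finitely many positive bounds on level-$N$ cylinders have a positive minimum. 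For the converse, given a decidable bar $B$ closed under extensions (Lemma~\ref{Lem:dec_bar_closedunderextensions}), define $f(\alpha)=\inf\set{2^{-n}}{\overline\alpha n\in B}$, a uniformly continuous positive-valued function whose infimum is positive iff $B$ is uniform.

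For $\text{(\ref{fand-equiv-pos3})}$ I would argue: \FAND $\Rightarrow$ (\ref{fand-equiv-pos3}) because $d(x,B)$ is uniformly continuous and positive-valued on the compact set $A$, so by (\ref{fand-equiv-pos2}) (transported via Proposition~\ref{Pro:compXunifsurj} to a surjection $\CS\to A$) it has positive infimum; (\ref{fand-equiv-pos3}) $\Rightarrow$ (\ref{fand-equiv-pos2}) by considering the graph $A=\set{(\alpha,0)}{\alpha\in\CS}$ and $B=\set{(\alpha,f(\alpha))}{\alpha\in\CS}$ in $\CS\times\RR$ — these are compact, pointwise a positive distance apart, and their distance is $\inf f$. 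For $\text{(\ref{fand-equiv-pos4})}$, the Heine--Borel statement for compact metric spaces and countable open-ball coverings is the standard contrapositive reformulation of the fan theorem: a countable cover with no finite subcover yields, via the tree of ``uncovered'' finite approximations, a decidable non-uniform bar, and conversely. For $\text{(\ref{fand-equiv-dini})}$ (Dini): \FAND $\Rightarrow$ Dini because $g_n=f-f_n\searrow 0$ pointwise with each $g_n$ uniformly continuous, so $B=\set{u}{g_n<\varepsilon\text{ on }[u]\text{ for some }n,\text{ certified}}$ is a decidable bar, uniform by \FAND, giving a uniform $n$; Dini $\Rightarrow$ \FAND (equivalently $\Rightarrow$ \POS) by the classical construction building from a decidable bar an increasing sequence of piecewise-linear functions whose non-uniform convergence encodes non-uniformity of the bar. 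The main obstacle I anticipate is the Dini direction and, more subtly, making the ``decidable bar'' genuinely decidable in the forward implications: one cannot in general decide $g_n(\alpha)<\varepsilon$ for a single real value, so one must work with rational thresholds and the modulus of uniform continuity to manufacture a decidable predicate that still bars $\CS$, exactly as in the classical \POS proof — I would cite \cite{wJ84} and \cite{dB87} for the technical details here rather than reproduce them.
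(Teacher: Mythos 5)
Your overall decomposition matches the paper's: derive the POS $\iff$ (\ref{fand-equiv-pos2}) equivalence from Lemma \ref{Lem:extendingfunctions} and Lemma \ref{Lem:Fp}, establish the core \FAND $\iff$ POS equivalence by the Julian--Richman bar construction (which the paper simply cites from \cite{wJ84} and \cite{dB87}), and defer the Heine--Borel and Dini items to the literature (\cite{hD08b}, \cite{jB06b}) exactly as the paper does. Your sketch of (\ref{fand-equiv-pos3}) $\Rightarrow$ (\ref{fand-equiv-pos2}) via the graph pair $A=\CS\times\{0\}$, $B=\{(\alpha,f(\alpha))\}$ is the same idea the paper uses (the paper does (\ref{fand-equiv-pos3}) $\Rightarrow$ (\ref{PR:POS}) with $[0,1]\times\{0\}$ and the graph of $f$).

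The one place you diverge, and where you should be careful, is \FAND (or (\ref{fand-equiv-pos2})) $\Rightarrow$ (\ref{fand-equiv-pos3}). You write that ``$d(x,B)$ is uniformly continuous and positive-valued on the compact set $A$,'' and then apply POS once on $A$. But the positivity of $a\mapsto d(a,B)$ is itself not free constructively: for a fixed $a$ the hypothesis only gives $d(a,b)>0$ for each individual $b\in B$, and concluding $\inf_{b\in B}d(a,b)>0$ is precisely an instance of POS applied to the compact set $B$. So your argument requires a hidden, per-point application of POS on $B$ before the advertised application on $A$; as written it reads as if ``positive-valued'' is given. This is valid after you make the double application explicit, but the paper's route is cleaner: it passes to $\CS$ via uniformly continuous surjections $g_1\colon\CS\to A$, $g_2\colon\CS\to B$ and considers $h(\alpha)=d(g_1(\alpha^e),g_2(\alpha^o))$, a single uniformly continuous, positive-valued function on $\CS$ whose positive infimum handles both sets simultaneously. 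You may want to adopt that interleaving trick, or at least flag the nested use of POS in your version.
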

\begin{proof} 
The equivalence of \FAND to \POS is, as already mentioned, the first result in \CRM and is proved in \cite{wJ84} and \cite{dB87}*{Section 6.2}. The equivalence of \ref{PR:POS} and \ref{fand-equiv-pos2} follows from Lemmas \ref{Lem:extendingfunctions} and \ref{Lem:Fp}.
The equivalence of \FAND to \ref{fand-equiv-pos4} is in \cite{hD08b}*{Section 4.2}. 

It is easy to see that \ref{fand-equiv-pos3} implies \ref{PR:POS}, by applying the latter to the graph of a uniformly continuous, positive-valued  function $f$ and the unit interval $[0,1] \times \menge{0}$ as subsets of the Euclidean plane. To see that \ref{fand-equiv-pos2} implies \ref{fand-equiv-pos3} let $A,B$ be subsets such that $d(a,b)>0$ for all $a \in A$ and $b \in B$. By \cite{Troelstra1988}*{Chapter 7 Corollary 4.4} there exists surjective, uniformly continuous functions $g_1:\CS \to A$ and $g_2:\CS \to B$. Then the function $h: \CS \to \RR$ defined by \[ h(\alpha) = d(g_1(\alpha^e),g_2(\alpha^o)) \ , \]
where,  $\alpha^e = \alpha_0 \alpha_2 \alpha_4 \dots$ and $\alpha^o = \alpha_1 \alpha_3 \alpha_5 \dots$. Then $h$ is easily seen to be continuous, and such that $h(\alpha)> 0$ for all $\alpha \in \CS$. Applying \ref{fand-equiv-pos2} implies that there exists $\varepsilon>0$ such that $h(\alpha)>\varepsilon$ for all $\alpha \in \CS$. Since $g_1$ and $g_2$ are surjective also $d(a,b)>\varepsilon$ for all $a \in A$ and $b \in B$.

The equivalence to Dini's theorem is proved in \cite{jB06b}.
\end{proof}

%Counter increased in Question 5
%Even though \UCT has not been introduced yet, this seems to be a good place to ask the following question.
%\begin{Qu}
%Is Dini's theorem \cite{jB06b} for point-wise continuous functions equivalent to \UCT?
%\end{Qu}

\begin{Pro}\label{Pro:FANDfixp} \FAND is equivalent to the following statement:
Consider a uniformly continuous $f:[0,1] \to \RR$ such that $0<f(x) <x$ for all $x \in (0,1)$. Now let $x_0 \in (0,1)$ be arbitrary and consider the sequence defined by $x_{n+1} = f(x_n) = f^n(x_0)$. The sequence $(x_n)_{n \geqslant 1}$ converges to $0$.\footnote{This is  Exercise K8---one of many in a very entertaining classical book on analysis \cite{tK04}.}
\end{Pro}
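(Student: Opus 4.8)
The natural strategy is to prove the two implications separately. For the easy direction, $\FAND \implies$ (the iteration statement), I would work directly from the hypotheses on $f$: given $\varepsilon > 0$, I want a uniform bound on how long the orbit can stay above $\varepsilon$. For the converse, (iteration statement) $\implies \FAND$, the plan is to encode a decidable bar into a suitable function $f$ on $[0,1]$ of the required shape, so that the orbit $(x_n)$ failing to converge to $0$ would contradict the bar being uniform — or rather, so that convergence of the orbit forces the bar to be uniform.

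\begin{proof}
First assume \FAND and let $f:[0,1]\to\RR$ be uniformly continuous with $0<f(x)<x$ for all $x\in(0,1)$; fix $x_0\in(0,1)$ and set $x_{n+1}=f(x_n)$. Since $0<f(x)<x$ on $(0,1)$, the sequence is strictly decreasing and bounded below by $0$, so it suffices to show $\inf_n x_n = 0$, equivalently that for every $\varepsilon\in(0,x_0)$ there is $N$ with $x_N<\varepsilon$. Consider the function $g:[\varepsilon,x_0]\to\RR$, $g(x)=x-f(x)$. This is uniformly continuous and positive-valued on the compact interval $[\varepsilon,x_0]$ (using $f(x)<x$), so by Proposition \ref{Pro:Fand-equivalents}.\ref{PR:POS} (applied after an affine rescaling of $[\varepsilon,x_0]$ to $[0,1]$) there is $\delta>0$ with $x-f(x)>\delta$ for all $x\in[\varepsilon,x_0]$. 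Hence as long as $x_n\geqslant\varepsilon$ we have $x_{n+1}\leqslant x_n-\delta$, so after at most $N=\lceil (x_0-\varepsilon)/\delta\rceil$ steps we must have $x_N<\varepsilon$ (here we use that once a term drops below $\varepsilon$ all later terms stay below it). As $\varepsilon$ was arbitrary, $x_n\to 0$.

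Conversely, assume the iteration statement; by Lemma \ref{Lem:dec_bar_closedunderextensions} it suffices to show that every decidable bar $B$ closed under extensions is uniform. The idea is to build a uniformly continuous $f$ of the prescribed shape whose orbit starting at some fixed $x_0$ ``detects'' the level at which $B$ bars. Concretely, I would use the embedding $F^{p}:\CS\to[0,1]$ with $p<\tfrac12$ from Lemma \ref{Lem:Fp}: thinking of the decreasing orbit $x_0>x_1>\cdots$ as marching leftward through nested intervals $I^{p}_{u}$, I would arrange $f$ so that on the interval $I^{p}_{u}$ (for $u\notin B$) it contracts into $I^{p}_{u'}$ where $u'$ is the lexicographic predecessor-type refinement, but so that $f$ becomes ``flat near a positive value'' — i.e.\ has a fixed point bounded away from $0$ — exactly when one enters the part of $[0,1]$ coded by the sequences in $B$. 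Then: if $B$ is \emph{not} uniform, the orbit can avoid this flat region indefinitely and still converges to $0$ by hypothesis, which forces (via the geometry of the intervals and uniform continuity of $f$) that arbitrarily deep sequences outside $B$ occur, contradicting nothing by itself — so the cleaner route is the contrapositive packaging: construct $f$ so that convergence of $(x_n)$ to $0$ directly yields a uniform bound $N$ with $\overline{\alpha}N\in B$ for all $\alpha$.

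\textbf{Main obstacle.} The delicate point is designing $f$ so that it genuinely has the shape $0<f(x)<x$ on all of $(0,1)$ \emph{and} is uniformly continuous \emph{and} its single orbit simultaneously tests the bar along all branches — a single real orbit only walks down one path of the tree, so one cannot naively encode a branching bar. The fix is to route the orbit through $F^{p}$ so that it passes through (a dense set of) \emph{all} the intervals $I^{p}_{u}$ in order of decreasing left endpoint, using the self-similar structure of the Cantor-type construction; alternatively, one reduces first to a ``linear'' bar (a bar on $\NN$, i.e.\ the content of \LPO-style reasoning is not available, but \FAND for decidable bars on $\CS$ reduces, via compactness packaging, to statements about finitely branching structure that can be flattened). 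Pinning down this encoding rigorously — ensuring the constructed $f$ is uniformly continuous at $0$, where infinitely many ``interpolation pieces'' accumulate, and verifying $0<f(x)<x$ throughout — is where essentially all the work lies; the limiting argument that convergence of $(x_n)$ gives the uniform height $N$ is then a short compactness-free computation using part (1) of Lemma \ref{Lem:Fp}.
\end{proof}
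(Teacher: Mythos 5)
Your forward direction is correct and matches the paper essentially verbatim: apply \POS (Proposition \ref{Pro:Fand-equivalents}.\ref{PR:POS}) to $g(x)=x-f(x)$ on $[\varepsilon,x_1]$ to get a uniform decrease $\delta$, then count steps.

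The converse direction, however, is a genuine gap: you sketch an intention but explicitly stop before constructing $f$, and the sketch is pointed in a slightly wrong direction. Two specific issues. First, you reach for $F^{p}$ with $p<\tfrac12$, but the gaps in the Cantor set then need separate handling and give you no benefit; the paper uses $F^{\nicefrac12}$, the binary-expansion map, precisely because the intervals $I^{\nicefrac12}_{u}$ for $u$ of a fixed length abut edge to edge, so the relevant points tile $[0,1]$ with no leftover pieces. Second, and more importantly, you worry that ``a single real orbit only walks down one path of the tree,'' and try to cure this by having the orbit somehow traverse all branches. That is the wrong picture. The correct observation is that a decidable bar closed under extensions determines a set of \emph{leaves} — those $u\in B$ with $\overline{u}(\abs{u}-1)\notin B$ — and these leaves, read in decreasing lexicographic order, cut $[0,1]$ (via $F^{\nicefrac12}$) into a discrete chain from $1$ down to $0$. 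The orbit does not need to explore branches at all: it just walks this chain leftward one leaf per step. Concretely, enumerate the leaves $u_n$ with $u_1=1^m$ the all-ones leaf, and for each $u_n\neq 0^{\ell}$ let $u_{L(n)}$ be its immediate left neighbour ($u_n = w\ast 1\ast 0^k$, $u_{L(n)}=w\ast 0\ast 1^\ell$). Define $f$ on $(0,1)$ by linear interpolation through the points $\left(F(u_n),\,F(u_{L(n)})\right)$, so that $f(F(u_n))=F(u_{L(n)})$. Then $0<f(x)<x$ on $(0,1)$ and $f$ is uniformly continuous. Starting at $x_0=F(u_1)$, the orbit has $f^{n}(x_0)=F(u_{L^{n}(1)})$; convergence to $0$ gives an $N$ with $f^{N}(x_0)<2^{-(\abs{u_k}+1)}$ where $u_k=0^{\abs{u_k}}$, and since the only leaf with $F$-value below $2^{-(\abs{u_k}+1)}$ is $u_k$ itself, the visited leaves $u_1,u_{L(1)},\dots,u_{L^{N}(1)}=u_k$ list every leaf and hence cover $\CS$. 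Their maximum length is the required uniform bound. So the ``main obstacle'' you flagged is real, but it is resolved not by a clever traversal of the tree but by the observation that the leaves of a bar closed under extensions already form a linear, rather than a branching, object.
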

\begin{proof}
Clearly $(x_n)_{n \geqslant 1}$ is strictly decreasing. Now assume \FAND and let $\epsilon > 0$ be arbitrary. We want to show that there is $n \in \NN$ such that $x_n < \varepsilon$. The function $g:[\varepsilon, x_1] \to \RR$ defined by \[g(x) = x-f(x) \] is uniformly continuous. It is also such that $g(x) > 0$ for all $x \in [\varepsilon, x_1]$. So by \POS, which as we have seen above is equivalent to \FAND, there exists $\delta >0$ such that $g(x)>\delta$ for all $x \in [\varepsilon, x_1]$. Now choose $n$ such that $1-n\delta < \varepsilon$. Then clearly $x_n < \varepsilon$ and we are done.

Conversely consider a decidable bar $B \subset \CS$. Let $u_n$ be an enumeration of all elements of $B$ such that $u_n[:-1] = \overline{u_n}(\abs*{u_n}-1) \notin B$. Notice that either $u_n = 0 \ast \dots \ast 0$ or we can find its immediate ``left'' neighbour $u_{L(n)}$ that is there exists $w$ such that $u_n=w\ast 1 \ast 0^k$ and $u_{L(n)} = w \ast 0 \ast 1^\ell$ for some $k,\ell \geqslant 0$. We may assume that $u_1 = 1 \ast \dots \ast 1$. Now define a function $f(0,1) \to \RR$ by interpolating linearly between the points \[ (F(u_n),F(u_{L(n)}) ) \ , \]
where $F = F^\frac{1}{2}$ as defined in Section \ref{Sec:linking-cantor-and-unitint}, which simply maps every binary sequence onto the real with the matching binary expansion. Then $f$ satisfies the condition above. Thus we can consider the sequence $x_n$, again defined as above, where $x_0 = F(u_1) < 1$. Let $k$ be such that $u_k = 0 \ast \dots \ast 0$. If $x_n$ converges to $0$, then there must be $N$ such that $f^N(x_0) < 2^{- (\abs*{u_k} +1) }$. It is clear that the sequences $u_1, u_{L(1)} \dots, u_{L^N(1)} = u_k $ describe a cover of $\CS$. In other words their maximum length is a uniform bound for $B$.
\end{proof}

\begin{Pro}
\FAND is equivalent to the statement that every tail-located sequence in $\RR$ that is eventually bounded away from every point in $[0,1]$ is eventually bounded away from the entire set. 
\end{Pro}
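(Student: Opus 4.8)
The statement in question is the anti-Specker property $\AS{[0,1]}{\RR}$ with the sequences restricted to those that are tail-located; the point of that restriction is precisely to keep every bar that appears \emph{decidable} (parts~(3) of Lemmas~\ref{Lem:seq-to-bar} and~\ref{Lem:bar-to-seq}), so that only \FAND, and not the stronger \FANc, is ever needed. The plan is therefore to prove the two implications by feeding the translation lemmas \ref{Lem:seq-to-bar} (sequences $\to$ bars) and \ref{Lem:bar-to-seq} (bars $\to$ sequences) the appropriate hypotheses.

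For the direction \emph{statement $\Rightarrow$ \FAND}, the idea is: start from a decidable bar $B \subset \cS$, and first pass to $\set{u \in \cS}{\ex{n \leqslant \abs{u}}{\overline{u}n \in B}}$ as in Lemma~\ref{Lem:dec_bar_closedunderextensions} (the reverse implication, that uniformity of the new bar returns uniformity of $B$, is immediate from its definition), so that we may assume $B$ is closed under extensions, hence in particular a $c$-bar. Lemma~\ref{Lem:bar-to-seq} then yields a sequence $(x_n)_{n \geqslant 1}$ of reals eventually bounded away from every point of $[0,1]$; by part~(3) it is tail-located, and by part~(1), if it is eventually bounded away from all of $[0,1]$ then $B$ is uniform. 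Applying the assumed statement to $(x_n)_{n \geqslant 1}$ supplies exactly that hypothesis, so $B$ — and therefore the original bar — is uniform.

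For \emph{\FAND $\Rightarrow$ statement}, the plan is: take $(x_n)_{n \geqslant 1}$ tail-located and eventually bounded away from every point of $[0,1]$; in particular it is eventually bounded away from $0$ and from $1$, so fix $n_0$ and $\varepsilon_0 > 0$ with $\abs{x_n} > \varepsilon_0$ and $\abs{x_n - 1} > \varepsilon_0$ for all $n \geqslant n_0$. Since a real apart from $0$ satisfies $x_n < 0 \lor x_n > 0$ (and likewise at $1$), for each $n \geqslant n_0$ we can decide whether $x_n \in [0,1]$ or $d(x_n,[0,1]) > \varepsilon_0$. Feed $(x_n)_{n \geqslant 1}$ to Lemma~\ref{Lem:seq-to-bar}: it produces a $c$-bar $B \subset \cS$, decidable by part~(3) because $(x_n)_{n \geqslant 1}$ is tail-located, which is closed under extensions and whose witnessing decidable set $D$ (with $\abs{v_n} = n$) contains $v_n$ whenever $x_n \in [0,1]$. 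Now apply \FAND: $B$ is uniform, so there is $N$ such that every $\alpha \in \CS$ has an initial segment of length $\leqslant N$ in $B$, whence, by closure under extensions, every string of length $N$ lies in $B$ and so has no extension in $D$. Consequently, for $j \geqslant \max\menge{N,n_0}$ we cannot have $x_j \in [0,1]$, since that would put $v_j \in D$ with $\abs{v_j} = j \geqslant N$ while the length-$N$ initial segment of $v_j$ is in $B$. By the decision above, $d(x_j,[0,1]) > \varepsilon_0$ for all $j \geqslant \max\menge{N,n_0}$; that is, $(x_n)_{n \geqslant 1}$ is eventually bounded away from $[0,1]$.

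The delicate point, which I expect to be the crux, is that very last move: turning ``$B$ is uniform'' back into eventual bounded-awayness from all of $[0,1]$. The black-box form of Lemma~\ref{Lem:seq-to-bar}.(1) runs only the opposite way, and the contrapositive of part~(2) gives merely $\neg(x_n \in [0,1]$ infinitely often$)$, which constructively is too weak to conclude with. The fix, as sketched above, is to unwind the construction of $B$ and to exploit the fact that the decision procedure ``$x_n \in [0,1]$?'' on the tail automatically carries the uniform gap $\varepsilon_0$ inherited from bounded-awayness at $0$ and $1$; this is what upgrades ``eventually $x_j \notin [0,1]$'' to ``eventually $d(x_j,[0,1]) > \varepsilon_0$'' for free, with no second appeal to \POS.
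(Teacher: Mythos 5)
Your proof takes the same route as the paper, which simply declares the result an ``immediate consequence of Lemmas \ref{Lem:seq-to-bar} and \ref{Lem:bar-to-seq}.'' What you have done, however, is to actually check that this black-box invocation works, and in doing so you have put your finger on a real discrepancy: part (1) of Lemma \ref{Lem:seq-to-bar} is \emph{stated} in the direction ``$(x_n)$ eventually bounded away from $[0,1]$ $\Rightarrow$ $B$ uniform,'' which is exactly the direction that is \emph{useless} for $\FAND \Rightarrow$ statement. If you look at the \emph{proof} of that item in the paper, though, it in fact assumes $B$ uniform and derives ``$x_n \notin [0,1]$ for all $n \geqslant N$''---the opposite of what the item claims, and exactly what is needed. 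So the lemma's statement appears to be a typo, and the lemma's proof supplies the correct half. Your second observation is also right and fills a gap the paper leaves silent: ``$x_n \notin [0,1]$ for $n \geqslant N$'' is weaker than ``eventually bounded away from $[0,1]$,'' and the upgrade genuinely requires appealing to the hypothesis that $(x_n)$ is eventually bounded away from the two endpoints $0$ and $1$ with some uniform $\varepsilon_0$, after which the case-split on $|x_n| > \varepsilon_0$ and $|x_n - 1| > \varepsilon_0$ forces $d(x_n,[0,1]) > \varepsilon_0$ whenever $x_n \notin [0,1]$. The reverse direction you give---pass to the extension-closed hull via Lemma \ref{Lem:dec_bar_closedunderextensions} (where, as you note, uniformity transfers back for free since any prefix in $B$ of a prefix of $\alpha$ is shorter), note that a decidable extension-closed bar is a $c$-bar with witness $C' = B$, and then feed parts (1), (3) of Lemma \ref{Lem:bar-to-seq} into the hypothesised statement---is exactly as the black-box reading intends and is unproblematic. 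In short: correct, same approach, but you have caught and repaired a subtlety the paper's one-liner hides.
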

\begin{proof}
Immediate consequence of Lemmas \ref{Lem:seq-to-bar} and \ref{Lem:bar-to-seq}.
\end{proof}

Next, we extend the results of Berger in \cite{jB05b}. First we need a lemma, whose proof idea is based on the proof of the main result in the paper mentioned.
\begin{Lem} \label{Lem:Fanandunifcont}
For every $f:\CS \to \RR$ that has a continuous (functional) modulus of continuity and for every $e \in \NN$, there exists a decidable bar $B$ that is uniform only if 
there exists $N \in \NN$ such that 
\[ \overline{\alpha}N = \overline{\beta}N \implies \abs*{ f(\alpha) - f(\beta)} < 2^{-e} \ . \]
\end{Lem}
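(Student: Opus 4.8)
The plan is to build the bar $B$ directly from a functional modulus of continuity $\omega$ for $f$, mimicking Berger's argument from \cite{jB05b}. Recall that a continuous functional modulus of continuity assigns to each $\alpha \in \CS$ a natural number $\omega(\alpha)$ such that $\overline{\alpha}\,\omega(\alpha) = \overline{\beta}\,\omega(\alpha)$ implies $\abs*{f(\alpha)-f(\beta)} < 2^{-e}$, and $\omega$ itself is continuous, so it has its own modulus: for each $\alpha$ there is $m(\alpha) \geqslant \omega(\alpha)$ with $\overline{\alpha}\,m(\alpha) = \overline{\beta}\,m(\alpha) \implies \omega(\alpha) = \omega(\beta)$. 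The natural candidate is
\[ B = \set{u \in \cS}{\fa{\alpha \in \CS}{\overline{\alpha}\abs{u} = u \implies m(\alpha) \leqslant \abs{u}}} \ , \]
or more honestly, since we want $B$ \emph{decidable}, we should use a functional modulus on the finite level: first note that $u \mapsto \omega(\alpha)$ and $u \mapsto m(\alpha)$ factor through long enough prefixes, so $m$ induces a partial function on $\cS$, and we set $u \in B$ iff $u$ is long enough to ``see'' the value of $m$ on its cylinder and that value is $\leqslant \abs{u}$. This is decidable because it only involves checking the value of the continuous functional $m$ on the clopen set determined by $u$, which by continuity depends on finitely much data. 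First I would make precise how continuity of $\omega$ (and hence the existence of $m$) lets one extract this decidable predicate — this is essentially the content of Berger's ``$\omega$ is given by a neighbourhood function'' observation.

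Next I would check that $B$ is a bar: given $\alpha \in \CS$, take $n = m(\alpha)$; then $\overline{\alpha}n \in B$ because any $\beta$ extending $\overline{\alpha}n$ agrees with $\alpha$ up to $m(\alpha)$, so $m(\beta) = m(\alpha) = n \leqslant n = \abs{\overline{\alpha}n}$. (One has to be slightly careful that the finite-level encoding of $m$ has stabilised by height $m(\alpha)$, which is why the definition should really read ``$u$ is long enough that $m$ is constant $= k$ on the cylinder of $u$, and $k \leqslant \abs{u}$''; monotonicity tricks as in Lemma \ref{countbar} or Lemma \ref{Lem:dec_bar_closedunderextensions} can be used to tidy this.) Then I would show the key implication: if $B$ is uniform, with bound $N$, then $N$ works as the desired modulus. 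Indeed if $\overline{\alpha}N = \overline{\beta}N$, then since $B$ is uniform there is $n \leqslant N$ with $\overline{\alpha}n \in B$; by definition of $B$ this forces $m(\alpha) \leqslant n \leqslant N$, and since $\overline{\alpha}\,\omega(\alpha) = \overline{\beta}\,\omega(\alpha)$ (as $\omega(\alpha) \leqslant m(\alpha) \leqslant N$ and the first $N$ bits agree), we conclude $\abs*{f(\alpha)-f(\beta)} < 2^{-e}$.

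The main obstacle, and the place where Berger's original argument has to be adapted with care, is the decidability of $B$: a priori ``$m(\alpha) \leqslant \abs{u}$ for all $\alpha$ through $u$'' is a $\Pi^0_1$-condition in disguise, not obviously decidable. The resolution is that $m$ (being a genuine continuous functional, i.e.\ given by a well-founded neighbourhood function / associate) is \emph{locally constant}, so on the cylinder $[u]$ one can actually compute whether $m$ has already committed to a value and what that value is by inspecting the finitely-branching decision tree of the associate restricted to $[u]$; this is a bounded search and hence decidable. I would spell this out by working with the associate $\hat m: \cS \to \NN$ of $m$ (with $\hat m(u) = 0$ meaning ``not yet decided'' and $\hat m(u) = k+1$ meaning ``value $k$''), and then $u \in B$ iff $\ex{v \preceq u}{\hat m(v) = k+1 \land k \leqslant \abs{u}}$, which is a decidable predicate of $u$. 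The rest is routine bookkeeping, and one can additionally invoke Lemma \ref{Lem:dec_bar_closedunderextensions} to arrange that $B$ is closed under extensions if that is convenient downstream.
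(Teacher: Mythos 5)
You correctly identify the central difficulty — that the "natural" bar $\set{u \in \cS}{\fa{\alpha \in \CS}{\overline{\alpha}\abs{u} = u \implies m(\alpha) \leqslant \abs{u}}}$ is a $\Pi^0_1$-condition and not obviously decidable — but the fix you propose opens a genuine gap. You want to replace $m$ by an associate $\hat{m}: \cS \to \NN$ and read off its values on finite prefixes. The hypothesis, however, only provides a \emph{point-wise continuous function} $\mu : \CS \times \NN \to \NN$ as modulus; nothing supplies a neighbourhood function or associate for it, let alone for a second-level modulus $m$ of the modulus. Extracting an associate from a merely continuous functional is precisely what continuous choice would buy, and assuming that collapses the fan hierarchy (Section~\ref{Sec:Fan_collapse}) and trivialises the lemma. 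So "inspect the finitely-branching decision tree of the associate" is not available from the stated hypotheses, and "a bounded search" does not go through.

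The observation that removes both the decidability problem and the need for any second-level modulus is: do not quantify over the cylinder at all, just evaluate the modulus at the canonical point $u \ast \zero$. Set
\[ B = \set{u \in \cS}{\mu(u \ast \zero, e) \leqslant \abs{u}} \ . \]
This is decidable because $\mu(u \ast \zero, e)$ is a definite natural number and $\leqslant$ on $\NN$ is decidable; no associate is involved. To see $B$ is a bar one uses only the ordinary (point-wise) continuity of $\mu$: given $\alpha$, find $M$ such that $\mu(\cdot,e)$ is constant on the cylinder determined by $\overline{\alpha}M$, take $M^{\prime} = \max\menge{M, \mu(\alpha,e)}$, and observe $\mu(\overline{\alpha}M^{\prime} \ast \zero, e) = \mu(\alpha,e) \leqslant M^{\prime}$. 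The extraction of a uniform modulus from a uniform bound for $B$ then runs essentially as you sketched. In short, the shape of your argument is right, but the missing ingredient is the "evaluate at $u \ast \zero$" trick; it both fixes decidability outright and makes the second-level modulus $m$ and the associate machinery unnecessary.
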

\begin{proof}
Say $f:\CS \to \RR$ has a continuous modulus that is there exists a continuous function $\mu:\CS \times \NN \to \NN$ such that 
\[ \fa{\alpha, \beta \in \CS}{\fa{e > 0}{ \left( \overline{\alpha}\mu(\alpha,e) =  \overline{\beta}\mu(\alpha,e) \implies \abs*{ f(\alpha) - f(\beta)}  < 2^{-e}\right)}} \ . \]
Now let $e \in \NN$ be arbitrary and define 
\[ B = \set{ u \in \cS}{ \mu(u \ast \zero ,e  ) \leqslant \abs{u}   }  \ .\]
Clearly $B$ is decidable. It is also a bar: for let $\alpha \in \CS$ be arbitrary. Since $\mu$ is continuous itself there exists $M$ such that $\mu(\alpha,e) =\mu(\beta,e) $ for all $\beta \in \CS$ with $ \overline{\alpha}M = \overline{\beta}M$. For  $M^{\prime} = \max \menge{M,\mu(\alpha,e)}$ we must have 
\[ \mu(\overline{\alpha}M^{\prime}  \ast \zero ,e) = \mu(\alpha,e) \leqslant M^{\prime} \ , \] and therefore $\overline{\alpha}M^{\prime} \in B$. 

Next, assume $B$ is uniform. So there exists $N \in \NN$ such that 
\[ \fa{\alpha \in \CS}{ \ex{n \leqslant N}{\overline{\alpha}n \in B} } \ .\]
So for any $\alpha \in \CS$ there is $n \leqslant N$ such that $\mu(\overline{\alpha}n\ast \zero,e ) \leqslant n \leqslant N$, and hence for all $\beta \in \CS$ with $\overline{\alpha}N=\overline{\beta}N$ we have $\abs*{f(\alpha) - f(\beta)} < 2^{-e}$.
\end{proof}

\begin{Pro} \label{FAND-equiv} The following are equivalent to \FAND
\begin{enumerate}
\item \label{FAND-equiv-cont1} Every continuous function $f:X \to Y$, with a continuous modulus of continuity is uniformly continuous,  where $X$ is a compact and $Y$ an arbitrary metric space.
\item  \label{FAND-equiv-cont2} Every continuous function $f:\CS \to \RR$ with a continuous modulus of continuity is uniformly continuous.
\item  \label{FAND-equiv-cont3} Every continuous function $f:\CS \to \NN$ with a continuous modulus of continuity is uniformly continuous.
\end{enumerate}
\end{Pro}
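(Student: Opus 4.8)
The plan is to prove the cycle $\FAND \implies \ref{FAND-equiv-cont1} \implies \ref{FAND-equiv-cont2} \implies \ref{FAND-equiv-cont3} \implies \FAND$. The first implication is essentially Lemma \ref{Lem:Fanandunifcont} combined with a routine argument: given a compact metric space $X$, use Proposition \ref{Pro:compXunifsurj} to get a uniformly surjective, uniformly continuous $g:\CS \to X$; then $f \circ g : \CS \to Y$ inherits a continuous modulus of continuity from $f$ and $g$ (composing the two moduli, using the uniform continuity of $g$ to absorb the $\CS$-side). For each $e \in \NN$, Lemma \ref{Lem:Fanandunifcont} produces a decidable bar $B_e$ whose uniformity yields an $N$ witnessing the $2^{-e}$-modulus condition for $f \circ g$ on $\CS$; by \FAND each $B_e$ is uniform, so $f \circ g$ is uniformly continuous on $\CS$, and uniform surjectivity of $g$ transports this back to uniform continuity of $f$ on $X$.

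The implications $\ref{FAND-equiv-cont1} \implies \ref{FAND-equiv-cont2} \implies \ref{FAND-equiv-cont3}$ are trivial specialisations: $\CS$ is compact, and $\NN$ with the discrete (or any standard) metric is a metric space, so a function with values in $\NN$ is a special case of one with values in an arbitrary metric space. (One should note that a continuous $f : \CS \to \NN$ with a continuous modulus being uniformly continuous simply means it is eventually constant on cylinders of a fixed height, which is the natural reading.)

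The interesting direction is $\ref{FAND-equiv-cont3} \implies \FAND$. Here I would start from a decidable bar $B \subset \cS$, and (by Lemma \ref{Lem:dec_bar_closedunderextensions}) assume without loss of generality that $B$ is closed under extensions. Define $f : \CS \to \NN$ by sending $\alpha$ to the least $n$ such that $\overline{\alpha}n \in B$; this is well-defined since $B$ is a bar, and it is continuous because once $\overline{\alpha}n \in B$ the value depends only on $\overline{\alpha}n$. The key point is that $f$ has a \emph{continuous} modulus of continuity: the function $\mu(\alpha,e)$ can be taken to be $f(\alpha)$ itself (independent of $e$), and one checks that $\mu$ is continuous by the same argument — it stabilises as soon as a prefix lands in $B$. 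Applying \ref{FAND-equiv-cont3}, $f$ is uniformly continuous, so there is $N$ with $\overline{\alpha}N = \overline{\beta}N \implies f(\alpha) = f(\beta)$ (taking $e$ large enough that $2^{-e} < 1$ forces equality of the natural-number values). In particular $f(\alpha) \leqslant N$ for all $\alpha$: given any $\alpha$, the constant-after-$N$ modification $\overline{\alpha}N \ast \zero$ agrees with $\alpha$ up to $N$, so $f(\alpha) = f(\overline{\alpha}N \ast \zero) \leqslant N$ would need a separate small argument — more cleanly, uniform continuity gives a single bound $N$ such that $f$ restricted to any cylinder of height $N$ is constant, and since $f \leqslant$ height-of-first-prefix-in-$B$, iterating over the finitely many cylinders of height $N$ bounds $f$ by some $N'$; then $\overline{\alpha}N' \in B$ for every $\alpha$, i.e.\ $B$ is uniform.

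The main obstacle I anticipate is making the "continuous modulus of continuity" hypothesis precise and checking that the $f$ built in the last step genuinely satisfies it — in constructive analysis "continuous modulus" is a functional, and one must verify that $\mu(\alpha,e) := f(\alpha)$ is continuous as a map $\CS \times \NN \to \NN$, which boils down to the observation that $f$ is locally constant in the appropriate sense. A secondary subtlety is ensuring that the reductions $\ref{FAND-equiv-cont1} \implies \ref{FAND-equiv-cont2} \implies \ref{FAND-equiv-cont3}$ really are immediate given the exact formulation of "continuous modulus of continuity" used in Lemma \ref{Lem:Fanandunifcont}; once that formulation is fixed, the specialisations are purely a matter of unwinding definitions.
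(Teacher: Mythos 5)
Your proof is correct and follows the same broad strategy as the paper's: use Lemma \ref{Lem:Fanandunifcont} to get \FAND $\Rightarrow$ uniform continuity on $\CS$, use a uniformly continuous, uniformly surjective $g:\CS \to X$ to move between $\CS$ and a general compact $X$, and for $\ref{FAND-equiv-cont3} \implies \FAND$ use $f(\alpha) = \min\set{n}{\overline{\alpha}n \in B}$ (which you have in effect rediscovered — the paper credits Berger \cite{jB05b} for exactly this function and the observation that it is its own continuous modulus).

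The one place where your route genuinely diverges is the passage from $\CS$/$\RR$-valued statements to the general case. The paper proves $\ref{FAND-equiv-cont2} \implies \ref{FAND-equiv-cont1}$ by a characteristic device it uses repeatedly: given $f : X \to Y$, it defines $h : \CS \to \RR$ by $h(\alpha) = d\bigl(f(g(\alpha^{e})), f(g(\alpha^{o}))\bigr)$ (interleaving even and odd subsequences), constructs a continuous modulus $\eta$ for $h$ from $\mu$ and a modulus for $g$, and then applies $\ref{FAND-equiv-cont2}$ literally to the real-valued $h$, recovering uniform continuity of $f$ from uniform continuity of $h$ together with the uniform surjectivity of $g$. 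You instead apply Lemma \ref{Lem:Fanandunifcont} directly to $f \circ g : \CS \to Y$. This is fine — the lemma's proof never inspects the codomain, only the modulus $\mu$, so it generalizes verbatim from $\RR$ to an arbitrary metric space — but the lemma as stated in the paper is only for $\CS \to \RR$, so you are tacitly appealing to a (harmless) generalization that would need to be stated explicitly. The paper's $h$-trick avoids restating the lemma, at the cost of a somewhat more involved interleaving construction; your composition avoids the interleaving at the cost of a slightly more general lemma. Both buy the same thing. Your final argument for $\ref{FAND-equiv-cont3} \implies \FAND$ (take the max of $f$ over the finitely many cylinders of height $N$ and use closure under extensions) is the right cleanup of the initial slip you flagged yourself, so no issue there.
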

\begin{proof}
It is clear that \ref{FAND-equiv-cont2} implies \ref{FAND-equiv-cont3} and that  \ref{FAND-equiv-cont1} implies  \ref{FAND-equiv-cont2}. In \cite{jB05b} it is shown that if $B$ is a decidable bar, then 
\[ f(\alpha) = \min\set{n \in \NN}{\overline{\alpha}n \in B} \]
is a continuous function with itself (!) as a continuous modulus of continuity. Furthermore it is shown that if $f$ is uniformly continuous then $B$  is uniform. Hence \ref{FAND-equiv-cont3} implies \FAND. Next, with the help of Lemma \ref{Lem:Fanandunifcont}, we can show that \FAND implies \ref{FAND-equiv-cont2}. 

So we are done if we can show that  \ref{FAND-equiv-cont2} implies  \ref{FAND-equiv-cont1}. To this end let $f:X \to Y$ be a uniformly continuous map with a continuous modulus of continuity $\mu: X \times \NN \to \NN$ such that
\[ \fa{x,y \in X}{\fa{e \in \NN}{ d(x,y)<2^{-\mu(x,e)} \implies d(f(x),f(y))<2^{-e}   } }  \ .\] 
Let  $g: \CS \to X$ the mapping from Proposition \ref{Pro:compXunifsurj}. Using countable choice we can therefore construct $\tau: \NN \to \NN$ such that 
\[ \fa{\alpha, \beta \in \CS}{\fa{n \in \NN}{\overline{\alpha}\tau(n) =  \overline{\beta}\tau(n) \implies d(g(\alpha),g(\beta)) < 2^{-n}}}  \ . \]

 Furthermore, for $\alpha \in \CS$ let $\alpha^{e} = \alpha(0)\alpha(2)\alpha(4) \dots$ and $\alpha^{o} = \alpha(1)\alpha(3)\alpha(5) \dots$. Now consider the function $h :\CS \to \RR$ defined by 
\[h(\alpha) = d(f(h(\alpha^{e})),f(h(\alpha^{o}))) \ .\]
It is straightforward to show that $\eta:\CS \times \NN \to \NN$ defined by 
\[ \eta(\alpha,n) =  2 \tau (\mu (g(\alpha^{e}),n) )     \] is a continuous modulus of continuity for $h$. By \ref{FAND-equiv-cont2} it is also uniformly continuous. So for an arbitrary $\varepsilon >0$ there exists $n$ such that $\overline{\alpha}n=\overline{\beta}n$ implies $\abs*{h(\alpha) - h(\beta)} < \varepsilon$. By Lemma \ref{Lem:cmpact-image-of-2N} there exists $\delta >0$ such that for  $x,y \in X$ with $d(x,y)< \delta$ there exist $ \alpha,\beta \in \CS$ such that 
\[ \overline{\alpha}n = \overline{\beta}n \land x=g(\alpha) \land y = g(\beta) \ .  \]
Now define $\gamma$ and $\gamma^{\prime}$ by 
\[ \gamma= \alpha(0)\beta(0)\alpha(1)\beta(1)\dots \]
and
\[ \tau= \alpha(0)\alpha(0)\alpha(1)\alpha(1)\dots \ . \]
That way $\gamma^{e} = \tau^{e} = \tau^{o} = \alpha$ and $\gamma^{o} = \beta$.
Furthermore $\overline{\gamma}n = \overline{\tau}n$, and hence 
\begin{align*}
 \varepsilon & > \abs*{h(\gamma) - h(\tau)}  \\
 & = \abs*{ d(f(h(\gamma^{e})),f(h(\gamma^{o}))) - d(f(h(\tau^{e})),f(h(\tau^{o})))} \\
 & =  \abs*{ d(f(h(\alpha)),f(h(\beta))) - d(f(h(\alpha)),f(h(\alpha)))} \\
 & = d(f(h(\alpha)),f(h(\beta))) = d(f(x),f(y)) \ . 
\end{align*}
That means that $\delta$ is a modulus of uniform continuity for $\varepsilon$, and we are done.
\end{proof}

\begin{Def}
We will call a function $f:[0,1] \to \RR$ \/ \define{fully located} if, $f([a,b])$ is located for every $a<b$.
\end{Def}
Classically every function is fully located, and constructively every uniformly continuous function $[0,1] \to \RR$ is fully located, however the converse does not hold, as is shown in Proposition \ref{Pro:KT-fullyloc}, which will be a consequence of the following lemmas.
\begin{Lem} $ $ 
\begin{enumerate}
\item Assume $f:[0,1] \to \RR$ is fully located. Then $f$ is point-wise continuous if it is sequentially continuous.
\item If $f(x) > 0$ for all $x \in [0,1]$, then $f$ is fully located if and only if $\frac{1}{f}$ is.
\end{enumerate}

\end{Lem}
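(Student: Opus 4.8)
The plan for \textbf{(1)} is, given $x_0 \in [0,1]$ and $\varepsilon > 0$, to construct a modulus $\delta$ outright, letting sequential continuity supply the relevant level rather than performing an unbounded search for it. For each $n \in \NN$ put $L_n = [x_0 - 2^{-n}, x_0 + 2^{-n}] \cap [0,1]$; for all sufficiently large $n$ this is a nondegenerate closed interval, so full locatedness makes $f(L_n)$ a located — hence, being bounded, totally bounded — subset of $\RR$ containing $f(x_0)$, and in particular $\sup f(L_n)$ and $\inf f(L_n)$ exist. Resolving these two reals against the values $f(x_0) \pm \tfrac{\varepsilon}{2}$ and $f(x_0) \pm \varepsilon$ (each such comparison being decidable because the thresholds $\tfrac{\varepsilon}{2}$ and $\varepsilon$ leave room), we can decide, for each such $n$, whether $\forall y \in L_n\, |f(y) - f(x_0)| < \varepsilon$ or $\exists y \in L_n\, |f(y) - f(x_0)| > \tfrac{\varepsilon}{2}$. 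Equivalently, one may invoke the characterisation of located sets recalled above.

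Now define a sequence $(y_n)$: put $y_n := x_0$ at those $n$ for which the first alternative was found, and let $y_n \in L_n$ be a witness with $|f(y_n) - f(x_0)| > \tfrac{\varepsilon}{2}$ at those $n$ where the second was. Since $|y_n - x_0| \le 2^{-n}$ we have $y_n \to x_0$, so sequential continuity gives $f(y_n) \to f(x_0)$ and, via its modulus of convergence, an index $N$ with $|f(y_N) - f(x_0)| < \tfrac{\varepsilon}{2}$. The second alternative cannot have held at $N$, so the first did: $\forall y \in L_N\, |f(y) - f(x_0)| < \varepsilon$, and $\delta := 2^{-N}$ witnesses point-wise continuity of $f$ at $x_0$. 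No case distinction on ``$\exists n$'' is ever made, so the argument stays within \BISH.

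For \textbf{(2)}, it suffices to prove one implication: if $f > 0$ everywhere then so does $g := 1/f$, and $1/g = f$, so applying the forward direction to $g$ recovers the converse. Assume then that $f$ is fully located and fix $a < b$; we must show $(1/f)([a,b]) = \{\, 1/f(y) : y \in [a,b]\,\}$ is located. It is inhabited (it contains $1/f(a)$), so by the characterisation of located sets it remains to decide, for arbitrary reals $p < q$ and $\eta > 0$, whether $\forall y \in [a,b]\, 1/f(y) \notin [p,q]$ or $\exists y \in [a,b]\, 1/f(y) \in [p - \eta, q + \eta]$. The idea is to pull these conditions back along $t \mapsto 1/t$: ``$1/f(y) \in [p,q]$'' is equivalent to ``$f(y) \in J$'' for an interval $J$ determined by $p,q$ — a bounded closed interval $[1/q, 1/p]$ when $p > 0$, a closed ray $[1/q, \infty)$ when $p \le 0 < q$, and trivially false when $q \le 0$ since $1/f(y) > 0$ — and likewise ``$1/f(y) \in [p-\eta, q+\eta]$'' corresponds to a slightly larger such interval $J'$. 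When $J, J'$ are bounded the required decision is exactly an instance of the characterisation of locatedness applied to the located set $f([a,b])$; when $J$ is a ray, the decision ``$\forall y\, f(y) < c$'' versus ``$\exists y\, f(y) > c'$'' (with $c' < c$) is a comparison of $\sup f([a,b])$, which exists since $f([a,b])$ is located and bounded, against a threshold, hence decidable. Assembling these cases yields the decidable separation property for $(1/f)([a,b])$, so it is located; as $a < b$ were arbitrary, $1/f$ is fully located.

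The main obstacle in (1) is the constructive extraction of the modulus: the naive route would decide the dichotomy for each $n$, observe that the second alternative cannot hold for every $n$, and then \emph{search} for an $n$ realising the first — that last step being Markov's principle. The device above sidesteps it by handing the whole candidate sequence $(y_n)$ to sequential continuity at once and reading off a good level $N$ from its modulus of convergence. In (2) the main point is that $t \mapsto 1/t$ is not uniformly continuous, so total boundedness of $f([a,b])$ is not transported to $(1/f)([a,b])$ by any general preservation theorem; the resolution is that we never need to transport a \emph{uniform} modulus, only to resolve the image against each \emph{fixed} pair of thresholds, which pulls back to fixed thresholds for $f$, together with the sign/ray bookkeeping. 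A minor technicality throughout is that ``$f([a,b])$ located'' is used together with boundedness of $f$ on $[a,b]$ (so that total boundedness, and hence the relevant suprema and infima, are available); this is immediate if full locatedness is read to include boundedness of the range, and otherwise one restricts attention to a bounded sub-window.
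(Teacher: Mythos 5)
Your proof of (1) is the paper's own argument: use full locatedness of $f(L_n)$, together with the characterisation of located subsets of $\RR$ recalled just before this lemma, to decide for each $n$ the dichotomy $\bigl(\forall y\in L_n\; |f(y)-f(x)|<\varepsilon\bigr)\ \lor\ \bigl(\exists y\in L_n\; |f(y)-f(x)|>\varepsilon/2\bigr)$, hand the resulting witness sequence to sequential continuity, and read off an $N$ at which the first alternative must hold. Your phrasing of the dichotomy is the correct one; the paper's display inadvertently swaps the two quantifiers (as written, the first disjunct is vacuously true), though its subsequent $\lambda_n$-conditions are consistent with your reading. For (2) the paper merely writes ``Straightforward,'' and your pull-back-along-$t\mapsto 1/t$ computation, with the sign and ray case analysis, is an honest filling-in. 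The one point genuinely worth dwelling on is the boundedness caveat you flag at the end: the characterisation lemma only yields decisions about intersections with \emph{bounded} intervals, whereas $f([a,b])$ need not be bounded --- producing unbounded, fully located functions is exactly what Lemma \ref{Lem:fullyloc} is about --- so the paper's ``Using our locatedness assumption, we can decide\ldots'' is doing a bit more work than it lets on. Flagging that explicitly, as you do, is a point in your favour rather than a defect.
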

\begin{proof}
\begin{enumerate}
\item Let $x \in [0,1]$ and $\varepsilon>0$ be arbitrary. 
Using our locatedness assumption, we can decide for every $n \in \NN$ whether $\ex{y \in B_{2^{-n}}(x)}{\abs*{f(y)-f(x)} < \varepsilon} $ or whether $\fa{y \in B_{x}(2^{-n})}{\abs*{f(y)-f(x)} > \frac{\varepsilon}{2}}$. So using countable choice we can construct a binary sequence $(\lambda_n)_{n \geqslant 1}$ and a sequence of reals $(x_n)_{n \geqslant 1}$ such that $\abs*{x_n-x}<\frac{1}{2^n}$ and
\begin{align*}
\lambda_n=0 & \implies \abs*{f(x_n) -f(x)} > \frac{\varepsilon}{2}  \ , \\
\lambda_n=1 & \implies \fa{y}{\abs*{x-y}<\frac{1}{2^n} \implies \abs*{f(y) - f(x)}< \varepsilon} \ .
\end{align*}
Since $x_n$ converges to $x$ and $f$ is sequentially continuous there exists $N$ such that for all $n \geqslant N$ we have $\abs*{f(x_n) -f(x)} < \frac{\varepsilon}{2}$. This in turn implies that $\lambda_n = 1$, which immediately yields the desired property. Hence $f$ is point-wise continuous.
\item Straightforward. \qedhere
\end{enumerate}
\end{proof}

\begin{Lem} \label{Lem:fullyloc}
For every decidable bar $B$ there exists a point-wise continuous, fully located function $f:[0,1] \to \RR$ such that $f$ is bounded if and only if $B$ is uniform; and vice versa.
\end{Lem}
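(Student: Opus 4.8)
The plan is to construct, from a decidable bar $B$, a point-wise continuous function $f:[0,1]\to\RR$ that encodes the "height profile" of the bar, and is fully located regardless of whether $B$ is uniform, but whose boundedness is equivalent to $B$ being uniform. The natural building block is the map $F = F^{\nicefrac{1}{2}}:\CS \to [0,1]$ from Section \ref{Sec:linking-cantor-and-unitint}, which sends a binary sequence to the real with that binary expansion. First I would use Lemma \ref{Lem:dec_bar_closedunderextensions} to replace $B$ by a decidable bar closed under extensions, so that uniformity of $B$ means $\ex{N}{\fa{\alpha\in\CS}{\overline{\alpha}N\in B}}$. Then, for each $u\in\cS$ that is a "new" entry of $B$ (i.e.\ $u\in B$ but $u[:-1]\notin B$), I would place a "spike" over the dyadic interval $I_u = F(\menge{u}\ast\CS) = [F(u), F(u\ast\one)]$ of height roughly $\abs{u}$ (or $2^{\abs{u}}$, whichever normalisation makes the located-ness argument cleanest), interpolating linearly down to $0$ at the endpoints of $I_u$, and setting $f = 0$ outside $\bigcup_{u\text{ new}} I_u$. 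Since the "new" entries of a bar closed under extensions form an antichain covering $\CS$, these intervals are essentially disjoint (meeting only at endpoints), so $f$ is well-defined and point-wise continuous.

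Next I would check the two directions of the boundedness equivalence. If $B$ is uniform with bound $N$, then every new entry $u$ has $\abs{u}\leqslant N$, so the spikes have height $\leqslant N$ and $f$ is bounded. Conversely, if $f$ is bounded by some $M$, then no new entry $u$ can have $\abs{u} > M$ (such a $u$ would force a spike of height $> M$); since $B$ is closed under extensions, this means $\overline{\alpha}M' \in B$ for all $\alpha$ for $M'$ slightly above $M$, i.e.\ $B$ is uniform. Some care is needed because point-wise evaluation of $f$ at a real $x$ requires locating $x$ relative to the $I_u$, which is exactly where full located-ness of $f$ comes in; so these two properties have to be established together.

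The heart of the lemma — and the step I expect to be the main obstacle — is proving that $f$ is fully located \emph{without} any fan-theoretic or decidability-of-uniformity assumption: given $a<b$, I need to compute $\sup$ and $\inf$ (in Bishop's sense) of $f$ on $[a,b]$, hence the distance from any real $y$ to $f([a,b])$. The point is that although we cannot decide how tall the tallest spike meeting $[a,b]$ is, we \emph{can} for each $n$ decide, using decidability of $B$, whether some new entry $u$ with $\abs{u}\leqslant n$ has $I_u \cap [a,b]\neq\emptyset$ and compute the resulting partial supremum $s_n$; the sequence $(s_n)$ is non-decreasing. Located-ness of $f([a,b])$ then follows from a careful argument showing that for each target accuracy $\varepsilon$ and each real $y$ we can, after examining sufficiently deep initial segments, decide whether $y$ is within $\varepsilon$ of $f([a,b])$ or bounded away from it — the key being that far enough down the tree the intervals $I_u$ are so short that either they miss $[a,b]$ entirely or the spike over them reaches full height $\approx\abs{u}$ inside $[a,b]$. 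This mirrors the decidability manipulations in Lemma \ref{Lem:seq-to-bar} and Lemma \ref{Lem:bar-to-seq}, so I would adapt those techniques. I would also have to handle the degenerate cases ($a$, $b$ falling inside a single interval $I_u$, or at a dyadic endpoint), which are routine but fiddly.

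Finally, for the "vice versa" direction — that every point-wise continuous fully located $f:[0,1]\to\RR$ yields a decidable bar $B$ uniform iff $f$ is bounded — I would run the construction in reverse: use full located-ness to decide, for each dyadic interval $I_u$ (a compact set since it equals $F(\menge{u}\ast\CS)$, the uniformly continuous image of $\CS$), whether $\sup_{I_u} \abs{f} < 2^{\abs{u}}$ or $\sup_{I_u}\abs{f} > 2^{\abs{u}} - 1$, and declare $u\in B$ in the former case. Decidability of $B$ is then immediate from located-ness of $f$ on the $I_u$; that $B$ is a bar follows because at any $x$, point-wise continuity bounds $f$ near $x$, hence on a small enough $I_{\overline{\alpha}n}$; and $B$ uniform $\iff$ $f$ bounded by the same two estimates as above. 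The cleanest way to organise the whole proof is to do both constructions so that they witness the claimed biconditional, and to isolate the full-located-ness verification as the single genuinely delicate lemma, citing the approximation machinery already developed for $F^{\nicefrac{1}{2}}$.
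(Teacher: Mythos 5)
Your forward construction takes a genuinely different route from the paper, and the route has a real gap exactly where you flag it. You build the spike function directly on $[0,1]$ and then need to prove it is fully located. The paper explicitly rejects this approach — the author writes that proving the extension of a hat-function $\hat f:[0,1]\to\RR$ is fully located "seems tedious" — and instead does something that sidesteps the problem entirely: it forms the totally bounded set $S = \set{(F^{\nicefrac{1}{3}}(u),2^{-\abs{u}})}{u\in T}\subset\RR^2$ (where $T=\lnot B$), lets $f(x)=d((x,0),S)$, and returns $1/f$. The point is that $f$ is a distance function, hence uniformly continuous, hence automatically fully located (uniformly continuous image of a compact interval is totally bounded, hence located), and then $1/f$ is fully located by the previously stated reciprocal lemma. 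No hand analysis of spikes is needed.

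The gap in your argument: you claim that for large $n$ the intervals $I_u$ "either miss $[a,b]$ entirely or the spike over them reaches full height $\approx\abs{u}$ inside $[a,b]$," and that this gives locatedness. But the difficulty is not that an individual spike might be clipped; it is that for a non-dyadic $a$ you cannot decide, uniformly over infinitely many levels, whether some deep $I_u$ (and hence a tall spike) is wholly on one side of $a$. The non-decreasing sequence $(s_n)$ of partial suprema you introduce need not converge and you have no way to decide whether it does — that is exactly the information "is $B$ uniform over $[a,b]$?" that the lemma is designed \emph{not} to presuppose. So the locatedness verification is not a fiddly detail but the whole content of the forward direction, and your sketch does not supply it.

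A secondary problem is your choice of $F^{\nicefrac{1}{2}}$. Constructively $F^{\nicefrac{1}{2}}$ is not surjective (that is \LLPO), and it is not injective at dyadic points, so your dyadic intervals $I_u$ do not have the clean covering and separation properties you are implicitly using. The paper carefully chooses $p=\nicefrac{1}{3}$ in the forward direction (to get the Bishop-lemma separation from Lemma \ref{Lem:Fp}.\ref{Lem:Var-Bish}) and $p=\nicefrac{2}{3}$ in the converse (to get surjectivity onto the overlapping $I_u^{\nicefrac{2}{3}}$, which is what lets one turn a point where $f$ is large inside $I_u$ into a $\beta\in\CS$ agreeing with $\alpha$ up to level $\abs{u}$). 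Your converse is otherwise in the right spirit — decide via locatedness of $f(I_u)$ whether $u\in B$ — and matches the paper modulo these parameter choices, but you should also notice that $\sup_{I_u}\abs{f}$ need not exist a priori; one only gets a dichotomy of the form the paper writes down, not an exact supremum, and one needs the interval $I_u$ to genuinely contain a neighbourhood of $F(\alpha)$ for the bar argument, which fails for $p=\nicefrac{1}{2}$ at dyadic points.

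The lesson to take away is the "reciprocal of a distance function" trick: whenever you need an unbounded fully located function, do not try to build it directly; build a uniformly continuous positive function (a distance to a located set is always one) and take its reciprocal, citing the lemma that locatedness of images passes through reciprocals of positive-valued functions.
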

\begin{proof}
First, start with a decidable bar $B$ that is without loss of generality closed under extension. We are going to adapt the construction in \cite{dB87}*{Theorem 2.2.7}. Even though, given a bar, we could also define a function $f:\CS \to \NN$ from it and extend it to a function $\hat{f}:[0,1] \to \RR$, proving that if $f$ is fully located, then $\hat{f}$ is seems tedious, so we are going to go a different route. Since $B$ is decidable so is $T =\lnot B$, and since $B$ is closed under extensions $T$ is closed under restriction, i.e.\ a tree. Furthermore $T$ does not admit infinite paths. Now consider the set 
\[ S = \set{(F^{\nicefrac{1}{3}}(u),2^{-\abs{u}}) \in \RR^{2}}{u \in T} \ . \]
It is easy to see that $S$ is totally bounded and therefore located. Hence the function $f: [0,1] \to \RR$ defined by $f(x) = d((x,0),S)$ exists and is uniformly continuous. 
So, since $[a,b]$ is totally bounded, its image under $f$ also totally bounded and therefore located. Just as in  \cite{dB87}*{Theorem 2.2.7} the fact that $T$ does not admit infinite paths implies that $f$ is positively valued. Furthermore if $\inf f > 0$, then $B$ is uniform. By the previous  lemma $\frac{1}{f}$ is fully located, and is such that it is bounded if $B$ is uniform.

Conversely assume that $f:[0,1] \to \RR$ is fully located, and point-wise continuous. Hence, using countable choice, we can fix a decidable set $B \subset \cS$ such that 
\begin{align*}
u \notin B & \implies \ex{x \in I_{u}}{f(x) > 2\abs{u}} \ , \\
u \in B & \implies \fa{x \in I_{u}}{f(x)  <2 \abs{u}+1}  \ ;
\end{align*}
where  $I_{u}$ are the intervals $I_{u}^{\nicefrac{2}{3}}$ defined above. We want to show that $B$ is a bar, so let $\alpha \in \CS$ be arbitrary. Since $f \circ F^{\nicefrac{2}{3}}$ is point-wise continuous there exists $N \in \NN$ such that $\abs*{f \circ F^{\nicefrac{2}{3}}(\alpha) - f \circ F^{\nicefrac{2}{3}}(\beta)} < 1$ whenever $\overline{\alpha}N = \overline{\beta}N$. Now choose $M > N$ such that $f \circ F^{\nicefrac{2}{3}}(\alpha) < M$. Now assume that $\overline{\alpha} M \notin B$. Then, by the choice of $B$ there exists $y \in I_{\overline{\alpha}M}$ such that $f(y) > 2M$. Since $F^{\nicefrac{2}{3}}$ is surjective on $I_{\overline{\alpha}M}$ (Lemma \ref{Lem:Fp}) there exists $\beta \in \CS$ such that $F^{\nicefrac{2}{3}}(\beta) = y$ and $\overline{\beta}M = \overline{\alpha}M$.  But that means that \[
f \circ F^{\nicefrac{2}{3}}(\alpha)+ \abs*{f \circ F^{\nicefrac{2}{3}}(\alpha) - f \circ F^{\nicefrac{2}{3}}(\beta)} > f \circ F^{\nicefrac{2}{3}}(\beta)  > 2M
\]
which implies that $f \circ F^{\nicefrac{2}{3}}(\alpha) > 2M -1$; but this is a contradiction to $f \circ F^{\nicefrac{2}{3}}(\alpha)  < M \leqslant 2M-1$. 

By definition $B$ is also closed under extensions. Therefore, if $B$ is uniform there exists $N$ such that $\overline{\alpha}N \in B$ for all $\alpha \in \CS$. Since $[0,1] \bigcup_{u \in 2^n} I_{u}$ that means that $f(x) < 2N+1$ for all $x \in [0,1]$.
\end{proof}

\begin{Cor} \label{Cor:FAND_equiv_fullyloc} \FAND is equivalent to every sequentially continuous, fully located function $f:[0,1] \to \RR$ being bounded.
\end{Cor}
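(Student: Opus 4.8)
The plan is to deduce the corollary directly from Lemma~\ref{Lem:fullyloc} together with the preceding lemma (the one asserting that a fully located, sequentially continuous function is point-wise continuous). First I would prove that \FAND implies the stated boundedness property. So suppose $f:[0,1]\to\RR$ is sequentially continuous and fully located. By the preceding lemma, full locatedness upgrades sequential continuity to point-wise continuity, so $f$ is point-wise continuous. Now the ``vice versa'' part of Lemma~\ref{Lem:fullyloc} produces a decidable bar $B$ (which one may take to be closed under extensions) such that $f$ is bounded if and only if $B$ is uniform. Applying \FAND gives that $B$ is uniform, hence $f$ is bounded.

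For the converse I would start from an arbitrary decidable bar $B$ and apply Lemma~\ref{Lem:fullyloc} in its forward form: it yields a point-wise continuous, fully located function $f:[0,1]\to\RR$ with $f$ bounded if and only if $B$ uniform. Since every point-wise continuous function is trivially sequentially continuous, the hypothesis (``every sequentially continuous, fully located function is bounded'') applies to $f$, so $f$ is bounded, whence $B$ is uniform. As $B$ was an arbitrary decidable bar, \FAND follows.

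The only place any care is needed is the first implication, where the hypothesis supplies only sequential continuity whereas Lemma~\ref{Lem:fullyloc} is phrased for point-wise continuous functions; but this gap is exactly what the preceding lemma closes, which is the reason full locatedness is built into the statement. Everything else is a citation of the two lemmas, so I do not expect a genuine obstacle — the substantive work (the bar-to-function and function-to-bar constructions, and the continuity upgrade) has already been carried out.
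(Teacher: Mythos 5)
Your proposal is correct and is exactly the argument the paper intends: the corollary is stated without its own proof because it follows immediately from Lemma~\ref{Lem:fullyloc} together with the unnamed lemma preceding it (the one upgrading sequential to point-wise continuity under full locatedness). Both directions are handled precisely as you describe, and your observation that full locatedness is built into the statement exactly to close the gap between sequential and point-wise continuity is the right way to read the result.
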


\begin{Rmk}
In \RUSS there exists a point-wise continuous, fully located function $f:[0,1] \to \RR$  which fails to be bounded.
\end{Rmk}
\begin{proof}
See Proposition \ref{Pro:KT-fullyloc}.
\end{proof}

\section{\texorpdfstring{\FANc}{FANc}}

In \cite{jB06} in order to answer the question what kind of fan theorem is equivalent to the uniform continuity theorem for functions $\CS \to \NN$ Berger introduced the notion of a $c$-bar. Not much later, the resulting principle \FANc turned out to be equivalent to the ``anti-Specker''-principle, which was conceived independently from  Berger's work \cite{jB07}. 

\begin{Pro} \label{Pro:FANc-equivs}
The following are equivalent to \FANc.
\begin{enumerate}
\item \AS{[0,1]}{\RR} 
\item \AS{[0,1]}{1}
\item \AS{\CS}{1} 
\item \label{Equiv:FANc2} For every compact $X$ and every one-point extension $Y$ \AS{X}{Y} holds.
\item \label{Equiv:FANc3} Every point-wise continuous function $f:\CS \to \BS$ is uniformly continuous.
\item \label{Equiv:FANc4} Every point-wise continuous function $f:\CS \to \NN$ is uniformly continuous.
\item \label{Equiv:FANc5} Every point-wise continuous function $f:\CS \to 2$ is uniformly continuous.
\item \label{Equiv:FANc6} Every point-wise equi-continuous sequence of mappings of $[0,1]$ into $\RR$ is uniformly sequentially equi-continuous.
\item \label{Equiv:FANc7} Every point-wise continuous mapping of $[0, 1]$ into a metric space is uniformly sequentially continuous.
\end{enumerate}
\end{Pro}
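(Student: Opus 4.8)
The plan is to establish the cycle of implications indicated in the diagram-like list, routing everything through the equivalence of \FANc with the anti-Specker property \AS{\CS}{1} (which links to \AS{[0,1]}{1} and \AS{[0,1]}{\RR} via the one-point-extension remark), and the equivalence with uniform continuity of functions $\CS \to \NN$ which is essentially Berger's result from \cite{jB06}. So I would first prove $\FANc \iff \AS{\CS}{1}$ directly: given a $c$-bar $C$, use Lemma \ref{Lem:bar-to-seq} (with the obvious modification from $[0,1]$ to $\CS$, or simply transport along $F^{\nicefrac{1}{3}}$) to produce a sequence in a one-point extension of $\CS$ that is eventually bounded away from every point; anti-Specker then gives it is eventually bounded away from all of $\CS$, and by part (1) of that lemma $C$ is uniform. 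Conversely, given a sequence eventually bounded away from every point, Lemma \ref{Lem:seq-to-bar} (again adapted to $\CS$) produces a $c$-bar which \FANc makes uniform, and parts (1)--(2) of the lemma translate uniformity back into the sequence being eventually bounded away from the whole space.

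Next I would handle the ``one-point extension does not matter'' items: the remark just before Section \ref{Sec:linking-cantor-and-unitint} already records that $\AS{X}{Y}$ and $\AS{X}{Y'}$ are equivalent for any two one-point extensions, so $\AS{[0,1]}{1}$, $\AS{[0,1]}{\RR}$ and $\AS{\CS}{1}$ are pairwise equivalent as soon as I bridge $[0,1]$ and $\CS$. For the bridge, I would use Lemmas \ref{Lem:seq-to-bar} and \ref{Lem:bar-to-seq}, which are stated precisely so that a $c$-bar corresponds to a sequence in (a one-point extension of) $[0,1]$ eventually bounded away from every point, with uniformity of the bar matching the sequence being eventually bounded away from all of $[0,1]$; composing these two lemmas gives $\AS{[0,1]}{1} \iff \FANc$ immediately, and \AS{\CS}{1} then follows either by the same argument run over $\CS$ or by pushing forward along a uniformly surjective $F^{p}$ with $p>\nicefrac12$ (Lemma \ref{Lem:cmpact-image-of-2N}). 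Item \ref{Equiv:FANc2} (anti-Specker for every compact $X$ and one-point extension $Y$) follows because every compact $X$ is a uniformly surjective image of $\CS$ (Proposition \ref{Pro:compXunifsurj}): pull a Specker-type sequence in $Y$ back to $\CS$, apply $\AS{\CS}{1}$, and push forward; the converse to \FANc is the special case $X = \CS$.

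For the uniform-continuity items \ref{Equiv:FANc3}--\ref{Equiv:FANc5} I would note the trivial implications $\ref{Equiv:FANc3} \Rightarrow \ref{Equiv:FANc4} \Rightarrow \ref{Equiv:FANc5}$, then show $\ref{Equiv:FANc5} \Rightarrow \FANc$ and $\FANc \Rightarrow \ref{Equiv:FANc3}$. For the first of these, given a $c$-bar $C$ with decidable $C'$, define $f:\CS \to 2$ by $f(\alpha) = 1$ iff $\exists n\,(\overline{\alpha}n \in C)$ — wait, that need not be decidable pointwise; instead I would reduce to the decidable-bar case via the construction of Proposition ``$c$-bar $\to \Pi^0_1$-bar'' combined with the way Berger's function in \cite{jB05b} turns a bar into a continuous function, or more directly: from $C'$ decidable, the function $\alpha \mapsto \min\{n : \overline{\alpha}n \ast w \in C' \text{ for all } w\in 2^{\le k}\}$-style truncations give a pointwise-continuous $\CS \to \NN$ whose uniform continuity forces $C$ uniform. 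For $\FANc \Rightarrow \ref{Equiv:FANc3}$: given pointwise-continuous $f:\CS \to \BS$ and $e \in \NN$, the set $B_e = \{u : f(\alpha)$ and $f(\beta)$ agree on their first $e$ coordinates for all $\alpha,\beta$ extending $u\}$ is a $c$-bar (membership is a $\forall w$-statement over a decidable matrix obtained from a continuous modulus), so \FANc makes it uniform, giving a uniform modulus of continuity. The remaining analytic items \ref{Equiv:FANc6} and \ref{Equiv:FANc7} I would cite to the literature (Ishihara–Schuster / Bridges et al.) with a one-line reduction of \ref{Equiv:FANc7} to \ref{Equiv:FANc3} via the extension Lemma \ref{Lem:extendingfunctions} and the uniform surjection $F^p$.

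The main obstacle I anticipate is getting the $c$-bar $\leftrightarrow$ pointwise-continuous-function correspondence exactly right \emph{without} inadvertently needing decidability of the bar or a uniform modulus: the subtlety is that a $c$-bar is a $\Pi^0_1$-ish object, so the associated function $\CS \to \NN$ is only pointwise continuous, and one must check that the ``continuous modulus'' machinery of Lemma \ref{Lem:Fanandunifcont} is \emph{not} available here (that lemma is what separates \FANc from \FAND), so the argument has to go through the anti-Specker formulation or Berger's original $c$-bar construction rather than through decidable bars. Everything else is routine bookkeeping and citation, but that pivot is where care is needed.
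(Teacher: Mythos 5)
Your overall plan matches the paper's — route the anti-Specker items through Lemmas \ref{Lem:seq-to-bar} and \ref{Lem:bar-to-seq}, cite Berger for (\ref{Equiv:FANc4}), cite the one-point-extension equivalences and the analytic items (\ref{Equiv:FANc6}), (\ref{Equiv:FANc7}), and close the loop through the uniform-continuity items. But the one genuinely new argument that the paper actually writes out is exactly where your proposal has a gap.

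You plan the cycle $\FANc \Rightarrow \ref{Equiv:FANc3} \Rightarrow \ref{Equiv:FANc4} \Rightarrow \ref{Equiv:FANc5} \Rightarrow \FANc$. For the last step you correctly observe that encoding a $c$-bar directly as a pointwise-continuous map $\CS \to 2$ fails (the naive $f(\alpha)=1 \iff \exists n\,(\overline\alpha n \in C)$ is either ill-defined or identically $1$). But your fallback — the $\alpha \mapsto \min\{n : \ldots\}$ truncations — produces a function $\CS \to \NN$, not $\CS \to 2$. That establishes $\ref{Equiv:FANc4} \Rightarrow \FANc$, which you already have from Berger; it does not establish $\ref{Equiv:FANc5} \Rightarrow \FANc$, so item (\ref{Equiv:FANc5}) dangles as a sink with nothing leading back out of it. The paper closes this by showing $\ref{Equiv:FANc5} \Rightarrow \ref{Equiv:FANc3}$ via an interleaving trick: given pointwise-continuous $f:\CS \to \BS$ and a precision $N$, set $g(\alpha) = 0$ iff $f(\alpha^e)$ and $f(\alpha^o)$ agree on their first $N$ coordinates (where $\alpha^e,\alpha^o$ are the even- and odd-indexed subsequences). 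Then $g:\CS \to 2$ is pointwise continuous, and its uniform modulus of constancy, applied to interleavings of the form $\alpha_0\beta_0\alpha_1\beta_1\cdots$ versus $\alpha_0\alpha_0\alpha_1\alpha_1\cdots$, recovers a uniform modulus for $f$ at precision $N$. That is the step your proposal is missing.

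One smaller point: in your $\FANc \Rightarrow \ref{Equiv:FANc3}$ argument you justify decidability of the $c$-bar $B_e$ by appeal to ``a decidable matrix obtained from a continuous modulus,'' but a merely pointwise-continuous $f$ carries no modulus (as you yourself stress in your final paragraph, that is precisely the $\FAND$/$\FANc$ divide). The construction still works: take $C' = \{u : \overline{f(u\ast\zero)}e = \overline{f(\overline u(\abs u - 1)\ast\zero)}e\}$, which is decidable because only finitely many values of the discrete-valued $f$ need to be inspected, and then $u \in B_e \iff \forall w\,(u\ast w \in C')$ follows by induction plus pointwise continuity. No modulus is needed; just don't claim one.
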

\begin{proof}
Even though the equivalence $\cramped{\FANc \iff  \AS{[0,1]}{\RR}}$ has been shown in \cite{jB07} it also follows from Lemmas  \ref{Lem:seq-to-bar} and \ref{Lem:bar-to-seq}. For the equivalence between \AS{[0,1]}{\RR} and \ref{Equiv:FANc2} see \cite{hD09}, for the one between \AS{[0,1]}{\RR} and \AS{[0,1]}{1} see \cite{hD09b}, and for the one between \FANc and \AS{\CS}{1} see \cite{hD08b}. The equivalence between \ref{Equiv:FANc4} and \FANc~has been shown by Berger in \cite{jB06}. 
Clearly $\ref{Equiv:FANc3} \implies \ref{Equiv:FANc4} \implies \ref{Equiv:FANc5}$. To see that also $\ref{Equiv:FANc5} \implies \ref{Equiv:FANc3}$ let $f:\CS \to \BS$ be a point-wise continuous function and $\varepsilon >0$ arbitrary. First choose $N$ such that $2^{-N}< \varepsilon$. Now define $g: \CS \to \menge{0,1}$ by
\begin{equation*}
 	g(\alpha) = 
 	\begin{cases} 
 		0 & \text{ if }\fa{i\leqslant N}{f(\alpha^e)(i) = f(\alpha^o)(i)} \\
		1 &  \text{ otherwise,} 
	\end{cases}
\end{equation*}
where, as before, $\alpha^e = \alpha_0 \alpha_2 \alpha_4 \dots$ and $\alpha^o = \alpha_1 \alpha_3 \alpha_5 \dots$. 
The function $g$ is continuous, since for an arbitrary $\alpha$ there exists $M$ such that for arbitrary $\beta$ we have 
\[ \abs*{f(\overline{\alpha^e}M \ast \beta) - f(\alpha^e)} < 2^{-N} \text{ and } \abs*{f(\overline{\alpha^o}M \ast \beta) - f(\alpha^o)} < 2^{-N} \ , \] which means that for all $i \leqslant N$
\[ f(\overline{\alpha^e}M \ast \beta)(i) = f(\alpha^e)(i) \text{ and } f(\overline{\alpha^o}M \ast \beta)(i) = f(\alpha^o)(i) . \]
So either $f(\alpha^o)(i) = f(\alpha^e)(i)$ for all $i \leqslant N$ or not. In the first case $g(\alpha) = 0$ and in the second case $g(\alpha) = 1$. But by the choice of $M$, also for any $\beta$ we have $g(\overline{\alpha}(2M) \ast \beta) = g(\alpha)$, and hence $g$ is point-wise continuous. By our assumption it is therefore uniformly continuous, which means we can find an $M$ as above which is independent of $\alpha$, which immediately gives us uniform continuity of $f$.

Finally, the equivalences between \AS{[0,1]}{\RR}, \ref{Equiv:FANc6}, \ref{Equiv:FANc7} are proved in \cite{dB09b}.
\end{proof}

Notice that, unlike in the case of variations of \UCT, \AS{}{} and many other principles it is not possible to replace $\CS$ with $[0,1]$ in \ref{Equiv:FANc4} and \ref{Equiv:FANc5} above, since a point-wise continuous function $f:[0,1] \to \NN$ is necessarily constant.

\begin{Pro}[Variant of \POS]
\FANc~is equivalent to every point-wise continuous function $f:\CS \to \NN$ being bounded.
\end{Pro}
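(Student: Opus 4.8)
The plan is to prove the equivalence by reducing to the already-established equivalence $\FANc \iff$ ``every point-wise continuous $f : \CS \to \NN$ is uniformly continuous'' (item \ref{Equiv:FANc4} of Proposition \ref{Pro:FANc-equivs}). One direction is immediate: if every point-wise continuous $f : \CS \to \NN$ is uniformly continuous, then such an $f$ factors through a uniformly continuous function on a totally bounded space, hence has totally bounded — in particular bounded — range; so \FANc implies the boundedness statement. (Alternatively, and even more directly, one can invoke the $c$-bar machinery: given a point-wise continuous $f : \CS \to \NN$, the sets $B_k = \set{u \in \cS}{\fa{\alpha \in \CS}{f(u \ast \alpha) < k}}$ — or rather a suitable cofinal-in-$k$ variant — form $c$-bars, and \FANc makes one of them uniform, which bounds $f$.)

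The interesting direction is the converse: assuming every point-wise continuous $f : \CS \to \NN$ is bounded, I want to deduce \FANc, and by item \ref{Equiv:FANc4} it suffices to show that every point-wise continuous $f : \CS \to \NN$ is \emph{uniformly} continuous. First I would fix such an $f$ together with its (point-wise) modulus of continuity, and fix $e \in \NN$; the goal is a uniform $N$ such that $\overline{\alpha}N = \overline{\beta}N \implies \abs{f(\alpha) - f(\beta)} < 2^{-e}$ — but since $f$ takes values in $\NN$ this just means $f(\alpha) = f(\beta)$. Following Berger's idea (cf.\ Lemma \ref{Lem:Fanandunifcont} and \cite{jB05b}), the natural move is to encode the \emph{failure} of local constancy at stage $n$ into a new function. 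Concretely, using the point-wise modulus $\mu$ of $f$, define $g : \CS \to \NN$ by letting $g(\alpha)$ be the least $n$ such that $\mu(\overline{\alpha}n \ast \zero, 0) \leqslant n$ (i.e.\ the least $n$ at which the modulus at the "fleshed-out by zeros" approximation has stabilised below the current length); as in the proof of Lemma \ref{Lem:Fanandunifcont}, $g$ is well-defined (every $\alpha$ has such an $n$, since $\mu(\alpha,0)$ is finite and $\mu$ itself is continuous) and point-wise continuous.

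By hypothesis $g$ is bounded, say $g(\alpha) \leqslant N$ for all $\alpha \in \CS$. The main step is then to check that this $N$ is a uniform modulus: given $\alpha$, there is $n \leqslant N$ with $\mu(\overline{\alpha}n \ast \zero, 0) \leqslant n \leqslant N$, and since the modulus is monotone in the length one concludes that $f$ is constant on the cylinder $\overline{\alpha}N$, i.e.\ $\overline{\alpha}N = \overline{\beta}N \implies f(\alpha) = f(\beta)$. Running this for each $e$ (here trivial since $\NN$-valued, but the same scheme with $\mu(-,e)$ gives the general $2^{-e}$-version) yields uniform continuity of $f$, hence \FANc by Proposition \ref{Pro:FANc-equivs}.\ref{Equiv:FANc4}. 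I expect the main obstacle to be purely bookkeeping: making sure the auxiliary function $g$ really is point-wise continuous (this uses continuity of $\mu$ in its first argument in an essential way, exactly as in Lemma \ref{Lem:Fanandunifcont}) and that the "least $n$" in its definition is genuinely constructible — i.e.\ that the relevant predicate is decidable so that an unbounded search terminates — which is where one must be a little careful rather than where any deep idea is needed.
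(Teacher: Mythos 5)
Your forward direction matches the paper (reduce to Proposition \ref{Pro:FANc-equivs}.\ref{Equiv:FANc4}), but your converse direction has a genuine gap, not a bookkeeping problem.

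You want to deduce \FANc from the boundedness hypothesis by showing every point-wise continuous $f:\CS \to \NN$ is uniformly continuous, constructing an auxiliary $g$ out of a modulus $\mu$ of $f$. But the construction of $g$ from $\mu$ --- showing that the least $n$ with $\mu(\overline{\alpha}n \ast \zero, 0) \leqslant n$ exists, and that $g$ is point-wise continuous --- requires $\mu$ to be continuous, which you explicitly lean on (``since $\mu(\alpha,0)$ is finite and $\mu$ itself is continuous''). For a general point-wise continuous $f:\CS \to \NN$, a modulus $\mu$ exists by countable choice but need not be continuous, and you cannot assume it is: ``every point-wise continuous $f:\CS \to \NN$ with a continuous modulus is uniformly continuous'' is exactly \ref{FAND-equiv-cont3} of Proposition \ref{FAND-equiv}, i.e.\ \FAND. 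So your argument, carried to completion, would establish that the boundedness hypothesis implies \FAND; it does not reach the quantifier ``\emph{every} point-wise continuous $f:\CS \to \NN$'' that \ref{Equiv:FANc4} demands, and hence does not yield \FANc. Lemma \ref{Lem:Fanandunifcont} is the wrong tool precisely because it is a \FAND-level lemma.

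The paper sidesteps the modulus entirely. It takes a $c$-bar $B$ with decidable witness $C$ (so $u \in B \iff \fa{w}{u\ast w \in C}$) and defines $f(\alpha) = \max\set{n}{\overline{\alpha}n \notin C}$ directly. The decidability of $C$ is what makes $f$ constructible: the set $\set{n}{\overline{\alpha}n \notin C}$ is finite (because $B$ is a bar) and its membership is decidable (because $C$ is), so the $\max$ exists without any appeal to a modulus and $f$ is locally constant. A bound $N$ on $f$ then gives $\overline{\alpha}(N+1)\ast w \in C$ for all $w$, i.e.\ a uniform bound for $B$. The essential insight you are missing is that the decidability of the witness set $C$ replaces the continuity of a modulus; one should argue from the definition of $c$-bar rather than detouring through a (possibly discontinuous) modulus. (The paper even flags, in a footnote to that proof, that the naive $\min$-over-$B$ definition, which is what one would try first and which works for decidable bars, fails here --- only the $\max$-over-$\lnot C$ trick is available.)
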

\begin{proof}
	Clearly, this statement is implied by Part \ref{Equiv:FANc4} of the above proposition and therefore follows from \FANc. Conversely we can directly see that it implies \FANc. Given a $c$-bar $B$ with $C \subset \cS$ such that 
	\[  u \in B \iff \fa{w \in \cS}{\left(u \ast w \in C\right)} \ , \] 
	we can define $f:\CS \to \NN$ by 
	\[ f(\alpha) = \max \set{n \in \NN}{\overline{\alpha}n \notin C}  \ .\footnote{Notice that $f(\alpha) = \min \set{n \in \NN}{\overline{\alpha}n \in B}$ would not be well-defined, unless $B$ would actually be a decidable bar. }\]
	The fact that $B$ is a $c$-bar and $C$ is decidable ensures that this is a well-defined and point-wise continuous function. If $f$ is bounded by $N$, then this $N$ also gives us a uniform bound for $B$.
\end{proof}

This last equivalence also allows us to proof another version of Dini's theorem, which was first proved in \cite{jB09c}*{Theorem 19}.\footnote{To be precise, in that paper, the authors prove an equivalence of Dini's theorem to \ref{Pro:FANc-equivs}.\ref{Equiv:FANc4}. A direct proof can be found in the freely available \cite{jB18b}*{Proposition 8}}
\begin{Pro}[version of Dini's theorem] \label{Pro:FANc_equiv_Dini}
\FANc~is equivalent to every decreasing sequence of point-wise continuous functions $(f_n)_{n \geqslant 1}$ converging point-wise to $0$. Then the convergence is uniform.
\end{Pro}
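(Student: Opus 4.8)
The plan is to reduce the Dini-type statement to the previously established equivalence between \FANc and the statement that every point-wise continuous function $f:\CS \to \NN$ is bounded (the ``Variant of \POS'' proposition immediately above). Since \FANc also implies \UCT and hence \FAND, we actually get both directions by exploiting that a decreasing sequence converging point-wise to $0$ encodes, at each point, a ``first index at which $f_n$ is small'', which is exactly the kind of data that gives a point-wise continuous $\NN$-valued function on $\CS$.

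First I would prove the forward direction, $\FANc \implies$ the Dini statement. Assume $(f_n)_{n\geqslant 1}$ is a decreasing sequence of point-wise continuous functions on $[0,1]$ (or on $\CS$, using Lemma \ref{Lem:extendingfunctions} to transfer) with $f_n \to 0$ point-wise, and fix $\varepsilon > 0$. Using \FANc in the form that $f$ is uniformly continuous (Proposition \ref{Pro:FANc-equivs}.\ref{Equiv:FANc4}), or more directly its boundedness variant, I would build a point-wise continuous $g:\CS \to \NN$ by setting $g(\alpha)$ to be a natural number $n$ witnessing $f_n(F^{\nicefrac{1}{3}}(\alpha)) < \varepsilon$ together with a modulus ensuring this persists on a neighbourhood — here one uses that for each $x$ there is $n$ with $f_n(x) < \varepsilon/2$, then point-wise continuity of $f_n$ gives a neighbourhood of $x$ on which $f_n < \varepsilon$, and the decreasing property makes this stable for all larger indices. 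The only subtlety is making $g$ genuinely well-defined and point-wise continuous rather than merely ``locally constant up to choice''; this is handled exactly as in the proof of Proposition \ref{Pro:FANc-equivs}.\ref{Equiv:FANc5}$\implies$\ref{Equiv:FANc3} and uses countable choice to pick the witnessing pairs. Then $g$ bounded by $N$ means $f_N(x) < \varepsilon$ for all $x$, and by monotonicity $f_n(x) < \varepsilon$ for all $n \geqslant N$ and all $x$; that is uniform convergence.

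For the converse, assume the Dini statement and let $C \subset \cS$ be a $c$-bar, witnessed by a decidable $C'$ with $u \in C \iff \fa{w \in \cS}{u\ast w \in C'}$. Following the boundedness-variant proof above, I would define $f(\alpha) = \max\set{n \in \NN}{\overline{\alpha}n \notin C'}$, a well-defined point-wise continuous $f:\CS \to \NN$, and then set $f_n(\alpha) = \max\menge{0, f(\alpha) - n}$, extended to $[0,1]$ via Lemma \ref{Lem:extendingfunctions} if one wants the $[0,1]$ formulation. Each $f_n$ is point-wise continuous, the sequence is decreasing, and since $f$ is finite at every point, $f_n \to 0$ point-wise. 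By hypothesis the convergence is uniform, so there is $N$ with $f_N(\alpha) < 1$, i.e. $f_N(\alpha) = 0$, for all $\alpha$; hence $f(\alpha) \leqslant N$ for all $\alpha$, which gives $\overline{\alpha}N \ast w \in C'$ for all $\alpha$ and $w$, i.e. $\overline{\alpha}N \in C$. Thus $C$ is uniform and \FANc holds.

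The main obstacle I anticipate is the same one that recurs throughout this chapter: verifying that the $\NN$-valued function extracted from the decreasing sequence is honestly point-wise continuous — one must produce, for each point, not just an index $n$ with $f_n$ small but a genuine modulus-of-continuity neighbourhood on which the chosen index keeps working, and this requires invoking point-wise continuity of each individual $f_n$ and then using monotonicity to glue. The use of countable choice to assemble these local data into a single function should be noted explicitly, as in the analogous proofs in Proposition \ref{Pro:FANc-equivs}. Everything else — the monotonicity bookkeeping, the translation between $\CS$ and $[0,1]$ via Lemmas \ref{Lem:Fp} and \ref{Lem:extendingfunctions}, and the final boundedness-to-uniformity step — is routine given the machinery already developed.
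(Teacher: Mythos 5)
Your converse direction is essentially the paper's, with the appeal to the preceding boundedness-variant proposition inlined; that part is fine. The forward direction, however, has a real gap, and it is signalled already by your preliminary claim that ``\FANc~also implies \UCT''---this is backwards. The hierarchy runs $\UCT \implies \FANc$, and the converse is exactly what the principle \MC of Section \ref{Sec:Fan_collapse} is about: $\UCT \iff \FANc + \MC$, with \FANc alone believed strictly weaker.

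You treat the $f_n$ as real-valued: you fix $\varepsilon>0$ and want to build a point-wise continuous $g:\CS \to \NN$ with $g(\alpha)$ a first index $n$ for which $f_n(F^{\nicefrac{1}{3}}(\alpha)) < \varepsilon$. For real-valued $f_n$ the predicate $f_n(x) < \varepsilon$ is not decidable, so there is no canonical such index, and a countable-choice-based selection of witnessing pairs gives a function with no reason to be point-wise continuous: a nearby $\beta$ may simply receive a different chosen pair. You flag this and gesture at the proof of \ref{Pro:FANc-equivs}.\ref{Equiv:FANc5}$\implies$\ref{Equiv:FANc3}, but that argument addresses an unrelated problem (splitting and re-interleaving sequences) and does not repair yours. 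The gap is not cosmetic: if your forward direction worked for real-valued $f_n$, it would show that \FANc implies the Dini statement of Proposition \ref{Pro:Qu6answered} (specialise to limit $f=0$), and that proposition shows this to be equivalent to \UCT; you would thus have proved $\FANc \implies \UCT$, which is not known and is not believed. The repair is to read the proposition, as the paper's proof does implicitly, as concerning sequences $f_n:\CS \to \NN$. This is the form the converse actually produces, and it is the only reading under which Proposition \ref{Pro:FANc-equivs}.\ref{Equiv:FANc4} makes the $f_n$ uniformly continuous. For $\NN$-valued $f_n$ no $\varepsilon$ and no choice is needed: $g(\alpha) = \min\set{n}{f_n(\alpha)=0}$ is well-defined by point-wise convergence and is honestly locally constant (shrink a neighbourhood of $\alpha$ until $f_{g(\alpha)}$ and, if $g(\alpha)>0$, $f_{g(\alpha)-1}$ are constant on it; monotonicity then forces $g$ constant there). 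Applying \FANc in boundedness form gives a uniform bound $K$ on $g$, i.e.\ $f_K \equiv 0$, which is uniform convergence. The paper instead upgrades the $f_n$ to uniformly continuous via \ref{Pro:FANc-equivs}.\ref{Equiv:FANc4} and invokes the \FAND-version of Dini (Proposition \ref{Pro:Fand-equivalents}.\ref{fand-equiv-dini}); both routes work, but only once the codomain is $\NN$.
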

\begin{proof}
One direction follows from the fact that \FANc implies \FAND, that \FANc implies that a point-wise continuous function is uniformly continuous (see \ref{Equiv:FANc4} of \ref{Pro:FANc-equivs}), so we can use Proposition \ref{Pro:Fand-equivalents}.\ref{fand-equiv-dini}.

For the other direction, by the previous proposition, we only need to show that a function $f:\CS \to \NN$ is bounded. Now define $f_n : \CS \to \NN$ by $f_n(\alpha) = \max\menge{f(\alpha) - n,0}$. Clearly $f_n \geqslant f_{n+1}$. Since $f$ is point-wise continuous (which in this case even means locally constant), it is locally bounded. Therefore $f_n$ is eventually zero, locally, which means that $f_n \to 0$ point-wise. Now assume the convergence is uniform. That means there is $N$ such that $f_n < 1$. But that means that there cannot be $\alpha \in \CS$ with $f(\alpha) > N+1$, since then $f_N(\alpha) = \max\menge{f(\alpha)-N,0 } > 1$.
\end{proof}

\section{\texorpdfstring{\UCT}{UCT}} \label{Sec:UCT}

The uniform continuity theorem is the standard first step in numerous theorems of classical analysis. It states:
\begin{principle}[UCT]{\UCT} \label{PR:UCT}
Every point-wise continuous function $f:[0,1] \to \RR$ is uniformly continuous.
\end{principle}

Its importance in shaping the development of constructive mathematics cannot be overrated. It is not provable with purely constructive methods (i.e.\ in \BISH), which led Brouwer to the notion of bar induction, in order to be able to prove it. Bishop sneakily avoided such additional assumptions by simply building it into his definition of a continuous function. 

It is surprising, that  \UCT for real-valued functions on $[0,1]$ suffices to show the most general conceivable one, as proved in \cite{dB07}
\begin{Pro} \label{Pro:Equiv_UCT}
\UCT is equivalent to the statement that every point-wise continuous map $X \to Y$ on a compact metric space $X$ and into an arbitrary metric space $Y$ is uniformly continuous.
\end{Pro}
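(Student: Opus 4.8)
The plan is to reduce the general statement to the special case \UCT for $[0,1]$ using the machinery already built up in the chapter, in particular the uniformly surjective coverings of compact spaces (Proposition \ref{Pro:compXunifsurj}) and the extension lemma (Lemma \ref{Lem:extendingfunctions}). One direction is trivial: taking $X = [0,1]$ and $Y = \RR$ recovers \UCT verbatim, so it suffices to show that \UCT implies the general statement.

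So assume \UCT and let $f : X \to Y$ be point-wise continuous with $X$ compact and $Y$ an arbitrary metric space. First I would invoke Proposition \ref{Pro:compXunifsurj} to obtain a uniformly continuous, uniformly surjective map $g : \CS \to X$. The composite $f \circ g : \CS \to Y$ is point-wise continuous (composition of a uniformly continuous map with a point-wise continuous one). Next I would reduce from an arbitrary metric target $Y$ to $\RR$: fixing $\varepsilon > 0$, I want a single modulus working for all pairs at distance $< \delta$; the standard trick is to consider, for fixed points, the real-valued ``distance'' functions $\alpha \mapsto d\bigl(f(g(\alpha^{e})), f(g(\alpha^{o}))\bigr)$, where $\alpha^{e}, \alpha^{o}$ are the even/odd subsequences of $\alpha$. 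This is exactly the device used in the proof of Proposition \ref{FAND-equiv} (the implication \ref{FAND-equiv-cont2} $\implies$ \ref{FAND-equiv-cont1}), and I would follow that template closely: define $h : \CS \to \RR$ by $h(\alpha) = d\bigl(f(g(\alpha^{e})), f(g(\alpha^{o}))\bigr)$, check it is point-wise continuous, then transport it to a point-wise continuous $\widetilde{h} : [0,1] \to \RR$ via Lemma \ref{Lem:extendingfunctions} (using $F^{p}$ for some $0 < p < \tfrac12$, noting $h = \widetilde{h} \circ F^{p}$). Apply \UCT to $\widetilde{h}$ to get uniform continuity of $\widetilde{h}$, hence of $h$ (since $F^{p}$ is uniformly continuous with the quantitative bound from Lemma \ref{Lem:Fp}.1). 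Finally, unwind: given $\varepsilon > 0$, uniform continuity of $h$ yields $n$ with $\overline{\alpha}n = \overline{\beta}n \implies \abs{h(\alpha) - h(\beta)} < \varepsilon$; by uniform surjectivity of $g$ (Lemma \ref{Lem:cmpact-image-of-2N}-style, or directly Equation \eqref{Equ:unif_surj}) there is $\delta > 0$ so that $d(x,y) < \delta$ in $X$ lifts to $\alpha, \beta$ with $\overline{\alpha}n = \overline{\beta}n$, $x = g(\alpha)$, $y = g(\beta)$; then the interleaving argument ($\gamma = \alpha(0)\beta(0)\alpha(1)\beta(1)\dots$ versus $\tau = \alpha(0)\alpha(0)\alpha(1)\alpha(1)\dots$) shows $d(f(x),f(y)) = h(\gamma) \leqslant \abs{h(\gamma) - h(\tau)} + h(\tau) < \varepsilon$, since $h(\tau) = d(f(x),f(x)) = 0$ and $\overline{\gamma}n = \overline{\tau}n$. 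Hence $\delta$ is a modulus of uniform continuity for $f$ at $\varepsilon$.

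The main obstacle I anticipate is the bookkeeping for the metric target and the composition with $g$: verifying that $h$ genuinely has the continuity needed to apply Lemma \ref{Lem:extendingfunctions} (point-wise continuity suffices there), and making sure the quantitative estimates from Lemma \ref{Lem:Fp} and the uniform surjectivity of $g$ line up so the ``lift then interleave'' step produces exactly the index $n$ appearing in the uniform modulus for $h$. None of this requires new ideas beyond those in the proof of Proposition \ref{FAND-equiv}; the point is simply that \UCT (unlike \FAND) gives full point-wise-to-uniform continuity for the auxiliary function $h$, so no modulus-of-continuity hypothesis is needed on $f$. A secondary subtlety worth a sentence of care is countable choice: as in Proposition \ref{FAND-equiv} one may need it to extract a uniform modulus $\tau$ for $g$ from its pointwise continuity, and this should be flagged but is within the ambient assumptions of the text.
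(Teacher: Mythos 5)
The paper does not actually give a proof of this proposition: it defers entirely to the citation \cite{dB07}, so there is no internal argument to compare against. Your proposal is nevertheless a correct, self-contained derivation, and it is a sensible one to include: it reuses exactly the machinery the chapter builds up for Proposition \ref{FAND-equiv} (the uniformly surjective $g:\CS\to X$ from Proposition \ref{Pro:compXunifsurj}, the auxiliary $h(\alpha)=d\bigl(f(g(\alpha^e)),f(g(\alpha^o))\bigr)$, the extension via Lemma \ref{Lem:extendingfunctions}, and the interleaving/zipping of $\gamma$ and $\tau$). Two small points worth flagging. First, watch the index bookkeeping in the interleaving step: if $N$ is a modulus of uniform continuity for $h$ at $\varepsilon$, you must lift $x,y$ to $\alpha,\beta$ agreeing on at least $\lceil N/2\rceil$ entries so that $\overline{\gamma}N=\overline{\tau}N$; this is harmless but should be said explicitly rather than ``agreeing on $n$.'' Second, Proposition \ref{FAND-equiv} as written in the paper has $f(h(\cdot))$ where $f(g(\cdot))$ is intended (a typo there); your $h(\alpha)=d\bigl(f(g(\alpha^e)),f(g(\alpha^o))\bigr)$ is the correct reading, so no issue, but it is worth being aware that you have silently corrected the template. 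The use of countable choice to extract a modulus for $g$ is indeed within the ambient assumptions and matches the paper's practice.
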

Also in \cite{dB07}, and maybe even more surprising, it is shown it is also equivalent to some weaker versions
\begin{Pro} \label{Pro:more_Equiv_UCT}
\UCT is equivalent to the following
\begin{enumerate}
  \item Every point-wise continuous function $f:[0,1] \to \RR$ is bounded. (That is $[0,1]$ is pseudo-compact.)
  \item Every point-wise continuous function $f:\CS \to \RR$ is bounded.
  \item Every point-wise continuous mapping of $[0,1]$ into $\RR$ is  integrable.
\end{enumerate}
\end{Pro}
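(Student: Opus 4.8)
I would treat \UCT $\implies$ (1)--(3) as the trivial direction: a uniformly continuous function on $[0,1]$ (respectively $\CS$) lives on a complete, totally bounded space, so it is bounded, and its Riemann sums form a Cauchy sequence with a modulus read off from the modulus of uniform continuity, hence it is integrable. So all the content is in recovering \UCT from each of (1)--(3), and by Proposition \ref{Pro:Equiv_UCT} it suffices to show each of them implies that every point-wise continuous $f\colon[0,1]\to\RR$ is uniformly continuous.

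First I would dispose of the cheap reductions. For (1) $\iff$ (2): given point-wise continuous $g\colon[0,1]\to\RR$, the composite $g\circ F^{p}$ with $p>\tfrac12$ is point-wise continuous on $\CS$, and since $F^{p}$ is onto (Lemma \ref{Lem:Fp}.\ref{Fp_surj}) any bound for $g\circ F^{p}$ is a bound for $g$; conversely a point-wise continuous $f\colon\CS\to\RR$ extends, via Lemma \ref{Lem:extendingfunctions} with $p<\tfrac12$, to a point-wise continuous $\widetilde f\colon[0,1]\to\RR$ with $f=\widetilde f\circ F^{p}$, so a bound for $\widetilde f$ is a bound for $f$. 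For the integrability clause: \UCT $\implies$ (3) is the easy direction above, while if a point-wise continuous $f$ were unbounded one could, for any putative value of $\int_{0}^{1}f$ and any tagged partition (whose finitely many subintervals have a positive least length), retag a subinterval by a point at which $|f|$ is made arbitrarily large, forcing the Riemann sums to oscillate by more than any prescribed amount; hence $f$ is not integrable, so (3) $\implies$ (1).

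The heart of the matter is (1) $\implies$ \UCT. Let $f\colon[0,1]\to\RR$ be point-wise continuous and fix $k$; the goal is a uniform $\delta>0$ with $|x-y|<\delta\implies|f(x)-f(y)|\le 2^{-k}$. The strategy is to manufacture from $f$ and $k$ an auxiliary function $g\colon[0,1]\to\RR$ that is point-wise continuous and finite at every point, but such that a \emph{global} bound on $g$ --- which hypothesis (1) hands us for free --- is precisely a uniform modulus of continuity for $f$ at level $2^{-k}$: morally $g(x)$ should be of order $2^{m(x)}$ where $2^{-m(x)}$ is the \emph{local} modulus of $f$ at $x$, so that $g\le B$ gives $\delta\sim B^{-1}$.

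The obstacle --- and this is exactly where \cite{dB07} does the real work --- is that the naive $g$ is not constructively legitimate: ``the least $m$ with $|x-y|\le 2^{-m}\implies|f(x)-f(y)|\le 2^{-k}$'' requires deciding $|f(x)-f(y)|\le 2^{-k}$ against $|f(x)-f(y)|>2^{-k}$, and even if $f$ is presented with a modulus-of-continuity function, that function need not itself be continuous, so $x\mapsto 2^{m(x)}$ need not be point-wise continuous. The resolution is to build $g$ from a \emph{smoothed}, localized modulus: overlay the oscillation of $f$ at all dyadic scales with rapidly decaying weights and replace the sharp threshold $2^{-k}$ by a ramp, so that $g$ becomes point-wise continuous while a bound on $g$ still pins down the scale at which $f$ ceases to oscillate by more than $2^{-k}$. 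Carrying this construction out carefully (as in \cite{dB07}) finishes the proof; one could equally transport everything to $\CS$ by the previous paragraph and run the same argument there, where the clopen basis makes the bookkeeping a little cleaner, but the real-valued target is essential --- it is what makes the conclusion full \UCT rather than merely \FANc or \FAND.
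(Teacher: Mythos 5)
The paper gives no proof of this proposition at all — it simply cites \cite{dB07} — so there is no paper argument to compare against, and deferring the hard direction to \cite{dB07} (as you do) is therefore perfectly aligned with the source. Your logical architecture is sensible: reduce everything to ``(1) $\implies$ \UCT'' via the cheap reductions (1) $\iff$ (2) and (3) $\implies$ (1), and hand that one implication to \cite{dB07}. The (1) $\iff$ (2) step via $F^p$ for $p>\nicefrac{1}{2}$ in one direction and Lemma~\ref{Lem:extendingfunctions} with $p<\nicefrac{1}{2}$ in the other is correct.

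The one place you should be careful is (3) $\implies$ (1). As written, it is a reductio: ``if $f$ were unbounded, retagging would make the Riemann sums oscillate, so $f$ is not integrable.'' Constructively this only establishes $\lnot(\text{unbounded}) $, which (since ``unbounded'' and ``bounded'' are not each other's negations) does not deliver the positive statement that $f$ is bounded — you have derived at best $\lnot\neg(\text{bounded})$. The fix is to run the argument forward: assuming (3), the integrability of $f$ hands you a real number $I$ and, for $\varepsilon = 1$, a mesh bound $\delta>0$ such that every $\delta$-fine tagged partition has Riemann sum within $1$ of $I$. Fix the uniform partition with $n > \nicefrac{1}{\delta}$ cells. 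Changing the tag on a single cell $[x_{i-1},x_i]$ from $t$ to $t'$ changes the sum by $\tfrac{1}{n}|f(t)-f(t')|$, which must be $< 2$; hence $|f(t)-f(t')| < 2n$ on each cell, and $\max_i |f(x_i)| + 2n$ is an explicit bound for $f$. This is a genuine positive extraction and is the version you should state. Note also that this fix is only as good as the definition of ``integrable'': it requires the Riemann-style notion with a partition modulus (the notion actually used in \cite{dB07}), not a Daniell/Lebesgue-style one under which unbounded functions like $x\mapsto x^{-1/2}$ can still be integrable; your proposal should say which notion it is using, since the proposition is false under the other reading.

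Your high-level sketch of (1) $\implies$ \UCT — build a pointwise-continuous $g$ encoding a smoothed local modulus of $f$ so that a global bound on $g$ yields a uniform modulus for $f$ — is a reasonable description of the kind of construction involved, and you are right that the constructive legitimacy of $g$ is exactly where the difficulty lies. Since the paper itself defers this to \cite{dB07}, leaving the carrying-out of that construction to the cited source is acceptable.
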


Notice that the construction to prove the last equivalence in the above proposition relies on creating a function that has ``very high peaks.'' An interesting question is therefore: 

\begin{Qu}
Is there a version of the fan theorem equivalent to the following statement: 
every bounded point-wise continuous function $f:[0,1] \to \RR$ is integrable.
\end{Qu}

It is folklore that the following holds.
\begin{Pro} \UCT is equivalent the statement that every point-wise continuous $f:[0,1] \to \RR$ can be uniformly approximated with polynomials.
\end{Pro}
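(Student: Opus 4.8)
The plan is to establish the equivalence by showing that both statements are equivalent to \UCT via their connection to boundedness of point-wise continuous functions. The easy direction is that the polynomial approximation statement implies \UCT: if every point-wise continuous $f:[0,1] \to \RR$ can be uniformly approximated by polynomials, then in particular $f$ differs by at most $1$ from some polynomial $p$ on $[0,1]$; since $p$ is uniformly continuous on the compact interval $[0,1]$ (being a polynomial, it has a readily computable modulus of continuity), $f$ is uniformly continuous too. Hence this statement implies ``every point-wise continuous $f:[0,1] \to \RR$ is uniformly continuous,'' which is \UCT itself — or one can go via Proposition \ref{Pro:more_Equiv_UCT}, since uniform approximability by polynomials immediately gives boundedness.

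For the forward direction, assume \UCT and let $f:[0,1] \to \RR$ be point-wise continuous; by \UCT it is uniformly continuous. The remaining work is just the constructive Weierstrass approximation theorem for uniformly continuous functions, which is provable in \BISH: one uses the Bernstein polynomials $B_n(f)(x) = \sum_{k=0}^{n} f(k/n) \binom{n}{k} x^k (1-x)^{n-k}$. Given $\varepsilon > 0$, pick a modulus of uniform continuity $\delta$ for $f$ at $\varepsilon/2$, note that $f$ is bounded by some $M$ (which \UCT also gives us), and then the standard estimate splitting the sum into terms with $|k/n - x| < \delta$ and those with $|k/n - x| \geqslant \delta$, together with Chebyshev's inequality applied to the binomial distribution, shows $\abs*{B_n(f)(x) - f(x)} < \varepsilon$ for $n$ large enough, uniformly in $x$. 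All of these steps are finite/discrete combinatorial estimates and go through constructively without any choice principle beyond what is needed to extract the modulus.

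The main obstacle — and it is a mild one — is being careful about what ``point-wise continuous'' buys us before we invoke \UCT. We cannot evaluate $f$ at $k/n$ and manipulate the Bernstein polynomial unless we already know $f$ is (at least) sequentially well-behaved, but point-wise continuity is enough to make each $f(k/n)$ a genuine real number and the finite sum a genuine polynomial; the uniformity of the approximation is precisely what requires \UCT (equivalently, the fan theorem \FAND, since \UCT implies \FAND). One should also record that boundedness of $f$ is needed for the tail estimate, and this follows from Proposition \ref{Pro:more_Equiv_UCT}. So the proof is short: one direction is a one-line modulus-of-continuity argument, the other is the classical Bernstein argument carried out constructively once \UCT has upgraded point-wise to uniform continuity.
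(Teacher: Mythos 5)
Your proposal follows the same overall shape as the paper's proof: the easy direction is ``uniform approximability implies uniform continuity,'' and the forward direction is ``\UCT upgrades $f$ to uniformly continuous, then Bernstein polynomials finish.'' That matches the paper exactly. One remark about the route via boundedness: the paper argues the easy direction directly (for each $\varepsilon$ find a polynomial within $\varepsilon/3$, use the polynomial's modulus and the triangle inequality), whereas your alternative via Proposition \ref{Pro:more_Equiv_UCT} is a genuinely different, and arguably cleaner, path --- approximability gives boundedness with a single polynomial, and boundedness is already known to be equivalent to \UCT. Both are fine, and it is worth knowing that the second route avoids the uniform-limit argument entirely.

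However, the first version of your easy direction as written has a slip: from ``$f$ differs by at most $1$ from the polynomial $p$'' you \emph{cannot} conclude that $f$ is uniformly continuous. A function within distance $1$ of a polynomial can still be badly discontinuous --- indeed, producing bounded, pointwise-continuous but not uniformly continuous perturbations is exactly the kind of thing that goes wrong in the absence of \UCT. To make the direct argument correct you need, for \emph{every} $\varepsilon > 0$, a polynomial $p_\varepsilon$ with $\norm*{f - p_\varepsilon}_\infty < \varepsilon/3$, combine this with a modulus of uniform continuity for $p_\varepsilon$, and apply the triangle inequality; this is the standard ``uniform limit of uniformly continuous functions'' argument and it is what the paper is implicitly using. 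Since you also offer the boundedness route, the proposal is salvageable as stated, but the sentence about ``differs by at most $1$'' should be repaired or dropped.
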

\begin{proof}
	Of course, if $f$ can be uniformly approximated with polynomials, then $f$ is uniformly continuous, since polynomials are. Conversely the only non-constructive step in the normal proof that uses Bernstein polynomials \cite{gH94}*{Section 4.2.2} is to conclude that $f$ is uniformly continuous, which is exactly what \UCT enables us to do.
\end{proof}

Although the following proposition seems like a harmless variant of the previous one, it is actually very different: Notice that for the converse direction the functions are not assumed to be uniformly continuous. Somehow, just the fact that there are countably many of them suffices.

\begin{Pro} \label{Pro:UCT01sep}
	\UCT is equivalent to $C([0,1])$ being separable. \\ More precise: \UCT is equivalent to the statement that there exists a sequence of point-wise continuous functions $f_n:[0,1] \to \RR$ such that for all $\varepsilon > 0$ and all point-wise continuous $f:[0,1] \to \RR$ there exists $n \in \NN$ with 
	\[  \fa{x \in [0,1]}{\abs*{f_n(x)-f(x)} < \varepsilon} \ . \]	
\end{Pro}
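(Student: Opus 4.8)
The plan is to prove the two directions separately, with the forward direction being routine and the converse being the heart of the matter.

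\textbf{Forward direction ($\UCT \implies$ separability).} Assuming \UCT, every point-wise continuous function on $[0,1]$ is uniformly continuous, hence by Proposition \ref{Pro:more_Equiv_UCT} also bounded, and we may invoke the constructive Stone--Weierstrass/Bernstein machinery already cited above (Proposition before this one): every point-wise continuous $f$ can be uniformly approximated by polynomials. So it suffices to produce a countable dense family approximating all polynomials uniformly on $[0,1]$. Take $(f_n)$ to be an enumeration of all polynomials with rational coefficients. Given $\varepsilon > 0$ and point-wise continuous $f$, first approximate $f$ to within $\varepsilon/2$ by some polynomial $p$ (using \UCT), then approximate $p$ to within $\varepsilon/2$ by a rational-coefficient polynomial $f_n$ on the compact interval $[0,1]$ (routine, since $x \mapsto x^k$ is bounded by $1$ there). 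This gives $\fa{x\in[0,1]}{\abs*{f_n(x)-f(x)}<\varepsilon}$.

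\textbf{Converse direction (separability $\implies \UCT$).} This is the main obstacle and where the real content lies. By Proposition \ref{Pro:more_Equiv_UCT} it suffices to show that every point-wise continuous $f:[0,1]\to\RR$ is bounded. So suppose $(f_n)$ is a separating sequence and let $f$ be an arbitrary point-wise continuous function; I want to derive a bound on $f$. The idea is a diagonalisation: if $f$ were unbounded (or rather, if we cannot bound it), we will build a point-wise continuous $g$ that is ``farther than $1$'' from every $f_n$ at some point, contradicting the approximation property with $\varepsilon = 1$. Concretely, I would use the fact that $f$ itself is point-wise continuous and attempt to construct $g$ by modifying $f$ locally near a sequence of points where $f$ is large --- but care is needed because we cannot decide whether $f$ is large somewhere. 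Instead, the cleaner route is: for each $n$, the statement ``$\fa{x\in[0,1]}{\abs{f_n(x)-f(x)}<1}$ or not'' must be handled constructively. Here I would exploit that, from the sequence $(f_n)$ and $f$, one can build a single auxiliary point-wise continuous function (e.g. something like $h(x) = \sup$ over an appropriately coded countable set, or a direct ``tower'' construction placing scaled bumps) whose boundedness forces a uniform bound relating $f$ to finitely many $f_n$. The key technical step is the same ``very high peaks'' construction alluded to in the excerpt after Proposition \ref{Pro:more_Equiv_UCT}: given the separating family, use it to bound the heights of peaks that any point-wise continuous function can have, and then feed this back to show $f$ is bounded.

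\textbf{Expected main difficulty.} The delicate point is that we cannot, constructively, locate where $f$ is large, so the diagonalisation must be carried out ``positively'': rather than picking a bad point, one constructs a whole function $g$ (point-wise continuous, by gluing together suitably damped copies of the deviation $f - f_n$ scaled up) such that the hypothesis ``$g$ is $\varepsilon$-approximated by some $f_m$'' is contradictory unless $f$ is already bounded. Getting $g$ to be genuinely point-wise continuous --- so that it is a legitimate input to the separability hypothesis --- while still encoding enough information to force the bound, is the crux; this is precisely the sort of argument carried out in \cite{dB07} for the bounded/integrable equivalences, and I would adapt that peak-construction rather than reinvent it.
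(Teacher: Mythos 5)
Your forward direction is fine and matches the paper (which dismisses it with ``just as in the classical proof''), and your reduction of the converse to ``every point-wise continuous $g$ is bounded'' via Proposition~\ref{Pro:more_Equiv_UCT} is exactly the right move. You have also correctly identified the shape of the remaining argument: build a single point-wise continuous auxiliary function from $g$ and the separating family, feed it to the hypothesis with a fixed $\varepsilon$, and extract a bound. But this is where your proposal stops being a proof: you leave the construction as a vague plan (``gluing together suitably damped copies of the deviation $f-f_n$'') and defer to a citation, and the hinted construction is not obviously realizable and is not the one the paper uses.

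The paper's construction is short and resolves precisely the constructivity worry you raise (``cannot decide whether $f$ is large'') by a partition-of-unity trick that avoids any case split. Given point-wise continuous $g$, put $g_n(x)=\max\menge{0,\,1-\abs*{g(x)-n}}$; these tent functions form a partition of unity with at most two nonzero at any given $x$, so
\[
  G(x)=\sum_{n\in\NN} g_n(x)\bigl(\max\menge{f_1(x),\dots,f_n(x)}+1\bigr)
\]
is a locally finite sum, hence well-defined and point-wise continuous, with no decision about the size of $g(x)$ ever required. Choose $M$ with $\abs*{f_M(x)-G(x)}<1$ for all $x$. If $g(x)>M$ at some $x$, then the only indices $n$ with $g_n(x)>0$ satisfy $n\geqslant M$, and for each such $n$ the coefficient $\max\menge{f_1(x),\dots,f_n(x)}+1$ dominates $f_M(x)+1$; since the $g_n(x)$ sum to $1$, this gives $G(x)\geqslant f_M(x)+1$, contradicting $\abs*{f_M(x)-G(x)}<1$. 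Hence $g\leqslant M$ and \UCT follows. This concrete encoding --- using the value of $g$ to ``address'' which $f_n$'s to max over --- is the idea missing from your sketch.
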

\begin{proof}
Obviously \UCT implies that $C([0,1])$ is separable, just as in the classical proof. The interesting part is the proof of the converse: Consider $(f_n)_{n \geqslant 1}$ dense in $C([0,1])$. By Proposition \ref{Pro:Equiv_UCT} it suffices to show that every point-wise continuous $g:[0,1] \to \RR$ is bounded. To this end consider the functions $g_n$ defined by
\[ g_n(x) = \max \menge{0, 1- \abs*{g(x)-n}} \ . \] These have the property that $g_n(x)>0 \iff g(x) \in (n-1,n+1)$. Thus the function $G:[0,1] \to \RR$ defined by
\[ G(x) = \sum_{n \in \NN} g_n(x) \left( \max\menge{f_1(x), \dots f_n(x) } + 1\right) \]
is well-defined, and continuous. So there exists $M \in \NN$ such that 
\[ \fa{x \in [0,1]}{\abs*{f_M(x)-G(x)} <1 } \ .\]
Now there cannot be $x$ such that $g(x) > M$, since in that case $g_M(x) = 1$ and $g_i(x) = 0$ for all $i \neq M$. Hence $G(x) = \max\menge{f_1(x), \dots f_n(x) } + 1 \geqslant f_M(x) + 1$, which is a contradiction. Thus $g(x) \leqslant M$ for all $x \in [0,1]$ and we are done.
\end{proof}
\begin{Cor}
In \RUSS, $C([0,1])$ is not separable.
\end{Cor}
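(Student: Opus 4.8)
The statement is the corollary that in \RUSS, $C([0,1])$ is not separable; it is an immediate consequence of Proposition \ref{Pro:UCT01sep}, which establishes the equivalence of \UCT with the separability of $C([0,1])$, together with the fact that \UCT fails in \RUSS. So the whole task reduces to assembling these two ingredients. First I would recall that \UCT fails in \RUSS: in recursive constructive mathematics one can construct a Kleene tree, that is an infinite decidable binary tree with no infinite path, whose complement is a counterexample to \FAND; and since \UCT implies \FAND (this is noted throughout the \FAND section, e.g. via $\WKL \implies \UCT \implies \FAND$), \UCT must fail in \RUSS as well. Indeed, the standard recursive counterexample gives a point-wise continuous function $f:[0,1]\to\RR$ that is unbounded, contradicting Proposition \ref{Pro:more_Equiv_UCT}(1).

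Second, I would invoke Proposition \ref{Pro:UCT01sep}, which says \UCT is equivalent to the statement that $C([0,1])$ is separable, where separability is spelled out as the existence of a sequence $(f_n)_{n\geqslant1}$ of point-wise continuous functions on $[0,1]$ such that every point-wise continuous $f$ is uniformly $\varepsilon$-approximable by some $f_n$. Combining the two facts: were $C([0,1])$ separable in \RUSS, then by Proposition \ref{Pro:UCT01sep} we would have \UCT in \RUSS, contradicting the failure of \UCT there. Hence $C([0,1])$ is not separable in \RUSS.

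There is essentially no obstacle here — the corollary is a one-line deduction from the preceding proposition and the well-known failure of the fan theorem (equivalently, \UCT) in \RUSS. The only point worth being slightly careful about is which direction of Proposition \ref{Pro:UCT01sep} is used: we need ``separability of $C([0,1])$ $\implies$ \UCT'', i.e. the non-trivial direction whose proof constructs the auxiliary function $G(x)=\sum_n g_n(x)(\max\{f_1(x),\dots,f_n(x)\}+1)$ from a putative dense sequence and derives boundedness of an arbitrary point-wise continuous $g$. So the proof I would write is simply: \emph{Suppose $C([0,1])$ were separable in \RUSS. By Proposition \ref{Pro:UCT01sep} this implies \UCT, and hence \FAND. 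But in \RUSS there is a Kleene tree, whose complement witnesses the failure of \FAND (equivalently, one gets an unbounded point-wise continuous $f:[0,1]\to\RR$, contradicting Proposition \ref{Pro:more_Equiv_UCT}). This contradiction shows $C([0,1])$ is not separable in \RUSS.}
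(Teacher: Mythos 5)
Your proof is correct and is exactly the intended argument: the paper presents this as an immediate corollary of Proposition \ref{Pro:UCT01sep} without spelling out a proof, and your reasoning (separability $\implies$ \UCT $\implies$ \FAND, which fails in \RUSS because of the Kleene tree) is precisely the chain one is meant to assemble.
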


As mentioned in Section \ref{Sec:FAND} \FAND is equivalent to Dini's theorem formulated for a sequence of uniformly continuous functions $[0,1] \to \RR$. It is not too surprising that we can also prove the following version.

\begin{Pro*}[{\textbf{\theThm{}½}} version of Dini's theorem] \label{Pro:Qu6answered} \UCT is equivalent to the statement that 
if $(f_n)_{n \geqslant 1}:[0,1] \to \RR$ is a decreasing sequence of point-wise continuous functions converging point-wise to a point-wise continuous $f:[0,1] \to \RR$, then the convergence is uniform.\footnote{This answers Question 6 in the original version.} 
\end{Pro*}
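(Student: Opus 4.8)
The plan is to reduce this version of Dini's theorem to the characterisation of \UCT as ``every point-wise continuous $f:[0,1]\to\RR$ is bounded'' from Proposition \ref{Pro:more_Equiv_UCT}. One direction is routine: if \UCT holds, then by Proposition \ref{Pro:Equiv_UCT} all the $f_n$ and $f$ are uniformly continuous, so $g_n := f_n - f$ is a decreasing sequence of uniformly continuous functions converging point-wise to $0$, and we are exactly in the situation of \FAND-Dini (Proposition \ref{Pro:Fand-equivalents}.\ref{fand-equiv-dini}), noting that \UCT implies \FAND.

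For the converse, I would assume the stated Dini property and prove that every point-wise continuous $g:[0,1]\to\RR$ is bounded. The trick is to manufacture a decreasing sequence of point-wise continuous functions from $g$ whose uniform convergence forces boundedness, mimicking the construction in Proposition \ref{Pro:FANc_equiv_Dini}. Without loss of generality $g \geqslant 0$ (replace $g$ by $|g|$, which is still point-wise continuous). Set
\[ f_n(x) = \max\menge{0,\, g(x) - n} \ . \]
Each $f_n$ is point-wise continuous (it is the composition of $g$ with a continuous function $\RR\to\RR$), and clearly $f_n \geqslant f_{n+1} \geqslant 0$. Since $g$ is point-wise continuous it is \emph{locally} bounded, so for each fixed $x$ there is $N$ with $g(x) < N$, whence $f_n(x) = 0$ for all $n \geqslant N$; that is, $f_n \to 0$ point-wise, and the limit function $f \equiv 0$ is trivially point-wise continuous. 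Applying the Dini hypothesis, the convergence is uniform: there exists $N \in \NN$ with $f_N(x) < 1$ for all $x \in [0,1]$. But $f_N(x) < 1$ means $g(x) < N+1$ for all $x$, so $g$ is bounded. By Proposition \ref{Pro:more_Equiv_UCT} this gives \UCT.

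The main obstacle I anticipate is the verification that $f_n$ is genuinely point-wise continuous at every point of $[0,1]$ with a proper modulus (rather than merely sequentially continuous) — but since $t \mapsto \max\menge{0,t-n}$ is a $1$-Lipschitz map $\RR\to\RR$, any modulus of point-wise continuity for $g$ at $x$ serves as one for $f_n$ at $x$, so this is a genuinely routine check. The only other point requiring a little care is the reduction $g \rightsquigarrow |g|$ and the fact that ``locally bounded'' follows from point-wise continuity — both are immediate. Thus the real content of this $\Thm\text{½}$ is just the observation that the high-peak construction used to prove Proposition \ref{Pro:more_Equiv_UCT} can be refactored into a decreasing-to-$0$ sequence, exactly parallel to how Proposition \ref{Pro:FANc_equiv_Dini} refactors the ``$f:\CS\to\NN$ bounded'' characterisation of \FANc.
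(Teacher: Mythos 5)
Your proof is correct and follows essentially the same route as the paper's: both reduce the converse to the boundedness characterisation of \UCT via Proposition \ref{Pro:more_Equiv_UCT} using the truncation sequence $f_n = \max\menge{0, f - n}$, and both derive the forward direction from \FAND-Dini. Your explicit reduction to $\abs{g}$ is a minor (and slightly more careful) variant of what the paper does, but the argument is identical in substance.
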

\begin{proof}
	One direction is trivial since \UCT implies \FAND, and allows us to replace point-wise continuity by uniform continuity in Proposition \ref{Pro:Fand-equivalents}.
	
	Conversely, as mentioned in Proposition \ref{Pro:more_Equiv_UCT}, it suffices to show that a point-wise continuous $f:[0,1]\to\RR$ is bounded.  Now consider the sequence of functions $(f_n)_{n \geqslant 1}$ defined by \[ f_n(x) = \max\menge{f(x) - n, 0} \ .\] It is easy to see that this sequence is decreasing. Since a point-wise continuous function is locally bounded, it is also easy to see that we have $f_n(x) = 0$ eventually (this is actually the case on an entire neighbourhood of $x$). So $f_n(x) \to 0$ point-wise. Now assume that $f_n \to 0$ uniformly. Then there exists $N \in \NN$ such that $\abs*{f_N(x)} < 1$. We have $f(x)\leqslant N+1$ for any $x \in [0,1]$: for assume that $f(x) > N+1$, or  that equivalently $f(x) - N > 1$. Then $f_N(x) =\max\menge{f(x) - N,0}  > 1$; a contradiction.
\end{proof}

In \cite{hD11b} the following variations were considered in the context of differential equations.
\begin{Pro} The following are equivalent to \UCT
\begin{enumerate}
  \item (BUCT). Every bounded, continuous function $f: [0,1] \to \RR$ is uniformly continuous.
  \item (LUCT). Every continuous function $f: [0,1] \to \RR$ is locally uniformly continuous.
\end{enumerate}
\end{Pro}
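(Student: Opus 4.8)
The plan is as follows. The implications $\UCT \Rightarrow \mathrm{BUCT}$ and $\UCT \Rightarrow \mathrm{LUCT}$ are immediate: a bounded point-wise continuous function is in particular point-wise continuous, and a uniformly continuous function is in particular locally uniformly continuous. So all the content lies in the two converse implications, and for these I would first reduce, using Proposition \ref{Pro:more_Equiv_UCT}(1), to proving pseudo-compactness of $[0,1]$, i.e.\ that every point-wise continuous $f:[0,1]\to\RR$ is bounded; once that is known, the full \UCT follows from that proposition. (For $\mathrm{LUCT}$ one can equivalently aim directly at uniform continuity of $f$.)

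For $\mathrm{BUCT}\Rightarrow\UCT$: let $f:[0,1]\to\RR$ be point-wise continuous; we may assume $f\geqslant 1$. Using countable choice, fix for each dyadic rational $q\in[0,1]$ and each $j$ a positive $\delta(q,j)$ with $|y-q|\leqslant\delta(q,j)\Rightarrow|f(y)-f(q)|<2^{-j}$. From this data I would build an auxiliary function $F:[0,1]\to\RR$ as a uniformly convergent sum of small ``tents,'' one placed near each $q$ (inside disjoint Cantor-type intervals, in the style of Lemmas \ref{Lem:Fp} and \ref{Lem:fullyloc}), of summable heights but of half-widths tied to the $\delta(q,j)$. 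Being a uniform limit of locally Lipschitz point-wise continuous functions, $F$ is point-wise continuous, and since the heights are summable, $F$ is bounded. The tents are designed so that the steepness near $q$ grows when $\delta(q,j)$ shrinks, hence a uniform modulus of continuity for $F$ forces a uniform lower bound on the relevant $\delta(q,j)$ over a fine enough net of $q$'s — that is, it forces $f$ to be uniformly continuous (a fortiori bounded). Now I \emph{apply} $\mathrm{BUCT}$ to $F$: since $F$ is bounded and point-wise continuous, it is uniformly continuous, and by the previous sentence $f$ is then bounded, whence \UCT follows. The implication $\mathrm{LUCT}\Rightarrow\UCT$ runs the same way, except that one works with \emph{local} uniform continuity of $F$, which the construction arranges to fail at a limit point of the ``bad'' rationals unless $f$ is already uniformly continuous.

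The main obstacle — and the part that needs genuine care — is the construction of $F$ together with the constructive verification that it really is point-wise continuous and that the translation ``$F$ uniformly continuous $\Rightarrow$ $f$ bounded'' goes through. The tents accumulate precisely near points where $f$ oscillates on ever-finer scales, so one must balance heights against supports so that the infinite, accumulating construction stays a well-defined point-wise continuous function while still detecting non-uniformity of $f$ through an unbounded slope. This is the ``very high peaks'' technique already used to prove Proposition \ref{Pro:more_Equiv_UCT}, adapted here so that the peaks have \emph{bounded height but unbounded steepness}; this is what makes a bounded-function hypothesis such as $\mathrm{BUCT}$ (rather than pseudo-compactness itself) enough to draw the conclusion.
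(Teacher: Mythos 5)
The paper does not actually give a proof here --- it only cites \cite{hD11b} --- so your sketch must be judged on its own terms. The overall plan is sound: the forward implications are immediate, and reducing the converses via Proposition~\ref{Pro:more_Equiv_UCT} to showing that every point-wise continuous $f:[0,1]\to\RR$ is bounded is exactly the right first move. But the auxiliary function $F$ as you describe it cannot do what you ask of it. A sum of tents with pairwise disjoint supports and \emph{summable} heights is \emph{always} uniformly continuous, no matter how steep the individual tents are: summability gives $h_n\to 0$, each partial sum $F_N=\sum_{n\leqslant N}T_n$ is Lipschitz, and $\sup\abs{F-F_N}\leqslant\sup_{n>N}h_n\to 0$, so $F$ is a uniform limit of uniformly continuous functions and hence uniformly continuous, constructively and with no extra hypotheses. (For a given $\varepsilon$, discard every tent of height below $\varepsilon/3$ --- each contributes less than $2\varepsilon/3$ of oscillation --- and use the Lipschitz modulus of the finitely many that remain.) The pivotal claim, that a uniform modulus of continuity for $F$ ``forces a uniform lower bound on the relevant $\delta(q,j)$,'' is therefore false: the steepness of a tent is invisible to a modulus once its height is small. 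So BUCT applied to this $F$ yields nothing about $f$, and the same objection kills the LUCT version of the sketch, since a uniformly continuous $F$ is trivially locally uniformly continuous.

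The calibration you need is the opposite one: tents of \emph{fixed} height, with point-wise continuity of $F$ secured not by vanishing heights but by the well-foundedness of a tree, as in Lemma~\ref{Lem:fullyloc}. Given a decidable bar $B$ closed under extensions with tree $T=\lnot B$, placing height-$1$ tents of width on the order of $3^{-\abs{u}}$ in the Cantor gaps at the leaves $u$ of $T$ gives a bounded, point-wise continuous $F$ which is uniformly continuous precisely when $T$ is height-bounded; BUCT applied to it yields \FAND. Full \UCT then follows in a second step: apply BUCT to $1/(\abs{f}+1)$ --- bounded and point-wise continuous --- to obtain its uniform continuity, then invoke the equivalence of \FAND and \POS to get a positive infimum, whence $f$ is bounded. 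For LUCT yet another idea is needed, since both this tent function and the standard high-peaks function $1/d(\cdot,S)$ turn out to be \emph{locally} uniformly continuous automatically, so the local hypothesis demands a differently designed auxiliary construction.
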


\section{\texorpdfstring{\FANP and \FANst}{FAN-Î  and FANst}}
We will start this subsection with an attempt to remove some confusion about principles running under the name \FANP. Above we defined a $\Pi_{1}^{0}$-bar to be a bar that is the intersection of decidable sets that are closed under extension. Equivalently one often finds the following definition for a $\Pi_{1}^{0}$-bar $B$. There exists a decidable set $S \subset \cS \times \NN$ such that
\begin{enumerate}
\item $u \in B \iff \fa{n \in \NN}{(u,n) \in S}$
\item \label{stablebarcond} If $(u,n) \in S$, then for any $w \in \cS$ also $(u \ast w,n) \in S$.
\end{enumerate}
However, some authors\footnote{Including ourself.} have also referred to bars only satisfying the first condition as  $\Pi_{1}^{0}$-bars. In the following we will call these bars \define{stable bars}. Notice that stable bars are exactly bars that are the complement of a countable set. Of course another complexity of a bar leads to another fan theorem.
\begin{principle}[FANst]{\FANst} \label{PR:FANst}
Every stable bar is uniform.
\end{principle}
Trivially, 
\[ \FANf \implies \FANst \implies \FANP \ , \]
but the converses seem unlikely. The unproven (and probably false) assumption that \FANst~and \FANP are equivalent can be explained by Lemma \ref{Lem:dec_bar_closedunderextensions} taken together with the following observation.
\begin{Pro}
If a stable bar $B$ is closed under extensions, then $B$ is a $\Pi_{1}^{0}$-bar.
\end{Pro}
\begin{proof}
This proof is based on one by J.\ Berger.\footnote{Personal Correspondence, 18th Mar 2011.} Notice that a similar construction is also used in the proof of Theorem 2 of \cite{hD08} as well as in the proof of Proposition 2 of \cite{hD09} and in the proof of Proposition \ref{Pro:Equiv-fanp-equipos}. 
Assume that  the bar $B$ is the intersection of the decidable sets $B_n$. Now define 
\[ \widetilde{B}_n = \set{u \in \cS}{\fa{w}{\left( \abs{w} \leqslant n -\abs{u} \implies u \ast w \in B_n \right)} }  \ . \]
By  construction  every $\widetilde{B}_n$ is closed under extensions. We want to show that \[ B = \bigcap_{n \geqslant 1} \widetilde{B}_n \ . \]
To see that ``$\subset$'' holds let $u \in B$, which means that $u \ast w \in B$ for all $w \in \cS$. This implies that for all $w \in \cS$ and all $n \in \NN$ we have $u\ast w \in B_n$ and in particular that $u \ast w \in \widetilde{B}_n$. 
For the other direction ``$\supset$'' consider $u$ such that $u \in \bigcap_{n \geqslant 1}\widetilde{B}_n$, and consider an arbitrary $m \in \NN$. If $m \leqslant \abs{u}$, then write $u=\overline{u}m \ast w$. For this $w$ we have $\abs{w} \leqslant m - \abs{u}$, and therefore, since $u \in \widehat{B}_{m}$, also $u \in B_{m}$. If $m > \abs{u}$, then taking the $\varepsilon$, the empty sequence, as $w$ we get $u \ast w = u \in B_{m}$. Altogether $u \in B_{m}$ for any  $m$ and therefore $u \in \bigcap_{n \geqslant 1} B_n = B$.
\end{proof}
With the help of this proposition it seems as if we can show that for every stable bar there exists an equivalent $\Pi_{1}^{0}$ bar. However, the missing step is to prove that for every stable bar there exists an equivalent stable bar that is closed under extensions. The proof of Lemma \ref{Lem:dec_bar_closedunderextensions} does not generalise to this case, even if it seems on first glance easy to adapt the proof somehow. Notice that it would be enough to show that the closure (under extensions) of a stable bar is stable.

Our first \FANP-equivalence is a sequential version of \ref{Pro:Fand-equivalents}.\ref{PR:POS}.

\begin{Pro} \label{Pro:Equiv-fanp-equipos}
\FANP is equivalent to the following statement: \\
Every  equi-continuous, and equi-positive,\footnote{We call a sequence of functions $(f_{k})_{k \geqslant 1}:X \to \RR$ \define{equi-positive} if for every $x \in X$ there exists $\varepsilon>0$ with $f_{k}(x)> \varepsilon$ for all $k \in \NN$.} sequence of functions $(f_{k})_{k \geqslant 1}: \CS \to \RR$ is such that there exists $\delta>0$ such that 
\[  f_{k}(\alpha) > \delta \text{ for all } \alpha \in \CS \text{ and } k\in \NN \ . \]

\end{Pro}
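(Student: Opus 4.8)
The plan is to establish the two implications by translating between $\Pi^0_1$-bars and sequences of equi-continuous, equi-positive functions, mimicking the dictionary that was used for \FANc in Lemmas \ref{Lem:seq-to-bar} and \ref{Lem:bar-to-seq} and for \FAND in Proposition \ref{Pro:Fand-equivalents}. The key observation is that a $\Pi^0_1$-bar is an intersection $B = \bigcap_{n \geqslant 1} B_n$ of decidable sets closed under extensions, and each level $B_n$ naturally gives rise to one function $f_n$ in a sequence; conversely a sequence $(f_k)_{k \geqslant 1}$ of functions gives, for each $k$, a decidable (approximate zero-)set, and intersecting over $k$ produces a $\Pi^0_1$-bar.

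First I would prove that the function statement implies \FANP. Given a $\Pi^0_1$-bar $B = \bigcap_n B_n$ with each $B_n$ decidable and closed under extensions, and without loss of generality (by replacing $B_n$ with $B_1 \cap \dots \cap B_n$) decreasing, I define $f_k : \CS \to \RR$ by something like $f_k(\alpha) = \min\set{n \leqslant k}{\overline{\alpha}n \in B_k}$ if such an $n$ exists, and $f_k(\alpha) = $ (some fixed large value, or better, a quantity measuring how far $\overline{\alpha}k$ is from being in $B_k$) otherwise; the point is to arrange that $f_k$ is locally constant hence point-wise continuous, that the sequence is equi-continuous (each $f_k$ has the same trivial modulus coming from the fact that membership in $B_k$ depends only on a finite prefix), and that it is equi-positive: for a fixed $\alpha$, since $B$ is a bar there is $m$ with $\overline{\alpha}m \in B$, i.e. $\overline{\alpha}m \in B_k$ for all $k$, and $B_k$ closed under extensions then bounds $f_k(\alpha)$ uniformly in $k$ away from $0$ — here I should set things up so ``small $f_k$'' means ``barred'', so equi-positivity corresponds to $\alpha$ being barred at a bounded level. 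Applying the hypothesis gives $\delta > 0$ with $f_k(\alpha) > \delta$ for all $\alpha, k$; reading this back through the definition yields a uniform $N$ with $\overline{\alpha}N \in B_N \supseteq$ (eventually) $B$, and with a little care using closure under extensions one extracts a genuine uniform bound for $B$, using dependent/countable choice as in the analogous proofs.

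For the converse, \FANP implies the function statement: given $(f_k)_{k \geqslant 1}$ equi-continuous and equi-positive, I define for each $k$ a decidable set by declaring a finite sequence $u$ to be ``in $C_k$'' roughly when every $\alpha$ extending $u$ has $f_k(\alpha)$ bounded below on the cylinder determined by $u$ at the resolution given by the common modulus — using countable choice to pick, at each node, witnesses for the equi-continuity modulus, so that $C_k$ can be made decidable and closed under extensions. Then $B = \bigcap_k C_k$ is a $\Pi^0_1$-bar: it bars every $\alpha$ because equi-positivity gives $\varepsilon > 0$ with $f_k(\alpha) > \varepsilon$ for all $k$, and equi-continuity at $\alpha$ then localises this to a cylinder, putting a prefix of $\alpha$ into $C_k$ for every $k$. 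By \FANP there is a uniform $N$ with $\overline{\alpha}N \in B$ for all $\alpha$; unwinding this across all levels $C_k$ simultaneously — this is where equi-continuity (a single modulus working for all $k$) is essential — produces a single $\delta > 0$ lower-bounding all the $f_k$ on all of $\CS$.

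The main obstacle I anticipate is the bookkeeping in the converse direction: \FANP only gives uniformity for the bar $B$, which is an intersection over $k$, and to get a single $\delta$ valid for every $f_k$ one must check that the uniform height $N$ obtained from $B$ genuinely controls every $C_k$ at once, not merely cofinitely many. This forces the definition of the $C_k$ to be uniform in $k$ — precisely the role of \emph{equi}-continuity rather than mere point-wise continuity of each $f_k$ — and one has to be careful that the decidable sets $C_k$ are all closed under extensions so that ``barred by height $N$'' really means ``$\overline{\alpha}N \in C_k$'' for each $k$. A secondary technical point is making the $C_k$ decidable at all, which requires the countable choice of equi-continuity moduli, exactly as flagged in the surrounding text; the footnote's remark that similar constructions appear in \cite{hD08}, \cite{hD09} suggests the same trick of truncating the defining condition at level $n - \abs{u}$ to enforce closure under extensions will be needed here too.
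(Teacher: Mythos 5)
Your overall translation between $\Pi^0_1$-bars and sequences of equi-continuous, equi-positive functions is the same approach the paper takes, and the architecture of both directions — including the truncation trick to enforce decidability plus closure under extensions, and the recognition that equi-continuity must supply a modulus uniform in $k$ — is right. There is, however, a concrete error in the direction you labelled ``the function statement implies \FANP'' that makes the argument as written vacuous.

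The formula $f_k(\alpha) = \min\set{n \leqslant k}{\overline{\alpha}n \in B_k}$ has the monotonicity reversed: with it, $f_k(\alpha)$ is \emph{small} precisely when $\alpha$ is barred by $B_k$ at a \emph{low} level, and since $f_k$ takes values in the positive integers, equi-positivity holds trivially ($f_k \geqslant 1$ always) and the hypothesis's uniform conclusion $f_k(\alpha) > \delta$ is likewise automatic and carries no information whatsoever about $B$. You need the reciprocal relation, $f_k(\alpha) = 2^{-\min\set{\ell}{\overline{\alpha}\ell \in B_k}}$ (the cap $n \leqslant k$ is also unnecessary, since $B \subset B_k$ being a bar already guarantees the minimum exists). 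Then small $f_k(\alpha)$ means ``barred late,'' equi-positivity becomes exactly the assertion that $\alpha$ is barred at a level bounded over $k$ (which $B$ being a bar plus the $B_k$ being closed under extensions furnishes), and the uniform bound $f_k > 2^{-N}$ reads off directly as $\overline{\alpha}N \in B_k$ for every $k$, i.e.\ $\overline{\alpha}N \in B$. You wrote ``I should set things up so `small $f_k$' means `barred','' which shows you sensed an inversion was needed, but the explicit formula you gave is the one that fails. In the converse direction, your singly-indexed $C_k$ should really be a doubly-indexed family $B_{n,k}$ with the universal quantifier over extensions truncated to $\abs{w}+\abs{u}\leqslant n$, and the $\Pi^0_1$-bar taken to be $\bigcap_{n,k} B_{n,k}$; you anticipated this with the reference to the truncation trick, so the issue there is one of precision rather than a wrong idea.
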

\begin{proof}
First, assume that \FANP holds. Using countable choice fix decidable sets $B_{n,k} \subset \cS$ such that
\begin{align}
\label{Eqn:stabbardef} u \notin B_{n,k} & \implies \ex{w \in \cS}{\abs{w}+\abs{u} \leqslant n \land f_{k}(u*w) < 2^{-\abs{u}}} \ , \\
 u \in B_{n,k} & \implies \fa{w \in \cS}{\abs{w}+\abs{u} \leqslant n \implies f_{k}(u*w) > 2^{-(\abs{u}+1)}} \ .
\end{align}
We will show that $B = \bigcap_{n,k \in \NN} B_{n,k} $ is a  bar. To this end let $\alpha \in \CS$ be arbitrary. Since $(f_{k})_{k \geqslant 1}$ is equi-positive there exists $N$ such that $f_{k}(\alpha) > 2^{-(N-1)}$ for all $k \in \NN$. Now choose $M \in \NN$ such that 
\[ \fa{\beta}{\overline{\beta}M =\overline{\alpha} M \implies \abs*{f_{k}(\beta) -  f_{k}(\alpha)} < 2^{-N}} \ .  \]
Let $K= \max \menge{N,M}$. For any $k \in \NN$ and $w \in \cS$ we get  that
\begin{align*}
 f_{k}(\overline{\alpha}K \ast w )   
 & > f_{k}(\alpha ) - \abs*{f_{k}(\overline{\alpha}K \ast w) - f_{k}(\alpha)}  \ , \\
 & > 2^{-(N-1)} - 2^{- N} \\
 & > 2^{-N} > 2^{-K} \ . 
\end{align*}
Hence $\overline{\alpha}K \in B_{n,k}$, since case \ref{Eqn:stabbardef} is ruled out. Therefore $B$ is a bar. By construction it is also closed under extensions and we can apply \FANP, to get $L \in \NN$ such that $u \in B$ for all $u \in 2^{L}$. So for all $\alpha \in \CS$ we get, by continuity, $f_{k}(\alpha) \geqslant 2^{-(L+1)}$, which concludes one direction of the proof.

Conversely, let $B=\bigcap_{k \in \NN} B_{k}$ a stable bar, with decidable sets $B_{k}$ that are closed under extensions. Define $f_{k}: \CS \to \RR$ by  
\[ f_{k}(\alpha) = 2^{- \min \set{\ell \in \NN}{\overline{\alpha}\ell \in B_{k}}} \ .\]
We will show that $(f_n)_{k \geqslant 1} $ is equi-continuous and equi-positive.
To this end let $\alpha \in \CS$ be arbitrary. Since $B$ is a bar there exists $n \in \NN$ such that $\overline{\alpha}n \in B$, which means that for all $k \in \NN$ we can find the minimal $n_{k} \leqslant n$ such that $\overline{\alpha}n_{k} \in B_{k}$. This, in turn, implies that if $\overline{\beta}n = \overline{\alpha}n$, then $\overline{\beta}n_{k} = \overline{\alpha}n_{k}$ and therefore
\[ f_{k}(\alpha) = f_{k}(\beta) = 2^{-n_{k}} \geqslant 2^{-n}> 0 \ . \] Thus we have both shown equi-positivity and equi-continuity. By our assumptions there exists $N \in \NN$ such that $f_{k}(\alpha) > 2^{-N}$ for all $k \in \NN$ and $\alpha \in \CS$. This translates back, for all $\alpha \in \CS$, into the existence of $m \leqslant N$ such that $\overline{\alpha}m \in B_{k}$. Since the sets $B_{k}$ are closed under extensions also $\overline{\alpha}N \in B_{k}$ for all $n \in \NN$ and $\alpha \in \CS$. Therefore $\overline{\alpha}N \in B$  for all $\alpha \in \CS$, which means $B$ is uniform.
\end{proof}

\begin{Pro} The following are equivalent to \FANP
\begin{enumerate}
\item \label{bizzarevariant} Every locally constant function $f:X \to Y$ is uniformly locally constant; where $X$ is a compact and $Y$ an arbitrary metric space.
\item \label{bizzare} Every locally constant function $f:\CS \to \RR$ is uniformly locally constant.
\end{enumerate}
\end{Pro}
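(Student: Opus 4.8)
The proposition to prove is that \FANP{} is equivalent to (a) every locally constant function $f\colon X \to Y$ on a compact metric space $X$ into an arbitrary metric space $Y$ is uniformly locally constant, and (b) every locally constant function $f\colon\CS \to \RR$ is uniformly locally constant. Here "locally constant" should mean: for every $x$ there is a neighbourhood of $x$ on which $f$ is constant; and "uniformly locally constant" should mean: there is a single modulus $N$ such that $f$ is constant on every ball of radius $2^{-N}$ (equivalently, on $\CS$, $\overline{\alpha}N = \overline{\beta}N \implies f(\alpha) = f(\beta)$).

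The plan is to prove the cycle $\FANP \implies \ref{bizzarevariant} \implies \ref{bizzare} \implies \FANP$. The middle implication $\ref{bizzarevariant} \implies \ref{bizzare}$ is trivial, since $\CS$ is compact and $\RR$ is a metric space. For $\FANP \implies \ref{bizzarevariant}$ I would argue as in the proof of Proposition~\ref{FAND-equiv} (the \FAND/\UCT-style argument): given a locally constant $f\colon X \to Y$, use Proposition~\ref{Pro:compXunifsurj} to get a uniformly surjective uniformly continuous $g\colon\CS \to X$, precompose to get $f\circ g\colon\CS\to Y$, and reduce to the case $X = \CS$. On $\CS$, associate to $f$ the bar $B = \set{u \in \cS}{\fa{\alpha,\beta \in \CS}{f(u\ast\alpha) = f(u\ast\beta)}}$; local constancy of $f$ makes $B$ a bar, and $B$ is closed under extensions. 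The key point is to check that $B$ is a $\Pi^0_1$-bar: writing $f(\alpha) = f(\beta)$ as $d(f(\alpha),f(\beta)) = 0$, i.e. as $\fa{n}{d(f(\alpha),f(\beta)) < 2^{-n}}$, and using a continuous modulus of continuity of $f$ (which exists since $f$ is locally constant, hence point-wise continuous with a locally constant — in particular continuous — modulus), one expresses membership in $B$ as a countable intersection of decidable, extension-closed sets, exactly in the style of the Proposition in Section~\ref{Sec:FAND} that turns $c$-bars into $\Pi^0_1$-bars and of the Proposition stating that an extension-closed stable bar is a $\Pi^0_1$-bar. Applying \FANP{} then yields a uniform bound $N$, which is precisely a uniform modulus of local constancy for $f$, and transporting back along the uniformly surjective $g$ (using uniform surjectivity to convert a closeness bound in $X$ into a closeness bound in $\CS$, as in Lemma~\ref{Lem:cmpact-image-of-2N}) gives uniform local constancy of the original $f$.

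For the converse $\ref{bizzare} \implies \FANP$, I would start with a $\Pi^0_1$-bar $B = \bigcap_{n\geqslant 1} B_n$, where each $B_n$ is decidable and closed under extensions, and define $f\colon\CS \to \RR$ by $f(\alpha) = 2^{-\min\set{\ell \in \NN}{\overline{\alpha}\ell \in B_n \text{ for all } n \leqslant \ell}}$, or more simply $f(\alpha) = 2^{-\min\set{\ell \in \NN}{\overline{\alpha}\ell \in B}}$ — but the latter is not well-defined without decidability of $B$, so I would instead mimic the construction in the converse direction of Proposition~\ref{Pro:Equiv-fanp-equipos}: set $g_n(\alpha) = 2^{-\min\set{\ell}{\overline{\alpha}\ell \in B_n}}$ for each $n$ and then let $f(\alpha)$ record, coordinate-wise or via a sum $\sum_n 2^{-n} g_n(\alpha)$, enough information to be locally constant while encoding the bar. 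The cleanest route is probably to put $f(\alpha) = \sum_{n\geqslant 1} 2^{-n} g_n(\alpha)$; since $B$ is a bar, for each $\alpha$ there is $m$ with $\overline{\alpha}m \in B$, hence $\overline{\alpha}m \in B_n$ for all $n$, and because the $B_n$ are extension-closed each $g_n$ is constant on the basic clopen set determined by $\overline{\alpha}m$, so $f$ is locally constant there. Then $\ref{bizzare}$ gives a uniform modulus $N$, and one reads off from $N$ (together with extension-closedness of the $B_n$) a uniform bound for $B$: for every $\alpha$, $\overline{\alpha}N \in B_n$ for all $n$, hence $\overline{\alpha}N \in B$.

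The main obstacle I anticipate is twofold: first, pinning down the exact reduction $\ref{bizzarevariant} \implies \ref{bizzare}$ versus the harder direction, and more substantially, verifying carefully that the bar $B$ attached to a locally constant $f$ on $\CS$ really is a $\Pi^0_1$-bar and not merely a stable bar — this is the same subtlety the excerpt flags when it notes that \FANst{} and \FANP{} are not known to be equivalent, so one must genuinely use that local constancy supplies an extension-closed decidable approximation at each level (via a continuous modulus), rather than just complementation of a countable set. The other delicate point is the back-transport in $\ref{bizzarevariant} \implies \FANP$ route when $X \neq \CS$: converting uniform local constancy on $\CS$ to uniform local constancy on a general compact $X$ requires the uniform surjectivity of $g$ from Proposition~\ref{Pro:compXunifsurj}, exactly as in the proof of Proposition~\ref{FAND-equiv}, so I would lean on that lemma rather than reprove it.
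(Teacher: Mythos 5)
Your proposal takes a genuinely different route from the paper, and it contains two concrete gaps. For context: the paper \emph{cites} Berger for the equivalence $\FANP \iff$ (\ref{bizzare}) and only supplies a proof of (\ref{bizzarevariant}) $\iff$ (\ref{bizzare}). The non-trivial direction (\ref{bizzare}) $\implies$ (\ref{bizzarevariant}) is done by the interleaving trick: given locally constant $f\colon X\to Y$ and a uniformly surjective, uniformly continuous $g\colon\CS\to X$, set $h(\alpha)=d\bigl(f(g(\alpha^e)),f(g(\alpha^o))\bigr)$; $h$ is locally constant, (\ref{bizzare}) gives it a modulus $N$, and comparing the interleaved $\gamma=\alpha_0\beta_0\alpha_1\beta_1\dots$ with the doubled $\eta=\alpha_0\alpha_0\alpha_1\alpha_1\dots$ yields $d\bigl(f(g(\alpha)),f(g(\beta))\bigr)=h(\gamma)=h(\eta)=0$, i.e.\ a modulus for $f\circ g$, which is transported back to $X$ via the uniform surjectivity of $g$. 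No re-derivation of $\FANP \iff$ (\ref{bizzare}) is required; you re-derive it, and that re-derivation is where the problems lie.

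In your $\FANP\implies$ (\ref{bizzare}) direction you assert that a locally constant $f\colon\CS\to\RR$ has a \emph{continuous} modulus of local constancy, and use that to make $B=\set{u}{\fa{\alpha,\beta}{f(u\ast\alpha)=f(u\ast\beta)}}$ a $\Pi^0_1$-bar. That assertion is not justified: countable choice gives \emph{some} modulus $N_\alpha$, but picking the minimal one (the natural way to get a locally constant modulus) requires deciding $\Pi^0_1$-properties, i.e.\ \WLPO. Without a continuous modulus, $B$ as written is $\Pi^1_1$, not $\Pi^0_1$. The way around this is exactly the bounded-quantifier construction used in Proposition \ref{Pro:Equiv-fanp-equipos}: with countable choice fix decidable $B_n$ in which $u\in B_n$ certifies, approximately, that $f$ varies by less than $2^{-\abs{u}}$ on all extensions $u\ast w$ with $\abs{w}\leqslant n-\abs{u}$; the length bound makes each $B_n$ decidable and extension-closed with no modulus needed, and $\bigcap_n B_n$ is a $\Pi^0_1$-bar doing the job of your $B$.

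In your (\ref{bizzare}) $\implies\FANP$ direction the encoding $f=\sum_n 2^{-n}g_n$ is not injective, so $f(\alpha)=f(\beta)$ does not entail $g_n(\alpha)=g_n(\beta)$ for each $n$. Worse, the stated conclusion, ``for every $\alpha$, $\overline{\alpha}N\in B_n$ for all $n$,'' is simply false: take $B_n=\set{u\in\cS}{\abs{u}\geqslant 5}$ for every $n$. Then $f$ is globally constant, the modulus (\ref{bizzare}) supplies is $N=0$, yet $\overline{\alpha}0\notin B$. What one can actually extract—after first replacing $B_n$ by $\bigcap_{k\leqslant n}B_k$ so that the sets decrease, and replacing the sum by an injective coding such as $F^{\nicefrac{1}{3}}$ of a binary sequence recording each $m_n(\alpha)=\min\set{\ell}{\overline{\alpha}\ell\in B_n}$—is that $m_n(\alpha)$ is determined by $\overline{\alpha}N$. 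Then $m(\alpha)=\min\set{\ell}{\overline{\alpha}\ell\in B}=\sup_n m_n(\alpha)$ is likewise determined by $\overline{\alpha}N$, and maximising $m(u\ast\zero)$ over the finitely many $u\in 2^N$ gives a uniform bound $M\geqslant N$, which is what $\FANP$ needs.
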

\begin{proof} 
The equivalence of \FANP to \ref{bizzare} is shown in \cite{Ber06}.
The direction $\ref{bizzarevariant} \implies \ref{bizzare}$ is trivial. To see that the converse is valid let $f:X \to Y$ be a locally constant function. As usual, we can find a surjective and uniformly continuous function $g:\CS \to X$ \cite{Troelstra1988}*{Chapter 7 Corollary 4.4}. 
Now consider the function $h:\CS \to \RR$ defined by:
\[ h(\alpha) = d(f(\alpha^e),f(\alpha^o)) \ ,\]
where $\alpha^e$ is the sequence of all even terms of $\alpha$ and $\alpha^o$ the one of all odd ones (compare the proof of Proposition \ref{FAND-equiv}). Since $f$ is locally constant so is $h$. By our assumption that means that $h$ is globally constant, so there exists $N$ such that $\overline{\alpha}N = \overline{\beta}N$ implies that $h(\alpha) = h(\beta)$. Now given $\alpha, \beta$ with $\alpha N^\prime =\beta N^\prime$ where $N^\prime \geqslant N/2$ consider  $\gamma = \alpha_0 \beta_0 \alpha_1 \beta_1 \dots$ and $\eta=\alpha_0 \alpha_0 \alpha_1 \alpha_1 \dots$. By that construction $\overline{\gamma}N = \overline{\eta}N$, so $h(\gamma) = h(\eta)$.
So \[ 0 = d(f(\alpha),f(\alpha)) = d(f(\eta^e),f(\eta^o)) = h(\eta) = h(\gamma) = d(f(\gamma^e),f(\gamma^o)) =  d(f(\alpha),f(\beta))  \ .\]
In other words $f(\alpha) = f(\beta)$. Hence $f$ is globally constant.
\end{proof}
\begin{Rmk}
Notice that for functions $f:\CS \to \NN$ the notion of (uniformly) locally constant coincides with the one for (uniformly) point-wise continuous. For such functions the statement that ``every locally constant function is uniformly locally constant'' is therefore equivalent to \FANc.
\end{Rmk}

Maybe the most interesting equivalence is the following, involving a notion well known from the Arzela-Ascoli-Theorem in analysis. A proof can be found in \cite{hD08}.
\begin{Pro}
\FANP is equivalent to the statement that every equi-continous sequence of functions $[0,1] \to \RR$ is uniformly equi-continuous.
\end{Pro}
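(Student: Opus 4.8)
The statement to prove is the equivalence of \FANP with the assertion that every equi-continuous sequence of functions $[0,1] \to \RR$ is uniformly equi-continuous.

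The plan is to mirror the structure of the proof of Proposition~\ref{Pro:Equiv-fanp-equipos}, which already establishes the analogue for equi-positivity, and to use the linking machinery between $\CS$ and $[0,1]$ (Lemma~\ref{Lem:extendingfunctions}, Lemma~\ref{Lem:Fp}, and Proposition~\ref{Pro:compXunifsurj}) to transfer between the unit interval and Cantor space. First I would reduce, using the extension lemma, to the Cantor-space formulation: a sequence $(f_k)_{k \geqslant 1}$ of functions $\CS \to \RR$ that is point-wise equi-continuous is uniformly equi-continuous. Indeed, given an equi-continuous sequence on $\CS$, compose each $f_k$ with a local-constancy-respecting gadget or extend via Lemma~\ref{Lem:extendingfunctions} to $[0,1]$; conversely restrict along $F^{p}$ for $p<\tfrac12$, whose uniform continuity (Lemma~\ref{Lem:Fp}.1) and injectivity let one pull an equi-continuous family on $[0,1]$ back to one on $\CS$ and push a uniform modulus back out. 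This reduction is routine but needs care that the modulus of equi-continuity is genuinely uniform in $k$.

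For the main direction (\FANP $\implies$ uniform equi-continuity), fix an equi-continuous $(f_k)_{k\geqslant1}$ on $\CS$ and a target $\varepsilon = 2^{-e}$. Using countable choice, build for each $n$ a decidable set
\[ B_{n} = \set{u \in \cS}{\fa{w_1,w_2 \in \cS, k \in \NN}{\abs{w_i} \leqslant n - \abs{u} \land \overline{u w_1}\,\abs{u} = \overline{u w_2}\,\abs{u} \implies \abs{f_k(u w_1 \zero) - f_k(u w_2 \zero)} < 2^{-e}}} \ , \]
or more simply define $B_{n}$ to record that, within depth $n$ below $u$, all the $f_k$ oscillate by less than $2^{-e}$; the ``$\abs{w}\leqslant n-\abs{u}$'' clause guarantees closure under extensions exactly as in the proof of Proposition~\ref{Pro:Equiv-fanp-equipos} and the earlier $\Pi^0_1$-bar constructions. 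One shows $B = \bigcap_n B_n$ is a bar: given $\alpha$, equi-continuity at $\alpha$ supplies an $M$ working for every $k$, so $\overline{\alpha}M \in B$. Since $B$ is a $\Pi^0_1$-bar closed under extensions, \FANP yields a uniform height $N$, and then $\overline{\alpha}N = \overline{\beta}N$ forces $\abs{f_k(\alpha) - f_k(\beta)} < 2^{-e}$ for all $k$, which is uniform equi-continuity.

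For the converse, take a stable bar $B = \bigcap_k B_k$ with the $B_k$ decidable and closed under extensions (this is the standard normalisation, cf.\ the discussion around Lemma~\ref{Lem:dec_bar_closedunderextensions}), and define $f_k:\CS \to \RR$ by $f_k(\alpha) = 2^{-\min\set{\ell}{\overline{\alpha}\ell \in B_k}}$, exactly as in Proposition~\ref{Pro:Equiv-fanp-equipos}. There one already checks $(f_k)$ is equi-continuous (the locating level $n$ for $\alpha$ works simultaneously for all $k$ since $\overline{\alpha}n \in B$). Uniform equi-continuity then gives an $N$ with $\overline{\alpha}N = \overline{\beta}N \implies \abs{f_k(\alpha)-f_k(\beta)}$ small for all $k$; feeding in $\beta = \overline{\alpha}N \ast \zero$ and using closure under extensions forces $\overline{\alpha}N \in B_k$ for every $k$, hence $\overline{\alpha}N \in B$, so $B$ is uniform. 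Finally transport this $\CS$-version back to $[0,1]$ via the reduction above. The main obstacle I anticipate is not any single step but keeping the quantifier over $k$ genuinely uniform throughout both the bar construction and the reduction between $\CS$ and $[0,1]$ — in particular ensuring the modulus extracted from \FANP (resp.\ from uniform equi-continuity) does not secretly depend on $k$ — together with the bookkeeping needed to make $B$ provably $\Pi^0_1$ and closed under extensions.
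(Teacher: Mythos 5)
The paper does not actually prove this proposition — it cites \cite{hD08} — so I am judging your argument on its own merits. Your core bar constructions are sound and do mirror Proposition~\ref{Pro:Equiv-fanp-equipos}: the forward direction builds a $\Pi^0_1$-bar from the oscillation of the family, and the converse recycles the functions $f_k(\alpha) = 2^{-\min\{\ell:\, \overline{\alpha}\ell \in B_k\}}$, feeding in $\beta = \overline{\alpha}N \ast \zero$ and using closure under extensions. That part is right, modulo a slip: as written your $B_n$ has a $\forall k \in \NN$ inside and is therefore not decidable; you need the double-indexed $B_{n,k}$ (decidable for each fixed $(n,k)$) with $B = \bigcap_{n,k} B_{n,k}$, exactly as in Proposition~\ref{Pro:Equiv-fanp-equipos} — you clearly know this, but the displayed definition does not say it.

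The genuine gap is in your transfer between $\CS$ and $[0,1]$ for the forward direction. You propose to ``restrict along $F^{p}$ for $p < \tfrac12$, whose uniform continuity and injectivity let one pull an equi-continuous family on $[0,1]$ back to one on $\CS$ and push a uniform modulus back out.'' Injectivity buys you nothing here: for $p < \tfrac12$ the image $F^p(\CS)$ is the Cantor set, a nowhere-dense subset of $[0,1]$, so a uniform modulus for $(f_k \circ F^p)$ on $\CS$ controls $(f_k)$ only on that set and says nothing about pairs $x,y$ in the gaps. What you need is the \emph{uniformly surjective} $F^p$ with $p > \tfrac12$ (Lemma~\ref{Lem:cmpact-image-of-2N}, or Proposition~\ref{Pro:compXunifsurj}): given $\varepsilon$, \FANP gives an $N$ for $(f_k \circ F^p)$; uniform surjectivity then supplies a $\delta$ such that any $x,y$ with $d(x,y) < \delta$ lift to $\alpha,\beta$ with $\overline{\alpha}N = \overline{\beta}N$ and $F^p(\alpha) = x$, $F^p(\beta) = y$, whence $\abs{f_k(x) - f_k(y)} < \varepsilon$ for all $k$. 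Without surjectivity the argument does not close. A secondary point, less severe: in the converse you extend each $f_k$ to $[0,1]$ via Lemma~\ref{Lem:extendingfunctions} and then assert the extended family is equi-continuous; the lemma as stated only preserves properties of a single function, so you owe a short check that the interpolation construction transports a modulus of equi-continuity for the whole family, not just continuity of each $\tilde f_k$.
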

It seems feasible to also prove this theorem for functions functions of type $\CS \to \RR$ and for functions $f:X \to Y$, where $X$ is an arbitrary compact and $Y$ is an arbitrary metric space.
This is a common theme throughout a lot of the results in this chapter; namely that $\CS$ and $[0,1]$ are often interchangeable without changing the classification. However, with the anti-Specker principle we have to be a bit careful. Here the space that $\CS$ or $[0,1]$ are embedded in makes a difference, as was shown in \cite{hD09}.
\begin{Pro} \label{Pro:FANP-equiv-ASXY}
 	The following are equivalent to \FANP.
	\begin{enumerate}
		\item \AS{[0,1]}{\RR^{2}}
		\item \AS{X}{Y} for any compact subspace $X$ of a metric space $Y$
	\end{enumerate}
\end{Pro}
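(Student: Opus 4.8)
The plan is to prove Proposition \ref{Pro:FANP-equiv-ASXY} by linking the anti-Specker principle over two-point-or-more extensions back to \FANP via the machinery already set up: Lemma \ref{Lem:seq-to-bar} and Lemma \ref{Lem:bar-to-seq} handled the $c$-bar case (giving \FANc), and the same strategy should work here once we pass to stable bars. First I would observe that the direction \FANP $\implies$ \AS{X}{Y} for compact $X \subset Y$ is the easy one, essentially classical: a sequence $(x_n)$ eventually bounded away from every point of $X$ produces a stable bar (using the linking function $F^{\nicefrac{1}{2}}$ or $F^{\nicefrac{1}{3}}$ as in Lemma \ref{Lem:seq-to-bar}, but now where ``$x_n$ lies close to the cell $I_u$'' is only expressible as a countable intersection rather than decidably, because $Y$ is richer than $\RR$ and we only have located distances, not decidable membership); this bar is uniform by \FANP, which translates into the sequence being eventually bounded away from all of $X$.

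The reverse direction is the one requiring the bar-to-sequence construction. Given a $\Pi^0_1$-bar $B = \bigcap_n B_n$ with the $B_n$ decidable and closed under extensions, I would mimic Lemma \ref{Lem:bar-to-seq}: enumerate $\cS$ by a bijection $\eta$ respecting length, and define points $x_m$ in an appropriate ambient space $Y$ — this is where we need $\RR^2$ (or a general metric one-point-or-more extension) rather than $\RR$, because we must be able to ``park'' a point $x_m$ at a location that simultaneously records membership of $\eta(m)$ in the various $B_n$; in $\RR$ there is not enough room to witness a countable intersection continuously, but in $\RR^2$ we can stack the data along a second coordinate, converging to $0$ exactly when $\eta(m) \in B$, and escaping to the ``extension point'' otherwise. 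The sequence so constructed will be eventually bounded away from every point of $X = [0,1] \times \{0\}$ (using the Variation on Bishop's Lemma, Lemma \ref{Lem:Fp}.\ref{Lem:Var-Bish}, to get the uniform escape near points not in the Cantor set), and if \AS{X}{Y} holds it is eventually bounded away from all of $X$, which forces a uniform bound on $B$. The passage from \AS{[0,1]}{\RR^2} to \AS{X}{Y} for arbitrary compact $X \subset Y$ then goes through Proposition \ref{Pro:compXunifsurj}: write $X$ as a uniformly surjective image of $\CS$, pull the sequence back, and transfer the anti-Specker conclusion along the uniform continuity of the surjection, exactly as in the analogous step of Proposition \ref{Pro:FANc-equivs} for \FANc.

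The main obstacle I anticipate is getting the ambient space and the embedding exactly right so that membership of $\eta(m)$ in each decidable set $B_n$ can be encoded continuously and so that the ``bounded away from every point'' property genuinely holds constructively — in particular, ensuring the second-coordinate data both (a) tends to $0$ along a fixed $\alpha$ when $B$ bars $\alpha$, giving eventual distance-bound away from the single point $F(\alpha)$, and (b) is large enough, uniformly, away from points $x$ with $d(x, F(\CS)) > 0$. This is precisely the role played by the two cases in the proof of Lemma \ref{Lem:bar-to-seq}, and the subtlety is that for a stable (merely $\Pi^0_1$) bar the ``decidable'' bookkeeping of that lemma is no longer available, so one has to carry a countable family of decidable approximations and only take the intersection at the very end — which is exactly what forces the weaker fan principle \FANP rather than \FANc. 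Once the construction is in place the verifications are the same routine estimates (triangle inequalities with the $p^n$-bounds from Lemma \ref{Lem:Fp}) as in the $c$-bar proof, so I would not grind through them in detail but instead cite Lemmas \ref{Lem:seq-to-bar} and \ref{Lem:bar-to-seq} as templates and note which inequalities change. I would also cite \cite{hD09} for the fact that the ambient space genuinely matters here, since that reference established the inequivalence of \AS{[0,1]}{\RR} and \AS{[0,1]}{\RR^2}.
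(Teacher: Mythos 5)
The paper itself gives no proof of this proposition --- the surrounding text simply states that the result ``was shown in \cite{hD09}.'' There is therefore no in-text argument for me to compare your sketch against; I can only assess it on its own terms.

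The strategy you outline --- repeat the \FANc argument of Lemmas \ref{Lem:seq-to-bar} and \ref{Lem:bar-to-seq} with $\Pi^0_1$-bars in place of $c$-bars, using the extra dimension of $\RR^2$ to store graded information that a single decidable condition cannot, then transfer to arbitrary compact $X \subset Y$ via Proposition \ref{Pro:compXunifsurj} --- is the right one, and the references you invoke are the correct ones. But the central construction, as you describe it, has the polarity reversed. You write that the second coordinate ``converg[es] to $0$ exactly when $\eta(m) \in B$, and escap[es] \dots otherwise.'' In Lemma \ref{Lem:bar-to-seq}, which you cite as your template, the point $x_m$ lands \emph{inside} $[0,1]$ precisely when $\eta(m) \notin C'$, i.e.\ when $\eta(m)$ is known not to be barred, and escapes to $2$ when $\eta(m) \in C'$: in short, ``barred'' means ``far.'' Transposed to the $\Pi^0_1$-setting, the point must be \emph{far} from $X = [0,1] \times \menge{0}$ when $\eta(m) \in B$; with your polarity, $B$ being uniform would push the second coordinate to $0$, so the sequence could never be eventually bounded away from $X$, which is the opposite of what you need. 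The difficulty you correctly identify --- that $\eta(m) \in B$ is a $\Pi^0_1$-condition and is never positively confirmed --- is handled by making the ``far'' value the \emph{default} and letting $x_m$ drift toward $X$ only once $\eta(m) \notin B_n$ is actually witnessed, not by the encoding you state. (Even then, if one keeps a single index $m$, a fixed short $u \notin B$ would give a fixed $F(u) \in X$ accumulating the sequence, so the precondition of the anti-Specker property would fail; getting around that is exactly where the second coordinate has to do real work, and none of that detail is in the sketch.) Separately, your third step ($\AS{[0,1]}{\RR^{2}} \implies \AS{X}{Y}$) is redundant: once you have $\FANP \implies \AS{X}{Y}$ and $\AS{[0,1]}{\RR^{2}} \implies \FANP$, the trivial specialization $\AS{X}{Y} \implies \AS{[0,1]}{\RR^{2}}$ closes the triangle; and as a direct argument it is suspect on its own, since a uniformly continuous surjection $\CS \to X$ says nothing about how $Y$ sits around $X$, so ``pulling the sequence back'' does not visibly preserve the anti-Specker hypothesis. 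None of this makes the approach wrong at the strategic level, but as written the sketch inverts the key encoding and carries a superfluous and under-justified step.
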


\section{\texorpdfstring{\FANf}{FANfull}}
There is, maybe surprisingly, hardly any equivalence to the full, unrestricted, fan theorem. The only one, which is almost a kind of restatement, is that $\CS$ is cover-compact.
\begin{Pro} The following are equivalent
\begin{enumerate}
	\item \label{EquiFanf1} \FANf 
	\item \label{EquiFanf2} Every compact (totally bounded and complete) metric space is cover compact.
	\item \label{EquiFanf3} $\CS$ is cover compact.
\end{enumerate}

\end{Pro}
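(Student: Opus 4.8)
The plan is to establish the cycle \ref{EquiFanf1} $\implies$ \ref{EquiFanf2} $\implies$ \ref{EquiFanf3} $\implies$ \ref{EquiFanf1}, where the last implication is the only one carrying any mathematical content, the middle one being trivial since $\CS$ is a compact metric space, and the first being a standard argument reducing an open cover of a compact $X$ to a bar on $\CS$ via a uniformly continuous surjection $g\colon\CS\to X$ (cf.\ \cite{Troelstra1988}*{Chapter 7 Corollary 4.4}). So let me describe \ref{EquiFanf2} $\implies$ \ref{EquiFanf3} and \ref{EquiFanf3} $\implies$ \ref{EquiFanf1} in a little more detail.

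For \ref{EquiFanf2} $\implies$ \ref{EquiFanf3} one just notes that $\CS$, with the standard metric $d(\alpha,\beta) = 2^{-\min\set{n}{\alpha(n)\neq\beta(n)}}$, is complete (a Cauchy sequence of binary sequences stabilises on every coordinate) and totally bounded (the $2^n$ basic clopen cylinders of length $n$ form a finite $2^{-n}$-net), hence compact, hence cover compact by \ref{EquiFanf2}.

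The substantive step is \ref{EquiFanf3} $\implies$ \ref{EquiFanf1}. Suppose $\CS$ is cover compact, and let $B\subset\cS$ be an arbitrary bar, so $\fa{\alpha\in\CS}{\ex{n}{\overline{\alpha}n\in B}}$. For each $u\in\cS$ let $U_u = \set{\alpha\in\CS}{\overline{\alpha}\abs{u} = u}$ be the basic cylinder; this is an open (indeed clopen) subset of $\CS$. Consider the family $\mathcal{U} = \set{U_u}{u\in B}$. The bar property says precisely that $\mathcal{U}$ is an open cover of $\CS$: given $\alpha$, pick $n$ with $\overline{\alpha}n\in B$, and then $\alpha\in U_{\overline{\alpha}n}\in\mathcal{U}$. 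By cover compactness there is a finite subcover $U_{u_1},\dots,U_{u_k}$ with $u_1,\dots,u_k\in B$. Let $N = \max\menge{\abs{u_1},\dots,\abs{u_k}}$. Now for an arbitrary $\alpha\in\CS$ there is some $i$ with $\alpha\in U_{u_i}$, i.e.\ $\overline{\alpha}\abs{u_i} = u_i \in B$ with $\abs{u_i}\leqslant N$; hence $\ex{n\leqslant N}{\overline{\alpha}n\in B}$, so $B$ is uniform. The main obstacle — really the only point requiring care — is being precise about what ``open cover'' and ``finite subcover'' mean constructively (one should use a notion of cover compactness given by an \emph{indexed} family of open sets, or equivalently a cover by a set of basic cylinders, so that extracting a finite subcover is not itself a choice principle); with the right bookkeeping this is routine, and indeed the whole equivalence is little more than an unpacking of definitions, which is why so few genuinely different equivalences of \FANf{} are known.
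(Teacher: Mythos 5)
Your proof is correct and follows essentially the same cycle the paper uses: the trivial implication from cover-compactness of compact spaces to that of $\CS$, the bar-to-cylinder-cover translation for $\CS$, and the pullback of a cover on a compact $X$ along a continuous surjection $\CS\to X$ to a bar on $\CS$. Your closing remark about the bookkeeping involved in ``finite subcover'' is apt but doesn't change the substance.
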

\begin{proof}
Trivially \ref{EquiFanf2} implies \ref{EquiFanf3}. To see that \ref{EquiFanf3} implies \ref{EquiFanf1} let $B$ be an arbitrary bar. The sets \[ B_u =  \set{\alpha \in \CS}{\overline{\alpha}\abs{u} = u} \] form an open cover of $\CS$. If this cover can be refined to a finite cover $B_{u_1}, \dots , B_{u_n}$, then $\max \menge{u_1, \dots,u_n}$ is a uniform bound for $B$. 
Finally, assume \ref{EquiFanf1} and let $X$ be totally bounded and complete. We can find a continuous $g:\CS \to X$ \cite{Troelstra1988}*{Chapter 7 Corollary 4.4}.
Now if $(U_i)_{i \in I}$ is an open cover of $X$ let
\[ B= \set{u \in \CS}{\ex{i \in I}{\fa{\beta \in \CS}{u \ast \beta \in g^{-1}(U_i)}}} \ . \]
For every $\alpha \in \CS$ there exists $i \in I$ such that $g(\alpha) \in U_i$, or equivalently $\alpha \in g^{-1}(U_i)$. Since $g^{-1}(U_i)$ is open there exists $n$ such that $\overline{\alpha}n \ast \beta \in g^{-1}(U_i)$ for all $\beta \in \CS$. So $B$ is a bar. Applying \FANf yields $n$ such that $u \in B$ for all $u \in 2^n$. That means we can find finitely many $i_u$ such that $(g^{-1}(U_{i_u}))_{u \in 2^n}$ covers $\CS$. But since $g$ is surjective this means that $(U_{i_u})_{u \in 2^n}$  cover $X$. 
\end{proof}
Given the similarities between them we might be tempted to try and replace  $\CS$ with $[0,1]$ in the above proposition. However, the work in \cite{iM84} shows that $[0,1]$ being cover compact is strictly weaker than \FANf. This is a very subtle distinction since in the case of a countable cover with basic opens the cover compactness of $[0,1]$ is  equivalent to the one of $\CS$ (and both are equivalent to \FAND \cite{hD08b}).

\begin{landscape}

\section{Comparing the Fan Theorems}
The differences between the fan theorems are overall very subtle and often confusingly minute. Since many of the results in this section are variations of each other, we hope that the following table might highlight some of the differences. 
\newlength{\tabwid}
\setlength{\tabwid}{4.13cm}
\begin{center}\hbadness=10000
{\rowcolors{1}{}{mylightgray}
\begin{tabular}{p{2.1cm}|p{\tabwid}|p{\tabwid}|p{\tabwid}|p{\tabwid}} \toprule
 & \FAND & \FANc &  \UCT & \FANP{} \\ \midrule
Uniform continuity & Every pwc.\ fully located $f:[0,1] \to \RR$ is uc.\ \newline Every $f:[0,1] \to \RR$ with a pwc.\ modulus of cont.\ is uc. & Every pwc.\ $f:\CS \to \NN$ is uc.\ \newline  Every pwc.\ $f:[0,1] \to \RR$ is usc.\ & Every pwc.\ $f:[0,1] \to \RR$ is uc.\ \newline Every pwc.\ $f:\CS \to \RR$ is uc.\ &\\ \midrule
Anti-Specker & \AS{[0,1]}{\RR} for tail-located sequences & \AS{[0,1]}{\RR},   \AS{[0,1]}{1},  \AS{\CS}{1}, & & \AS{[0,1]}{\RR^2} \\ \midrule
Boundedness statements & \POS & Every pwc.\ $f:\CS \to \NN$ is bounded. & Every pwc.\ $f:\CS \to \RR$ or $f:[0,1] \to \RR$ is bounded. & \\ \midrule 
Equicontinuity & & Every ec.\ $f_n:[0,1] \to \RR$ is usec.\ & & Every ec.\ $f_n:[0,1] \to \RR$ is uec.\ \\ \midrule constancy & & Every locally const.\ $f:\CS \to \NN$ is globally const.\ &  & Every locally const.\ $f:\CS \to \RR$ is globally const.\ \\ \bottomrule
\end{tabular}}
\end{center}
The following abbreviations are used:
\begin{multicols}{2}
\begin{description}
  \item pwc: point-wise continuous
  \item uc: uniformly continuous 
  \item usc: uniformly sequentially continuous
  \item ec: equi-continuous (i.e.\ point-wise equi-continuous)
  \item uec: uniformly equi-continuous
  \item usec: uniformly sequentially equi-continuous
\end{description}
\end{multicols}
\noindent We can recognise the following themes.
\begin{itemize}
  \item \FANc is involved whenever sequential continuity or sequences of real numbers are involved.
  \item \FANP is involved in sequences of functions.
  \item Introducing locatedness assumptions reduces principles to \FAND.
\end{itemize}

\end{landscape}

\chapter{\texorpdfstring{\BDN}{BDN} and Below} \label{Ch:BDN}
\todo{Future work: include \cite{hI08}}
Together with  Weak Markov's principle (Section \ref{Sec:WMP}), \BDN occupies a special and rare place in constructive mathematics: it is accepted in \CLASS, in \INT, as well as in \RUSS. However, it is not accepted in \BISH. 
One might think that after reading Proposition \ref{Pro:BDN-equivs}, which says that \BDN is equivalent to every sequentially continuous function on a separable metric space being point-wise continuous, one  fully understands this principle. However \BDN does appear in some quite unexpected places, and has an entire zoo of weaker principles below it.

\section{\texorpdfstring{\BDN}{BDN}} 
A subset S of $\NN$ is \define{pseudobounded} if 
\[\lim_{n \to \infty} \frac{s_n}n = 0\]
for each sequence $(s_n)_{n \geqslant 1}$ in $S$. This is equivalent to assuming that for every sequence $(s_n)_{n \geqslant 1}$ in $S$ we have
\[ s_n < n\] eventually.

 Trivially, every bounded set is pseudobounded, but the converse is more subtle and the content of our next principle.
\begin{principle}[BDN]{\BDN} \label{PR:BDN}
Every inhabited, countable, pseudobounded subset of $\NN$  is bounded.
\end{principle}
One can readily show that when dealing with a decidable set one can  establish the consequence:
\begin{proof}
  Let $S$ be a inhabited, decidable and pseudobounded subset of $\NN$. Choose $s_0 \in S$ and
  define a sequence  by \[s_{n+1} = \begin{cases} n+1, & n+1 \in S; \\ s_0, & n+1 \notin S \ . \end{cases} \] As $S$ is pseudobounded there is $M$ such that $s_i < i$ for all $i \geqslant M$. We claim that $S \subset \menge{1 , \dots, M} $. For assume that there is a $k \in S$ such
  that $k > M $. Then $k = s_k < k$ a contradiction.
\end{proof}
F.\ Richman has done a wonderful job of showing that there are many alternative equivalent notions for pseudobounded \cite{fR09}. The most surprising one may be that a subset $A$ of $\NN$ is pseudobounded if and only if every nonempty subset of $A$ that is detachable from $\NN$ is finite.

\begin{Rmk*}
	The inhabitedness assumption in \BDN can be disregarded without loss of generality, by considering the set $A \cup \menge{0}$ instead of $A$.
\end{Rmk*}

The reason for H.\ Ishihara to investigate \BDN was the question whether constructively there is a difference between sequential and point-wise continuity. The following result is a slight improvement of his original result \cite{Ishihara1992}, since there he showed that for every pseudobounded and countable set there exists a separable space and a sequentially continuous function such that if that function is point-wise continuous, then the set is bounded. Our construction $\BDN \iff \ref{BDNequiv3}$ is not only, arguably, simpler it also shows that it suffices to concentrate on Baire space.\footnote{Since every separable metric space is a continuous image  of Baire space one could probably get the same result by lifting the original equivalence. However, that would involve not only showing that every separable metric space is the continuous image of Baire space \cite{Troelstra1988}, but also that this embedding has some sort of openness property.}
\begin{Pro} The following are equivalent to \BDN. \label{Pro:BDN-equivs}
\begin{enumerate}
\item  \label{BDNequiv2} Every sequentially continuous map on a separable metric space is point-wise continuous.
\item   \label{BDNequiv3} Every sequentially continuous map $f:\BS \to \NN$ is point-wise continuous.

\item   \label{BDNequiv3.1} Every sequentially continuous map $f:\BS \to \NN$ is locally bounded.
\end{enumerate}
\end{Pro}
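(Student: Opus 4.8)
The plan is to prove the equivalence $\BDN \iff \ref{BDNequiv2} \iff \ref{BDNequiv3} \iff \ref{BDNequiv3.1}$ by a cycle of implications: $\ref{BDNequiv2} \implies \ref{BDNequiv3}$ is trivial (Baire space is a separable metric space), $\ref{BDNequiv3} \implies \ref{BDNequiv3.1}$ is essentially immediate since a point-wise continuous $f:\BS \to \NN$ is locally constant hence locally bounded, and the two substantive steps are $\BDN \implies \ref{BDNequiv2}$ and $\ref{BDNequiv3.1} \implies \BDN$. I would state these four items and then run the cycle $\BDN \implies \ref{BDNequiv2} \implies \ref{BDNequiv3} \implies \ref{BDNequiv3.1} \implies \BDN$.

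For $\BDN \implies \ref{BDNequiv2}$, let $f:X \to Y$ be sequentially continuous with $X$ separable, fix $x \in X$, and suppose toward point-wise continuity that we are given $\varepsilon > 0$. Using a countable dense subset of $X$ together with countable choice, one builds, for each $n$, a witness either that there is a point within $1/2^n$ of $x$ whose image is $\geqslant \varepsilon$ away from $f(x)$, or else that $1/2^n$ already serves as a modulus; the point of the argument is to extract from a hypothetical failure of continuity a sequence $(x_n)$ converging to $x$ with $d(f(x_n), f(x)) \geqslant \varepsilon$, which contradicts sequential continuity. The way $\BDN$ enters is the standard Ishihara move: the ``speed'' at which the $x_n$ are allowed to approach $x$ is governed by a countable pseudobounded set of indices; pseudoboundedness comes from sequential continuity (any sequence drawn from the set, used to build a convergent sequence, must have its images converge, forcing the index sequence to grow slower than $n$), and $\BDN$ then bounds the set, yielding a genuine modulus of continuity. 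I would cite the structure of Ishihara's original argument \cite{Ishihara1992} here rather than reproduce every detail.

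For $\ref{BDNequiv3.1} \implies \BDN$, the key is to turn a countable pseudobounded $S \subset \NN$ into a sequentially continuous $f:\BS \to \NN$ whose local boundedness at the zero sequence forces $S$ bounded. Write $S = \menge{s_0, s_1, s_2, \dots}$. Given $\alpha \in \BS$, let $k(\alpha) = \min\set{i}{\alpha(i) \neq 0}$ if such $i$ exists (read off from a finite prefix), and define $f(\alpha) = s_{\alpha(0)}$ if $\alpha(0) \neq 0$, and more generally use the first nonzero entry and its position to select an element of $S$ in a way that is locally constant away from $\zero$ but ``unbounded near $\zero$'' exactly when $S$ is unbounded; concretely $f(\alpha) = s_{\max_{i \leqslant k(\alpha)} \alpha(i)}$ or a similar formula. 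Such an $f$ is point-wise continuous at every $\alpha$ with some nonzero entry, so the only issue is sequential continuity at $\zero$: if $\alpha^{(m)} \to \zero$ then the first nonzero entry of $\alpha^{(m)}$ occurs at a position tending to infinity, and one checks that $f(\alpha^{(m)})$ is governed by a sequence in $S$ of the form $s_{t_m}$ with $t_m$ constrained so that pseudoboundedness of $S$ makes $f(\alpha^{(m)}) \to f(\zero) = s_0$; this is where pseudoboundedness of $S$ is used to get sequential continuity. Then $\ref{BDNequiv3.1}$ gives $N$ with $f$ bounded by $N$ on a neighbourhood $\set{\alpha}{\overline{\alpha}n = \zero\!\!\restriction_n}$ of $\zero$, and by choosing, for each candidate $s_i$, an $\alpha$ in that neighbourhood with $f(\alpha) = s_i$ one deduces $s_i \leqslant N$ for all $i$, i.e. $S$ is bounded.

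The main obstacle I expect is getting the function $f$ in the last implication exactly right: it must simultaneously be (i) sequentially continuous everywhere, which near $\zero$ rests delicately on the pseudoboundedness hypothesis and the precise rate at which the selecting index can grow as the input approaches $\zero$, and (ii) ``rich enough near $\zero$'' that a local bound on $f$ translates into a global bound on all of $S$, which means every $s_i$ must be attainable as $f(\alpha)$ for $\alpha$ arbitrarily close to $\zero$. Balancing these two competing demands — enough richness to recover all of $S$, enough tameness to stay sequentially continuous — is the crux, and it is essentially the content of Ishihara's construction, which I would follow, noting (as the proposition's preamble does) that the present version has the advantage of working directly on $\BS$ rather than via an arbitrary separable space.
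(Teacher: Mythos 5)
The overall decomposition (run the cycle $\BDN \Rightarrow \ref{BDNequiv2} \Rightarrow \ref{BDNequiv3} \Rightarrow \ref{BDNequiv3.1} \Rightarrow \BDN$, citing Ishihara for the first arrow and observing that the middle two are trivial) is exactly the paper's, and that part is fine. But the construction you sketch for the crucial last implication $\ref{BDNequiv3.1} \Rightarrow \BDN$ does not work, for two independent reasons.

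First, your $f$ is not constructively total. You define $k(\alpha) = \min\set{i}{\alpha(i) \neq 0}$ ``if such $i$ exists,'' and then base $f(\alpha)$ on $k(\alpha)$, falling back to $f(\zero) = s_0$. But for an arbitrary $\alpha \in \BS$ one cannot decide whether a nonzero entry exists — that is precisely \LPO. Defining a function on $\BS$ by cases ``$\alpha = \zero$'' versus ``$\alpha \neq \zero$'' is not admissible, so the $f$ you propose is not a well-defined total map. Second, even waving that away, the concrete formula $f(\alpha) = s_{\max_{i \leqslant k(\alpha)}\alpha(i)}$ fails to be sequentially continuous at $\zero$: for the sequence $\alpha^{(m)} = 0^m N_m 0\dots$ one has $\alpha^{(m)} \to \zero$ while $f(\alpha^{(m)}) = s_{N_m}$, which we may make arbitrary by choosing $N_m$ freely; pseudoboundedness of $S$ does nothing to constrain $s_{N_m}$ here because $N_m$ is not itself a sequence drawn from $S$, so there is no contradiction to exploit. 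In other words the ``select an element of $S$'' strategy is circular — it only works once you already know $S$ is bounded.

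The paper's construction is structurally different and sidesteps both problems. It sets $\widehat{\alpha}(n) = \sum_{i=1}^n\alpha(i)+1$ (strictly increasing, with $\widehat{\zero} = \id$) and defines
\[
f(\alpha) = \max\set{n \in \NN}{a_{\widehat{\alpha}(n)} > n}.
\]
This is uniformly well-defined with no case split, because $(a_{\widehat{\alpha}(n)})_{n}$ is a sequence \emph{in} the pseudobounded set $A$, so $a_{\widehat{\alpha}(n)} \leqslant n$ eventually and the set being maxed is finite — pseudoboundedness is used up front to make $f$ exist, not after the fact. Rather than returning an element of $A$, $f$ returns a ``delay'' measuring how long the pseudoboundedness inequality takes to kick in along the path $\widehat{\alpha}$; making $f$ large near $\zero$ then requires $A$ itself to contain a large element at a controlled index, which is exactly what a local bound on $f$ forbids. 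Finally, sequential continuity of $f$ is not argued directly by hand: the paper verifies strong extensionality and that \LPO would give sequential continuity, then invokes Ishihara's tricks (\cite{hD12b}*{Corollary 3.4}) — a step entirely absent from your sketch and genuinely needed, since a direct verification of sequential continuity at $\zero$ without that device is not straightforward. To repair your proof you would need to replace your $f$ by something along these lines; the ``pick an element of $S$ near $\zero$'' idea cannot be made to work.
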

\begin{proof}
$\BDN \iff \ref{BDNequiv2}$ is shown in \cite{Ishihara1992}. Moreover, it is clear that \ref{BDNequiv2} $\implies$ \ref{BDNequiv3}, and that \ref{BDNequiv3} $\implies$ \ref{BDNequiv3.1}, so it remains to show that \ref{BDNequiv3.1} $\implies$ \BDN. Let $A =\menge{a_n}$ be a countable, pseudobounded subset of the natural numbers. We may assume that $a_n$ is non-decreasing. Notice that for all $\alpha \in \BS$ the sequence $\widehat{\alpha}$ defined by $\widehat{\alpha}(n) = \sum_{i=1}^n\alpha(i)+1$ is strictly increasing. In particular $\widehat{0} = \id$. Conversely, for every strictly increasing $\beta \in \BS$ there exists $\gamma$ with $\widehat{\gamma} = \beta$, but we will not use this fact. \\
Since $A$ is pseudobounded, the function $f: \BS \to \NN$ defined by 
\[ f(\alpha) = \max \set{n \in \NN}{a_{\widehat{\alpha}(n)} > n} \] 
is well-defined. It is also easy to see that $f$ is strongly extensional and that \LPO implies that it is sequentially continuous. Hence, using a corollary to Ishihara's tricks \cite{hD12b}*{Corollary 3.4.}, we get that $f$ is sequentially continuous. (Alternatively, sequential continuity also follows from the uniform sequential continuity of $f$ as proved directly below.) Our assumptions then imply that $f$ is locally bounded. So there exist $N, K $  such that, if $\overline{\alpha}N = 0$, then $f(\alpha) \leqslant K$. That means that if we take $M = \max \menge{N ,K}$ we have that if $\overline{\alpha}M = 0$, then $f(\alpha) \leqslant M$.  \\
We will show that there cannot be $i > M$ with $a_{i} > M +1$. If there were we could consider $\beta \in \CS$ defined by
\[  \beta = \underbrace{0 \dots 0}_{M} (i-M) 0 \dots \ ,\] so that 
 $\overline{\beta}M = 0$ and $\beta(M+1) = i-M$. This way $\widehat{\beta}(M+1) = i$, which means that $a_{\widehat{\beta}(M+1) } = a_i > M+1$ and therefore $f(\beta) \geqslant M+1$.  But this gives a contradiction, since $\overline{\beta}M = 0$, which means that $f(\beta) \leqslant M$, by our choice of $M$. 
That means that $\max \menge{a_1, \dots, a_M, M+1}$ is an upper bound of $A$ and therefore \BDN holds.
\end{proof}

\begin{Pro} The following are equivalent to \BDN. \label{Pro:BDN-unif-equivs}
\begin{enumerate}
\item \label{BDNequiv4} Every uniformly sequentially continuous mapping of a separable metric space into a metric space is uniformly continuous.
\item   \label{BDNequiv4.1} Every uniformly sequentially continuous map $f:\BS \to \NN$ is uniformly continuous.
\item   \label{BDNequiv4.2} Every uniformly sequentially continuous map $f:\BS \to \NN$ is locally bounded.
\end{enumerate}
\end{Pro}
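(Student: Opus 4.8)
The statement to prove is Proposition~\ref{Pro:BDN-unif-equivs}: the equivalence of \BDN with the three ``uniform'' versions of sequential continuity theorems. My plan follows the template already established in Proposition~\ref{Pro:BDN-equivs}, exploiting the fact that the implications \ref{BDNequiv4} $\implies$ \ref{BDNequiv4.1} $\implies$ \ref{BDNequiv4.2} are immediate by specialisation (take $X = \BS$, $Y = \NN$, and use that a uniformly continuous map is locally bounded). So the two substantive tasks are: (a) \BDN $\implies$ \ref{BDNequiv4}, and (b) \ref{BDNequiv4.2} $\implies$ \BDN. I expect (b) to be essentially a re-run of the corresponding argument in the proof of Proposition~\ref{Pro:BDN-equivs}, since the function $f$ constructed there from a countable pseudobounded set $A$ is in fact \emph{uniformly} sequentially continuous (the proof even flags this parenthetically: ``sequential continuity also follows from the uniform sequential continuity of $f$ as proved directly below''). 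So the plan for (b) is: given an inhabited countable pseudobounded $A = \menge{a_n}$ with $a_n$ non-decreasing, form $f(\alpha) = \max\set{n \in \NN}{a_{\widehat{\alpha}(n)} > n}$ as before, verify it is uniformly sequentially continuous directly (from $\overline{\alpha}N = \overline{\beta}N$ one controls $\widehat\alpha$ and $\widehat\beta$ up to a computable height, hence controls $f$), apply \ref{BDNequiv4.2} to get $N, K$ with $\overline{\alpha}N = 0 \implies f(\alpha) \leqslant K$, and then deduce as in Proposition~\ref{Pro:BDN-equivs} that $\max\menge{a_1, \dots, a_M, M+1}$ with $M = \max\menge{N,K}$ bounds $A$.

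For part (a), the plan is to reduce \ref{BDNequiv4} to \ref{BDNequiv4.1} (which I then prove from \BDN), or to argue \ref{BDNequiv4.1} $\implies$ \ref{BDNequiv4} directly as is done for the point-wise case: given a uniformly sequentially continuous $f : X \to Y$ on a separable metric space $X$, compose with a continuous surjection $g : \BS \to X$ (every separable metric space is a continuous image of Baire space, cf.\ \cite{Troelstra1988}), reduce modulus questions on $X$ to modulus questions on $\BS$, and transfer. The cleanest route, though, is probably to prove \BDN $\implies$ \ref{BDNequiv4.1} head-on and then lift: let $f : \BS \to \NN$ be uniformly sequentially continuous and suppose, for contradiction with a failure of uniform continuity, that for some $\varepsilon$ (here $\varepsilon = 1$ suffices since the codomain is $\NN$) there is no modulus. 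Using countable choice, for each $n$ pick $\alpha^{(n)}, \beta^{(n)} \in \BS$ with $\overline{\alpha^{(n)}}n = \overline{\beta^{(n)}}n$ but $f(\alpha^{(n)}) \neq f(\beta^{(n)})$. From these witnesses build a countable subset $S$ of $\NN$ encoding ``how far out one must go'' and show $S$ is pseudobounded using uniform sequential continuity applied to a single interleaved sequence (the standard trick: interleave the $\alpha^{(n)}$ and $\beta^{(n)}$ so that the joint sequence converges, contradicting that the discrepancies persist). Then \BDN makes $S$ bounded, which is exactly a uniform modulus.

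The main obstacle I anticipate is getting part (a) to go through cleanly without simply waving at the point-wise version: the subtlety is that ``uniformly sequentially continuous'' is the hypothesis, which is weaker than ``uniformly continuous'' but stronger than ``sequentially continuous'', so one must be careful to feed exactly the right kind of convergent sequence into the hypothesis. Concretely, the sequence I interleave must genuinely be a single sequence converging in $\BS$ (or, after $g$, in $X$), and the pair structure $(\alpha^{(n)}, \beta^{(n)})$ must be arranged so that uniform sequential continuity forces $f(\alpha^{(n)})$ and $f(\beta^{(n)})$ to agree for large $n$ --- contradicting their construction --- \emph{or} so that the associated function is locally bounded, from which boundedness of the obstruction set follows. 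Once the shape of that interleaved sequence is pinned down, the pseudoboundedness computation is routine and \BDN finishes it. For the separable-space version \ref{BDNequiv4}, the only extra wrinkle beyond \ref{BDNequiv4.1} is checking that composition with the Baire-space surjection $g$ preserves uniform sequential continuity and that a uniform modulus upstairs descends to one downstairs, which is a short diagram chase using a modulus of (uniform) continuity for $g$ together with countable choice, exactly parallel to the treatment of Proposition~\ref{Pro:Equiv_UCT} and the point-wise case.
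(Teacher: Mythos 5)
Your treatment of \ref{BDNequiv4.2} $\implies$ \BDN matches the paper's: the paper reuses the function $f$ from Proposition~\ref{Pro:BDN-equivs}, verifies it is uniformly sequentially continuous by choosing a modulus $\mu$ with countable choice and deciding level by level whether $f(\alpha_i) \neq f(\beta_i)$ occurs in $[\mu(n),\mu(n+1))$, and then proceeds exactly as you describe. That part is fine.

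For \BDN $\implies$ \ref{BDNequiv4}, there is a genuine gap. The paper does not prove this implication at all; it simply cites \cite{dB05c}. Your direct attempt, by contrast, is not constructively valid as written: to show $f$ is uniformly continuous you cannot ``suppose for contradiction that there is no modulus'' and derive absurdity, since that amounts to double-negation elimination on a $\Pi_2$-type statement. Worse, even under that supposition, ``$n$ is not a modulus'' is a negated $\Pi^0_1$-over-$\BS\times\BS$ statement, and extracting actual witnesses $\alpha^{(n)}, \beta^{(n)}$ from it requires a Markov-type principle that is not available in \BISH. So the countable choice step ``for each $n$ pick $\alpha^{(n)}, \beta^{(n)}$'' has no ground to stand on. You sensed the difficulty (``getting part (a) to go through cleanly''), but the interleaving trick does not repair a contradiction argument.

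A constructive route does exist and the paper itself supplies the key ingredient, albeit in a lemma stated \emph{after} this proposition: Lemma~\ref{Lem:seq_unif_cont} takes a uniformly sequentially continuous $f : \BS \to \NN$ (or $\CS \to \NN$) and builds a (plain) sequentially continuous $g$ --- by slicing an arbitrary $\gamma \in \BS$ into countably many sequences and counting, for the resulting systematic pairs, how many indices give disagreement under $f$ --- such that local boundedness of $g$ at $\zero$ yields uniform continuity of $f$. The pairs there are produced deterministically from $\gamma$, not conjured from an assumed failure of uniform continuity, which is exactly the point that makes the argument constructive. Combining that with Proposition~\ref{Pro:BDN-equivs}.\ref{BDNequiv3.1} (\BDN implies sequentially continuous maps $\BS \to \NN$ are locally bounded) gives \BDN $\implies$ \ref{BDNequiv4.1} without appeal to contradiction. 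Lifting to \ref{BDNequiv4} then needs the Baire-space surjection argument you sketch, with some care that uniform sequential continuity is preserved under composition and that a uniform modulus transfers back down; this is nontrivial but feasible. If you want a self-contained proof of (a), that is the route; otherwise, following the paper, you can simply cite \cite{dB05c} and confine the hard work to \ref{BDNequiv4.2} $\implies$ \BDN.
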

\begin{proof}
In \cite{dB05c} it is shown that \BDN is equivalent to \ref{BDNequiv4}. 
	Clearly $\ref{BDNequiv4} \implies \ref{BDNequiv4.1}$, and $\ref{BDNequiv4.1} \implies \ref{BDNequiv4.2}$, so it remains to show that the latter implies \BDN. To this end let $A =\menge{a_n}$ be a countable, pseudobounded subset of the natural numbers,  and define $f$ in the same way as in the proof of Proposition \ref{Pro:BDN-equivs}. We want to show that $f$ is uniformly sequentially continuous. Let $\alpha_n$ and $\beta_n$ be sequences such that $d(\alpha_n, \beta_n) \to 0$ as $n \to \infty$. More specificly, with the help of countable choice, we can choose a modulus $\mu: \NN \to \NN$ such that $\overline{\alpha_i}n = \overline{\beta_i}n$ for all $i \geqslant \mu(n)$.
Now, for every $n$ we can decide whether  there is an $i$ with
\[ \mu(n) \leqslant i < \mu(n+1) \land f(\alpha_i) \neq f(\beta_i)  \]
or not. In the first case there must be $k_n$ such that $a_{k_n} > n$: for assume there is such $i$ with $f(\alpha_i) \neq f(\beta_i)$ such that $\overline{\alpha_i}n = \overline{\beta_i}n$. Then either $f(\alpha_i) > n$ or $f(\beta_i)>n$, since if both were $\leqslant n$ we would have $f(\alpha_i) \neq f(\beta_i)$. But that means that there is $\ell > n$ such that either 
\[ a_{\widehat{\alpha_i}(\ell)} > \ell > n \text{ or } a_{\widehat{\beta_i}(\ell)} > \ell > n \ . \] In the first case let $k_n = \widehat{\alpha_i}(\ell)$, in the second case let $k_n = \widehat{\beta_i}(\ell)$.

Now define
\[ \gamma_n = \begin{cases}
	k_n & \text{in the first case} \\
	n & \text{otherwise}.
\end{cases}  \]
Since $A$ is pseudobounded there exists $N$ such that for all $n \geqslant N$ we have $a_{\gamma(n)} \leqslant n$. But that means that for all $n \geqslant N$ we cannot be in the first case, which means that we must have $f(\alpha_n) = f(\beta_n)$. So $f$ is uniformly sequentially continuous. 

That means that if can apply \ref{BDNequiv4.2} to get that $f$ is locally bounded, and continue as in the proof of Proposition \ref{Pro:BDN-equivs}.
\end{proof}

We can also show that $\ref{BDNequiv3.1}$ of Proposition \ref{Pro:BDN-equivs} implies $\ref{BDNequiv4.1}$ of Proposition \ref{Pro:BDN-unif-equivs} directly without referring to \BDN, by using the following lemma. This might seem like a fairly obscure point to make, however the construction itself is quite interesting and the result also works for $\CS$.

\begin{Lem} \label{Lem:seq_unif_cont}
	If $f:\BS \to \NN$ (or $\CS \to \NN$) is uniformly sequentially continuous, then there exists a $g:\BS \to \NN$ (respectively $\CS \to \NN$) such that \begin{itemize}
  \item $g$ is sequentially continuous, and
  \item if $g$ is locally bounded at $\zero$, then $f$ is uniformly continuous.
\end{itemize}
\end{Lem}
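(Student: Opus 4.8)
The plan is to mimic the construction in the proof of Proposition \ref{Pro:BDN-equivs} (the implication $\ref{BDNequiv3.1}\Rightarrow\BDN$), combined with the bar-to-function translations in the spirit of Lemma \ref{Lem:Fanandunifcont}. From $f$ I will build a function $g$ on Baire space whose values record \emph{proposed witnesses} to the failure of uniform continuity of $f$, encoded so that the coordinate of $\alpha$ sitting just past a run of zeros is free to point at any such witness; boundedness of $g$ near $\zero$ then forces that no witness of high level is valid, i.e.\ that $f$ is uniformly continuous. Once this lemma is in place it is used as advertised: given $f$ uniformly sequentially continuous, the resulting $g$ is sequentially continuous, so by $\ref{BDNequiv3.1}$ of Proposition \ref{Pro:BDN-equivs} it is locally bounded (in particular at $\zero$), whence $f$ is uniformly continuous. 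I will carry out the $\BS$-case; the $\CS$-case is the same with the device $\widehat{\cdot}$ below replaced by a block decoding along a fast-growing sequence of cut-points.

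For $f:\BS\to\NN$, call a triple $W=(u,s,t)\in\cS^{3}$ a \emph{potential witness}, with \emph{level} $\ell(W)=\abs{u}$, and say $W$ is \emph{valid} if $f(u\ast s\ast\zero)\neq f(u\ast t\ast\zero)$ --- a decidable condition, because $f$ is $\NN$-valued. Fix a bijection $j\mapsto W_{j}$ of $\NN$ onto $\cS^{3}$ with $\ell(W_{j})\le j$ for all $j$ (e.g.\ enumerate triples by $\abs{u}+\abs{s}+\abs{t}$, breaking ties lexicographically). Recalling that $\widehat{\alpha}(n)=\sum_{i\le n}(\alpha(i)+1)$ is strictly increasing, depends only on $\overline{\alpha}n$, and satisfies $\widehat{\zero}=\id$, define
\[ g(\alpha)=\max\Bigl(\{0\}\cup\set{n\ge 1}{W_{\widehat{\alpha}(n)}\text{ is valid and }\ell(W_{\widehat{\alpha}(n)})\ge n}\Bigr)\ . \]
That this set is bounded --- so that $g$ is well defined --- is exactly where the hypothesis enters: an unbounded such set would provide, for arbitrarily large $n$, a valid witness $(u_{n},s_{n},t_{n})$ of level $\ge n$, hence a pair $u_{n}s_{n}\zero$, $u_{n}t_{n}\zero$ agreeing on a prefix of length tending to infinity but separated by $f$, contradicting the uniform sequential continuity of $f$ (this is, step for step, the well-definedness argument in the proof of Proposition \ref{Pro:BDN-equivs}). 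From the definition one reads off that $g$ is strongly extensional, and that $\LPO$ implies $g$ is sequentially continuous; hence, by the corollary to Ishihara's tricks \cite{hD12b}*{Corollary 3.4} --- again exactly as for the companion function in Proposition \ref{Pro:BDN-equivs} --- $g$ is sequentially continuous unconditionally.

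It remains to derive uniform continuity of $f$ from local boundedness of $g$ at $\zero$. Suppose $N,M$ satisfy $\overline{\alpha}N=0\Rightarrow g(\alpha)\le M$; replacing both by $\max\{N,M\}$ we may assume $N=M$. I claim $M+1$ is a modulus of uniform continuity for $f$. Let $\beta,\gamma\in\BS$ with $\overline{\beta}(M+1)=\overline{\gamma}(M+1)$. Since equality of naturals is decidable, either $f(\beta)=f(\gamma)$, and we are done, or $f(\beta)\neq f(\gamma)$. In the latter case, using that $f$ is (sequentially) continuous, choose $j>M+1$ with $f(\overline{\beta}j\ast\zero)=f(\beta)$ and $f(\overline{\gamma}j\ast\zero)=f(\gamma)$; writing $\overline{\beta}j=u\ast s$, $\overline{\gamma}j=u\ast t$ with $\abs{u}=M+1$ exhibits a valid witness $W=(u,s,t)$ of level $M+1$, say $W=W_{j^{\ast}}$, and then $j^{\ast}\ge\ell(W_{j^{\ast}})=M+1$. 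Let $\alpha$ consist of $M$ zeros, then the entry $j^{\ast}-M-1$, then arbitrary entries; then $\overline{\alpha}M=0$ while $\widehat{\alpha}(M+1)=j^{\ast}$, so $n=M+1$ lies in the defining set of $g(\alpha)$ and $g(\alpha)\ge M+1>M$ --- contradicting the hypothesis. Hence $f(\beta)=f(\gamma)$, and $f$ is uniformly continuous.

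The step I expect to be the real obstacle is the $\CS$-case. There $\widehat{\cdot}$ has no direct analogue, so one must instead carve the coordinates of $\CS$ into consecutive blocks whose lengths grow quickly enough that, from \emph{every} cylinder $\overline{\alpha}N=0$, some block lying entirely beyond coordinate $N$ is long enough to encode a (possibly long) valid witness of a prescribed level; the arithmetic linking the block lengths, $N$ and $M$ to the extracted modulus of $f$ is the delicate part. A secondary, routine, point is to make the appeal to Ishihara's tricks for the sequential continuity (and well-definedness) of $g$ fully explicit, as is done for the analogous function in Proposition \ref{Pro:BDN-equivs}.
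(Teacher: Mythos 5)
Your construction is a genuinely different route from the paper's. The paper defines $g$ by a zip/unzip device: a bijection $\NN^{2}\to\NN$ is used to slice each argument $\gamma\in\BS$ into countably many subsequences, from which two families $\alpha^{(n)},\beta^{(n)}$ with $\overline{\alpha^{(n)}}n=\overline{\beta^{(n)}}n$ are read off, and $g(\gamma)$ is taken to be the (finite, by uniform sequential continuity) number of $n$ with $f(\alpha^{(n)})\neq f(\beta^{(n)})$. The failures of uniform continuity that $g$ records are thus carried entirely inside $\gamma$ itself. You instead fix a global enumeration of ``potential witnesses'' $W_{j}=(u,s,t)\in\cS^{3}$ and let $\widehat{\alpha}$ act as a pointer into that enumeration, with $g(\alpha)$ the maximum level $n$ such that the $n$-th pointer hits a valid witness of level at least $n$. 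Both proofs then run the same three-step skeleton: well-definedness from uniform sequential continuity (your argument, once rewritten without the contradiction-phrasing as you indicate, is the same as the paper's); strong extensionality plus ``LPO implies continuity'' plus Ishihara's tricks to get sequential continuity; and a coding step to show local boundedness of $g$ at $\zero$ forces a modulus of uniform continuity for $f$. For the $\BS$ case your version is correct, and arguably the coding step is cleaner since the contradiction is reached by constructing a single $\alpha$ rather than an interleaved block pattern.

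The real divergence, and the one genuine gap in your proposal, is the $\CS$ case. The paper dismisses it with ``can be treated analogously,'' and for the slicing construction this is literally true: $\curlywedgedownarrow$ applied to a $\{0,1\}$-valued sequence yields $\{0,1\}$-valued slices, and the interleaving of $\mu,\eta\in\CS$ with $\zero$ in the final step again lands in $\CS$, so the entire argument goes through verbatim with $\BS$ replaced by $\CS$. Your device $\widehat{\alpha}$ does not adapt at all, as you note: for $\alpha\in\CS$ one has $\widehat{\alpha}(n)\le 2n$, so the pointers cannot reach most of the witness enumeration, and the coding step (which needs $\widehat{\alpha}(M+1)=j^{\ast}$ for an a priori unbounded $j^{\ast}$) collapses. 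You flag this honestly as ``the real obstacle,'' but the proposed fix---block decoding along fast-growing cut-points---is not carried out and is where the actual work in your version would lie. If you want to keep the witness-pointer picture, the cleanest repair is probably to borrow the paper's zipping trick just for the encoding: choose a bijection $\varphi:\NN^{2}\to\NN$, interpret a block of $\alpha\in\CS$ of growing length as a binary expansion of a natural number, and use that as the pointer; but at that point you have essentially reproduced the paper's construction. The paper's slicing approach thus buys a uniform treatment of $\BS$ and $\CS$ for free, while your approach buys a more transparent $\BS$ proof at the cost of a nontrivial reworking for $\CS$.
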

\begin{proof}
We will only treat the case  $\BS$; the case of $\CS$ can be treated analogously. 
We first need to introduce some notation. We assume that we have fixed a bijection $\varphi : \NN^2 \to \NN$.
Given $\gamma \in \BS$, we can slice up $\gamma$ into countably many sequences and turn it into a double sequence $(\curlywedgedownarrow \gamma) :  \NN \times \NN \to \NN$
 defined by \[ (\curlywedgedownarrow \gamma) (m,n) = \gamma(\varphi(m,n)) \ . \]
We can also zip a double indexed sequence  $\sigma :\NN \times \NN \to \NN $ up into a single indexed sequence $(\curlyveedownarrow \sigma) \in \BS$ by setting
\[ (\curlyveedownarrow \sigma) (n) = \sigma(\varphi^{-1}_1(n),(\varphi^{-1}_2(n)) \ .  \]
These operations are inverses, so we have
\[ \curlywedgedownarrow \curlyveedownarrow \sigma = \sigma \text{ and }  \curlyveedownarrow  \curlywedgedownarrow \gamma = \gamma \ . \]

Again, consider an arbitrary $\gamma \in \BS$, and define two sequences of sequences $\alpha^{(n)}$ and $\beta^{(n)}$ by 
\[ \alpha^{(n)} = \curlywedgedownarrow \gamma (2n, \cdot), \quad \text{and} \quad \beta^{(n)} = \overline{\curlywedgedownarrow \gamma (2n, \cdot)}(n) \ast \curlywedgedownarrow \gamma (2n+1, n+1) \ast \gamma (2n+1, n+2) \ast \dots \ .\]
These sequences are constructed such that 
\[ \overline{\alpha^{(n)}}(n) = \overline{\beta^{(n)}}(n) \ , \]
that means $d(\alpha^{(n)},\beta^{(n)}) \to 0$. Since $f$ is uniformly sequentially continuous $f(\alpha^{(n)}) = f(\beta^{(n)})$ eventually. That means the number 
\begin{equation}
	\abs*{\set{n \in \NN}{f(\alpha^{(n)}) \neq f(\beta^{(n)}) }}
\end{equation} 
exists. So we can define a function $g: \BS \to \NN$ that maps $\gamma$ to that number. It is easy to see that $g$ is strongly extensional and that it is point-wise continuous provided \LPO holds. Thus, using a corollary to Ishihara's tricks \cites{hD12b,hI91} we can conclude that $g$ is sequentially continuous. 

Assume that $g$ is locally bounded at $\zero$. So there exists $N,K$ such that for all $\alpha \in \BS$ we have $g(0^N \ast \alpha) < K$.
Since $\varphi$ is bijective there exists $M$ such 
\[   \set{ \varphi^{-1}_1(i) }{ i \leqslant N}   \ \]
is bounded by $2M$. 

We claim that $L = M+K$ is a modulus of constancy for $f$. Let $\mu, \eta$ such that $\overline{\mu}(L) = \overline{\eta}(L) $, but $f(\mu) \neq f(\eta)$. Now consider the following sequence of sequences:
\[ \underbrace{\zero,\zero, \dots, \zero}_{2M}, \underbrace{\mu, \eta, \mu, \eta, \dots, \mu, \eta}_{2K}, \zero, \zero, \dots   \ .\]
If we combine these with $\curlyveedownarrow$ into a sequence $\vartheta$  we have the following properties:
\begin{enumerate}
  \item $\overline{\vartheta}(N) = 0 \dots 0$, and
  \item $\curlywedgedownarrow \vartheta (2i, \cdot ) = \mu$ for $ M < i \leqslant L $
  \item $\curlywedgedownarrow \vartheta (2i+1, \cdot ) = \nu$ for $ M < i \leqslant L $.
\end{enumerate} 
This is set up such that $g(\vartheta) = K$, which is a contradiction to $K$ being a local bound at $\zero$. That means that for any $\mu, \eta \in \BS$ such that $\overline{\mu}(L) = \overline{\eta}(L) $, we have $f(\mu) = f(\eta)$.
\end{proof}

From \cite{dB98}
\begin{Pro}
The following are equivalent to \BDN
\begin{enumerate}
\item If $T$ is a nonzero bounded linear mapping of a separable Hilbert space $H$ into itself such that $T^{\ast}$ exists and $\mathrm{ran}(T)$ is complete, then $T$ is open.
\item Every one-one self-adjoint sequentially continuous linear mapping from a Hilbert space onto itself is bounded.
\end{enumerate}
\end{Pro}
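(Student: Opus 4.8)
The plan is to prove each of the two equivalences by a two‑way argument, pushing the forward implications (from \BDN) through the characterisation already obtained in Proposition \ref{Pro:BDN-equivs}, and obtaining the converses by the standard device of encoding an arbitrary inhabited, countable, pseudobounded subset of $\NN$ into an operator meeting the stated hypotheses. So there are four implications to organise, but two of them reduce to routine bookkeeping once the characterisation of \BDN via sequentially continuous maps is in hand.

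For the forward direction \BDN $\implies$ (2): recall that a linear map is sequentially continuous exactly when it sends sequences converging to $0$ to sequences converging to $0$, and that for a linear map point‑wise continuity coincides with boundedness. Since (in the present, Bishop‑style setting, where Hilbert spaces are tacitly separable) a Hilbert space is a separable metric space, Proposition \ref{Pro:BDN-equivs}.\ref{BDNequiv2} applies directly: any sequentially continuous linear $T$ is point‑wise continuous, hence bounded. The self‑adjointness, injectivity and surjectivity hypotheses play no role in this direction; they are present only so that the converse can go through. For \BDN $\implies$ (1): given a nonzero bounded $T$ with $T^{\ast}$ existing and $\mathrm{ran}(T)$ complete, one uses $T^{\ast}$ together with completeness of $\mathrm{ran}(T)$ and the identity $\langle Tx,y\rangle = \langle x,T^{\ast}y\rangle$ to show that the naturally defined linear inverse of $T$ on its range is sequentially continuous; Proposition \ref{Pro:BDN-equivs}.\ref{BDNequiv2} then upgrades this to boundedness of the inverse, which is precisely openness of $T$.

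For the converse implications the strategy is the one used throughout this chapter: take a countable, pseudobounded $A=\{a_n\}_{n\geqslant 1}$ (without loss of generality non‑decreasing, with $a_1\geqslant 1$) and manufacture an operator on $\ell^2$ whose structural properties hold unconditionally but whose quantitative behaviour decides $\sup_n a_n$. The natural raw material is the sequentially continuous map $f:\BS\to\NN$ built in the proof of Proposition \ref{Pro:BDN-equivs} — sequentially continuous by (a corollary of) Ishihara's tricks, and locally bounded at $\zero$ if and only if $A$ is bounded — which one then transports into a Hilbert‑space operator $T$, arranging self‑adjointness, injectivity and surjectivity by hand. Statement (2) (respectively (1)) forces $T$ to be bounded (respectively open), and from the resulting numerical bound one reads off that $A$ is bounded, establishing \BDN.

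The main obstacle I expect is exactly this encoding step. An everywhere‑defined, self‑adjoint, one‑one, onto linear operator on a Hilbert space is bounded in every standard model (\CLASS, \INT, \RUSS), so its potential unboundedness must be a genuinely \BISH‑level phenomenon hinging on \BDN; consequently the construction cannot be a naive diagonal operator, whose domain would already collapse once $A$ is unbounded, but must conceal the sequence $(a_n)$ inside a rank‑ or block‑controlled perturbation whose defining series converges at every input precisely by pseudoboundedness, while still forcing $\|T\|\geqslant \sup_n a_n$ (or the analogous estimate for the modulus of openness in (1)). Verifying simultaneously that this $T$ is everywhere defined, self‑adjoint, injective, surjective and sequentially continuous — and that a norm bound on it genuinely bounds $A$ — is the delicate heart of the argument, and is where I would follow \cite{dB98} closely.
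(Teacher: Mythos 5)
The paper does not actually supply a proof of this proposition; it simply prefaces the statement with ``From \cite{dB98}'' and delegates the entire argument to that reference. There is therefore nothing in the text against which to measure your proposal line-by-line. Taken on its own, however, your outline has gaps that would need to be closed before it could stand as a proof.

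The forward direction for (2) is plausible, but note that the statement does not assume the Hilbert space in (2) is separable; you assume it tacitly. Proposition \ref{Pro:BDN-equivs}.\ref{BDNequiv2} only covers separable spaces, so either separability must be imported (for instance by arguing via the closure of a suitably chosen countable subset) or \BD rather than \BDN would be needed — a point the final argument must address. The forward direction for (1) is where the real trouble lies: you propose to show that ``the naturally defined linear inverse of $T$ on its range is sequentially continuous,'' but nothing in the hypotheses makes $T$ injective, so there is no such inverse in general. Openness of a linear map between metrisable TVSs is a quantitative statement (roughly: the image of the unit ball contains a ball around $0$) that does not factor through an inverse. The existence of $T^{\ast}$ and the completeness of $\mathrm{ran}(T)$ are used in \cite{dB98} to manufacture an auxiliary map whose sequential continuity can be upgraded via \BDN, but this is genuinely different from inverting $T$; your sketch skips exactly the construction that makes the implication go. Finally, for the two reverse implications you quite rightly identify the encoding of a pseudobounded $A\subset\NN$ into a self-adjoint bijective operator as ``the delicate heart of the argument,'' but you then explicitly defer it to \cite{dB98} rather than carry it out. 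Since this is where all the mathematical content resides, the proposal as written is a programme rather than a proof; it is compatible with the approach in \cite{dB98} but does not reproduce it.
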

In \cite{dB11} a weakened form of the usual Cauchy condition is considered. There a sequence $(x_n)_{n \geqslant 1}$ in a metric space $(X,d)$ is called \emph{almost Cauchy}, if for any strictly increasing $f,g: \NN \to \NN$ \[ d(x_{f(n)},x_{g(n)}) \to 0\] as $n \to \infty$. A closer analysis of this and equivalent conditions can be found in \cite{hD12c}. 
The main theorem proved in \cite{dB11} is the following.
\begin{Pro} \label{Pro:BDN-almost-Cauchy}
\BDN is equivalent to the statement that every almost Cauchy sequence in a semi-metric space is Cauchy.
\end{Pro}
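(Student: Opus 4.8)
The plan is to prove the two implications of the equivalence separately. The direction from \BDN to the displayed statement (call it $(\star)$) is where \BDN does the real work; the converse is a reduction that packages a countable pseudobounded subset of $\NN$ into an almost Cauchy sequence, which I would extract from the construction in \cite{dB11}. For the forward direction, let $(x_n)_{n \geqslant 1}$ be an almost Cauchy sequence in a semi-metric space $(X,d)$ and fix $\varepsilon > 0$; the goal is a single $N$ with $d(x_m,x_n) < \varepsilon$ for all $m,n \geqslant N$. Since $d$ need not be decidable, I would first use countable choice to pick, for each pair $i<j$, a rational $r_{ij}$ with $\abs{r_{ij} - d(x_i,x_j)} < \varepsilon/4$ and set $P(i,j) :\equiv r_{ij} > 3\varepsilon/4$. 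Then $P$ is decidable, $P(i,j) \implies d(x_i,x_j) > \varepsilon/2$, and $\lnot P(i,j) \implies d(x_i,x_j) < \varepsilon$; moreover almost Cauchyness translates into: for every pair of strictly increasing $f,g : \NN \to \NN$ there is $m$ with $\lnot P(f(k),g(k))$ for all $k \geqslant m$. Now define a non-decreasing sequence $(s_n)_{n \geqslant 1}$ by letting $s_n$ be the largest $k$ for which there exist $i_0 < i_1 < \dots < i_k \leqslant n$ with $P(i_\ell,i_{\ell+1})$ for every $\ell < k$ (well-defined since the indices are bounded by $n$, and $s_n \leqslant n$). I then want (i) that $S = \set{s_n}{n \in \NN}$ is pseudobounded, and (ii) that a bound for $S$ delivers a Cauchy estimate.

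For (i): given any sequence $(t_p)_{p \geqslant 1}$ in $S$, say $t_p = s_{n_p}$, if it failed that $t_p < p$ eventually then, feeding the witnessing chains through dependent choice, one could build two strictly increasing functions $f,g : \NN \to \NN$ with $P(f(k),g(k))$ for all $k$, contradicting almost Cauchyness via the reformulation above; hence $S$ is pseudobounded. By \BDN, $S$ is bounded, say $s_n \leqslant M$ for all $n$. For (ii): boundedness of the chain statistic should force that no $\varepsilon/2$-sized oscillation survives far out --- concretely I expect to show that for $m < n$ beyond a threshold depending on $M$ one cannot have $P(m,n)$, since otherwise a maximal $P$-chain ending below $m$ could be extended by $m$ and then $n$, producing a chain of length $M+1$. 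Hence $d(x_m,x_n) < \varepsilon$ for all sufficiently large $m,n$, and, $\varepsilon$ being arbitrary, $(x_n)$ is Cauchy.

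The main obstacle will be making (i) and (ii) compatible with a single choice of the statistic $s_n$. It must simultaneously be well-defined and non-decreasing; be large whenever a late $\varepsilon$-sized oscillation occurs, so that a bound on it genuinely yields a Cauchy modulus (this is the delicate half, and the naive ``longest $P$-path below $n$'' may be too weak, so one probably has to chain only among indices in a growing window, or force the chain to alternate between two separated clusters of terms); and yet be tame enough that its unboundedness can be converted, \emph{constructively}, into an honest pair of strictly increasing index functions violating almost Cauchyness. This bookkeeping is the technical core carried out in \cite{dB11}.

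For the converse, let $A = \set{a_n}{n \in \NN}$ be an inhabited, countable, pseudobounded subset of $\NN$, which we may assume enumerated non-decreasingly; the aim is to produce a bound for $A$ from $(\star)$. Following \cite{dB11}, I would build a semi-metric space whose points encode finite histories together with a scale that shrinks along the enumeration of $A$, and a sequence $(x_n)$ that keeps jumping by a non-negligible amount for as long as $A$ keeps producing larger elements, yet for which any two coupled subsequences $(x_{f(n)})$ and $(x_{g(n)})$ are squeezed together \emph{precisely because} $A$ is pseudobounded --- so $(x_n)$ is almost Cauchy. Applying $(\star)$ makes $(x_n)$ Cauchy, and reading a Cauchy modulus for one fixed small $\varepsilon$ off the construction produces an explicit bound for $A$; hence \BDN. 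As on the forward side, the sensitive point is calibrating the construction so that pseudoboundedness of $A$ (not boundedness) is exactly what secures almost Cauchyness, while a Cauchy modulus is exactly strong enough to bound $A$.
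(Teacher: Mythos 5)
The paper does not actually contain a proof of this proposition: it is explicitly attributed to Bridges' paper \cite{dB11}, and the statement is recorded here only as a result cited from elsewhere. So there is no in-paper argument to compare your attempt against; what you have written is a blind reconstruction of Bridges' proof, and you yourself defer the ``technical core'' to the very reference the paper cites.

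Taking your sketch on its own terms, the overall framework --- encode the obstruction to Cauchyness as a countable pseudobounded subset of $\NN$, invoke \BDN, and then, for the converse, manufacture an almost Cauchy but non-Cauchy sequence in a custom-built \emph{semi}-metric space from a pseudobounded set --- is the correct shape, and your observation that the converse is where the absence of the triangle inequality is exploited is on target. But the forward direction as written has a concrete gap, one you already half-concede. Your statistic $s_n$ (longest $P$-chain among indices $\leqslant n$) neither yields pseudoboundedness nor, from a bound, a Cauchy modulus. For the modulus: $s_n \leqslant M$ is perfectly compatible with there being $P$-pairs $(m,n)$ with both coordinates arbitrarily large --- if, say, $P(i,j)$ holds only when $j = i+1$ and $i$ lies in some sparse set, then every maximal $P$-chain has length $1$ yet $P$-pairs appear cofinally. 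Your intended fix (``extend a maximal chain ending below $m$ by $m$ and then $n$'') silently uses that $P(i_M, m)$ holds, i.e.\ a kind of transitivity; but $P$ encodes ``$d > \varepsilon/2$'' in a space with no triangle inequality, so there is no reason the chain joins up. For pseudoboundedness: from $s_{n_p} \geqslant p$ you obtain, for each $p$, some chain of length $p$ inside $\{1,\dots,n_p\}$, but these chains for different $p$ need not be disjoint or stacked in increasing position, so you cannot directly read off two strictly increasing index functions $f,g$ with $P(f(k),g(k))$ for all $k$ --- the chain for $p+1$ may sit entirely below the chain you already used for $p$. A correct argument must use a statistic engineered so that both extractions go through (e.g.\ one that only counts $P$-pairs localised to a growing window, so that a bound controls all sufficiently late pairs and unboundedness supplies well-separated witnesses), and this calibration is exactly the step you leave open. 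The converse is entirely sketched, with no construction of the semi-metric or of the sequence. As it stands the proposal identifies the right plan but does not constitute a proof.
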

Here a semi-metric space is a metric space not necessarily satisfying the triangle-inequality condition. Notice that the statement is not provable in \BISH for metric spaces (see \cite{hD12c} referring to \cite{bL11}).
Another equivalence of \BDN concerns a lesser known fact of analysis. It together with its proof can be found in \cite{dB10}
\begin{Pro}
\BDN is equivalent to the statement that for all conjugate exponents $p$ and $q$, if $a$ is any sequence of complex numbers such that $\sum_{n=1}^{\infty} a_nx_n$ converges for each $x \in l_{p}$, then $\sum_{n=1}^{\infty} \abs*{a_n}^{q}$ has bounded partial sums.
\end{Pro}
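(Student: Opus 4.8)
\emph{Framing.} Fix conjugate exponents $p,q$ (so $1<p,q<\infty$, $\tfrac1p+\tfrac1q=1$) and write (S) for the analytic statement. Given a complex sequence $a=(a_n)$ with $\sum_na_nx_n$ convergent for every $x\in\ell_p$, put $\sigma_N=\sum_{n=1}^N\abs{a_n}^q$ and note that the partial-sum functionals $T_N:\ell_p\to\mathbb{C}$, $T_N(x)=\sum_{n=1}^Na_nx_n$, are bounded with $\|T_N\|=\sigma_N^{1/q}$, and that $(T_N(x))_N$ converges — hence is bounded — for each $x$. Thus (S) is exactly a uniform boundedness statement for this sequence of functionals: ``pointwise bounded $\Rightarrow$ uniformly bounded, i.e.\ $(\sigma_N)$ bounded''. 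The classical proof is a gliding-hump argument; the only non-constructive ingredient is the dichotomy ``$(\sigma_N)$ is bounded or it is not'', and the plan is to replace this ingredient by an appeal to \BDN. This already suggests both directions.

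\emph{$\BDN\Rightarrow$(S).} Disposing of the case $a=0$, I would form the inhabited, countable set $A$ of natural numbers $m$ with $\sigma_N>m$ for some $N$, and observe that boundedness of $A$ is precisely the conclusion of (S); so by \BDN it suffices to prove $A$ pseudobounded. Let $(s_k)$ be a sequence in $A$, which (replacing $s_k$ by $\max\menge{s_1,\dots,s_k}$) we may take nondecreasing, fixing nondecreasing $N_k$ with $\sigma_{N_k}>s_k$; we must produce $K$ with $s_k<k$ for $k\geqslant K$. The tool is the computation that on an index block $(m,M]$ and with $\lambda>0$, the choice $x_n=\lambda\,\overline{\mathrm{sgn}(a_n)}\,\abs{a_n}^{q-1}$ gives, using $(q-1)p=q$,
\[ \sum_{n=m+1}^{M}a_nx_n=\lambda\sum_{n=m+1}^{M}\abs{a_n}^q \quad\text{and}\quad \sum_{n=m+1}^{M}\abs{x_n}^p=\lambda^p\sum_{n=m+1}^{M}\abs{a_n}^q, \]
so a block carrying $\ell_q^q$-mass $T$ can be made to contribute at least $1$ to $\sum_na_nx_n$ at $\ell_p^p$-cost as small as we please, once $T$ is large enough; a single large entry $a_n$ is treated as its own block. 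Stacking such humps on successive stretches between the $N_k$, with a summable schedule of $\ell_p^p$-budgets, assembles a genuine $x\in\ell_p$ whose bounded partial sums $\sum_{n=1}^{N_k}a_nx_n$ then force $s_k<k$ eventually. The delicate points are: the hump construction must be carried out ``online'' (processing the indices in order, with all real comparisons made with a margin) so that $x$ is a bona fide element of $\ell_p$ even though we cannot decide in advance how much mass each stretch will carry; and the stacking must be arranged so that what comes out is genuine pseudoboundedness of $A$ rather than an outright bound (which would be too strong).

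\emph{(S)$\Rightarrow\BDN$.} Conversely, from an inhabited, countable, pseudobounded $S=\menge{s_1,s_2,\dots}\subseteq\NN$ — taken nondecreasing with $s_1\geqslant1$ — I would build a nonnegative sequence $a$, together with a partition of $\NN$ into consecutive finite blocks $B_1,B_2,\dots$, such that (i) the running masses $\sigma_N=\sum_{n=1}^N\abs{a_n}^q$ mirror the $s_k$, so that their boundedness forces $\sup_ks_k<\infty$; and (ii) the pseudoboundedness of $S$ is exactly what one needs to verify, block by block, that $\sum_na_nx_n$ converges for every $x\in\ell_p$ — using a summation-by-parts / refined Hölder estimate that exploits how the $B_k$ are laid out and how $a$ is profiled within them, and that is genuinely sharper than the crude bound $\sum_k\bigl(\sum_{n\in B_k}\abs{a_n}^q\bigr)^{1/q}\bigl(\sum_{n\in B_k}\abs{x_n}^p\bigr)^{1/p}$. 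Applying (S) to this $a$ then yields a bound on $(\sigma_N)$ and hence, by (i), a bound for $S$.

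\emph{Main obstacle.} Both directions hinge on circumventing the case split ``$(\sigma_N)$ bounded or not'' by routing it through \BDN, and that is where the work lies. In $\BDN\Rightarrow$(S), the subtlety is running the gliding hump constructively: keeping the test vector inside $\ell_p$ in the presence of arbitrarily large entries of $a$, and extracting a set that is provably pseudobounded. In (S)$\Rightarrow\BDN$, the real tightrope is the encoding: since for \emph{any} sequence $a$ the hypothesis ``$\sum_na_nx_n$ converges for all $x\in\ell_p$'' is classically equivalent to $a\in\ell_q$, the construction must make this hypothesis \emph{provable} from the weak fact that $S$ is only pseudobounded, while keeping ``$(\sigma_N)$ bounded'' unprovable except via the knowledge that $S$ is bounded — otherwise the conclusion of (S) would be vacuous and could not recover boundedness of $S$. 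Getting the block lengths and the intra-block shape of $a$ to achieve this, without ever splitting on whether $S$ is bounded, is the heart of the matter; the detailed construction is the content of \cite{dB10}.
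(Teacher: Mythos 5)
The paper offers no proof of this proposition; it states the result and defers entirely to Bridges \cite{dB10}, so there is no internal argument to measure your sketch against. Your outline matches the known approach in all visible respects: identifying the statement as a constructive uniform boundedness principle, the H\"older-extremal computation showing that a block carrying $\ell_q^q$-mass $T$ can contribute $1$ to $\sum a_nx_n$ at $\ell_p^p$-cost $T^{1-p}$ (which is correct since $p>1$), and routing both directions through \BDN via an encoded inhabited, countable, pseudobounded set of integers are the right ideas. But what you have written is a plan, not a proof: the online gliding-hump construction (so that the assembled $x$ provably lies in $\ell_p$ without first deciding how much mass each stretch carries), the budget schedule that yields genuine pseudoboundedness rather than a too-strong outright bound, and the block profile in the converse direction that makes the convergence hypothesis verifiable from pseudoboundedness alone are exactly where the work sits, and you defer all of them to the same reference the paper does. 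Since the paper contains nothing beyond the citation, the most I can say is that your sketch is consistent with and no weaker than the paper's treatment, and contains no visible error, while falling short of a proof in precisely the places the paper also chose not to prove.
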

Lastly, we would like to mention that in \cite{hI02} it is shown that \BDN is equivalent to two important spaces being complete.

\section*{\textbf{\thesection ½} \BD}\addcontentsline{toc}{section}{\texorpdfstring{\thesection ½ \BD}{\thesection ½ BD}}
At the same time as investigating \BDN H.~Ishihara also introduced the stronger principle \BD which drops the assumption of the set being countable. This also explains the nomenclature: ``$\mathrm{BD}$'' is short for boundedness while the $\mathrm{N}$ in \BDN stands for countable.\footnote{H.~Ishihara has also mentioned that he prefers a simply typed $\mathrm{N}$ rather than a boldface $\NN$, in this principle's name.}
\begin{principle}[BD]{\BD} \label{PR:BD}
Every inhabited, pseudobounded subset of $\NN$  is bounded.
\end{principle}

We have the following counterpart to Propositions \ref{Pro:BDN-equivs} and \ref{Pro:BDN-unif-equivs}.

\begin{Pro} The following are equivalent to \BD. \label{Pro:BD-equivs}
\begin{enumerate}
\item  \label{BDequiv2} Every  map that is sequentially continuous at a point $x$  is point-wise continuous at $x$.\footnote{Note that in Proposition \ref{Pro:BDN-equivs} we need the sequential continuity not just at a point, but everywhere.}
\item \label{BDequiv4} Every uniformly sequentially continuous mapping of a metric space into a metric space is uniformly continuous.
\item \label{BDequiv5} Every uniformly sequentially continuous mapping of a metric space into a metric space is point-wise continuous.
\end{enumerate}
\end{Pro}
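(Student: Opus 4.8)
The plan is to establish the chain
\[ \BD \implies \ref{BDequiv2}, \qquad \BD \implies \ref{BDequiv4} \implies \ref{BDequiv5} \implies \BD, \qquad \ref{BDequiv2} \implies \BD, \]
where $\ref{BDequiv4} \implies \ref{BDequiv5}$ is trivial. The whole argument mirrors the proofs of Propositions \ref{Pro:BDN-equivs} and \ref{Pro:BDN-unif-equivs}; the one essential change is that, having no separable domain to work with, the auxiliary space that plays the role of $\BS$ will be $A^{\NN} = \set{\gamma \in \BS}{\fa{n}{\gamma(n) \in A}}$ with the subspace metric of Baire space. This space is in general \emph{not} separable — there is no way to enumerate an arbitrary pseudobounded $A$ — which is precisely why the countability hypothesis built into \BDN can be dropped here.

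For $\BD \implies \ref{BDequiv4}$, let $f \colon X \to Y$ be uniformly sequentially continuous and fix $\varepsilon > 0$. Consider
\[ S = \menge{0} \cup \set{n \geqslant 1}{\ex{a,b \in X}{d(a,b) < \tfrac1n \land d(f(a),f(b)) > \varepsilon}} \subseteq \NN , \]
which is inhabited. I claim $S$ is pseudobounded: given a sequence $(s_m)_{m \geqslant 1}$ in $S$, use countable choice to select, for those $m$ with $s_m \geqslant 1$, witnessing pairs $(a_m,b_m)$, and for $m$ with $s_m < m$ (a decidable alternative) replace the pair by a constant one; the resulting pairs are at distance $\leqslant \tfrac1m$, so uniform sequential continuity forces $d(f(a_m),f(b_m)) < \varepsilon$ eventually, ruling out $s_m \geqslant m$ from some point on. Hence $s_m < m$ eventually. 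By \BD, $S$ is bounded, say by $N$; then $\delta = \tfrac{1}{N+1}$ is a modulus of uniform continuity for $\varepsilon$, since $d(a,b)<\delta$ and $d(f(a),f(b))>\varepsilon$ would put $N+1$ into $S$. Running this with $\varepsilon/2$ yields the strict inequality. The implication $\BD \implies \ref{BDequiv2}$ is the identical argument with the single point $x$ in both slots, using only sequential continuity at $x$ and the set $\menge{0} \cup \set{n \geqslant 1}{\ex{y \in X}{d(x,y)<\tfrac1n \land d(f(x),f(y))>\varepsilon}}$.

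For the two reverse implications, let $A \subseteq \NN$ be inhabited and pseudobounded. Replacing $A$ by $A \cup \menge{0}$ — which is again pseudobounded (dominate any sequence in $A\cup\menge 0$ by the $A$-sequence $\max\menge{s_m,a_0}$ for a fixed $a_0\in A$) and bounded exactly when $A$ is — we may assume $0 \in A$, so $\zero \in A^{\NN}$. Define $f \colon A^{\NN} \to \NN$ by
\[ f(\gamma) = \max\bigl(\menge{0} \cup \set{n \in \NN}{\gamma(n) > n}\bigr) . \]
Since every $\gamma \in A^{\NN}$ is literally a sequence in $A$, pseudoboundedness gives $\gamma(n) < n$ eventually, so the set inside the $\max$ is decidable and bounded; a bounded, decidable, inhabited set of naturals is located (and the locatedness disjunction does not mention the bound, so a single $\exists$-witness suffices), hence its supremum is attained and $f$ is well defined. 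Now if $f$ is locally bounded at $\zero$ — say $f(\gamma) \leqslant K$ whenever $\overline{\gamma}N = \zero$ — then $A$ is bounded by $M+1$ with $M=\max\menge{N,K}$: for $a\in A$ with $a \geqslant M+2$, the sequence $\gamma = 0^{M}\,a\,0\,0\,\ldots$ lies in $A^{\NN}$, agrees with $\zero$ on the first $N$ places, yet $\gamma(M+1)=a>M+1$ gives $f(\gamma) \geqslant M+1 > K$. Applying $\ref{BDequiv5}$ to $f$ makes it point-wise continuous, hence (as $\NN$ is discrete) locally constant, hence locally bounded at $\zero$; so $\ref{BDequiv5} \implies \BD$. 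Since uniform sequential continuity implies sequential continuity at every point (take a constant second sequence), the same $f$ also gives $\ref{BDequiv2} \implies \BD$ via continuity at $\zero$ alone.

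The crux, as in the \BDN case, is to verify that $f \colon A^{\NN} \to \NN$ is uniformly sequentially continuous. The plan is to copy the direct argument in the proof of Proposition \ref{Pro:BDN-unif-equivs}: given $(\alpha_m),(\beta_m)$ with $d(\alpha_m,\beta_m)\to 0$, fix by countable choice a modulus $\mu$ with $\overline{\alpha_i}n = \overline{\beta_i}n$ for $i \geqslant \mu(n)$, and for each $n$ decide whether some $i \in [\mu(n),\mu(n+1))$ has $f(\alpha_i) \neq f(\beta_i)$. If such $i$ exists, then — since $\alpha_i,\beta_i$ agree on the first $n$ coordinates and $f$ of a string whose value is $\leqslant n$ is already determined by those coordinates — one of $f(\alpha_i),f(\beta_i)$ exceeds $n$, and evaluating the relevant string at that index produces an element of $A$ that is $>n$; recording these elements (and $n$ in the other case) yields a sequence in $A$, whose pseudoboundedness forces the first alternative to stop occurring, i.e. $f(\alpha_m)=f(\beta_m)$ eventually. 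I expect the only genuinely delicate point to be making the ``decide whether $f(\alpha_i)\neq f(\beta_i)$'' step and the ``value $\leqslant n$ is determined by the first $n$ coordinates'' step fully rigorous given that $f$ is defined through an unbounded search — exactly the sort of bookkeeping that is already needed for Propositions \ref{Pro:BDN-equivs} and \ref{Pro:BDN-unif-equivs}; everything else is a routine adaptation, and no new use of Ishihara's tricks is required because $f$ is handled directly.
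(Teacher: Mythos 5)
Your proposal is correct, and it does more work than the paper itself: where the paper simply cites Ishihara (1992) and \cite{dB05c}, you supply a self-contained argument modelled on Propositions \ref{Pro:BDN-equivs} and \ref{Pro:BDN-unif-equivs}, with the single structural change of replacing $\BS$ by $A^{\NN}$ so that the countability hypothesis in \BDN is never invoked. I checked each step: $S$ is pseudobounded by the dichotomy $s_m \geqslant m$ versus $s_m < m$ (decidable on $\NN$, and the ``replace by a constant pair'' device keeps countable choice applicable); the bound $N$ from \BD yields a modulus $\nicefrac{1}{N+1}$, with the usual $\varepsilon/2$ twist to turn $\leqslant$ into $<$; $f(\gamma)=\max(\menge{0}\cup\set{n}{\gamma(n)>n})$ is well-defined because $\gamma$ is literally a sequence in $A$ and pseudoboundedness hands us a bound, after which the max is a finite decidable search; local boundedness at $\zero$ bounds $A$ by the $\gamma = 0^{M}\,a\,\zero$ test sequence; and the uniform sequential continuity of $f$ follows the direct route of Proposition \ref{Pro:BDN-unif-equivs}, where the two worries you flag --- decidability of $f(\alpha_i)\neq f(\beta_i)$ and localisation of small values --- are both unproblematic: equality on $\NN$ is decidable, and if $f(\gamma) \leqslant n$ then every $k$ with $\gamma(k)>k$ satisfies $k\leqslant n$, so $f$ restricted to values $\leqslant n$ really is a function of the first $n$ coordinates. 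Two cosmetic fixes: in the ``otherwise'' branch of the pseudoboundedness sequence you should record the fixed element $0 \in A$ rather than $n$ itself (since $n$ need not lie in $A$), and it is worth saying explicitly that one may take $\mu$ strictly increasing (or else observe that the claim is trivial if $\mu$ is bounded). Neither affects the argument, and your route is arguably cleaner than the \BDN one because it avoids Ishihara's tricks entirely by going through uniform sequential continuity in both directions.
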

\begin{proof}
The equivalence of \BD to \ref{BDequiv2} has already been shown in \cite{Ishihara1992}, where \BD was first introduced. The equivalence of \BD to \ref{BDequiv4} and \ref{BDequiv5} is from \cite{dB05c}.
\end{proof}

The strength of \BD has, to our knowledge, not been considered, apart from Lubarsky's topological model \cite{rL12}, in which \BD fails.
% Does  \BDN hold \todo{check}. 

It is clear, that \BD holds classically. 
One can make the almost trivial observation that in the presence of Kripke's Scheme (see Section \ref{Sec:KS_and_PFP}) \BDN $\implies$ \BD (with the help of Proposition \ref{Pro:KS_equiv_Ncount}). That means that \BD also holds in some interpretations of \INT, however, the exact state of \KS in \INT is somewhat unclear. There is also a less problematic way: In \cite{Ishihara1992} it is shown that \WCN implies \BDN, which means it holds in \INT, and we can use basically the same proof for \BD.

\todo{Future Work: link up with CC and so on}
\begin{principle}[WCN]{\WCN} \label{PR:WCN}
The principle \define{weak continuity for numbers} is
\begin{multline*}
 \fa{\alpha \in \CS}{\ex{i \in \NN}{A(\alpha,i)}} \\ \implies \left( \fa{\alpha \in \CS}{\ex{i,j \in \NN}{\fa{\beta \in \CS}{ A(\overline{\alpha}(i) \ast \beta,j)  }}}  \right)
 \end{multline*}
\end{principle}

\begin{Pro}
	\WCN implies \BD.
\end{Pro}
\begin{proof}
	Assume $S$ is an inhabited, pseudobounded subset of $\NN$. Without loss of generality, we may assume that $0 \in S$. Now define a predicate $A \subset \CS \times \NN$ by
	\[ A(\alpha, j) \equiv \left(\fa{n \in \NN}{\alpha_n \in S} \right) \implies \left(\fa{n \geqslant j}{\alpha_n \leqslant j} \right) \ .
	 \]
	 The pseudoboundedness assumption translates into $\fa{\alpha \in \CS}{\ex{i \in \NN}{A(\alpha,i)}}$.  So we can use \WCN to get $i,j$ for $\alpha = \zero$ such that $\fa{\beta \in \CS}{A(0^i\ast \beta, j)}$. We claim that $M = \max \menge{i,j}$ is an upper bound for $S$. For assume there is $N > M$ such that $N \in S$. Then we have $A(\gamma , j)$ for $\gamma = 0^M \ast N \ast N \ast \dots$. For that sequence $\gamma$ we have $\fa{n \in \NN}{\gamma_n \in S}$, so $\fa{n \geqslant j}{\gamma_n \leqslant j}$.  In particular this is the case for $n=N$, which means $j \leqslant M < N = \gamma_N  \leqslant j$; a contradiction. 
\end{proof}

\section{Below \texorpdfstring{\BDN}{BDN}} \label{Sec:belowBDN}

Between, approximately, 2007 and 2010 a couple of statements were considered by researchers working in \CRM for which a proof in \BISH could not be found, but that were all implied by \BDN. Naturally a considerable amount of time was spent trying to prove that they were in fact equivalent to \BDN. As it turned out around 2011 for most of them it could be shown that that could not be done, but that also they could not be proved within \BISH. These statements, and some additional ones, are the following:

\begin{Pro} \label{Pro:BDN-impl}
\BDN \emph{implies} the following statements.
\begin{enumerate}
\item \label{BDN-impl12} If $X$ is separable, then $\AS{X}{1}$ implies \AS{X}{Y} for any $Y \supset X$.
\item \label{BDN-impl1} $\FANc$ is equivalent to $\FANP$.
\item \label{BDN-impl1p} $\FANc$ is equivalent to $\UCT$.
\item \label{BDN-impl2} The anti-Specker property is closed under products in the sense that if $\AS{X}{1}$ and $\AS{Y}{1}$ holds, then also $\AS{X \times Y}{1}$ holds. 
\item \label{BDN-impl3} The anti-Specker property implies pseudo-compactness. That is, if $\AS{X}{1}$ and $X$ is separable, then any point-wise continuous function $f:X \to \RR$ is bounded.
\item \label{BDN-impl3.5} The anti-Specker property implies totally boundedness That is, if $\AS{X}{1}$ and $X$ is separable, then $X$ is totally bounded.
\item \label{BDN-impl4} A converse of the Riemann permutation theorem: if $(a_n)_{n \geqslant 1}$ is a sequence of real numbers such that for all $\sigma$ permutations the series
\[ \sum_{n = 1}^{\infty} a_{\sigma(n)} \] converges, then $(a_n)_{n \geqslant 1}$ converges absolutely.
\item \label{BDN-impl5}  Every almost Cauchy sequence in a metric space is Cauchy. 

\end{enumerate}
Furthermore, \ref{BDN-impl12} implies both \ref{BDN-impl1} and \ref{BDN-impl3}, \ref{BDN-impl1} implies \ref{BDN-impl1p}, and \ref{BDN-impl3} and \ref{BDN-impl3.5} are equivalent and they also imply \ref{BDN-impl1p}. Finally, \ref{BDN-impl5} implies \ref{BDN-impl4}. 
\end{Pro}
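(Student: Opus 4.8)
The plan is to route everything through the two characterisations of \BDN already established: that every inhabited, countable, pseudobounded subset of $\NN$ is bounded (Principle \ref{PR:BDN}), and that every sequentially continuous map $\BS \to \NN$ is locally bounded (Proposition \ref{Pro:BDN-equivs}), together with the uniform variants of Proposition \ref{Pro:BDN-unif-equivs}. One item is immediate: \ref{BDN-impl5} is the metric-space special case of the forward direction of Proposition \ref{Pro:BDN-almost-Cauchy}, since a metric space is in particular a semi-metric space. For the remaining items the strategy is uniform: assuming the conclusion of a listed statement fails (a sequence is not eventually bounded away from a whole set, a function is not bounded, a series does not converge absolutely), one extracts a countable subset of $\NN$ recording "how far one has to search" for the relevant witness; the hypotheses plus continuity make this set pseudobounded, and \BDN upgrades it to a bounded set, which supplies exactly the missing uniformity. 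Choosing the right set is the real work in each case.

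Before the analytic constructions I would dispose of the "Furthermore" implications, which are essentially dictionary translations through equivalences already in the text and use \BDN only indirectly. Recall $\FANc \iff \AS{[0,1]}{1} \iff \AS{\CS}{1}$ (Proposition \ref{Pro:FANc-equivs}), $\FANP \iff \AS{[0,1]}{\RR^2}$ (Proposition \ref{Pro:FANP-equiv-ASXY}), and that $\UCT$ lies between the two fan principles, $\FANP \implies \UCT \implies \FANc$, the first implication by specialising the equi-continuity characterisation of $\FANP$ to a single function and the second visible from the boundedness forms in Propositions \ref{Pro:more_Equiv_UCT} and \ref{Pro:FANc-equivs}. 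Then: \ref{BDN-impl12} applied with $X=[0,1]\subseteq\RR^2=Y$ promotes $\AS{[0,1]}{1}$ to $\AS{[0,1]}{\RR^2}$, i.e. $\FANc\implies\FANP$, which is \ref{BDN-impl1}; combined with the sandwich this collapses all three fan principles and gives \ref{BDN-impl1p}. Likewise the pseudo-compactness statement \ref{BDN-impl3} applied to $X=\CS$ yields $\FANc \implies (\text{pwc }\CS\to\RR\text{ bounded}) \iff \UCT$, again giving \ref{BDN-impl1p}; and \ref{BDN-impl3} $\iff$ \ref{BDN-impl3.5} is the standard fact that for separable $X$ pseudo-compactness and total boundedness are interderivable (an $\varepsilon$-separated sequence is a Specker-type obstruction to pseudo-compactness, and conversely the uniformly continuous image of a totally bounded set is bounded). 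Finally \ref{BDN-impl12} $\implies$ \ref{BDN-impl3.5}: from $\AS{X}{1}$ it gives $\AS{X}{X}$, which forbids $\varepsilon$-separated sequences in $X$ and hence forces total boundedness.

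This reduces the genuine content to three direct implications from \BDN: \ref{BDN-impl12}, \ref{BDN-impl2}, and \ref{BDN-impl4}. For \ref{BDN-impl12}, fix a dense sequence $(q_k)$ in $X$ and a sequence $(y_n)$ in $Y\supseteq X$ eventually bounded away from every point of $X$; using countable choice build a sequence $(x_n)$ in a one-point extension of $X$ by replacing $y_n$ with a sufficiently close $q_k$ whenever $y_n$ falls within a shrinking tolerance of $X$ and sending it to the adjoined point otherwise, recording the index depths of these replacements as a countable $S\subseteq\NN$. The point is that $(x_n)$ is still eventually bounded away from each point of $X$, so $\AS{X}{1}$ applies; $S$ is pseudobounded because $(y_n)$ separates from the $q_k$, so \BDN bounds $S$ and converts the uniform modulus for $(x_n)$ back into one for $(y_n)$. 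For \ref{BDN-impl2} one runs the analogous argument on the two coordinates of a sequence in the one-point extension of $X\times Y$, using \BDN to synchronise the separation depths in the $X$- and $Y$-directions so that $\AS{X}{1}$ and $\AS{Y}{1}$ can be used in tandem. For \ref{BDN-impl4} one takes $(a_n)$ all of whose rearrangements converge and assumes, for contradiction, that the partial sums $\sum_{i\le n}|a_i|$ are unbounded; splitting into positive and negative parts, an unbounded tail of $|a_n|$ can be packaged into a permutation whose partial sums oscillate, and the block lengths of such a construction form a pseudobounded subset of $\NN$, which \BDN then bounds, contradicting the oscillation.

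I expect \ref{BDN-impl4} to be the main obstacle: building the rearrangement — equivalently, defining the pseudobounded set cleanly — without deciding the signs of individual $a_n$ or pinning down the value of $\sum|a_n|$ is where the encoding is least routine and most error-prone. The auxiliary replacement construction in \ref{BDN-impl12} (and the synchronisation in \ref{BDN-impl2}) is the second delicate point; everything else is bookkeeping on top of the results already in the excerpt.
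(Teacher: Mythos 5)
Your overall plan — encode the ``search depth'' into a countable pseudobounded set, let \BDN upgrade pseudoboundedness to boundedness, and route the items through the anti-Specker characterisations of the fan theorems — is the paper's plan, and your treatment of \ref{BDN-impl5}, the sandwich \[\FANP \implies \UCT \implies \FANc,\] and the derivation of \ref{BDN-impl1} (hence \ref{BDN-impl1p}) from \ref{BDN-impl12} all match what the paper does. Your sketch of \ref{BDN-impl12} itself is the same argument the paper gives, just described with the quantifiers in a slightly different order: the paper defines $A = \set{n}{\ex{i,j \geqslant n}{d(z_i,x_j)<2^{-n}}}\cup\menge{0}$, proves $A$ pseudobounded by feeding an auxiliary $X\cup\menge{\omega}$-valued sequence to $\AS{X}{1}$, then applies \BDN. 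So far, same route.

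You diverge, less efficiently, in two places and have one genuine wobble. First, \ref{BDN-impl2}: the paper does not re-run a fresh \BDN-plus-synchronisation argument on the two coordinates. It derives \ref{BDN-impl2} directly from \ref{BDN-impl12} with no new use of \BDN, by projecting a sequence $(z_n)$ in $(X\times Y)\cup\menge{\omega}$ onto $Y$, and then for a fixed $y\in Y$ applying $\AS{X}{(X\times Y)\cup\menge{\omega}}$ (available by \ref{BDN-impl12}) to the embedding $x\mapsto(x,y)$ to bound the projected sequence away from $y$; then $\AS{Y}{1}$ finishes. Your synchronisation idea would presumably work but is extra machinery, and it forces you to re-prove something you already have.

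Second, the claimed step ``\ref{BDN-impl12} $\implies$ \ref{BDN-impl3.5} via $\AS{X}{X}$'' is not usable as stated. A sequence taking values in $X$ can never be eventually bounded away from the entire set $X$ (it is at distance $0$ from itself), so $\AS{X}{X}$ degenerates to the assertion that no sequence in $X$ is eventually bounded away from every point of $X$; that is a different statement from total boundedness and does not by itself ``forbid $\varepsilon$-separated sequences'' in a way that closes the argument. The paper instead shows \ref{BDN-impl12} $\implies$ \ref{BDN-impl3}: given pointwise-continuous $f:X\to\RR$ with $f>1$, set $g=1/f$, form the sequence $\bigl((x_n,g(x_n))\bigr)_n$ in $X\times\RR$, show it is eventually bounded away from $X\times\menge{0}$ using $\AS{X}{X\times\RR}$, and read off a bound for $f$; then it handles \ref{BDN-impl3} $\iff$ \ref{BDN-impl3.5} separately (one direction by citation, the other by observing $f(X)$ is separable, $\AS{f(X)}{1}$ holds, and totally bounded $\implies$ bounded). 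You should replace the $\AS{X}{X}$ shortcut with this two-step route.

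Finally, for \ref{BDN-impl4} the paper only cites a sketch in \cite{hD12b}; your one-paragraph outline is too compressed to certify, and you rightly flag it as the weak point. The packaging of ``block lengths'' into a pseudobounded set is exactly the right shape, but as you note, doing it without deciding the signs of individual $a_n$ is the delicate part and would need to be written out before counting it as a proof.
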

\begin{proof}
To see that \ref{BDN-impl12} follows from \BDN let $(x_n)_{n \geqslant 1}$ be a dense sequence in $X$; without loss of generality we can assume that every point is repeated infinitely often in the sequence $(x_n)_{n \geqslant 1}$. Furthermore, consider  a sequence $(z_n)_{n \geqslant 1}$ in $Y$ that is bounded away from every point in $X \subset Y$.
Now define the set
\[ A= \set{n \in \NN}{\ex{i,j \geqslant n}{d(z_i,x_j) < 2^{-n}}} \cup \menge{0} \ .\]
The set $A$ is easily seen to be countable; and, unsurprisingly, we are going to show that it is also pseudobounded. To this end let $a_n$ be a sequence in $A$ and with the use of countable choice define a sequence $(y_n)_{n \geqslant 1}$ in $X \cup \menge{\omega}$\footnote{We choose a metric on $X \cup \menge{\omega}$ that is equivalent to the one on $X$  and is such that $d(x,\omega) \geqslant  1$.} such that 
\begin{align*}
   a_n < n &\iff y_n = \omega  \ , \\
   a_n \geqslant n &\iff y_n = x_j, \ j \geqslant n, \ \ex{i \geqslant n}{d(z_i,x_j) < 2^{-n}} \ .
\end{align*}

The sequence $(y_n)_{n \geqslant 1}$ is eventually bounded away from every point in $x$: for let $x$ in $X$ be arbitrary. Since $(z_n)_{n \geqslant 1}$ is eventually bounded away from $x$ there exists $N$ such that 
\begin{equation} \label{Eqn:BDN-ASusw}
d(x,z_k) > 2^{-N} \text{ for all } k > N \
\end{equation}
 We claim that $d(x,y_k) > 2^{-(N+2)}$ for all $k > N$. For assume $d(x,y_k) < 2^{-(N+1)}$. Then we must have $a_k \geqslant k$, and therefore $y_k = x_j$ for some $j \geqslant k$ and $d(z_i,x_j) < 2^{-k}$ for some $i \geqslant k$. Hence
\[ d(x,z_i) \leqslant d(x,x_j) + d(x_j,z_i) = d(x,y_k) + d(x_j,z_i) < 2^{-(N+1)} + 2^{-k}  \leqslant 2^{-N} \ ,\]
but this is a contradiction to \ref{Eqn:BDN-ASusw}. Hence we have shown that the sequence $(y_n)_{n \geqslant 1}$ is eventually bounded away from $x$. Since we assumed \AS{X}{1} it is therefore bounded away from the entire set $X$, which means there must exist $N$ such that $y_n = \omega$ for all $n \geqslant N$ and therefore $a_n < n$ for all $n \geqslant N$. Hence $A$ is bounded, say by a number $M$. Now there cannot be an $x \in X$ and $i \geqslant M$ with $d(x,z_i) < 2^{-(M+2)}$, since otherwise by the density we can find $j \geqslant M$ with $d(x_j,x) < 2^{-(M+2)}$, which would imply that $d(x_i,x_j) < 2^{-(M+1)}$ and therefore $M+1 \in A$, which is a contradiction. So for all $x \in X$ and $i \geqslant M$ we have $d(x,z_i) \geqslant 2^{-(M+2)}$, which means that $(z_n)_{n \geqslant 1}$ is eventually bounded away from the set $X$.

That \ref{BDN-impl1} follows from \ref{BDN-impl12} is an immediate consequence of Propositions \ref{Pro:FANc-equivs} and \ref{Pro:FANP-equiv-ASXY}.

To see that \ref{BDN-impl12} implies \ref{BDN-impl3} let $f:X \to \RR$ be a point-wise continuous function and $(x_n)_{n \geqslant 1}$ a dense sequence in $X$. We may assume that $f(x) > 1$ for all $x \in X$. Hence $g(x)=1/f(x)$ is well-defined and also point-wise continuous. Furthermore we have that $g(x) > 0$ for all $x \in X$. Hence, together with the continuity, the sequence $((x_n,g(x_n)))_{n \geqslant 1}$ in $X \times \RR$ is eventually bounded away from every point in $X$ (embedded in $X \times \RR$ as $X \times \menge{0}$). Since we assume \AS{X}{1} and therefore by assumption also \AS{X}{Y}, this sequence is also eventually bounded away from the entire set $X \times \menge{0}$. So there exists $M$ such that $d((x,0),(x_i,g(x_i))) > 2^{-M}$ for all $i \geqslant M$ and $x \in X$. In particular, $\abs*{g(x_i)} > 2^{-M}$ for all $i \geqslant M$, which is equivalent to $f(x_i) < 2^{M}$. Since $(x_n)_{n \geqslant 1}$ is dense and $f$ continuous $f(x) \leqslant 2^{M}$ for all $x \in X$, and thus $f$ is bounded.

The equivalence of \BDN to the statements \ref{BDN-impl2} and \ref{BDN-impl3} were proved in \cite{dB10d} and \cite{hD09b} respectively.

To see that \ref{BDN-impl3}  and \ref{BDN-impl3.5} are equivalent notice, for one direction, that pseudo-compactness implies totally boundedness \cite{dB76}*{Theorem 2}. For the other direction notice that if $f:X \to \RR$ is point-wise continuous, then $f(X)$ is also separable and that $\AS{f(X)}{1}$ follows from $\AS{X}{1}$, whence $f(X)$ is totally bounded and therefore, in particular, bounded (also see\ \cite{hD09b}*{Theorem 11}).

To see that \ref{BDN-impl12} implies \ref{BDN-impl2} let $z_n$ be a sequence in $X \times Y \cup \menge{\omega}$, where $\AS{X}{1}$ and $\AS{Y}{1}$. Let $y_n$ be the sequence in $Y \cup \menge{\omega}$ that is, basically, the projection of $z_n$ onto $Y$. More precisely:
\[ y_n = \begin{cases} \pi_Y(z_n) & \text{if } z_n \in Y \\ \omega & \text{if } z_n = \omega \ . \end{cases}  \]
We want to show that $y_n$ is bounded away from every point in $Y$. So let $y \in Y$ be arbitrary. By \ref{BDN-impl12} we have \AS{X}{Z}, where we think of $X$ being embedded in $X \times Y \cup \menge{\omega}$ by $x \mapsto (y,x)$. In other words, we consider $X \times \menge{y} \subset X \times Y \cup \menge{\omega}$. Since $z_n$ is bounded away from every point in $ X \times \menge{y} $ there exists $\delta > 0$ and $N \in \NN$ such that $d(z_n, (x,y)) > \delta$ for all $x \in X$ and $n \geqslant N$. Hence $d_{Y\cup \menge{\omega}}(y_n, y) > \delta $ for all $n \geqslant N$. So $y_n$ is bounded away from every point in $Y$. By \AS{Y}{1} that means that there exists $K$ such that $y_n = \omega$ for all $n \geqslant K$, but that implies, by construction of $y_n$, that $z_n = \omega$ for all $n \geqslant K$.

The implications \ref{BDN-impl1} implies \ref{BDN-impl1p}, and \ref{BDN-impl3.5} implies \ref{BDN-impl1p} are trivial.

 Finally, the proof of the equivalence of \BDN and \ref{BDN-impl4} is sketched in \cite{hD12b} and that \BDN implies \ref{BDN-impl5} is simply Proposition \ref{Pro:BDN-almost-Cauchy} restricted to metric spaces and without the converse. The proof that \ref{BDN-impl5} implies \ref{BDN-impl4} can be found in \cite{hD18b}.
\end{proof}

Since in every topological model (Section \ref{Sec:topmodels}) \UCT holds (consequence of \cite{mF79}*{Theorem 3.2}) and there is a model in which \BDN fails \cite{rL12}, Statement \ref{BDN-impl1} cannot imply \BDN. In \cite{bL11b} it is shown that it cannot be proved in \BISH. That \ref{BDN-impl2}, \ref{BDN-impl4}, and \ref{BDN-impl5} are strength-wise also between \BDN and unadorned \BISH was shown in \cite{bL11}.  It is unknown, whether this also holds for \ref{BDN-impl12} and \ref{BDN-impl3}, but it seems likely.

Another natural principle which falls into the same category is
\begin{principle}[wBDN]{\wBDN} \label{PR:wBDN} Every sequentially continuous mapping $f:\CS \to \NN$ is point-wise continuous.
\end{principle}

\begin{Pro} \label{Pro:wBDN_equiv}
	\wBDN is equivalent to the statement that every sequentially continuous mapping $f:\CS \to \NN$ is locally bounded. \end{Pro}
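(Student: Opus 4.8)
Throughout, write $\gamma^{[m]}\in\CS$ for the sequence obtained from $\gamma$ by replacing its first $m$ entries by $0$, i.e.\ $\gamma^{[m]} = 0^m \ast \big(\gamma(m)\gamma(m+1)\cdots\big)$, so that $\overline{\gamma^{[m]}}m = 0^m$ and $\gamma^{[m]}(i) = \gamma(i)$ for $i \geqslant m$.

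\medskip

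\noindent\textbf{Plan.} The forward implication is trivial: point-wise continuity of a map into the discrete space $\NN$ means ``locally constant'', which in particular is locally bounded. So all the content is in the converse. Assume every sequentially continuous $f:\CS\to\NN$ is locally bounded, and let $f:\CS\to\NN$ be sequentially continuous; we must show $f$ is point-wise continuous. Fix $\alpha\in\CS$. Composing $f$ with the uniformly continuous involution $\sigma_\alpha:\CS\to\CS$, $\sigma_\alpha(\beta)(n) = \abs{\beta(n)-\alpha(n)}$ (which swaps $\zero$ and $\alpha$ and preserves sequential continuity and prefix-length), we may assume $\alpha = \zero$. Put $j_0 := f(\zero)$. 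The key auxiliary object is the function $h:\CS\to\NN$ defined by
\[ h(\gamma) := \abs{\set{m\in\NN}{f(\gamma^{[m]}) \neq j_0}}\ . \]
This is well-defined: since $\gamma^{[m]}\to\zero$ as $m\to\infty$, sequential continuity of $f$ gives $f(\gamma^{[m]}) = j_0$ for all large $m$, so the displayed set is a detachable, finite subset of $\NN$ with a well-defined cardinality.

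\medskip

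\noindent\textbf{From a bound on $h$ to a modulus of constancy for $f$.} Suppose for the moment that $h$ is sequentially continuous. Then by hypothesis $h$ is locally bounded, in particular at $\zero$: there exist $N_1,K_1$ with $\overline\gamma{N_1}=0^{N_1}\implies h(\gamma)\leqslant K_1$. Set $N := \max\{N_1,K_1\}$ and let $\beta\in\CS$ with $\overline\beta N = 0^N$ be arbitrary. For every $m\leqslant N$ one has $\beta^{[m]} = \beta$, hence $f(\beta^{[m]}) = f(\beta)$. Now $f(\beta) = j_0$ or $f(\beta)\neq j_0$ is decidable; in the second case $\{0,1,\dots,N\}$ is contained in the defining set of $h(\beta)$, so $h(\beta)\geqslant N+1 > K_1$, contradicting $h(\beta)\leqslant K_1$ (legitimate since $N\geqslant N_1$). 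Therefore $f(\beta) = j_0$, i.e.\ $f$ is constant on $0^N\ast\CS$, which is point-wise continuity of $f$ at $\zero$, hence (undoing $\sigma_\alpha$) at $\alpha$.

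\medskip

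\noindent\textbf{The main obstacle: sequential continuity of $h$.} This is handled exactly as the analogous constructions in the proofs of Proposition \ref{Pro:BDN-equivs} and Lemma \ref{Lem:seq_unif_cont}. First, $h$ is strongly extensional: a sequentially continuous $f:\CS\to\NN$ is itself strongly extensional (given $f(\gamma)\neq f(\delta)$, flip the coordinates of $\gamma$ towards $\delta$ one at a time and use sequential continuity of $f$ to find a first flip at which the value of $f$ changes; that coordinate is one where $\gamma$ and $\delta$ differ), and then $h(\gamma)\neq h(\delta)$ forces some $m$ with $f(\gamma^{[m]})\neq f(\delta^{[m]})$, so $\gamma^{[m]}\neq\delta^{[m]}$, so $\gamma\neq\delta$. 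Second, \LPO implies $h$ is sequentially continuous: under \LPO the sequentially continuous $f$ is point-wise continuous (\LPO implies \BDN, which implies \wBDN) and even uniformly continuous (\LPO implies \LLPO, hence \UCT by Proposition \ref{Pro:LLPO_impl_UCT}, hence \FANc, which upgrades point-wise to uniform continuity by Proposition \ref{Pro:FANc-equivs}); if $D$ is a modulus of constancy for $f$, then $\overline{\gamma^{[m]}}D = 0^D$ for every $m\geqslant D$, so $f(\gamma^{[m]}) = j_0$ there, whence $h(\gamma) = \abs{\set{m<D}{f(\gamma^{[m]})\neq j_0}}$, a finite count over the fixed range $\{0,\dots,D-1\}$ of functions of $\gamma$ each having a modulus of constancy at most $2D$; thus $h$ is uniformly continuous. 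By the corollary to Ishihara's tricks \cites{hD12b,hI91} used repeatedly in this chapter, a strongly extensional $h:\CS\to\NN$ that is sequentially continuous under the assumption of \LPO is sequentially continuous outright. This discharges the assumption made above and completes the proof.

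The delicate point is entirely step (i) — getting sequential continuity of $h$ — because the value $f(\gamma^{[m]})$ depends on arbitrarily high coordinates of $\gamma$, so there is no naive finitary modulus; it is precisely Ishihara's-tricks machinery that bridges the gap, and the combinatorics of the prefix-zeroing operation $\gamma\mapsto\gamma^{[m]}$ are arranged so that a deviation of $f$ from $j_0$ deep inside $0^N\ast\CS$ is forced to be witnessed $N+1$ times in $h$ on a whole neighbourhood of $\zero$.
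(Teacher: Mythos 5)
Your proof is correct, but it takes a genuinely different route from the paper's. The paper works directly at an arbitrary $\alpha$ and defines the auxiliary function $g(\beta)$ to be the first coordinate at which $\alpha$ and $\beta$ differ when $f(\beta)\neq f(\alpha)$ (and $0$ otherwise), using the fact—noted there—that sequential continuity on a complete space already gives strong extensionality of $f$, so this ``first disagreement'' exists. Local boundedness of $g$ at $\alpha$ then immediately forces $f(\beta)=f(\alpha)$ on a ball: if $\overline{\alpha}M=\overline{\beta}M$ with $M$ chosen from the local bound and $f(\beta)\neq f(\alpha)$, then $g(\beta)>M$, contradiction. You instead reduce to $\alpha=\zero$ by an involution and define $h(\gamma)=\abs{\set{m}{f(\gamma^{[m]})\neq j_0}}$, a counting function built from the prefix-zeroing operation; the contradiction then comes from the count exceeding the local bound. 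Both auxiliary functions are strongly extensional, are shown sequentially continuous via the same Ishihara's-tricks corollary, and are locally unbounded near the base point precisely when $f$ fails to be locally constant there—so the structural skeleton is identical. The paper's $g$ is the shorter and more direct construction, reading the ``depth of the violation'' straight off the tree structure of $\CS$; your $h$ is heavier machinery (involution reduction, zeroing, cardinality count), though it mirrors the style of the counting construction already used in Lemma~\ref{Lem:seq_unif_cont}, which you cite. One small stylistic remark: you re-derive strong extensionality of $f$ by a coordinate-flipping argument, whereas the paper simply invokes the standard fact that sequentially continuous maps on complete metric spaces are strongly extensional; either is fine, but the latter is cleaner and is what the paper appeals to.
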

\begin{proof}
One direction is clear. Conversely, consider $f:\CS \to \NN$ and let $\alpha \in \CS$. Notice that on complete spaces sequential continuity implies strong extensionality and so we can, for every $\beta \in \CS$, if $f(\beta) \neq f(\alpha)$ find the minimal $n$ such that $\alpha(n) \neq \beta(n)$. So define 
	\[ g(\beta) = \begin{cases} n & \text{where $f(\beta) \neq f(\alpha)$ and $n$ is as above} \\ 0 & \text{otherwise}. \end{cases} \]
	
Similar to proofs above, it is easy to see that $g$ is strongly extensional and that it is point-wise continuous provided \LPO holds. Thus, using a corollary to Ishihara's tricks \cites{hD12b,hI91} we can conclude that $g$ is sequentially continuous. 
	
 Now if $g$ is locally bounded around $\alpha$ there exists $N,K$ such that $\overline{\alpha}N = \overline{\beta}N$, then $g(\beta) \leqslant K$. If we use $M = \max{N,K}$ we get that if $\overline{\alpha}M = \overline{\beta}M$, then $g(\beta) \leqslant M$. But that means that there cannot be any $\beta$ such that $\overline{\alpha}M = \overline{\beta}M$ such that $f(\beta) \neq f(\beta)$, since in that case $g(\beta) > M$. Thus $f$ is point-wise continuous.

\end{proof}
\begin{Rmk} \label{Rmk:wBDN}
	It does not seem to be possible to extend the above result analogously to Proposition \ref{Pro:BDN-equivs} and show that \wBDN is equivalent to sequentially continuous functions $\CS \to \RR$ being point-wise continuous. Moreover, it even seems impossible to get an equivalence  analogous to the one of Proposition \ref{Pro:BDN-unif-equivs}.
\end{Rmk}
\begin{Pro} \label{Pro:wwBDN}
	\wBDN implies that every uniformly sequentially continuous mapping $f:\CS \to \NN$ is uniformly continuous.
\end{Pro}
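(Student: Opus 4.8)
The plan is to mimic the proof of Lemma~\ref{Lem:seq_unif_cont}, which establishes exactly this kind of transfer for \BDN, but now keeping track of a \emph{single} base point rather than needing local boundedness everywhere. First I would let $f:\CS \to \NN$ be uniformly sequentially continuous and reuse the zipping/unzipping operations $\curlywedgedownarrow$ and $\curlyveedownarrow$ between $\CS$ and $\CS^{\NN}$ (these make sense for $\CS$ just as for $\BS$, as remarked in the proof of Lemma~\ref{Lem:seq_unif_cont}). From $f$ I would build the function $g:\CS \to \NN$ that sends $\gamma$ to the number
\[ \abs*{\set{n \in \NN}{f(\alpha^{(n)}) \neq f(\beta^{(n)})}} \ , \]
where $\alpha^{(n)}$ and $\beta^{(n)}$ are the two sequences of sequences extracted from $\gamma$ so that $\overline{\alpha^{(n)}}(n) = \overline{\beta^{(n)}}(n)$, hence $d(\alpha^{(n)},\beta^{(n)}) \to 0$. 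Uniform sequential continuity of $f$ guarantees $f(\alpha^{(n)}) = f(\beta^{(n)})$ eventually, so that cardinality is a well-defined natural number and $g$ is total.

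Next I would check that $g$ is strongly extensional and is point-wise continuous under the assumption of \LPO, exactly as in the cited proofs; then, invoking the corollary to Ishihara's tricks \cites{hD12b,hI91}, conclude that $g$ is sequentially continuous. At this point \wBDN applies: $g$ is point-wise continuous, and in particular (by Proposition~\ref{Pro:wBDN_equiv}, or just directly) locally bounded at $\zero$. So there exist $N,K$ such that $g(0^N \ast \alpha) < K$ for all $\alpha \in \CS$. As in Lemma~\ref{Lem:seq_unif_cont} I would then use the bijection $\varphi:\NN^2 \to \NN$ to find $M$ with $\set{\varphi^{-1}_1(i)}{i \leqslant N}$ bounded by $2M$, and claim $L = M + K$ is a modulus of constancy for $f$.

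To verify the claim, suppose $\mu,\eta \in \CS$ with $\overline{\mu}(L) = \overline{\eta}(L)$ but $f(\mu) \neq f(\eta)$. Form the sequence of sequences $\underbrace{\zero,\dots,\zero}_{2M}, \underbrace{\mu,\eta,\mu,\eta,\dots,\mu,\eta}_{2K}, \zero,\zero,\dots$, zip it with $\curlyveedownarrow$ into $\vartheta \in \CS$, and observe (i) $\overline{\vartheta}(N) = 0\dots0$, (ii) $\curlywedgedownarrow\vartheta(2i,\cdot) = \mu$ and (iii) $\curlywedgedownarrow\vartheta(2i+1,\cdot) = \eta$ for $M < i \leqslant L$; this forces $g(\vartheta) \geqslant K$, contradicting the local bound $g(0^N \ast \alpha) < K$. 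Hence no such $\mu,\eta$ exist and $L$ is a modulus of uniform continuity for $f$. The main obstacle I anticipate is purely bookkeeping: making sure the indexing of the $\curlywedgedownarrow$/$\curlyveedownarrow$ operations lines up so that the $2M$ leading $\zero$'s genuinely sit inside the first $N$ coordinates of $\vartheta$, and that truncating $\mu,\eta$ at length $L$ does not disturb the $\overline{\alpha^{(n)}}(n) = \overline{\beta^{(n)}}(n)$ matching used to make $g$ well-defined; everything else is a routine adaptation of the $\BS$ argument to $\CS$, where finiteness of the alphabet only makes life easier.
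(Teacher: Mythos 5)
Your proof is correct and follows exactly the alternative route the paper itself flags: ``or alternatively, from Lemma~\ref{Lem:seq_unif_cont}.'' The paper's primary route is shorter and avoids the zipping construction entirely: uniform sequential continuity implies sequential continuity, so \wBDN immediately gives that $f$ itself is point-wise continuous, and then Lemma~\ref{Lem:useqc_pwc_impl_uc} (the lemma stated directly after the proposition, which says a function $\CS \to \NN$ that is both uniformly sequentially continuous and point-wise continuous is uniformly continuous) yields the conclusion with no auxiliary $g$ at all. Your route buys nothing extra here, but it is just as valid; and the bookkeeping concern you raise about adapting $\curlywedgedownarrow$/$\curlyveedownarrow$ from $\BS$ to $\CS$ is exactly the point the paper also waves away, proving Lemma~\ref{Lem:seq_unif_cont} only for $\BS$ and asserting the $\CS$ case is analogous.
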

\begin{proof}
		 This follows easily from the next lemma, or alternatively, from Lemma \ref{Lem:seq_unif_cont}. 
\end{proof}
\begin{Lem} \label{Lem:useqc_pwc_impl_uc}
	If $f$ is a function  $\CS \to \NN$, and $f$ is both uniformly sequentially continuous and point-wise continuous, then $f$ is uniformly continuous.
\end{Lem}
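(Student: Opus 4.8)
The plan is to extract a modulus of uniform continuity for $f$ by combining the finite information given by uniform sequential continuity with the local control given by point-wise continuity, using a compactness-style argument that stays within $\BISH$ (no fan theorem). First I would fix $\varepsilon = 1$ (since $f$ maps into $\NN$, continuity at scale $1$ means locally constant) and recall that uniform sequential continuity gives, via countable choice, a modulus $\mu:\NN\to\NN$ such that for any two sequences $(\alpha_k),(\beta_k)$ in $\CS$ with $\overline{\alpha_k}\,\mu(n) = \overline{\beta_k}\,\mu(n)$ for all $k\geqslant n$, we have $f(\alpha_k) = f(\beta_k)$ for all $k\geqslant n$ — or more usefully in the uniform-sequential form, that $d(\alpha_k,\beta_k)\to 0$ implies $f(\alpha_k)=f(\beta_k)$ eventually.

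The key step is to argue by contradiction: suppose $f$ is \emph{not} uniformly continuous, i.e.\ for every $N$ there exist $\alpha,\beta\in\CS$ with $\overline{\alpha}N = \overline{\beta}N$ but $f(\alpha)\neq f(\beta)$. Using countable choice I would pick, for each $N$, such a pair $(\alpha^{(N)},\beta^{(N)})$. Then $d(\alpha^{(N)},\beta^{(N)}) \leqslant 2^{-N} \to 0$, so by uniform sequential continuity $f(\alpha^{(N)}) = f(\beta^{(N)})$ for all $N$ large enough — directly contradicting the choice of the pairs. The subtlety is that "not uniformly continuous" for a map into a \emph{discrete} space, together with the fact that $f$ being point-wise continuous makes $f$ locally constant, is exactly the negation we can extract a witnessing sequence from; and since $\NN$ is discrete, $f(\alpha^{(N)})\neq f(\beta^{(N)})$ is a decidable, stable inequality, so the contradiction is genuine and constructively admissible. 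I would spell out that point-wise continuity is used only to legitimise replacing $\varepsilon$-continuity by "eventually equal" in the discrete codomain, so that the statement $f(\alpha^{(N)})=f(\beta^{(N)})$ eventually is meaningful and contradicts strict inequality holding for every $N$.

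The main obstacle I anticipate is the logical bookkeeping around the negation: strictly speaking "$f$ is not uniformly continuous" is a negative statement, and to run the above I need to extract an actual sequence of witnessing pairs, which requires the witnessing data to be positively available. This is fine here because the failure of uniform continuity at level $N$ is a $\Sigma$-statement ($\exists \alpha,\beta$ with a decidable condition $\overline{\alpha}N=\overline{\beta}N \wedge f(\alpha)\neq f(\beta)$), so from "$\neg\forall N\,\neg(\dots)$" I cannot immediately conclude "$\forall N\,\exists\dots$" — instead I would argue directly: assume toward a contradiction that no modulus works, observe this is equivalent (by the decidability of the inner matrix and countable choice) to the existence of the bad sequence, and derive the contradiction as above. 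Alternatively, and perhaps more cleanly, I would prove the contrapositive positively: given that $f$ is uniformly sequentially continuous and point-wise continuous, for the fixed $\varepsilon=1$ the set $A = \set{N\in\NN}{\exists\alpha,\beta\in\CS:\ \overline{\alpha}N=\overline{\beta}N \wedge f(\alpha)\neq f(\beta)}$ is a decidable subset of $\NN$, and uniform sequential continuity forces $A$ to be bounded (if it contained arbitrarily large elements we build the bad sequence and contradict); hence there is a largest $N_0\in A$, and $N_0+1$ is a modulus of constancy for $f$, which is uniform continuity. This second route avoids any delicate use of negation and is the one I would write up.
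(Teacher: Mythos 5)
Your second route — the one you say you would actually write up — hinges on the claim that
\[
A \;=\; \set{N\in\NN}{\ex{\alpha,\beta\in\CS}{\overline{\alpha}N=\overline{\beta}N \,\land\, f(\alpha)\neq f(\beta)}}
\]
is a decidable subset of $\NN$. This is not justified and is in fact the crux of the whole problem. For a fixed $N$ the membership condition involves an existential quantifier over $\CS$, and point-wise continuity of $f$ gives you only \emph{local} constancy around each point, with no uniform bound on where that constancy kicks in — which is precisely what the lemma is trying to establish. Deciding whether $f$ fails to be constant on some $2^{-N}$-ball would require searching an uncountable space, and in \BISH there is no ground for this. (Indeed, if you could decide $A$ directly from point-wise continuity alone, you would be dangerously close to proving an \FANc-type statement outright.) Once the decidability of $A$ goes, so does the selection of the ``largest $N_0 \in A$'' and the construction of the bad sequence from ``$A$ unbounded''. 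Your first route has the problem you yourself flag — extracting a sequence of witnessing pairs from the negative statement ``$f$ is not uniformly continuous'' is not constructively legitimate — so neither route closes the gap.

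What the paper does instead avoids both issues by working with a \emph{fixed countable family} of test pairs: enumerate all finite binary sequences $u_n$ (so that $\abs*{u_n}\to\infty$), and set $\alpha_n = u_n \ast \zero$, $\beta_n = u_n \ast 1 \ast \zero$. Since $d(\alpha_n,\beta_n)\to 0$, uniform sequential continuity applies directly — no decidability or choice over an uncountable family is needed — and yields a $K$ with $f(\alpha_n)=f(\beta_n)$ for $n\geqslant K$. Setting $L=\max\menge{\abs*{u_1},\dots,\abs*{u_K}}$, one then shows $L$ is a modulus of constancy by a proof by contradiction that is constructively admissible because equality in $\NN$ is decidable: if some $\alpha,\beta$ agreed up to $L$ with $f(\alpha)\neq f(\beta)$, point-wise continuity lets you replace $\beta$ by a finite modification $\overline{\alpha}L\ast w\ast\zero$ of the ``tail-zero'' form, and tracking where the value of $f$ first changes along the finite chain of prefixes produces some $u_k$ with $\abs*{u_k}\geqslant L$ (hence $k\geqslant K$) where $f(\alpha_k)\neq f(\beta_k)$ — a contradiction. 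The essential idea you are missing is this reduction of arbitrary witnesses to witnesses in a fixed countable dense family, via point-wise continuity, which is exactly what lets uniform sequential continuity be brought to bear.
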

\begin{proof}
	Let $u_n$ be an enumeration of all finite binary sequences.
	 Then the set of all sequences $\alpha_n = u_n \ast \zero $ is dense in $\CS$. We will also consider the related sequences $\beta_n = u_n \ast 1 \ast \zero$. Since for all $m$ there exists $N$ such that $\abs*{u_n} \geqslant m$ for all $n \geqslant N$ (a fact which holds for any enumeration, but in particular for the obvious one enumerating the sequences by length and then lexicographically), we have that $d(\alpha_n, \beta_n) \to 0$. Since $f$ is uniformly sequentially continuous we also have that $d(f(\alpha_n),f(\beta_n)) \to 0$, which means that there exists $K$ such that $f(\alpha_n) = f(\beta_n)$ for $n \geqslant K$. Now choose $L = \max\menge{ \abs*{u_1}, \dots , \abs*{u_K}}$.
	 We claim that if $\alpha \in \CS$ arbitrary, then $f(\overline{\alpha}L \ast 0\ast \dots ) = f(\overline{\alpha}L \ast \beta ) $ for all $\beta \in \CS$; which means that $f$ is uniformly continuous. For assume there exists $\alpha, \beta \in \CS$ such that $f(\overline{\alpha}L \ast 0\ast \dots ) \neq f(\overline{\alpha}L \ast \beta ) $. 
	 Since $f$ is point-wise continuous there exists $M$ such that \[ f(\overline{\overline{\alpha}L \ast \beta} M \ast \zero ) = f(\overline{\alpha}L \ast \beta )  \ .\]
	 We must have $M > L$, so for $M^\prime = M - L$ we have 
	 \[ f(\overline{\alpha}L \ast \overline{\beta} M^\prime \ast \zero ) = f(\overline{\alpha}L \ast \beta ) \neq f(\overline{\alpha}L \ast \zero ) \ .\]
By checking all finitely many prefixes of $\overline{\beta} M^\prime$ we can find $w \in \cS$ such that 
	 \[ f(\overline{\alpha}L \ast w \ast \zero) \neq f(\overline{\alpha}L \ast w \ast 1 \ast \zero)  \ . \]
	 Let $ k $ be such that $u_k = \overline{\alpha}L \ast w$. Since $\abs*{u_k} \geqslant  L$ we must have $k \geqslant K$ and therefore
	 $f(\alpha_k)  = f(\beta_k)$. But this is a contradiction, since
	 \[ f(\alpha_k) = f(\overline{\alpha}L \ast w \ast \zero)  \neq f(\overline{\alpha}L \ast w \ast 1 \ast \zero) = f(\beta_k) \ . \]
	 
Altogether $L$ is a modulus of constancy and therefore for uniform continuity of $f$.
\end{proof}

It seems worth noting that the above proof does not work for functions defined on $\BS$.

It is unknown if \wBDN is equivalent to, or implies, or is implied by any of the statements of Proposition \ref{Pro:BDN-impl}.
\begin{Qu}
Are there any other equivalences to \wBDN? Can the issues outlined in Remark \ref{Rmk:wBDN} be resolved?	
\end{Qu}

We finish this section with a diagram summing up all these statements and their known interactions. Here a double headed arrow means that the implication is strict. A double arrow ending in $\top$ indicates non-provability in \BISH. The numbers are the ones from Proposition \ref{Pro:BDN-impl}.
\begin{center}
\tikzset{external/export next=true}
\begin{tikzpicture}[>=stealth,shorten >=1pt, node distance=1.5cm]
    \node (BDN)  {\BDN};
    \node[below left of=BDN] (C12) {\ref{BDN-impl12}};
    \node[below left of=C12] (C1) {\ref{BDN-impl1}};
    \node[below of=C1] (C1p) {\ref{BDN-impl1p}};

    \node[below of=C12] (C3) {\ref{BDN-impl3}, \ref{BDN-impl3.5}};
    \node[below of=BDN] (C2) {\ref{BDN-impl2}};
    \node[right of=C2] (C4) {\ref{BDN-impl5}};
    \node[below of=C4] (C5) {\ref{BDN-impl4}};
    \node[right of=C4] (C6) {\wBDN};
    \node[below = 3cm of C2] (BISH) {$\top$};
    \draw[->] (BDN) -- (C12);
    \draw[->] (C12) -- (C1);
    \draw[->] (C12) -- (C3);
    \draw[->] (C12) -- (C2);

    \draw[->>] (C1p) -- (BISH);
    \draw[->] (C1) -- (C1p);
    \draw[->] (C3) -- (C1p);

    \draw[->>] (BDN) -- (C2);
    \draw[->>] (C2) -- (BISH);

    \draw[->>] (BDN) -- (C4);
    \draw[->] (C4) -- (C5);
    \draw[->] (BDN) -- (C6);

    \draw[->>] (C5) -- (BISH);
\end{tikzpicture}
\end{center}

\chapter{The recursive side} 
\label{Ch:recside}
\section{Introduction} \label{Sec:rec-Intro}
Let us assume that $K \subset \cS$ is a decidable tree (i.e.\ it is closed under restriction) that does not admit infinite paths; that is
\[ \fa{\alpha \in \CS}{\ex{n \in \NN}{\overline{\alpha}n \notin K}} \ .\]
Then the complement  $B \subset \cS$ has the following properties:
\begin{itemize}
\item $B$ is decidable, since $K$ is decidable,
\item $B$ is a bar, since $K$ does not admit infinite paths, and
\item $B$ is closed under extension, since $K$ is a tree and therefore closed under restriction.
\end{itemize}
In other words $B$ is a decidable bar that is closed under restriction.
Conversely, the complement of such a bar is a decidable tree that  does not admit infinite paths.\footnote{Note that the complement of a decidable set is again decidable.} Now in \RUSS (and other recursive varieties of constructive mathematics) there exists a Kleene tree, which is a tree having the above properties that is also infinite as a set. The latter condition is actually equivalent to  
\[ \fa{n\in \NN}{\ex{u \in T}{\abs*{u} \geqslant n}} \ .\] With the above observation we  see that the complement of a Kleene tree fails to be a uniform bar and therefore provides  a  counterexample to \FAND. This gives rise to the idea of considering  ``Anti-Fan''-principles.
\begin{principle}[Anti-FAN]{Anti-\FAN$\square$}
There exists a $\square$-bar $B$ such that there is a sequence $(u_n)_{n \geqslant 1}$ with 
\begin{itemize}
\item $\abs*{u_n} \geqslant n$ and 
\item $u_n \notin B$.
\end{itemize}
\end{principle}
As already pointed out in \cite{hD08b}*{Proposition 4.5.2.} Anti-\FANf~is actually equivalent to Anti-\FANc: for if $B$ is an arbitrary bar and $(u_n)_{n \geqslant 1}$ as above, then the complement $B^{\prime}$ of $\menge{u_{1}, u_{2}, \dots}$ is actually a $c$-bar and a superset of $B$. Furthermore, by construction, $u_n \notin B^{\prime}$. Therefore there are actually only two Anti-Fan principles to consider: Anti-\FAND and Anti-\FANc. 

In a similar spirit we can also define \AWWKL for $k \in (0,1)$: 
\begin{principle}[Anti-WWKL]{\AWWKL($k$)}
There exists a decidable bar $B$ that is closed under extension such that 
\[
 \fa{n\in \NN}{\frac{\abs*{\set{u \notin B}{\abs{u} = n }}}{2^n} > k} 
\]
\end{principle}

As we will see in the following section this principle is actually independent of the choice of $k$, and we will therefore simply refer to it as \AWWKL. 

So we have the following  hierarchy of recursive principles:

\[ \AWWKL (\SinC) \implies \AFAN_{\Delta} (\KT) \implies \AFAN_{c} (\SS) \ . \]

In the following sections we are going to show that most recursive counterexamples in analysis are equivalent to one of these three, which will also lead to better names (already bracketed).

Although we are going to cite details later on we would like to point out that W.\ Veldmann has considered equivalences to the existence of a Kleene tree \cite{wV11}.

\section{Singular Covers}

By  slightly extending, in an obvious way,  the observation at the start of Section \ref{Sec:rec-Intro} we can see that  Anti-\WWKL\!$(k)$ is equivalent to
\begin{principle}[KTs(k)]{\KTs{k}}
There exists a decidable binary tree $T$ such that 
\begin{equation} \label{Eqn:majority}
 \fa{n\in \NN}{\frac{\abs*{\set{u \in T}{\abs{u} = n }}}{2^n} > k}  \ , 
\end{equation}
and
\begin{equation} \label{Eqn:treeblock} \fa{\alpha \in \CS}{\ex{n \in \NN}{\overline{\alpha}n \notin T} } \ . 
\end{equation}
\end{principle}

\begin{Lem}
\[ \KTs{k} \implies \KTs{(2-k)k} \]
\end{Lem}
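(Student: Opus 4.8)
The plan is to mimic, in ``tree language'', the amplification used for $\WWKL(k) \implies \WWKL(k^2)$ --- indeed $1-(2-k)k = (1-k)^2$, so after passing to complementary bars the two statements are the same kind of squaring. So let $T$ be a decidable tree witnessing $\KTs{k}$: closed under restriction, blocking every path, and with $p_n := |T \cap 2^n| / 2^n > k$ for all $n$. The idea is to graft a fresh copy of $T$ below each node at which a path first ``falls off'' $T$, thereby recovering at level $n$ both the mass $p_n$ already present in $T$ and a fraction $>k$ of the complementary mass $1-p_n$, for a total exceeding $p_n + k(1-p_n)$.

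First I would isolate the \emph{frontier} $F = \set{u \in \cS}{u \notin T \text{ and } \overline{u}(\abs{u}-1) \in T}$. Since $T$ is decidable, so is $F$; since $T$ is closed under restriction, every proper prefix of an element of $F$ lies in $T$, so $F$ is prefix-free, and since $k>0$ forces $()\in T$, every $u \notin T$ has a unique prefix in $F$, namely its shortest non-$T$ prefix. Then I would set
\[ T' \ = \ T \ \cup \ \set{u \ast v}{u \in F,\ v \in T} \ . \]
The three properties of a $\KTs{(2-k)k}$-witness would be checked in turn: \emph{decidability} is routine given decidability of $T$ and $F$ plus prefix-freeness of $F$ (testing $v$ for the second clause is a finite search for its unique $F$-prefix); \emph{closed under restriction} holds because a restriction of $u \ast v$ is either a prefix of $u$ --- all in $T$, with $u = u \ast ()$ itself in $T'$ --- or of the form $u \ast v'$ with $v'$ a restriction of $v \in T$; and \emph{blocking every path} follows since, given $\alpha$, taking $m$ minimal with $u := \overline{\alpha}m \notin T$ puts $u \in F$, and applying the blocking property of $T$ to the tail of $\alpha$ beyond position $m$ yields $j$ with $\overline{\alpha}(m+j) = u \ast w$, $w \notin T$; as the $F$-prefix of $\overline{\alpha}(m+j)$ is exactly $u$ and $\overline{\alpha}(m+j) \notin T$, this sequence is outside $T'$.

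The core is the level-$n$ count. The two parts of $T' \cap 2^n$ are disjoint, since a member of $T \cap 2^n$ has no prefix in $F$; and for each $u \in F$ with $\abs{u} \leqslant n$ there are exactly $|T \cap 2^{n-\abs{u}}|$ legal choices of $v$, these sets being disjoint for distinct $u$. Hence $|T' \cap 2^n| = |T \cap 2^n| + \sum_{u \in F,\, \abs{u} \leqslant n} |T \cap 2^{n-\abs{u}}|$. The decisive observation is that $\sum_{u \in F,\, \abs{u} \leqslant n} 2^{\,n-\abs{u}}$ counts precisely the length-$n$ sequences that are \emph{not} in $T$ (each extends its unique $F$-prefix, and every extension of a node of $F$ lies outside $T$), so it equals $2^n - |T \cap 2^n| = (1-p_n)2^n$. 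Combining this with $|T \cap 2^{m}| > k\,2^{m}$ gives $|T' \cap 2^n| > \big(p_n + k(1-p_n)\big)2^n$, and since $p_n > k$ and $1-k > 0$ we have $p_n + k(1-p_n) = k + p_n(1-k) > k + k(1-k) = (2-k)k$. So $T'$ witnesses $\KTs{(2-k)k}$, as required.

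I expect the main obstacle to be the bookkeeping rather than any conceptual leap: specifically, recognising that the ``frontier mass'' at level $n$, namely $\sum_{u \in F,\,\abs{u}\leqslant n} 2^{-\abs{u}}$, equals the complementary proportion $1-p_n$ --- this is exactly what upgrades the bound from $k$ to $k + k(1-k) = (2-k)k$ --- while simultaneously keeping everything decidable and using the prefix-freeness of $F$ so that no classical, König-style ``the tree is finite'' reasoning creeps in.
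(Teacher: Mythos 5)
Your proof is correct and follows essentially the same route as the paper: the construction $T' = T \cup \{u \ast v : u \in F,\ v \in T\}$ grafting a copy of $T$ at each frontier node is identical to the paper's $S$, and the level-$n$ count via the identity $\sum_{u \in F,\ |u|\leqslant n} 2^{n-|u|} = 2^n - |T\cap 2^n|$ is exactly the paper's computation. The only differences are cosmetic — you are more explicit about decidability of $T'$ and about why $F$ is prefix-free, and you keep the strict inequality throughout (the paper lapses into $\geqslant$ at one point), but the substance matches.
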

\begin{proof}
For the following we are going to introduce some notation for slicing sequences.\footnote{Basically be the Python programming language list notation.} If $u \in \CS$, $\abs*{u} \geqslant m \geqslant n$, then 
$u[n \co m]$ is the sequence $u(n)u(n+1) \cdots u(m)$. Furthermore, $ \pylist{u}n{} = \pylist{u}n{\abs*{u}}$ and $\pylist{u}{}n = \pylist{u}{1}n$. Finally, $\pylist{u}n{-m} = \pylist{u}n{\abs*{u} - m}$ and in particular $\pylist{u}{}{-1}$ is the sequence $u$ cut short by the last element. Naturally, apart from the last two, these notations also make sense for infinite sequences.  

Let $T$ be a decidable Kleene Tree with property \ref{Eqn:majority}. Define \[ K_n = \abs*{\set{u \in T}{\abs{u}=n}} \cdot 2^n \ . \] By \ref{Eqn:majority} we know that $K_n \geqslant k2^n $ for every $n \in \NN$. The set $B$ of all sequences $u \in \cS$ such that $u \notin T$, but $u[:-1] \in T$ is decidable. Notice that for every $u \in \cS$ either $u \in T$ or there exists at exactly one $n$ such that $u[\co n] \in B$, which means that
\[ S = T \cup \bigcup_{u \in B} \set{ u \ast v }{v \in T} \ . \]
 is a disjoint union. The idea behind this construction is to attach to every leaf of $T$ another copy of $T$.

As it contains $T$ this set $S$ is still an infinite tree. It also blocks every path: for let $\alpha \in \CS$ be arbitrary. Since $T$ blocks $\alpha$  there is $n_{1} \in \NN$ such that $\pylist{\alpha}{}{n_{1}}  \in B$. Since also $\pylist{\alpha}{n_{1}+1}{}  $ gets blocked there exists $n_{2}$ such that $\pylist{\alpha}{n_{1}+1}{n_{2}}  \in B$, which means that $\pylist{\alpha}{}{n_{2}}  \notin S$. 

Last, we need to count the nodes at a level $n$ in $S$.  There are finitely many $u_{1}, \dots, u_{m} \in B$ such that $\abs*{u_{i}} \leqslant n$ and for every $w \in 2^n$ either $w \in T$, or there exists a unique $i$ such that $u_{i} \sqsubseteq w$. For every $1 \leqslant i \leqslant m$ there are  $K_{n-\abs*{u_{i}}}$  Elements $w \in 2^n$ with $u_{i} \sqsubseteq w$ and $\pylist{w}{\abs*{u_{i}}+1}{}  \in T$, which means there $k_{n-\abs*{u_{i}}}$ Elements $w \in S$ with $u_{i} \sqsubseteq w$.  
Furthermore there are exactly $2^n-K_n$ sequences of length $n$ starting with a sequence from $B$.
Altogether:
\begin{align*}
\abs*{\set{u\in S}{\abs{u}=n}} & = \abs*{ \set{u \in 2^n}{u \in T}} + \sum_{i=1}^{m} \abs*{\set{w \in 2^n}{u_{i} \sqsubseteq w \land \pylist{w}{\abs*{u_{i}}+1}{}  \in T} }  \\
& = K_n + \sum_{i=1}^{m} K_{n-\abs*{u_{i}}} \\
& \geqslant K_n + k \sum_{i=1}^{m} 2^{n-\abs*{u_{i}}} \\
& = K_n + k (2^n-K_n) \\
& = (1-k)K_n+k2^n \\
& \geqslant (1-k) k2^n +k2^n = (2-k)k2^n
\end{align*}
But that is exactly the inequality we wanted.
\end{proof}
\begin{Cor}
$\KTs{k}$ is independent of the choice of $k >0$.
\end{Cor}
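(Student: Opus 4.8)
The corollary we need to establish is that $\KTs{k}$ is independent of the choice of $k > 0$. We already have the preceding lemma, which gives us the single-step improvement $\KTs{k} \implies \KTs{(2-k)k}$. The plan is to turn this one inequality into a full independence result by iteration: starting from any $k_0 \in (0,1)$, we want to show that repeated application of the lemma produces values $k_n$ that increase and that can be pushed arbitrarily close to $1$; combined with the trivial monotonicity $\KTs{k} \implies \KTs{k'}$ whenever $k' \leqslant k$, this will show that the class of $k$ for which $\KTs{k}$ holds is either empty or all of $(0,1)$, hence that the principle does not depend on $k$.

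Concretely, first I would record the obvious observation that for $0 < k' \leqslant k < 1$ we have $\KTs{k} \implies \KTs{k'}$, since the defining inequality $\abs*{\set{u \in T}{\abs{u}=n}}/2^n > k$ immediately gives $> k'$ and the tree-blocking condition is unchanged. So it suffices to show that if $\KTs{k}$ holds for \emph{some} $k \in (0,1)$, then it holds for $k$ arbitrarily close to $1$, and then by monotonicity for all $k' \in (0,1)$. For the upgrade, define the iteration $g(k) = (2-k)k = 1 - (1-k)^2$. Then $1 - g(k) = (1-k)^2$, so writing $\delta_n = 1 - k_n$ with $k_{n+1} = g(k_n)$ we get $\delta_{n+1} = \delta_n^2$, hence $\delta_n = \delta_0^{2^n}$. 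Since $\delta_0 = 1 - k_0 \in (0,1)$, we have $\delta_n \to 0$, i.e.\ $k_n \to 1$. By the lemma, $\KTs{k_0} \implies \KTs{k_n}$ for every $n$ (an $n$-fold iteration). Given an arbitrary target $k' \in (0,1)$, choose $n$ with $k_n > k'$; then $\KTs{k_0} \implies \KTs{k_n} \implies \KTs{k'}$. Since $k_0$ was an arbitrary element of $(0,1)$ for which the principle holds, and $k'$ an arbitrary element of $(0,1)$, this proves the equivalence of all the $\KTs{k}$ for $k \in (0,1)$.

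Finally, I would translate this back through the equivalence $\AWWKL(k) \iff \KTs{k}$ noted in the text, so that the same independence statement holds for $\AWWKL$, which is the form actually used later. I do not anticipate a genuine obstacle here: the only mildly delicate point is bookkeeping the direction of the monotonicity (smaller $k$ is the \emph{weaker} statement, so we upgrade to large $k$ and then descend) and confirming that the $n$-fold composition of the lemma is legitimate, which it is since $\KTs{k_0} \implies \KTs{g(k_0)} \implies \KTs{g(g(k_0))} \implies \cdots$ is just transitivity of implication. One should also note in passing that the fixed point of $g$ in $[0,1]$ is $1$ and that $g(k) > k$ for $k \in (0,1)$, which is why the iteration genuinely makes progress rather than stalling; this is immediate from $g(k) - k = k(1-k) > 0$.
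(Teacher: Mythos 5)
Your argument is correct and matches the paper's proof in its essential idea: iterating the lemma so that the defect $\delta_n = 1 - k_n$ squares at each step, hence $\delta_n = \delta_0^{2^n} \to 0$ and $k_n \to 1$, then descending by the trivial monotonicity $\KTs{k}\implies\KTs{k'}$ for $k'\leqslant k$. The paper leaves the monotonicity step implicit, but you have simply spelled it out; the approach is the same.
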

\begin{proof}
For any $k \in (0,1)$ the sequence defined by $x_{0}=k$, and $x_n=(2-k)k$ converges to $1$. Therefore for $y_n = 1- x_n$ we have $y_{n+1} = (y_n)^{2}$. That shows that $y_n$ converges to $0$.
\end{proof}
We can actually view a tree $T$ that does not admit infinite paths  as an open cover of Cantor space. If we collect all of the finite sequences $w_n$ that are just barely not in $T$, i.e.\ $w_n \notin T$ but $\overline{w_n}(\abs*{w_n} -1) \in T$, then these give us basic open sets $U_n = U_{w_n}$ where  $U_{w} = \set{\alpha \in \CS}{\overline{\alpha}\abs*{w} = w }$. These form a cover of Baire space because  for every $\alpha \in \CS$ we can find $n$ such that $w_n$ is a prefix of $\alpha$. Moreover, this cover has a small measure in the sense that 
\[ \sum_{i=1}^n \mu(U_n) < k \ , \] 
where $\mu(U_{w}) = 2^{-\abs*{w}}$.\footnote{We are not getting into deep waters of constructive measure theory here and are not saying that $\mu$ is a measure in the traditional sense.} This seems paradoxical, since $\mu(\CS) = 1$. However, such a strange topological behaviour is well-known in \RUSS \cite{Beeson1985}*{Theorem 6.1}: an $\alpha$-\define{singular cover}  is a sequence of intervals $(J_n)_{n \geqslant 1}$ with rational endpoints such that 
\begin{enumerate}[label=SC\arabic*]
\item \label{SC1} $\sum_{i=1}^n \abs*{J_n} < \alpha$ for all $n$,
\item any two $J_n$ are disjoint or have only an endpoint in common, 
\item \label{SC3}  for any $x \in [0,1]$ there exists $n,m \in \NN$ such that the intervals $J_n=[a_n,b_n]$ and $J_{m}=[a_{m},b_{m}]$ are such that $a_{m}=b_n$ and $x \in [a_n,b_{m}]$.
\end{enumerate}

\begin{Rmk}
Any  cover satisfying \ref{SC1} cannot be refined to a finite one.\footnote{Regardless of whether by cover we mean open cover or one in the sense of \ref{SC3}.}
\end{Rmk}

After all this build-up the following proposition is no surprise.
\begin{Pro}
The following are equivalent
\begin{enumerate}
\item \label{Equiv:KTs1} \KTs{k} for any $0\leqslant k<1$
\item \label{Equiv:KTs2} For  $0 < \alpha < 1$ there exists a sequence of open intervals $(J_n)_{n \geqslant 1}$ covering $[0,1]$ and satisfying \ref{SC1}
\item \label{Equiv:KTs3} The existence of an $\alpha$-singular cover of $[0,1]$ for any $0 < \alpha < 1$
\end{enumerate}
\end{Pro}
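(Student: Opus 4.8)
The plan is to prove the cycle $\ref{Equiv:KTs1} \Rightarrow \ref{Equiv:KTs2} \Rightarrow \ref{Equiv:KTs3} \Rightarrow \ref{Equiv:KTs1}$, using the correspondence between binary trees not admitting infinite paths and covers of the unit interval that was developed in the discussion leading up to the proposition. The bridge is the map $F = F^{\nicefrac{1}{2}}:\CS \to [0,1]$ from Section \ref{Sec:linking-cantor-and-unitint}, together with the intervals $I^{\nicefrac{1}{2}}_u$; recall $\mu(U_w) = 2^{-\abs{w}} = \abs*{I^{\nicefrac{1}{2}}_w}$, so measures of basic opens in $\CS$ match lengths of dyadic subintervals of $[0,1]$.

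First I would do $\ref{Equiv:KTs1} \Rightarrow \ref{Equiv:KTs2}$. Given $0<\alpha<1$, use the Corollary above (independence of $k$) to get a decidable tree $T$ satisfying \ref{Eqn:majority} and \ref{Eqn:treeblock} for some $k$ with $1-k < \alpha$. Let $(w_n)_{n\geqslant 1}$ enumerate the "leaves-of-the-complement'', that is those $w \in \cS$ with $w \notin T$ but $w[:-1] \in T$ (this set is decidable since $T$ is). By \ref{Eqn:treeblock} the basic opens $U_{w_n}$ cover $\CS$, hence the open dyadic intervals $J_n = \Interior{I^{\nicefrac{1}{2}}_{w_n}}$ cover $(0,1)$, and adding the two endpoints (which lie in some $I^{\nicefrac{1}{2}}_{w_n}$ by surjectivity, Lemma \ref{Lem:Fp}) we cover $[0,1]$ after at most slightly enlarging two of them. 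For \ref{SC1}: among $w_1,\dots,w_n$ finitely many appear, say with lengths $\leqslant m$; since the complement of $T$ restricted to level $m$ has at most $(1-k)2^m$ elements, and distinct $w_i$ give disjoint cylinders, $\sum_{i=1}^n \abs*{J_i} = \sum_{i=1}^n 2^{-\abs{w_i}} \leqslant (1-k) < \alpha$. The implication $\ref{Equiv:KTs2} \Rightarrow \ref{Equiv:KTs3}$ is a purely combinatorial refinement: from a cover by open intervals satisfying \ref{SC1} one subdivides and reselects at rational breakpoints to obtain intervals that pairwise meet at most in an endpoint (condition SC2) and that link up as in \ref{SC3}, being careful that the bookkeeping keeps the total length below $\alpha$ — this is the standard classical-style argument that works constructively because everything is done with rationals and finite data.

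For $\ref{Equiv:KTs3} \Rightarrow \ref{Equiv:KTs1}$, fix $0<k<1$, pick $\alpha < 1-k$... no, rather pick $\alpha$ with $\alpha < 1 - k$ is wrong direction; I want the measure of the cover small so the tree stays large, so choose $\alpha$ such that the complement at each level stays above $k$, i.e. $\alpha < 1-k$. Given an $\alpha$-singular cover $(J_n)$ of $[0,1]$, for each $n$ define $T_n \subset 2^n$ to consist of those $u \in 2^n$ such that $I^{\nicefrac{1}{2}}_u$ is not contained in $J_1 \cup \dots \cup J_n$ up to the obvious dyadic approximation — more carefully, one decides for each dyadic interval of length $2^{-n}$ whether it has been "covered'' by the first $n$ members of the cover, which is decidable since endpoints are rational; put $u \in T$ iff every prefix (including $u$) is uncovered. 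Condition \ref{SC1} forces at most $\alpha 2^n < (1-k)2^n$ covered dyadic intervals at level $n$, giving \ref{Eqn:majority}; condition \ref{SC3} (the linking property) forces every $\alpha \in \CS$ to eventually be covered, giving \ref{Eqn:treeblock}; and $T$ is a decidable tree by construction. The main obstacle I anticipate is the bookkeeping in $\ref{Equiv:KTs2}\Rightarrow\ref{Equiv:KTs3}$ and in the last implication: making the passage between "interval of the cover'' and "dyadic subinterval'' precise while (a) keeping the cumulative length strictly below the prescribed bound at \emph{every} finite stage and (b) preserving or establishing the linking condition \ref{SC3} without invoking any omniscience — the endpoints being rational is exactly what makes the relevant comparisons and subdivisions decidable, and I would lean on that throughout rather than on any case distinction about real numbers.
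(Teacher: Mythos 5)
Your cycle $\ref{Equiv:KTs1} \Rightarrow \ref{Equiv:KTs2} \Rightarrow \ref{Equiv:KTs3} \Rightarrow \ref{Equiv:KTs1}$ is the same as the paper's, and both $\ref{Equiv:KTs2} \Rightarrow \ref{Equiv:KTs3}$ and $\ref{Equiv:KTs3} \Rightarrow \ref{Equiv:KTs1}$ match the paper's arguments closely: shrink to rational endpoints and cut to get disjointness and linking; define $T$ by ``$I_u$ not covered by the first $|u|$ intervals'' and check it is a decidable tree blocking every path, with SC1 giving the density bound.

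The gap is in $\ref{Equiv:KTs1} \Rightarrow \ref{Equiv:KTs2}$. Setting $J_n = \Interior{I^{\nicefrac{1}{2}}_{w_n}}$ does \emph{not} give a cover of $(0,1)$: since the $w_n$ are pairwise incomparable, the intervals $I^{\nicefrac{1}{2}}_{w_n}$ are pairwise disjoint up to shared endpoints, so their interiors miss every dyadic rational that occurs as a boundary point. In particular $\frac{1}{2}$ is never covered --- every $w_n$ begins with $0$ or $1$, so every $I^{\nicefrac{1}{2}}_{w_n}$ has $\frac{1}{2}$ as at most an endpoint, never an interior point. This is not just a problem at the two ends of $[0,1]$, so ``slightly enlarging two of them'' cannot repair it; in general infinitely many dyadic rationals are missed. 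The appeal to surjectivity of $F^{\nicefrac{1}{2}}$ is also out of place: Lemma \ref{Lem:Fp} gives surjectivity only for $p > \nicefrac{1}{2}$, and the remark immediately after it points out that surjectivity of $F^{\nicefrac{1}{2}}$ is equivalent to \LLPO, hence not available here.

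What the paper does instead --- and what you need --- is to work from the start with a nested family of \emph{slightly enlarged, overlapping} intervals $I_u$ defined inductively by $I_u = I_{u0} \cup I_{u1}$ with $\abs*{I_u} = 2^{-\abs{u}} + \alpha\, 2^{-(2\abs{u}+1)}$, and set $J_n = I_{w_n}$. Because consecutive levels genuinely overlap, the union is an open cover containing all of $[0,1]$ without any case distinction on real numbers. The surplus lengths form a geometric series whose total is $\leqslant \alpha/2$, so choosing $T$ as in $\KTs{1-\alpha/2}$ still gives $\sum_{i=1}^n \abs*{J_i} \leqslant (1-k) + \alpha/2 \leqslant \alpha$. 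So the idea you need is not a post-hoc patch at finitely many points, but an a priori enlargement of every dyadic cell by an exponentially shrinking margin; the rest of your outline then goes through as written.
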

\begin{proof}
Let $1 > \alpha > 0$ be arbitrary. Now let $T$ be a tree as in \KTs{1-\alpha/2}, and let $(w_n)_{n \geqslant 1}$ be an enumeration of all the finite sequences such that $w_n \notin T$, but $\overline{w_n}(\abs*{w_n-1}) \in T$. 

Define, similarly as in Section \ref{Sec:linking-cantor-and-unitint} inductively for every $u \in \cS$ intervals $I_u$ such that 

\begin{itemize}
  \item $I_u = I_{u0} \cup I_{u1}$
  \item $\abs*{I_u} = \frac{1}{2^{\abs*{u}}} + \frac{\alpha}{2^{2\abs*{u}+1}}$
\end{itemize}

We claim that $J_n = I_{w_n}$ is the desired sequence of open intervals.

So let $x \in [0,1]$ be arbitrary. Using dependent choice we can construct a sequence $\alpha \in \CS$ such that $x \in I_{\overline{\alpha}n}$ for all $n \in \NN$. Now, since $T$ does not admit infinite paths, there exists $m$ such that $w_m = \overline{\alpha}M$ where $M = \abs*{w_m}$. This means that 
\[ x \in I_{w_m} \subset \bigcup_{n \in \NN} I_{w_n}  \ .\]
Now let $n \in \NN$ be arbitrary and choose $N = \max \menge{\abs*{w_1}, \abs*{w_2}, \dots, \abs*{w_n}}$. First notice that 
\[ \frac{1}{2^{\abs*{w_i}}} = \sum_{\substack{u \in 2^N \\ w_i \sqsubseteq u}} \frac{1}{2^N} \]
and that for such $u$ we also have $u \notin T$. 
 We have, 
\begin{align*}
	\sum_{i =1}^n \abs*{I_i} &  = \sum_{i =1}^n \left( \frac{1}{2^{\abs*{w_i}}} + \frac{\alpha}{2^{2\abs*{w_i}+1}} \right) \\ & \leqslant  \sum_{i =1}^n \frac{1}{2^{\abs*{w_i}}} + \sum_{u \in \cS} \frac{\alpha}{2^{2\abs*{u}+1}} \\ & =  \sum_{i =1}^n \frac{1}{2^{\abs*{w_i}}} + \sum_{n \in \NN} 2^n \frac{\alpha}{2^{2n+1}} \\ & =  \sum_{i =1}^n \frac{1}{2^{\abs*{w_i}}} + \alpha \sum_{n \in \NN} \frac{1}{2^{n+1}} 	\\ & \leqslant  \sum_{i =1}^n \sum_{\substack{u \in 2^N \\ w_i \sqsubseteq u}} \frac{1}{2^N} +  \frac{\alpha}{2}  \\ & \leqslant \sum_{\substack{u \in 2^N \\u \notin T}} \frac{1}{2^N} +  \frac{\alpha}{2} \\ & \leqslant 2^N (1-(1-\alpha/2))\frac{1}{2^N} +\frac{\alpha}{2} = \alpha \ .
\end{align*}
Thus we have shown that \ref{Equiv:KTs1} implies \ref{Equiv:KTs2}. 
Now assume that \ref{Equiv:KTs2} holds and let $0< \alpha <1$ be arbitrary. We will first show that we may assume that the intervals in \ref{Equiv:KTs2} have rational endpoints. To start, we may assume, Without loss of generality, that $\alpha$ is rational. Now let $I_n = (a_n,b_n)$ a sequence of open intervals covering $[0,1]$ and satisfying \ref{SC1} for $\alpha/2$. Now choose rationals $a^\prime_n$ and and $b_n^\prime$ such that $a_n^\prime \leqslant a_n$, $b_n \leqslant b_n^\prime$, $\abs*{a_n^\prime - a_n} < \frac{\alpha}{2^{n+2}}$, and $\abs*{b_n^\prime - b_n} < \frac{\alpha}{2^{n+2}}$. Then $I^\prime_n = (a_n^\prime, b_n^\prime)$ are obviously still a cover of $[0,1]$ and for any $n \in \NN$
\[ \sum_{i=1}^n \abs*{I_n^\prime} \leqslant \sum_{i=1}^n \abs*{I_n} + 2\sum_{i=1}^n \frac{\alpha}{2^{i+2}} \leqslant \nicefrac{\alpha}{2} + \nicefrac{\alpha}{2} \ . \] It is clear that by shrinking  the intervals $I_n^\prime$ and possibly cutting up into subintervals we can obtain a singular cover (with the same constant $\alpha$). Thus \ref{Equiv:KTs3} follows from \ref{Equiv:KTs2}. 

Finally let $1>k\geqslant 0$ and assume that we have an $\alpha$-singular cover $(J_n)_{n \geqslant 1}$, where $\alpha =1 -k$. Let $I_u$ be the intervals defined in Section \ref{Sec:linking-cantor-and-unitint} for $p = \nicefrac{1}{2}$. Now define a set $T$ by
\[ T  = \set{u \in \cS}{I_u \not \subset \bigcup_{i=1}^{\abs*{u}} J_i  }  \  .\]
Since $J_i$ have rational endpoints the set $T$ is decidable. It is also closed under restriction, since, for $w \sqsubseteq u$ and $u \in T$ we have $I_u \subset I_w$ and \[ \bigcup_{i=1}^{\abs*{w}} J_i  \subset \bigcup_{i=1}^{\abs*{u}} J_i \ . \] Therefore, the assumption that $I_w \subset  \bigcup_{i=1}^{\abs*{w}} J_i$ implies $I_u \subset  \bigcup_{i=1}^{\abs*{u}} J_i$; a contradiction. Thus $I_w \not \subset  \bigcup_{i=1}^{\abs*{w}} J_i$, which means that $w \in T$. Hence $T$ is a tree. 
Now, consider $\alpha \in CS$ arbitrary. By \ref{SC3} there exists $n,m$ such that $b_n = a_m$ and $F^{\nicefrac{1}{2}}(\alpha) \in [a_n, b_m]$. Since all the intervals in a singular cover are proper $\abs*{b_m - a_n} > 0$, which implies that there exists $N$ such that $I_{\overline{\alpha}N} \subset [a_n, b_m]$. Thus, for $K = \max\menge{N,n,m}$ we have $I_{\overline{\alpha}K} \subset \bigcup_{i=1}^{K} J_i$, which implies $\overline{\alpha}K \notin T$. Hence $T$ does not admit infinite paths.

So the only item left to consider is to show that $T$ satisfies Equation \eqref{Eqn:majority}. First notice that by definition of $T$ for $n \in \NN$
\[ 
\sum_{\substack{u \notin T \\ \abs*{u} = n}} \abs*{I_u} \leqslant \sum_{i=1}^n \abs*{J_i} \ .
\]
So 
\[ \sum_{\substack{u \in T \\ \abs*{u} = n}} \frac{1}{2^{\abs{u}}} = 1 - \sum_{\substack{u \notin T \\ \abs{u} = n}} \frac{1}{2^{\abs{u}}} = 1- \sum_{\substack{u \notin T \\ \abs{u} = n}} \abs*{I_u}  =  1 - \sum_{i=1}^n \abs*{J_i} \geqslant 1- \alpha = k  \ . \]
Thus Equation \eqref{Eqn:majority} is fulfilled and therefore \KTs{k}.
\end{proof}

\begin{Cor}
	If there exists a $\alpha$-singular cover for a specific  $0< \alpha < 0$, then there exists a $\alpha^\prime$-singular cover for any $0< \alpha^\prime < 0$.
\end{Cor}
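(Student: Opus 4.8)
The plan is to chain together the equivalence just proved with the independence result for $\KTs{k}$. The only point that needs emphasis — and the reason the corollary deserves a separate statement — is that the proof of the preceding proposition is entirely \emph{instance-wise}: a single $\alpha$-singular cover is used there to build a single decidable tree witnessing $\KTs{1-\alpha}$, and conversely a single witness of $\KTs{1-\alpha/2}$ is used to build a cover satisfying \ref{SC1} for $\alpha$, which is then thinned and subdivided into a genuine $\alpha$-singular cover. So none of the three items secretly relies on its own ``for any'' quantifier, and the implications may be used in the form ``one instance in, one instance out.''

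First I would take the given $\alpha$-singular cover for the specific value $0<\alpha<1$ and feed it into the construction from the proof of $\ref{Equiv:KTs3}\implies\ref{Equiv:KTs1}$ with $k=1-\alpha$; this produces a decidable binary tree satisfying \eqref{Eqn:majority} and \eqref{Eqn:treeblock} with that constant, i.e.\ $\KTs{k}$ holds for the single value $k=1-\alpha\in(0,1)$. Next I would invoke the preceding corollary, that $\KTs{k}$ is independent of the choice of $k>0$ — whose proof iterates the lemma $\KTs{k}\implies\KTs{(2-k)k}$ and observes that $1-x_n=(1-x_{n-1})^2\to 0$ — to upgrade this to $\KTs{k}$ for every $0\leqslant k<1$.

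Finally, given an arbitrary $0<\alpha^{\prime}<1$, I would apply $\KTs{1-\alpha^{\prime}/2}$ together with the construction from the proof of $\ref{Equiv:KTs1}\implies\ref{Equiv:KTs2}\implies\ref{Equiv:KTs3}$: the intervals $I_u$ with $\abs*{I_u}=\tfrac{1}{2^{\abs{u}}}+\tfrac{\alpha^{\prime}}{2^{2\abs{u}+1}}$, indexed by the ``barely excluded'' sequences $w_n$ of the tree, form a cover of $[0,1]$ whose initial-segment lengths sum to at most $\alpha^{\prime}$, and shrinking and subdividing these intervals to have rational endpoints and pairwise-disjoint interiors yields the desired $\alpha^{\prime}$-singular cover. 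Since $\alpha^{\prime}$ was arbitrary, the corollary follows. I expect no genuine obstacle here; the only thing to verify is the instance-wise remark made above, which is exactly how the proposition was proved.
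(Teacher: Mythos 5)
Your proposal is correct and is exactly the argument the paper leaves implicit (the corollary is stated without proof): chain the instance-wise implications from the preceding proposition through the earlier corollary on the independence of $k$. Your observation that each of the three implications in that proposition is proved ``one instance in, one instance out'' is precisely the point that makes the corollary a corollary rather than requiring a new argument, and you have identified it clearly; the only nitpick is that the final step really consumes $\KTs{1-\alpha'/4}$ rather than $\KTs{1-\alpha'/2}$ (the passage from the open cover of SC1 to a genuine singular cover halves $\alpha'$ once more), but since you have already upgraded to $\KTs{k}$ for all $k$, this is immaterial.
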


Thus the following principle is independent of the choice of $\alpha$.
\begin{principle}[SC]{\SinC} \label{PR:SinC}
There exists a singular cover (for some $0< \alpha < 0$).
\end{principle}

\section{Kleene Trees} \label{Sec:KT}
As already mentioned in the introduction of Chapter \ref{Ch:fan}, in some schools of constructive mathematics there exists a Kleene tree. In this section we want to investigate equivalences of the existence of such an object. 
\begin{principle}[KT]{\KT} \label{PR:KT}
There exists a decidable binary tree $T$ such that 
\[ \fa{n\in \NN}{\ex{u \in T}{\abs{u} \geqslant n}} \]
and
\[ \fa{\alpha \in \CS}{\ex{n \in \NN}{\overline{\alpha}n \notin T} } \ . \]
\end{principle}
In the presence of countable choice\footnote{Actually countable choice is too much. } we can also assume that there is a sequence $(u_n)_{n \geqslant 1}$ with $\abs*{u_n}=n$ and $u_n \in T$. 
The principle \KT was also named \textrm{Anti-FT$_{\Delta}$} in \cite{hD08b}*{Section 4.5} for obvious reasons. There, also, parts of the following proposition are proved.
\begin{Pro} \label{Pro:KT_equiv}
  The following are equivalent to  \KT.
  \begin{enumerate}
    \item \label{dj} There exist two compact subsets $A,B$ of a metric space such that 
    \[\fa{ a \in A}{\fa{b \in B}{ d(a,b) > 0 }} \] 
    but 
    \[ \inf \set{ d(x,y)}{x \in A, y \in B} = 0 .\]
    \item \label{pos} There is a uniformly continuous map $f:\CS \to \RR^+$ with $\inf f(\CS) =     0$.
    \item \label{pos2} There is a uniformly continuous map $f:[0,1] \to \RR^+$ with $\inf f([0,1]) =     0$.
    \item There exists an countable open cover of $\CS$ of basis sets that does not admit a finite subcover.
  \end{enumerate}
\end{Pro}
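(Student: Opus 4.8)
The plan is to prove the cycle $\KT \Rightarrow \ref{pos} \Rightarrow \ref{pos2} \Rightarrow \KT$ together with the two side equivalences $\ref{pos}\iff\ref{dj}$ and $\KT\iff(4)$. Most of these arrows are short applications of the linking machinery of Section \ref{Sec:linking-cantor-and-unitint}; the one step that I expect to cost real work, and which I flag now as the main obstacle, is $\ref{pos2}\Rightarrow\KT$, where an explicit decidable non-uniform bar has to be extracted from a uniformly continuous, positively valued $f:[0,1]\to\RR^+$ with $\inf f=0$.

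\textbf{$\KT\Rightarrow\ref{pos}$ and $\ref{pos}\iff\ref{pos2}$.} Given a Kleene tree $T$, its complement $B=\cS\setminus T$ is a decidable bar, closed under extensions, which is not uniform; I would set $f(\alpha)=2^{-\min\set{n\in\NN}{\overline{\alpha}n\in B}}$. This is a well-defined, positively valued function $\CS\to\RR^+$, it is uniformly continuous (if $\overline{\alpha}N=\overline{\beta}N$ and $2^{-N}<\varepsilon$, then either $f(\alpha)=f(\beta)$, namely when some prefix of length $\leqslant N$ already lies in $B$, or else $f(\alpha),f(\beta)<\varepsilon$), and $\inf f=0$ since for every $n$ one can pick $u\in T$ with $\abs{u}\geqslant n$ and note $f(u\ast\zero)<2^{-n}$. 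Lemma \ref{Lem:extendingfunctions} (applied with $p<\frac12$) then converts $f$ into the function on $[0,1]$ required by $\ref{pos2}$: the interpolation construction there visibly preserves positive-valuedness, and $\inf\widetilde f=0$ because the range of $\widetilde f$ contains that of $f$. For the converse $\ref{pos2}\Rightarrow\ref{pos}$ I would simply compose with the surjective uniformly continuous map $F^p:\CS\to[0,1]$ for some $p>\frac12$ (Lemma \ref{Lem:Fp}.\ref{Fp_surj}): the composite $f\circ F^p$ is uniformly continuous, positively valued, and still has infimum $0$.

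\textbf{$\ref{pos}\iff\ref{dj}$.} For $\ref{pos}\Rightarrow\ref{dj}$ I would take $A$ to be the graph $\set{(\alpha,f(\alpha))}{\alpha\in\CS}\subset\CS\times\RR$ and $B=\CS\times\menge{0}$; both are compact (the graph of a uniformly continuous function on a compact space is again compact), one has $d((\alpha,f(\alpha)),(\beta,0))\geqslant f(\alpha)>0$ for all $\alpha,\beta$, while $d((\alpha,f(\alpha)),(\alpha,0))=f(\alpha)$ together with $\inf f=0$ forces the infimum of the distance between $A$ and $B$ to be $0$. For $\ref{dj}\Rightarrow\ref{pos}$ I would use Proposition \ref{Pro:compXunifsurj} to obtain uniformly continuous surjections $g_1:\CS\to A$ and $g_2:\CS\to B$, and the bijection $\varphi:\CS\to(\CS)^2$ of Lemma \ref{Lem:spacefilling}, and set $h(\alpha)=d(g_1(\alpha^{e}),g_2(\alpha^{o}))$, where $\varphi(\alpha)=(\alpha^{e},\alpha^{o})$. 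Then $h:\CS\to\RR$ is uniformly continuous, positively valued by the hypothesis on $A$ and $B$, and its range is $\set{d(a,b)}{a\in A,\ b\in B}$, whose infimum is $0$.

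\textbf{$\KT\iff(4)$ and $\ref{pos2}\Rightarrow\KT$.} The equivalence with statement $(4)$ is the ``witness form'' of the fact recalled in Proposition \ref{Pro:Fand-equivalents}.\ref{fand-equiv-pos4} and Section \ref{Sec:rec-Intro} that \FAND is the cover compactness of $\CS$ for countable coverings by basic opens. From a Kleene tree $T$ I would take the cylinders $U_{w_n}$ over the minimal sequences $w_n\notin T$ with $\overline{w_n}(\abs{w_n}-1)\in T$: these cover $\CS$ because $T$ blocks every path, and admit no finite subcover because $T$ is infinite and closed under restriction (a finite subcover $U_{w_1},\dots,U_{w_N}$ would bound the length of every member of $T$ by $\max_{i\leqslant N}\abs{w_i}$). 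Conversely, from a countable basic cover $(U_{w_n})_{n\geqslant 1}$ with no finite subcover, the set $T=\set{u\in\cS}{\fa{i\leqslant\abs{u}}{w_i\text{ is not a prefix of }u}}$ is a decidable tree, blocks every path, and is infinite (``no finite subcover'' gives, for each $N$, an $\alpha$ no $w_i$ with $i\leqslant N$ being a prefix of it, whence $\overline{\alpha}M\in T$ for $M=\max\menge{N,\abs{w_1},\dots,\abs{w_N}}\geqslant N$). Finally, for $\ref{pos2}\Rightarrow\KT$ I would run the classical Julian--Richman construction behind \FAND $\implies$ \POS (\cite{wJ84}; \cite{dB87}*{Section 6.2}; see also \cite{hD08b}*{Section 4.5}): from $f$ and a modulus of uniform continuity one certifies, cylinder by cylinder and decidably, a positive lower bound for $f$, the certified sequences forming a decidable bar that is uniform exactly when $\inf f>0$; since $\inf f=0$, its complement is a Kleene tree. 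Making the certificate genuinely decidable (via cotransitivity of $<$) and checking that $\inf f=0$ really produces arbitrarily deep uncertified cylinders is the crux of the whole proposition; everything else above is routine.
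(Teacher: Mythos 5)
Your proposal takes the same route as the paper's one-line proof, which delegates to "the same arguments as in the proof of Proposition \ref{Pro:Fand-equivalents}": each FAND-equivalence there is dualised to its \KT-witness form exactly as you describe, using $F^p$, Lemma \ref{Lem:extendingfunctions}, the surjection from Proposition \ref{Pro:compXunifsurj}, and the function $f(\alpha)=2^{-\min\set{n}{\overline{\alpha}n\in B}}$. One small slip in your $(4)\Rightarrow\KT$ direction: the element $\overline{\alpha}M$ with $M=\max\menge{N,\abs{w_1},\dots,\abs{w_N}}$ need not lie in your $T$, because membership requires $\fa{i\leqslant\abs{u}}{w_i\text{ is not a prefix of }u}$ and for $N<i\leqslant M$ you have no control over $w_i$; but $\overline{\alpha}N$ itself already lies in $T$ (for $i\leqslant N$ with $\abs{w_i}\leqslant N$ the prefix property fails by choice of $\alpha$, and for $\abs{w_i}>N$ it fails by length), which gives elements of every length and is all you need.
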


\begin{proof} These equivalences can be proved with the same arguments as in the proof of Proposition \ref{Pro:Fand-equivalents}. 
\end{proof}

\begin{Pro} \label{bij-cant-baire}
\KT is equivalent to   the existence of a homeomorphism $\varphi: \CS \to \BS $ with a continuous modulus of continuity given by a function $\mu:\CS \times \NN \to \NN$; i.e.\ 
    \[ \fa{\alpha \in \CS, n \in \NN}{\overline{\beta}\mu(\alpha,n) = \overline{\alpha}\mu(\alpha,n)  \implies \overline{\varphi(\beta)}n = \overline{\varphi(\alpha)}n } \ . \]
\end{Pro}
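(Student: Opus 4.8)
The plan is to prove the two implications separately, using the \emph{frontier} of a Kleene tree in one direction and the machinery of Lemma~\ref{Lem:Fanandunifcont} and Proposition~\ref{FAND-equiv} in the other.

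\textbf{From $\KT$ to the homeomorphism.} Fix a Kleene tree $T$, that is, a decidable infinite tree with no infinite path, and let
\[ B = \set{u \in \cS}{u \notin T \ \wedge \ \overline{u}(\abs{u}-1) \in T} \]
be its frontier. Then $B$ is decidable, prefix-free, a bar, and — because $T$ is infinite — contains arbitrarily long strings and is hence infinite as a set; I would fix a bijection $\eta : \NN \to B$ (possible without choice since $B$ is decidable). For $\alpha \in \CS$ one peels off successive $B$-prefixes: put $p_0 = 0$, and having defined $p_{i-1}$, let $m_i$ be the least $m \geqslant 1$ with $\alpha(p_{i-1})\alpha(p_{i-1}+1)\cdots\alpha(p_{i-1}+m-1) \notin T$ (which exists since $T$ admits no infinite path and $\langle\rangle \in T$), then set $p_i = p_{i-1} + m_i$ and $u_i = \alpha(p_{i-1})\cdots\alpha(p_i-1) \in B$. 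Since each $m_i \geqslant 1$ we get $\alpha = u_1 \ast u_2 \ast u_3 \ast \cdots$, so I define $\varphi(\alpha) = (\eta^{-1}(u_1), \eta^{-1}(u_2), \dots) \in \BS$. The verification then consists of four routine points: (i) $\mu(\alpha,n) := p_n$ is a locally constant, hence continuous, function $\CS \times \NN \to \NN$ and a modulus of continuity for $\varphi$, because agreement of $\beta$ with $\alpha$ up to position $p_n$ forces $u_1,\dots,u_n$ to agree; (ii) $\varphi$ is injective, and in fact $\alpha \neq \beta$ implies $\varphi(\alpha) \neq \varphi(\beta)$, since prefix-freeness of $B$ means the peelings must first differ at some whole block $u_i$; (iii) $\varphi$ is surjective, since $(k_i) \in \BS$ is the image of the concatenation $\eta(k_1) \ast \eta(k_2) \ast \cdots$ (here prefix-freeness again guarantees that peeling recovers exactly the $\eta(k_i)$); (iv) $\varphi^{-1}$, which sends $(k_i)$ to $\eta(k_1)\ast\eta(k_2)\ast\cdots$, is pointwise continuous. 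Together these say $\varphi$ is the required homeomorphism.

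\textbf{From the homeomorphism to $\KT$.} Given $\varphi:\CS \to \BS$ as in the statement with continuous modulus $\mu$, I would look at $h : \CS \to \NN$, $h(\alpha) = \varphi(\alpha)(0)$. Then $\tilde\mu(\alpha) := \mu(\alpha,1)$ is a continuous modulus of continuity for $h$, and $h$ is unbounded because $\varphi$ is onto $\BS$. A uniformly continuous map $\CS \to \NN$ has finite image, so $h$ is \emph{not} uniformly continuous. Now I follow Lemma~\ref{Lem:Fanandunifcont}: the set $B' = \set{u \in \cS}{\tilde\mu(u \ast \zero) \leqslant \abs{u}}$ is a decidable bar, and it cannot be uniform — a uniform bound $N$ would put every $h(\alpha)$ into the finite set $\set{h(u \ast \zero)}{\abs{u} \leqslant N}$, contradicting unboundedness. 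Replacing $B'$ by its extension-closure via Lemma~\ref{Lem:dec_bar_closedunderextensions} and then taking complements gives a decidable tree with no infinite path; it is infinite because non-uniformity supplies, for every $N$, a string of length $N$ in the complement (the set of candidate strings of length $N$ being finite and decidable, so a double negation elimination applies). This is a Kleene tree, so $\KT$ holds.

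\textbf{Expected main obstacle.} The difficulties here are organizational rather than deep. In the forward direction the two things to be careful about are that the frontier $B$ genuinely contains arbitrarily long strings — this is what makes $\eta$ a bijection onto an infinite set and, crucially, makes $\varphi$ surjective onto \emph{all} of $\BS$ rather than onto sequences whose entries are restricted by the lengths occurring in $B$ — and that the peeling function $p_n = \mu(\alpha,n)$ is total (each $m_i$ exists) and locally constant. In the backward direction the one subtle step is extracting an \emph{infinite} decidable tree, not merely a non-uniform bar, from the failure of uniform continuity of $h$; this is exactly where Lemma~\ref{Lem:dec_bar_closedunderextensions} together with the finiteness and decidability of the level sets of the tree is needed.
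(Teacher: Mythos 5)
Your proposal is correct and follows essentially the same route as the paper's own proof: the forward direction peels off successive frontier blocks of the Kleene tree and maps each to its index under a fixed bijection $\eta:\NN\to B$ (the paper writes $\varphi(\alpha)(n)=m_{\alpha_n}$ for the index of the $n$-th block, but this is the same construction), with modulus $\mu(\alpha,n)=p_n$ the position where the $n$-th block ends, exactly as in the paper. For the converse, the tree you obtain by closing $B'=\set{u}{\mu(u\ast\zero,1)\leqslant\abs{u}}$ under extensions and then complementing is literally the paper's $T=\set{u}{\fa{i\leqslant\abs{u}}{\mu(\overline{u}i\ast\zero,1)>i}}$, so the difference is only that you route the argument through Lemmas~\ref{Lem:Fanandunifcont} and~\ref{Lem:dec_bar_closedunderextensions} rather than writing out $T$ and verifying its properties directly; both yield the same object and the same contradiction with surjectivity.
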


\begin{proof}
One direction is well known \cite{Beeson1985}*{IV.13}, at least for the inverse mapping $\BS \to \CS$, and will be easy to adapt it to our purposes.  Assume that \KT holds, so assume that $T$ is a decidable infinite tree that blocks ever infinite path. In particular we can find $\menge{u_{1}, u_{2}, \dots }$ an injective enumeration of all $u \notin T$ such that  $\overline{u}(\abs{u}-1) \in T$. In particular, for every $\alpha \in \CS$ there exist unique $k_{\alpha},m_{\alpha} \in \NN$ such that $\overline{\alpha}k_{\alpha} = u_{m_{\alpha}}$. Now define  $(\alpha_n)_{n \geqslant 1}$ recursively by
\begin{align*}
\alpha_{0} & = \alpha  \ , \\
\alpha_{n+1} & = \alpha_n[k_{\alpha_n} :] \ ;
\end{align*}
that is we keep chopping off unique $u_{m}$ prefixes of $\alpha$. Now $\varphi:\CS \to \BS$ can be succinctly defined by
\[ \varphi(\alpha)(n)  = m_{\alpha_n} \ . \] 
We still need to show that $\varphi$ is surjective, injective, and has a continuous modulus if continuity. The easiest is surjectivity: if $\gamma \in \BS$, then 
\[ \varphi( u_{\gamma(0)} \ast u_{\gamma(1)} \ast u_{\gamma(2)} \ast \dots ) = \gamma \ . \] 
It is easy to see, that by the way it is constructed $\varphi$ is injective.
To see that $\varphi$ has a continuous modulus of continuity let $e \in \NN$ and $\alpha \in \CS$ be arbitrary. Let $N = \sum_{i=0}^{e} k_{\alpha_i}$. Then
\[  \alpha = u_{m_{\alpha_0}} \ast u_{m_{\alpha_1}} \ast \dots \ast u_{m_{\alpha_e}} \ast \alpha_{e+1}  \ ,  \]
and for any $\beta$ such that $\overline{\alpha}N = \overline{\beta}N$ we have that \[  \beta = u_{m_{\alpha_0}} \ast u_{m_{\alpha_1}} \ast \dots \ast u_{m_{\alpha_e}} \ast \beta_{e+1}  \ .  \]
That means that for $i \leqslant e$ we have that $\varphi(\alpha)(i)=\varphi(\beta)(i)$. Hence $\mu(\alpha,e) = \sum_{i=0}^{e} k_{\alpha_i}$ is a modulus of continuity, which by construction only depends on a finite initial prefix of $\alpha$ and therefore is itself continuous.

Conversely let $\varphi : \CS \to \BS$ be a homeomorphism between  Cantor and Baire space with $\mu$ as stated above. Now define 
\[ T = \set{u \in \cS}{\fa{i \leqslant \abs{u}}{ \mu(\overline{u}i \ast \zero ,1) > \abs*{i}}}  \ . \]
By definition $T$ is decidable and closed under restriction. We claim that  it does not admit infinite paths, but is infinite.
To see that $T$ does not admit infinite paths let $\alpha \in \CS$ arbitrary. Then there exists, since $\mu$ is continuous itself, $M$ such that $\mu(\alpha,1) = \mu(\overline{\alpha}M \ast \beta,1)$ for all $\beta \in \CS$. Now let $N = \max \menge{M,\mu(\alpha)}$. Then the assumption that $\overline{\alpha}N \in T$ implies that $\mu(\overline{\alpha}N \ast \zero,1) > N$. But 
\[ N \geqslant \mu(\alpha,1) = \mu(\overline{\alpha}N \ast \zero,1) > N \] a contradiction and hence $\overline{\alpha}N \notin T$.

Assume that $n$ bounds the height of $T$  that is that $\overline{\alpha}n \notin T$ for all $\alpha \in \CS$. That means that  $\mu(\overline{\alpha}n \ast \zero) \leqslant n$ for $\alpha \in \CS$, which means that $\varphi(\overline{\alpha}n\ast \zero)(1) = \varphi(\alpha)(1)$. This in term implies that $\varphi(\alpha)(1) \leqslant \max_{u \in \cS:\abs{u}=n} \menge{\varphi(u \ast \zero)(1)}$. So $\varphi$ cannot be surjective; a contradiction. We conclude that $T$ is infinite.
\end{proof}

\begin{Pro} \label{Pro:KT-fullyloc} \KT is equivalent to the existence of a point-wise continuous, fully located function $f:[0,1] \to \RR$ that fails to be uniformly continuous.
\end{Pro}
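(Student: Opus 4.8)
The plan is to obtain the equivalence by routing both implications through Lemma~\ref{Lem:fullyloc} (the correspondence between decidable bars and point-wise continuous, fully located functions on $[0,1]$), the lemma preceding it, and Proposition~\ref{Pro:KT_equiv}. The guiding observation is that \KT should really be equivalent to ``there is a point-wise continuous, fully located \emph{unbounded} function $[0,1]\to\RR$'': Lemma~\ref{Lem:fullyloc} turns a decidable bar $B$ closed under extensions into such an $f$ with $f$ bounded iff $B$ is uniform, and conversely; so a Kleene tree (whose complement is exactly a decidable, non-uniform bar closed under extensions) corresponds to an unbounded such $f$, and an unbounded such $f$ corresponds to a bar that is not uniform, hence to a Kleene tree. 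An unbounded function on $[0,1]$ is automatically not uniformly continuous, since a uniformly continuous function carries the compact $[0,1]$ onto a totally bounded, hence bounded, set; so one direction of the ``unbounded vs.\ non-uniformly-continuous'' reformulation is free, and the theorem will follow once the other direction is handled.

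Concretely, for $\KT\Rightarrow{}$[such an $f$ exists]: by Proposition~\ref{Pro:KT_equiv}.\ref{pos2}, \KT yields a uniformly continuous $g:[0,1]\to\RR^{+}$ with $\inf g([0,1])=0$. Being uniformly continuous, $g$ is fully located, and being positively valued everywhere, $g$ admits the point-wise continuous reciprocal $f:=1/g$, which by part~2 of the lemma preceding Lemma~\ref{Lem:fullyloc} is fully located as well; since $\inf g=0$, $f$ is unbounded, hence not uniformly continuous. (Alternatively one can bypass Proposition~\ref{Pro:KT_equiv} and apply Lemma~\ref{Lem:fullyloc} directly to the complement of a Kleene tree, noting that the explicitly constructed function takes values $\geqslant 2^{n}$ at the images of the length-$n$ nodes of the tree.)

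For the converse, let $f:[0,1]\to\RR$ be point-wise continuous, fully located, and not uniformly continuous. By (the ``vice versa'' part of) Lemma~\ref{Lem:fullyloc} there is a decidable bar $B\subseteq\cS$, closed under extensions, with $B$ uniform if and only if $f$ is bounded. If $B$ fails to be uniform, then its complement $\lnot B$ is a decidable tree, closed under restriction, that blocks every infinite path (as $B$ is a bar) and is infinite (non-uniformity of $B$ says $\lnot\,\forall u\in 2^{n}\,(u\in B)$, a decidable statement, so $\exists u\in 2^{n}\,(u\notin B)$ for every $n$); that is precisely a Kleene tree, so \KT holds. Thus the whole converse reduces to the statement that a \emph{bounded}, fully located, point-wise continuous function on $[0,1]$ is uniformly continuous — equivalently, that the non-uniformly-continuous $f$ in hand is in fact unbounded — and this is the crux and the main obstacle of the proof.

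To attack it one fixes $\varepsilon>0$, takes $p\in(\tfrac12,1)$ so that the intervals $I^{p}_{u}$ cover $[0,1]$ with overlaps (Lemma~\ref{Lem:Fp}.\ref{Fp_surj}, Lemma~\ref{Lem:cmpact-image-of-2N}), and observes that for bounded $f$ each $f(I^{p}_{u})$ is located and bounded, so the oscillation $\mathrm{osc}(u)=\sup f(I^{p}_{u})-\inf f(I^{p}_{u})$ exists, is non-increasing along branches, and satisfies $\mathrm{osc}(\overline{\alpha}n)\to 0$ for every $\alpha\in\CS$ by point-wise continuity. Using countable choice and real comparison one extracts, exactly as in the proof of Lemma~\ref{Lem:fullyloc}, a decidable bar $B_{\varepsilon}$ closed under extensions with $u\in B_{\varepsilon}\Rightarrow\mathrm{osc}(u)<\varepsilon$; a uniform height for $B_{\varepsilon}$ then yields, via the overlap estimate for the $I^{p}_{u}$, a modulus of uniform continuity for $\varepsilon$. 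The delicate point — where boundedness together with full locatedness must be used in precisely the way Lemma~\ref{Lem:fullyloc} uses them, rather than smuggling in the fan theorem — is to justify that the oscillation bars $B_{\varepsilon}$ are uniform; I expect this, plus the bookkeeping relating the overlaps to $\delta$, to be where essentially all of the work lies. (If one prefers to read ``fails to be uniformly continuous'' in the strong, witnessed sense, an explicit $\varepsilon$ and a sequence of shrinking intervals of oscillation $\geqslant\varepsilon$ are available from the outset, and one can instead build the singular cover / Kleene tree directly.)
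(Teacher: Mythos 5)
Your forward direction is fine and, as you note, can be run either through Proposition~\ref{Pro:KT_equiv} and the reciprocal lemma, or directly via Lemma~\ref{Lem:fullyloc} applied to the complement of a Kleene tree; that matches what the paper intends.

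The converse is where I have a real concern, and you yourself flag it as ``the crux and the main obstacle.'' You reduce the converse to the claim that \emph{a bounded, fully located, point-wise continuous $f:[0,1]\to\RR$ is uniformly continuous}. I do not think this claim is available in \BISH, and so the reduction is a dead end rather than a remaining technicality. The obstruction is exactly the one you run into in your own sketch: after extracting, for each rational $\varepsilon$, an oscillation bar $B_{\varepsilon}$, you still need each $B_{\varepsilon}$ to be uniform, and nothing in ``bounded plus fully located'' supplies that for free. If one pushes the attack to the end, what one actually needs is that a pointwise modulus $a\mapsto\delta(a)$ defined on a dense set of $a$'s has positive infimum, which is a \POS-type statement and hence of \FAND\ strength (cf.\ Corollary~\ref{Cor:FAND_equiv_fullyloc}, where even the boundedness half of this line of reasoning is shown to carry the strength of \FAND). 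So the proposed bridging lemma is not a ``delicate bookkeeping step''; it is a genuinely non-\BISH\ assumption, and the converse cannot be routed through it.

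The fix is the route you relegate to the final parenthetical, and it is worth spelling it out rather than gesturing at it. Lemma~\ref{Lem:fullyloc}'s ``vice versa'' produces a decidable bar $B$, closed under extensions, with $B$ uniform exactly when $f$ is bounded --- it says nothing about uniform continuity. The clean way to use the hypothesis is therefore to read ``fails to be uniformly continuous'' in the positive, witnessed sense: there is an $\varepsilon>0$ and, for each $n$, points $x_n,y_n$ with $\abs*{x_n-y_n}<2^{-n}$ and $\abs*{f(x_n)-f(y_n)}\geqslant\varepsilon$. Then one builds a single oscillation bar: using $p>\nicefrac12$ so that the $I^p_u$ overlap, full locatedness of the $f(I^p_u)$ (together with the approximate intermediate-value argument that turns mere locatedness into the two-sided boundedness/oscillation decisions actually used in the proof of Lemma~\ref{Lem:fullyloc}) lets one fix, via countable choice, a decidable $C\subset\cS$, closed under extensions, with $u\in C\Rightarrow\mathrm{osc}(f,I^p_u)<\varepsilon$ and $u\notin C\Rightarrow\mathrm{osc}(f,I^p_u)>\nicefrac{\varepsilon}{2}$. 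Pointwise continuity makes $C$ a bar; the witnesses $(x_n,y_n)$ land, for each $n$, inside some $I^p_{u}$ with $\abs*{u}\to\infty$ and force $u\notin C$, so $C$ is not uniform; and $\lnot C$ is then a Kleene tree. This is essentially a recasting of the argument the paper compresses into ``follows from Lemma~\ref{Lem:fullyloc}'': the lemma is a boundedness statement, the natural $f$ it produces from a Kleene tree is \emph{unbounded}, and the converse needs the oscillation-bar construction (or, equivalently, one must first argue via locatedness-plus-connectedness that the non-uniformly-continuous $f$ cannot be bounded). As written, your proposal leaves exactly this step open, and the lemma you intend to prove in its place is not one you will be able to prove constructively.
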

\begin{proof}
This follows from Lemma \ref{Lem:fullyloc}.
\end{proof}

\section{Specker Sequences}
In the seminal article \cite{Specker1949} E.\ Specker showed that in recursive mathematics there exists an increasing, computable sequence of rationals $(r_n)_{n \geqslant 1}$ in $[0,1]$ that does not converge to a computable number. More than that, he showed that it  does not converge to a computable number in the strong sense that it is computably eventually bounded away from every computable real. 
Interpreted in \RUSS, this shows that the following principle holds there.
\begin{principle}[SS]{\SS} \label{PR:SS}
	There exists a sequence $(x_n)_{n \geqslant 1}$ in $[0,1]$ that is bounded away from every point in $[0,1]$.
\end{principle}

It is worth pointing out that we have not forgotten about the fact that Specker's original sequence is rational, as the next lemma clarifies. The fact that Specker's sequence is increasing is discussed in Section \ref{Sec:incss}.
\begin{Lem}
If  $(x_n)_{n \geqslant 1}$ is a sequence in $[0,1]$ that is bounded away from every point in $[0,1]$, then there exists a sequence of rationals with the same property.
\end{Lem}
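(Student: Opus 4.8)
The plan is to approximate each term of the Specker sequence by a sufficiently close rational, arranging the error to shrink geometrically so that the ``eventually bounded away'' property survives the perturbation.

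\textbf{Step 1 (extracting rational approximants).} Using countable choice together with the density of $\QQ$ in $\RR$, I would pick for each $n$ a rational $q_n$ with $\abs{q_n - x_n} < 2^{-n}$. If one wants the new sequence to stay in $[0,1]$, replace $q_n$ by $\min\menge{1,\max\menge{0,q_n}}$, which is again rational and, since $x_n \in [0,1]$, only decreases the distance to $x_n$; so we may assume $q_n \in [0,1]\cap\QQ$ with $\abs{q_n - x_n}<2^{-n}$ for all $n$.

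\textbf{Step 2 (the separation estimate).} Fix an arbitrary $x \in [0,1]$. Since $(x_n)_{n\geqslant 1}$ is eventually bounded away from $x$, there is $N$ with $d(x_n,x) > 2^{-N}$ for all $n \geqslant N$. For every $n \geqslant N+1$ the triangle inequality yields
\[ d(q_n,x) \;\geqslant\; d(x_n,x) - d(x_n,q_n) \;>\; 2^{-N} - 2^{-n} \;\geqslant\; 2^{-N} - 2^{-(N+1)} \;=\; 2^{-(N+1)} \ . \]
Hence, setting $M = N+1$, we have $d(q_n,x) > 2^{-M}$ for all $n \geqslant M$; that is, $(q_n)_{n\geqslant 1}$ is eventually bounded away from $x$. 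As $x$ was arbitrary, $(q_n)_{n\geqslant 1}$ is a sequence of rationals in $[0,1]$ bounded away from every point of $[0,1]$.

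There is no genuine obstacle here: the argument is a routine approximation together with an index shift. The only points deserving mild care are the (standard) appeal to countable choice needed to produce the rational approximants all at once, and the bookkeeping in Step 2, where one must move from $n\geqslant N$ to $n\geqslant N+1$ precisely so that the approximation error $2^{-n}$ is swallowed by half of the separation constant $2^{-N}$.
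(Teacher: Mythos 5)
The paper marks this lemma's proof as simply ``Straightforward,'' giving no details, and your argument is exactly the routine one a reader is expected to supply: pick rational approximants with geometrically shrinking error via countable choice, then absorb the error into half the separation constant with a modest index shift. The estimate in Step 2 is correct and the clamping into $[0,1]$ is a harmless refinement; nothing more is needed.
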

\begin{proof}
Straightforward.	
\end{proof}

\begin{Pro} \label{Pro:KT_impl_iSS}
\KT implies that there exists an increasing Specker sequence of rationals.
\end{Pro}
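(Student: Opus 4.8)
The plan is to extract an increasing Specker sequence of rationals directly from a Kleene tree $T$, which \KT asserts exists. Recall that $T \subset \cS$ is a decidable binary tree that is infinite (contains arbitrarily long finite sequences, so under (unique) countable choice we have a sequence $(u_n)_{n\geqslant 1}$ with $\abs{u_n} = n$ and $u_n \in T$) but admits no infinite path: for every $\alpha \in \CS$ there is $n$ with $\overline{\alpha}n \notin T$. The complement $B = \Complement{T}$ is then a decidable bar that is closed under extensions. The idea is to use the Cantor-set embedding $F = F^{\nicefrac{1}{2}}:\CS \to [0,1]$ from Section \ref{Sec:linking-cantor-and-unitint} (which simply sends a binary sequence to the real with that binary expansion), together with the natural left-to-right ordering on the finitely many sequences ``just barely'' outside $T$, to manufacture a monotone sequence of dyadic rationals bounded away from every real in $[0,1]$.

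First I would enumerate (using decidability of $T$ and countable choice) the leaves of $T$, i.e.\ all $w \in \cS$ with $w \notin T$ but $w[:\!-1] \in T$, obtaining an injective sequence $(w_n)_{n\geqslant 1}$. Since $T$ is infinite but blocks every path, for every $\alpha \in \CS$ there is a unique $n$ with $w_n$ a prefix of $\alpha$, so the dyadic intervals $J_n = I^{\nicefrac{1}{2}}_{w_n}$ (of length $2^{-\abs{w_n}}$) cover $[0,1]$ and are pairwise non-overlapping. Now I would define the sequence $(x_k)_{k\geqslant 1}$ roughly as follows: scanning the leaves in order of increasing length and left-to-right lexicographic order within each length, let $x_k$ be the left endpoint $F(w_{\sigma(k)})$ of the $k$-th interval encountered, but adjusted so that the partial list of endpoints used so far is kept sorted — more concretely, one can take $x_k$ to run through the endpoints $a^{\nicefrac{1}{2}}_{w_n}$ of the intervals in increasing order of $n$ while recording them into an increasing sequence by a standard bookkeeping trick (at stage $k$ output the smallest not-yet-output endpoint among the first $k$ leaves). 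Because the $w_n$ exhaust all leaves and have unbounded length, the resulting sequence is increasing (or can be made so) and its entries are dyadic rationals in $[0,1]$.

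The key verification is that $(x_k)_{k\geqslant 1}$ is bounded away from every point of $[0,1]$. Given $x \in [0,1]$, apply the variation on Bishop's Lemma (Lemma \ref{Lem:Fp}, part \ref{Lem:Var-Bish}) with $p = \nicefrac{1}{3}$ — or more directly, since the intervals $J_n$ cover $[0,1]$ there is $m$ and a neighbourhood of $x$ contained in $J_m$, hence a dyadic rational distance $\delta > 0$ and $M$ such that every leaf $w_n$ with $\abs{w_n} \geqslant M$ and $w_n \neq w_m$ has $F(w_n)$ at distance $> \delta$ from $x$; moreover $x$ itself lies in the interior of $J_m$ (or we shift $x$ slightly, staying dyadically bounded away) so $x$ is bounded away from the two endpoints of $J_m$ too. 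Since only finitely many leaves have length $< M$, all but finitely many $x_k$ are endpoints of such long leaves (or of $J_m$), hence bounded away from $x$ by $\min\{\delta, \text{gap to endpoints of }J_m\} > 0$. This is essentially the computation already carried out in the proof of Lemma \ref{Lem:bar-to-seq} transported to $F^{\nicefrac{1}{2}}$, so I would cite that lemma to shortcut the estimates.

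The main obstacle — and it is a genuinely fiddly rather than deep one — is arranging the sequence to be \emph{increasing} while keeping all entries rational and while still exhausting enough endpoints to guarantee the anti-Specker property; a naive left-to-right scan of leaves is not monotone because short intervals on the right get enumerated before long ones on the left. The fix is the bookkeeping sort described above (or, equivalently, since the set $D = \set{a^{\nicefrac{1}{2}}_{w_n}}{n \in \NN} \cup \{2\}$ is a countable, tail-located — indeed detachable — subset of the rationals once one notes $T$ is decidable, one can enumerate $D$ in increasing order), and I would present this sorting step carefully, remarking that it needs only unique choice. I expect the final write-up to be short: invoke \KT to get $T$, build the leaves and intervals, quote Lemma \ref{Lem:bar-to-seq}'s estimates for ``bounded away'', and spend the bulk of the words on the monotonisation. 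The statement ``of rationals'' is free since all the $a^{\nicefrac{1}{2}}_{w_n}$ are dyadic.
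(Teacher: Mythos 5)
Your approach --- enumerate the leaves of the Kleene tree $T$, map them to dyadic endpoints via $F^{\nicefrac{1}{2}}$, and then sort the resulting sequence into increasing order --- differs from the paper's and, more importantly, does not quite work. The gap is exactly the step you flag as ``fiddly rather than deep'': the monotonisation. You cannot, in general, enumerate a countable subset $D$ of the rationals in increasing order, even when $D$ is detachable. A set like $\set{2^{-n}}{n \in \NN}$ is detachable but has no least element, so no enumeration of it is increasing; and concretely a Kleene tree can perfectly well have a leaf near $1$ at depth $2$ while leaves near $0$ appear only at arbitrarily deep levels, so any on-the-fly bookkeeping sort will already have emitted a large endpoint before it learns of a smaller one. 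Detachability lets you \emph{decide} membership; it does not let you look into the future, and sorting an infinite stream of reals into an increasing one requires precisely the sort of omniscience one is not entitled to here.

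The paper's proof sidesteps the problem by never producing an unordered set that needs sorting. Rather than leaves of $T$, it takes at each level $n$ the lexicographically \emph{leftmost} node $w_n \in T$ with $\abs{w_n}=n$ (these exist because $T$ is infinite and decidable). Since $T$ is a tree, the length-$n$ prefix of $w_{n+1}$ lies in $T$ and so is $\geqslant_{\mathrm{lex}} w_n$; hence $w_n 0 \leqslant_{\mathrm{lex}} w_{n+1}$, and applying the lex-monotone embedding $F^{\nicefrac{1}{3}}$ gives $x_n = F^{\nicefrac{1}{3}}(w_n) \leqslant x_{n+1}$ automatically. In other words, ``leftmost surviving node at each depth'' is increasing by construction, with no post-hoc sort. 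The bounded-away estimate then proceeds as in Lemma \ref{Lem:bar-to-seq}, and note it really does need $F^{\nicefrac{1}{3}}$ rather than $F^{\nicefrac{1}{2}}$: the variation on Bishop's Lemma (Lemma \ref{Lem:Fp}.\ref{Lem:Var-Bish}) that supplies the uniform positive gap off the Cantor set requires $p < \nicefrac{1}{2}$. The cheapest repair of your write-up is to drop the leaves and the sort entirely and adopt the leftmost-node trick.
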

\begin{proof}
The construction, naturally, is a variation of the one in Lemma \ref{Lem:bar-to-seq}.
Let $T$ be a Kleene tree. Now construct a sequence $(w_n)_{n \geqslant 1} \in T$ such that for all $n \in \NN$
\begin{enumerate}
\item $\abs*{w_n} = n$, and
\item $\left( u \in T \land \abs{u} =n \right) \implies w_n \leqslant_{\mathrm{lex}} u$.
\end{enumerate}
Notice that then $w_n0 \leqslant_{\mathrm{lex}} w_{n+1}$.
Define $x_n = F^{\nicefrac{1}{3}}(w_n)$. Then $(x_n)_{n \geqslant 1}$ is an increasing sequence of rationals in $[0,1]$. We want to show that it is eventually bounded away from every $x\in [0,1]$. To this end choose $\alpha \in \CS$ as in Lemma \ref{Lem:Fp}.2. Since $T$ blocks every infinite path there exists $N \in \NN$ such that $\overline{\alpha}N \notin T$. Hence, for every $n \geqslant N$ we must have $\overline{\alpha}N \neq \overline{w_n}N$, which means that 
\[ \abs*{F^{\nicefrac{1}{3}}(w_n) - F^{\nicefrac{1}{3}}(\alpha)} > 3^{-(N+1)}  \ .\] Now either $\abs*{F^{\nicefrac{1}{3}}(\alpha) - x} < 3^{-(N+2)} $ or $\abs*{F^{\nicefrac{1}{3}}(\alpha) - x} > 3^{-(N+3)} $. In the first case 
\[ \abs*{F^{\nicefrac{1}{3}}(w_n) - x} > 3^{-N+1}  \]
for all $n \geqslant N$. In the second case by Lemma \ref{Lem:Fp}.2 we have that $d(x,F(\CS)) > \delta$ for some $\delta > 0$. In both cases $\abs*{x_n - x} > \min \menge{\delta, 3^{-(N+2)}}$ for all $n \geqslant N$.
\end{proof}

In the following we want to weaken the requirement of decidability on a Kleene tree. Just as a decidable tree is the complement of a decidable set that is closed under extensions, we define a \define{$c$-tree} to be the complement of a $c$-set that is closed under extensions. More formally a tree $T \subset \cS$ is a $c$-tree, if there exists a decidable set $D \subset \cS$ such that 
\[ u \in T \iff \ex{w \in \cS}{u \ast w \in D}  \ . \]
A $c$-Kleene tree is then simply  a Kleene tree, that is a $c$-tree instead of a decidable one. Obviously every Kleene tree is a $c$-Kleene tree.

\begin{Pro} \label{Pro:equiv_SS} The following are equivalent.
\begin{enumerate}
\item \label{enu-ss} There exists a Specker sequence in $[0,1]$.
\item \label{KTc} There exists a $c$-Kleene tree. 
\item \label{surj-cant-natural} There exists a continuous surjection $\varphi: \CS \to \NN$
\item \label{surj-cant-baire} There exists a continuous surjection $\Phi: \CS \to \BS$. 
\end{enumerate}
\end{Pro}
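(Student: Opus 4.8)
The four statements form a cycle. The plan is to prove $\ref{enu-ss} \Rightarrow \ref{KTc} \Rightarrow \ref{surj-cant-natural} \Rightarrow \ref{surj-cant-baire} \Rightarrow \ref{enu-ss}$, reusing as much of the machinery of Section~\ref{Sec:linking-cantor-and-unitint} and of Proposition~\ref{Pro:KT_equiv} and Proposition~\ref{bij-cant-baire} as possible. The guiding idea is that a Specker sequence in $[0,1]$ pulls back along the uniformly surjective map $F^{p}$ (with $p>\frac12$, Lemma~\ref{Lem:cmpact-image-of-2N}) to give a sequence in $\CS$ which is eventually bounded away from every point; collecting the finite prefixes that "barely leave" the closed neighbourhoods of that sequence produces a $c$-tree that is infinite but blocks every path. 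Conversely a $c$-Kleene tree yields, exactly as in Lemma~\ref{Lem:bar-to-seq} applied to the $c$-bar which is its complement, a Specker sequence; in fact Lemma~\ref{Lem:bar-to-seq} is essentially the content of $\ref{KTc} \Rightarrow \ref{enu-ss}$ already, so that implication should be almost a citation.

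\textbf{From a $c$-Kleene tree to a surjection onto $\NN$.} Given a $c$-tree $T$ with decidable witness $D$ (so $u\in T \iff \exists w\,(u\ast w\in D)$) that is infinite and blocks every path, I would first pass to the set $E\subset\cS$ of minimal finite sequences \emph{not} in $T$, i.e.\ $u\notin T$ but $u[:-1]\in T$; since $T$ is a $c$-set its complement is again a $c$-set, and the "barely outside" sequences $E$ can be enumerated. Because $T$ blocks every path, for every $\alpha\in\CS$ there is a (unique) $n$ with $\overline{\alpha}n\in E$; because $T$ is infinite, $E$ contains elements of unbounded length. Then $\varphi(\alpha) = $ (the index, in a fixed enumeration of $E$, of the unique prefix of $\alpha$ lying in $E$) is a well-defined, point-wise continuous — indeed locally constant — map $\CS\to\NN$, and it is surjective because $E$ is infinite and we may arrange the enumeration so that arbitrarily large indices are hit. (One needs the minimal-prefix property to see well-definedness; the fact that $T$ is merely a $c$-tree rather than decidable is exactly why we cannot use $\min\{n : \overline{\alpha}n\notin T\}$ naively — but the $c$-structure makes the "escape height" a well-defined partial function that is total on $\CS$.) The only subtle point is ensuring surjectivity onto all of $\NN$; if $E$ happened to be finite this would fail, but infinitude of $T$ together with König-type reasoning (every $u\in T$ extends to some element of $E$ or stays in $T$ forever, the latter contradicting the blocking property) gives that $E$ is infinite.

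\textbf{From a surjection onto $\NN$ to one onto $\BS$, and back to a Specker sequence.} For $\ref{surj-cant-natural}\Rightarrow\ref{surj-cant-baire}$ the plan is the standard "slicing" trick: write $\CS \cong (\CS)^{\NN}$ via the homeomorphism sending $\alpha$ to the sequence of its arithmetic subsequences (as in the proof of Lemma~\ref{Lem:spacefilling}, using a pairing $\varphi:\NN^2\to\NN$), and apply the given surjection $\varphi:\CS\to\NN$ coordinatewise to obtain $\Phi:\CS\cong(\CS)^{\NN}\to\NN^{\NN}=\BS$, which is continuous and surjective. Finally, for $\ref{surj-cant-baire}\Rightarrow\ref{enu-ss}$: given a continuous surjection $\Phi:\CS\to\BS$, build a Specker sequence by diagonalisation — for each $n$ pick (using the continuity modulus) a finite prefix that forces $\Phi$'s first coordinate to exceed $n$, and let $x_n = F^{\nicefrac{1}{3}}$ of a suitable extension; continuity of $\Phi$ forces these $x_n$ to be eventually bounded away from any given $x\in[0,1]$, by an argument structurally identical to the one in Lemma~\ref{Lem:bar-to-seq}. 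Here the main obstacle — and the step I would spend the most care on — is $\ref{enu-ss}\Rightarrow\ref{KTc}$: turning a Specker sequence into a $c$-tree requires the argument of Lemma~\ref{Lem:seq-to-bar} but \emph{without} any tail-locatedness, so one only gets a $c$-bar, not a decidable one; one must check that the complement is genuinely a $c$-tree (closed under extension, complement of a decidable set via the $C'$ in the definition) and that it is infinite precisely because $x_n\in[0,1]$ for all $n$ (part~2 of Lemma~\ref{Lem:seq-to-bar}). Much of this is already packaged in Lemmas~\ref{Lem:seq-to-bar} and~\ref{Lem:bar-to-seq}, so the proof should consist largely of invoking those lemmas and supplying the short combinatorial glue for $\ref{KTc}\Leftrightarrow\ref{surj-cant-natural}$ and the slicing argument for $\ref{surj-cant-natural}\Leftrightarrow\ref{surj-cant-baire}$.
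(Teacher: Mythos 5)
The main gap is in your $\ref{KTc}\Rightarrow\ref{surj-cant-natural}$ step. You claim that the set $E$ of minimal escape prefixes can be enumerated "since $T$ is a $c$-set its complement is again a $c$-set"; this does not follow. Membership in a $c$-tree is a $\Sigma$-condition ($u\in T\iff\ex{w}{u\ast w\in D}$ with $D$ decidable), so $u\notin T$ is a $\Pi$-condition, and $u\in E$ conjoins that $\Pi$-condition with a $\Sigma$-condition at the parent node; neither $E$ nor the escape height $\min\{n:\overline{\alpha}n\notin T\}$ is computable just from the $c$-structure. Your parenthetical correctly flags the obstruction to the naive $\min$, but the assertion that "the $c$-structure makes the escape height a well-defined partial function that is total on $\CS$" is exactly the thing that needs an argument and is not true at face value. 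What does yield a total, locally constant map $\CS\to\NN$ is to route everything through the decidable witness $D$: the blocking witness $n_{0}$ for a given $\alpha$ forces $\overline{\alpha}m\notin D$ for all $m\geqslant n_{0}$, so $\{m:\overline{\alpha}m\in D\}$ is decidable and bounded and one may take its maximum; one then finishes by composing with a surjection from the decidable infinite image onto $\NN$, which is exactly the composition trick the paper uses for its $g(\alpha)=\min\{n:\overline{\alpha}n\notin T\}$.

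Apart from that, your cyclic decomposition $\ref{enu-ss}\Rightarrow\ref{KTc}\Rightarrow\ref{surj-cant-natural}\Rightarrow\ref{surj-cant-baire}\Rightarrow\ref{enu-ss}$ is a legitimate alternative to the paper's three separate biconditionals: it spares you the paper's proof of $\ref{surj-cant-natural}\Rightarrow\ref{KTc}$ (which builds $T$ from a decidable set $\{w_{1},w_{2},\dots\}$ of locking prefixes of $\varphi$ with $\abs{w_{n}}\geqslant n$) at the cost of a new implication $\ref{surj-cant-baire}\Rightarrow\ref{enu-ss}$ absent from the paper. Your diagonalisation sketch for that new direction works: for $n$ larger than the first coordinate of $\Phi(\alpha)$, the locking prefix $w_{n}$ must disagree with $\alpha$ before the relevant continuity modulus $L$, so $F^{\nicefrac{1}{3}}(w_{n}\ast\zero)$ is at distance $\geqslant 3^{-(L+1)}$ from $F^{\nicefrac{1}{3}}(\alpha)$, and the case split from Lemma~\ref{Lem:Fp}.\ref{Lem:Var-Bish} handles points of $[0,1]$ bounded away from the Cantor set. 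The slicing argument for $\ref{surj-cant-natural}\Rightarrow\ref{surj-cant-baire}$ and the reuse of Lemmas~\ref{Lem:seq-to-bar} and~\ref{Lem:bar-to-seq} for $\ref{enu-ss}\iff\ref{KTc}$ match the paper.
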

\begin{proof}
The equivalence $\ref{enu-ss} \iff \ref{KTc}$ follows from Lemmas \ref{Lem:seq-to-bar} and \ref{Lem:bar-to-seq} and the above mentioned fact that a $c$-Kleene tree is just the complement of a $c$-bar and vice-versa.

To see that \ref{KTc} implies \ref{surj-cant-natural} let $T$ be a $c$-Kleene tree. Now consider the function $g:\CS \to \NN$ defined by 
\[g(\alpha) = \min \set{n \in \NN}{\overline{\alpha}n \notin T} \ .  \] 
The fact that $T$ does not admit infinite paths ensures that this is well-defined. It is also easily seen to be point-wise continuous. Unfortunately it might not be a surjection. However, the image $R=g(\CS)$ is decidable and infinite, which means we can find a surjection $h:R \to \NN$. So the composition $\varphi=h \circ g$ is the desired function.

Next, will now show that \ref{surj-cant-natural} implies \ref{KTc}: for assume $\varphi : \CS \to \NN$ is a continuous surjection. Let $\alpha_n = \varphi^{-1}(n) $. Using continuity and (unique) countable choice, we can find  $w_n \in \cS$ such that $\varphi(w_n) = n$. Without loss of generality we may also choose $w_n$ such that $\abs*{w_n} \geqslant n$, so that $\menge{w_{1},w_{2},\dots}$ is a decidable subset of $\cS$. We want to show that 
\[ u \in T \iff \ex{w \in \cS}{uw \in \menge{w_{1},w_{2},\dots}}  \]
is a $c$-Kleene tree. It is clear that it is a $c$-tree by definition, and that it is infinite, since $w_n \in T$ and $\abs*{w_n} \geqslant n$ for all $n \in \NN$. So it remains to show that $T$ does not admit any infinite path. To this end let $\alpha \in \CS$ be arbitrary. Set \[ m = \varphi(\alpha) \ , \] and, by continuity, choose $M$ such that $\overline{\alpha}M = \overline{\beta}M$ implies that $\varphi(\beta) = m$; which implies that for all such $\beta$ and $i>m$ we must have $\overline{\alpha_{i}}M \neq \overline{\beta}M$. Set $k = \max\menge{M, \abs*{w_{1}}, \dots, \abs*{w_{m}}} +1$. Now assume that $\overline{\alpha}k \in T$; that is that there is $w \in \cS$ and $j \in \NN$ such that $\overline{\alpha}k \ast w = w_{j}$. Since $\abs*{\overline{\alpha}k \ast w} \geqslant k $ we must have $j \geqslant k $. But for all $j \geqslant k$ also $j >m,M$ and we have  \[ \overline{w_{j}}M  = \overline{\alpha_{j}}M \neq  \overline{\alpha}M  =  \overline{\overline{\alpha}k \ast w}M \]  and therefore $w_{j} \neq \overline{\alpha}k \ast w $. That means that $\overline{\alpha}k \notin T$.

It is clear that \ref{surj-cant-baire} implies \ref{surj-cant-natural}. Conversely assume $\varphi:\CS \to \NN$ is a continuous surjection. Now consider a bijective pairing function $\pi = (\pi_{1},\pi_{2}):\NN \to \NN^{2}$. Define $\Phi:\CS \to (\NN \to \NN)$ by
\[ \Phi(\alpha)(m) = \varphi(\alpha \circ ( \pi^{-1}(m,\cdot)))  \ . \]
We want to show that $\Phi$ is surjective. So let $\gamma \in \BS$ be arbitrary. Using countable choice and continuity we can find $w_n \in \cS$ such $\varphi(\alpha_n) = \gamma(n)$, where $\alpha_n=w_n \ast \zero$. Now for $\beta\in \CS$ defined by 
\[ \beta(i) = \alpha_{\pi_{1}(i)}(\pi_{2}(i))  \]
we have that $\beta \circ ( \pi^{-1}(m,\cdot))  = \alpha_{m}$ and therefore $\Phi(\beta)(m) = \varphi(\alpha_{m}) = \gamma(m)$. Thus $\Phi(\beta) = \gamma$, which means $\Phi$ is surjective. It is also straightforward to show that it is continuous.
\end{proof}

The last proposition together with Proposition \ref{bij-cant-baire} raises an interesting question. 
\begin{Qu}
 Is the existence of a bijection $\BS \to \CS$ equivalent to \SS or \KT (or neither)?
\end{Qu}
Analysing the proofs of the two propositions mentioned it seems as if \KT is too strong an assumption, whereas \SS is just barely not enough.\footnote{A good candidate might actually be \iSS as in Section \ref{Sec:incss}.}

\begin{Pro} \label{Pro:equivs_of_SS}
The following are equivalent
\begin{enumerate}
\item \label{Equiv:SS1} There exists a Specker sequence in $[0,1]$.
\item \label{Equiv:SS2} There exists a point-wise continuous function $f:[0,1] \to \RR$ that is unbounded.
\item \label{Equiv:SS3} There exists a surjection $f:[0,1] \to \RR$
\item \label{Equiv:SS4} There exists a point-wise continuous function $f:[0,1] \to [0,1]$ that is not uniformly continuous.
\item \label{Equiv:SS5} There exists a point-wise continuous function $f:[0,1] \to \RR^{+}$ with $\inf f =0$.
\end{enumerate}
\end{Pro}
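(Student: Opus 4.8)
The plan is to prove the cycle of implications
\[
\ref{Equiv:SS1} \implies \ref{Equiv:SS5} \implies \ref{Equiv:SS2} \implies \ref{Equiv:SS3} \implies \ref{Equiv:SS4} \implies \ref{Equiv:SS1} \ ,
\]
which is the most economical route through the five statements. The spine of the argument is the translation between Specker sequences and anti-Specker bars already set up in Lemmas \ref{Lem:seq-to-bar} and \ref{Lem:bar-to-seq}, together with the observation that the function-theoretic statements are all ``unboundedness at a point'' phenomena that one can manufacture from, or convert back into, such a bar. Indeed \ref{Equiv:SS5} is essentially the ``$\inf f = 0$'' reformulation and \ref{Equiv:SS2} the ``$\sup f = \infty$'' reformulation of the same object, linked by $f \mapsto 1/f$, so two of the five implications are almost formal.

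First I would do \ref{Equiv:SS1} $\implies$ \ref{Equiv:SS5}. Given a Specker sequence $(x_n)_{n \geqslant 1}$ in $[0,1]$, apply Lemma \ref{Lem:seq-to-bar} to obtain a $c$-bar $B \subset \cS$; since $(x_n)$ is eventually bounded away from the entire set $[0,1]$, part (1) of that lemma tells us $B$ is uniform. But wait — uniformity is the wrong direction here; what I actually want is the \emph{failure} of a fan theorem. So instead I would work directly: using countable choice fix a non-negative sequence $\delta_n$ with $x_n$ bounded away from every point by $\delta_n$, and follow the construction in the proof of Proposition \ref{Pro:equiv_SS} / Lemma \ref{Lem:bar-to-seq} in reverse, building a point-wise continuous $f:[0,1] \to \RR^+$ whose value near a point $x$ is governed by $d(x, \{x_n, x_{n+1}, \dots\})$; the point-wise continuity comes from the fact that, near any fixed $x$, only the tail of the sequence that is bounded away from $x$ contributes, exactly as in the proof of Lemma \ref{Lem:bar-to-seq}(3). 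Then $\inf f = 0$ because the $x_n$ get arbitrarily close to $[0,1]$ as a whole. This is the step where I expect the main technical friction: one must check point-wise continuity without locatedness, and the cleanest way is to quote the machinery already developed (Lemma \ref{Lem:Fp} on $F^{\nicefrac{1}{3}}$ and the estimates in Lemma \ref{Lem:bar-to-seq}) rather than redo the $\varepsilon$-bookkeeping.

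Next, \ref{Equiv:SS5} $\implies$ \ref{Equiv:SS2}: if $f:[0,1]\to\RR^+$ is point-wise continuous with $\inf f = 0$ (after rescaling we may assume $f \leqslant 1$, or simply note $f$ bounded is harmless), then $g = 1/f$ is point-wise continuous and unbounded — this is routine. Then \ref{Equiv:SS2} $\implies$ \ref{Equiv:SS3}: an unbounded point-wise continuous $f:[0,1]\to\RR$ is turned into a \emph{surjection} onto $\RR$ by composing with a homeomorphism $\RR \to (0,\infty)$ (or $\RR\to\RR$) after first noting that, being unbounded and continuous, $f$ hits arbitrarily large values; more carefully, take $h(x) = f(x) + c\sin(f(x))$-type perturbation, or simply observe that post-composing with a surjection $\RR^+ \to \RR$ and pre-observing that the image is unbounded and connected enough — the slick move is: if $f$ is unbounded and point-wise continuous, then $x \mapsto \arctan(f(x))$ has supremum $\pi/2$, and a small modification spreads the image over all of $\RR$; I would give the explicit formula rather than wave hands. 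Then \ref{Equiv:SS3} $\implies$ \ref{Equiv:SS4}: a surjection $f:[0,1]\to\RR$ is certainly unbounded hence not uniformly continuous once we know it is point-wise continuous — but a bare surjection need not be continuous, so here I would instead go \ref{Equiv:SS2} $\implies$ \ref{Equiv:SS4} directly (an unbounded point-wise continuous $f:[0,1]\to\RR$ yields, via $x\mapsto \tfrac12 + \tfrac1\pi\arctan(f(x))$, a point-wise continuous $[0,1]\to[0,1]$ that fails uniform continuity, since a uniformly continuous function on $[0,1]$ with values in $(0,1)$ approaching both... no — better: any point-wise continuous $[0,1]\to\RR$ that is uniformly continuous is bounded, which is precisely Proposition \ref{Pro:more_Equiv_UCT}(1)'s contrapositive content, but that needs \UCT; the honest route is that a \emph{uniformly continuous} map on a complete totally bounded space is bounded, which is constructive, so \ref{Equiv:SS4} follows), and reorganize the cycle so \ref{Equiv:SS3} is reached last or is bridged from \ref{Equiv:SS4} by composing the non-uniformly-continuous $[0,1]\to[0,1]$ with its own oscillation to get onto all of $\RR$.

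Finally, \ref{Equiv:SS4} $\implies$ \ref{Equiv:SS1} (or whichever function statement we land on $\implies$ \ref{Equiv:SS1}): given a point-wise continuous $f:[0,1]\to[0,1]$ that is not uniformly continuous, negate uniform continuity to get $\varepsilon_0 > 0$ and, using countable choice, sequences $(y_n), (z_n)$ with $|y_n - z_n| < 2^{-n}$ but $|f(y_n) - f(z_n)| \geqslant \varepsilon_0$; then run the construction of Lemma \ref{Lem:bar-to-seq}'s underlying idea to produce from the ``bad pairs'' a sequence bounded away from every point of $[0,1]$ — concretely, near any fixed $x$, point-wise continuity of $f$ at $x$ rules out infinitely many bad pairs clustering at $x$, so the sequence of, say, midpoints $(y_n+z_n)/2$ restricted suitably is eventually bounded away from $x$; this is exactly the mechanism in Proposition \ref{Pro:BDN-equivs}'s proof and in Lemma \ref{Lem:bar-to-seq}. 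The main obstacle throughout is the absence of decidability/locatedness, forcing us to route every construction through the already-established Lemmas \ref{Lem:Fp}, \ref{Lem:seq-to-bar}, \ref{Lem:bar-to-seq} and through Ishihara-style care with point-wise continuity; I would therefore write the proof as a short sequence of reductions, citing those lemmas and the parallel arguments in Propositions \ref{Pro:equiv_SS} and \ref{Pro:KT_equiv} rather than reproving the estimates, and flag that all uses of countable choice here are of the ``representation'' kind discussed in the introduction.
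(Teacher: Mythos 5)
The paper's own proof is almost entirely a citation: it cites \cite{hD08b}*{Proposition 4.5.2} for the equivalence of \ref{Equiv:SS1}, \ref{Equiv:SS2}, \ref{Equiv:SS4}, remarks that the argument there for $\ref{Equiv:SS1}\iff\ref{Equiv:SS2}$ extends to cover \ref{Equiv:SS3}, and then supplies only the new biconditional $\ref{Equiv:SS2}\iff\ref{Equiv:SS5}$ via the two-line observation that $g = 1/f$ and $h = 1/\max\menge{\abs*{f},1}$ convert between ``unbounded'' and ``positive with $\inf = 0$''. Your proposed cycle is therefore a genuinely different decomposition, and your treatment of $\ref{Equiv:SS5}\iff\ref{Equiv:SS2}$ does coincide with the one piece the paper actually proves. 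But the steps you then have to supply yourself---the ones the paper outsources---are where the gaps live.

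The serious gap is in $\ref{Equiv:SS4}\implies\ref{Equiv:SS1}$. You write ``negate uniform continuity to get $\varepsilon_0 > 0$ and, using countable choice, sequences $(y_n),(z_n)$ with $|y_n-z_n|<2^{-n}$ but $|f(y_n)-f(z_n)|\geqslant\varepsilon_0$.'' Constructively this extraction is not available: ``not uniformly continuous'' is $\lnot\fa{\varepsilon}{\ex{\delta}{\dots}}$, and passing from $\lnot\forall\exists$ to $\exists\forall\lnot$ (a positive $\varepsilon_0$ together with the witnessing bad pairs) is exactly a double-negation-shift/Markov-style move that \BISH does not license---this is the hardest step in the cited Proposition 4.5.2 and cannot be waved through by just ``negating.'' Once you \emph{have} the bad pairs, the Specker sequence does drop out by the continuity argument you sketch, so the entire burden is in the extraction you skipped. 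A second, smaller gap is in $\ref{Equiv:SS1}\implies\ref{Equiv:SS5}$: you propose $f$ ``whose value near $x$ is governed by $d(x,\menge{x_n,x_{n+1},\dots})$,'' but (a) that distance need not exist constructively---the Specker sequence is not tail-located in general, which is precisely the hypothesis singled out in Lemma~\ref{Lem:seq-to-bar}.(3)---and (b) even if it did, such an $f$ vanishes at every $x_n\in[0,1]$, so it cannot land in $\RR^{+}$. The fix is a bump-sum or the two-dimensional embedding used in Lemma~\ref{Lem:fullyloc}, but as written the construction is not right. Your instinct to worry about item~\ref{Equiv:SS3} is, by contrast, sound: a bare surjection $[0,1]\to\RR$ exists classically while the other four statements fail classically, so the proposition only holds as stated if \ref{Equiv:SS3} is read as a \emph{point-wise continuous} surjection (as the paper itself tacitly assumes two propositions later); you flag this but then only route around it, which leaves $\ref{Equiv:SS3}\implies$ (anything) unestablished.
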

\begin{proof}
The equivalences between \ref{Equiv:SS1}, \ref{Equiv:SS2}, and \ref{Equiv:SS4} are proved in \cite{hD08b}*{Proposition 4.5.2}. It is also clear that the proof between \ref{Equiv:SS1} and \ref{Equiv:SS2} there can be extended to show the equivalence of \ref{Equiv:SS1}, \ref{Equiv:SS3}. Finally, if $f:[0,1] \to \RR^+$ is a function with $\inf f =0$, then $1/f$ is well-defined, point-wise continuous and unbounded. Conversely if $f:[0,1] \to \RR$ is point-wise continuous and unbounded, then $h= 1 / \max\menge{\abs*{f}, 1}$ is also point-wise continuous and such that $h(x)>0$ for all $x \in [0,1]$ but $\inf h = 0$. Thus \ref{Equiv:SS2} $\iff$ \ref{Equiv:SS5}.
\end{proof}

\chapter{Relationships Between the Principles} \label{Ch:relations}
\section{Basic Relations} \label{Sec:WKL_impl_UCT}
It has long been known that \WKL implies \FAND \cite{hI06}. In \cite{jB07b} Berger has shown that it also implies \FANc. This result in turn was again slightly improved upon in \cite{hD12} where we showed that it also implies \UCT. For completeness' sake we will include the proof here.

\begin{Lem}
\LLPO / \WKL implies that the image of a sequentially continuous map $f:\CS \to \RR$  is order located.
\end{Lem}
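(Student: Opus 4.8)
First I would unwind what ``order located'' means for the image $S=f(\CS)$: it is inhabited (e.g.\ $f(0\ast 0\ast\cdots)\in S$), so order-locatedness amounts to showing that for every pair of reals $a<b$ one has
\[ \bigl(\lnot\,\forall\alpha\in\CS\ (f(\alpha)\leqslant a)\bigr)\ \lor\ \bigl(\forall\alpha\in\CS\ (f(\alpha)<b)\bigr)\ . \]
Fix such $a<b$ and interpolate $a<a'<b'<b$. Since $f$ is sequentially continuous on the complete space $\CS$ it is in particular strongly extensional, so for each finite string $v\in\cS$ the real number $f(v\ast 0\ast 0\ast\cdots)$ can, by countable choice, be placed on one side of the gap $(a',b')$; this gives us decidable ``probing data'' to feed into a tree.

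The plan is then a König/fan-style argument. I would build from this data a decidable tree $T\subseteq\cS$ whose members $u$ certify that $f$ is ``large (above $a'$)'' somewhere in the cylinder below $u$, probing at a depth that grows with $|u|$, so that $T$ is closed under restriction. Then I apply \WKL in the form $\WKL\iff\LLPO$ (Proposition \ref{Pro:WKL_equiv_LLPO}) together with the fact, recalled at the start of this section, that \WKL implies \FAND: \emph{either} $T$ is infinite, in which case \WKL yields a path $\alpha$, and choosing along that path a sequence of approximants $\overline{\alpha n}\ast 0\ast 0\ast\cdots$ (or the deeper probe points) which converge to $\alpha$, sequential continuity forces $f(\alpha)\geqslant a'>a$, giving the first disjunct; \emph{or} $T$ admits no path, in which case $\cS\setminus T$ is a decidable bar, hence uniform by \FAND, so $T$ has a uniform height bound $N$, from which I want to read off $\forall\alpha\,(f(\alpha)<b)$, the second disjunct.

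The hard part is exactly that last implication: a bound on the height of $T$ only controls $f$ on cylinders of \emph{bounded} length, whereas $f$ is merely sequentially — not uniformly — continuous, so a priori a point $\alpha$ could escape that control. Making the tree's membership condition probe cylinders at a depth increasing with the level is designed precisely to convert a uniform height bound into a genuine modulus, and getting that bookkeeping right (so that ``$u\notin T$ for all $u$ of length $N$'' really does bound $f$ uniformly) is where the work lies. An alternative route, which I would keep in reserve, is to sidestep this entirely: first upgrade $f$ to a uniformly continuous function (via \LLPO, transporting through the embedding $F^{p}$ of Lemmas \ref{Lem:Fp} and \ref{Lem:extendingfunctions} if one prefers to argue on $[0,1]$), then invoke that a uniformly continuous map $\CS\to\RR$ attains its supremum under \WKL (Proposition \ref{Pro:WKL_Min}), after which the desired disjunction is immediate — comparing $\sup f$ with $a$ and with $b$, in the first case the maximizer witnesses $\lnot\,\forall\alpha\,(f(\alpha)\leqslant a)$, in the second $\sup f<b$ gives $\forall\alpha\,(f(\alpha)<b)$.
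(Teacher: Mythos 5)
Your plan has a genuine gap, and it is the very first fork. You propose to split into ``either $T$ is infinite, in which case \WKL gives a path, or $T$ admits no path, in which case $\cS\setminus T$ is a bar and \FAND applies.'' For a decidable tree, ``$T$ contains a node at level $n$'' is decidable level by level, but ``$T$ is infinite'' versus ``$T$ eventually dies out'' is a $\Pi^0_1$/$\Sigma^0_1$ dichotomy that is not available under \LLPO alone --- deciding it in general is \WLPO, strictly stronger than what we are given. So there is no constructive way to enter one branch or the other, and the subsequent \FAND argument (whose bookkeeping you already flag as the hard part) is moot because you cannot reach it. The paper sidesteps this entirely by constructing the tree $T$ so that it is \emph{provably infinite by design}: as long as no probe $\mu(u)=1$ has been found, $T$ is the full binary tree, and the moment a first witness $v$ with $\mu(v)=1$ appears, $T$ degenerates to the single branch $v\ast\zero$, so $T$ contains a node at every level regardless. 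Then \WKL applies unconditionally, and the disjunction in order-locatedness comes afterwards, from comparing $f(\alpha)$ (for the path $\alpha$ found) against $a$ and $b$ via the approximate splitting of the reals --- not from deciding whether the tree is finite. In the branch $f(\alpha)<b$, one observes that if any $\mu(u)=1$ existed then the path would have been forced onto $v\ast\zero$ with $f(\alpha)\geqslant b$, a contradiction, so $\mu\equiv 0$, and sequential continuity propagates $f(u\ast\zero)\leqslant b$ along $\overline{\alpha}n\ast\zero\to\alpha$ to all of $\CS$.

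Your fallback route is also not available here: you suggest first upgrading $f$ to a uniformly continuous function ``via \LLPO,'' i.e.\ via $\LLPO\implies\UCT$. But this lemma is one of the two ingredients the paper uses to \emph{prove} $\LLPO\implies\UCT$ (Proposition \ref{Pro:LLPO_impl_UCT}), so invoking that implication is circular. Even setting circularity aside, $f$ here is only assumed \emph{sequentially} continuous, not point-wise continuous, and passing from sequential to point-wise continuity on $\CS$ needs something like \wBDN, which is not among the hypotheses. This is precisely why the paper's construction probes only at the ``tame'' points $u\ast\zero$ with a single application of \LLPO per node, and recovers information about arbitrary $\alpha$ only at the very end through sequential continuity along a single convergent sequence.
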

\begin{proof}
Let $f:\CS \to \RR$ be a map and $a<b$ two arbitrary real numbers. Using countable choice and \LLPO\footnote{This application of \LLPO is actually not necessary, since the proof still works with a slightly modified definition of $\mu$ (namely $\mu(1) \implies f(u) > b - (b-a)2^{-\abs{u}} $ ). It is, however, convenient and since \LLPO is used later in the proof, nothing seems to be gained from dropping it here. }  fix $\mu : \cS \to \menge{0,1}$ such that
\begin{align*}
\mu(u) = 0 & \implies f(u \ast \zero ) \leqslant b \ , \\
\mu(u) = 1 & \implies f(u \ast  \zero) \geqslant b \ .
\end{align*}

Let $\preceq$ denote the decidable order on $\cS$ such that
\[ u \preceq v \iff \abs{u} < \abs{v} \lor ( \abs{u}= \abs{v} \land u \leq_{Lex} v ) \ , \] where $\leq_{Lex}$ is the lexicographic order.
For convenience and readability's sake we define decidable predicates $\Lambda$ and $ F$ on $2^{*}$  by 
\begin{align*}
\Lambda(u)  & \iff \fa{v \in 2^{*}}{ \; \left( v \preceq u \implies \mu (v) = 0 \right)} \ , \\
F(u) & \iff \mu(u) =1 \land  \left( \fa{ v \in \cS }{ \; \mu(v) = 1 \implies u \preceq v} \right)  \ .
\end{align*}
In words, if we imagine searching for a finite binary sequence $u$ with $\mu(u) = 1$, then as long as we have not been successful $\Lambda$ holds and $F$ only holds for the first such sequence. 
 Now define a decidable set $T$ by
\[ T = \set{u \in \cS}{\Lambda(u)} \cup \set{u \in \cS}{\ex{v} {(F(v)  \land u = v\ast 0 \ast \dots \ast 0)}} \ . \]
Notice that for every $n \in \NN$ either $\mu(u) = 0$ for all $u \in 2^{*}$ with $\abs{u} \leqslant n$ or there exists $v \in 2^{*}$ such that $\abs{v} \leqslant n$ and $F(v)$ holds.

Therefore, if $u \in T$ either $\Lambda(u)$ holds or there exists $v$ with $F(v)$ such that $u = v * 0 * \dots * 0$. In the first case also $\Lambda(\overline{u} (n)) $ for all $1 \leqslant n \leqslant \abs{u}$, whence $\overline{u} (n) \in T$. In the second case and if $1 \leqslant n < \abs{v}$, then $\overline{v}(n) = \overline{u}(n)$ and therefore $\Lambda(\overline{u}(n))$ holds which in return implies that $\overline{u} n \in T$. Finally in the second case and if $ \abs{v} \leqslant n \leqslant \abs{u}$, then $\overline{u}n = v * 0 * \dots * 0$ which means that $\overline{u}n \in T$.
This shows that $T$ is closed under restriction and hence is a tree. It also contains elements of arbitrary lengths and therefore, by \WKL, this tree admits an infinite path $\alpha \in \CS$. Now either $f(\alpha) > a$ and we are done, or $f(\alpha) < b$. We claim that in this second case $\mu(u) =0$ for all $u \in 2^{*}$. For assuming that there exists $u \in \cS$ with $\mu(u) =1$,  we can find $v \preceq u$ such that $F(v)$ holds, which means that $\alpha = v * 0 * \dots$ by the construction of $T$. But this leads to the contradiction \[ b \leqslant f(v \ast \zero) = f (\alpha) < b \ . \] By sequential continuity, the fact that $f(u \ast \zero) \leqslant b$ for all $u \in 2^{*}$ now implies $f(\alpha) \leqslant b$ for all $\alpha \in \CS$.
\end{proof}

\begin{Lem}
\FANc implies that an order located image of a point-wise continuous map $f:\CS \to \RR$ is bounded.
\end{Lem}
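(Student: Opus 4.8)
The plan is to produce a suitable point‑wise continuous $\NN$‑valued function — more precisely, a $c$‑bar — out of $f$ and then invoke \FANc. First I would reduce to bounding $f$ from above: the lower bound is obtained by running the same argument on $-f$ (whose image is again order located), and a real number is bounded iff it is bounded above and below. Recall that \FANc is equivalent to every point‑wise continuous map $\CS\to\NN$ being bounded, and, in its native form, to every $c$‑bar being uniform; I will use the latter, so the target is a $c$‑bar whose uniform height produces an explicit numerical bound for $f(\CS)$.

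Here is the construction. Using the order‑locatedness of $f(\CS)$ together with countable choice, for every $m\in\NN$ fix $\lambda_m\in\{0,1\}$ with $\lambda_m=0\implies\fa{\alpha\in\CS}{f(\alpha)<m+1}$ and $\lambda_m=1\implies\lnot\fa{\alpha\in\CS}{f(\alpha)\leqslant m}$. Using countable choice again, for every $v\in\cS$ and every $j\leqslant\abs{v}$ decide the (fuzzy) comparison of the real numbers $f(v\ast\zero)$ and $f(\overline{v}j\ast\zero)+1$, recording whether $f(v\ast\zero)<f(\overline{v}j\ast\zero)+1$ or $f(v\ast\zero)>f(\overline{v}j\ast\zero)+\tfrac12$. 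From this data define a decidable set $C'\subseteq\cS$ by letting $v\in C'$ mean, roughly, ``$f(v\ast\zero)<f(\overline{v}j\ast\zero)+1$ for every $j\leqslant\abs{v}$ with $\lambda_j=1$'', and put $B=\set{u\in\cS}{\fa{w\in\cS}{u\ast w\in C'}}$, a $c$‑bar. The point of comparing $f(v\ast\zero)$ against \emph{all} its prefixes is the standard device that circumvents the impossibility of recovering the ``split point'' $\abs{u}$ from $u\ast w$: one of those prefixes is the firing node, so $u\in B$ certifies $f(u\ast\beta)\leqslant f(u\ast\zero)+1$ for every $\beta\in\CS$ by passing to the limit along $\beta$ and invoking point‑wise continuity of $f$; the $\lambda_j$‑clause is what keeps the certification from being blocked by a short initial segment on which $f$ is small. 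One checks $B$ is a bar: given $\alpha\in\CS$, point‑wise continuity gives a height past which $f$ oscillates by less than $1$ on the cylinder of $\alpha$, and the order‑located data rules out a branch along which $f$ ``keeps climbing''; taking a prefix $\overline{\alpha}n$ with $n$ beyond both that height and $f(\alpha)$ makes every $C'$‑condition for extensions of $\overline{\alpha}n$ come out positively.

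Finally, apply \FANc: $B$ is uniform, so there is $M$ with $\overline{\alpha}M\in B$ for all $\alpha\in\CS$. A bounded search through $\lambda_1,\dots,\lambda_M$ now finishes the argument: either some $\lambda_m=0$ with $m\leqslant M$, in which case $f<m+1$ and we are done, or $\lambda_m=1$ for all $m\leqslant M$, in which case the certification attached to $B$ (together with the fuzzy bound $f(\overline{\alpha}M\ast\zero)<M$ built into $C'$) yields $f(\alpha)<M+2$ for every $\alpha$, hence $f(\CS)$ is bounded; once $f(\CS)$ is order located and bounded above it is in particular bounded, which is what was claimed (and, via Proposition \ref{Pro:more_Equiv_UCT}, this is exactly the \UCT‑flavoured conclusion one wants downstream). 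The main obstacle I expect is precisely the last‑but‑one step: arranging the $c$‑bar so that its \emph{uniform height} translates into a \emph{global} bound on $f$ rather than mere control of $f$ at bounded depth — a plain decidable bar cannot do this, because \FANc caps the firing height and $f$ may peak arbitrarily deep near a point, so the ``$\fa{w\in\cS}{\ }$''‑structure of a $c$‑bar must be genuinely exploited, and it is exactly here that order‑locatedness (not just point‑wise continuity) is indispensable.
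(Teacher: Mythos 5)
You take a genuinely different route from the paper's. The paper never constructs a $c$-bar from $f$; instead it invokes the anti-Specker property $\AS{\CS}{1}$, which is one of the equivalents of \FANc recorded in Proposition~\ref{Pro:FANc-equivs}. Using order locatedness one picks, for each $n$, either a point $\alpha_n\in\CS$ with $f(\alpha_n)>n-1$ or the datum $\alpha_n=\omega$ certifying $f\leqslant n$ everywhere; local boundedness of the point-wise continuous $f$ makes $(\alpha_n)_{n\geqslant 1}$ eventually bounded away from every point of $\CS$, so anti-Specker forces $\alpha_n=\omega$ eventually, and the bound on $f$ drops out. No $c$-bar appears, and the obstacle you flag at the end is thereby sidestepped entirely.

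Your direct construction has a genuine gap: the set $B=\set{u\in\cS}{\fa{w\in\cS}{u\ast w\in C'}}$ you define need not be a bar, so \FANc never applies. The trouble is the comparisons at small $j$. Take $f(\alpha)=10\sum_{n\geqslant 1}\alpha(n)2^{-n}$, a uniformly continuous map whose image is dense in $[0,10]$ and therefore order located, so every $\lambda_j=1$. Consider $\alpha=\one$. For any candidate height $n$ and any $w=1^k$ with $n+k$ not too small we have $f(\overline{\alpha}n\ast w\ast\zero)=10\bigl(1-2^{-(n+k)}\bigr)\geqslant 5$, while the clause at the first index $j$ (where $\overline{(\overline{\alpha}n\ast w)}j\ast\zero$ is $\zero$ or a very short all-ones prefix) demands, on pain of the fuzzy comparison recording ``no'', that $f(\overline{\alpha}n\ast w\ast\zero)<f(\overline{\alpha}j\ast\zero)+1$, which numerically fails by a wide margin. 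So the comparison must record ``no'', $\overline{\alpha}n\ast w\notin C'$, and $\overline{\alpha}n\notin B$ for every $n$: $B$ does not bar $\one$. The $\lambda_j$-gate you built in does not repair this, because $\lambda_j$ records whether $f$ \emph{globally} exceeds $j$, which says nothing about whether $f(\overline{v}j\ast\zero)$ happens to be small along the particular prefix $\overline{v}j$; and it is exactly such small prefix-values that block barring. In short, your construction breaks one step upstream of where you anticipated --- at showing $B$ is a bar, before the question of turning uniform height into a global bound can even be posed. Unless you can redesign $C'$ so that low $f$-values along prefixes do not obstruct barring, the cleaner path is the paper's: avoid $c$-bars and use the anti-Specker formulation of \FANc.
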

\begin{proof}
Define a sequence $(\alpha_n)_{n \geqslant 1}$ in $\CS \cup \menge{\omega} $ such that
\begin{align*}
  \alpha_n \in \CS  & \implies f(\alpha_n) > n-1 \ , \\
  \alpha_n = \omega & \implies \fa{ \alpha \in \CS}{ f(\alpha) \leqslant n} \ .
\end{align*}
Since $f$ is point-wise continuous it is locally bounded. Hence $(\alpha_n)_{n \geqslant 1}$ is eventually bounded away from every point in $\CS$. In \cite{hD08b} it is shown that \FANc implies what is know as the anti-Specker property for $\CS$: namely that every sequence in $\CS \cup \menge{\omega} $ that is bounded away from every point in $\CS$ is eventually bounded away from the entire set. So there exists $N$ such that $\alpha_n = \omega$, which means that $f$ is bounded.
\end{proof}

\begin{Pro} \label{Pro:LLPO_impl_UCT}
\LLPO /\WKL implies \UCT \ .\footnote{M.~Hendtlass has given a direct proof of this result (unpublished). There is also a generalisation \cite{mH13}*{Proposition 30}.} 
\end{Pro}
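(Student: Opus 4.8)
The plan is to chain together the two lemmas that were just proved in this section. Since we want to show \LLPO / \WKL implies \UCT, and by Proposition \ref{Pro:more_Equiv_UCT} it suffices to show that every point-wise continuous function $f:\CS \to \RR$ is bounded (using the extension Lemma \ref{Lem:extendingfunctions} to reduce the $[0,1]$-case to the $\CS$-case, or simply citing the second item of that proposition directly). So let $f:\CS \to \RR$ be an arbitrary point-wise continuous function. Point-wise continuity implies sequential continuity, so by the first lemma of this section, \LLPO / \WKL implies that the image $f(\CS)$ is order located. Then, since \WKL is equivalent to \LLPO (Proposition \ref{Pro:WKL_equiv_LLPO}), which implies \FANc via the anti-Specker property — wait, one must be careful here: \LLPO is equivalent to \WKL, but is \WKL known to imply \FANc at this point in the text? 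Yes: the section's opening paragraph states that Berger showed \WKL implies \FANc \cite{jB07b}, so we may invoke it. Hence \FANc holds, and by the second lemma, an order located image of a point-wise continuous map $f:\CS \to \RR$ is bounded. Therefore $f(\CS)$ is bounded.

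First I would state that, by Proposition \ref{Pro:more_Equiv_UCT}, it is enough to prove that every point-wise continuous $f:\CS \to \RR$ is bounded. Then I would observe that point-wise continuity entails sequential continuity, apply the first lemma to conclude $f(\CS)$ is order located (this uses \LLPO / \WKL), and then apply the second lemma — whose hypothesis \FANc is supplied by \WKL via Berger's result cited at the start of the section — to conclude that $f(\CS)$ is bounded. This gives boundedness of every point-wise continuous function on $\CS$, hence \UCT.

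The only real subtlety — and hence the "main obstacle," though it is minor — is making sure the logical dependencies line up: we need \WKL $\implies$ \FANc (from \cite{jB07b}, as quoted), \WKL $\iff$ \LLPO (Proposition \ref{Pro:WKL_equiv_LLPO}) so that the first lemma, stated for \LLPO, applies, and we need the reduction from \UCT to pseudo-compactness of $\CS$ (Proposition \ref{Pro:more_Equiv_UCT}). Everything else is bookkeeping. I would write the proof in roughly three sentences: reduce to boundedness on $\CS$, apply Lemma one for order-locatedness, apply Lemma two for boundedness.

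\begin{proof}
By Proposition \ref{Pro:more_Equiv_UCT} it suffices to show that every point-wise continuous function $f:\CS \to \RR$ is bounded. So let $f:\CS \to \RR$ be point-wise continuous; then $f$ is, in particular, sequentially continuous, so by the first lemma of this section \LLPO / \WKL implies that $f(\CS)$ is order located. Since \WKL implies \FANc \cite{jB07b}, the second lemma now yields that $f(\CS)$ is bounded. Hence every point-wise continuous $f:\CS \to \RR$ is bounded, and therefore \UCT holds.
\end{proof}
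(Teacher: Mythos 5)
Your proof is correct and is essentially the paper's own argument: both reduce \UCT to boundedness of point-wise continuous maps $\CS \to \RR$ (the paper cites \cite{dB07}, you cite Proposition \ref{Pro:more_Equiv_UCT}, which is that result), invoke Berger's $\LLPO \implies \FANc$, and chain the two preceding lemmas on order-locatedness and boundedness.
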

\begin{proof} J. Berger's work in \cite{jB07b} shows that $\LLPO \implies \FANc$. So combining the previous lemmas we get that under the assumption of \LLPO every point-wise continuous map $f:\CS \to \RR$ is bounded. Moreover, this last statement was shown to be an equivalent of \UCT in 
\cite{dB07}.
\end{proof}

This result enables us to replace ``uniformly continuous'' by ``point-wise continuous'' thereby improving the well known characterisation of $\WKL$ \cite{hI90}. 
\begin{Cor}
\WKL is equivalent to the statement that every point-wise continuous map on a compact space attains its minimum.
\end{Cor}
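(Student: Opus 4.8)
The plan is to obtain the Corollary as an essentially immediate consequence of Proposition \ref{Pro:WKL_Min}, Proposition \ref{Pro:Equiv_UCT}, and the implication \WKL $\implies$ \UCT just established as Proposition \ref{Pro:LLPO_impl_UCT}. Only the forward direction carries any real content.

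For the forward direction I would assume \WKL and take an arbitrary point-wise continuous $f : X \to \RR$ on a compact metric space $X$. First I would invoke Proposition \ref{Pro:LLPO_impl_UCT} to get \UCT, and then Proposition \ref{Pro:Equiv_UCT} to conclude that $f$, being point-wise continuous on a compact metric space, is in fact uniformly continuous. With $f$ now known to be uniformly continuous, Proposition \ref{Pro:WKL_Min}.\ref{Equiv:WKL0b}---uniformly continuous real-valued functions on a compact metric space attain their minimum---completes this direction.

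For the converse I would simply specialise: a uniformly continuous $f : [0,1] \to \RR$ is in particular point-wise continuous and $[0,1]$ is compact, so under the hypothesis $f$ attains its minimum; by Proposition \ref{Pro:WKL_Min}.\ref{Equiv:WKL1} this already yields \WKL.

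I expect no genuine obstacle here, since all of the difficulty has been packed into Proposition \ref{Pro:LLPO_impl_UCT} and the results it rests on. The one point to be careful about is to cite the general, compact-metric-space form of the uniform continuity theorem (Proposition \ref{Pro:Equiv_UCT}) rather than just the $[0,1]$ version, so that the conclusion genuinely concerns arbitrary compact spaces and not merely the unit interval.
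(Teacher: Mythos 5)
Your proof is correct and takes essentially the same route the paper intends: upgrade point-wise to uniform continuity via Proposition~\ref{Pro:LLPO_impl_UCT} (through \UCT, using the compact-metric-space form of Proposition~\ref{Pro:Equiv_UCT}), then apply Proposition~\ref{Pro:WKL_Min}, with the converse by specialisation to $[0,1]$. The explicit care taken to invoke the general compact-metric-space version of \UCT rather than the unit-interval form is exactly the right point to flag.
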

More general, this means that we can replace ``uniformly continuous'' by ``point-wise continuous'' in any of the equivalences to \LPO, \WLPO, and \LLPO in Chapter \ref{Ch:OmniPr}.

 It is also clear that \WLPO implies \FANst, so we get the following diagram of implications:
\begin{figure}[h]
  \begin{tikzpicture}[>=stealth,shorten >=1pt, node distance=2cm]
    \node (A)  {\LPO};
    \node[right of=A] (B) {\WLPO};
    \node[right of=B] (C) {\LLPO/\WKL};
    \node[below of=B] (D) {\FANst};
    \node[right of=D] (E) {\UCT};
    \node[right of=E] (F) {\FANc};
    \node[right of=F] (G) {\FAND};
    \draw[double,->] (A) -- (B);
\draw[double,->] (B) -- (C);
\draw[double,->] (B) -- (D);
\draw[double,->] (C) -- (E);
\draw[double,->] (D) -- (E);
\draw[double,->] (E) -- (F);
\draw[double,->] (F) -- (G);
  \end{tikzpicture}
\end{figure}

There is a possibility to improve on these results:
\begin{Qu}
Does \WKL imply \FANst or \FANP?
\end{Qu}

\section{Kripke's Schema and the Principle of Finite Possibility} \label{Sec:KS_and_PFP}
Kripke's Schema, which goes back to Kreisel (see historical notes \cite{Troelstra1988a}*{Ch apter 4, 10.6}) and Myhill \cites{jM68, jM66}, states that 
\begin{principle}[KS]{\KS} \label{PR:KS}
for every  statement $\varphi$ there exists a binary sequence $(a_n)_{n \geqslant 1}$ such that \begin{equation} \label{Eqn:KS} \varphi \iff  \ex{ n \in \NN}{ a_n=1}  \ .\end{equation}
\end{principle}
This amounts to saying that every statement is simply existential. The philosophy behind it is that, if there is a proof for $\varphi $, somebody will eventually find it. Kripke's Schema captures  the idea of Brouwer's creating subject and to formalise the rejection of Markov's principle see Proposition \ref{Pro:WPFP+MP_equiv_LPO}. 
A weakening of \KS which restricts the complexity of the formulas $\varphi$ is the Principle of Finite Possibility (\PFP). It was introduced by Mandelkern in \cite{mM82} (also named the  Principle of Inverse Decision (PID) in \cite{mM88}).  
\begin{principle}[PFP]{\PFP} \label{PR:PFP}
\ For every binary sequence $(a_n)_{n \geqslant 1}$ there exists a binary sequence $(b_n)_{n \geqslant 1}$ such that 
\begin{equation*} 
 \fa{n \in \NN}{a_{n } = 0}  \iff \ex{ n \in \NN}{ b_n = 1} \ . 
 \end{equation*}
\end{principle}
To reject \MP the following weaker version  is in fact sufficient
\begin{principle}[WPFP]{\WPFP} \label{PR:WPFP} For every binary sequence $(a_n)_{n \geqslant 1}$ there exists a binary sequence $(b_n)_{n \geqslant 1}$ such that 
\begin{equation} \label{Eqn:WPFP}
 \fa{n \in \NN}{ a_{n } = 0}  \iff \lnot \fa{n \in \NN}{b_n = 0} \ . 
 \end{equation}
\end{principle}
Trivially 
\[ \KS \implies \PFP \implies \WPFP  \ .\]
\WPFP was also named WPID\index{WPID|see \WPFP} (weak principle of Inverse Decision) by Mandelkern, who, to our knowledge, also first suggested it. The  ``rejection'' of \MP using \WPFP is of course only convincing if one agrees with the latter and disagrees with \LPO.
\begin{Pro} \label{Pro:WPFP+MP_equiv_LPO}
$\WPFP + \MP \iff \LPO$
\end{Pro}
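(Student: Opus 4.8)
The plan is to prove the two implications separately; the forward one is routine and the backward one is the substance.

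\textbf{Forward direction ($\LPO \implies \WPFP + \MP$).} That $\LPO$ implies $\MP$ is immediate: given a binary $(a_n)_{n \geqslant 1}$ with $\lnot \fa{n \in \NN}{a_n = 0}$, the first disjunct of $\LPO$ is refuted, so $\ex{n \in \NN}{a_n = 1}$. For $\WPFP$, given $(a_n)_{n \geqslant 1}$ apply $\LPO$; in the case $\fa{n\in\NN}{a_n=0}$ put $b_n = 1$ for all $n$, and in the case $\ex{n\in\NN}{a_n=1}$ put $b_n = 0$ for all $n$. In either case both sides of the required equivalence $\fa{n\in\NN}{a_n = 0} \iff \lnot\fa{n\in\NN}{b_n=0}$ have the same (decided) truth value.

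\textbf{Backward direction ($\WPFP + \MP \implies \LPO$).} Write $P$ for the statement $\fa{n\in\NN}{a_n=0}$. The idea is that $\WPFP$ turns a $\Pi^0_1$ statement into the negation of a $\Pi^0_1$ statement, and $\MP$ then turns such a negation into a $\Sigma^0_1$ statement; iterating once more produces a $\Sigma^0_1$ form for $\lnot P$ as well, after which an interleaving argument plus one more use of $\MP$ decides $P$. Concretely: apply $\WPFP$ to $(a_n)_{n\geqslant 1}$ to obtain $(b_n)_{n\geqslant 1}$ with $P \iff \lnot\fa{n\in\NN}{b_n=0}$; since $\MP$ gives $\lnot\fa{n\in\NN}{b_n=0} \iff \ex{n\in\NN}{b_n=1}$, we get $P \iff \ex{n\in\NN}{b_n=1}$, and hence (for binary sequences) $\lnot P \iff \fa{n\in\NN}{b_n=0}$. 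Now apply $\WPFP$ to $(b_n)_{n\geqslant 1}$ to obtain $(c_n)_{n\geqslant 1}$ with $\fa{n\in\NN}{b_n=0} \iff \lnot\fa{n\in\NN}{c_n=0}$; again $\MP$ yields $\fa{n\in\NN}{b_n=0} \iff \ex{n\in\NN}{c_n=1}$, so that $\lnot P \iff \ex{n\in\NN}{c_n=1}$ and $P \iff \fa{n\in\NN}{c_n=0}$.

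\textbf{Conclusion via interleaving.} Define the binary sequence $d$ by $d_{2n} = b_n$ and $d_{2n+1} = c_n$. Then $\fa{n\in\NN}{d_n = 0}$ is equivalent to $\fa{n\in\NN}{b_n=0} \land \fa{n\in\NN}{c_n=0}$, i.e.\ to $\lnot P \land P$, which is contradictory; hence $\lnot\fa{n\in\NN}{d_n=0}$, and $\MP$ (applied to $d$) gives $\ex{n\in\NN}{d_n=1}$. Inspecting the parity of a witness $n$: if $n$ is even we obtain a witness for $\ex{n\in\NN}{b_n=1}$, hence $P$, hence $\fa{k\in\NN}{a_k=0}$; if $n$ is odd we obtain a witness for $\ex{n\in\NN}{c_n=1}$, hence $\lnot P$, i.e.\ $\lnot\fa{k\in\NN}{a_k=0}$, and a final application of $\MP$ to $(a_k)_{k\geqslant 1}$ yields $\ex{k\in\NN}{a_k=1}$. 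In all cases $\fa{k\in\NN}{a_k=0} \lor \ex{k\in\NN}{a_k=1}$, which is $\LPO$.

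I expect the main (and only real) obstacle to be recognising that a \emph{single} application of $\WPFP$ together with $\MP$ only yields $\lnot\lnot P \implies P$ and is not enough, and that one must apply $\WPFP$ twice — once to $a$ and once to the resulting $b$ — so that both $P$ and $\lnot P$ acquire $\Sigma^0_1$ representations; the interleaving trick to merge these two existential statements into one sequence on which $\MP$ forces a decision is then the natural final move. All other steps are bookkeeping with the (constructively valid) equivalence $\lnot\ex{n\in\NN}{x_n=1} \iff \fa{n\in\NN}{x_n=0}$ for binary sequences.
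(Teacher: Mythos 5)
Your proof is correct, but it takes a different route from the paper, which derives the statement as an immediate corollary of the stronger equivalence $\WPFP + \MPv \iff \WLPO$ (Proposition \ref{Pro:MPvWPFPequivWLPO}) together with the standard facts $\MP \implies \MPv$ and $\LPO \iff \WLPO + \MP$. Your self-contained argument is valid, but your closing remark — that a single application of $\WPFP$ with $\MP$ only yields $\lnot\lnot P \implies P$ and "is not enough," so that two applications are required — is mistaken. After applying $\WPFP$ once to $a$ to get $b$ with $P \iff \lnot \fa{n \in \NN}{b_n=0}$, and hence $P \iff \ex{n \in \NN}{b_n=1}$ via $\MP$, you already have a $\Sigma^0_1$ form for $\lnot P$ for free from the \emph{original} sequence: $\lnot P \iff \ex{n \in \NN}{a_n=1}$, since one direction is trivial and the converse is precisely $\MP$ applied to $a$. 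So you can interleave $a$ with $b$ directly: the biconditional makes $\fa{n\in\NN}{a_n=0} \land \fa{n\in\NN}{b_n=0}$ contradictory, $\MP$ on the interleaved sequence produces a witness, and inspecting its parity decides $\LPO$ exactly as in your final step. The second application of $\WPFP$, the auxiliary sequence $c$, and the implicit appeal to stability of $\Pi^0_1$ statements are all unnecessary.
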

Note that a proof of this fact has already been sketched in \cite{mM82}*{page 258}. It will, however, also follow from Proposition \ref{Pro:MPvWPFPequivWLPO} below.

The next proposition improves on an example from \cite{mM82}*{Example 3}, in which it is shown that the second part of the following implies \PFP.
\begin{Pro} \label{Pro:KS_equiv_Ncount}
\KS is equivalent to the statement that every inhabited subset of $\NN$ is countable.
\end{Pro}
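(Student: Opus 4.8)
The plan is to prove the two implications directly; both are short, and the only genuine subtlety is a use of countable choice (\ACC) in the forward direction together with the familiar ``comprehension trick'' (as in the proof of Proposition \ref{Pro:LEM_equiv_inhabited}) in the converse.

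\textbf{From \KS to countability.} Let $S \subset \NN$ be inhabited, say $s_0 \in S$. For each $m \in \NN$ the assertion ``$m \in S$'' is a syntactically correct statement, so by \KS there is a binary sequence $(a^{(m)}_n)_{n \geqslant 1}$ with $m \in S \iff \ex{n \in \NN}{a^{(m)}_n = 1}$; by \ACC we may collect these into a single doubly-indexed family $(a^{(m)}_n)_{m,n}$. Fix a bijection $\langle \cdot,\cdot\rangle\colon\NN\times\NN\to\NN$ and define $f\colon\NN\to\NN$ by
\[ f(\langle m,n\rangle) = \begin{cases} m & \text{if } a^{(m)}_n = 1 \ ,\\ s_0 & \text{otherwise} \ ,\end{cases}\]
which is well defined because ``$a^{(m)}_n = 1$'' is decidable. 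If $f(\langle m,n\rangle)=m$ because $a^{(m)}_n=1$, then $m \in S$; and $s_0 \in S$; so the range of $f$ is contained in $S$. Conversely, if $m \in S$, then $\ex{n}{a^{(m)}_n=1}$, and for such an $n$ we have $f(\langle m,n\rangle)=m$. Hence $f$ is a surjection $\NN \to S$, so $S$ is countable.

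\textbf{From countability to \KS.} Let $\varphi$ be an arbitrary syntactically correct statement and consider
\[ S = \menge{0} \cup \set{x \in \NN}{x = 1 \land \varphi} \ . \]
Then $0 \in S$, so $S$ is inhabited, and by hypothesis there is a surjection $f\colon\NN\to S$. Since equality on $\NN$ is decidable, we may define a binary sequence by $a_n = 1$ if $f(n)=1$ and $a_n=0$ otherwise. If $\varphi$ holds, then $1 \in S$, so by surjectivity there is $n$ with $f(n)=1$, that is $a_n=1$. Conversely, suppose $a_n = 1$ for some $n$; then $f(n)=1 \in S$, so $f(n)=0 \lor (f(n)=1 \land \varphi)$, and since $1 \neq 0$ the first disjunct is impossible, leaving $\varphi$. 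Thus $\varphi \iff \ex{n \in \NN}{a_n = 1}$, which is exactly \KS.

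\textbf{Main obstacle.} There is no serious obstacle here; the argument is a direct translation in each direction. The two points worth care are that the forward direction genuinely needs \ACC to turn the ``for each $m$ there exists a sequence'' delivered by \KS into one uniform family, and that in the converse the witnessing set must be phrased so as to be \emph{provably} inhabited (hence the extra element $0$), which is why one cannot simply use $\set{x}{x=0 \land \varphi}$.
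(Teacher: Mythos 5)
Your proof is correct and takes essentially the same approach as the paper: the forward direction applies \KS to each ``$m \in S$'', invokes \ACC to collect the witnessing sequences into a double sequence, and then pairs indices through a bijection $\NN \cong \NN\times\NN$ to build the enumeration, while the converse applies the hypothesis to the set $\menge{0}\cup\set{x}{x=1\land\varphi}$. The only cosmetic difference is in the converse: since $A \subset \menge{0,1}$, the paper observes that the enumeration can without loss of generality be taken to \emph{be} a binary sequence, whereas you extract a binary sequence from an arbitrary surjection by testing $f(n)=1$; both are fine.
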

\begin{proof}
Let $\varphi$ be any syntactically correct closed statement and let 
\[ A = \set{x \in \NN}{x = 0 \lor \left( x=1 \land \varphi \right)}  \ . \]
Then $A$ is inhabited, since $0 \in A$. If $A$ is countable there exists a, without loss of generality binary, sequence $(a_n)_{n \geqslant 1}$ such that $A = \set{a_n}{n \in \NN}$. In particular, Equation \eqref{Eqn:KS} holds.
Conversely, if $A$ is an inhabited subset of $\NN$ and Kripke's schema holds, then for every $m\in \NN$ there exists a binary sequence $(a_{m,n})_{n \geqslant 1}$ such that 
\begin{equation*} 
m \in A  \iff  \ex{ n \in \NN}{ a_{m, n}=1}  \ .
\end{equation*} 
Using countable choice we can write this as a double sequence. Now, using a standard bijection $\pi: \NN \to \NN^{2}$,  it is easy to see that the function
\[ f(n) = \begin{cases} \pi(n)_{1} & \text{if } a_{\pi(n)} =1 \\
a & \text{otherwise, }
\end{cases}
\] where $a$ is an arbitrary fixed element of $A$, enumerates $A$.
\end{proof}
Exercise 1.17 \cite{dBlV06}*{page 21} requires one to fill in the (essentially above proposition) easy proof that the statement that ``every inhabited subset of $\NN$ is countable'' implies \PFP.
It is almost obvious, that $\WPFP + \LLPO  \iff \WLPO$.
Although on first glance \WPFP seems to be the perfect puzzle piece bridging the gap between \LLPO and \WLPO it is somewhat to strong as the following result shows. We remind the reader that \LLPO is strictly stronger than \MPv.
\begin{Pro}
$\WPFP + \MPv \iff \WLPO$.\footnote{This has also been proven in \cite{mM88} and was also pointed out in \cite{mH13}*{Proposition 25}.} \label{Pro:MPvWPFPequivWLPO}
\end{Pro}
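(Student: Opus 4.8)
The plan is to prove the two implications $\WLPO \implies \WPFP + \MPv$ and $\WPFP + \MPv \implies \WLPO$. The forward direction is easy: assuming \WLPO, for any binary sequence $(a_n)_{n \geqslant 1}$ we can decide $\fa{n \in \NN}{a_n = 0} \lor \lnot \fa{n \in \NN}{a_n = 0}$, so set $b_n = 0$ for all $n$ in the first case and $b_1 = 1$ (rest $0$) in the second; this witnesses \WPFP. For \MPv one invokes the already-noted fact (Remark after Proposition \ref{Pro:equivs_of_MPv}) that \LLPO $\implies$ \MPv, together with the standard implication \WLPO $\implies$ \LLPO; alternatively, \MPv follows directly from \WLPO since its equivalent form \ref{MPo:1b}½ of Proposition \ref{Pro:equivs_of_MPv} ($\neg(x=0) \implies x \leqslant 0 \lor 0 \leqslant x$) is weaker than the \WLPO-consequence $x = 0 \lor \neg(x = 0)$.

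For the substantial direction, $\WPFP + \MPv \implies \WLPO$, I would start with an arbitrary binary sequence $(a_n)_{n \geqslant 1}$; the goal is to decide $\fa{n}{a_n = 0}$ versus $\lnot \fa{n}{a_n = 0}$. First apply \WPFP to get $(b_n)_{n \geqslant 1}$ with $\fa{n}{a_n = 0} \iff \lnot\fa{n}{b_n = 0}$. Now the key observation is that $\lnot\big(\fa{n}{a_n=0} \land \fa{n}{b_n=0}\big)$ holds: if both held, then from $\fa{n}{b_n=0}$ we get $\lnot\fa{n}{a_n=0}$ by the \WPFP equivalence, contradicting $\fa{n}{a_n=0}$. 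This is exactly the antecedent of the De Morgan law for $\Pi^0_1$ formulae, which is item \ref{MPo:8} of Proposition \ref{Pro:equivs_of_MPv} and hence equivalent to \MPv (using countable choice to pass between the sequence formulation and reals, as already flagged in that proposition). Applying \MPv in the form \ref{MPo:8} yields $\lnot\fa{n}{a_n=0} \lor \lnot\fa{n}{b_n=0}$. In the first disjunct we are immediately done. In the second disjunct, $\lnot\fa{n}{b_n=0}$ is, by the \WPFP equivalence, precisely $\fa{n}{a_n = 0}$, so again we have decided the relevant disjunction. Hence \WLPO holds.

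The main obstacle, such as it is, is bookkeeping the direction of the \WPFP biconditional correctly and making sure the stability manipulations are intuitionistically valid: one must resist the temptation to "cancel" double negations. The step $\lnot(\fa{n}{a_n=0} \land \fa{n}{b_n=0})$ is derived constructively (an assumption leads to a contradiction), and the final move $\lnot\fa{n}{b_n=0} \implies \fa{n}{a_n=0}$ is just one half of the \WPFP biconditional, so no illegitimate double-negation elimination is used. I would also remark that combining this with the observation $\WPFP + \MP \iff \LPO$ (Proposition \ref{Pro:WPFP+MP_equiv_LPO}) and the decomposition $\MP \iff \MPv + \WMP$ gives a clean picture: \WPFP upgrades \MPv to \WLPO and upgrades \MP to \LPO, consistently with $\LPO \iff \WLPO \land \MP$. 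Finally, I would note explicitly where countable choice enters — only in translating item \ref{MPo:8} of Proposition \ref{Pro:equivs_of_MPv} into its sequence form, matching the parenthetical caveats already present in the statement of that proposition — so that the reader sees this proof is no more choice-hungry than the equivalences it invokes.
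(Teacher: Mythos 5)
Your substantive direction ($\WPFP + \MPv \implies \WLPO$) is correct and follows essentially the same route as the paper: you derive $\lnot\left(\fa{n \in \NN}{a_n=0}\land\fa{n \in \NN}{b_n=0}\right)$ from the \WPFP biconditional, apply \MPv in its De Morgan form for $\Pi^0_1$ statements (item \ref{MPo:8} of Proposition \ref{Pro:equivs_of_MPv}), and translate the resulting disjuncts back through the biconditional—which is precisely what the paper does, with the translation step pushed one move earlier; if anything your presentation lands on the two arms of the \WLPO disjunction more transparently.

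The easy direction, however, has a sign error that breaks it as written. In proving $\WLPO\implies\WPFP$ you set $b_n\equiv 0$ in the case $\fa{n\in\NN}{a_n=0}$, but then the right-hand side $\lnot\fa{n\in\NN}{b_n=0}$ of the \WPFP biconditional is false while the left-hand side is true, so the witness does not witness. The two cases must be swapped: when $\fa{n\in\NN}{a_n=0}$ holds set $b_1=1$ (rest $0$), and when $\lnot\fa{n\in\NN}{a_n=0}$ holds set $b_n\equiv 0$; this is the paper's choice, and the biconditional is then easy to verify in both branches (in the second using that $\fa{n\in\NN}{a_n=0}$ contradicts the case hypothesis). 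You flagged "bookkeeping the direction of the \WPFP biconditional" as the main hazard, and that is exactly where the slip occurred. Separately, your "alternative" derivation of \MPv from \WLPO is unsound: the consequence $x=0\lor\neg(x=0)$ of \WLPO does not yield $\neg(x=0)\implies(x\leqslant 0\lor 0\leqslant x)$, because in the $\neg(x=0)$ branch nothing produces the disjunction $x\leqslant 0\lor 0\leqslant x$—that disjunction for arbitrary $x$ is \LLPO. Your primary route $\WLPO\implies\LLPO\implies\MPv$ is the right one; drop the alternative.
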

\begin{proof} 
Let $(a_n)_{n \geqslant 1}$ be a binary sequence and let $(b_{n})_{n \geqslant 1}$ be as in \ref{Eqn:WPFP}. Then 
\[ \lnot \fa{n \in \NN}{ \left( a_{n} = 0 \land b_{n} = 0  \right)} \ . \] Thus by MP$^{\lor}$ either $a_{n} = 0$ for all $n \in \NN$ or $b_{n} = 0$ for all $n \in \NN$. In the latter case  it is impossible that $a_{n}=0$ for all $n \in \NN$. Thus \WLPO holds.

Conversely, it is clear that \WLPO implies \MPv. To see that it also implies \WPFP let $(a_{n})_{n \geqslant 1}$ be a binary sequence. By \WLPO either $a_{n} =0$ for all $n \in \NN$ or there it is impossible that   $a_{n}=0$ for all $n \in \NN$. In the first case set $(b_{n})_{n \geqslant 1} \equiv 1$, in the second case set $(b_{n})_{n \geqslant 1} \equiv 0$. It is clear that the so defined sequence satisfies Equation \eqref{Eqn:WPFP}.
\end{proof}

\begin{Pro*}[\textbf{\theThm{}$\nicefrac{1}{2}$}] \label{Pro:PFP_and_IIIa_impl_WLPO}
	$\PFP+\IIIa \implies \WLPO$.
\end{Pro*}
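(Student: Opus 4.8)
The plan is to prove $\PFP + \IIIa \implies \WLPO$ by starting from an arbitrary binary sequence $(a_n)_{n\geqslant 1}$ and producing the decision $\fa{n\in\NN}{a_n=0} \lor \lnot\fa{n\in\NN}{a_n=0}$. First I would apply \PFP to $(a_n)_{n\geqslant 1}$ to obtain a binary sequence $(b_n)_{n\geqslant 1}$ with
\[ \fa{n\in\NN}{a_n=0} \iff \ex{n\in\NN}{b_n=1} \ . \]
Then I would apply \PFP again, this time to $(b_n)_{n\geqslant 1}$ itself, to get a further binary sequence $(c_n)_{n\geqslant 1}$ with
\[ \fa{n\in\NN}{b_n=0} \iff \ex{n\in\NN}{c_n=1} \ . \]
The point of this double application is that $\ex{n}{b_n=1}$ is now $\lnot\fa{n}{b_n=0}$, which by the second equivalence is $\lnot\ex{n}{c_n=1}$, i.e.\ equivalent to $\fa{n}{c_n=0}$. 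So writing $\alpha$ for (the increasing modification of) $(b_n)$ and $\beta$ for (the increasing modification of) $(c_n)$, viewed as elements of $\CS$ via ``has a term equal to $1$'', we get that $\alpha\neq 0$ holds iff $\beta = 0$ holds — exactly the hypothesis of \IIIa.

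Next I would invoke \IIIa in the form $(\alpha\neq 0 \iff \beta = 0) \implies (\alpha\neq 0 \lor \alpha = 0)$, which gives us the disjunction $\alpha\neq 0 \lor \alpha=0$, i.e.\ $\lnot\fa{n}{b_n=0} \lor \fa{n}{b_n=0}$. Now I would translate this back through the first \PFP-equivalence. In the case $\fa{n}{b_n=0}$, we have $\lnot\ex{n}{b_n=1}$, and by the first equivalence this is $\lnot\fa{n}{a_n=0}$. In the case $\lnot\fa{n}{b_n=0}$, we have $\lnot\neg\ex{n}{b_n=1}$, and since $\ex{n}{b_n=1}$ implies (by the first equivalence) $\fa{n}{a_n=0}$, intuitionistically $\lnot\neg\ex{n}{b_n=1}$ implies $\lnot\neg\fa{n}{a_n=0}$, which is $\fa{n}{a_n=0}$ by the stability of $\Pi^0_1$ statements (i.e.\ $\lnot\neg\fa{n}{a_n=0} \to \fa{n}{a_n=0}$ is provable intuitionistically). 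So in both cases we have decided $\fa{n}{a_n=0} \lor \lnot\fa{n}{a_n=0}$, which is \WLPO.

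One technical point I would be careful about is whether one needs countable choice: \PFP as stated is a ``for every $\alpha$ there exists $\beta$'' statement, so to turn the sequence $(a_n)$ into the pair and then iterate, I just apply it twice to two concrete sequences — no choice is needed here since each application is a single existential instantiation. The passage between binary sequences and elements of $\CS$ (and the conversion to increasing sequences so that the $\Delta^0_1$ framing $\alpha\neq 0 \iff \beta=0$ matches the statement of \IIIa) is routine and does not require choice either, since ``$\alpha$ has a term equal to $1$'' is preserved under the standard increasing modification. The main obstacle, such as it is, is bookkeeping: making sure the double-negation manipulations in the second case are done correctly and that the $\Pi^0_1$-stability step (which is the only nontrivial intuitionistic inference) is flagged. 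I would expect the whole argument to be quite short once the double application of \PFP is set up correctly.
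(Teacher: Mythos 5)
Your overall structure — realize the hypothesis of \IIIa via \PFP and then read the conclusion back through the \PFP equivalence — is the right one, but there is both an unjustified step and an unnecessary detour.

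The unjustified step is the sentence ``the point of this double application is that $\ex{n}{b_n=1}$ is now $\lnot\fa{n}{b_n=0}$.'' For an \emph{arbitrary} binary sequence this biconditional is exactly Markov's principle (only $\ex{n}{b_n=1}\implies\lnot\fa{n}{b_n=0}$ is intuitionistic), and \MP is not among your hypotheses. It is true for \emph{your} $(b_n)$, but only because you can route through the first \PFP equivalence: $\lnot\fa{n}{b_n=0}\implies\lnot\lnot\ex{n}{b_n=1}\iff\lnot\lnot\fa{n}{a_n=0}\implies\fa{n}{a_n=0}\implies\ex{n}{b_n=1}$, where the third step is the $\Pi^0_1$-stability you invoke later. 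So the claim is provable, but it is neither automatic nor free, and as written it reads as if it were a general fact about binary sequences, which would be wrong.

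The detour is the second application of \PFP. You never needed $(c_n)$: the \emph{first} application already hands you the hypothesis of \IIIa on a plate. Take $\alpha$ to be (the increasing modification of) $(b_n)$ and $\beta$ to be (the increasing modification of) $(a_n)$ itself. Then $\alpha\neq 0$ is $\ex{n}{b_n=1}$, $\beta=0$ is $\fa{n}{a_n=0}$, and \PFP says precisely $\alpha\neq 0\iff\beta=0$. Applying \IIIa gives $\ex{n}{b_n=1}\lor\fa{n}{b_n=0}$; in the first case \PFP yields $\fa{n}{a_n=0}$, and in the second case $\lnot\ex{n}{b_n=1}$, hence $\lnot\fa{n}{a_n=0}$ by the contrapositive of \PFP. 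No stability argument is needed anywhere, and no second sequence is constructed. The paper marks the proof ``straightforward,'' and this single-application route is presumably what is meant; your two-step version is salvageable but makes the argument both longer and, as worded, incorrect at the point where you assert the biconditional without invoking the first \PFP equivalence and $\Pi^0_1$-stability.
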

\begin{proof}
Straightforward.
\end{proof}

The following can be found in \cite{rL12b}.
\begin{Pro}
\KS is equivalent to the following
\begin{enumerate}
\item Every open subspace of a separable metric space is separable.
\item Every open subset of a separable metric space is a countable union of open balls.
\end{enumerate}
\end{Pro}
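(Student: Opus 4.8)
The plan is to prove the cycle $\KS \implies (2) \implies (1) \implies \KS$, where $(1)$ and $(2)$ are the two displayed statements, and to note that $U$ from the last step also yields $(2)\implies\KS$ directly. Throughout I would work with countable choice \ACC, as in the rest of the text; it is needed to collect $\Sigma^0_1$-witnesses into a single binary sequence.

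For $\KS\implies(2)$, let $U$ be open in a separable metric space $X$ with dense sequence $(x_n)_{n\geqslant 1}$, and let $(q_m)_{m\geqslant 1}$ enumerate the positive rationals. The first, purely constructive, step is that $U = \bigcup\set{B(x_n,q_m)}{B(x_n,q_m)\subseteq U}$: given $y\in U$ pick $r>0$ with $B(y,r)\subseteq U$, then $x_n$ with $d(x_n,y)<r/3$, then a rational $q_m$ with $d(x_n,y)<q_m<r/3$, so that $y\in B(x_n,q_m)\subseteq B(y,r)\subseteq U$. Next I would apply \KS (and \ACC) to get, uniformly in $(n,m)$, a binary sequence $(a^{n,m}_k)_k$ with $B(x_n,q_m)\subseteq U \iff \ex{k}{a^{n,m}_k=1}$. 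Then $E=\set{(n,m,k)\in\NN^3}{a^{n,m}_k=1}$ is detachable, hence countable; each triple in $E$ witnesses $B(x_n,q_m)\subseteq U$; and $\bigcup_{(n,m,k)\in E}B(x_n,q_m) = \bigcup\set{B(x_n,q_m)}{B(x_n,q_m)\subseteq U}=U$, exhibiting $U$ as a countable union of open balls. For $(2)\implies(1)$, write $U=\bigcup_k B(c_k,r_k)$; since $d(x_n,c_k)<r_k$ is $\Sigma^0_1$, \ACC produces a detachable $E\subseteq\NN^3$ with $d(x_n,c_k)<r_k \iff \ex{j}{(n,k,j)\in E}$, and $S=\set{x_n}{\ex{k,j}{(n,k,j)\in E}}$ is then a countable dense subset of $U$ (density: for $y\in B(c_k,r_k)\subseteq U$ choose $x_n$ with $d(x_n,y)<\min\menge{\varepsilon,\,r_k-d(y,c_k)}$).

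For $(1)\implies\KS$, given an arbitrary statement $\varphi$ I would work in the separable space $\RR$ and take
\[ U = \set{x\in\RR}{\left(\ex{n\geqslant 1}{\abs*{x-2^{-n}}<4^{-n}}\right)\ \lor\ \left(\varphi\land\abs{x}<1\right)} \ . \]
Openness is verified by case analysis on the witness of $x\in U$: if $x$ lies in some $(2^{-n}-4^{-n},2^{-n}+4^{-n})$, that interval works; if $\varphi$ holds, then $(-1,1)\subseteq U$ is a neighbourhood of $x$. Note that every element of $U$ is positive unless $\varphi$ holds (since $2^{-n}-4^{-n}>0$). Given a countable dense $(d_k)_{k\geqslant 1}\subseteq U$ from $(1)$, the condition $d_k<0$ is $\Sigma^0_1$, so \ACC yields a binary sequence $(a_j)$ with $\ex{j}{a_j=1}\iff\ex{k}{d_k<0}$; and $\ex{k}{d_k<0}\iff\varphi$, because if $\varphi$ then $-\tfrac12\in U$ forces some $d_k<0$ by density, while a negative member of $U$ can only come from the $\varphi$-disjunct. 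Hence $\varphi\iff\ex{j}{a_j=1}$, which is \KS. The same $U$ gives $(2)\implies\KS$ at once: from $U=\bigcup_kB(c_k,r_k)$ one has $\varphi\iff\ex{k}{c_k-r_k<0}$, again $\Sigma^0_1$.

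The routine-looking steps that actually require care are: (a) the openness check for the set $U$ in the last step, which must analyse the \emph{evidence} for membership rather than decide which disjunct holds; and (b) the repeated move of converting a uniformly-$\Sigma^0_1$ family of conditions into a single binary sequence, which is where \ACC is essential and where one must keep "countable" in the Bishop sense of being the image of a detachable subset of $\NN$. I expect (b), together with keeping these choice applications uniform, to be the main technical obstacle, while everything else is bookkeeping with dense sequences and basic balls.
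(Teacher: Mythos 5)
The paper itself gives no proof here---it simply cites \cite{rL12b}---so there is nothing to compare your argument against. Your reconstruction is correct, and the cycle $\KS\implies(2)\implies(1)\implies\KS$ is a sensible decomposition.

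A few small remarks. The openness check for your $U$ is the delicate point and you handle it correctly: one must argue from the BHK witness of $x\in U$ (the specific $n$ in the first disjunct, or the proof of $\varphi$ together with $\abs{x}<1$ in the second), not by deciding which disjunct holds. Likewise the step ``$d_k<0$ and $d_k\in U$ implies $\varphi$'' is legitimate, since $P\lor Q$ together with $\lnot P$ intuitionistically yields $Q$, and the first disjunct forces $d_k>2^{-n}-4^{-n}>0$. It is also worth noting that $U$ is provably inhabited (it contains $\nicefrac{1}{2}$), so the countable dense subset supplied by (1) is inhabited and can be taken to be a genuine sequence $(d_k)_{k\geqslant 1}$.

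The one gap is in the throwaway remark at the end, namely the direct proof of $(2)\implies\KS$ via $\varphi\iff\ex{k}{c_k-r_k<0}$. The implication from right to left silently assumes $r_k>0$; if some $r_k\leqslant 0$ then $B(c_k,r_k)=\emptyset$ and $c_k-r_k<0$ carries no information about $U$. This is harmless if one reads ``open ball'' in the Bishop sense of positive radius, and in any case the main chain $\KS\implies(2)\implies(1)\implies\KS$ does not rely on it; still, to make the side remark airtight you should replace the condition by the manifestly $\Sigma^0_1$ statement $\ex{k}{\ex{q\in\QQ}{q<0\land\abs{q-c_k}<r_k}}$, which is exactly ``some ball contains a negative rational.''

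Finally, you are right to flag the repeated use of \ACC to turn a uniformly-$\Sigma^0_1$ family into a single binary sequence; this matches the paper's general convention of working with \ACC and \ADC, and your note about keeping ``countable'' in the Bishop sense (image of a detachable subset of $\NN$) is exactly the point to be careful about.
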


There are also some  equivalences of \KS involving the countability of subsets of $\NN$ in \cite{pS08}.

Finally, even though it might seem like \PFP and \WPFP are rather ``ad hoc'' principles among many others, the following result shows that it is not the case, and similar looking principles are not similarly interesting.
\begin{bareprinciple}\label{PR:coPFP} \label{PR:coWPFP}
	(\coPFP) \quad  For every binary sequence $(a_{n})_{n \geqslant 1}$ there exists a binary sequence $(b_{n})_{n \geqslant 1}$ such that 
\begin{equation*} 
 \ex{n \in \NN}{a_{n} = 1}  \iff \fa{n \in \NN}{b_{n} = 0} \ . 
 \end{equation*}
\\ 
	(\coWPFP) \quad For every binary sequence $(a_{n})_{n \geqslant 1}$ there exists a binary sequence $(b_{n})_{n \geqslant 1}$ such that 
\begin{equation*} 
 \lnot \fa{n \in \NN}{a_{n} = 0}  \iff \fa{n \in \NN}{ b_{n} = 0} \ . 
 \end{equation*}
\end{bareprinciple}
\begin{Pro} $ $
\begin{enumerate}
  \item $\coPFP \iff \LPO$.
  \item $\coWPFP \iff \WPFP$.
\end{enumerate}
\end{Pro}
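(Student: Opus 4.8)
The plan is to prove the two items separately, exploiting twice the fact that a statement of the form $\fa{n \in \NN}{b_n = 0}$ is \emph{stable}, i.e.\ $\lnot\neg\fa{n \in \NN}{b_n = 0}$ implies $\fa{n \in \NN}{b_n = 0}$ (because each $b_n = 0$ is decidable, hence stable), together with the trivial equivalence of $\lnot\ex{n \in \NN}{a_n = 1}$ with $\fa{n \in \NN}{a_n = 0}$. In both directions of both items the constructions are instance-wise: from a binary sequence $(a_n)_{n \geqslant 1}$ one produces the required $(b_n)_{n \geqslant 1}$ directly.

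For item 2 I would argue as follows. If \WPFP holds and $(a_n)_{n \geqslant 1}$ is given, pick $(b_n)_{n \geqslant 1}$ with $\fa{n \in \NN}{a_n = 0} \iff \lnot\fa{n \in \NN}{b_n = 0}$; negating both sides and applying stability on the right gives $\lnot\fa{n \in \NN}{a_n = 0} \iff \fa{n \in \NN}{b_n = 0}$, which is exactly the conclusion of \coWPFP. Conversely, if \coWPFP holds, choose $(b_n)_{n \geqslant 1}$ with $\lnot\fa{n \in \NN}{a_n = 0} \iff \fa{n \in \NN}{b_n = 0}$; negating both sides and applying stability on the left yields $\fa{n \in \NN}{a_n = 0} \iff \lnot\fa{n \in \NN}{b_n = 0}$, the conclusion of \WPFP. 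So item 2 is just ``negate both sides and use stability of $\Pi^0_1$ statements.''

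For item 1, the direction $\LPO \implies \coPFP$ is immediate: given $(a_n)_{n \geqslant 1}$, use \LPO to decide between $\ex{n \in \NN}{a_n = 1}$ and $\fa{n \in \NN}{a_n = 0}$; in the first case take $(b_n)_{n \geqslant 1}$ constantly $0$, and in the second case (so that $\lnot\ex{n \in \NN}{a_n = 1}$) take $(b_n)_{n \geqslant 1}$ constantly $1$. For the converse I would \emph{not} try to squeeze \LPO out of a single application of \coPFP; instead I would show $\coPFP \implies \WPFP$ and $\coPFP \implies \MP$ and then invoke Proposition \ref{Pro:WPFP+MP_equiv_LPO}. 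That $\coPFP \implies \WPFP$ is again pure negation: from $(b_n)_{n \geqslant 1}$ with $\ex{n \in \NN}{a_n = 1} \iff \fa{n \in \NN}{b_n = 0}$ one gets $\fa{n \in \NN}{a_n = 0} \iff \lnot\fa{n \in \NN}{b_n = 0}$. The one step carrying actual content is $\coPFP \implies \MP$ (in the form ``$\lnot\fa{n \in \NN}{a_n = 0}$ implies $\ex{n \in \NN}{a_n = 1}$''): assuming $\lnot\fa{n \in \NN}{a_n = 0}$, take $(b_n)_{n \geqslant 1}$ with $\ex{n \in \NN}{a_n = 1} \iff \fa{n \in \NN}{b_n = 0}$; if $b_k = 1$ for some $k$ then $\lnot\fa{n \in \NN}{b_n = 0}$, hence $\lnot\ex{n \in \NN}{a_n = 1}$, hence $\fa{n \in \NN}{a_n = 0}$, contradicting the hypothesis; so $b_k = 0$ for every $k$, i.e.\ $\fa{n \in \NN}{b_n = 0}$, whence $\ex{n \in \NN}{a_n = 1}$.

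The only thing to be careful about — the ``main obstacle,'' such as it is — is resisting the temptation to derive \LPO straight from \coPFP by forming a combined sequence out of $a$ and the witness $b$: every such attempt stalls on a concealed appeal to Markov's principle, and the clean route is precisely the factorisation $\coPFP \Rightarrow \WPFP \wedge \MP \Leftrightarrow \LPO$ through the already-established Proposition \ref{Pro:WPFP+MP_equiv_LPO}.
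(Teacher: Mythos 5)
Your proof is correct. Item~2 is exactly the paper's argument (negate both sides, use stability of $\Pi^0_1$ formulas), and so is the easy direction $\LPO\implies\coPFP$ of item~1. For the converse of item~1 you take a genuinely different route: after deriving $\coPFP\implies\MP$ (in essentially the same way the paper does, via contraposition and $\Pi^0_1$-stability), you observe that $\coPFP\implies\WPFP$ by a pure double-negation move and then invoke Proposition~\ref{Pro:WPFP+MP_equiv_LPO}, whereas the paper proceeds directly: with $\MP$ already in hand it forms the interleaved sequence $c_n = \max_{i\leqslant n}\{a_i,b_i\}$, shows $\lnot\fa{n}{c_n=0}$, applies $\MP$ to locate a $1$, and from its position reads off $\LPO$. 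Both are legitimate; yours buys brevity by reusing an already-proved equivalence, while the paper's is self-contained. One small correction to your closing caveat: the paper's combined-sequence argument does \emph{not} ``stall on a concealed appeal to Markov's principle'' — it derives $\MP$ from $\coPFP$ explicitly first and then invokes it openly, so the appeal is neither concealed nor an obstacle. The real point you have identified correctly is just that $\MP$ must be derived before any unbounded-search step, and both proofs respect this.
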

\begin{proof}
\todo{Future check: \coPFP and \coWPFP proof from arithmetic hierarchy paper}
\begin{enumerate}
\item We will first show that \coPFP implies \MP. Assume $(a_n)_{n \geqslant 1}$ is such that $\lnot \fa{n\in \NN}{a_n = 0}$. By \coPFP there exists $(b_n)_{n \geqslant 1}$ such that 
\[ \ex{n \in \NN}{a_n =1 } \iff \fa{n \in \NN}{b_n = 0} \ . \] 
Since 
\begin{align*}
\lnot \fa{n\in \NN}{a_n = 0} & \implies \lnot \lnot \fa{n \in \NN}{b_n = 0} \\ & \iff \fa{n \in \NN}{b_n = 0} \\
& \iff \ex{n \in \NN}{a_n = 1} 
\end{align*}
\MP holds. 

The rest is pretty much the well known argument from recursion theory that shows that  if a set $A \subset \NN$ and its complement $\overline{A}$ are recursively enumerable, then it is decidable.
So let $(a_n)_{n \geqslant 1}$ be arbitrary. By co-\PFP there exists $(b_n)_{n \geqslant 1}$, such that 
\[ \ex{n \in \NN}{a_n =1} \iff \fa{n \in \NN}{b_n = 0} \ . \]
Let $c_n = \max_{i \leqslant n} \menge{a_i,b_i}$. It is easy to see that $\lnot \fa{n \in \NN}{c_n = 0}$. My \MP that means that $\ex{n \in \NN}{c_n = 1}$, which in turn, allows us to decide whether $\ex{n \in \NN}{a_n =1}$ or $\ex{n \in \NN}{b_n =1}$, but in that second case $\fa{n \in \NN}{a_n =0}$. Hence \LPO holds.

Conversely it is straightforward to see that \LPO implies \coPFP.
\item This is simply the case since, if $(a_n)_{n \geqslant 1}$ and $(b_n)_{n \geqslant 1}$ are such that
\[
\fa{n\in \NN}{a_n = 0} \iff  \lnot \fa{n \in \NN}{b_n = 0}
\]
then by negating that relationship we get
\begin{align*}
\lnot \fa{n\in \NN}{a_n = 0} & \iff  \lnot \lnot \fa{n \in \NN}{b_n = 0} \\ & \iff \fa{n \in \NN}{b_n = 0} \ . \qedhere
\end{align*}
\end{enumerate}
\end{proof}

\section{Collapsing the Fan Theorems} \label{Sec:Fan_collapse}

As we have seen in Proposition \ref{Pro:BDN-impl} under the assumption of \BDN we have
$\FANc \implies \FANP$. This result was first proved in \cite{hD08b}. It is still interesting to give a direct proof, in order to understand the subtle difference between \FANc and \FANP. J.\ Berger has given such a proof, however has not published it. The following proof of $\FANc \implies \FANP$ follows basically Berger's argument.
\begin{Pro}
\BDN implies that $\FANc \implies \FANP$.
\end{Pro}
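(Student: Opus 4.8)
The plan is to assume \BDN and \FANc and show that every $\Pi^0_1$-bar is uniform. Fix such a bar $B = \bigcap_{n \geqslant 1} B_n$ with each $B_n$ decidable and closed under extensions; replacing $B_n$ by $\bigcap_{i \leqslant n} B_i$ one may assume the $B_n$ decrease. Each $B_n$ contains $B$, and so is itself a (decidable) bar, hence uniform by \FAND, which follows from \FANc. Put $N_n := \min\set{N \in \NN}{2^N \subseteq B_n}$; this is a genuine natural number (find it by bounded search once uniformity is known), it is non-decreasing in $n$ because the $B_n$ decrease, and hence $A := \set{N_n}{n \in \NN}$ is an inhabited countable subset of $\NN$. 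The whole proof reduces to showing that $A$ is bounded: if $N_n \leqslant M$ for all $n$, then, $B_n$ being closed under extensions, $2^M \subseteq B_n$ for all $n$, whence $2^M \subseteq \bigcap_n B_n = B$ and $B$ is uniform. By \BDN it is therefore enough to prove that $A$ is pseudobounded, and this is the one step that is not bookkeeping.

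For pseudoboundedness, take a sequence $(s_k)_{k \geqslant 1}$ in $A$, write (countable choice) $s_k = N_{j_k}$, and set $S := \set{k \in \NN}{N_{j_k} \geqslant k}$, a decidable subset of $\NN$; the goal is that $S$ is bounded, so that $N_{j_k} < k$ eventually. For $k \in S$ we have $2^{k-1} \not\subseteq B_{j_k}$, so (countable choice again) one may choose $u_k \in 2^{k-1}$ with $u_k \notin B_{j_k}$. Now define a sequence $(y_k)_{k \geqslant 1}$ in the one-point extension $\CS \cup \menge{\omega}$ by $y_k = u_k \ast \zero$ for $k \in S$ and $y_k = \omega$ otherwise. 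The crucial claim, which uses only that $B$ is a bar, is that $(y_k)$ is eventually bounded away from every $\alpha \in \CS$: choosing $m$ with $\overline{\alpha}m \in B \subseteq B_{j_k}$, for any $k \in S$ with $k-1 \geqslant m$ we must have $\overline{u_k}m \neq \overline{\alpha}m$ (else $u_k$ is an extension of $\overline{\alpha}m \in B_{j_k}$, hence in $B_{j_k}$), so $d(y_k, \alpha) \geqslant 2^{-m}$, while $d(y_k, \alpha) = d(\omega, \alpha) \geqslant 1$ when $k \notin S$. By \FANc this sequence satisfies \AS{\CS}{1} (Proposition \ref{Pro:FANc-equivs}), i.e.\ it is eventually bounded away from all of $\CS$; but $y_k \in \CS$ for every $k \in S$, which is absurd for large $k$. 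Hence $S$ is bounded and $A$ is pseudobounded.

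Putting it together: \BDN upgrades pseudoboundedness of $A$ to boundedness, and boundedness of $A$ gives a uniform bound $M$ for the arbitrary $\Pi^0_1$-bar $B$. The hardest part is the middle step, and within it the genuinely clever move --- due to Berger --- is to repackage the failure of pseudoboundedness as a Specker-type sequence $(y_k)$ in a one-point extension and let the anti-Specker content of \FANc annihilate it. Everything else is routine manipulation of bars closed under extensions, one invocation of \FAND to harvest the moduli $N_n$, and two uses of countable choice.
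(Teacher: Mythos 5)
Your proof is correct, and it differs from the paper's in two respects. The paper works with the countable set $A = \set{n}{\ex{u \in 2^n}{u \notin B}}$ of lengths at which $B$ is missed and never passes through \FAND; to show $A$ pseudobounded it selects witnesses $w_n \notin B$ for a given sequence $(a_n)$ in $A$, forms the decidable set $C$ complementary to $\set{w_n}{a_n \geqslant n}$, and applies \FANc directly to the associated $c$-bar $D = \set{u}{\fa{v}{u \ast v \in C}}$. You instead invoke \FAND on each $B_n$ to harvest its minimal uniform bound $N_n$, take $A = \set{N_n}{n \in \NN}$, and then repackage a putative failure of pseudoboundedness as a Specker-type sequence in $\CS \cup \menge{\omega}$, annihilated by the anti-Specker equivalent \AS{\CS}{1} of \FANc (Proposition \ref{Pro:FANc-equivs}). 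Both routes are sound; yours is arguably the more conceptual --- the moduli $N_n$ are the natural quantities to track, and the deduction from boundedness of $A$ to uniformity of $B$ is immediate --- at the cost of the extra layer of \FAND and the detour through the anti-Specker equivalence. Two small points: your second use of countable choice is dispensable, since one may take $u_k$ lexicographically least in the finite decidable set $2^{k-1} \setminus B_{j_k}$; and the paper credits J.~Berger with the $c$-bar version of the repackaging step, which is not quite the move you make, though it is in the same spirit.
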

\begin{proof}
Assume $B$ is a bar and such that there are decidable $B_n$ such that $B = \cap B_n$, and is such that $B$ is closed under extensions.
Hence the set
\[A = \set{n \in \NN}{\ex{u \in \cS}{\abs{u}=n \land u \notin B}} \]
is countable. We want to show that it is also pseudobounded. To this end let $(a_n)_{n \geqslant 1}$ be a sequence in $A$. By definition there exists a sequence $(w_n)_{n \geqslant 1}$ in $\cS$ such that $\abs*{w_n}=a_n$ and $w_n \notin B$. The complement $C$ of 
\[ \set{w_n}{a_n \geqslant n} \] 
is decidable. We claim that 
\[ D = \set{u \in \cS}{\fa{v \in \cS}{u \ast v \in C}} \]
is a bar. But we know that $\overline{C} \subset \overline{B}$, and since $C$ is decidable we can reverse this inclusion to see that $B \subset C$. This immediately tells us that $C$ is a bar that is closed under extensions, which in turn implies that $D$ is a bar. Applying \FANc we get a uniform bound $N$ for $D$. Now there cannot be $i \geqslant N$ such that $a_i \geqslant i$: in that case we would have $w_i$ such that $\abs*{w_i} \notin C$ and $\abs*{w_i} \geqslant N$. But then also $w_i \notin D$, which is a contradiction to $N$ being a uniform bound for $D$. Altogether we get that $A$ is pseudobounded. Now we can use \BDN to conclude that $A$ is bounded, which immediately translates into $B$ being uniform.
\end{proof}
\begin{Rmk}
 Notice that the previous proof relies on the assumption that $B$ is closed under extension and therefore does not work for \FANst.
\end{Rmk}

Since \BDN is a very weak principle already \FANP and \FANc are intuitively ``close''. Since \UCT lies between these two, it is, still intuitively speaking, even closer to \FANc. As a matter of fact the author has spend considerable time during his PhD, trying to prove $\FANc \implies \UCT$ and has to admit to sometimes believe to actually having found such a proof. However, all these proofs turned out to be faulty, and often relied on implicitly using the following harmless looking choice principle (majorised choice).

\begin{principle}[MC]{\MC} \label{PR:MC}
	For every point-wise continuous function $f:\CS \to \RR$ there exists a point-wise continuous function $\tilde{f}:\CS \to \NN$ such that \[ \fa{\alpha \in \CS}{f(\alpha) \leqslant \tilde{f}(\alpha)} \ .\]
\end{principle}
\begin{Pro}
   $\UCT \iff \FANc + \MC$
\end{Pro}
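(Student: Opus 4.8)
The plan is to prove the equivalence $\UCT \iff \FANc + \MC$ by showing two implications, each of which is mostly a matter of combining results already established in the excerpt.

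\textbf{The forward direction $\UCT \implies \FANc + \MC$.} This is the easy part. Since $\UCT$ implies that every point-wise continuous $f:\CS \to \RR$ is uniformly continuous (Proposition~\ref{Pro:more_Equiv_UCT} gives boundedness on $\CS$; combined with Proposition~\ref{Pro:Equiv_UCT} for the general statement), in particular every point-wise continuous $f:\CS \to \NN$ is uniformly continuous, which is exactly item~\ref{Equiv:FANc4} of Proposition~\ref{Pro:FANc-equivs}, hence $\FANc$. For $\MC$: given a point-wise continuous $f:\CS \to \RR$, by $\UCT$ it is uniformly continuous, hence bounded by some $N \in \NN$, so we may simply take $\tilde f$ to be the constant function $N$, which is trivially point-wise continuous and dominates $f$. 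So $\MC$ holds as well.

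\textbf{The converse direction $\FANc + \MC \implies \UCT$.} This is where the real content lies. By Proposition~\ref{Pro:more_Equiv_UCT} it suffices to show that every point-wise continuous $f:\CS \to \RR$ is bounded. So let $f:\CS \to \RR$ be point-wise continuous. Apply $\MC$ to obtain a point-wise continuous $\tilde f:\CS \to \NN$ with $f(\alpha) \leqslant \tilde f(\alpha)$ for all $\alpha \in \CS$. Now $\tilde f$ is a point-wise continuous function $\CS \to \NN$, so by $\FANc$ (item~\ref{Equiv:FANc4} of Proposition~\ref{Pro:FANc-equivs}, or equivalently the boundedness variant of $\POS$ proved just after it) $\tilde f$ is uniformly continuous and therefore bounded, say by $N$. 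But then $f(\alpha) \leqslant \tilde f(\alpha) \leqslant N$ for all $\alpha$, so $f$ is bounded above. Applying the same argument to $-f$ (or to $\lvert f \rvert$, composing with $\MC$ again) gives a bound below, hence $f$ is bounded. By Proposition~\ref{Pro:more_Equiv_UCT}, $\UCT$ follows.

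\textbf{The main obstacle.} The delicate point is precisely the one that caused the faulty proofs mentioned just above the statement: without $\MC$, from a point-wise continuous $f:\CS\to\RR$ one cannot constructively extract a point-wise continuous $\NN$-valued majorant, because at each $\alpha$ one would want to pick $n_\alpha \geqslant f(\alpha)$, but assembling these choices into a \emph{continuous} function requires more than countable choice — it requires the choice to be made ``locally uniformly'', which is exactly what $\MC$ postulates. Once $\MC$ is assumed, the reduction to the $\NN$-valued case (where $\FANc$ bites, via Proposition~\ref{Pro:FANc-equivs}) is immediate, so the only thing to check carefully is that $\tilde f$ really is a bona fide point-wise continuous function $\CS \to \NN$ to which Proposition~\ref{Pro:FANc-equivs}.\ref{Equiv:FANc4} applies, and that handling both the upper and lower bound (the sign issue) is done cleanly, e.g.\ by applying $\MC$ to $\lvert f\rvert$ rather than to $f$ and $-f$ separately.
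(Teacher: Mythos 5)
Your proof is correct and follows the same route as the paper: reduce the converse to showing that every point-wise continuous $f:\CS\to\RR$ is bounded (Proposition~\ref{Pro:more_Equiv_UCT}), use $\MC$ to obtain a point-wise continuous integer-valued majorant, and then invoke $\FANc$ via Proposition~\ref{Pro:FANc-equivs}.\ref{Equiv:FANc4} to bound it. You are in fact a touch more careful than the paper's own one-paragraph proof: since $\MC$ only yields $f(\alpha)\leqslant\tilde f(\alpha)$, applying it to $\lvert f\rvert$ as you suggest is the clean way to obtain the two-sided bound that Proposition~\ref{Pro:more_Equiv_UCT} actually requires.
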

\begin{proof}
One direction is clear. Conversely it suffices to show (Proposition \ref{Pro:more_Equiv_UCT}) that any point-wise continuous $f:\CS \to \RR$ is bounded. So consider such a function $f$. By \MC there exists a point-wise continuous $\tilde{f}:\CS \to \NN$ that bounds $f$. By \FANc this function is uniformly continuous (Proposition \ref{Pro:FANc-equivs}). That means that $\tilde{f}$ is bounded, and therefore so is $f$.
\end{proof}

\begin{Qu}
	Is there a (topological) model in which \MC fails?
\end{Qu}

As already mentioned, Brouwer never distinguished between different version of the fan theorem, since he assumed the principle of continuous choice. Following the notation of \cite{dB87} continuous choice can be split into two parts. \todo{Future work: Sort out mess here. CC1 is KLST, CC2 implies CC1? See 209 of TvD ConI. What is the relationship to \WCN}

\begin{principle}[CC]{\CC} \label{PR:CC} Continuous choice. \begin{description}
\item[CC(1)] \label{PR:CC1} Any function from $\BS \to \NN$ is point-wise continuous.
\item[CC(2)] \label{PR:CC2} If $P \subset \BS \times \NN$, and for each $\alpha \in \BS$ there exists $n \in \NN $ such that $(\alpha,n) \in P$, then there is a function $f : \BS \to \NN$ such that $(\alpha,f(\alpha)) \in P$ for all $\alpha \in \BS$.
\end{description}
\end{principle}

\begin{Pro}
Under the assumption of \CC we have $\FAND \implies \FANf$.
\end{Pro}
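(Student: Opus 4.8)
The plan is to show that continuous choice lets us upgrade an arbitrary bar into a decidable one, and then apply \FAND. First I would take an arbitrary bar $B \subset \cS$; the goal is to find a uniform bound for it. The key point is that \CC(1)—every function $\BS \to \NN$ is point-wise continuous—together with \CC(2) (the choice part) will let me extract, for each $\alpha \in \CS$, a witness $n$ with $\overline{\alpha}n \in B$ in a way that depends continuously on $\alpha$, and moreover lets me view such a function as defined on all of $\BS$ (embedding $\CS$ into $\BS$). Concretely: since $B$ is a bar, for each $\alpha \in \CS$ there exists $n$ with $\overline{\alpha}n \in B$; by \CC(2) (applied to the relation $P = \set{(\alpha,n)}{\overline{\alpha}n \in B}$ on $\BS \times \NN$, after extending $B$ suitably or restricting attention to $\CS$) there is a function $f:\CS \to \NN$ picking such a witness, and by \CC(1) this $f$ is point-wise continuous.

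Next I would convert this point-wise continuous $f:\CS \to \NN$ into a decidable bar. Define $B' = \set{u \in \cS}{\text{$f$ is constant on the cylinder } U_u \text{ with value } \leqslant \abs{u}}$, or more simply $B' = \set{u \in \cS}{\fa{\alpha \in U_u}{\overline{\alpha}\,f(\alpha) = \overline{\alpha}\,f(u\ast\zero) \leqslant \abs{u}}}$—the standard trick (cf. the proof of Lemma \ref{Lem:Fanandunifcont} and Berger's construction in \cite{jB05b}) of using the modulus of a continuous function to manufacture a decidable bar. Since $f$ is point-wise continuous, for every $\alpha$ there is a level $n$ at which $f$ has already ``settled'' on $\overline{\alpha}n$, so $B'$ is a bar; and because membership in $B'$ only requires checking finitely much data (the value of $f$ on $u\ast\zero$ and comparing initial segments), $B'$ is decidable. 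The construction should be arranged so that a uniform bound $N$ for $B'$ forces $f$ to be bounded by some $N'$ on all of $\CS$, hence every $\alpha$ is barred by $B$ at height at most $N'$, i.e. $B$ is uniform.

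So the proof skeleton is: (1) from $B$ a bar, use \CC to get a point-wise continuous witness function $f:\CS \to \NN$; (2) from $f$ build a decidable bar $B'$; (3) apply \FAND to $B'$ to get a uniform bound; (4) translate that back to a uniform bound for $B$. The main obstacle I expect is step (1)—making the application of continuous choice to $\CS$ (rather than $\BS$) legitimate, which requires either embedding $\CS$ into $\BS$ and extending the bar relation, or noting that $\CC(2)$ as stated for $\BS$ still yields what is needed after composing with a retraction $\BS \to \CS$; one must be careful that the resulting function restricted to $\CS$ still satisfies the bar condition and is continuous. The rest (steps 2–4) is the by-now-familiar decidable-bar manufacturing argument and is routine given the lemmas already in the chapter.
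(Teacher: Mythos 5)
Your plan matches the paper's proof: apply \CC(2) to the relation $P = \set{(\alpha,n)}{\overline{\alpha}n \in B}$ to obtain a choice function $f$ with $\overline{\alpha}f(\alpha) \in B$, use \CC(1) to note that $f$ is point-wise continuous, manufacture a decidable bar $B'$ from $f$, apply \FAND, and transfer the bound back to $B$. One small correction to your step (2): the two formulas you wrote for $B'$ each quantify over all $\alpha$ in the cylinder $U_u$, so as written they are not decidable; the paper instead takes the cleaner $B' = \set{u \in \cS}{f(u \ast \zero) \leqslant \abs{u}}$, which is decidable because it only requires evaluating $f$ at a single point, and then shows $B'$ is a bar by picking $M = \max\menge{N, f(\alpha)}$ where $N$ is a modulus of continuity for $f$ at $\alpha$. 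Your worry about $\CS$ versus $\BS$ when invoking \CC is legitimate and the paper glosses over it; composing with a retraction $\BS \to \CS$ and extending $P$ accordingly resolves it.
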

\begin{proof}
Let $B \subseteq \cS$ be an arbitrary bar. Let $P=\set{(\alpha,n)}{\overline{\alpha}n \in B}$ and apply \CC(2). So there is a function $f:\CS \to \NN$ such that $\overline{\alpha}f(\alpha) \in B$ for all $\alpha$. Now define 
\[ B^\prime = \set{u \in \cS}{f(u \ast 0 \dots) \leqslant \abs{u}} \ . \]
$B^\prime$ is decidable. It is also a bar, since for $\alpha \in \CS$ we have, by \CC(1) that $f$ is continuous at $\alpha$; that is there exists $N$ such that $\overline{\alpha}N=\overline{\beta}N$ implies that $f(\alpha)=f(\beta)$. Now let $M=\max \menge{N,f(\alpha)}$. Then
\[ f(\overline{\alpha} M \ast \zero) = f(\alpha) \leqslant M \ . \] So $\overline{\alpha} M \in B^\prime$. 

Applying \FAND we get a uniform bound $K$ for $B^\prime$. It is easy to see that this bound $K$ is also a uniform bound for $B$.
\end{proof}

\begin{Pro}
Under the assumption of \KS we have $\FAND \implies \FANf$.
\end{Pro}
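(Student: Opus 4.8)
The plan is to follow the pattern of the preceding proposition, with Kripke's Schema (together with countable choice) playing the role that continuous choice played there. Given an arbitrary bar $B \subset \cS$, I would first apply \KS to each of the (perfectly legitimate) statements ``$u \in B$'', for $u$ ranging over $\cS$. Since $\cS$ is countable, \ACC then supplies a single decidable set $S \subset \cS \times \NN$ such that
\[ u \in B \iff \ex{n \in \NN}{(u,n) \in S} \qquad \text{for all } u \in \cS \ . \]

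Next I would pass to a bounded, decidable replacement for $B$. Define
\[ B^{\prime} = \set{u \in \cS}{\ex{m \leqslant \abs{u}}{\ex{n \leqslant \abs{u}}{(\overline{u}m, n) \in S}}} \ . \]
The bounded quantifiers make $B^{\prime}$ decidable (this is the same ``$\abs{w} \leqslant n - \abs{u}$''-style trick used repeatedly in this chapter), and they also make $B^{\prime}$ closed under extensions. Moreover $B^{\prime}$ is still a bar: given $\alpha \in \CS$, since $B$ is a bar pick $m$ with $\overline{\alpha}m \in B$, then $n$ with $(\overline{\alpha}m, n) \in S$, and observe that $\overline{\alpha}K \in B^{\prime}$ for $K = \max\menge{m,n}$.

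Finally I would apply \FAND to $B^{\prime}$, obtaining $K \in \NN$ with $\overline{\alpha}K \in B^{\prime}$ for every $\alpha \in \CS$. Unwinding the definition of $B^{\prime}$, for each $\alpha$ there are $m, n \leqslant K$ with $(\overline{\alpha}m, n) \in S$, and hence $\overline{\alpha}m \in B$ with $m \leqslant K$; so $K$ is a uniform bound for $B$ as well, which is exactly \FANf. I do not expect a genuine obstacle here: this argument is actually shorter than the \CC version, and the only point that needs a moment's care is arranging the bounded quantifiers in $B^{\prime}$ so that it is simultaneously decidable, closed under extensions, and still a bar.
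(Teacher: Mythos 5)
Your proof is correct and takes essentially the same route as the paper: use \KS (plus countable choice) to extract a $\Sigma^0_1$ description of $B$, replace $B$ by a decidable bar $B^\prime$ whose uniformity forces that of $B$, and apply \FAND to $B^\prime$. The paper phrases the middle step as ``$B$ is countable'' and then cites Lemma~\ref{countbar} to produce $B^\prime$, whereas you inline that step with an explicit bounded-quantifier construction of $B^\prime$; the two are interchangeable, and your version has the small advantage of also making $B^\prime$ visibly closed under extensions.
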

\begin{proof}
If $B$ is an arbitrary bar, then, using \KS, there exists, for every $u \in \cS$ a sequence $(a^u_n)_{n \geqslant 1}$ such that 
\[ u \in B \iff \ex{n \in \NN}{a^u_n =1} \ .\]
So $B$ is countable (notice that any bar is inhabited, since $\overline{\zero }n$ must be in $B$ for some $n$). By Lemma \ref{countbar} there exists a decidable bar $B^\prime$ that is uniform only if $B$ is. Hence, using \FAND, is enough to give us a uniform bound for $B$.
\end{proof}

\begin{Pro}
Under the assumption of \PFP we have $\FAND \implies \FANst$.
\end{Pro}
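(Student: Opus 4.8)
The plan is to follow the pattern of the preceding proof that $\KS$ collapses $\FAND$ to $\FANf$, but to observe that the weaker principle $\PFP$ only lets us reach \emph{stable} bars — which is exactly what $\FANst$ asks for. The point is that, by the very definition discussed above, a stable bar is a bar $B$ for which there is a decidable $S \subseteq \cS \times \NN$ with $u \in B \iff \fa{n \in \NN}{(u,n) \in S}$; so membership in $B$ is a $\Pi^0_1$ predicate \emph{uniformly} in $u$, and $\PFP$ is precisely the device that converts such a predicate on binary sequences into a $\Sigma^0_1$ one.

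First I would fix such a decidable $S$ and, for each $u \in \cS$, form the decidable binary sequence $(a^u_n)_{n \geqslant 1}$ by $a^u_n = 1 \iff (u,n) \notin S$, so that $\fa{n \in \NN}{a^u_n = 0} \iff u \in B$. Applying $\PFP$ to $(a^u_n)_{n \geqslant 1}$ for each $u$, and collecting the resulting witnesses with countable choice (legitimate since $\cS$ is countable), I obtain a double binary sequence $(b^u_n)_{u \in \cS,\, n \geqslant 1}$ with
\[ u \in B \iff \ex{n \in \NN}{b^u_n = 1} \ . \]

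Next I would note that $B$ is inhabited, since any bar must contain some initial segment $\overline{\zero}m$, and conclude that $B$ is in fact countable: fixing a bijection $\pi = (\pi_1, \pi_2) : \NN \to \cS \times \NN$ and some $b^\ast \in B$, the map sending $k$ to $\pi_1(k)$ when $b^{\pi_1(k)}_{\pi_2(k)} = 1$ and to $b^\ast$ otherwise enumerates $B$. So $B$ is a countable bar, and Lemma \ref{countbar} yields a decidable bar $B'$ which is uniform only if $B$ is. Now $\FAND$ makes $B'$ uniform, hence $B$ is uniform; as $B$ was an arbitrary stable bar, $\FANst$ holds.

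The only thing that really needs care is the first sentence: one must check that ``stable bar'' is exactly the notion for which $\PFP$ is applicable and that closure under extensions is nowhere used — contrast the Remark after the $\FANc \implies \FANP$ argument, which breaks for stable bars precisely because it restructures the bar using closure under extensions. Here that pitfall is sidestepped by routing the argument through Lemma \ref{countbar} instead of through a $\Pi^0_1$-bar reformulation. A secondary, entirely routine point is the appeal to countable choice used to assemble the sequences $(b^u_n)$ into a double sequence.
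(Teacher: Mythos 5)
Your argument is correct and is essentially the paper's own proof, spelled out: the paper simply says that $\PFP$ shows a stable bar is countable and then appeals to Lemma~\ref{countbar} and $\FAND$, exactly as you do (and exactly paralleling the $\KS \Rightarrow (\FAND \Rightarrow \FANf)$ proof it points to). The only extra content in your write-up is the explicit unpacking of the countability step — forming $(a^u_n)$ from the decidable $S$, applying $\PFP$ pointwise with countable choice, and enumerating via a pairing function — and the useful observation that, unlike the $\FANc \Rightarrow \FANP$ argument, nothing here relies on closure under extensions.
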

\begin{proof}
The proof is similar to the one above. Starting with a stable bar $B$ we can use \PFP to conclude that $B$ is countable. Using Lemma \ref{countbar} and \FAND again, we can conclude that $B$ is uniform.
\end{proof}

Summarising all these implications in a diagram we get:

\begin{center}
\begin{tikzpicture}[node distance=2 cm, auto]
  \node (FAND)  {\FAND};
  \node (FANc) [right of=FAND] {\FANc};
  \node (UCT) [right of=FANc] {\UCT};
  \node (FANP) [right of=UCT] {\FANP};
  \node (FANst) [right of=FANP] {\FANst};
  \node (FANfull) [right of=FANst] {\FANf};

\draw[->] (FANc) to node {} (FAND);
\draw[->] (UCT) to node {} (FANc);
\draw[->] (FANP) to node {} (UCT);
\draw[->] (FANst) to node {} (FANP);
\draw[->] (FANfull) to node {} (FANst);

  \draw[->,bend right=35] (FANc) to node {\MC} (UCT);
  \draw[->,bend left=25] (FANc) to node {\BDN} (FANP);
  \draw[->,bend right=25] (FAND) to node {\PFP} (FANst);
  \draw[->,bend left=25] (FAND) to node {\CC/\KS} (FANfull);

\end{tikzpicture}
\end{center}

\section{Other Implications}

\begin{Pro} \label{Pro:WMP+WLPO_equiv_LPO}
\WMP implies that \LPO and \WLPO are equivalent.
\end{Pro}
\begin{proof}
Consider $x \in \RR$, such that $x \geqslant 0$. By \WLPO we know that either $x=0$ or $\neg(x=0)$; or with the notation introduced in Chapter \ref{Ch:MP} whether $x=0$ or $0 \lessdot x$. In the second case we can, using \WLPO again, for any $y \in \RR$ decide whether
\[ z=0  \lor \neg(z=0)  \ , \] 
where $z= \max \menge{x-y, 0}$. In the first case $z \leqslant 0$, which implies that $x \leqslant y$. Since $0 \lessdot x$, also $0 \lessdot y$ (Lemma \ref{Lem:pos-order}). In the second case $0 \lessdot z$, which implies that $y \lessdot x$. Altogether we can decide, for arbitrary $y \in \RR$ whether
\[ 0 \lessdot y \lor y \lessdot x \ . \]
So, using \WMP, we can conclude that $x>0$; and hence \LPO holds.
\end{proof}
\begin{Rmk}
Therefore \WMP also implies that  \nLPO and  \nWLPO are equivalent. Here's an analytical proof for the latter:
 We only need to show that \nLPO implies \nWLPO. So assume \nLPO and $\WLPO$. Hence there exists a function $f:[0,1]$ such that $f(0) = 0$ and $f(x) =1$ for $\lnot (x = 1)$. By \WMP this function is strongly extensional. Using Ishihara's tricks \cite{hD12b} \LPO holds; a contradiction.
\end{Rmk}

A principle that collapses \WLPO and \LLPO is \PFP.
\begin{Pro}
\WPFP implies that \WLPO and \LLPO are equivalent.
\end{Pro}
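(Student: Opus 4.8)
The statement to prove is: \WPFP implies that \WLPO and \LLPO are equivalent. Since \WLPO always implies \LLPO (this is essentially standard: if we can decide $\forall n\, a_n = 0$ versus $\lnot\forall n\, a_n = 0$ for any binary sequence, then given a sequence with at most one $1$ we apply \WLPO to its even-indexed and odd-indexed subsequences and use the fact that at most one of them can contain a $1$), the real content is to show that under \WPFP, \LLPO implies \WLPO.

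The plan is as follows. First I would recall the proof that $\WPFP + \MPv \iff \WLPO$ (Proposition \ref{Pro:MPvWPFPequivWLPO}) and, independently, the Remark right after Proposition \ref{Pro:equivs_of_MPv} that \LLPO implies \MPv. Chaining these two facts gives the result almost immediately: \LLPO implies \MPv, and then $\WPFP$ together with \MPv gives \WLPO via Proposition \ref{Pro:MPvWPFPequivWLPO}. Conversely, \WLPO implies \LLPO as noted above (and indeed \WLPO implies both \MPv and \WPFP as shown in the proof of Proposition \ref{Pro:MPvWPFPequivWLPO}), so the two principles coincide under the standing assumption \WPFP. I would write this as the main line of the proof.

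Alternatively, for a self-contained argument that does not route through \MPv, I would argue directly: let $(a_n)_{n\geqslant 1}$ be an arbitrary binary sequence; we want to decide $\forall n\, a_n = 0$ or $\lnot\forall n\, a_n = 0$. Apply \WPFP to get a binary sequence $(b_n)_{n\geqslant 1}$ with $\forall n\, a_n = 0 \iff \lnot\forall n\, b_n = 0$. Then $\lnot(\forall n\, a_n = 0 \land \forall n\, b_n = 0)$ holds (since the two are contradictory), so we may form the interleaved sequence $c_n$ defined by $c_{2n} = a_n$, $c_{2n+1} = b_n$, replace it by a sequence with at most one $1$ in the usual way (take $c'_n = c_n - \max_{i<n} c_i$), and observe that $c'$ still satisfies $\lnot\forall n\, c'_n = 0$ is not needed — rather we directly apply the De Morgan form. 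Actually the cleanest route here is: since $\lnot(\exists n\, a_n=1 \land \exists n\, b_n = 1)$ — wait, I need to be careful about which disjunction \LLPO delivers — I would instead apply \LLPO in its De Morgan form (Proposition \ref{Pro:LLPO_equiv_deMorg}) to the statement $\lnot(\ex{n}{\alpha(n)=1} \land \ex{n}{\beta(n)=1})$ where $\alpha$ witnesses $\lnot\forall a_n = 0$ being false and $\beta$ similarly; concretely, from $\lnot(\forall n\, a_n=0 \land \forall n\, b_n=0)$ and $\lnot\lnot(\forall n\, a_n=0 \lor \lnot\forall n\, a_n = 0)$ one obtains, after forming an appropriate at-most-one-$1$ sequence, that either $\forall n\, a_n = 0$ or $\forall n\, b_n = 0$; in the latter case $\lnot\lnot\forall n\, b_n = 0$, hence $\forall n\, b_n = 0$, hence $\lnot\forall n\, a_n = 0$. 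Either way \WLPO holds.

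The main obstacle — and it is a minor one — is bookkeeping the conversions between the "binary sequence" formulation of \LLPO (at most one $1$, split into even/odd) and the $\Pi^0_1$/De Morgan formulation, and making sure the at-most-one-$1$ normalisation $c'_n = c_n - \max_{i<n} c_i$ correctly preserves the relevant disjunction. I would present the short proof via \MPv (one line) as the official proof and perhaps remark that a direct argument is also available; there is essentially no deep content here, which matches the surrounding text's tone where these collapse results are stated tersely.
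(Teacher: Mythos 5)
Your main line (chaining $\LLPO \implies \MPv$, noted after Proposition~\ref{Pro:equivs_of_MPv}, with $\WPFP + \MPv \iff \WLPO$ from Proposition~\ref{Pro:MPvWPFPequivWLPO}) is correct and genuinely different from the paper's proof, which gives a self-contained direct argument. The paper starts with a binary sequence $(a_n)$ \emph{with at most one $1$} (a sufficient case for \WLPO), applies \WPFP to get $(b_n)$, which can also be normalised to at most one $1$, observes that both sequences cannot contain a $1$ (because $\ex{m}{a_m=1}$ would force $\lnot\fa{n}{a_n=0}$, hence, by the \WPFP biconditional and $\Pi^0_1$-stability, $\fa{n}{b_n=0}$), applies \LLPO in its De~Morgan form to get $\fa{n}{a_n=0} \lor \fa{n}{b_n=0}$, and converts the second disjunct into $\lnot\fa{n}{a_n=0}$. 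Your route buys brevity and reuses established infrastructure; the paper's buys self-containment and makes visible exactly where the ``at most one $1$'' mechanism enters. Your ``alternative'' direct argument is essentially the paper's proof, but note that your first attempt to invoke the De~Morgan form of \LLPO on $\lnot(\fa{n}{a_n=0} \land \fa{n}{b_n=0})$ is a wrong turn — that statement says at least one sequence has a $1$, whereas what \LLPO needs is $\lnot(\ex{n}{a_n=1} \land \ex{n}{b_n=1})$, i.e.\ \emph{at most} one does; you correctly course-correct to the latter (which holds by the same stability argument as in the paper), but the write-up of that second route should be tidied before it could be used as a proof.
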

\begin{proof}
Let $(a_n)_{n \geqslant 1}$ be a binary sequence with at most one $1$. By \WPFP there exists a binary sequence $(b_n)_{n \geqslant 1}$, such that 
\[ \fa{n \in \NN}{a_n=0} \iff \lnot (\fa{n \in \NN}{b_n =0}) \ . \]
We may also, without loss of generality assume that $(b_n)_{n \geqslant 1}$ contains at most one $1$. It is, furthermore easy to see that it cannot be the case that both sequences contain a $1$. So we can use \LLPO to decide whether $\fa{n \in \NN}{a_n=0}$ or $\fa{n \in \NN}{b_n=0}$. In the second case $\lnot (\fa{n \in \NN}{a_n=0})$ and hence \WLPO holds. 
\end{proof}

\begin{Pro}
\CC(1) implies \BDN.
\end{Pro}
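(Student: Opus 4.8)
The plan is to take an inhabited, countable, pseudobounded set $S = \menge{s_n}{n \in \NN} \subset \NN$ and use $\CC(1)$ — which asserts that every function $\BS \to \NN$ is point-wise continuous — to extract a uniform bound. The idea is that from $S$ one can build a function $f : \BS \to \NN$ (or better, one restricted to strictly increasing sequences, which is essentially $\Ninf$-like but living in $\BS$) whose continuity forces $S$ to be bounded. Concretely, following the same template as the function used in the proof of Proposition \ref{Pro:BDN-equivs} (the equivalence $\BDN \iff \ref{BDNequiv3.1}$), I would first make $(s_n)_{n \geqslant 1}$ non-decreasing without loss of generality, and for $\alpha \in \BS$ set $\widehat{\alpha}(n) = \sum_{i=1}^{n} \alpha(i) + 1$, so that $\widehat{\alpha}$ is strictly increasing and $\widehat{0} = \id$. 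Then define
\[ f(\alpha) = \max \set{n \in \NN}{s_{\widehat{\alpha}(n)} > n} \ , \]
which is well-defined precisely because $S$ is pseudobounded (for any strictly increasing $\beta$, $s_{\beta(n)} \leqslant s_{\beta(n)}$ and pseudoboundedness gives $s_{\beta(n)} < n$ eventually, bounding the set over which the max is taken).

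Next I would apply $\CC(1)$ to conclude that $f$ is point-wise continuous, in particular at $\zero$. So there exist $N, K \in \NN$ such that $\overline{\alpha}N = \overline{\zero}N$ implies $f(\alpha) \leqslant K$; taking $M = \max\menge{N, K}$ we get that $\overline{\alpha}M = 0$ implies $f(\alpha) \leqslant M$. Then, exactly as in the proof of Proposition \ref{Pro:BDN-equivs}, I would show there cannot be $i > M$ with $s_i > M + 1$: if there were, consider $\beta = \underbrace{0 \cdots 0}_{M}\, (i-M)\, 0\, 0 \cdots$, so $\overline{\beta}M = 0$ and $\widehat{\beta}(M+1) = i$, whence $s_{\widehat{\beta}(M+1)} = s_i > M+1$ and therefore $f(\beta) \geqslant M+1 > M$, contradicting the choice of $M$. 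This shows $\max\menge{s_1, \dots, s_M, M+1}$ bounds $S$, so $\BDN$ holds.

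The main thing to check carefully — the only real obstacle — is the legitimacy of the two invocations of $\CC(1)$: strictly, $\CC(1)$ applies to total functions $\BS \to \NN$, and I should verify that $f$ as defined above is genuinely total and well-defined on all of $\BS$ (not just on strictly increasing sequences), which it is, since $\widehat{\alpha}$ is strictly increasing for every $\alpha \in \BS$ and pseudoboundedness of $S$ bounds the relevant set for every such $\widehat{\alpha}$. One should also note that point-wise continuity of $f$ here is being used in its literal sense — the value $f(\alpha)$ is determined by a finite initial segment of $\alpha$ — which is exactly what $\CC(1)$ delivers; no appeal to Ishihara's tricks or strong extensionality is needed here, unlike in the proof of Proposition \ref{Pro:BDN-equivs} where sequential continuity had to be upgraded. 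Everything else is the routine bookkeeping already carried out in that earlier proof, so I would simply refer to it rather than repeat it.
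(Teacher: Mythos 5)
Your proof is correct and is essentially the paper's approach: the paper simply cites Proposition \ref{Pro:BDN-equivs} (whose proof you are reproducing), and you inline the construction of $f(\alpha) = \max\set{n \in \NN}{s_{\widehat{\alpha}(n)} > n}$ from the direction $\ref{BDNequiv3.1} \implies \BDN$ there, applying $\CC(1)$ directly to $f$. Your observation that $\CC(1)$ makes Ishihara's tricks unnecessary (since one gets point-wise continuity of $f$ for free rather than having to first establish sequential continuity) is accurate, but it only streamlines the underlying argument rather than changing the route.
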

\begin{proof}
A simple consequence of Proposition \ref{Pro:BDN-equivs}.
\end{proof}

\section{The Big Picture} \label{Ch:BigPicture}
As a handy overview of the relationship between most of the principles discussed we include the following diagram. Dotted lines indicate contradictions.
\begin{figure}[h!]
\tikzset{external/export next=true}
\begin{tikzpicture}[node distance=2 cm, auto]
  \node (LEM) {\LEM};
  \node (WLEM) [right of=LEM] {\WLEM};
  \node (LPO) [below of=LEM] {\LPO};
  \node (WLPO) [right of=LPO] {\WLPO};
  \node (LLPO) [right of=WLPO] {\LLPO/\WKL};
  \node (MPv) [right of=LLPO] {\MPv};
  \node (IIIa) [right of=MPv] {\IIIa};

  \draw[->] (LEM) to node {} (WLEM);
  \draw[->] (LEM) to node {} (LPO);
  \draw[->] (LPO) to node {} (WLPO);
  \draw[->] (WLEM) to node {} (WLPO);
  \draw[->] (WLPO) to node {} (LLPO);
  \draw[->] (LLPO) to node {} (MPv);
  \draw[->] (MPv) to node {} (IIIa);

  \draw[->,bend left=45] (WLPO) to node {\MP} (LPO);
  \draw[->,bend left=45,shorten >=1.25cm] (MPv) to node {\qquad \qquad \WMP} (LPO);

    \node[below  = 3 cm   of WLPO] (FANst) {\FANst};
    \node[left of=FANst] (FANf) {\FANf};
    \node[right of=FANst] (FANP) {\FANP};
    \node[right of=FANP] (UCT) {\UCT};
    \node[right of=UCT] (FANc) {\FANc};
    \node[right of=FANc] (FAND) {\FAND};
    \node[right of=FAND] (WWKL) {\WWKL};

    \node[below of=FANc] (SS) {\SS};
    \node[below of=FAND] (KT) {\KT};
    \node[below of=WWKL] (SC) {\SinC};

  \draw[->] (SC) to node {} (KT);
  \draw[->] (KT) to node {} (SS);

  \draw[dashed] (WWKL) to node {} (SC);
  \draw[dashed] (FAND) to node {} (KT);
  \draw[dashed] (FANc) to node {} (SS);
  \draw[dashed] (FANf) to node {} (SS);

  \draw[->] (FANf) to node {} (FANst);
  \draw[->] (FANst) to node {} (FANP);
  \draw[->] (FANP) to node {} (UCT);
  \draw[->] (UCT) to node {} (FANc);
  \draw[->] (FANc) to node {} (FAND);
  \draw[->] (FAND) to node {} (WWKL);
  \draw[->,bend left=45] (FANc) to node {\BDN} (FANP);

  \draw[->] (LLPO) to node {} (UCT);
  \draw[->] (WLPO) to node {} (FANst);
\end{tikzpicture}
\end{figure}

\chapter{Separating Principles} \label{Ch:separating}
\section{The Big Three} \label{Sec:varieties}
The easiest and most convenient way to see that principle $A$ does not imply principle $B$, or more general, that theorem $T$ is  \emph{not provable} in \BISH is to show that theorem $T$ is \emph{false} in classical mathematics (\CLASS), Brouwer's intuitionism (\INT), or in Markov's recursive school of mathematics (\RUSS). 
The view that all three of these varieties of mathematics can be seen as models of \BISH has slowly evolved over the years until it was cemented by the publication of \cite{dB87}. Of course, we use the term model somewhat loosely here, since none of \BISH, \CLASS, \INT, or \RUSS are fully formalised systems.\footnote{Is countable choice accepted in \BISH? Is Kripke's Schema part of \INT? Is the continuum hypothesis true in \CLASS?} Nevertheless, experience has shown that any proof given in \BISH can be immediately interpreted and is acceptable by a practitioner of any of the three varieties. We should stress that one should not make the mistake of characterising \BISH as the intersection of these three varieties, since there are statements namely \BDN, \WMP, and possibly others, that are acceptable in \CLASS, \INT, and \RUSS, but for which there is no compelling reason to accept them in \BISH.
The three varieties are all well understood so they immediately show many separations between the principles we have discussed so far. For example \MP does not imply \FAND, since the first one is true in \RUSS, but the second one is false. Many more separations can be read off the diagram below.

\begin{center}
\tikzset{external/export next=true}
\begin{tikzpicture}
  \tikzset{venn circle/.style={draw,circle,minimum width=7cm,opacity=0.4}}
  \node [venn circle] (A) at (0,0) {\RUSS};
  \node [venn circle] (B) at (60:4cm) {\CLASS};
  \node [venn circle] (C) at (0:4cm) {\INT};
  \node[left] at (barycentric cs:A=1/2,B=1/2 ) {\MP, \MPv}; 
  \node[below=0.5cm] at (barycentric cs:A=1/2,C=1/2 ) {$\lnot \LPO$};   
  \node[right] at (barycentric cs:B=1/2,C=1/2 ) {\FAN, \KS};   
  \node[below] at (barycentric cs:A=1/3,B=1/3,C=1/3 ){\BDN, \WMP};
  \node[above=0.5cm] at (barycentric cs:B=1/2){\LPO,\WLPO,\LLPO,\WKL};
  \node[below right = 0.5cm and 0.5cm] at (barycentric cs:C=1/2 ) {\CC};   
  \node[below left = 0.5cm and 0cm] at (barycentric cs:A=1/2 ) {$\neg$\FAN, \KS, \SS, \SinC};   

\end{tikzpicture}  
\end{center}

\section{Topological and Heyting-valued Models} \label{Sec:topmodels}
Topological models are a natural setting to interpret formalised intuitionistic theories. By ``intuitionistic'' we mean theories using intuitionistic logic; it is worth noting though, that topological models also have a distinct intuitionistic flavour a'la Brouwer. For example they all validate \FANf~(Proposition \ref{Pro:TopmodFanf}).
Even though they have a long history starting with several publications around 1970 \cites{dS68,dS70,rG84,mF79}, according to our personal judgement, there is no good introduction available (the subsection in van Dalen's chapter on intuitionistic logic in \cite{goble2001blackwell} is probably the most accessible text). This section will not remedy this situation; its aim is to give a decent overview and starting point for researchers working in constructive reverse mathematics. To poach a famous booktitle the theme for this section is ``Topological models for the working constructivist.''

\emph{Notice that, for this section, we work with classical logic in the meta-theory.}\footnote{This is almost unavoidable for our purposes. For a lot of omniscience principles we have the rule of thumb that if the principle holds in the model it also holds in the meta-theory. If one was only interested in principles from \INT one could potentially develop topological models in \BISH (or \INT). Notice, however, that then one would have to deal with problems such as Lemma \ref{Lem:topmodels_impl} not being available.}

\subsection{Propositional Logic} 
The basic idea of topological models is to use open sets the truth values. As usual, the propositional case is a lot easier and cleaner to deal with than the predicate case.

A topological model for propositional intuitionistic logic consists of a topological space $(T,\tau)$, and a function $\ext{\cdot}$ that maps all propositional symbols of the underlying language to elements of $\tau$ and is such that $\ext{\bot} = \emptyset$. We can then extend this function to one defined on all propositional formulas by setting
\begin{align*}
 \ext{A \land B} &= \ext{A} \cap \ext{B} \ ,\\
 \ext{A \lor B} &= \ext{A} \cup \ext{B} \ , \\
 \ext{A \rightarrow B} &= \Interior{\Complement{\ext{A}} \cup \ext{B}}  \ .\footnote{Note that we use Halmos' notation $\Complement{A}$ to denote the  complement of a set $A$.}
\end{align*}
We assume, as usual, that negation is defined as $\lnot \varphi \equiv \varphi \rightarrow \bot$, so
\[ \ext{\lnot \varphi} = \Interior{\Complement{\ext{\varphi}}} \ .\]
We say that $T$ \define{models} $\varphi$, if $\ext{\varphi} = T$. In this case we use the notation $T \Vdash \varphi$. As common, the interpretation $\ext{\cdot}$ should be clear from the context and is therefore omitted from the notation. 
\begin{Lem} \label{Lem:topmodels_impl}
$T \Vdash \varphi \rightarrow \psi $ if and only if we have $\ext{\varphi} \subset \ext{\psi}$
\end{Lem}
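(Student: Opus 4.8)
The plan is to prove both directions of the equivalence directly from the definition of $\ext{\varphi \to \psi} = \Interior{\Complement{\ext{\varphi}} \cup \ext{\psi}}$ and the fact that $T \Vdash \varphi \to \psi$ means $\ext{\varphi \to \psi} = T$.

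First I would handle the ``only if'' direction. Assume $T \Vdash \varphi \to \psi$, i.e. $\Interior{\Complement{\ext{\varphi}} \cup \ext{\psi}} = T$. Since the interior of a set is contained in the set itself, this forces $\Complement{\ext{\varphi}} \cup \ext{\psi} = T$ as well (it contains its own interior, which is all of $T$). Now take any point $x \in \ext{\varphi}$; then $x \notin \Complement{\ext{\varphi}}$, so $x$ must lie in $\ext{\psi}$. Hence $\ext{\varphi} \subset \ext{\psi}$. This direction is essentially immediate set-theoretic bookkeeping.

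For the ``if'' direction, assume $\ext{\varphi} \subset \ext{\psi}$. Then $\Complement{\ext{\varphi}} \cup \ext{\psi} \supset \Complement{\ext{\varphi}} \cup \ext{\varphi} = T$, so $\Complement{\ext{\varphi}} \cup \ext{\psi} = T$. Taking interiors, $\Interior{\Complement{\ext{\varphi}} \cup \ext{\psi}} = \Interior{T} = T$ since $T$ is open in itself. Therefore $\ext{\varphi \to \psi} = T$, which is exactly $T \Vdash \varphi \to \psi$. I would note in passing that $\ext{\psi} \in \tau$ (it is open, being the interpretation of a formula), but this is not actually needed for the argument — only that $T$ itself is open.

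There is no real obstacle here: the statement is a routine unwinding of definitions, and the only mild subtlety is remembering that $\ext{\cdot}$ always lands in $\tau$ so that e.g. $\Interior{\ext{\varphi}} = \ext{\varphi}$, though even that is not invoked. I would keep the write-up to a few lines.

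\begin{proof}
Write $U = \ext{\varphi}$ and $V = \ext{\psi}$, so that $\ext{\varphi \to \psi} = \Interior{\Complement{U} \cup V}$, and recall that $T \Vdash \varphi \to \psi$ means $\Interior{\Complement{U} \cup V} = T$.

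Suppose first that $T \Vdash \varphi \to \psi$. Since $\Interior{\Complement{U} \cup V} \subset \Complement{U} \cup V$, we get $\Complement{U} \cup V = T$. If $x \in U$, then $x \notin \Complement{U}$, so $x \in V$. Hence $\ext{\varphi} = U \subset V = \ext{\psi}$.

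Conversely, suppose $\ext{\varphi} = U \subset V = \ext{\psi}$. Then
\[ \Complement{U} \cup V \supset \Complement{U} \cup U = T \ , \]
so $\Complement{U} \cup V = T$. As $T$ is open in the space $(T,\tau)$, we have $\Interior{\Complement{U} \cup V} = \Interior{T} = T$, that is, $\ext{\varphi \to \psi} = T$. Thus $T \Vdash \varphi \to \psi$.
\end{proof}
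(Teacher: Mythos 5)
Your proof is correct and is exactly the routine unwinding of definitions that the paper has in mind (the paper's own proof just reads ``Straightforward'' plus a remark). One thing worth flagging: your step $\Complement{U} \cup U = T$ in the ``if'' direction is precisely the application of \LEM in the meta-theory that the paper's remark calls out — it is not constructively valid — so if you want to match the paper you should name it as such.
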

\begin{proof}
Straightforward. Notice, however, that we need \LEM to prove that $\ext{\varphi} \subset \ext{\psi}$ implies $T \Vdash \varphi \rightarrow \psi $.
\end{proof}

One can easily show that we have soundness, that is if $\varphi$ is a propositional formula that is derivable in intuitionistic logic from a set of propositional formulae $\Gamma$, i.e.\ $\Gamma \vdash_i \varphi$, then $T \Vdash \varphi$ for any topological space $T$ that models every formula in $\Gamma$. Therefore, we can use topological models to show the unprovability of many statements.

\begin{Pro}
\LEM and \WLEM are not derivable in intuitionistic logic.
\end{Pro}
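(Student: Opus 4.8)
The plan is to exhibit, for each of the two principles, a single topological space together with an interpretation of one propositional symbol that refutes the relevant instance of the principle; soundness (already established) then guarantees the principle is not derivable in intuitionistic logic. First I would fix the real line $\RR$ with its usual topology as the ambient space $T$, though any space with a sufficiently ``non-clopen'' open set will do.

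For \WLEM: take a single propositional symbol $A$ and set $\ext{A} = (-\infty,0)$. Then $\ext{\lnot A} = \Interior{\Complement{(-\infty,0)}} = \Interior{[0,\infty)} = (0,\infty)$, and $\ext{\lnot\neg A} = \Interior{\Complement{(0,\infty)}} = \Interior{(-\infty,0]} = (-\infty,0)$. Hence $\ext{\lnot A \lor \lnot\neg A} = (0,\infty) \cup (-\infty,0) = \RR \setminus \menge{0} \neq \RR$, so $\RR \not\Vdash \lnot A \lor \lnot\neg A$. Since \WLEM would require this for \emph{every} formula, in particular for $A$, and since \WLEM is derivable from no hypotheses in intuitionistic logic were it derivable at all, soundness gives a contradiction; thus \WLEM is not derivable in intuitionistic logic. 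The same space also handles \LEM, since $\ext{A \lor \lnot A} = (-\infty,0) \cup (0,\infty) = \RR \setminus \menge{0} \neq \RR$, so $\RR \not\Vdash A \lor \lnot A$; alternatively one notes that \LEM implies \WLEM, so the failure of \WLEM already suffices. I would present the \WLEM computation in full since it subsumes \LEM, and then remark that the \LEM case is immediate (either directly from the same interpretation, or because \LEM $\implies$ \WLEM).

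The key steps, in order: (1) recall the soundness statement just proved, so that ``$T \Vdash \gamma$ whenever $\Gamma \vdash_i \gamma$ and $T$ models $\Gamma$'' — here $\Gamma = \emptyset$; (2) produce the space $\RR$ and the interpretation $\ext{A} = (-\infty,0)$; (3) compute $\ext{\lnot A}$, $\ext{\lnot\neg A}$, and hence $\ext{\lnot A \lor \lnot\neg A}$, observing the answer is $\RR \setminus \menge{0}$; (4) conclude $\RR \not\Vdash \lnot A \lor \lnot\neg A$, hence $\not\vdash_i \lnot A \lor \lnot\neg A$, so \WLEM is not intuitionistically derivable; (5) observe $\RR \not\Vdash A \lor \lnot A$ likewise (or invoke \LEM $\implies$ \WLEM) to finish \LEM. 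There is essentially no obstacle here — the only thing to be careful about is the interior operation in the clause for implication/negation, namely that $\Interior{[0,\infty)} = (0,\infty)$ rather than $[0,\infty)$, which is exactly the point that makes the disjunction fail at the boundary point $0$. This boundary phenomenon is the entire content of the argument, so I would make sure the reader sees precisely where the missing point $\menge{0}$ comes from.
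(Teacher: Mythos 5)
Your proposal is correct and matches the paper's own proof almost exactly: both use $\RR$ with the interpretation $\ext{P}=(-\infty,0)$, compute $\ext{\lnot P}=(0,\infty)$ and $\ext{\lnot\neg P}=(-\infty,0)$, and observe that the disjunction misses the point $0$. The only cosmetic difference is that the paper lets the \WLEM computation stand for both principles implicitly, whereas you spell out the \LEM case; the substance is identical.
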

\begin{proof}
Let $T=\RR$ with the usual topology. Furthermore let $\ext{\cdot}$ be such that $\ext{P}=(-\infty,0)$. Then $\ext{\lnot P}= (0,\infty)$ and therefore $\ext{\lnot \neg P} = (-\infty,0)$. However $\RR \neq \ext{\lnot P \lor \lnot \neg P} = (-\infty,0) \cup (0,\infty)$. 
\end{proof}

This space $T$ is actually not the simplest model showing that \WLEM is not derivable in intuitionistic logic. Such a model must contain at least three points: in fact, if a space only contains two points there are only three topologies: the trivial one, the discrete one, and the Sierpinski space $\Sigma=\left(\menge{0,1}, \menge{\emptyset, \menge{1}, \menge{0,1}}  \right)$. The trivial and the discrete one satisfy \LEM, the Sierpinski one validates \WLEM but not \LEM. Hence $\Sigma$ shows that \WLEM does not imply \LEM.

The simplest topology not validating \WLEM is $ T_2 = (  \menge{1,2,3} , \menge{  \emptyset, \menge{1}, \menge{2},  \menge{1,2},\menge{1,2,3} }  )$. Then, if $P$ is a propositional symbol such that $\ext{P} = \menge{1}$ we get that \[ \ext{\lnot P} = \Interior{\Complement{\menge{1}}} = \menge{2} \] and similarly  \[\ext{\lnot \neg P} = \Interior{\Complement{\menge{2}}} = \menge{1} \ . \]  Thus \[ T_2 \nVdash \lnot P \lor \lnot \neg P \ . \]

In this sense we can talk about $\RR$ or $\Sigma$ as a topological model being a \emph{counterexample} to \LEM. However, we have to be careful that there are two ways a statement can fail to hold. We will say that $T$ is a \define{weak counterexample} for $\varphi$ if $T \nVdash \varphi$. It is a \define{strong counterexample} if $T \Vdash \lnot \varphi$. Since we can prove $\vdash_i \neg(\varphi \land \lnot \varphi)$ we will never be able to find a strong counterexample to \LEM or \WLEM. Nevertheless for weaker principles such as \LPO we can, as we will see, find topological models that are strong counterexamples.

\subsection{Predicate Logic}
To extend the topological interpretation to predicate logic we also need some universe $\mathcal{U}$ to interpret constants and variables. A predicate $\ext{P(x_1, \dots, x_k)}$ should be mapped to a function $\mathcal{U}^k \to \tau$.
It is convenient for $d \in \mathcal{U}$ and a formula $\varphi(x)$ to simply write $\ext{\varphi(d)}$ for the truth value of $\ext{\varphi(x)}$ evaluated at $d$.

Since we want to interpret equality, we also assume that $\ext{x=y}$ maps to a function $\mathcal{U}^2 \to \tau$, such that 
\[ \ext{a=b} = \ext{b=a} \]
and
\[ \ext{a=b} \cap \ext{b=c} \subset \ext{a=c} \ . \]
Now all predicate symbols $P(x_1, \dots, x_k)$ should be mapped to a function $\ext{P(x_1, \dots, x_k)}: \mathcal{U}^k \to \tau$ such that equality is respected; that is
\[ \ext{x=y} \cap \ext{P(x)} = \ext{x=y}\cap \ext{P(y)}  \ .\]

\begin{Rmk}
We can extend every predicate evaluation that is defined on $\mathcal{V} \subset \mathcal{U}$ and that respects the equality.
\end{Rmk}
\begin{proof}
Assume that $k \in \NN$ is fixed. Now extend $\ext{\varphi(\vec{x})} : \mathcal{V}^k \to \tau$  to a function $\mathcal{U}^k \to \tau$ by setting
\[\ext{\varphi(\vec{b})} = \bigcup_{\vec{a} \in \mathcal{V}^k} \left( \ext{\vec{a} = \vec{b}} \cap \ext{\varphi(\vec{a})} \right)  \ .\]
It is routine to check that this definition is in fact an extension and that it respects equality.
\end{proof}

Finally we can give the interpretation of the quantifiers:
\begin{align*}
\ext{ \ex{x}{\varphi(x)}} &= \bigcup_{d \in \mathcal{U}} \ext{\varphi(d)} \ , \\
\ext{ \fa{x}{\varphi(x)}} &= \Interior{\bigcap_{d \in \mathcal{U}} \ext{\varphi(d)}} \ .
\end{align*}
All of this naturally extends to first order predicate logic with types. 
\emph{Since it will not affect the following, we will not distinguish between having a logic with different types or having some sort of set theory.}

Again one can easily show soundness, by induction over deductions.
Notice that in general we do not have an existence property, that is we do not have that if $T \Vdash \ex{x}{\varphi(x)}$, then there exists $d \in \mathcal{U}$ such that $T \Vdash \varphi(d)$.

\subsection{Topological Models of Arithmetic and of Analysis}

Naturally, we want to consider models that validate, at least, Heyting arithmetic. To be precise, we assume that our language contains a constant $0$ and function symbols $s, +, \cdot$ and that a model validates the axioms
\begin{multicols}{2}
\begin{enumerate}[label=H\arabic*]
\item $\fa{x}{\lnot (x = s(0))}$
\item $\fa{x,y}{s(x)=s(y) \rightarrow x=y}$
\item $\fa{x}{x+0 = x}$
\item $\fa{x,y}{x+s(y) = s(x+y)}$
\item $\fa{x}{x \cdot 0 = 0}$
\item $\fa{x,y}{x \cdot s(y) = x \cdot y + y}$
\item $ \left(\varphi(0) \land \fa{x}{\varphi(x) \rightarrow \varphi(x+1) } \right) \rightarrow \fa{x}{\varphi(x)}$.
\end{enumerate}
\end{multicols}
Here the last axiom is, as usual, actually an axiom schema for all statements $\varphi$. And, of course, all variables are of a fixed natural number type.

We have a natural embedding of the external natural numbers into the language of the model, via \[ n \mapsto s^n(0) \ . \] To distinguish the external and the internal natural numbers we will denote the embedded ones by $\breve{n}$. Internally there might be more natural numbers than externally: Given any set $S$ equipped with the discrete topology, it is easy to check that the functions $S \to \NN$ with the obvious interpretation form a model of Heyting arithmetic. So, for example, in the case that $S = \menge{0,1}$ the identity function  is a natural number with name, say $c$, which is  not of the form $s^n(0)$. That is there is no $n$ such that $S \Vdash \breve{n} = c$. 
 Of course locally we have \[ \menge{0} \Vdash c = \breve{0}  \text{ and } \menge{1} \Vdash c = \breve{1} \ , \] so these new numbers are not really adding any interesting new behaviour. 
 There might, however, also be more internal numbers locally---just think of any non-standard model. We would like to exclude this non-standard case. To be precise, we would like to exclude the case that the internal and the external numbers differ locally---if we assume non-standard numbers in the meta-theory we also have these in the model. One way of ensuring this is by assuming that our model is full in the sense that we have have names for all predicates that we can define on the topological space.

\begin{Pro}  \label{Pro:ext_nat}
Consider $(T,\tau,\ext{\cdot})$ a model of Heyting arithmetic and assume that there exists a predicate $N(x)$ where $x$ is of type $\NN$ such that 
  \[ \ext{N(x)} = \bigcup_{n \in \NN} \ext{x = \breve{n}} . \]

Then, if $t \in \ext{\varphi(x)}$, where $x$ is of type $\NN$, there exists $n$ such that $t \in \ext{\varphi(\breve{n})}$. In other words, we can export natural numbers locally.

\end{Pro}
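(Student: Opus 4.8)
The plan is to reduce the proposition to the single internal fact that $N$ is \emph{provably total}, i.e.\ that $T \Vdash \fa{x}{N(x)}$, and then to extract the conclusion by a one-line calculation with $\ext{\cdot}$. So I would split the argument into two parts: verifying that $N$ satisfies, inside the model, the hypotheses of the induction axiom H7, and then a short piece of bookkeeping with the interpretation of equality.

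For the first part I would proceed as follows. First I would observe that $\ext{\breve{0} = \breve{0}} = T$ (reflexivity of the interpreted equality), so that $\ext{N(\breve{0})} = \bigcup_{n \in \NN} \ext{\breve{0} = \breve{n}} \supseteq \ext{\breve{0} = \breve{0}} = T$, giving $T \Vdash N(\breve{0})$. Next I would use that the interpretation of the function symbol $s$ respects equality, so $\ext{a = b} \subseteq \ext{s(a) = s(b)}$ for all $a,b$ of type $\NN$; taking $b = \breve{n}$ and recalling that $\breve{n+1}$ is by definition $s(\breve{n})$, this yields $\ext{a = \breve{n}} \subseteq \ext{s(a) = \breve{n+1}} \subseteq \ext{N(s(a))}$, and unioning over $n$ gives $\ext{N(a)} \subseteq \ext{N(s(a))}$ for every $a$. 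Hence $\Complement{\ext{N(a)}} \cup \ext{N(s(a))} = T$, which is open, so $T \Vdash \fa{x}{N(x) \rightarrow N(s(x))}$. Applying soundness to the instance of H7 for the formula $N$ then delivers $T \Vdash \fa{x}{N(x)}$, that is $\Interior{\bigcap_{a} \ext{N(a)}} = T$, and in particular $\ext{N(a)} = T$ for every $a$ of type $\NN$.

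For the second part I would take $t \in \ext{\varphi(x)}$, with the free variable $x$ interpreted by some $a$ of type $\NN$. Since $t \in T = \ext{N(a)} = \bigcup_{n \in \NN} \ext{a = \breve{n}}$, I would pick an external $n$ with $t \in \ext{a = \breve{n}}$; then, because the interpretation respects equality, $\ext{a = \breve{n}} \cap \ext{\varphi(a)} = \ext{a = \breve{n}} \cap \ext{\varphi(\breve{n})}$, and as $t$ lies in the left-hand side it lies in the right-hand side, so $t \in \ext{\varphi(\breve{n})}$. The analogous statement for existentials, namely that $t \in \ext{\ex{x}{\varphi(x)}}$ yields an external $n$ with $t \in \ext{\varphi(\breve{n})}$, then follows at once from $\ext{\ex{x}{\varphi(x)}} = \bigcup_{a} \ext{\varphi(a)}$.

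The one genuinely delicate point is the first part: I must be sure that $N$, even though it may be a predicate symbol outside the original arithmetical language, is still covered by the induction schema in the model, and that the Boolean identity $\Complement{\ext{N(a)}} \cup \ext{N(s(a))} = T$ is legitimate, which is precisely where the standing assumption that the meta-theory is classical gets used. Once $T \Vdash \fa{x}{N(x)}$ is in hand, the rest is routine unwinding of the definitions.
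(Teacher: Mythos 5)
Your proof is correct and follows the same strategy as the paper: use the internal induction axiom H7 to prove $T \Vdash \fa{x}{N(x)}$, then transfer truth from $\varphi(a)$ to $\varphi(\breve{n})$ via the equality axiom. You are in fact more explicit than the paper on two points the paper glosses over: the final bookkeeping step (the paper only says "it is enough" to prove $T \Vdash \fa{x}{N(x)}$ and leaves the transfer via $\ext{a=\breve{n}} \cap \ext{\varphi(a)} = \ext{a=\breve{n}} \cap \ext{\varphi(\breve{n})}$ implicit), and the use of the classical meta-theory to pass from the set inclusion $\ext{N(a)} \subseteq \ext{N(s(a))}$ to the forced implication (which is exactly the caveat recorded in Lemma~\ref{Lem:topmodels_impl}). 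These additions make the argument cleaner than the paper's own write-up, which at one point takes $t \in T$ arbitrary when it really means $t \in \ext{N(x)}$.
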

\begin{proof} It is enough to 
use natural induction (H7) to show that 
\[ T \Vdash x \in \NN \rightarrow N(x) \ . \]
The  case $x = 0$ is obviously fine. So let $x$ be an arbitrary variable symbol of the natural number type, and let $t \in T$ be arbitrary. 

By the induction hypothesis there exists a neighbourhood $U$ of $t$ and an external natural number $n$ such that $U \Vdash x = \breve{n}$. So we also have  $U \Vdash s(x) = s(\breve{n})$. If we can show that $T \Vdash s(\breve{n}) = \breve{n+1}$ we are done. But that holds by natural induction (in the metatheory) using H2. 
\end{proof}

\emph{From now on we are only going to consider topological models, that are models of HA and that satisfy the conclusion of the previous proposition. The full models introduced in the next subsection are such models.}

Let us move  on to models of the real numbers. That is our language contains constants $0,1$ binary function symbols $+, \cdot$, unary function symbols $-, ^{-1}$ and a binary relation $>$ such that the usual axioms are satisfied (for a detailed list of axioms see \cite{dB99b}).

If we have a model $T$ of the real numbers in this sense any internal real can be thought of as a continuous function $T \to \RR$.

\begin{Pro} \label{Pro:int_ext_reals}
Consider $(T,\tau,\ext{\cdot})$ a model of Heyting arithmetic and of real numbers.
 If $x$ is of type $\RR$, then there exists a continuous function $f:T \to \RR$ such that 
  \begin{equation} \label{Eqn:ext_reals}
  	\ext{\breve{r} < x< \breve{s} } = \set{t \in T}{r < f(t) <s}  \ ,
  \end{equation} 
  for all $r, s \in \QQ$.
  Therefore also, in particular, 
\begin{align*}
\ext{x>0} & = \set{t}{f(t) > 0} \ , \\
\ext{x=0} & =  \Interior{\set{t}{f(t) = 0}} \ , \\
\ext{x \geqslant 0} & =  \Interior{\set{t}{f(t) \geqslant 0}} \ , \\ 
\ext{\neg(x=0)} & = \Interior{\Complement{\Interior{\set{t}{f(t) = 0}}}}  \ .
\end{align*}
\end{Pro}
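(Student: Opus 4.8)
The plan is to prove Proposition \ref{Pro:int_ext_reals} by using the axioms for the reals together with the fact (Proposition \ref{Pro:ext_nat}) that natural numbers can be exported locally. Given a variable $x$ of type $\RR$, the idea is to "read off" a function $f : T \to \RR$ from the truth values $\ext{\breve{r} < x < \breve{s}}$ for rationals $r < s$. First I would note that since the model satisfies the usual axioms for $\RR$ (in the sense of \cite{dB99b}), internally every real is approximated to within $2^{-n}$ by a rational for each $n$; combined with local export of natural numbers (Proposition \ref{Pro:ext_nat}) this means that for every $t \in T$ there is a neighbourhood $U$ of $t$ and a rational $q$ with $U \Vdash \abs{x - \breve q} < \breve{2}^{-\breve n}$. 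Letting $n$ range, the nested sequence of such rationals (they agree up to $2^{-n+1}$ on overlaps, by the triangle inequality internally) determines a single real number $f(t)$ at each point. The key checks are: (i) $f(t)$ is well-defined, independent of the choices of $U$ and $q$; (ii) $f$ is continuous, which follows because the defining condition "$q$ works at scale $n$" holds on an open set; and (iii) $f$ satisfies \eqref{Eqn:ext_reals}.

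For step (iii), the "$\supseteq$" inclusion of \eqref{Eqn:ext_reals} is the easy direction: if $r < f(t) < s$ then choosing $n$ large and $q$ a rational approximant at $t$ with $r < q - 2^{-n}$ and $q + 2^{-n} < s$, the open set on which this approximation is valid forces $\breve r < x < \breve s$, so $t \in \ext{\breve r < x < \breve s}$. For "$\subseteq$", one uses that if $t \Vdash \breve r < x < \breve s$ then on a neighbourhood the internal value of $x$ is pinned between $r$ and $s$, which combined with the construction of $f$ gives $r < f(t) < s$; here I would be a little careful because $\ext{\breve r < x < \breve s}$ is already open, so working pointwise on it is fine. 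Once \eqref{Eqn:ext_reals} is established, the displayed formulas for $\ext{x > 0}$, $\ext{x = 0}$, $\ext{x \geqslant 0}$ and $\ext{\neg(x = 0)}$ drop out: $\ext{x > 0} = \bigcup_{r > 0, s} \ext{\breve r < x < \breve s} = \set{t}{f(t) > 0}$ (a union of open sets, already open); $\ext{x = 0}$ is $\ext{\neg(x > 0) \land \neg(x < 0)}$, i.e.\ the interior of the complement of $\set{f \neq 0}$-type sets, which unwinds to $\Interior{\set{t}{f(t) = 0}}$ using the definition of $\ext{\lnot\cdot}$ as interior of complement; $\ext{x \geqslant 0}$ (which constructively is $\ext{\neg(x < 0)}$) similarly becomes $\Interior{\set{t}{f(t) \geqslant 0}}$; and $\ext{\neg(x = 0)}$ is then $\Interior{\Complement{\ext{x=0}}}$ by definition of negation, giving the last line.

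The main obstacle I expect is step (i)--(ii), the construction of $f$ and the verification that it is a genuine continuous function rather than merely a "local" object: one has to patch the local rational approximants into a globally defined real-valued function and confirm consistency on overlaps, which relies essentially on Proposition \ref{Pro:ext_nat} (so that "there exists $n$" and "there exists a rational $q$" can be witnessed on honest open sets) and on the internal validity of the metric/order axioms of $\RR$. A secondary subtlety is that several of the displayed equalities involve $\Interior{\cdot}$, and one must take care to apply the interpretation clauses for $\lnot$ and $\forall$ correctly — in particular that $\ext{x = 0}$, being the interpretation of a negated formula (or a conjunction of two), is automatically open and equals the interior claimed. Since all of this is "routine unwinding" once the exporting lemma is in hand, I would expect the proof in the paper to simply say "Straightforward, using Proposition \ref{Pro:ext_nat}" or to cite the construction from one of the references; the content is entirely in having set up full models satisfying the conclusion of Proposition \ref{Pro:ext_nat}.
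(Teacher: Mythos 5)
Your plan is essentially sound and reaches the same endpoint, but the construction of $f$ differs from the paper's in a way worth noting. You build $f(t)$ from a nested sequence of rational approximants chosen locally (Cauchy-style), which forces you to fight the well-definedness and independence-of-choices battle that you yourself flag as the main obstacle. The paper instead sidesteps this entirely by defining $f(t)$ as the real given by the Dedekind cut
\[
D_t = \set{q \in \QQ}{t \in \ext{\breve{q} < x}} \ ;
\]
this set is canonically determined by $t$ (no choices to reconcile), and what needs checking is only that $D_t$ is a bona fide cut — inhabited and bounded, downward closed, and order located — all of which follow by exporting the appropriate internal existence statements via Proposition~\ref{Pro:ext_nat}. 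Continuity and the two inclusions of \eqref{Eqn:ext_reals} are then proved exactly along the lines you sketch, with one subtle point you glossed over: in the $\supseteq$ direction the paper has to use that the model validates the cotransitivity axiom $T = \ext{\breve{q} < x \,\lor\, x < \breve{s}}$ to upgrade "$t \notin \ext{\breve{q} < x}$" to "$t \in \ext{x < \breve{s}}$", which is where the internal order axioms for $\RR$ genuinely enter. Your prediction that the paper would just write "straightforward" is wrong — the paper gives a full page of argument — so the content is not all in the setup, but the Dedekind-cut reformulation does make your step (i) disappear and your step (ii) cleaner. The derivation of the displayed equalities for $\ext{x>0}$, $\ext{x=0}$, etc.\ by unwinding the interpretation clauses is correct and the paper leaves it implicit, as you expected.
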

\begin{proof}
First, in order to define $f$, we will show that for $t_0 \in T$ the set 
\[ D_{t_0} = \set{q \in \QQ}{t_0 \in \ext{\breve{q} < x}} \ ,\] 
is a Dedekind cut.
\begin{itemize}
  \item $D_{t_0}$ is bounded and inhabited: since 
\[ T \Vdash \ex{q, p \in \QQ}{q < x< p }  \ , \]
we can use Proposition \ref{Pro:ext_nat} to export the rationals (as pairs of natural numbers) locally. That is there exists a neighbourhood $U$ of $t_0$ and $q, p \in \QQ$\footnote{Here and below we abuse our notation slightly by reusing the same name for external and internal variables.} such that 
\[ U \Vdash \breve{q} < x< \breve{p}  \ . \]
Clearly $q \in D_{t_0}$ and $p \notin D_{t_0}$.
\item $D_{t_0}$ is downward closed: if $q \in D_{t_0}$ and $p < q$ then  
$T = \ext{\breve{p} < \breve{q}}$ and therefore \[ t_0 \in \ext{\breve{q} < x} \subset \ext{\breve{p} < x} \ . \] Thus $p \in D_{t_0}$.
\item Even though we are working with a classical meta-theory it seems worth pointing out that, similarly, we can also show that $D_{t_0}$ is order located.
\end{itemize}

Now consider the function $f$ that maps every $t \in T$ to the real given by the Dedekind cut $D_t$. We want to show that this function is point-wise continuous. It suffices to show that, given $\varepsilon > 0$, we can find an open neighbourhood $U$ of $t_0$ such that 
\[ \fa{t,t^\prime \in U, q \in f(t)}{\ex{p\in f(t^\prime) }{ \abs*{q-p} < \varepsilon}} \ . \]
So let $\varepsilon >0$ and $t_0 \in T$ be arbitrary. We may, without loss of generality, assume that $\varepsilon \in \QQ$. Since
\[ T \Vdash \ex{r \in \QQ}{x-\breve{\varepsilon} < r < x }  \ , \]
there exists an open neighbourhood $U$ of $t_0$ and $r \in \QQ$ such that 
\[ U \Vdash x-\breve{\varepsilon} < \breve{r} < x   \ . \]
Now let $t,t^\prime \in U$ and $q \in f(t)$. The latter implies that  $V \Vdash \breve{q} <  x$ for some open neighbourhood $V$ of $t$ and therefore 
$V \Vdash \breve{q} - \varepsilon < \breve{r}$.
Since we are dealing with imported reals that means that $q - \varepsilon < r$. Now set $p = q-\varepsilon$. Then $p < r $, which means that $U \Vdash \breve{p} < \breve{r} < x$, and therefore $p \in f(t^\prime)$. 

We will show \ref{Eqn:ext_reals} by showing that the left hand side set is a subset of the right hand one and vice versa. 
So let $t \in \ext{\breve{r} < x < \breve{s}}$. Similar to above, we can find a neighbourhood $U$ of $t$ and $p,q \in \QQ$ such that $U \Vdash \breve{r} < \breve{p} < x < \breve{q} < s$. Then, in terms of Dedekind cuts  $p \in f(t)$ and $q \notin f(t)$ for all $t \in U$ and therefore, seen as reals $r < f(t) < s$. 

Conversely, let $t_0 \in T$ such that $r < f(t_0) < s$. By continuity, we may choose a neighbourhood $U$ of $t_0$ and $q \in \QQ$ such that $r  < f(t) < q < s$ for all $t \in U$. (The asymmetry in needing $q$ on the right hand side, but no rational on the left comes from the fact that Dedekind cuts are somewhat asymmetric). Since $r < f(t)$ on $U$ that means that for any $t \in U$ we have $r \in D_{t_0}$, which in turn means that $ U \subset \ext{\breve{r} < x}$. Similarly we can show that for any $t \in U $  we have $t \notin  \ext{ x < \breve{q}}$. Thus 
\[ U \subset \ext{\lnot ( \breve{q} < x)} \subset \Complement{\ext{\breve{q} < x}} \subset \ext{x < \breve{s}} ; \]
because we have assumed that $T$ is a model for the constructive reals and therefore \[ T = \ext{ \breve{q}< x \ \lor \ x < \breve{s}} =  \ext{\breve{q}< x} \ \cup \  \ext{x < \breve{s}} \ . \] 
Together $t_0 \in \ext{\breve{r} < x< \breve{s} } $.
\end{proof}

At this point we have not assumed that we also have the converse, that is that for every continuous real-valued function $f:T \to \RR$ we have a constant symbol which has this real as its representative. However, we will do so in the next section. To our knowledge, models that are not full in this sense have not been considered as models of analysis.

\subsection{The Full Model and Countable Choice} \label{Sec:Choice}

The commonly used models are the ``full'' ones, whose existence is guaranteed by the following.

\begin{Pro}
For any topological space $(T,\tau)$ there exists a model such that
\begin{enumerate}
  \item It is a model of IZF (therefore also of CZF, HA, and the real numbers).
  \item For every $V \in \tau$ there is a proposition $P_V$ such that $\ext{P_V} = V$.

\end{enumerate}
 In particular, that means that we can use Propositions \ref{Pro:ext_nat} and \ref{Pro:int_ext_reals}, which imply that any internal real number $x$ is represented by a continuous function $f_x: T \to \RR$. Fullness also guarantees the converse, namely that 
  for every continuous function $f: T \to \RR$ there exists a name for a real number $x_f$ such that $f_{x_f} = f$.
\end{Pro}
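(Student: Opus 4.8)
The plan is to realise the model as the cumulative hierarchy of $\tau$-valued sets, the now-standard recipe for Heyting-valued models of intuitionistic set theory (see \cite{rG84}, \cite{mF79}). Define by recursion on ordinals $V^{(\tau)}_0 = \emptyset$, $V^{(\tau)}_{\alpha+1} = \set{f}{f\text{ a function},\ \mathrm{dom}(f) \subseteq V^{(\tau)}_\alpha,\ \mathrm{ran}(f) \subseteq \tau}$, and $V^{(\tau)}_\lambda = \bigcup_{\alpha<\lambda} V^{(\tau)}_\alpha$ at limits; let $\UU = V^{(\tau)} = \bigcup_\alpha V^{(\tau)}_\alpha$ be the universe over which the quantifiers of the previous subsection range. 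Interpret the two atomic predicates by a simultaneous recursion on the ranks of $a,b \in \UU$:
\[ \ext{a \in b} = \bigcup_{x \in \mathrm{dom}(b)} \bigl( b(x) \cap \ext{x = a} \bigr), \]
\[ \ext{a = b} = \Interior{\,\bigcap_{x \in \mathrm{dom}(a)}(\Complement{a(x)} \cup \ext{x \in b}) \cap \bigcap_{y \in \mathrm{dom}(b)}(\Complement{b(y)} \cup \ext{y \in a})\,}, \]
and then propagate to all formulas by the clauses already fixed for $\land, \lor, \rightarrow, \fa{}{}, \ex{}{}$. First I would record the routine facts that $\ext{\cdot}$ is symmetric and transitive on $=$, that it respects equality (so the remark on extending predicate evaluations applies), and that the interpretation is sound for intuitionistic predicate logic with equality.

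The bulk of the argument is then the verification that $\UU$ models IZF, i.e.\ $\ext{\varphi} = T$ for each axiom $\varphi$ — extensionality, pairing, union, infinity, power set, full separation, collection, and $\in$-induction — each established by exhibiting witnessing $\tau$-valued sets and computing extensions with the frame operations of $\tau$. This is entirely standard and I would cite \cite{rG84} rather than reproduce it. From $\UU \models \mathrm{IZF}$ one obtains models of CZF and of HA, interpreting the primitive symbols $0, s, +, \cdot$ by the IZF-definable natural-number structure (so H1–H7 hold), and of the constructive reals, interpreting $0,1,+,\cdot,-,{}^{-1},>$ by the Dedekind reals of $\UU$, which satisfy the axioms of \cite{dB99b}; this gives part (1).

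For part (2), write $\widehat{\emptyset}$ for the $\tau$-valued set with empty domain; since the defining intersections are then empty, $\ext{\widehat\emptyset = \widehat\emptyset} = \Interior{T} = T$. Given $V \in \tau$, let $\widehat V$ have $\mathrm{dom}(\widehat V) = \menge{\widehat\emptyset}$ and $\widehat V(\widehat\emptyset) = V$; then $\ext{\widehat\emptyset \in \widehat V} = V \cap \ext{\widehat\emptyset = \widehat\emptyset} = V$, so taking $P_V$ to be this sentence (with parameters $\widehat\emptyset, \widehat V$) yields part (2). In particular the canonical name $\widehat\omega$ (domain $\menge{\widehat n : n \in \NN}$, constant value $T$) satisfies $\ext{x \in \widehat\omega} = \bigcup_{n}\ext{x = \widehat n}$, and an external induction shows $\ext{\breve n = \widehat n} = T$ for $\breve n = s^n(0)$, so the hypothesis of Proposition \ref{Pro:ext_nat} is met with $N(x) := (x \in \widehat\omega)$; hence by Propositions \ref{Pro:ext_nat} and \ref{Pro:int_ext_reals} every internal real $x$ is represented by a continuous $f_x : T \to \RR$.

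Finally, for the converse (fullness for continuous functions), given continuous $f : T \to \RR$ I would form the $\tau$-valued left cut $L_f$ with domain $\menge{\widehat q : q \in \QQ}$ and $L_f(\widehat q) = \set{t \in T}{q < f(t)}$, open by continuity of $f$; one checks internally that $L_f$ is a rounded, located, bounded, inhabited Dedekind cut, hence names a real $x_f$, and that $\ext{\breve q < x_f < \breve s} = \set{t \in T}{q < f(t) < s}$ for all rationals $q < s$, so that $f_{x_f} = f$. The one genuine obstacle is the IZF verification in part (1) — in particular full separation, collection and $\in$-induction, which require care with the transfinite recursion and with the distributivity laws in the frame $\tau$ — while the rest is bookkeeping; since the surrounding section treats these models as standard tools, I would lean on \cite{rG84} for that step.
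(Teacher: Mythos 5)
Your proposal is correct and is exactly the standard Heyting-valued model construction that the paper defers to: the paper's ``proof'' of this proposition is a one-line citation to the section on topological models in \cite{mH15}, which in turn follows Grayson \cite{rG84}, and your cumulative hierarchy $V^{(\tau)}$ with the usual recursive interpretations of $\in$ and $=$ is precisely that construction. You have spelled out what the paper leaves to the reference, and each additional step — the one-element-domain name $\widehat V$ for part (2), the canonical $\widehat\omega$ feeding Proposition~\ref{Pro:ext_nat}, and the open left-cut $L_f$ for fullness — is the standard route and is correct (note also that your outer-interior variant of the $\ext{a=b}$ clause agrees with the usual one with inner interiors, since $\Interior{\bigcap_i A_i} = \Interior{\bigcap_i \Interior{A_i}}$ for any family of sets).
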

\begin{proof}
	See the section ``Topological Models'' in \cite{mH15}.
\end{proof}

We should note that the first point does not cover any form of choice. As a matter of fact, even countable choice often fails in topological models. We therefore need to be a bit careful when talking about, for example, \LPO being satisfied, since we need to specify whether we mean the sequential version $\LPO_\sigma$ or the one for the real numbers $\LPO_\RR$. We have that $\LPO_\RR$ implies $\LPO_\sigma$, but the converse is only true in the presence of countable choice, as we can see by considering the topological model over the reals. There  $\LPO_\RR$ fails: Let $z$ be the real given by the identity function. Then
\[ \ext{\neg(z = 0)} = \Interior{\Complement{\ext{z=0}}} = \Interior{\Complement{ \Interior{\menge{0}}}} = \Interior{\RR} = \RR \ ,  \]
 but
 \[\ext{z \neq 0} = (-\infty,0) \cup (0, \infty)  \ . \]
 On the other hand if $m$ is an (internal) natural number, then,  it is represented by a point-wise continuous function $f_m:\RR \to \NN$. But since the only such functions are the constant ones we have that for every name of a natural number $m$ there exists an external natural number $n$ such that $\RR \Vdash m = \breve{n}$. Extending this argument to binary sequence we get that the internal binary sequences are exactly the external ones. Thus $\LPO_\sigma$ holds, since we are working with a classical metatheory.
 
 Together we have 
 \[  \RR \Vdash \LPO_\sigma \text{ but } \RR \nVdash \LPO_\RR  \ , \]
 and therefore 
 \[  \RR \nVdash \ACC \ . \]

For the rest of the section we will keep the distinction between the real and the sequential version of our principles. Notice that for almost all principles the real version is stronger than the sequential one, and different sequential ones are equivalent to each other, as well as are the real ones. % The exceptions here are \WMP, where   \todo{WMP}

\subsection{Reverse Reverse Mathematics} \label{SubS:ReverseReverse}

In the following we give some characterisation of properties of topological spaces, such that the full model satisfies certain principles. This is actually quite a natural question to consider, and these kind of results are very helpful to find custom separations of principles. We have---not entirely seriously---named this area ``Reverse Reverse Mathematics.'' The assumption that we are working with full models is essential for this approach.

For example, it is easy to characterise the spaces that satisfy \LEM.
\begin{Pro} \label{Pro:disc_top_LEM}
A topological space $(X,\tau)$ validates $\LEM$ if and only if every open set is also closed.
\end{Pro}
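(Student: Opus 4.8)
The plan is to prove the biconditional in \Cref{Pro:disc_top_LEM} directly, using the explicit formulas for $\ext{\cdot}$ on the full model over $(X,\tau)$, together with \Cref{Lem:topmodels_impl}. Recall that for a full model, every open set $V \in \tau$ is realised as $\ext{P_V}$ for some proposition $P_V$, and that $T \Vdash \varphi$ means $\ext{\varphi} = X$. The key identity we exploit is $\ext{\lnot\varphi} = \Interior{\Complement{\ext{\varphi}}}$, so that $\ext{\varphi \lor \lnot\varphi} = \ext{\varphi} \cup \Interior{\Complement{\ext{\varphi}}}$. Thus $X \Vdash \LEM$ amounts to the assertion that for every open set $V$, $V \cup \Interior{\Complement{V}} = X$.

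For the forward direction, assume $X \Vdash \LEM$. Let $V \in \tau$ be arbitrary; apply the schema instance to $P_V$ to get $V \cup \Interior{\Complement V} = X$. I would then argue that $\Interior{\Complement V} \subset \Complement V$ always, and that $V \cap \Interior{\Complement V} = \emptyset$ (since $\Interior{\Complement V} \subset \Complement V$ is disjoint from $V$), so $V$ and $\Interior{\Complement V}$ partition $X$ into two disjoint open sets. Consequently $\Complement V = \Interior{\Complement V}$ is open, which says exactly that $V$ is closed. Since $V$ was an arbitrary open set, every open set is closed. For the converse, assume every open set is closed. Let $\varphi$ be any formula (over the full model); by \Cref{Pro:int_ext_reals}-style reasoning it suffices to note that $\ext{\varphi} =: V$ is some open set. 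By hypothesis $V$ is closed, so $\Complement V$ is open, whence $\Interior{\Complement V} = \Complement V$. Therefore $\ext{\varphi \lor \lnot\varphi} = V \cup \Complement V = X$, i.e.\ $X \Vdash \varphi \lor \lnot\varphi$. Since $\varphi$ was arbitrary, $X \Vdash \LEM$.

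One subtlety worth flagging: in the predicate case, $\varphi$ may contain free variables ranging over $\mathcal{U}$, so strictly speaking $\ext{\varphi}$ is a single open set once all variables are instantiated, and $X \Vdash \varphi$ quantifies over all instantiations; but the argument above applies uniformly to each instantiated value, so nothing changes. I expect the main obstacle — really the only one — to be bookkeeping around where the meta-theoretic law of excluded middle is used: \Cref{Lem:topmodels_impl} already warns that the direction ``$\ext{\varphi}\subset\ext{\psi}$ implies $T \Vdash \varphi \to \psi$'' needs \LEM in the meta-theory, and since this section explicitly works in classical meta-logic that is unproblematic, but I would state it cleanly so the reader sees the equivalence is genuinely an ``iff'' at the level of the model. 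The rest is a short topological computation: $V$ is closed iff $\Complement V$ is open iff $\Interior{\Complement V} = \Complement V$ iff $V \cup \Interior{\Complement V} = X$ (using that $V$ and $\Interior{\Complement V}$ are always disjoint). I would present this chain of equivalences as the heart of the proof and keep it to a couple of lines.
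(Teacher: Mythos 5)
Your proof is correct and is essentially the paper's argument, just written out in more detail where the paper says ``one direction is trivial'' and compresses the disjoint-partition step ($X = V \cup \Interior{\Complement{V}}$ with $V$ and $\Interior{\Complement{V}}$ disjoint forces $\Complement{V} = \Interior{\Complement{V}}$) to a single line. One cosmetic nit: the appeal to Proposition~\ref{Pro:int_ext_reals} for the fact that $\ext{\varphi}$ is an open set is misplaced---that is simply the definition of the topological interpretation---but this has no bearing on correctness.
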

\begin{proof}
One direction is trivial. Conversely let $U \in \tau$. Since we consider \emph{full} models, there exists $P$ such that $\ext{P} = U$. Since $X = \ext{P \lor \lnot P} = \ext{P} \cup \ext{\lnot P}$,  the complement $ \Complement{U} = \ext{\lnot P}$ of $U$ is open, whence $U$ is closed.
\end{proof}

\begin{Cor}
For the Sierpinski space $\Sigma=\left(\menge{0,1}, \menge{\emptyset, \menge{1}, \menge{0,1}}  \right)$, we have $\Sigma \nVdash \LEM$ but also $\Sigma \Vdash \LPO_\RR$. Hence $\Sigma \nVdash \KS_\RR$.
\end{Cor}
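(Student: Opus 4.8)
The plan is to verify the three claims about the Sierpi\'nski space $\Sigma=\left(\menge{0,1}, \menge{\emptyset, \menge{1}, \menge{0,1}}\right)$ by direct computation with the topological semantics, using Proposition \ref{Pro:disc_top_LEM} and Proposition \ref{Pro:int_ext_reals}. First, $\Sigma \nVdash \LEM$: the singleton $\menge{1}$ is open but not closed (its complement $\menge{0}$ is not open), so by Proposition \ref{Pro:disc_top_LEM} the space fails \LEM. One could alternatively exhibit this directly: take $P$ with $\ext{P}=\menge{1}$; then $\ext{\lnot P}=\Interior{\menge{0}}=\emptyset$, so $\ext{P\lor\lnot P}=\menge{1}\neq\Sigma$.

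Second, $\Sigma \Vdash \LPO_\RR$. By Proposition \ref{Pro:int_ext_reals}, every internal real $x$ is represented by a continuous function $f:\Sigma\to\RR$. But a function $\menge{0,1}\to\RR$ is continuous for the Sierpi\'nski topology on the domain iff the preimage of every open subset of $\RR$ is open in $\Sigma$, i.e.\ iff $f^{-1}(U)\in\menge{\emptyset,\menge{1},\Sigma}$ for all open $U\subseteq\RR$; since any two distinct values would force the preimage of a small open set around $f(0)$ to be exactly $\menge{0}$ (not open), the only continuous such functions are the constant ones. Hence every internal real equals some $\breve{r}$ with $r\in\RR$ (on all of $\Sigma$), and consequently every internal binary sequence, and indeed every internal natural number, is (globally) one of the external ones---the same argument applied to continuous functions $\Sigma\to\NN$ (with $\NN$ discrete) gives only constants. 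Working in the classical metatheory, for such a constant real $r$ we can decide $r<0\lor r=0\lor 0<r$ externally, and this decision is witnessed on all of $\Sigma$; more directly, for a binary sequence that is globally constant (externally given) we can decide $\fa{n}{a_n=0}\lor\ex{n}{a_n=1}$ in the metatheory, and the corresponding open set is $\Sigma$. So $\ext{\LPO_\RR}=\Sigma$. (If one wants the sequential-to-real step made explicit: since internal sequences coincide with external ones, $\LPO_\sigma$ and $\LPO_\RR$ coincide here, both holding.)

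Third, $\Sigma \nVdash \KS_\RR$. Since $\KS$ (Kripke's Schema, principle \ref{PR:KS}) asserts that every statement is of the simply existential form $\ex{n}{a_n=1}$, and since $\KS+\MP\iff\LPO$ while more directly $\KS$ together with decidability of equality of naturals would make every statement decidable, the cleanest route is: if $\Sigma\Vdash\KS$ then every proposition $P$ satisfies $\ext{P}=\ext{\ex{n}{a_n=1}}$ for some internal binary sequence $(a_n)$; but as shown above internal binary sequences over $\Sigma$ are globally constant, so $\ext{\ex{n}{a_n=1}}$ is either $\emptyset$ or $\Sigma$. Taking $P$ with $\ext{P}=\menge{1}$ gives a contradiction, since $\menge{1}\notin\menge{\emptyset,\Sigma}$. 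Thus $\Sigma\nVdash\KS_\RR$. The only mild subtlety---and the step I would be most careful about---is the reduction of the real-numbers version $\KS_\RR$ to a statement about a bare proposition $P$: one must take $\varphi$ in the schema to be precisely a propositional letter (which the full model supplies, as $\ext{P}$ can be any open set), so that no choice is needed and the computation of $\ext{\ex{n}{a_n=1}}$ is unambiguous. Everything else is a routine unwinding of the definitions of $\Interior{\cdot}$ and the continuity characterisation on $\Sigma$.
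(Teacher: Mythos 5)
Your proof is correct, and for the first two claims it takes exactly the paper's route: $\Sigma\nVdash\LEM$ because the open set $\menge{1}$ is not closed (Proposition \ref{Pro:disc_top_LEM}), and $\Sigma\Vdash\LPO_\RR$ because the only continuous maps $\Sigma\to\RR$ are the constants, so every internal real is (globally) an external one and the decision is imported from the classical metatheory. For the final clause $\Sigma\nVdash\KS_\RR$ you diverge mildly from what the paper leaves implicit. The paper's ``Hence'' is a transfer argument: intuitionistically $\KS_\RR+\LPO_\RR\vdash\LEM$ (given $\varphi$, take the real $x$ with $\varphi\iff x>0$ and apply $\LPO_\RR$ to decide $x>0\lor\neg(x>0)$), so by soundness a model that validates $\LPO_\RR$ but refutes $\LEM$ must refute $\KS_\RR$. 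You instead unwind the semantics directly: because internal reals (or internal binary sequences) over $\Sigma$ are constant, $\ext{x>0}$ (resp.\ $\ext{\ex{n}{a_n=1}}$) is always $\emptyset$ or $\Sigma$, hence the biconditional with $\ext{P}=\menge{1}$ cannot hold everywhere, and neither can the internal existential over $x$. Both routes work and rest on the same constancy observation; yours has the virtue of not needing the auxiliary derivation $\KS_\RR+\LPO_\RR\implies\LEM$, while the paper's is shorter given that $\LPO_\RR$ has already been established. One small wording issue: you write ``if $\Sigma\Vdash\KS$'' and then argue via a binary sequence, though the claim is about $\KS_\RR$, which asserts $\varphi\iff x>0$ for a real $x$. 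You do flag this as the delicate step, and in fact it is harmless here because internal reals and internal binary sequences are both forced to be external constants over $\Sigma$, so $\ext{x>0}$ and $\ext{\ex{n}{a_n=1}}$ have the same restricted range---but it would be cleaner to run the argument directly with a real $x$ as in the definition of $\KS_\RR$.
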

\begin{proof}
To see that $\Sigma$ satisfies $\LPO_\RR$, simply note that all continuous functions $f:\Sigma \to \RR$ are constant.\footnote{As M.~Hendtlass has pointed out, actually, every finite topological model satisfies \LPO.}	
\end{proof}

\begin{Cor*}[\textbf{\theThm ½}]
If $(X, \tau)$ does not have an accumulation point then $X \Vdash \LEM$.
\end{Cor*}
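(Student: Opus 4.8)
The plan is to use Proposition \ref{Pro:disc_top_LEM}, which characterises the full models validating \LEM{} as exactly those built over a topological space in which every open set is also closed. So the task reduces to showing: if $(X,\tau)$ has no accumulation point, then every open subset of $X$ is closed. Recall that a point $x \in X$ is an accumulation point if every neighbourhood of $x$ contains a point of $X$ other than $x$; equivalently, $X$ has no accumulation point precisely when every point $x$ is isolated, i.e.\ $\menge{x} \in \tau$ for all $x \in X$. Indeed, if $x$ is not an accumulation point, there is an open $U \ni x$ with $U \cap (X \setminus \menge{x}) = \emptyset$, so $U = \menge{x}$; conversely if $\menge{x}$ is open it witnesses that $x$ is not an accumulation point.

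Granting that every singleton is open, the space is discrete: an arbitrary subset $A \subset X$ equals $\bigcup_{a \in A} \menge{a}$, a union of open sets, hence open. In particular every open set has open complement and is therefore closed. By Proposition \ref{Pro:disc_top_LEM} the full model over $(X,\tau)$ validates \LEM. I would write this out as a short two-line argument: first the equivalence ``no accumulation point $\iff$ discrete topology'', then ``discrete $\implies$ every open set is clopen'', then invoke the Proposition. One should be mildly careful about the meta-theoretic setting — but the section explicitly states we work with classical logic in the meta-theory, so no constructive subtleties arise in checking that $X$ is discrete.

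There is essentially no main obstacle here; the only point worth a sentence of care is making sure the phrase ``does not have an accumulation point'' is read with the correct quantifier structure (no point of $X$ is an accumulation point of $X$), since a sloppy reading could suggest only that some particular subset lacks accumulation points. With the right reading, the corollary is an immediate consequence of Proposition \ref{Pro:disc_top_LEM} together with the observation that a space all of whose points are isolated carries the discrete topology, in which every subset — and a fortiori every open subset — is clopen.
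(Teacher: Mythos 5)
Your proof is correct and follows exactly the same route as the paper: no accumulation point implies the topology is discrete, hence every open set is clopen, and then Proposition \ref{Pro:disc_top_LEM} applies. You have merely spelled out the ``no accumulation point $\iff$ discrete'' equivalence in more detail than the paper's one-line proof.
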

\begin{proof}
	If no point is an accumulation point, then $\tau$ is the discrete topology, which means every set is clopen.
\end{proof}

\begin{Rmk}
The Sierpinski space validates $\Sigma \Vdash \WLEM$.
\end{Rmk}
\begin{proof}
Assume $\varphi$ is an arbitrary formula. If either $\ext{\varphi}=\emptyset$ or $\ext{\varphi} = \Sigma$ we have $\Sigma \Vdash \lnot \varphi \lor \lnot \neg \varphi$. But also in the case that $\ext{\varphi} = \menge{1}$ we have that $\ext{\lnot \varphi} = \Interior{\menge{0}} = \emptyset$ and therefore $X \Vdash \lnot \neg \varphi$.
\end{proof}

We remind the reader that a topological space $X$ is called \define{functionally Hausdorff}, if for any two points $x,y$ there exists a continuous function $f:X \to \RR$ such that $f(x) = 0$ and $f(y) = 1$. Any such space is necessarily Hausdorff. Conversely every normal, Hausdorff (i.e.\ a T4) space is functionally Hausdorff.\footnote{Slightly more obscure: a completely regular, T1 space---a so called Tychonoff space---is functionally Hausdorff.}

\begin{Pro} \label{Pro:topspace_omin_or_notMPv}
Assume $(T,\tau)$ is a first countable, functionally Hausdorff space, that has an accumulation point. Then  $T \nVdash \LLPO_\RR$.
\end{Pro}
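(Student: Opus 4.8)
The plan is to exhibit, from the assumed topological data, an internal real number $x$ together with a witness $y$ for which the disjunction in the definition of \MPv fails, i.e.\ a real $y$ such that neither $0 \lessdot y$ nor $y \lessdot x$ holds at some open set, while $0 \lessdot x$ does hold globally. Fix an accumulation point $t_0 \in T$. Since $T$ is first countable, choose a strictly decreasing neighbourhood basis $(U_n)_{n \geqslant 1}$ at $t_0$ with $\bigcap_n U_n = \{t_0\}$ (using that $T$ is functionally Hausdorff, hence $T_1$, so $\{t_0\}$ is closed and the intersection of the basis is exactly $\{t_0\}$). Because $t_0$ is an accumulation point, each $U_n$ contains a point $t_n \neq t_0$, and using functional Hausdorffness we can produce a continuous function $T \to \RR$ that separates points as needed. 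Concretely, I would build a continuous $f : T \to \RR$ with $f(t_0) = 0$, $f(t) > 0$ for $t \ne t_0$ in a suitable neighbourhood, and such that $f$ oscillates in sign on every $U_n \setminus \{t_0\}$ — this is where the accumulation point and first countability are used to glue countably many bump functions (obtained from functional Hausdorffness applied to the pairs $t_0, t_n$) into one continuous function. By Proposition \ref{Pro:int_ext_reals} and fullness, $f$ is the representative of an internal real $x$, and one reads off $\ext{x = 0} = \Interior{f^{-1}(\{0\})}$, $\ext{x > 0} = f^{-1}((0,\infty))$, etc.

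The key computation is then: arrange that $\ext{0 \lessdot x} = \ext{\neg(x \leqslant 0)} = T$ (so $x$ is "almost positive" in the model — this needs $\Interior{f^{-1}(\{0\})} = \emptyset$, which follows if $f$ is positive on a dense set near $t_0$), but that the pseudo-positivity instance fails. For the failure, take $y$ to be the real given by $f$ itself (or a shifted copy), so that $\ext{0 \lessdot y} = T \setminus \overline{\Interior{f^{-1}(\{0\})}}$-type set and $\ext{y \lessdot x}$ is an analogous interior; the oscillation of $f$ on each $U_n$ should force both $\ext{0 \lessdot y}$ and $\ext{y \lessdot x}$ to miss $t_0$, so their union is not all of $T$, giving $T \nVdash \fa{y}{0 \lessdot y \lor y \lessdot x}$ at $t_0$. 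Actually it is cleaner to use the equivalent form \ref{MPo:1b} of Proposition \ref{Pro:equivs_of_MPv}: $\MPv_\RR$ implies that for all $x$ with $\neg(x = 0)$ we have $x \leqslant 0 \lor 0 \leqslant x$. So I would instead produce a single internal real $x$ with $\ext{\neg(x=0)} = T$ but $\ext{x \leqslant 0} \cup \ext{0 \leqslant x} \ne T$, by taking $f$ that changes sign infinitely often in every neighbourhood of $t_0$: then $\ext{x \leqslant 0} = \Interior{f^{-1}((-\infty,0])}$ and $\ext{0 \leqslant x} = \Interior{f^{-1}([0,\infty))}$ both exclude $t_0$, whereas $\ext{\neg(x=0)} = \Interior{\Complement{\Interior{f^{-1}(\{0\})}}} = T$ since $f^{-1}(\{0\})$ has empty interior (infinitely many sign changes force the zero set to be nowhere dense near $t_0$, and one checks the same globally, or restricts attention to a neighbourhood where $f$ is built to behave).

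The main obstacle will be the explicit construction of the continuous function $f$ that oscillates in sign on every member of a neighbourhood basis of $t_0$, using only first countability and functional Hausdorffness: one must check that the countably many separating functions supplied by functional Hausdorffness can be rescaled and summed (or combined via a partition-of-unity-like argument, which is delicate without normality) to yield a genuinely continuous function on all of $T$ with the prescribed local oscillation, and that no zeros of positive-interior are accidentally created. A secondary technical point is verifying the three extension-of-truth-value identities ($\ext{x \leqslant 0}$, $\ext{0 \leqslant x}$, $\ext{\neg(x=0)}$) carefully from Proposition \ref{Pro:int_ext_reals}, and confirming that $t_0 \notin \ext{x \leqslant 0} \cup \ext{0 \leqslant x}$ — this amounts to showing that $t_0$ has no neighbourhood on which $f \leqslant 0$ and none on which $f \geqslant 0$, which is exactly the oscillation property. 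Once $f$ is in hand, everything else is routine unwinding of the topological semantics.
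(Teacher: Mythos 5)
Your high-level strategy matches the paper's: both of you build, on a neighbourhood of an accumulation point $t_0$, a continuous real-valued function that oscillates in sign, take the corresponding internal real $x$, and observe that $t_0 \in \ext{\neg(x=0)}$ but $t_0 \notin \ext{x \leqslant 0} \cup \ext{x \geqslant 0}$, so the $\MPv$-equivalent (\ref{MPo:1b} of Proposition \ref{Pro:equivs_of_MPv}) fails. So the target and the shape of the refutation are correct.

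The gap is exactly where you flag the ``main obstacle,'' and neither of the remedies you sketch resolves it. You propose to get the sign-oscillating $f$ by a signed sum $\sum_n (-1)^n h_n$ (or a partition-of-unity-like patching) of the separating functions $h_n$ furnished by functional Hausdorffness. But functional Hausdorffness only gives $h_n : T \to [0,1]$ with $h_n(t_0)=0$ and $h_n(t_n)=1$; it gives no control over the support of $h_n$, so $h_n$ and $h_m$ can overlap anywhere, and the sign of $\sum_n (-1)^n h_n(t)/2^n$ is then not governed by which $t_n$ is nearest. A partition of unity subordinate to disjoint neighbourhoods of the $t_n$ would fix this, but that needs something like normality or complete regularity, which you correctly note is unavailable — and then you leave the plan there. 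The construction of the oscillating function is not a technical loose end of the argument; it \emph{is} the argument.

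The paper's proof closes this gap with a specific device that your plan does not hit upon: instead of gluing sign-alternating bumps on $T$, it sums the separating functions with \emph{positive} coefficients into a single nonnegative ``radius-like'' map $H = \sum_n h_n/2^n$ (so there are no cancellation issues at all), notes that $H(t_0)=0$ and $H(t_n) \geqslant 2^{-n}$ with $H(t_n) \to 0$, and then postcomposes with a fixed one-dimensional piecewise-linear $f : \RR \to \RR$ that vanishes on $(-\infty, 0]$ and takes alternating signs $\pm x$ on a shrinking sequence of intervals $[r_{n+1}, s_n]$ chosen to contain the values $H(t_n)$. The oscillation is thus produced entirely downstream, by a one-variable function that does not depend on $T$, and the only thing required of the $h_n$ is the nonnegativity and the point values that functional Hausdorffness supplies directly. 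This composition trick $g = f \circ H$ is the essential idea missing from your plan; once you have it, the rest of your reading of the semantics via Proposition \ref{Pro:int_ext_reals} is as you describe.
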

\begin{proof} 
Consider an accumulation point $t_{0} \in T$ and choose a countable base of neighbourhoods $(U_n)_{n \geqslant 1}$ of $t_0$. For every $n \in \NN$ choose $t_n \in U_n$ such that $t_n \neq t_0$. Since $T$ is functionally Hausdorff, for every $n \in \NN$ there exists a continuous function $h_n:T \to [0,1]$ such that $h_n(t_0) = 0$ and $h_n(t_n) = 1$.

Now the function
\[ H(x) = \sum_{n \in \NN} \frac{h_n(x)}{2^n} \ , \] 
is well-defined and continuous. It also satisfies $H(t_0) = 0$ and $H(t_n) \geqslant \frac{1}{2^n}$ for all $n \in \NN$. Since $t_n \in U_n$ and the latter forms a neighbourhood base we have $H(t_n) \to 0$. We may assume, without loss of generality (by switching to an appropriate subsequence), that $H(t_n) > H(t_{n+1})$. Now choose reals $s_n$, $t_n$ such that 
\[ H(t_{n+1}) < s_n < r_n < H(t_n)  \ .\]

Let $f: \RR \to \RR$ be the piece-wise linear function that is $0$ for $x \leqslant 0$ and is otherwise such that 
$f (x) = x $ for $x \in \bracks*{r_{n+1},s_{n}}$ whenever $n$ is even, and $f(x) = -x$ for $x \in \bracks*{r_{n+1},s_{n}}$ whenever $n$ is odd. An illustration of $f$ can be found in Figure \ref{Fig:funch1}.
\begin{figure} 
\begin{center}
\tikzset{external/export next=true}
\begin{tikzpicture}
	\begin{axis}
		[
			every extra x tick/.style={
				grid=none,
				tick0/.initial=1.9em,
		        tick1/.initial=-0.9em,
				xticklabel style={font=\footnotesize,
					yshift=\pgfkeysvalueof{/pgfplots/tick\ticknum},
				},
			},
			axis x line=middle,
			axis y line=middle,
			enlarge x limits=0  0.1,
			enlarge y limits=0.1,
			xtick={ 5,6,7},
			xticklabels={ $s_n$, $r_n$, $\ s_{n-1}$},
			ytick=\empty,
			yticklabels={},
			xticklabel style = {yshift=0ex},
			extra x ticks={4.5, 6.5 },
		extra x tick labels={$f(t_{n+1})$, $f(t_n)$ }]	
		
		\addplot[very thick] coordinates 
		{ (0.25,-0.25)  (0.3125,-0.3125)  (0.375,0.375)  (0.4375,0.4375)   (0.5,-0.5)  (0.625,-0.625)  (0.75,0.75)  (0.875,0.875)   (1,-1)  (1.25,-1.25)  (1.5,1.5)  (1.75,1.75)   (2,-2)  (2.5,-2.5)  (3,3)  (3.5,3.5)  (4,-4)   (5,-5)  (6,6)  (7,7)  (7.5,0)} node[right,pos=0.9] {$f$};;
		\addplot[thick] coordinates { (-1,0)  (0,0)};
		\addplot[dotted] coordinates {(6.5,-1.2)  (6.5,6.5)};
		\addplot[dotted] coordinates {(6,0)  (6,6)};
		\addplot[dotted] coordinates {(7,0)  (7,7)};
		\addplot[dotted] coordinates {(4.5,0)  (4.5,-4.5)};
		\addplot[dotted] coordinates {(5,-1.2)  (5,-5)};
		\addplot[gray] coordinates {(0,0)  (7,7)} node[above = 0.2em,pos=0.7] {$\id$};	
		\addplot[gray] coordinates {(0,0)  (7,-7)} node[below = 0.2em,pos=0.7] {$-\id$};	
	\end{axis}
\end{tikzpicture} 
\end{center}
\caption{A sketch of the function $f$.}
\label{Fig:funch1}
\end{figure}
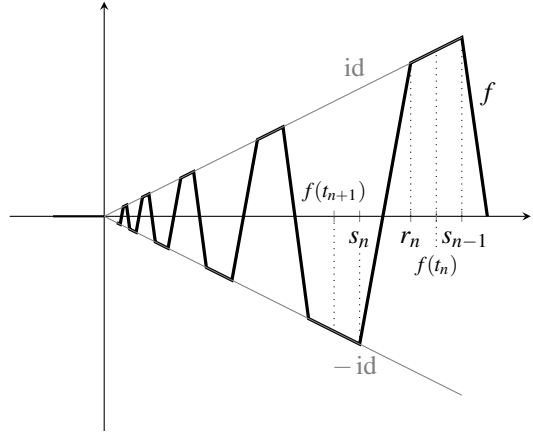
Consider the real $x_{g}$ given by $g = f \circ H$. 
 Notice that for every $n \geqslant 1$ we have that $t_n \in H^{-1}((r_n,s_{n-1}))$. So $t_{0} \notin \Interior{\set{t \in T}{g(t) \geqslant 0}}$ and $t_{0} \notin \Interior{\set{t \in T}{g(t) \leqslant 0}}$, and therefore 
\[ T \nVdash \lnot (x_{g}=0) \rightarrow (x_{g} \leqslant 0 \lor x_{g} \geqslant 0) \ . \qedhere \]
\end{proof}

Fred Richman has shown that \WMP holds in the sheaf model on every metric space \cite{fR02b}---a result which translates to this being true for every topological model over a metric space. To be precise we should also mention that we consider \WMP in its real form:
\begin{quote}
For all $x \in \RR$, if 
\begin{equation*} 
\fa{y \in \RR}{\lnot \neg (y > 0)  \lor \lnot \neg (y < x)}
\end{equation*}
then $0< x$.
\end{quote}

We can extend Richman's result mentioned above, using a very similar construction as in the proof of the previous proposition. Interestingly enough, we do not need to assume that the space is functionally Hausdorff, since we get a suitable real-valued function that we can adapt ``for free.''

\begin{Pro}
Assume $(T,\tau)$ is first countable. Then $T \Vdash \WMP_\RR$.
\end{Pro}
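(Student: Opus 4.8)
The plan is to mimic the proof of Proposition~\ref{Pro:topspace_omin_or_notMPv}, but with the direction of the argument reversed: instead of constructing a bad real that witnesses the failure of a disjunction, I must take an arbitrary internal real $x$ satisfying the hypothesis of \WMP and show $0<x$ holds in the model. So let $x$ be (named by) a continuous function $f_x:T\to\RR$, via Proposition~\ref{Pro:int_ext_reals} (using fullness). The hypothesis $T\Vdash \fa{y\in\RR}{\lnot\neg(y>0)\lor\lnot\neg(y<x)}$ must be unpacked: for every continuous $g:T\to\RR$ (every internal real $y$), we have
\[ T = \ext{\lnot\neg(y>0)} \cup \ext{\lnot\neg(y<x)} \ , \]
where $\ext{\lnot\neg(y>0)} = \Interior{\Closure{\set{t}{g(t)>0}}}$ and similarly for the other disjunct. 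The goal is to deduce $\ext{x>0}=\set{t}{f_x(t)>0}=T$, i.e.\ $f_x(t)>0$ for all $t\in T$.

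First I would fix $t_0\in T$ and aim to derive $f_x(t_0)>0$. Suppose not, so $f_x(t_0)\leqslant 0$; I want a contradiction. Using first countability, take a countable neighbourhood base $(U_n)_{n\geqslant1}$ of $t_0$, which we may assume is decreasing. The key construction: build a continuous function $g:T\to\RR$ (the ``test real'' $y$) that oscillates near $t_0$ in such a way that neither $\ext{\lnot\neg(g>0)}$ nor $\ext{\lnot\neg(g<x)}$ can contain $t_0$ — contradicting the hypothesis. Concretely, following the idea in the previous proof but now using $f_x$ itself rather than a separately built function: pick points $t_n\in U_n$ where $f_x(t_n)$ is sufficiently negative (if $f_x$ is strictly negative near $t_0$ we can do this; if $f_x(t_0)=0$ we need points where $f_x$ dips strictly below $0$, which exist in every neighbourhood of $t_0$ unless $f_x\geqslant0$ on a whole neighbourhood, in which case $t_0$ is in the interior of $\set{t}{f_x(t)\geqslant0}$ and we instead argue directly). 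I would then compose $f_x$ with a carefully chosen piecewise-linear $h:\RR\to\RR$ so that $g=h\circ f_x$ is positive on arbitrarily small neighbourhoods of $t_0$ (so $t_0\notin\Interior{\Closure{\set{g<x}}}$, since $g\geqslant 0$ somewhere in every neighbourhood and, say, $g\geqslant f_x$ there so $g<x$ fails to be dense) and also $g<f_x$ on arbitrarily small neighbourhoods of $t_0$ (so $t_0\notin\Interior{\Closure{\set{g>0}}}$). The ``for free'' remark in the statement suggests that unlike the functionally-Hausdorff case, here one does not need separating functions because $f_x$ already provides enough variation: the oscillation of $f_x$ near $t_0$ (forced by $f_x(t_0)\leqslant0$ together with $f_x$ not being eventually $\geqslant0$) can be amplified by a suitable $h$.

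The main obstacle will be handling the two cases cleanly: either $f_x$ is bounded away from $0$ from below on no neighbourhood of $t_0$ — then we get points $t_n\to t_0$ with $f_x(t_n)<0$ and $f_x(t_n)\to f_x(t_0)\leqslant 0$, and must choose the interpolation nodes $r_n<s_n$ interleaving the values $f_x(t_n)$ so that $h\circ f_x$ alternates sign appropriately near $t_0$ — or $f_x\geqslant 0$ on some $U_N$, in which case $t_0\in\Interior{\set{f_x\geqslant0}}=\ext{x\geqslant0}$, so if additionally $f_x(t_0)=0$ I need a separate argument (here is where I'd exploit that on $U_N$ the real $x$ is $\geqslant0$, build a test real $g$ supported inside $U_N$, and again contradict the disjunction). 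I expect the bookkeeping of choosing the $r_n,s_n$ and verifying that $\Interior{\Closure{\set{h\circ f_x>0}}}$ and $\Interior{\Closure{\set{h\circ f_x<f_x}}}$ both miss $t_0$ to be the technical heart; the rest (soundness, fullness, translating $\Vdash$ statements into set-theoretic inclusions via Lemma~\ref{Lem:topmodels_impl} and Proposition~\ref{Pro:int_ext_reals}) is routine. Finally, since $t_0$ was arbitrary, $\set{t}{f_x(t)>0}=T$, i.e.\ $T\Vdash 0<x$, completing the proof that $T\Vdash\WMP_\RR$.
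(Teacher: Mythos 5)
Your overall skeleton — take an internal real $x$ named by a continuous $f_x:T\to\RR$, fix $t_0$ in the extent of the hypothesis, assume $f_x(t_0)\leqslant 0$ for contradiction, and build a test real $y$ that falsifies both disjuncts at $t_0$ — matches the paper. But your choice to source the oscillation from points where $f_x$ is \emph{negative} is fatal in exactly the case you list first. Suppose $f_x(t_0)<0$. You want $g=h\circ f_x$ to satisfy $g>0$ on open sets arbitrarily close to $t_0$ and $g<f_x$ on other open sets arbitrarily close to $t_0$. Continuity of $g$ at $t_0$ forces $g(t_0)=h(f_x(t_0))$ to be a limit of values from both families: from the first, $h(f_x(t_0))\geqslant 0$; from the second, $h(f_x(t_0))\leqslant f_x(t_0)<0$. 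These are incompatible, so no continuous $h$ exists. (The construction also fails if $f_x$ is constant $\leqslant 0$ near $t_0$, since then $h\circ f_x$ is constant and cannot oscillate.) And the case you dismiss as ``argue directly'' — $f_x(t_0)=0$ and $f_x\geqslant 0$ on a neighbourhood, with $\set{f_x>0}$ possibly dense there — is in fact the \emph{only} case where the oscillating construction is genuinely needed, yet you never specify what the ``separate argument'' is.

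The paper resolves this with a case split the other way around. If $f_x\leqslant 0$ on some neighbourhood $U$ of $t_0$ (which covers $f_x(t_0)<0$, constant $f_x$, and $f_x\equiv 0$ near $t_0$), \emph{no} oscillation is needed: instantiating the hypothesis at $y=x$ gives $t_0\in\ext{\lnot\neg(x>0)}\subset\Closure{\set{f_x>0}}$, since $\lnot\neg(x<x)$ is identically false — but $U$ is a neighbourhood of $t_0$ disjoint from $\set{f_x>0}$, a contradiction. Otherwise, every neighbourhood of $t_0$ contains a point with $f_x>0$; picking $t_n\in U_n$ with $f_x(t_n)>0$, continuity forces $f_x(t_0)=0$ and $f_x(t_n)\to 0^{+}$. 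Defining $h$ to be $0$ on $(-\infty,0]$, $0$ on odd bands and the identity on even bands (with the bands separating the $f_x(t_n)$) gives a piecewise-linear $h$ that is continuous at $0$ precisely \emph{because} the oscillation values approach $0$ from above. Then $g=h\circ f_x$ equals $0$ (so $g\leqslant 0$, killing $\lnot\neg(y>0)$) on opens arbitrarily close to $t_0$ and equals $f_x$ (so $g\geqslant f_x$, killing $\lnot\neg(y<x)$) on others, contradicting the hypothesis at $t_0$. In short: you oscillate in the cases where it is impossible or unnecessary, and leave unspecified the one case where it is essential; the key missing idea is to dispatch the ``$f_x\leqslant 0$ on a neighbourhood'' case with the single instance $y=x$, and to take the sampled $f_x$-values positive so that $h(z)=0$ for $z\leqslant 0$ gives a continuous $h$.
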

\begin{proof}
Let the internal real $x$ be given by the continuous function $f:T \to \RR$. 
Now let $t_0 \in T$ be a point such that 
\begin{equation} \label{Eqn:WMP_top_models} t_{0} \in \ext{\fa{y \in \RR}{\lnot \neg (y > 0)  \lor \lnot \neg (y < x)} } \ . \end{equation}
We want to show that $t_{0} \in \ext{0<x}$. Now if $f(t_{0}) > 0$ we are done, so it suffices to exclude the case that $f(t_{0}) \leqslant 0$. A sub-case that is then easily excluded is that there is an open neighbourhood $U$ of $t_{0}$ such that $f(t^{\prime}) \leqslant 0$ for all $t^{\prime} \in U$, since then $U \Vdash x \leqslant 0$, but for $y=x$ the antecedent of \WMP  implies $ \lnot \left( x \leqslant 0 \right)$.  
So, finally, we can assume that there is a sequence of points $t_{n}$ such that $f(t_{n}) > 0$ and $t_{n} \in U_{n}$; where $U_{n}$ is a base of open neighbourhoods of $t_{0}$. Since $f$ is continuous we have $f(t_n) \to 0$. By using an appropriate subsequence we may assume, without loss of generality, that $f(t_n) > f(t_{n+1})$.
Now choose reals $s_n,r_n$ such that 
\[ f(t_{n+1}) < s_n < r_n < f(t_n) \ . \]
Let $h: \RR \to \RR$ be the piece-wise linear function that is $0$ for $x \leqslant 0$ and is otherwise such that 
$h (x) = x $ for $x \in \bracks*{r_{n+1},s_{n}}$ whenever $n$ is even, and $h(x) = 0$ for $x \in \bracks*{r_{n+1},s_{n}}$ whenever $n$ is odd.
An illustration of $h$ can be found in Figure \ref{Fig:funch2}.

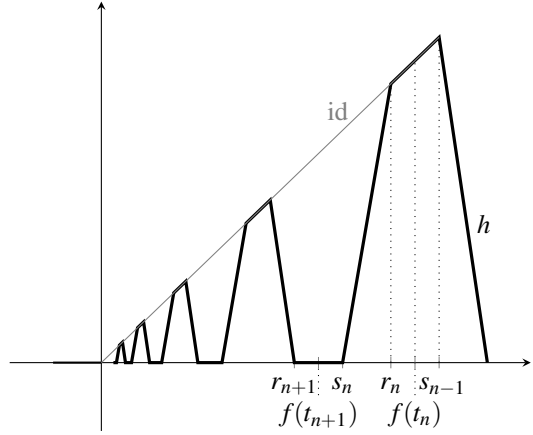
\begin{figure}
\begin{center}
\tikzset{external/export next=true}
\begin{tikzpicture}
	\begin{axis}
		[
			every extra x tick/.style={
				grid=none,
				xticklabel style={
					yshift=-0.9em
				},
			},
			axis x line=middle,
			axis y line=middle,
			enlarge x limits=0  0.1,
			enlarge y limits=0.1,
			xtick={4, 5,6,7},
			xticklabels={$r_{n+1}$, $s_n$, $r_n$, $\ s_{n-1}$ },
			ytick=\empty,
			yticklabels={},
			xticklabel style = {yshift=0ex},
			extra x ticks={4.5, 6.5 },
		extra x tick labels={$f(t_{n+1})$, $f(t_n)$ }]	
		
		\addplot[very thick] coordinates 
		{ (0.25,0)  (0.3125,0)  (0.375,0.375)  (0.4375,0.4375)   (0.5,0)  (0.625,0)  (0.75,0.75)  (0.875,0.875)   (1,0)  (1.25,0)  (1.5,1.5)  (1.75,1.75)   (2,0)  (2.5,0)  (3,3)  (3.5,3.5)  (4,0)   (5,0)  (6,6)  (7,7)  (8,0)} node[right,pos=0.9] {$h$};;
		\addplot[thick] coordinates { (-1,0)  (0,0)};
		
		\addplot[dotted] coordinates {(6.5,-0.7)  (6.5,6.5)};
		\addplot[dotted] coordinates {(6,0)  (6,6)};
		\addplot[dotted] coordinates {(7,0)  (7,7)};

		\addplot[dotted] coordinates {(4.5,-0.7)  (4.5,0)};
		\addplot[gray] coordinates {(0,0)  (7,7)} node[above = 0.2em,pos=0.7] {$\id$};	
	\end{axis}
\end{tikzpicture} 
\end{center}
\caption{A sketch of the function $h$.}
\label{Fig:funch2}
\end{figure}

Since $f$ is continuous, $W_n = f^{-1}((r_{n+1},s_n))$ is an open neighbourhood of $t_{n+1}$. So if $n$ is even, there is an open neighbourhood around $t_n$ (namely $W_n$) such that $h \circ f$ is equal to $f$, and if  $n$ is  odd, then $h \circ f$ is zero on an entire open neighbourhood around $t_n$. (It is crucial that all this happens on a neighbourhood and not just at $t_n$). Let $y$ be the real given by $h \circ f$.

Then $W_{2n+1} \subset \ext{\neg (y > 0)} = \Interior{\set{t \in T}{f(t) \leqslant 0}}$ for all $n$. 
That means, in particular, that $t_{2n+1} \notin \ext{\lnot \neg (y > 0)}$. That, in turn, implies that $t_0 \notin \ext{\lnot \lnot (y > 0)}$, since otherwise there exists $N$ such that $t_{2N+1} \in U_{2N+1} \subset \ext{\lnot \lnot (y > 0)}$.  Similarly $t_{0} \notin \ext{\lnot \neg (y<x)}$. Together we have the desired contradiction to \eqref{Eqn:WMP_top_models}.
\end{proof}

This nicely fits in with a result of Lubarsky and Hendtlass who showed in \cite{mH15} that there is a topological model satisfying \LLPO, but not \LPO, which means that that model also does not satisfy \WMP. 

Their space $X_{U}$ is defined to consist of $\NN$ and an added point $\omega$, where a base of the topology is given by
\begin{itemize}
\item $\menge{n}$ for $n \in \NN$; \item $\menge{\omega} \cup A$ for $A \in \mathcal{U}$; where $\mathcal{U}$ is an ultrafilter on $\NN$. 
\end{itemize}
In particular every set that doesn't contain $\omega$ is open.

We also assume that $\mathcal{U}$ is non-principal, i.e.\ it contains the  Fr\'{e}chet filter of cofinite subsets of $\NN$.

\begin{Pro} $ $
\begin{enumerate}
\item $X_{U} \nVdash \MP_\RR$
\item $X_{U} \Vdash \WLEM$
\end{enumerate}
\end{Pro}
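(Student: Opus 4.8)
The plan is to work entirely through Lemma \ref{Lem:topmodels_impl} and Proposition \ref{Pro:int_ext_reals}, which translate statements about internal reals in a full model into statements about continuous functions $X_U \to \RR$, together with fullness of $X_U$ and the classical logic allowed in the meta-theory.

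\textbf{Part 1 ($X_U \nVdash \MP_\RR$).} I would exhibit an explicit counterexample. Let $f \co X_U \to \RR$ be given by $f(n) = 2^{-n}$ for $n \in \NN$ and $f(\omega) = 0$. This $f$ is continuous: every $n \in \NN$ is isolated, so only $\omega$ needs checking, and for each $\varepsilon > 0$ the set $\menge{n \in \NN}{2^{-n} < \varepsilon}$ is cofinite, hence lies in $\mathcal{U}$ because $\mathcal{U}$ is non-principal and so contains the Fréchet filter; thus $\menge{\omega} \cup \menge{n \in \NN}{2^{-n} < \varepsilon}$ is an open neighbourhood of $\omega$ on which $\abs{f} < \varepsilon$. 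By fullness $f$ represents an internal real $x_f$. Since $\menge{\omega}$ is not open ($\emptyset \notin \mathcal{U}$), Proposition \ref{Pro:int_ext_reals} gives $\ext{x_f = 0} = \Interior{\set{t}{f(t) = 0}} = \Interior{\menge{\omega}} = \emptyset$, hence $\ext{\lnot(x_f = 0)} = \Interior{\Complement{\emptyset}} = X_U$; whereas $\ext{x_f \neq 0} = \set{t}{f(t) \neq 0} = \NN \subsetneq X_U$. By Lemma \ref{Lem:topmodels_impl} we do not have $X_U \Vdash (\lnot(x_f = 0) \rightarrow x_f \neq 0)$, so $X_U \nVdash \MP_\RR$. (The same $f$ also refutes the $\lnot\neg$-form of Markov's principle, since $\ext{x_f \gtrdot 0} = \Interior{\Closure{\NN}} = X_U$ while $\ext{x_f > 0} = \NN$.)

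\textbf{Part 2 ($X_U \Vdash \WLEM$).} The key step is the general observation that, in a \emph{full} topological model, $X \Vdash \WLEM$ iff $X$ is extremally disconnected, i.e.\ $\Closure{U}$ is open for every open $U$: for an open set $U = \ext{\varphi}$ we have $\ext{\lnot \varphi} = \Interior{\Complement{U}} = \Complement{\Closure{U}}$ and $\ext{\lnot\neg\varphi} = \Interior{\Closure{U}}$, and the identity $\Complement{\Closure U} \cup \Interior{\Closure U} = X$ holds classically precisely when $\Closure{U}$ is open. So it suffices to verify that $X_U$ is extremally disconnected. I would let $V$ be open and split on whether $\omega \in V$. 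If $\omega \notin V$ and $V \notin \mathcal{U}$, then $\NN \setminus V \in \mathcal{U}$, so $\menge{\omega} \cup (\NN \setminus V)$ is an open neighbourhood of $\omega$ disjoint from $V$, and since each $n \in \NN$ is isolated, $\Closure{V} = V$, which is open. If $\omega \notin V$ but $V \in \mathcal{U}$, then every basic neighbourhood $\menge{\omega} \cup A$ of $\omega$ meets $V$ (as $A \cap V \in \mathcal{U}$ is nonempty), so $\Closure{V} = V \cup \menge{\omega}$, which is again a basic open set. If $\omega \in V$, then $V = \menge{\omega} \cup (V \cap \NN)$ with $V \cap \NN \in \mathcal{U}$, and no further point of $\NN$ is a limit point of $V$, so $\Closure{V} = V$ is open. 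Hence $\Closure{V}$ is always open, so $X_U \Vdash \WLEM$.

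\textbf{Main obstacle.} None of the steps are deep; the only care needed is in the closure computations of Part 2, where one uses the ultrafilter dichotomy ($V$ or $\NN \setminus V$ lies in $\mathcal{U}$) to decide whether $\omega \in \Closure{V}$, and the non-principality of $\mathcal{U}$ in Part 1 to obtain continuity of $f$ at $\omega$.
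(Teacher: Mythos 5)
Both parts are correct. Part 1 is essentially the same argument as the paper's: the same witness $f(n)=2^{-n}$, $f(\omega)=0$, the same use of non-principality for continuity at $\omega$, and the same computation $\ext{\lnot(x_f=0)} = X_U$ versus $\ext{x_f \neq 0} = \NN$. For Part 2, however, you take a genuinely different and somewhat cleaner route. The paper argues directly: it case-splits on the shape of an arbitrary open set $\ext{\varphi}$ (either a subset of $\NN$, where the ultrafilter dichotomy kicks in, or of the form $\menge{\omega}\cup A$ with $A\in\mathcal{U}$), and computes $\ext{\lnot\varphi}$ and $\ext{\lnot\neg\varphi}$ explicitly in each case to see that their union is $X_U$. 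You instead first isolate the general fact that in a full topological model $\ext{\lnot\varphi} \cup \ext{\lnot\neg\varphi} = \Complement{\Closure{U}} \cup \Interior{\Closure{U}}$ equals the whole space precisely when $\Closure{U}$ is open, so $X \Vdash \WLEM$ iff $X$ is extremally disconnected, and then verify extremal disconnectedness of $X_U$ by the closure computations (which involve the same ultrafilter dichotomy). The underlying case analysis is the same, but your factoring through extremal disconnectedness is reusable for other spaces and makes the role of the ultrafilter transparent: it is exactly what forces $\Closure{V}$ to be open. One could regard the paper's proof as this argument with the lemma inlined.
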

\begin{proof}
\begin{enumerate}
  \item Consider the real $x$ given by $f(n) = \frac{1}{2^{n}}$ and $f(\omega) = 0$. $f$ is continuous since our filter is non-principal and therefore contains sets $A \in \mathcal{U} $ such that $A \cap\menge{1, \dots,n} = \emptyset $. Since $\Interior{\set{t}{f(t) =0}}= \Interior{\menge{\omega}} = \emptyset$ we conclude that $\ext{\neg(x=0)} = X_{U}$. But $\omega \notin \ext{x \neq 0}$, since $f(\omega)=0$. Hence $X_{U} \nVdash \neg(x=0) \implies x \neq 0$. 
  \item Consider an arbitrary $\varphi$. We distinguish three cases:
  \begin{itemize}
  \item $\ext{\varphi}  \subset \NN$. Then either $\ext{\varphi} \in \mathcal{U}$ or $ \NN \setminus \ext{\varphi} \in \mathcal{U}$. In the first case $\ext{\lnot \varphi} = \Interior{\Complement{\ext{\varphi}}} = \emptyset$ and therefore $\ext{\lnot \neg \varphi} = X_{U}$. In the second case $\ext{\lnot \varphi} = \Interior{\Complement{\ext{\varphi}}} = \menge{\omega} \cup (\NN \setminus \ext{\lnot \varphi})$ and $\ext{\lnot \neg \varphi} = \ext{\varphi}$.  In both cases we get $X_{U} \Vdash \lnot \varphi\lor \lnot \neg \varphi$. 
  \item If $\ext{\varphi} = \menge{\omega} \cup A$, then $A \in \mathcal{U}$. Therefore  $\ext{\lnot \varphi} =  \NN \setminus A$ and $ \ext{\lnot \neg \varphi} = \ext{ \varphi}$. So also in this case $X_{U} \Vdash \lnot \varphi\lor \lnot \neg \varphi$.  \qedhere 
  \end{itemize}
\end{enumerate}
\end{proof}

\begin{Cor} \label{Cor:Countermodel_WMP}
$X_{U} \nVdash \WMP_\RR$.
\end{Cor}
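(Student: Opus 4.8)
The plan is to deduce the corollary from the two parts of the preceding Proposition together with the decomposition $\MP \iff \MPv + \WMP$ recorded just before Proposition~\ref{Pro:equivs_of_MPv}. First I would note that, by part~2 of the preceding Proposition, $X_{U} \Vdash \WLEM$.

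The key intermediate step is to check that $\WLEM$ implies $\MPv_\RR$ over intuitionistic logic. Given $x \in \RR$ with $0 \lessdot x$ and an arbitrary $y \in \RR$, I would observe that the conjunction $y \leqslant 0 \land x \leqslant y$ would yield $x \leqslant 0$, i.e.\ $\lnot(0 < x)$, contradicting $0 \lessdot x$; hence $\lnot\bigl(\lnot(0<y) \land \lnot(y<x)\bigr)$. Applying \WLEM in its De Morgan form \ref{DM1} (Proposition~\ref{Pro:WLEM_equiv_DM1}) then gives $\lnot\lnot(0<y) \lor \lnot\lnot(y<x)$, that is $0 \lessdot y \lor y \lessdot x$. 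Since $y$ was arbitrary, $x$ is pseudo-positive, so $\MPv_\RR$ holds. (Alternatively one could invoke item~\ref{MPo:5} or item~\ref{MPo:8} of Proposition~\ref{Pro:equivs_of_MPv}, which \WLEM yields as an instance of De Morgan's law.) Consequently $X_{U} \Vdash \MPv_\RR$.

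Finally I would argue by contradiction: if $X_{U} \Vdash \WMP_\RR$, then, combining with $X_{U} \Vdash \MPv_\RR$ and the immediate implication $\MPv_\RR + \WMP_\RR \implies \MP_\RR$ (a pseudo-positive real is in particular almost positive, and $\WMP$ upgrades it to positive), we would obtain $X_{U} \Vdash \MP_\RR$, contradicting part~1 of the preceding Proposition. Hence $X_{U} \nVdash \WMP_\RR$.

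There is really no serious obstacle here; the only point requiring a little care is to make sure the equivalence $\MP \iff \MPv + \WMP$ and the relevant items of Proposition~\ref{Pro:equivs_of_MPv} are used in their real-number rather than sequential forms, which is legitimate since the implication $\MPv_\RR + \WMP_\RR \implies \MP_\RR$ follows directly from the definitions of almost positive and pseudo-positive.
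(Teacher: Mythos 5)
Your argument is essentially the same as the paper's own: the paper's entire proof is the one-line observation ``Since $\MP \iff \MPv \land \WMP$ and $\WLEM \implies \MPv$,'' which, combined with the preceding Proposition ($X_U \Vdash \WLEM$ and $X_U \nVdash \MP_\RR$), yields the corollary. You have simply filled in the routine details of the implication $\WLEM \implies \MPv_\RR$ and of the final contradiction, which is a correct unpacking of the same argument.
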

\begin{proof}
Since $\MP \iff \MPv \land \WMP$ and $\WLEM \implies \MPv$.
\end{proof}

\begin{Cor} 
$ \nvdash \WLPO \implies \MP_\RR$.
\end{Cor}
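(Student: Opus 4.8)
The plan is to obtain this as an immediate corollary of the topological model $X_{U}$ from the preceding proposition, so the proof should be only a few lines. First I would record the simple observation that $\WLEM$ implies $\WLPO$: if $\varphi$ is any statement for which double negation elimination holds (and every statement of the form $\fa{n \in \NN}{a_n = 0}$, as well as $x = 0$ or $x = y$ for real numbers $x,y$, is such a statement, equality of reals being stable), then $\WLEM$ gives $\lnot \varphi \lor \lnot\neg\varphi$, and stability of $\varphi$ turns $\lnot\neg\varphi$ into $\varphi$, yielding $\varphi \lor \lnot\varphi$. Hence $\WLEM$ delivers $\WLPO$ in its strongest, real-number form without any appeal to countable choice.

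Next I would combine this with the two parts of the preceding proposition. Part (2) says $X_{U} \Vdash \WLEM$, so by the observation just made $X_{U} \Vdash \WLPO$ (real version). Part (1) says $X_{U} \nVdash \MP_\RR$. Since $X_{U}$ is a full topological model and therefore validates intuitionistic logic together with $\mathrm{IZF}$, any formula derivable over intuitionistic logic is validated by $X_{U}$; in particular, if $\WLPO \implies \MP_\RR$ were derivable, then $X_{U} \Vdash \WLPO$ would force $X_{U} \Vdash \MP_\RR$, contradicting part (1). Therefore $\nvdash \WLPO \implies \MP_\RR$, which is exactly the claim.

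I do not expect any real obstacle here, since all the genuine work was already done in establishing the preceding proposition (the continuity of the function $n \mapsto 2^{-n}$, $\omega \mapsto 0$ on $X_{U}$ using non-principality of the ultrafilter, and the three-case analysis showing $X_{U} \Vdash \WLEM$). The only point deserving a sentence of care is the real-versus-sequential distinction for the omniscience principles, but this is harmless: $\WLEM$ gives the real form of $\WLPO$ directly via stability of equality of reals, and $\MP_\RR$ is already stated in its real form, so no version of choice enters the argument.
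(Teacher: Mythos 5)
Your proposal is correct and follows essentially the same route as the paper: the paper's proof is simply the one-line remark ``Since $\WLEM \implies \WLPO$'', and you have merely unpacked that implication (via stability of $\Pi^0_1$-statements / of real equality) and made the soundness step explicit. Nothing substantively different.
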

\begin{proof}
Since  $\WLEM \implies \WLPO$.
\end{proof}

\begin{Rmk}
The space $X_{U}$ is not first countable.
\end{Rmk}
\begin{proof}
Assume that $U_{n}$ is a base of open neighbourhoods of $\omega$. We may assume that $U_{n} \supset U_{n+1}$. Since $\mathcal{U}$ contains cofinite, and since $\mathcal{U}$ does not contain the empty set, sets we can find a strictly increasing sequence $y_{n}$ of natural numbers and a strictly increasing $f:\NN \to \NN$, such that $y_{n} \in U_{f(n)}$ and $y_{n} \notin U_{f(n+1)}$. Now consider the set $A = \set{y_{2n}}{n \in \NN}$. Either $A \in \mathcal{U}$ or $A \notin \mathcal{U}$. We will only treat the first case, since the second one can be handled similarly. Then $\menge{\omega} \cup A$ is open and should hence contain $U_{N}$ for some $N \in \NN$. Consider $M$ such that $f(2M+1) > N$. Since $y_{2M+1} \in U_{f(2M+1)} \subset U_{N} \subset A$, we get a contradiction.
\end{proof}

The following is actually a special case of \cite{mF79}*{Theorem 3.2}, where it is shown that in any spatial topoi $\CS$ is (cover) compact. Nevertheless, in the spirit of this section, a direct proof seems well fitted.
\begin{Pro} \label{Pro:TopmodFanf}
In any topological model \FANf~holds.
\end{Pro}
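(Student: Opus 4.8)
The plan is to reduce the internal statement to the classical fan theorem applied in the (classical) meta-theory, using the fact that in a topological model the finite binary sequences and natural numbers are ``locally standard'' (Proposition~\ref{Pro:ext_nat}). Fix an internal name $B$ with $T \Vdash$ ``$B$ is a bar'', and for each \emph{external} $u \in \cS$ put $V_u = \ext{\breve{u} \in B} \in \tau$. First I would observe that applying the bar hypothesis to the constant internal sequences $\breve{\alpha}$, for external $\alpha \in \CS$, forces
\[ \bigcup_{n \in \NN} V_{\overline{\alpha}n} = T \qquad \text{for every external } \alpha \in \CS \ ; \]
this uses that $\overline{\breve{\alpha}}\,\breve{n} = \breve{\overline{\alpha}n}$ is provable, that $\ext{\ex{n}{\overline{\breve{\alpha}}n\in B}} = \bigcup_n V_{\overline{\alpha}n}$, and that $\Interior{X} = T$ implies $X = T$.

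Next I would fix an arbitrary point $t_0 \in T$ and form the \emph{external} set $B^{t_0} = \set{u \in \cS}{t_0 \in V_u}$. By the previous paragraph $B^{t_0}$ is a bar in the meta-theory, so by the classical fan theorem (the contrapositive of K\"onig's lemma, available because the meta-theory is classical) there is $N \in \NN$ such that for every $\alpha \in \CS$ there is $n \leqslant N$ with $\overline{\alpha}n \in B^{t_0}$. Set
\[ W = \bigcap_{s \in 2^N} \bigcup_{n \leqslant N} V_{\overline{s}n} \ , \]
a finite intersection of open sets, hence open, with $t_0 \in W$ by the choice of $N$.

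The third and main step is to check that $W \subseteq \ext{\fa{\alpha \in \CS}{\ex{n \leqslant N}{\overline{\alpha}n \in B}}}$; granting this, $W$ is an open neighbourhood of $t_0$ inside $\ext{\text{``}B\text{ uniform''}}$, and since $t_0$ was arbitrary we get $T \Vdash$ ``$B$ is uniform''. To verify it, let $\alpha$ be \emph{any} internal Cantor sequence and $t \in W$. One uses the routine fact (from the Heyting axioms, decidable equality of naturals, and finiteness of $2^N$) that internally $2^{\breve N} = \set{\breve{s}}{s \in 2^N}$, so the opens $\ext{\overline{\alpha}\,\breve{N} = \breve{s}}$, $s \in 2^N$, cover $T$; pick $s$ with $t \in \ext{\overline{\alpha}\,\breve{N} = \breve{s}}$. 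Then $t \in W \subseteq \bigcup_{n \leqslant N} V_{\overline{s}n}$, so $t \in V_{\overline{s}n}$ for some $n \leqslant N$; combining $\ext{\overline{\alpha}\,\breve{N} = \breve{s}} \subseteq \ext{\overline{\alpha}\,\breve{n} = \breve{\overline{s}n}}$ with the equality-respecting property of the predicate $\in$ gives $t \in \ext{\overline{\alpha}\,\breve{n} \in B}$. Using Proposition~\ref{Pro:ext_nat} to identify internal $m \leqslant \breve{N}$ with external ones, this places $t$ in $\ext{\ex{m \leqslant N}{\overline{\alpha}m \in B}}$ for every internal $\alpha$; since $W$ is open, $W \subseteq \Interior{\bigcap_{\alpha} \ext{\ex{m \leqslant N}{\overline{\alpha}m \in B}}} = \ext{\fa{\alpha}{\ex{m \leqslant N}{\overline{\alpha}m \in B}}}$.

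The only genuinely non-formal ingredient is the classical fan theorem in the meta-theory; the expected obstacle is purely bookkeeping in the third step --- making precise that internal elements of $2^N$, $\NN$ and $\cS$ reduce locally to standard ones and that truncation behaves as expected, so that the \emph{pointwise} uniform bound $N$ obtained from $B^{t_0}$ really witnesses the internal $\ex{N}{\dots}$. Everything else is unwinding the topological interpretation of $\forall$, $\exists$, $=$ and $\in$. This gives a direct proof of the special case of Fourman's theorem \cite{mF79} that $\CS$ is cover compact in any spatial topos.
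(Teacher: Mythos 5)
Your proof is correct and is essentially the same argument as the paper's: fix an arbitrary point $t$, form the external set $B_t$ of finite sequences whose membership in $B$ is forced at $t$, show it is a bar using local exportation of naturals (Proposition~\ref{Pro:ext_nat}), apply the classical fan theorem in the meta-theory to get a uniform bound $N$, and then check that $N$ is forced to be a uniform bound on an open neighbourhood of $t$. The one place you go beyond the paper is the last step: the paper declares it ``straightforward'' that $t \in \ext{\text{``}N_t\text{ is a uniform bound for }B\text{''}}$, whereas you explicitly exhibit the open neighbourhood $W = \bigcap_{s \in 2^N}\bigcup_{n\leqslant N}V_{\overline{s}n}$ and verify the forcing, using that internal elements of $2^{\breve N}$ reduce locally to the finitely many external ones --- which is exactly the content the paper is compressing. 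So this is the same proof, with the claimed-straightforward step written out.
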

\begin{proof}
Let $T$ be a topological model such that 
\[ T \Vdash  \textrm{``$B$ is a bar''} \ . \]
Now consider an arbitrary $t \in T$.\footnote{This is the exact point, at which this argument fails for certain Heyting valued models; namely for some which are not spatial.} Define an external set $B_t \subset \cS$ by 
\[ u \in B_t \iff t \in \ext{ u \in B}  \ .\]
$B_t$ is a bar, since we can internalise any $\alpha \in \CS$ and know that 
\[ t \in \ext{ \ex{n \in \NN}{\overline{\alpha}n \in B}} \ .\] We can externalise this $n$ locally by Proposition \ref{Pro:ext_nat}, so there exists $n \in \NN$ such that 
\[ t \in \ext{ \overline{\alpha}n \in B} \ .\]
Externalising again, we can conclude that $\overline{\alpha}n \in B_t$, which means that $B_t$ is a bar. Using the Fan theorem in the meta-theory we can therefore find a uniform bar $N_t$ for $B_t$. It is straightforward to show that 
\[ t \in \ext{\text{``$N_t$ is a uniform bound for $B$''}}  \ .\]
Altogether
\[T = \bigcup_{n \in \NN} \ext{\text{``$N$ is a uniform bound for $B$''}} \ , \]
which means that 
\[T = \ext{\text{``$B$ is uniform''}} \ . \qedhere \]
\end{proof}

\begin{Pro}
If $(X,\rho)$ is a metric space, then the topological model over $X$ satisfies the following real number version of \KS: for all statements $\varphi$ there exists $x$ such that 
\[ X \Vdash \varphi \iff x > 0  \ .\]
\end{Pro}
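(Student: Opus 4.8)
The plan is to exploit the fact that in a topological model over a space $X$, every open set $V$ is the extension of some proposition $P_V$ (by fullness), and conversely every proposition has an open set as its extension. So the task reduces to showing: for every open $V \subseteq X$, there exists a continuous function $f \colon X \to \RR$ such that $\ext{P_V} = V$ coincides, as a truth value, with $\ext{x_f > 0} = \set{t \in X}{f(t) > 0}$. By Proposition \ref{Pro:int_ext_reals} together with fullness, real numbers in the model correspond exactly to continuous functions $X \to \RR$, and $\ext{x_f > 0} = \set{t}{f(t) > 0}$. Hence what I need is: for every open $V$ in a metric space $X$, there is a continuous $f \colon X \to \RR$ with $\set{t}{f(t)>0} = V$.

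First I would recall the standard fact from metric topology that this is precisely where metrisability is used: given an open set $V$ with complement $C = X \setminus V$ (which is closed), the function
\[ f(t) = \rho(t, C) = \inf\set{\rho(t,c)}{c \in C} \]
is (uniformly) continuous, and $f(t) > 0$ if and only if $t \notin C$, i.e. if and only if $t \in V$. (One must handle the degenerate case $C = \emptyset$ separately, taking $f \equiv 1$, so that $\set{t}{f(t)>0} = X = V$.) This gives exactly $\set{t}{f(t) > 0} = V$. Note that constructively, in the meta-theory over a metric space, the distance-to-a-closed-set function need not always exist as a located distance, but since we are explicitly working with classical logic in the meta-theory for this entire section (as emphasised in the excerpt), $\rho(t,C)$ is well-defined.

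Then I would assemble the argument: given an arbitrary statement $\varphi$, its extension $\ext{\varphi}$ is some open set $V \in \tau$. Take $f = \rho(\cdot, X \setminus V)$ as above (or $f \equiv 1$ if $X \setminus V = \emptyset$), and let $x$ be the real number named by $f$, whose existence is guaranteed by fullness. Then by Proposition \ref{Pro:int_ext_reals}, $\ext{x > 0} = \set{t}{f(t) > 0} = V = \ext{\varphi}$. By Lemma \ref{Lem:topmodels_impl}, two formulas with equal extensions are provably equivalent in the model, so $X \Vdash \varphi \iff x > 0$, which is what we wanted.

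I do not expect a serious obstacle here; the content is essentially packaging the distance-function trick inside the topological-model framework. The one point requiring a little care is making sure the correspondence between internal reals and continuous functions goes in the direction we need (from a continuous function $f$ we get a \emph{name} for a real $x$ with $f_x = f$), which is exactly the content of the fullness assumption as stated in the Proposition on full models, so it is available. A second minor point is the empty-complement case, handled by the constant function $1$. Everything else is routine.
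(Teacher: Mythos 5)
Your proposal is correct and follows essentially the same route as the paper's: both reduce the claim, via fullness and Proposition \ref{Pro:int_ext_reals}, to producing for each open set $V$ a continuous $f \colon X \to \RR$ with $\set{t}{f(t) > 0} = V$, realised as a distance function and with the degenerate case handled separately. (The paper's formula is printed as $f(t) = \inf_{u \in U} \rho(t,u)$, which appears to be a typo for the distance to the complement $X \setminus U$ --- exactly the $\rho(\cdot, X \setminus V)$ you use; with the formula as printed, every point of $U$ itself would be sent to $0$, giving the wrong set.)
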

That means, that if $X$ also validates countable choice, then $X \Vdash \KS$.
\begin{proof}
This is the case, since if $U$ is an inhabited open set, we can write it as
	\[ U = \set{t}{f(t) > 0} \ , \]
where $f(t) = \inf_{u \in U} \rho(t,u)$. If $U$ is empty, we can simply use $f = 0$.
\end{proof}

\begin{Rmk}
M.~Hendtlass has communicated\footnote{unpublished work} to us the result that $X$ satisfies (the sequence version of) \KS  if and only if every open set in $X$ is a countable union of clopens. This provides an alternative way of seeing that $\BS$ satisfies \KS.
\end{Rmk}

\begin{Cor} \label{Cor:BS_nval_IIIa}
	$\BS \nVdash \IIIa$.
\end{Cor}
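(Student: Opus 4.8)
The plan is to derive a contradiction from the assumption that $\BS \Vdash \IIIa$, by combining two facts proved earlier in the excerpt. The first is the preceding Proposition, which shows that over a metric space $X$ the topological model satisfies the real-number version of Kripke's Schema: for every statement $\varphi$ there exists a real $x$ with $X \Vdash \varphi \iff x > 0$; applying this to Baire space $\BS$ (which is metric) gives us this form of \KS. The second is Proposition \ref{Pro:PFP_and_IIIa_impl_WLPO}, which states $\PFP + \IIIa \implies \WLPO$ — or, more directly, the Remark following Corollary \ref{Cor:Countermodel_WMP} and the discussion around Proposition \ref{Pro:MPvWPFPequivWLPO}. The cleanest route is: $\KS$ (even just the real-number version, together with whatever choice is available in topological models over metric spaces, or by working directly with the real-number formulations) implies $\PFP$; hence $\BS \Vdash \PFP$; combined with the assumed $\BS \Vdash \IIIa$ this yields $\BS \Vdash \WLPO$.

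Next I would show that $\BS \nVdash \WLPO$, which is the contradiction. This should follow from the structure of the full topological model over $\BS$: consider a real $x$ represented by a suitable continuous function $f : \BS \to \RR$, or more directly a binary sequence represented by a continuous (hence, on $\BS$, not globally constant) function, chosen so that $\ext{\fa{n}{a_n = 0}}$ and $\ext{\lnot\fa{n}{a_n=0}}$ do not cover $\BS$. Concretely, take the internal sequence $(a_n)$ where $a_n$ is determined locally by the first $n$ coordinates of the generic point, so that $\ext{\fa{n}{a_n = 0}} = \{\zero\}$'s interior $= \emptyset$ while $\ext{\lnot \fa{n}{a_n=0}}$ is the complement of the closure of $\{\zero\}$, which is not all of $\BS$. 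Then $\BS \nVdash \fa{n}{a_n=0} \lor \lnot \fa{n}{a_n = 0}$, so \WLPO fails. Alternatively, invoke that $\BS$ validates continuous choice-style behaviour or cite that the model over $\BS$ refutes \WLPO by a standard argument, since a discontinuous function $\BS \to \NN$ would be needed (Proposition \ref{Pro:WLPO-Equiv}).

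The main obstacle I anticipate is bookkeeping about the sequential-versus-real distinction emphasised in Subsection \ref{SubS:ReverseReverse}: the preceding Proposition gives the \emph{real} version of \KS, and one must make sure the implication $\KS \implies \PFP$ and $\PFP + \IIIa \implies \WLPO$ and the failure of \WLPO are all formulated consistently (either all in the real version, or using the fact that $\BS$ validates countable choice — per the Remark, $\BS$ satisfies full \KS since every open set in $\BS$ is a countable union of clopens — which collapses the distinction). If $\BS \Vdash \ACC$, then $\BS \Vdash \KS$ outright, $\PFP$ and \WLPO take their usual forms, and the argument is immediate. So the proof I would write is short: "By the previous Proposition and the Remark following it, $\BS \Vdash \KS$, hence $\BS \Vdash \PFP$. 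If also $\BS \Vdash \IIIa$, then by Proposition \ref{Pro:PFP_and_IIIa_impl_WLPO} we would have $\BS \Vdash \WLPO$, contradicting the fact that the full model over $\BS$ does not validate \WLPO (a discontinuous function $\BS \to \NN$, equivalently \WLPO by Proposition \ref{Pro:WLPO-Equiv}, is not available there). Hence $\BS \nVdash \IIIa$."
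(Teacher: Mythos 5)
Your final argument is correct and essentially the same as the paper's: $\BS\Vdash\KS$ (Hendtlass' characterisation, since every open in $\BS$ is a countable union of clopens), hence $\BS\Vdash\PFP$, so $\BS\Vdash\IIIa$ would force $\BS\Vdash\WLPO$ by Proposition~\ref{Pro:PFP_and_IIIa_impl_WLPO}, contradicting $\BS\nVdash\WLPO$. One caution about your attempted concrete refutation of $\WLPO$ in $\BS$: if you pick $(a_n)$ so that $\ext{\fa{n}{a_n=0}}$ is the (empty) interior of $\{\zero\}$, then $\ext{\lnot\fa{n}{a_n=0}}=\Interior{\Complement{\emptyset}}=\BS$, not the complement of $\overline{\{\zero\}}$, so the disjunction is actually valid for that particular sequence; a working witness needs $\ext{\fa{n}{a_n=0}}$ to be a \emph{nonempty} open set that is not clopen (so that it has a genuine boundary point), not one with empty truth value. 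Since your final write-up just cites the standard fact that $\BS\nVdash\WLPO$ rather than relying on that construction, this does not affect the correctness of the proposal, which matches the paper's proof step for step.
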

\begin{proof}
Since $\PFP \vdash \IIIa \implies \WLPO$, and $\BS \Vdash \KS$, as mentioned above, and therefore $\BS \Vdash \PFP$, but $\BS \nVdash \WLPO$.
\end{proof}

%\subsection*{\thesubsection{½} More models by Hendtlass and Lubarsky} \addcontentsline{toc}{subsection}{\thesubsection{½} More models by Hendtlass and Lubarsky}
\todo{Future Work: Write Subsection more models}

\subsection{Overview}
The following quick overview might be useful to compare the big three varieties and three topological models, discussed above.
\begin{center}	
{\rowcolors{1}{}{mylightgray}
\begin{tabular}{lcccccccc}
\toprule
 & \LEM & \WLEM &  \LPO & \WLPO & \LLPO & \MP & \WMP & \MPv \\ \midrule
\CLASS & \cmark & \cmark & \cmark & \cmark & \cmark & \cmark & \cmark & \cmark \\
\INT & \xmark & \xmark & \xmark & \xmark & \xmark & \xmark & \cmark & \xmark \\
\RUSS & \xmark & \xmark  & \xmark & \xmark & \xmark & \cmark & \cmark & \cmark \\
$\RR$\footnote{Note that, since this space does not validate $\mathrm{DC}$ this row refers to the real-number versions the principles, where applicable.} & \xmark & \xmark & \xmark & \xmark & \xmark & \xmark & \cmark & \xmark  \\
$\Sigma$ &  \xmark & \cmark & \cmark & \cmark & \cmark & \cmark & \cmark & \cmark \\
$X_{U}$ &  \xmark & \cmark & \xmark & \cmark & \cmark & \xmark & \xmark & \cmark \\
\bottomrule
\end{tabular}}
\end{center}
We conclude this section by pointing out that topological models have been used to give models that 
\begin{itemize}
  \item do not satisfy \BDN, and \BD \cite{rL12},
  \item separate \LPO, \WLPO, \MP and various variations of these \cite{mH15},
  \item separate various principles below \BDN \cite{bL11} (see Section \ref{Sec:belowBDN}).
\end{itemize}

Analysing the definition of a topological model one can see that the points of the underlying space are actually never mentioned, and that only the open sets are what matters. Therefore, we can just as well consider models over algebras that behaves like the open sets of a topological space do. Such algebras are known as \define{Heyting algebras} and have, indeed, been used for this purpose. Since there are Heyting algebras that cannot be defined in terms of the open of a topological space this is a richer model theory. These non-spatial Heyting valued models can be used to separate \FANf and \FANP \cite{bL11b}. 

\section{Realizability and Other Methods}

In a \define{realizability model} we extend intuitionistic logic by allowing witnesses of statements to be attached to  statements. This is, in particular, interesting in a constructive context, where we want to attach computable objects (``realizers'') that describe the computational content of a formula. For example, we would like to express the fact that a formula $\ex{m \in \NN}{P(\vec{x},m)}$ is realized (witnessed) by  a computable function $f$ when $P(\vec{x},f(\vec{x}))$ holds for all $\vec{x}$. There are many different ways to fix details of how the logical connectives and quantifiers are handled, leading to very different interpretations.\footnote{To mention some of the more famous realizability interpretations: there is Kleene's original version, modified realizability, Scott's graph model, models based on relativized computations, \dots} Most realizability models have a distinct recursive flavour and so we can say that, vaguely speaking, realizability models are to \RUSS what topological models are to \INT. 
We will not go into any details and only point the reader to \cite{Beeson1985}*{Chapter 7} and \cite{jvO08} for further details. 

Realizability models have been used to show that \BDN is not derivable in intuitionistic logic \cite{pL04}, and we would conjecture that they are the natural tool to separate \SinC, \KT, \SS.

As in the case of topological models, with realizability models one generally works in a classical meta-theory. This also means that there are strange realizability models, such as one based on infinite time turing machines \cite{aB15}, in which non-constructive principles like \LPO are validated. The model mentioned also has the curious property that $\BS$ is countable.

Finally, one can also use various proof interpretations \cite{uK08}, which can also be seen as types of realizabilities, to separate principles. This, for example and as was mentioned in the respective chapter, was used to show that \WMP is not derivable \cite{uK02}, which implies that \WLPO does not imply \MP.

\chapter{Bits'n'Pieces} \label{Ch:bitsnpieces}
\section{\texorpdfstring{\LLPOn{n}}{LLPOn}} \label{Sec:LLPOn}
In \cite{fR90} Richman introduced a natural weakening of \LLPO. 
\begin{principle}[LLPOn]{\LLPOn{n}} \label{PR:LLPOn} 
If $(P_{i})_{1 \leqslant i \leqslant n}$ is a partition of $\NN$ and $(a_n)_{n \geqslant 1}$ is a binary sequence with at most one $1$, then there exists $m$ such that  $a_{i} = 0$ for all $i \in P_{m}$.
\end{principle}
In \cite{fR02} we find the following version
\begin{principle}[LLPOnp]{\LLPOnp{n}} \label{PR:LLPOnp}
If  $(a_n)_{n \geqslant 1}$ is a binary sequence with at most one $1$, then there exists $0 \leqslant m < n$ such that  $a_{ni + m} = 0$ for all $i \in \NN$.
\end{principle}
It is clear that the first version implies the second if one considers the partition
\[ P_{m} = \set{mn+r}{ r \in \NN } \ .\]
A proof of the converse is not completely straightforward and is, to our knowledge, nowhere to be found in the literature, so we include one here.
\begin{Pro}
\LLPOnp{n} implies  \LLPOn{n} for every $n \in \NN$.
\end{Pro}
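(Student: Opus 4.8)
The plan is to show that finitely many applications of $\LLPOnp{n}$ suffice to simulate an arbitrary partition of $\NN$ into $n$ pieces. The key idea is that $\LLPOnp{n}$ handles the specific ``round-robin'' partition $Q_m = \set{ni+m}{i \in \NN}$, so I would like to reduce the general partition $(P_1,\dots,P_n)$ to this one. Given a binary sequence $(a_k)_{k \geqslant 1}$ with at most one $1$, I will build a new binary sequence $(b_k)_{k \geqslant 1}$ by relocating the unique potential $1$ so that the index where $a_k = 1$ (if there is such a $k$, lying in some $P_j$) gets mapped to an index congruent to $j \bmod n$. Concretely, I would fix, for each residue class $j \in \{0,1,\dots,n-1\}$, an enumeration of $P_{j+1}$ (or rather a way to tell, for a given index $k$, which $P_j$ it belongs to — this is decidable since a partition into finitely many decidable pieces is decidable) and set $b_k = 1$ iff there exists $\ell \leqslant k$ with $a_\ell = 1$ and $\ell \in P_{(k \bmod n)+1}$ and $\ell$ is ``the'' such index in the appropriate position. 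The cleanest implementation: let $b_k$ record that $a_\ell = 1$ for the (unique if it exists) $\ell$ that is the $\lfloor k/n\rfloor$-th element of $P_{(k\bmod n)+1}$. Then $(b_k)$ still has at most one $1$, and $b_k = 1$ for some $k \equiv j-1 \pmod n$ iff $a_\ell = 1$ for some $\ell \in P_j$.

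Next I would apply $\LLPOnp{n}$ to $(b_k)$ to obtain $m$ with $0 \leqslant m < n$ such that $b_{ni+m} = 0$ for all $i$. By the construction of $(b_k)$ this means $a_\ell = 0$ for all $\ell \in P_{m+1}$, which is exactly the conclusion of $\LLPOn{n}$ with witnessing index $m+1$. So in fact a single application of $\LLPOnp{n}$ suffices once the right reindexing sequence $(b_k)$ is set up; the work is all in verifying that $(b_k)$ is well-defined, has at most one $1$, and correctly tracks which piece contains the (potential) $1$ of $(a_k)$.

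The main obstacle — and the reason the converse is ``not completely straightforward'' — is the bookkeeping needed to define $(b_k)$ constructively without countable choice or any omniscience. One must check: (i) the partition being into $n$ decidable pieces makes ``which $P_j$ does $k$ lie in'' a decidable question, so we can compute, for each $k$, the pair $(j, \text{position of } k \text{ within } P_j)$; (ii) conversely, given a residue $r = k \bmod n$ and a count $c = \lfloor k/n \rfloor$, we can decide whether the $c$-th element of $P_{r+1}$ exists and, if so, find it, and then look up $a$ at that index — all decidable, bounded computations; (iii) since $(a_k)$ has at most one $1$, at most one index $\ell$ has $a_\ell = 1$, and that $\ell$ lies in exactly one $P_j$ at exactly one position, so $b_k = 1$ for at most one $k$. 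One subtlety: a piece $P_j$ might be finite or even empty, so ``the $c$-th element of $P_{r+1}$'' may not exist; in that case simply set $b_k = 0$, which is harmless because the $1$ of $(a_k)$, if it exists, lies at a genuine position of a genuine piece. I would also remark that $\LLPOn{1}$ and $\LLPOnp{1}$ are both trivially true, and that for $n = 2$ this recovers the classical $\LLPO$, so the proposition is only interesting for $n \geqslant 3$; the argument above is uniform in $n$ regardless.
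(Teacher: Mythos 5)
Your proposal is correct and follows the same overall strategy as the paper: construct a single new binary sequence $(b_k)_{k \geqslant 1}$ with at most one $1$ that ``relocates'' the potential $1$ of $(a_k)$ to a residue class recording which piece it lies in, and then apply $\LLPOnp{n}$ exactly once. Where you differ is in the encoding of $(b_k)$. You propose $b_{cn+r} = 1$ iff $a_\ell = 1$ for $\ell$ the $c$-th element of $P_{r+1}$, which forces you to enumerate each piece, worry about whether a ``$c$-th element'' exists, and explicitly patch up the case of finite or empty pieces. The paper avoids all of this with a cleaner reindexing: it keeps the original index $r$ as the row and records the piece membership in the column, setting
\[ b_{rn+i} = \begin{cases} 1 & \text{if } r \in P_i \text{ and } a_r = 1, \\ 0 & \text{otherwise,} \end{cases} \]
so that $(b_m)$ is automatically well-defined and has at most one $1$, and $b_{ni+m} = 0$ for all $i$ immediately gives $a_i = 0$ for all $i \in P_m$. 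The membership test $r \in P_i$ is all that is needed (no enumeration of the pieces), and the bookkeeping you flagged as the ``main obstacle'' largely evaporates. Both versions work and give the same single-application reduction; the paper's is just the more economical choice of indexing.
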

\begin{proof}
Let $(P_{i})_{1 \leqslant i \leqslant  n}$ be a partition of $\NN$ and $(a_{m})_{m \geqslant 1}$ a binary sequence with at most one $1$. Define a binary sequence $(b_{m})_{m \geqslant 1}$ by 
\[ 
b_{rn+i} = \begin{cases}
1 & \text{if }  r \in P_{i}  \text{ and } a_{r} = 1  \\
0 & \text{otherwise,}
\end{cases}
\]
for every $r \in \NN$ and $i \in \menge{0, \dots, n-1}$. Then $(b_{m})_{m \geqslant 1}$ has at most one $1$. Thus there exists $m$ such that $b_{ni +m} =0$ for all $i \in \NN$. Hence, by the definition of the sequence, there cannot be $i \in P_{m}$ with $a_{i} = 1$.
\end{proof}
The reader has probably already noticed that \LLPOn{2} is simply \LLPO.

\begin{Pro} \label{Pro:LLPOn-equiv-real}
\LLPOn{n} is equivalent to the statement that if $x_{1}, \dots, x_n$ are real numbers such that $x_{i}x_{j}=0$ for $i \neq j$, then there is $m$ such that $x_{m}=0$.
\end{Pro}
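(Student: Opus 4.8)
The plan is to prove the equivalence in both directions, following the template of the proof that $\LLPO$ is equivalent to $\RR$ being an integral domain (Proposition \ref{Pro:equivs_of_LLPO}.\ref{item:LLPO-5}), which is the case $n=2$. For the forward direction, suppose $\LLPOn{n}$ holds and let $x_1,\dots,x_n$ be reals with $x_ix_j=0$ for all $i\neq j$. First I would, using countable choice, build for each pair $i\neq j$ and each $k\in\NN$ a decision of whether $\abs{x_i}<2^{-k}$ or $\abs{x_j}<2^{-k}$ — this is possible because $x_ix_j=0$ forces $\min\{\abs{x_i},\abs{x_j}\}=0$, so for any $k$ at least one of them is below $2^{-k}$ and we can test which. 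The bookkeeping then needs to be packaged so that $\LLPOn{n}$ can be applied: one natural way is to form a single binary sequence with at most one $1$ encoding, roughly, ``the first index $i$ (in some enumeration of time-stamped events) at which $\abs{x_i}$ is witnessed to exceed some threshold,'' together with a partition of $\NN$ into $n$ classes $P_1,\dots,P_n$ so that class $P_m$ collects exactly the events pertaining to $x_m$. Applying $\LLPOn{n}$ yields $m$ with $a_i=0$ for all $i\in P_m$, which should translate into $\abs{x_m}<2^{-k}$ for all $k$, i.e.\ $x_m=0$. The subtlety is to make this encoding genuinely yield ``at most one $1$'': since $x_ix_j=0$ for all pairs, at most one of the $x_i$ can be nonzero, so the underlying non-determinism collapses, and a careful construction (flagging the first moment any $\abs{x_i}$ is seen to be large, then staying flagged) will have at most one flagged event.

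For the converse, suppose the real-valued statement holds, and let $(a_n)_{n\geqslant 1}$ be a binary sequence with at most one $1$, together with a partition $P_1,\dots,P_n$ of $\NN$. I would define $n$ real numbers by a series-type construction analogous to the $n=2$ case: for $1\leqslant m\leqslant n$ set
\[ x_m = \sum_{i\in P_m} \frac{a_i}{2^i} \]
(using that $P_m$ is a detachable subset of $\NN$, so the partial sums are computable and the series converges). Because $(a_n)_{n\geqslant 1}$ has at most one $1$, and the $P_m$ are pairwise disjoint, at most one of the $x_m$ is nonzero; more precisely, for $m\neq m'$ we have $x_mx_{m'}=0$, since any single $1$ lands in exactly one class. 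By hypothesis there exists $m$ with $x_m=0$, and $x_m=0$ forces $a_i=0$ for all $i\in P_m$ (if some $a_i=1$ with $i\in P_m$ then $x_m\geqslant 2^{-i}>0$). This is exactly the conclusion of $\LLPOn{n}$.

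The main obstacle I expect is the forward direction — specifically, turning the pairwise condition $x_ix_j=0$ into a clean application of $\LLPOn{n}$ with a genuinely ``at most one $1$'' sequence and a concretely specified partition. One has to be careful that the events being partitioned are indexed so that $\LLPOn{n}$'s output ``$a_i=0$ for all $i\in P_m$'' really does pin down a single $x_m$ that is zero, rather than merely giving partial information; iterating Ishihara-style tricks or a direct timestamp-and-freeze construction (as used elsewhere in the paper, e.g.\ in the proofs around Proposition \ref{Pro:LPO<->IPP}) should handle this, but the details of the encoding are where the real work lies. The converse, by contrast, is essentially the routine series construction already familiar from Proposition \ref{Pro:equivs_of_LLPO}, with detachability of the partition classes being the only point that needs a remark.
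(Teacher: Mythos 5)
Your converse direction is complete and, if anything, slightly more direct than the paper's: you prove \LLPOn{n} for a general partition $P_1,\dots,P_n$ by setting $x_m = \sum_{i\in P_m} a_i/2^i$ (detachability of each $P_m$ makes the partial sums computable, and the single possible $1$ in $(a_n)_{n\geqslant1}$ lands in at most one class, giving $x_m x_{m'}=0$), whereas the paper proves the residue-class special case \LLPOnp{n} via $x_i = \sum_m a_{in+m}/2^m$ and then leans on the separately established equivalence $\LLPOnp{n} \iff \LLPOn{n}$. Your route avoids that indirection.

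For the forward direction, which you flag as the real work, the paper's construction is cleaner than a cross-index ``flag the first moment any $|x_i|$ is seen to be large'' search, and it avoids the need for Ishihara-style tricks entirely. One builds, independently for each $i$, an increasing flagging sequence $(a^{(i)}_m)_{m\geqslant 1}$ with $a^{(i)}_m=0 \Rightarrow |x_i| < 2^{-m}$ and $a^{(i)}_m=1 \Rightarrow |x_i| > 2^{-m-1}$, then passes to the difference sequence $b^{(i)}_m = a^{(i)}_{m+1}-a^{(i)}_m$, which automatically has at most one $1$ because each $a^{(i)}$ is increasing. Interleaving the $b^{(i)}$ into a single sequence $c$ by residue class modulo $n$, the at-most-one-$1$ property of $c$ is not built into the construction but inherited from the hypothesis: two $1$s at distinct positions would come either from the same $b^{(i)}$ (impossible, each has at most one) or from $b^{(i)},b^{(j)}$ with $i\neq j$, which forces $|x_i|>0$ and $|x_j|>0$ and hence $x_ix_j>0$, contradicting $x_ix_j=0$. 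Applying \LLPOn{n} (in residue-class form) to $c$ then returns an $m$ with $b^{(m)}\equiv 0$, hence $a^{(m)}\equiv 0$, hence $x_m=0$. There is no ``first occurrence'' bookkeeping and no freezing; the pairwise hypothesis does exactly the work you were trying to engineer by hand.
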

\begin{proof}
Assume \LLPOn{n} and let $x_{1}, \dots, x_n$ are real numbers such that $x_{i}x_{j}=0$ for $i \neq j$. Using countable choice we fix binary increasing sequences $(a^{(i)}_{m})_{m \geqslant 1}$ such that 
\begin{align*}
a^{(i)}_{m} = 0 & \implies x_{i} < \frac{1}{2^{m}} \ ,  \\
a^{(i)}_{m} = 1 & \implies x_{i} > \frac{1}{2^{m+1}} \ .
\end{align*}
Now let $b^{(i)}_{m} = a^{(i)}_{m+1} - a^{(i)}_{m}$, such that for every $i$ the sequence $b^{(i)}$ has at most one $1$ and that only if $a^{(i)}$ has one.  Finally combine all of these into one binary sequence $(c_{m})_{m \geqslant 1}$ by setting
\[ c_{in+ m} = b^{(i)}_{m} \] for every $m \in \NN$ and $1 \leqslant i \leqslant n$. Then $(c_{m})_{m \geqslant 1}$ has at most one $1$: For assume $c_{i} = c_{j} = 1$ with $i \neq j$. Furthermore choose $p,p^{\prime} \in \NN$ and $1 \leqslant r, r^{\prime} \leqslant n$ such that $i = p n+r$ and $j = p^{\prime} n + r^{\prime}$. If $p=p^{\prime}$, then $b^{(p)}_{r} = b^{(p)}_{r^{\prime}}$, which means that $r = r^{\prime}$ and therefore $i = j$; a contradiction. So $p \neq p^{\prime}$ and $b^{(p)}_{r} = b^{(p^{\prime})}_{r^{\prime}} = 1$. Hence $a^{(p)}_{r} = a^{(p^{\prime})}_{r^{\prime}} = 1$, which means that $x_{p} >0$ and $x_{p^{\prime}} >0$. So finally $x_{p} x_{p^{\prime}} > 0$, contradicting our assumptions. An application of \LLPOn{n} yields $1 \leqslant p \leqslant n$ such that $c_{pn + m} = 0$ for all $m \in \NN$. Hence $b^{(p)}_{m} = 0$ for all $m \in \NN$ and therefore $a^{(p)}_{m} = 0$ for all $m \in \NN$. 

Conversely let $(a_{m})_{m \geqslant 1}$ and for $1 \leqslant i \leqslant n$ define a real number 
\[ x_{i} = \sum_{m \geqslant 1} \frac{a_{in+m}}{2^{m}} \ .\]
It is easy to check that these numbers have the desired property. Now if there is $i$ such that $x_{i}=0$, then there cannot be $m \in \NN$ such that $a_{in+m}=1$, since in this case $x_{i}>0$.
\end{proof}

\begin{Rmk}
One might also consider the question whether the following is an interesting statement: 
\begin{quote}
If $x_1, \dots, x_n$ are real numbers such that $x_1 x_2 \dots x_n=0$, then there is a $i$ such that $x_i = 0$
\end{quote}
However this is trivially equivalent to \LLPO. If $x_1 x_2 \dots x_n = 0$, then by \LLPO either $x_1 x_2 \dots x_{n-1}$ or $x_n = 0$. Using \LLPO possibly up to $n-1$ times again we get that $x_1=0$, $x_2 = 0$, $\dots$, or $z=0$. Conversely, if $xy = 0$, then also $x\dots xy =0$, so either $x=0$ or $y = 0$.
\end{Rmk}

A more interesting equivalence is the following:
\begin{Pro}
\LLPOn{3} is equivalent to the statement that whenever $x,y$, and $z$ are real number such that $xyz=0$, 
\[ xy = 0 \lor yz=0 \lor xz = 0 \ .\]	
\end{Pro}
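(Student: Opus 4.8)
The plan is to route both implications through the real‑number reformulation of \LLPOn{3} given in Proposition \ref{Pro:LLPOn-equiv-real}, namely that \LLPOn{3} is equivalent to the assertion that any three reals $x_1,x_2,x_3$ with $x_ix_j=0$ for $i\neq j$ satisfy $x_m=0$ for some $m$. Thus it suffices to prove the triple‑product statement equivalent to this three‑variable orthogonality principle, and the passage to and from the sequence form of \LLPOn{3} is handled once and for all by the cited proposition (which, as elsewhere in the text, uses countable choice).

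For the forward direction I would assume the orthogonality principle and take reals $x,y,z$ with $xyz=0$. Setting $u_1=xy$, $u_2=yz$, $u_3=xz$, one checks that these are pairwise orthogonal: $u_1u_2=(xyz)y=0$, $u_1u_3=(xyz)x=0$, and $u_2u_3=(xyz)z=0$. Applying the principle yields $m$ with $u_m=0$, which is exactly $xy=0 \lor yz=0 \lor xz=0$.

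For the converse I would assume the triple‑product statement and let $x_1,x_2,x_3$ be reals with $x_ix_j=0$ for $i\neq j$. The essential point — and the only step needing a little thought, since the naive choice $x=x_1,\ y=x_2,\ z=x_3$ merely feeds back the hypothesis — is to pass instead to the ``leave‑one‑out'' sums $u=x_2+x_3$, $v=x_1+x_3$, $w=x_1+x_2$. Expanding and using $x_ix_j=0$ for $i\neq j$ gives $uv=x_3^2$, $vw=x_1^2$, $uw=x_2^2$, and hence $uvw=x_3^2(x_1+x_2)=x_3(x_3x_1)+x_3(x_3x_2)=0$. The triple‑product statement then delivers $uv=0 \lor vw=0 \lor uw=0$, i.e. $x_3^2=0 \lor x_1^2=0 \lor x_2^2=0$; since $t^2=0$ implies $t=0$ for any real $t$, one of $x_1,x_2,x_3$ is zero, which is precisely the orthogonality principle.

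The main obstacle is simply spotting the ``leave‑one‑out'' auxiliary triple in the converse; after that everything is a routine commutative‑ring computation. I would also add a sentence of context: the weaker conclusion ``$xy=0 \lor yz=0 \lor xz=0$'' (rather than ``$x=0 \lor y=0 \lor z=0$'') is exactly what keeps the statement at the \LLPOn{3} level rather than collapsing to \LLPO, in contrast with the Remark immediately preceding the proposition.
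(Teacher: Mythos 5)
Your proof is correct, and the forward direction is essentially identical to the paper's (taking the triple $xy, yz, xz$). The converse, however, takes a genuinely different and arguably slicker route. The paper feeds in the absolute differences $a = \abs*{x_1-x_2}$, $b = \abs*{x_2-x_3}$, $c = \abs*{x_1-x_3}$, and then needs two separate contradiction arguments: first to establish $abc = 0$ (via the observation that $a,b,c$ all positive would force at least two of the $x_i$ nonzero), and second to extract $x_i = 0$ from, say, $ab = 0$. Your leave-one-out sums $u = x_2+x_3$, $v = x_1+x_3$, $w = x_1+x_2$ avoid both: the identity $uvw = 0$ falls out of a direct ring computation using $x_ix_j = 0$ (with no appeal to apartness or a reductio), and the extraction step reduces to the single clean observation that $t^2 = 0 \implies t = 0$, which is just stability of equality in disguise ($t \neq 0 \implies t^2 > 0$). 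So your argument replaces two ad hoc contradictions with one algebraic identity plus one use of stability. One minor point of calibration: you frame the whole thing as an equivalence between the triple‑product statement and the real reformulation of Proposition \ref{Pro:LLPOn-equiv-real}, whereas the paper applies the sequence form of \LLPOn{3} directly through that proposition; this is a cosmetic difference, but be aware that the cited proposition (and hence your proof) relies on countable choice, as you correctly note.
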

\begin{proof}
	The forward direction is easy: assume that $x,y,z$ are such that $xyz=0$ and  let $a=xy$, $b = yz$, and $c = xz$. Then $ab=bc=ac=xyz=0$. So we can apply \LLPOn{3} to decide whether $a=0$, $b=0$, or $c=0$; but that is exactly what we wanted to do. 
	
	For the converse let $x,y,z$ be such that 
	\begin{equation}\label{Eqn:YALPO}
	xy=0, yz=0, \text{ and } xz=0 \ .
	\end{equation}
	 Now let $a=\abs*{x-y}$, $b=\abs*{y-z}$, and $c=\abs*{x-z}$. Now we must have $abc=0$, since otherwise $x \neq y$, $y \neq z$, and $x \neq z$. And in that case at least two out of $x,y,z$ must be non-zero contradicting the assumption that their pair-wise product is zero. Thus we can decide whether $ab=0$, $bc=0$, or $ac =0$. Assume that w.l.o.g\ the first one holds. If $ab=0$, then $y =0$. Otherwise the assumption that $y \neq 0$ leads to a contradiction: if $y \neq 0$ we must have $x=z=0$ by Equation \eqref{Eqn:YALPO}, but then $ab=\abs*{y}^2 \neq 0$.
\end{proof}
\begin{Qu}
What about the general principle? Is \LLPOn{n}  equivalent to the statement that whenever $x_1, \dots, x_n$ are reals such that $\prod_{i=1}^n x_i = 0$, there exists $j$ such that 
\[\prod_{i=1, \ i \neq j}^n x_i = 0  \  ?\] 	
The forward direction is trivial. However the converse does not generalise straightforwardly
\end{Qu}

The reason that \LLPOn{n} does not feature more prominently in Constructive Reverse Mathematics is, that it has yet failed to have interesting equivalencies. So far it has actually only been used as a tool to show that certain results are non-construtive \cites{fR90,dB00b}; that is  it has been shown that some statements imply \LLPOn{n}, but not the converse.
The reason given for  \LLPOn{n} to be non-constructive is that it fails in \RUSS \cite{fR90}*{Theorem 5}. An alternative refutation relies on an argument about stable solutions by Beeson, which one could call ``a bit handwavey'' \cite{fR02}*{p. 112}.  Here we will show that it fails in the sheaf model of $\BS$, which also validates dependent choice.
\begin{Pro}
If a metric space $X$ has an accumulation point, then \LLPOn{n} fails in the topological model over $X$.
\end{Pro}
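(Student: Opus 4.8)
The plan is to adapt the construction from the proof of Proposition \ref{Pro:topspace_omin_or_notMPv} (and the parallel \LLPO-failure arguments), but now using an accumulation point to build a single point-wise continuous function that encodes an $n$-fold analogue of the ``cannot decide which of two alternatives'' phenomenon. Since $X$ is a metric space it is in particular first countable and functionally Hausdorff, so around an accumulation point $t_0$ we can fix a countable neighbourhood base $(U_k)_{k\geqslant 1}$ and points $t_k\in U_k$ with $t_k\neq t_0$, together with continuous functions $h_k:X\to[0,1]$ with $h_k(t_0)=0$, $h_k(t_k)=1$. As before, $H(x)=\sum_k 2^{-k}h_k(x)$ is continuous, $H(t_0)=0$, and $H(t_k)\to 0$; passing to a subsequence we may assume $H(t_{k})>H(t_{k+1})$ and pick rationals interleaving the values $H(t_k)$.

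The key new ingredient is to replace the ``above/below the diagonal'' alternation (two residue classes mod $2$) by an alternation through $n$ residue classes. Concretely I would build $n$ continuous functions $g_1,\dots,g_n:X\to\RR$ by composing $H$ with piecewise linear maps $f_1,\dots,f_n:\RR\to\RR$ designed so that: on the ``gap'' around $t_k$ exactly one of the $g_j$ is nonzero, and which one depends only on $k\bmod n$; and so that $g_i(x)g_j(x)=0$ pointwise for $i\neq j$ (the supports are made disjoint by construction). Then each $g_j(t_0)=0$, and for every neighbourhood $V$ of $t_0$ and every residue $m$ there is a $k\equiv m$ with $t_k\in V$ and $g_m$ nonzero on an entire open neighbourhood of $t_k$. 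Internally, letting $x_j$ be the real represented by $g_j$, we get $X\Vdash x_ix_j=0$ for $i\neq j$ (since the interiors of the zero sets of $g_ig_j$ are all of $X$), so by Proposition \ref{Pro:LLPOn-equiv-real} \LLPOn{n} would force $X\Vdash \bigvee_{m=1}^n (x_m=0)$, hence $t_0\in\bigcup_m \Interior{\set{t}{g_m(t)=0}}$, i.e. some $g_m$ vanishes on a whole neighbourhood of $t_0$. But by construction every neighbourhood of $t_0$ contains some $t_k$ with $k\equiv m\bmod n$ at which $g_m$ is nonzero on an open set — contradiction. This shows $X\nVdash \LLPOn{n}$.

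The main obstacle is getting the piecewise-linear ``templates'' $f_1,\dots,f_n$ right simultaneously: I need disjoint supports (so that all products $g_ig_j$ are identically zero, not merely zero at isolated points — the \emph{interior} of the zero set must be everything) while still ensuring that for each residue class $m$ the function $g_m$ is genuinely nonzero on an open neighbourhood of infinitely many $t_k$ clustering at $t_0$. The cleanest way is probably to carve out, between consecutive threshold values $r_{k+1}<s_k$ (with $H(t_{k+1})<s_k<r_k<H(t_k)$), a small interval on which $f_m$ is an identity-like bump for $m\equiv k\bmod n$ and flat zero for all other indices, and $f_j\equiv 0$ near $0$ for every $j$; continuity of $H$ then makes $g_j^{-1}$ of such a bump an open set around $t_{k+1}$, exactly as in the \WMP/\MPv proofs. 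I would also state explicitly that since $X$ is metric the model validates enough choice (or simply invoke Proposition \ref{Pro:LLPOn-equiv-real} in the real-number form, which only needs countable choice that holds in topological models over metric spaces by the \KS-type arguments above) so that the equivalence with the ``$x_ix_j=0$'' formulation is available internally. The rest is a routine bookkeeping of neighbourhoods, in the same style as the preceding propositions in this subsection.
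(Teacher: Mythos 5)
Your construction is essentially the one the paper uses: build $n$ continuous real-valued functions with pairwise disjoint supports whose bumps cluster at the accumulation point, then use $\ext{x_m=0}=\Interior{\set{t}{g_m(t)=0}}$ to conclude that no neighbourhood of $t_0$ forces any $x_m=0$. The paper, since $X$ is given to be metric, skips the Urysohn sum $H$ and simply composes piecewise-linear bump templates with $d(x_0,\cdot)$ directly, and it uses a $3n$-periodic indexing of the $r_k$ to keep the supports separated; your $k\bmod n$ bookkeeping plus gaps achieves the same effect. So the route is the same.

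One thing in your aside is actually incorrect and worth flagging. You claim that countable choice ``holds in topological models over metric spaces by the \KS-type arguments above.'' It does not: the paper's own discussion in Section~\ref{Sec:Choice} exhibits $\RR$ as a metric space whose full topological model refutes $\ACC$ (since $\LPO_\sigma$ holds but $\LPO_\RR$ fails there). The $\KS$-type argument gives $\KS_\RR$, not choice. What makes the argument work is simply that you refute the real-number form of $\LLPOn{n}$ directly (via the $x_ix_j=0$ formulation, which is Proposition~\ref{Pro:LLPOn-equiv-real}'s target); no internal appeal to the sequential/partition version is needed for the statement as it is proved here. The paper deals with the sequential version separately in the corollary, where it explicitly invokes that the model over $\BS$ validates dependent choice. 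So the cleanest fix to your write-up is to drop the choice claim and just say that the real-number form of $\LLPOn{n}$ fails, which is the content the paper also establishes.
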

\begin{proof}
The proof builds on the one of Proposition \ref{Pro:topspace_omin_or_notMPv}. So let $x_{0},x_{1}, \dots$ and $r_{1}, r_{2}, \dots$ as there. Furthermore let $f^{(i)}: \RR^{+}_{0} \to \RR$ be defined to be piecewise linear through the points $(r_{3nj+3i+1}, \frac{1}{2^{j}})$ and $(r_{i^{\prime}},0)$ for $i^{\prime} \neq 3nj+3i+1$. Furthermore let $z_{i}$ be the reals in the sheaf model given by $\tilde{f}^{(i)} = f \circ d(x_{0}, \cdot)$. Then $z_{i} z_{j} = 0$ for $i \neq j$, since at every point at most one of the $f^{(i)}$ is non-zero. But also in every neighbourhood $U$ of $x_{0}$ and for every $i$ there exists $t$ such that $\tilde{f}(t) > 0$, so 
\[ U \nVdash \ex{i}{z_{i} =0} \ ,\]
and hence
\[ X \nVdash \ex{i}{z_{i} =0} \ .\]
Altogether $X \nVdash \LLPOn{n}$.
\end{proof}
\begin{Cor}
\LLPOn{n} fails in the sheaf model of $\RR$ on $\BS$. \end{Cor}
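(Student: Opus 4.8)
The statement to prove is the Corollary: \LLPOn{n} fails in the sheaf model of $\RR$ on $\BS$.

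The plan is to reduce this immediately to the preceding proposition. The proposition just stated says that if a metric space $X$ has an accumulation point, then \LLPOn{n} fails in the topological model over $X$. So the only thing I need to supply is the observation that Baire space $\BS$, equipped with its standard metric, is a metric space possessing an accumulation point, together with the remark (already used repeatedly in the section, e.g.\ in Corollary \ref{Cor:BS_nval_IIIa} and the discussion of \KS on metric spaces) that the sheaf model over $\BS$ is the same as the topological model over $\BS$. The displayed hierarchy $\LLPOn{2} = \LLPO$ and the fact that we are working with full models in a classical metatheory are all in place, so no further machinery is needed.

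First I would note that $\BS$ with the usual metric $d(\alpha,\beta) = 2^{-\min\set{n}{\alpha(n) \neq \beta(n)}}$ (or any equivalent one) is a metric space, and that every point of $\BS$ is an accumulation point: given $\alpha \in \BS$ and $\varepsilon > 0$, pick $N$ with $2^{-N} < \varepsilon$ and modify $\alpha$ at some coordinate $\geqslant N$ to obtain a distinct point within distance $\varepsilon$ of $\alpha$. Hence $\BS$ certainly has an accumulation point. Then the previous proposition applies verbatim to $X = \BS$, giving $\BS \nVdash \LLPOn{n}$. Since the sheaf model of $\RR$ over $\BS$ is precisely the topological model over $\BS$ (as used throughout this subsection), the conclusion follows.

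There is no real obstacle here — the corollary is genuinely a one-line consequence. The only point requiring a moment's care is making explicit that ``the sheaf model of $\RR$ on $\BS$'' is the topological model over the space $\BS$, so that the hypothesis ``metric space with an accumulation point'' of the preceding proposition is met; but this identification is already the standing convention of the section. So the proof is simply:

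\begin{proof}
Baire space $\BS$ is a metric space in which every point is an accumulation point (given $\alpha \in \BS$ and $\varepsilon > 0$, altering $\alpha$ at a sufficiently late coordinate produces a distinct point within $\varepsilon$ of $\alpha$). Hence the previous proposition applies to $X = \BS$, and \LLPOn{n} fails in the topological, i.e.\ sheaf, model over $\BS$.
\end{proof}
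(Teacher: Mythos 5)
Your core argument — that $\BS$ is a metric space with accumulation points, so the preceding proposition applies — is correct and is the first half of the paper's own proof. However, the paper's proof adds a second sentence: ``Dependent choice is validated by the sheaf model of $\RR$ on $\BS$.'' This is not decorative. The preceding proposition's argument exhibits a family of internal \emph{reals} $z_1,\dots,z_n$ that refute the real-number formulation of \LLPOn{n} from Proposition \ref{Pro:LLPOn-equiv-real}; but \LLPOn{n} as officially defined is a statement about binary sequences and partitions of $\NN$, and the paper is deliberately careful (see the discussion in the subsection on choice) to distinguish the real-number and sequential versions of such principles, since they can come apart when countable choice fails — as it does, for instance, in the topological model over $\RR$. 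The direction of Proposition \ref{Pro:LLPOn-equiv-real} that gets you from the sequential version to the real-number version uses countable choice, so to conclude that the \emph{sequential} \LLPOn{n} fails from the established failure of its real-number counterpart you must know that $\ACC$/$\ADC$ holds internally in the model. That is exactly the role of the paper's remark about dependent choice. Your proof should therefore add this observation; as written, it only establishes the failure of the real-number form.
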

\begin{proof}
Dependent choice is validated by the sheaf model of $\RR$ on $\BS$. Also $\BS$ has an accumulation point.
\end{proof}

\section{Open Induction}
Coquand \cite{tC97}, U.~Berger \cite{uB04}, Schuster \cite{pS12}, and others have all investigated aspects of open induction. This principle is interesting, since, heuristically speaking, invocations of Zorn's Lemma in a classical proof can be replaced by using open induction. Now open induction is constructive at least for some sets, which also depends on the flavour of constructivism one practices. Thus, if open induction instance-wise implies a theorem, one can easily read off its constructive content up to exactly the level that one deems constructive.

We would also like to point out that Veldmann has done some work on open induction, however, the results are, at this stage, only partially published \cite{wV14}.

In classical mathematics, if $A \subset [0,1]$ such that $A$ is open, and \define{progressive} that is such that 
\begin{equation} \label{Eqn:progressive}
 \fa{x}{ \left( \left(\fa{y<x}{y \in A}\right) \implies x \in A \right)} 
 \end{equation}
then $A$ is actually the whole unit interval (notice that necessarily $0 \in A$).

There does not seem to be a good constructive theory of strict orders; or even any treatment of this theory. However, since in this section we will only deal with specific examples namely
\begin{itemize}
  \item the usual order $<$ on (subsets of) $\RR$ and
  \item the (strict) lexicographic order on (subsets of) $\CS$
\end{itemize}
 we actually do not need to fix a definition to state the principle of open induction for arbitrary strictly ordered sets.
\begin{principle}[OI]{\OI{X}} \label{PR:OI} Assume $X$ is a strictly ordered set equipped with the order topology and minimum element $\bot$. If $A \subset X$ is open and progressive, then $A = X$.
\end{principle}

Notice that such a set $X$ must necessarily contain all minimal elements $\bot$, since \ref{Eqn:progressive} is vacuously satisfied.

Veldmann has shown that \OI{[0,1]}  is implied by bar induction and in turn implies the fan theorem (that is the fan theorem for decidable bars.\footnote{We are unsure to what degree continuous choice is used in these, to our knowledge unpublished, proofs.})
\begin{Pro}
\OI{\CS} implies \FANf. 
\end{Pro}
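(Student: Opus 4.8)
The plan is to show how open induction over Cantor space (with the lexicographic order) computes, for an arbitrary bar $B \subseteq \cS$, a uniform bound. The key is to pick the right open progressive subset $A$ of $\CS$. Given a bar $B$, I would define, for $\alpha \in \CS$,
\[
 \alpha \in A \iff \ex{N \in \NN}{\fa{\beta \leqslant_{\mathrm{lex}} \alpha}{\ex{n \leqslant N}{\overline{\beta}n \in B}}} \ ;
\]
that is, $\alpha \in A$ precisely when the initial segment $\{\beta : \beta \leqslant_{\mathrm{lex}} \alpha\}$ of $\CS$ is uniformly barred by $B$. First I would check that $A$ is open in the order topology: if $\alpha \in A$ with witness $N$, then the bar $B$ meets $\overline{\alpha}(N+1)$ at some level, and moreover every $\beta$ with $\overline{\beta}(N+1) = \overline{\alpha}(N+1)$ and $\beta \leqslant_{\mathrm{lex}} \gamma$ for $\gamma$ slightly above $\alpha$ also has a bounded bar-hit — one has to be a little careful, using that $B$ is a bar to pick up the finitely many sequences lexicographically just above $\alpha$ that differ from $\alpha$ only in coordinates beyond $N+1$, and enlarging $N$ to absorb those. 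The point is that membership in $A$ is witnessed by finite data and only depends on a finite initial segment of $\alpha$ together with a neighbourhood to the lexicographic right.

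Next I would verify that $A$ is progressive, i.e. satisfies \ref{Eqn:progressive}. Suppose $\fa{\gamma <_{\mathrm{lex}} \alpha}{\gamma \in A}$; I must show $\alpha \in A$. Since $B$ is a bar, there is $m$ with $\overline{\alpha}m \in B$. Consider the sequence $\alpha^- = \overline{\alpha}m \ast \zero$ truncated just below $\alpha$ in an appropriate sense; the hypothesis gives, for each $\gamma$ strictly below $\alpha$, a uniform bound, and by openness (or directly by compactness-of-finite-branching reasoning internal to the argument) these bounds can be glued across the finitely many ``slots'' determined by $\overline{\alpha}m$. Combining the uniform bound just below $\alpha$ with the single bar-hit $\overline{\alpha}m \in B$ yields a uniform bound for all $\beta \leqslant_{\mathrm{lex}} \alpha$, so $\alpha \in A$. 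Applying \OI{\CS} then gives $A = \CS$, and in particular $\one = 111\ldots \in A$ — but $\beta \leqslant_{\mathrm{lex}} \one$ for \emph{every} $\beta \in \CS$, so the witness $N$ for $\one \in A$ is exactly a uniform bound for the bar $B$. Hence \FANf{} holds.

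The main obstacle is the openness and progressivity verifications, specifically controlling the ``lexicographic neighbours from above'': unlike the real-interval case, the lexicographic order on $\CS$ is not densely ordered in a way that makes the order topology transparent, and one must show that the set of $\beta$ lexicographically $\leqslant$ a given $\alpha$ behaves well under small perturbations of $\alpha$ — this is where the bar property of $B$ is genuinely used, to handle the sequences that agree with $\alpha$ on a long prefix and then jump. I expect this to require a careful case analysis of how $\overline{\beta}n$ and $\overline{\alpha}n$ can first differ, together with the observation (already implicit in the paper's treatment of $F^{1/2}$ and lexicographic structure in Section~\ref{Sec:linking-cantor-and-unitint}) that $F^{1/2}$ is lex-monotone and can transport the problem to $[0,1]$ if a direct argument proves too fiddly. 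One should also double-check whether any appeal to continuous choice sneaks in; Veldman's proofs apparently use it, and I would flag that in the write-up rather than claim a choice-free argument.
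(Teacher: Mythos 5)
Your choice of $A$ is essentially the paper's: you ask for the $\leqslant_{\mathrm{lex}}$-initial segment below $\alpha$ to be uniformly barred at some level $N$, while the paper asks for the $<$-initial segment to be covered by a finite union of cylinders at bar-nodes; the two are interchangeable (pass between a finite family $u_1,\dots,u_n$ and $N = \max_i \abs{u_i}$), and your $\leqslant$ even saves the paper's small extra step at the end, where the $<$-cover of $\set{\beta}{\beta < \one}$ has to be augmented by a prefix of $\one$ before it bounds the whole bar. Openness also goes through as you say, and more easily than you fear: if $\alpha \in A$ with witness $N$, then every $\gamma$ with $\overline{\gamma}N = \overline{\alpha}N$ is again in $A$ with the \emph{same} witness, since a $\beta \leqslant_{\mathrm{lex}} \gamma$ either first disagrees with $\alpha$ before level $N$ --- hence lies below $\alpha$, already covered --- or shares the prefix $\overline{\alpha}N$, hence is barred below $N$ because $\overline{\alpha}n \in B$ for some $n \leqslant N$. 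There is no enlarging of $N$ and no extra use of the bar property beyond that one hit.

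The genuine gap is in progressivity, and your candidate ``just below'' sequence $\alpha^- = \overline{\alpha}m \ast \zero$ is the wrong object. It need not lie strictly below $\alpha$ (it \emph{equals} $\alpha$ whenever $\alpha$ is zero from position $m$ onwards), so the progressive hypothesis does not apply to it; and even where $\alpha^- <_{\mathrm{lex}} \alpha$, it sits too far down to control those $\beta <_{\mathrm{lex}} \alpha$ that first disagree with $\alpha$ inside the prefix $\overline{\alpha}m$. What the argument actually needs is built from the \emph{last} $1$ of $\overline{\alpha}m$. Split off the degenerate case $\overline{\alpha}m = 0^m$, which is immediate since every $\beta \leqslant_{\mathrm{lex}} \alpha$ must then share the prefix $0^m \in B$. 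Otherwise write $\overline{\alpha}m = u \ast 1 \ast 0^k$ and set
\[
 \gamma = u \ast 0 \ast \one \ ,
\]
which is lexicographically strictly below $\alpha$ and hence lies in $A$ with some witness $N$. Then any $\beta \leqslant_{\mathrm{lex}} \alpha$ either agrees with $\alpha$ through position $m$, so is barred at level $m$, or first disagrees with $\alpha$ at an index at most $\abs{u}$, which forces $\beta \leqslant_{\mathrm{lex}} \gamma$ and hence a bar hit below $N$; the witness for $\alpha$ is $\max\menge{m,N}$. Without this $\gamma$ the ``gluing across the finitely many slots'' you gesture at does not close; it is not a compactness argument but exactly this one case split, and it is the point of the proof. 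Finally, your worry about continuous choice is unfounded: every existential in the verification ranges over finite, decidable data (prefix lengths, truncations, lexicographic comparisons of finite strings), so no choice is used and there is no need to transport the problem to $[0,1]$ via $F^{\nicefrac{1}{2}}$.
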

\begin{proof}
Let $B$ be a bar. We will write $B_{u}$ for the set $\menge{u} \ast \CS \cap B$. Then 
\[ A = \set{\alpha \in \CS}{\ex{u_{1}, \dots, u_n \in \CS}{\fa{\beta< \alpha}{\beta \in \bigcup_{i =1}^n B_{u_{i}}}}} \ .\]
We want to show that $A$ is open and progressive. So let $\alpha \in A$. Now there exists $m$ such that $\gamma \in B$, as long as $\overline{\gamma}m = \overline{\alpha}m$. Furthermore there exists $u_{1}, \dots, u_n$ such that 
\[ \fa{\beta< \alpha}{\beta \in \bigcup_{i =1}^n B_{u_{i}}} \ , \] and therefore, with $u_{n+1} = \overline{\gamma}m $ also
\[ \fa{\beta< \gamma}{\beta \in \bigcup_{i =1}^{n+1} B_{u_{i}}} \ , \]
that is $\gamma \in A$. 

Along similar lines assume that $\alpha$ is such that $\beta \in A$ for all $\beta < \alpha$. Again choose $m$ such that $\overline{\alpha}m \in B$. Now either $\overline{\alpha}m = 0^{m}$ and we are done, or $\overline{\alpha}m = u 1 0 \dots 0$ for some $u \in \cS$. Then $\gamma < \alpha$ for $\gamma = u 0 1 1 \dots $. By our assumptions there exists $u_{1}, \dots , u_n$ such that 
\[ \fa{\beta< \gamma}{\beta \in \bigcup_{i =0}^n B_{u_{i}}} \ . \] Since, for any $ \beta <  \alpha$ we can decide whether $\overline{\beta} m = \overline{\alpha}m $ or $ \beta \leqslant \gamma$ we have that 
\[ \fa{\beta< \alpha}{\beta \in \bigcup_{i = 1}^{n+1} B_{u_{i}}} \ , \]
for $u_{n+1} = \overline{\alpha}m$. Hence $\alpha \in A$.

Since $B$ is a bar there exists $k$ such that $1^{k} = \overline{\one}k \in B$. 

By $\OI{\CS}$ we have $A=\CS$, and in particular $\one \in A$; therefore there exists $u_{1}, \dots,u_n$ such that $ \beta \in \bigcup_{i \leqslant n} B_{u_{i}}$ for any $\beta < \one$. Now for any $\gamma \in \CS$ we can decide whether $\overline{\gamma}k=1^{k}$ or $\gamma < \one$, and hence 
\[ \CS = \bigcup_{i =0}^n B_{u_{i}} \ ,  \]
where $u_{0}=1^{k}$. But this means that $B$ is a uniform bar.
\end{proof}

For some  sets $X$ we can prove $\OI{X}$ in pure \BISH.

\begin{Pro} $ $
\begin{enumerate}
\item If $X$ is a decidable subset of $\NN$, then $\OI{X}$ holds.
\item Let $\Ninf$ be the set of all increasing binary sequences (with the order and metric of Cantor space). Then $\OI{\Ninf}$ holds.
\end{enumerate}
\end{Pro}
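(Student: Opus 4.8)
For such an $X$ the order topology is discrete, so the openness hypothesis on $A$ carries no information and only progressiveness is needed. The plan is to prove $\fa{n\in\NN}{(n\in X\implies n\in A)}$ by course-of-values induction on $n$. Fix $n$ and assume $m\in A$ for every $m<n$ with $m\in X$; if $n\in X$, then for every $y\in X$ with $y<n$ the induction hypothesis gives $y\in A$, so the antecedent of the progressiveness clause \eqref{Eqn:progressive} at $x=n$ is met, whence $n\in A$. (The required minimum $\bot$ of $X$ is handled automatically, since no $y<\bot$ lies in $X$ and the hypothesis is vacuously true there.) This yields $X\subseteq A$, i.e.\ $A=X$; decidability of $X$ is only needed to make sense of the order topology as the subspace topology from $\NN$.

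\textbf{Part 2 ($\Ninf$).} Here $\Ninf$ carries the lexicographic order and the Cantor-space metric, with minimum $\bot=\zero$. For $N\in\NN$ set $U_N=\set{\beta\in\Ninf}{\overline{\beta}N=0^{N}}$; these are clopen, nested ($U_{N+1}\subset U_N$), and $U_0=\Ninf$. First I would record the combinatorial fact that for $\alpha,\beta\in\Ninf$ one has $\beta<\alpha$ iff there is $j$ with $\alpha=0^{j}\ast\one$ and $\overline{\beta}(j+1)=0^{j+1}$; in particular no element lies below $\zero$, and the set of predecessors of $0^{k}\ast\one$ is exactly $U_{k+1}$. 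Given this, the argument runs in three moves: (i) progressiveness at $\zero$ holds vacuously, so $\zero\in A$; (ii) since $A$ is open and $\zero\in A$, some basic neighbourhood $U_N$ of $\zero$ satisfies $U_N\subseteq A$; (iii) a finite induction on $m\le N$ shows $U_{N-m}\subseteq A$ — the base case is (ii), and for the step, assuming $U_{k+1}\subseteq A$ with $k+1\le N$, progressiveness at $0^{k}\ast\one$ (whose predecessor set is $U_{k+1}$) gives $0^{k}\ast\one\in A$, while for any $\beta\in U_k$ one may simply evaluate the single bit $\beta(k)$: if $\beta(k)=0$ then $\beta\in U_{k+1}\subseteq A$, and if $\beta(k)=1$ then $\beta$ is increasing and all-ones from position $k$, i.e.\ $\beta=0^{k}\ast\one\in A$; so $U_k\subseteq A$. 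At $m=N$ this gives $U_0=\Ninf\subseteq A$, hence $A=\Ninf$.

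\textbf{The main obstacle.} The genuine difficulty is the constructive behaviour of $\Ninf$: we cannot decide whether a given $\alpha\in\Ninf$ is eventually $1$ or equals $\zero$, so there is no direct induction over ``the elements of $\Ninf$'' as in Part 1. The device that avoids this is to induct on the finite depth $N$ instead, exploiting that $\Ninf$ is exhausted by the nested clopen neighbourhoods $U_N$ of $\bot$ and that openness of $A$ supplies a starting level; the only case distinctions ever made are evaluations of a concretely given bit $\beta(k)$, which are unproblematic. I would also take care to confirm at the outset that the order topology on $\Ninf$ coincides with the metric topology it inherits from $\CS$, so that ``open'' in $\OI{\Ninf}$ agrees with ``open ball around $\bot$''; this is precisely where the hypothesis that we use the order and metric \emph{of Cantor space} is invoked.
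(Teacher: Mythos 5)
Your proof is correct. In Part 1 the paper proves $x \in A$ by enumerating the (decidable) finite set $\set{y \in X}{y \le x} = \menge{a_1 < \dots < a_n}$ and applying progressiveness $n$ times from $\bot$; your course-of-values induction over $\NN$ proves $\fa{n}{n\in X\implies n\in A}$ in one pass, and you are right that decidability of $X$ is not used in that induction at all — it only serves to make the order topology on $X$ discrete, so that the openness hypothesis is vacuous.

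In Part 2 your route is genuinely dual to the paper's. Writing $\underline n=0^n\ast\one$ and $\omega=\zero$, the paper first shows $\underline n\in A$ for every $n$ "as in Part 1" (a finite chain of progressiveness steps from $\underline0$), then shows $\omega\in A$ (every strict predecessor of $\omega$ is some $\underline n$), and only then invokes openness of $A$ at $\omega$ together with a decidable bit check to cover an arbitrary $y\in\Ninf$. For those steps to make sense, $\underline 0=\one$ must be $\bot$ and $\omega=\zero$ must be the \emph{top} element — that is, the paper is implicitly using the reverse of the lexicographic order on $\Ninf$, notwithstanding the statement's "order \ldots\ of Cantor space". You take the stated order literally ($\zero=\bot$): progressiveness puts $\zero\in A$, openness at $\bot$ gives a clopen $U_N\subseteq A$, and a finite descent $U_N\subseteq A$, $U_{N-1}\subseteq A$, \ldots, $U_0=\Ninf\subseteq A$ follows, each step combining one application of progressiveness (at $0^k\ast\one$, whose strict predecessor set in $\Ninf$ is exactly $U_{k+1}$) with a decidable check of the single bit $\beta(k)$. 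Both proofs use the same three devices — progressiveness at explicit elements, one use of openness, a single-bit case distinction — organized in mirror image; yours has the merit of matching the order the statement names, and of exhibiting the openness bound $N$ as precisely the thing that bounds the number of applications of progressiveness.
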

\begin{proof} $ $
\begin{enumerate}
\item Let $A \subset X$ progressive (since $\NN$ has the discrete topology openness is irrelevant here). We need to show that $ X \subset A$. So let $x \in X$ be arbitrary. 
Since $X$ is decidable we can find natural numbers $a_{1} <  \dots < a_n$ such that $\set{y \in X}{y \leqslant x} = \menge{a_{1}, \dots, a_n}$, where $a_{1} = \bot_{X}$ and $a_n =x$. Now $a_{1} \in A$ by the observation above. Therefore, since $A$ is progressive also $a_{2} \in A$ and similarly after $n$ steps we have $a_n = x\in A$. 
\item As an abbreviation we write $\underline{n} = 0^n\one$ and $\omega=\zero$.
Now let $A \subset \Ninf$ be progressive and open. We can, as above, show  that $\underline{n} \in A$ for every $n \in \NN$. We can also show that $\omega \in A$, since for every $y < \omega$ we can find $n \in \NN$ such that $\underline{n}=y$.
It might be surprising that we are not done yet, and in fact we actually need to show that \emph{every} $x \in \Ninf$ is also in $A$, and not just the top element!\footnote{Of course classically we have $\Ninf = \set{\underline{n}}{n \in \NN} \cup \menge{\omega}$, but constructively we cannot prove this; to be precise this equality holds if and only if \LPO holds.}  However, since $A$ is open there exists $k \in \NN$ such that if $y > \underline{k}$ (that is if $y$ is in the neighbourhood of $\omega$ given by $k$), then $y \in A$. Furthermore for every $y \in \Ninf$ we can decide whether $y \leqslant \underline{k}$ and therefore $y = \underline{n}$ for some $n \in \NN$ or $y > \underline{k}$. In both cases we already know that $y \in A$. \qedhere
\end{enumerate}
\end{proof}

\begin{Pro}
The following are equivalent
\begin{enumerate}
\item \OI{\CS}
\item \OI{[0,1]}
\item \OI{\RR^{+}_{0}}
\end{enumerate}
\end{Pro}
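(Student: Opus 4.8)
The plan is to prove the three equivalences $\OI{\CS} \iff \OI{[0,1]} \iff \OI{\RR^{+}_{0}}$ by transferring the open-progressive structure along the Cantor-style maps $F^{p}$ developed in Section~\ref{Sec:linking-cantor-and-unitint}. The natural cycle of implications is $\OI{[0,1]} \implies \OI{\RR^{+}_{0}} \implies \OI{\CS} \implies \OI{[0,1]}$, since $\RR^{+}_{0}$ sits ``above'' $[0,1]$ (so its induction should follow by handling $[0,1]$ first and then the tail), $\CS$ maps onto $[0,1]$ via $F^{p}$ for $p > \tfrac12$, and $[0,1]$ maps into $\CS$ via the order-embedding-like behaviour of $F^{p}$ for $p < \tfrac12$.

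First I would do $\OI{[0,1]} \implies \OI{\RR^{+}_{0}}$. Given an open, progressive $A \subset \RR^{+}_{0}$, I would first apply $\OI{[0,1]}$ to the set $A \cap [0,1]$ (after checking it is open and progressive as a subset of $[0,1]$ — progressiveness transfers since elements $x \leqslant 1$ only ``see'' predecessors in $[0,1]$), concluding $[0,1] \subset A$. Then I would bootstrap: since $A$ is open, for each $n$ the fact that $[0,n] \subset A$ together with openness at $n$ and progressiveness pushes past $n$; formalised, the set $\set{n \in \NN}{[0,n] \subset A}$ is progressive and (using openness and the local finiteness argument as in the $\Ninf$ proof above) one shows every real is eventually covered. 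Actually the cleanest route is to show $\set{x \in \RR^{+}_{0}}{[0,x] \subset A}$ is itself open and progressive and contains every $[0,n]$, hence all of $\RR^{+}_{0}$; the key subtlety, exactly as in the $\OI{\Ninf}$ proof, is that we cannot decide whether a given $x$ is one of finitely many ``jump points'' or lies in the interior of a covered interval, so we must keep the argument constructive by using that $A$ is open at every already-reached point.

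Next, $\OI{\RR^{+}_{0}} \implies \OI{\CS}$: given open progressive $A \subset \CS$, I would transport it to $\widehat{A} = \set{x \in [0,1]}{\fa{\alpha \in \CS}{(F^{p}(\alpha) = x \implies \alpha \in A)}}$ using $F^{p}$ with $p > \tfrac12$ (so $F^{p}$ is uniformly surjective, Lemma~\ref{Lem:cmpact-image-of-2N}); the uniform surjectivity is what lets openness and progressiveness transfer cleanly, since nearby reals lift to nearby sequences. One shows $\widehat A$ is open and progressive in $[0,1]$, concludes $\widehat A = [0,1]$ by $\OI{[0,1]}$ (obtained from $\OI{\RR^{+}_{0}}$ by restriction, which is easy), and reads back $A = \CS$. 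The lexicographic order on $\CS$ and the usual order on $[0,1]$ must be matched up carefully here, and one must check that the fibres of $F^{p}$ interact well with the order — this is where the ``variation on Bishop's Lemma'' (Lemma~\ref{Lem:Fp}.\ref{Lem:Var-Bish}) style control of $F^{p}$ comes in.

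Finally, $\OI{\CS} \implies \OI{[0,1]}$: given open progressive $A \subset [0,1]$, pull it back along $F^{p}$ with $0 < p < \tfrac12$ (where $F^{p}$ is injective and ``almost surjective'' onto $\mathcal{C}^{p} \cup -\mathcal{C}^{p}$, which is dense), setting $A' = (F^{p})^{-1}(A)$ together with the sequences mapping into the gaps; check $A'$ is open and progressive in $\CS$ (progressiveness uses that $F^{p}$ is order-preserving and that the image is dense, so a predecessor-gap in $\CS$ corresponds to a covered interval in $[0,1]$), apply $\OI{\CS}$, and push forward. I expect the main obstacle to be this last implication: controlling progressiveness under the pullback requires showing that whenever all $\beta < \alpha$ lie in $A'$, the real $F^{p}(\alpha)$ has all its $[0,1]$-predecessors in $A$, which is delicate because the order-isomorphism between $\CS$ and a subset of $[0,1]$ is only ``dense'' and one must use openness of $A$ to bridge the gaps left outside $\mathcal{C}^{p}$. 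A secondary difficulty, recurring in all three steps, is that none of these spaces admits a decidable order or a decomposition into isolated points plus limit points, so every step must mimic the careful constructive bookkeeping already used in the $\OI{\Ninf}$ and $\OI{\CS} \implies \FANf$ proofs rather than reasoning by cases on the structure of the space.
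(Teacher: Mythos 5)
Your high-level strategy of transferring open-progressive structure along the Cantor-style maps $F^{p}$ is the paper's strategy too, but you have the roles of the injective ($p<\tfrac12$) and surjective ($p>\tfrac12$) maps backwards, and that swap is not cosmetic: it creates gaps in both $\CS \leftrightarrow [0,1]$ directions that your own proposal flags but does not close.

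For $\OI{\CS} \implies \OI{[0,1]}$ the paper uses $F^{\nicefrac{2}{3}}$ (surjective) and the preimage $E=\set{\alpha}{F^{\nicefrac{2}{3}}(\alpha)\in A}$; surjectivity is exactly what lets one read $E=\CS$ back to $A=[0,1]$, and the progressiveness of $E$ is handled by finding, for each $y<F^{\nicefrac{2}{3}}(\alpha)$, some $\gamma<\alpha$ with $F^{\nicefrac{2}{3}}(\gamma)=y$ (using surjectivity plus the variation on Bishop's Lemma). Your version with $p<\tfrac12$ breaks here: after applying $\OI{\CS}$ you only learn $A\supset\mathcal{C}^{p}$, the Cantor set, which is nowhere dense. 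Your remark that $F^{p}$ is ``almost surjective onto $\mathcal{C}^{p}\cup -\mathcal{C}^{p}$'' is a misreading of Lemma~\ref{Lem:extendingfunctions}: the \emph{image} of $F^{p}$ is $\mathcal{C}^{p}$, and only the disjoint union $\mathcal{C}^{p}\cup -\mathcal{C}^{p}$ is dense. Bridging the gap intervals to reach $A=[0,1]$ from $A\supset\mathcal{C}^{p}$ would itself require an open-induction argument on those subintervals, which is what you are trying to prove, so the step is circular as stated.

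For $\OI{[0,1]} \implies \OI{\CS}$ the paper uses $F^{\nicefrac{1}{3}}$ (injective, order-preserving) with the downward-closed formulation $A=\set{x}{\fa{\alpha}{F^{\nicefrac{1}{3}}(\alpha)\leqslant x \implies \alpha\in E}}$; strict monotonicity of $F^{p}$ for $p<\tfrac12$ is precisely what makes $\beta<\alpha$ yield $F^{p}(\beta)<F^{p}(\alpha)$ in the progressiveness argument. Your version with $p>\tfrac12$ and the equality condition $F^{p}(\alpha)=x$ fails because $F^{p}$ is \emph{not} monotone for $p>\tfrac12$: for instance with $p$ close to $1$ one has $F^{p}(10\zero)<F^{p}(01\one)$ even though $10\zero>01\one$ lexicographically. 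So from $\beta<\alpha$ you cannot conclude $F^{p}(\beta)<x$, the hypothesis that all $y<x$ lie in $\widehat{A}$ gives you nothing about $\beta$, and the progressiveness of $\widehat{A}$ does not go through. Your own sentence flagging that ``the lexicographic order on $\CS$ and the usual order on $[0,1]$ must be matched up carefully'' identifies the problem, but the fix is to use $p<\tfrac12$ for this direction, not to hope for order-compatibility of a non-injective map.

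Two smaller remarks. For $\OI{[0,1]}\implies\OI{\RR^{+}_{0}}$, the paper simply applies the scaled instance $\OI{[0,z]}$ to $A\cap[0,z]$ for an arbitrary $z$; your bootstrap-by-induction over $[0,n]$ also works but is more elaborate, and the formulation ``show $\set{x}{[0,x]\subset A}$ is open and progressive and hence all of $\RR^{+}_{0}$'' is in danger of being circular (one must conclude via the Archimedean property, not via $\OI{\RR^{+}_{0}}$). And $\OI{\RR^{+}_{0}}\implies\OI{[0,1]}$ is not ``easy by restriction'': you go the other way, \emph{extending} an open progressive $A\subset[0,1]$ to an open progressive subset of $\RR^{+}_{0}$, which the paper does via $(A\cap(0,1))\cup\set{x}{A=[0,1]}$. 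It is a short step, but calling it restriction obscures the point.
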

\begin{proof}
Assume  \OI{\CS} holds and let $A \subset [0,1]$ be open, progressive and such that $0 \in A$. Furthermore let $F^{\nicefrac{2}{3}}:\CS \to [0,1]$ be the  function from Section \ref{Sec:linking-cantor-and-unitint}. Consider the set
\[ E = \set{\alpha \in \CS}{F^{\nicefrac{2}{3}}(\alpha) \in A} \ .\] Now $0 \in E$, since $F^{\nicefrac{2}{3}}(0)=0$. Since $E$ is the preimage of an open set under a continuous function it is also open. To see that it is progressive let $\alpha \in \CS$ be such that $\fa{\beta < \alpha}{\beta \in E}$. It suffices to show that for all $y< F^{\nicefrac{2}{3}}(\alpha)$ we have $y \in A$, since then $F^{\nicefrac{2}{3}}(\alpha) \in A$ by virtue of $A$ being progressive. By Lemma \ref{Lem:Fp} there  is $\gamma \in \CS$ with $F^{\nicefrac{2}{3}}(\gamma) =y$ and $\gamma < \alpha$. Hence $\gamma \in E$ and therefore $y \in A$. We are now in the position to apply \OI{\CS}  to show that $1 \in E$; and therefore $F(1)=1 \in A$.

Next, assume \OI{[0,1]} and let $E \subset \CS$ be open, progressive, and such that $0 \in E$. We will consider the set
\[A = \set{x \in [0,1]}{\fa{\alpha \in \CS}{ F^{\nicefrac{1}{3}}(\alpha) \leqslant x \implies \alpha \in E } }  \ , \]
where $F^{\nicefrac{1}{3}}$ is the canonical embedding of $\CS$ into the Cantor middle third set from Lemma \ref{Lem:Fp}. 
It is easy to see that $0 \in A$. We are going to show that if $y \in A $ for all $y < x$, then $A$ contains an open neighbourhood of $x$. Since $A$ is also downward closed this will show at the same time that $A$ is progressive as well as open. 

So let $x \in [0,1]$ such that $y \in A$ for all $y <x$. Let $\alpha$ be as in Lemma \ref{Lem:Fp}.\ref{Lem:Var-Bish}. We first want to show that $\beta \in E$ for all $\beta < \alpha$. It is easy to see that if $\beta < \alpha$, then $F^{\nicefrac{1}{3}}(\beta) < F^{\nicefrac{1}{3}}(\alpha)$. Now either $F^{\nicefrac{1}{3}}(\beta) < x$ and therefore $F^{\nicefrac{1}{3}}(\beta) \in A$ which in turn implies that $\beta \in E$, or $x < F^{\nicefrac{1}{3}}(\alpha)$. In the second case Lemma  \ref{Lem:Fp} yields $\fa{\beta < \alpha}{F^{\nicefrac{1}{3}}(\beta) \leqslant x}$, and thus, by the definition of $A$, $\beta \in E$.  So in all cases we have $\beta \in E$ for $\beta < \alpha$, and thus, since $E$ is progressive, $\alpha \in E $. Since $E$ is open there exists $n \in \NN$ such that $\gamma \in E$ whenever $\overline{\alpha}n = \overline{\gamma}n$. If $\overline{\alpha}n = 1^n$ we are done, since then $A=[0,1]$ and thus contains an open neighbourhood of $x$. In the case that $\overline{\alpha}n = u 0 1^{m} $ for some $u \in \cS $ and some $m \in \NN$ we have that for all $\beta < \gamma =u 1 1^{m} 0 \dots$ we  have that $\beta \in E$ and also $x < F^{\nicefrac{1}{3}}(\gamma)$. Hence also in this case $A$ contains an open neighbourhood of $x$. Thus \OI{[0,1]}  implies  \OI{\CS}.

To see that  \OI{[0,1]} implies  \OI{\RR^{+}_{0}}, we can use scaling: assume that $A \subset \RR^{+}_{0}$ is open and progressive, and let $z \in \RR^{+}_{0}$ be arbitrary. Then it is easy to see that $A \cap [0,z]$ is progressive and open (in $[0,z]$). Clearly \OI{[0,1]} implies \OI{[0,z]}, so $A \cap [0,z] = [0,z]$ which means that $[0,z] \subset A$. In particular $z \in A$.

Conversely Let $A \subset [0,1]$ be open, progressive, and such that $0 \in A$. Then 
\[ A^{\prime} = (A \cap (0,1)) \cup \set{x \in \RR^{+}_{0}}{A = [0,1]}  \ .\]
Then trivially $A^{\prime}$ is open. It is also progressive, since for $x \in A^\prime$ either $x \in A$, in which case we can use the progressiveness of $A$, or $x \in \set{x \in \RR^{+}_{0}}{A = [0,1]}$, in which case $A^\prime = \RR^{+}_{0}$, which is also clearly progressive. This means we can use \OI{\RR^{+}_{0}} to conclude that $A^\prime = \RR^{+}_{0}$. In particular $1 \in A^\prime$, which means that $A = [0,1]$.
\end{proof}

\section{The Limited Anti-Specker Property} 
Douglas Bridges, James Dent, and Maarten McKubre-Jordens \cites{dB14,jD13} have considered various weakenings of  the Anti-Specker property. One of them is the so-called \define{Limited Anti-Specker property}. 

\begin{principle}[ASLX]{\ASL{X}} \label{PR:ASL} If $X \cup \menge{\omega}$ is a one-point extension of a metric space $X$ and $(x_n)_{n \geqslant 1}$ is a sequence in $X \cup \menge{\omega}$ is eventually bounded away from each point of $X$, then there exists $n$ such that $x_n = \omega$.
\end{principle}

\begin{Pro}
$AS_{[0,1]}^{L} \iff AS_{\CS}^{L}$
\end{Pro}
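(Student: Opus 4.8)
The plan is to move sequences back and forth between $\CS$ and $[0,1]$ by means of the embeddings $F^{p}\co\CS\to[0,1]$ of Section~\ref{Sec:linking-cantor-and-unitint}, in exactly the manner of Lemmas~\ref{Lem:seq-to-bar} and~\ref{Lem:bar-to-seq}. Concretely, $F^{\nicefrac{2}{3}}$ (surjective, Lemma~\ref{Lem:Fp}.\ref{Fp_surj}) will be used for $\ASL{\CS}\implies\ASL{[0,1]}$, and $F^{\nicefrac{1}{3}}$ (injective, plus the variation on Bishop's Lemma~\ref{Lem:Fp}.\ref{Lem:Var-Bish} and in particular~\eqref{Eqn:OI-BL}) for $\ASL{[0,1]}\implies\ASL{\CS}$. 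As with $\AS{}{}$, I will use that $\ASL{X}$ is insensitive to the choice of one-point extension, and work throughout with the extension $X\cup\menge{\omega}$ in which $d(x,\omega)=2$ for all $x\in X$; since $\mathrm{diam}\,[0,1]=\mathrm{diam}\,\CS=1$, cotransitivity of $<$ then lets us decide, for each term $z$ of a sequence in $X\cup\menge{\omega}$, whether $z=\omega$ or $z\in X$ (this is the analogue of the opening remark in the proof of Lemma~\ref{Lem:seq-to-bar}).

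For $\ASL{\CS}\implies\ASL{[0,1]}$: let $(x_{n})$ be a sequence in $[0,1]\cup\menge{\omega}$ that is eventually bounded away from every point of $[0,1]$. Classifying each term and invoking surjectivity of $F^{\nicefrac{2}{3}}$ together with countable choice, pick $\alpha_{n}\in\CS$ with $F^{\nicefrac{2}{3}}(\alpha_{n})=x_{n}$ whenever $x_{n}\in[0,1]$, and set $\alpha_{n}=\omega$ otherwise. If $(x_{n})$ is eventually bounded away from $F^{\nicefrac{2}{3}}(\alpha)$ for some $\alpha\in\CS$, then, using uniform continuity of $F^{\nicefrac{2}{3}}$ (Lemma~\ref{Lem:Fp}.1), $(\alpha_{n})$ is eventually bounded away from $\alpha$; hence $(\alpha_{n})$ is eventually bounded away from every point of $\CS$, so $\ASL{\CS}$ gives $n$ with $\alpha_{n}=\omega$, i.e.\ $x_{n}=\omega$.

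For $\ASL{[0,1]}\implies\ASL{\CS}$: let $(\alpha_{n})$ be a sequence in $\CS\cup\menge{\omega}$ that is eventually bounded away from every point of $\CS$, and set $x_{n}=F^{\nicefrac{1}{3}}(\alpha_{n})$ when $\alpha_{n}\in\CS$, and $x_{n}=\omega$ otherwise. To see that $(x_{n})$ is eventually bounded away from an arbitrary $x\in[0,1]$, choose $\alpha\in\CS$ for $x$ as in Lemma~\ref{Lem:Fp}.\ref{Lem:Var-Bish}, take $N$ with $d(\alpha_{n},\alpha)>2^{-N}$ for $n\geqslant N$, and let $c_{N}>0$ be the minimal distance between distinct length-$N$ cells $I^{\nicefrac{1}{3}}_{u}$. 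By cotransitivity, either $d\bigl(x,F^{\nicefrac{1}{3}}(\alpha)\bigr)>0$, whence~\eqref{Eqn:OI-BL} gives $d\bigl(x,F^{\nicefrac{1}{3}}(\CS)\bigr)>0$ and $(x_{n})$ is bounded away from $x$ outright; or $d\bigl(x,F^{\nicefrac{1}{3}}(\alpha)\bigr)<c_{N}/2$, in which case, for $n\geqslant N$ with $\alpha_{n}\in\CS$, the inequality $\overline{\alpha_{n}}N\neq\overline{\alpha}N$ puts $F^{\nicefrac{1}{3}}(\alpha_{n})$ in a cell at distance $\geqslant c_{N}$ from $F^{\nicefrac{1}{3}}(\alpha)$, so $\abs*{x_{n}-x}>c_{N}/2$, while the terms with $\alpha_{n}=\omega$ satisfy $d(x_{n},x)=2$. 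Thus $(x_{n})$ is eventually bounded away from every point of $[0,1]$, $\ASL{[0,1]}$ yields $n$ with $x_{n}=\omega$, and then $\alpha_{n}=\omega$.

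The main obstacle I anticipate is the transfer of ``eventually bounded away'' in the second implication: since an arbitrary $x\in[0,1]$ need not lie in the Cantor set $F^{\nicefrac{1}{3}}(\CS)$, mere continuity of $F^{\nicefrac{1}{3}}$ is not enough, and one must genuinely exploit the variation on Bishop's Lemma (Lemma~\ref{Lem:Fp}.\ref{Lem:Var-Bish}) together with the separation of the level-$N$ cells, which follows from part~1 of that lemma. A subsidiary point is the legitimacy of freely choosing the one-point extension: strictly this needs the lemma (the analogue of the one quoted for $\AS{}{}$) that $\ASL{X}$ does not depend on $X\cup\menge{\omega}$, which I would either cite from~\cites{dB14,jD13} or establish directly, by the same kind of non-expansive-extension argument that makes $F^{p}$ itself transfer cleanly.
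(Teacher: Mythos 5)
Your proof is correct, and in the forward direction $\ASL{[0,1]}\implies\ASL{\CS}$ it takes essentially the same route as the paper: push the $\CS$-sequence through $F^{\nicefrac{1}{3}}$, apply the variation on Bishop's Lemma (Lemma~\ref{Lem:Fp}.\ref{Lem:Var-Bish}) to associate to an arbitrary $x\in[0,1]$ the auxiliary $\alpha\in\CS$, and use cotransitivity to split into the case where $x$ is a positive distance from the Cantor set (handled by~\eqref{Eqn:OI-BL}) and the case where $x$ is so close to $F^{\nicefrac{1}{3}}(\alpha)$ that the gap between distinct level-$N$ cells (your $c_N$; the paper's explicit $3^{-M}=2\delta$) pushes $F^{\nicefrac{1}{3}}(\alpha_n)$ away. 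You correctly identify this as the step where bare continuity would be inadequate.

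Where you genuinely diverge from the paper is in the converse $\ASL{\CS}\implies\ASL{[0,1]}$. The paper dispatches this in one sentence by citing \cite{hD09b}*{Lemma 0.3} together with the existence of a uniformly continuous surjection $\CS\to[0,1]$. You instead give a self-contained direct argument: choose preimages $\alpha_n$ under the surjection $F^{\nicefrac{2}{3}}$ (countable choice), and use the explicit modulus from Lemma~\ref{Lem:Fp}.1 to transfer the separation $|F^{\nicefrac{2}{3}}(\alpha_n)-F^{\nicefrac{2}{3}}(\alpha)|>\varepsilon$ for $n\geqslant N$ back to a separation $d(\alpha_n,\alpha)\geqslant 2^{-M}$ at the level of sequences, then conclude by $\ASL{\CS}$. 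This unpacks exactly what the cited lemma encapsulates — a pullback of ``eventually bounded away'' along a uniformly surjective map — and is sound; it buys self-containment at the cost of a paragraph, which is a reasonable trade given that the cited lemma is external. Your remark about the independence of $\ASL{X}$ from the choice of one-point extension is also a fair point to flag: the paper only implicitly relies on it (``We may assume that $d([0,1],\omega)=2$''), borrowing the analogous observation made for $\AS{X}{Y}$ earlier in the chapter.
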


\begin{proof} 
Let $F^{\nicefrac{1}{3}}:\CS \to [0,1]$ denote the canonical embedding of $\CS$ onto the Cantor middle third set as in Section \ref{Sec:linking-cantor-and-unitint}. We may assume that $d([0,1], \omega) = 2$. Let $(\alpha_n)_{n \geqslant 1}$ be a sequence in $\CS \cup \menge{\omega}$ that is bounded away from every point in $\CS$ and set 
\[ x_n = \begin{cases} \omega & \text{ if } \alpha_n = \omega  \\ F^{\nicefrac{1}{3}}(\alpha_n) &  \text{ if } \alpha_n \in \CS \ . \end{cases}\]
 We want to show that it is eventually bounded away from every point in $[0,1]$. To this end let $x \in [0,1]$ be arbitrary. By Proposition \ref{Lem:Fp}.\ref{Lem:Var-Bish} there exists $\alpha \in \CS$ such that 
\[ F^{\nicefrac{1}{3}}(\alpha) \neq x \implies \ex{ \varepsilon > 0}{ d(x,F^{\nicefrac{1}{3}}(\CS)) > \varepsilon} \ . \] Now find $N,M \in \NN$ such that for all $n \geqslant N$ we have $\overline{\alpha}M \neq \overline{\alpha_n}M$. It is an easy calculation\footnote{If $\overline{\beta}K \neq  \overline{\gamma}K$, then $\abs*{F^p(\beta) - F^p(\gamma)} \geqslant (1-2p)p^{M-1}$} to see that for all such $n$ we have 
\[ \abs*{F^{\nicefrac{1}{3}}(\alpha)-F^{\nicefrac{1}{3}}(\alpha_n)} \geqslant 3^{-M} = 2\delta . \]
  Now either $\abs*{x-F^{\nicefrac{1}{3}}(\alpha)}< \delta$ or $\abs*{x-F^{\nicefrac{1}{3}}(\alpha)} >0$. In the second case there exists $\varepsilon >0$ such that $d(x,F^{\nicefrac{1}{3}}(\CS)) > \varepsilon $, which implies that $\abs*{x - x_n} > \varepsilon$ for all $n \in \NN$ with $\alpha_n \in \CS$. In the first case for all $n \geqslant N$ for which $\alpha_n \in \CS$ we get 
\begin{align*}
\abs*{x-x_n}  = \abs*{x-F^{\nicefrac{1}{3}}(\alpha_n)} & \geqslant \abs*{F^{\nicefrac{1}{3}}(\alpha)  - F^{\nicefrac{1}{3}}(\alpha_n) } -  \abs*{x - F^{\nicefrac{1}{3}}(\alpha)}  \ , \\
&  \geqslant 2\delta -  \abs*{x - F^{\nicefrac{1}{3}}(\alpha)} > \delta \ .
\end{align*}
Notice that for $n$ with $\alpha_n=\omega$ we have $\abs*{x-x_n} \geqslant 1 > \frac{1}{2}$. So in all cases we have the desired $\abs*{x_n -x} > \min \menge{\varepsilon,\delta, \frac{1}{2}} > 0$ for all $n \geqslant N$. Therefore $x_n$ is bounded away from every point in $[0,1]$. Furthermore, if there exists $m$ such that $x_m = \omega$, then $\alpha_m = \omega$. Thus $AS_{[0,1]}^{L} \implies AS_{\CS}^{L}$. 

To see that the converse holds we can use \cite{hD09b}*{Lemma 0.3} and the fact that there exists a point-wise continuous surjection $\CS \to [0,1]$ (see Section \ref{Sec:linking-cantor-and-unitint}).
\end{proof}

As was shown in \cite{dB13} \ASL{X} is equivalent to the following version of \POS (remember that \POS is the same with the assumption that $f$ is uniformly continuous, which ensures that $\inf f$ exists).

\begin{principle}[POSpw]{$\POSp{X}$} \label{PR:POSp} If $f$ is a point-wise continuous, positive-valued mapping on a metric space, and if $\inf_{X} f$ exists, then $\inf_{X} f > 0$.
\end{principle}

\begin{Lem}
If $f$ is a point-wise continuous, positively valued mapping on a metric space, then $g =\frac{1}{f}$ is also a point-wise continuous, positively valued mapping; furthermore
\begin{itemize}
\item $\inf f$ exists if and only if $g(X)$ is upper order located, and
\item $\inf f > 0$ if and only if $g$ is bounded.
\end{itemize}
\end{Lem}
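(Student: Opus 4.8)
The plan is to handle the three assertions in turn: the first (that $g$ is again point-wise continuous and positively valued) and the second bullet are routine, while the first bullet is the one needing a little constructive care. Throughout I would assume $X$ inhabited; if $X$ is empty every statement involved degenerates. For the opening assertion: since $f$ is positively valued its range lies in $(0,\infty)$, and $t\mapsto 1/t$ is a strictly decreasing, point-wise continuous bijection of $(0,\infty)$ onto itself; hence $g=1/f$ is a composite of point-wise continuous maps and is point-wise continuous, and $g(x)=1/f(x)>0$, so it is positively valued.

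For the second bullet I would first fix the reading of ``$\inf f>0$'' that makes the equivalence literally true, namely ``$f$ is bounded away from $0$'', i.e.\ $\ex{c>0}{\fa{x\in X}{f(x)>c}}$; in the setting of the first bullet, where $\inf f$ is already known to exist, this is exactly the statement that the (existing) infimum is positive, which is the reading relevant to \POSp{X} and \POS. Then the elementary equivalence $f(x)>c\iff g(x)<1/c$ (for $c>0$) shows at once that a positive lower bound $c$ for $f$ gives the upper bound $1/c$ for $g$, and conversely an upper bound $M>0$ for $g$ gives the positive lower bound $1/(2M)$ for $f$; so $g$ is bounded if and only if $f$ is bounded away from $0$.

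The heart of the lemma is the first bullet. Recall Bishop's characterisation (cf.\ \cite{dB85}): an inhabited subset $S\subset\RR$ that is bounded below has an infimum precisely when it is \emph{order located from below}, i.e.\ for all $\alpha<\beta$ either $\beta$ fails to be a lower bound of $S$ or $\alpha$ is a lower bound; dually, ``$g(X)$ is upper order located'' means that for all $a<b$ either $a$ fails to be an upper bound of $g(X)$ or $b$ is an upper bound. I would show these two located-ness conditions are equivalent by transporting them along $t\mapsto 1/t$: given $0<a<b$, put $\alpha=1/b$ and $\beta=1/a$, so $0<\alpha<\beta$; then ``$a$ is an upper bound of $g(X)$'' is verbatim ``$\beta$ is a lower bound of $f(X)$'', and ``$b$ is an upper bound of $g(X)$'' is verbatim ``$\alpha$ is a lower bound of $f(X)$'', so the clause for $g$ at $(a,b)$ is precisely the clause for $f$ at $(1/b,1/a)$; since $t\mapsto 1/t$ is a bijection of $(0,\infty)$, quantifying over all positive $a<b$ is the same as quantifying over all positive $\alpha<\beta$. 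The remaining cases, where $a\leqslant 0$ or $b\leqslant 0$, are immediate from the positivity of $g$ (resp.\ $f$) together with inhabitedness of $X$. Since $f(X)$ is inhabited and bounded below by $0$, this yields: $g(X)$ is upper order located $\iff$ $f(X)$ is order located from below $\iff$ $\inf f$ exists.

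The main obstacle is essentially bookkeeping rather than genuine mathematical difficulty: one must settle on a single form of the ``order located'' predicate (the $\lnot\forall$ form, as used in the proof of Proposition~\ref{Pro:equival_WLEM}, or the $\exists$ form) and use it consistently, cite the precise version of Bishop's infimum characterisation being invoked, and be careful with the sign case-analysis in the transport. The one slightly non-formal ingredient, and it is buried inside the cited characterisation, is the cotransitivity of $<$ on the reals, which is what lets one split $\alpha<\beta$ against $m=\inf f$ into the two alternatives $m<\beta$ or $\alpha<m$ needed to verify one of the disjuncts; everything else is direct calculation.
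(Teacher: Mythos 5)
The paper offers no proof at all here — it simply says ``Straightforward.'' Your write-up is a correct filling-in of what the author treats as routine: the transport of the order-locatedness condition along the strictly decreasing bijection $t \mapsto 1/t$ of $(0,\infty)$, together with Bishop's characterisation of when an infimum exists, is exactly the natural argument, and your sign case-analysis and the elementary equivalence $f(x)>c \iff g(x)<1/c$ for the second bullet are both sound.
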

\begin{proof}
Straightforward.
\end{proof}

\begin{Cor}
The following are equivalent:
\begin{enumerate}
\item $\ASL{[0,1]}$
\item $\POSp{[0,1]}$
\item Every point-wise continuous function $f:[0,1] \to \RR$ such that $f([0,1])$ is located is bounded.
\end{enumerate}
\end{Cor}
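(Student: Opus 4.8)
The statement to prove is the Corollary giving the equivalence of three things: $\ASL{[0,1]}$, $\POSp{[0,1]}$, and the boundedness statement for point-wise continuous $f:[0,1]\to\RR$ whose image is located.

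The plan is to chain the three items together, relying on the results just established. First I would recall that the equivalence $\ASL{[0,1]} \iff \POSp{[0,1]}$ is exactly the result cited from \cite{dB13}, so items (1) and (2) are already known to be equivalent. This reduces the work to connecting $\POSp{[0,1]}$ with item (3), the statement that every point-wise continuous $f:[0,1]\to\RR$ with $f([0,1])$ located is bounded. The natural bridge is the preceding Lemma applied to $[0,1]$: given a point-wise continuous, positively valued $f:[0,1]\to\RR$, the reciprocal $g=1/f$ is point-wise continuous and positively valued, $\inf f$ exists iff $g([0,1])$ is (upper order) located, and $\inf f>0$ iff $g$ is bounded.

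So, for $\POSp{[0,1]} \implies (3)$: take a point-wise continuous $g:[0,1]\to\RR$ with $g([0,1])$ located. Without loss of generality (replacing $g$ by $\max\{\abs{g},1\}$, which is still point-wise continuous, has located image since a uniformly continuous function of a located set is located, and is bounded iff $g$ is) we may assume $g$ is positively valued and bounded away from $0$ is \emph{not} yet known—instead assume $g(x)\geqslant 1$ via the $\max$ trick so $g$ is positively valued. Then $f=1/g$ is point-wise continuous and positively valued, and since $g([0,1])$ is located, by the Lemma $\inf f$ exists. Applying $\POSp{[0,1]}$ gives $\inf f>0$, hence again by the Lemma $g$ is bounded; therefore the original $g$ is bounded. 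For the converse $(3)\implies\POSp{[0,1]}$: let $f$ be point-wise continuous, positively valued, with $\inf_{[0,1]}f$ existing. Then $g=1/f$ is point-wise continuous, positively valued, and by the Lemma $g([0,1])$ is located. By (3), $g$ is bounded, so once more by the Lemma $\inf f>0$.

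The main obstacle, such as it is, is bookkeeping rather than conceptual: making sure the ``without loss of generality'' normalisation in the $\POSp{[0,1]}\implies(3)$ direction is legitimate—specifically that passing from $g$ to $\max\{\abs{g},1\}$ preserves point-wise continuity and locatedness of the image (the latter because $t\mapsto\max\{\abs{t},1\}$ is uniformly continuous and the uniformly continuous image of a located—indeed totally bounded after intersecting with a bounded interval—set is located, cf.\ \cite{dBlV06}), and preserves boundedness in both directions. One should also note that in (3) we may assume $f$ takes values in a bounded interval only after knowing it is bounded, so the argument must run in the order: locatedness of $g([0,1])$ $\Rightarrow$ (via Lemma) $\inf(1/g)$ exists $\Rightarrow$ (via $\POSp{}$) $\inf(1/g)>0$ $\Rightarrow$ (via Lemma) $g$ bounded. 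Everything else is a direct citation of the $\ASL{[0,1]}\iff\POSp{[0,1]}$ equivalence from \cite{dB13} together with the Lemma, so the corollary follows immediately.
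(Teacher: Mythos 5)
Your decomposition is the right one: cite the equivalence $\ASL{[0,1]}\iff\POSp{[0,1]}$ from \cite{dB13}, and use the preceding Lemma on reciprocals to connect $\POSp{[0,1]}$ with item~(3). The direction $(3)\Rightarrow\POSp{[0,1]}$ is fine as written.

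The normalisation step in $\POSp{[0,1]}\Rightarrow(3)$ has a genuine gap. Passing from $g$ to $\max\{\abs{g},1\}$, you justify locatedness of the new image by the principle that ``the uniformly continuous image of a located set is located.'' That principle is false: uniform continuity preserves \emph{total boundedness}, and for subsets of $\RR$ total boundedness is locatedness together with boundedness. Boundedness of $g([0,1])$ is precisely what you are trying to prove, so you cannot assume it, and the aside about intersecting with a bounded interval does not repair this --- one would first have to know that the intersection of a located set with $[-N,N]$ is located, which also fails in general. The clean fix is to normalise to $g'=1+\abs{g}$ instead. Then $g'([0,1])=1+\abs*{g([0,1])}$, and locatedness follows by hand with no appeal to total boundedness: for $x\geqslant 0$,
\[ d\bigl(x,\abs*{S}\bigr)=\min\bigl(d(x,S),d(-x,S)\bigr), \]
for $x<0$ one has $d\bigl(x,\abs*{S}\bigr)=\abs{x}+d(0,S)$, and translation preserves locatedness trivially. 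With this substitute the rest of your argument (apply the Lemma to $f=1/g'$, apply $\POSp{[0,1]}$, transfer boundedness from $g'$ back to $g$) goes through unchanged.

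A smaller point: you cite the Lemma as concluding that $g(X)$ is \emph{located}, but the Lemma actually speaks of $g(X)$ being \emph{upper order located}, and for possibly unbounded subsets of $\RR$ these are not interchangeable. The Corollary's own wording has the same gloss, so this is inherited from the source, but a fully careful write-up should either reconcile the two notions for the images involved or state item~(3) with the order-locatedness hypothesis.
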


Remember that in Corollary \ref{Cor:FAND_equiv_fullyloc} it was shown that \FAND is equivalent to every fully located, point-wise continuous function $[0,1] \to \RR$ being bounded. 

Therefore we get the following nice characterisations of POS, $POS_{p}^{X}$ and UCT being equivalent to all point-wise continuous functions on $[0,1]$ with an additional property being bounded.
\begin{center}
\rowcolors{1}{}{mylightgray}
\begin{tabular}{ll} \toprule
 & additional property X \\ \midrule
$\POS$, \FAND & $f([a,b])$ is located for all $a \leqslant b$ (that is $f$ is fully located) \\ 
 $\POSp{[0,1]}$, $\ASL{[0,1]}$ & $f([0,1])$ is located   \\ 
 \UCT & no assumption on $f$ \\ \bottomrule
\end{tabular}
\end{center}

\section{Increasing Specker Sequences}  \label{Sec:incss}

As mentioned above Specker's original sequence is increasing. So it is natural to follow Dent \cite{jD13} and consider the following principle.
 
\begin{principle}[iAS]{\iAS} \label{PR:iAS}
Every \emph{increasing} sequence $x_n$ in $\RR$ that is eventually bounded away from every point in $[0,1]$ is eventually bounded away from the entire interval.
\end{principle} 

If, for a sequence as in \iAS, we ever find $n \in \NN$ such that $x_n>1$, then from then on we have $x_m \geqslant x_n > 1$ for all $m \geqslant n$. Therefore \ASL{[0,1]} implies \iAS. In turn, we can show the following.

\begin{Pro}
$\iAS \implies \FAND$
\end{Pro}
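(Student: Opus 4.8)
We want to show that \iAS{} implies \FAND. By the characterisation in Corollary \ref{Cor:FAND_equiv_fullyloc} (or directly via \POS in Proposition \ref{Pro:Fand-equivalents}), it suffices to derive one of the known \FAND-equivalents from \iAS. The cleanest target is probably \POS: every uniformly continuous, positive-valued $f:[0,1]\to\RR^{+}$ has a positive infimum. So the plan is: start from such an $f$, build from it an \emph{increasing} sequence $(x_n)_{n\geqslant 1}$ in $\RR$ that is eventually bounded away from every point of $[0,1]$, apply \iAS{} to conclude it is eventually bounded away from the whole interval, and then read off $\inf f>0$ from the place where the sequence must leave $[0,1]$.

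\textbf{Constructing the increasing sequence.} The natural idea is to run along the uniform modulus of continuity of $f$. Fix a modulus $\omega$ so that $|x-y|<2^{-\omega(k)}$ implies $|f(x)-f(y)|<2^{-k}$. For each $k$, partition $[0,1]$ into $N_k$ subintervals of length $<2^{-\omega(k)}$ and, using countable choice, pick a point in each; list all these sample points (over all $k$) as a sequence $(t_j)_{j\geqslant 1}$. Now we want an increasing sequence that ``walks through'' $[0,1]$ as long as $f$ stays small, and escapes to a value $>1$ once we have confirmed $f>\delta$ everywhere for some $\delta$. Concretely: define $x_n$ recursively; at stage $n$, while we have not yet seen evidence that $\inf f>2^{-n}$ on the relevant finite sample, let $x_n$ be the next sample point $t_j$ (arranged in increasing order of value, restarting through $[0,1]$ — here one has to be a little careful, since to keep the sequence increasing \emph{in $\RR$} one should make each ``sweep'' of $[0,1]$ happen inside a shrinking band $[m, m+1-2^{-m}]$ rescaled, or alternatively work with $x_n\in[0,1]$ cycling and only at the escape point jump above $1$). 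The key property to engineer is: $(x_n)$ is increasing; it is eventually bounded away from every $x\in[0,1]$ (because $f$ being positive at $x$, hence $f>2^{-k}$ near $x$ by continuity, means that after finitely many stages we stop visiting the neighbourhood of $x$); and if $(x_n)$ is eventually bounded away from all of $[0,1]$, then from some stage $N$ on we are in the ``escape'' regime, which by construction happened because at that stage the finite sample gave $f\geqslant 2^{-N}$ on a $2^{-\omega(N)}$-net, hence by uniform continuity $f\geqslant 2^{-N}-2^{-N-1}>0$ on all of $[0,1]$, i.e. $\inf f>0$.

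\textbf{The main obstacle.} The real difficulty is making the sequence genuinely \emph{increasing in $\RR$} while still having it be dense-ish in $[0,1]$ infinitely often until escape — these two demands pull in opposite directions, so the construction of Lemma \ref{Lem:bar-to-seq} (which produces a not-necessarily-monotone Specker-type sequence from a bar) and of Proposition \ref{Pro:KT_impl_iSS} (which produces an increasing one from a Kleene tree, using the lexicographically-least path) are the templates to imitate. I expect the right move is to mimic \ref{Pro:KT_impl_iSS}: take a decidable bar $B$ (which is what \FAND is really about), form the associated tree $T=\lnot B$, enumerate for each level $n$ the lexicographically least $w_n\in T$ with $|w_n|=n$ (if it exists), so that $F^{1/3}(w_n)$ is increasing; this sequence is eventually bounded away from every point of $[0,1]$ precisely because $T$ has no infinite path, by the argument of \ref{Pro:KT_impl_iSS} using Lemma \ref{Lem:Fp}.2; then \iAS{} forces it to be eventually bounded away from $[0,1]$ entirely, which forces $T$ (hence the relevant part of it) to be finite above some level — but one must check this yields a \emph{uniform} bound for $B$, not merely that the chosen branch dies out. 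Patching that last gap — from ``the least branch escapes uniformly'' to ``$B$ is uniform'' — is where care is needed, and it may require first replacing $B$ by its closure under extensions (Lemma \ref{Lem:dec_bar_closedunderextensions}) so that $T$ is a genuine tree and ``escape of the least branch at level $N$'' really does mean every branch has died by level $N$.
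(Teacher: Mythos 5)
Your eventual plan — take a decidable bar $B$ closed under extensions (Lemma \ref{Lem:dec_bar_closedunderextensions}), let $T=\lnot B$, and feed the lexicographically-smallest unbarred node at each level through $F^{\nicefrac{1}{3}}$ — is precisely the paper's construction, and you are right that \POS{} is the wrong target here (that detour leads nowhere: forcing monotonicity while sweeping $[0,1]$ is hopeless). But the worry you end on is misplaced, and there is a different, real gap that you do not address.

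The spurious worry first: you fear that ``the chosen branch dies but $T$ survives.'' That cannot happen, because $w_n$ is defined as the lex-least \emph{element of $T$ of length $n$}, not the continuation of a fixed branch. If $T$ has any node at level $n$, $w_n$ exists; and since $T$ is a tree, $\overline{w_{n+1}}n\geqslant_{\mathrm{lex}}w_n$, so $F^{\nicefrac{1}{3}}(w_n)$ is (weakly) increasing. So $w_n$ fails to exist precisely when $2^n\subset B$, i.e.\ precisely when $n$ is a uniform bound. The real gap is what you do in that case: you write ``(if it exists),'' which leaves you with only a \emph{partial} sequence to which \iAS{} cannot be applied, and an attempted proof-by-contradiction from a hypothetical total sequence would only give $\lnot\lnot(B\text{ uniform})$, not a witness. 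The fix — and it is the heart of the paper's proof — is to notice that for each $n$ you can \emph{decide} whether $n$ is a uniform bound of $B$ (a finite check, since $B$ is decidable), and to set $x_n=2$ in that case and $x_n=F^{\nicefrac{1}{3}}(w_n)$ otherwise. This sequence is total, increasing (once a term equals $2$, all later ones do), and eventually bounded away from every point of $[0,1]$ by the Lemma \ref{Lem:Fp}.\ref{Lem:Var-Bish} / Lemma \ref{Lem:seq-to-bar} argument; then \iAS{} yields $N$ with $x_N>1$, which forces $x_N=2$ and hence $N$ to be a uniform bound for $B$. With that one modification your sketch becomes the paper's proof.
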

\begin{proof}
Let $B$ be a decidable bar that is closed under extensions. Notice that we can decide for every $n\in \NN$, whether $n$ is a uniform bound of $B$. Now define a sequence $u_n \in \cS \cup \menge{\omega}$ by
\[ u_n = \begin{cases}
\omega & \text{if $n$ is a uniform bound} \\
u & \text{if $u \in 2^n$ is the (lexicographically) smallest element such that $u \notin B$}
\end{cases}
 \]
 It is easy to show that $x_n$ defined by 
 \[ x_n = \begin{cases}
2 & \text{if } u_n = \omega \\
F(u_n) & \text{if } u_n \in \cS
\end{cases}
 \]
 is an increasing sequence, that is eventually bounded away from every point in $[0,1]$. Thus there exists a $n$ such that $x_n > 1$, which translates back into $n$ being a uniform bound of $B$.
\end{proof}
Thus we have shown that 
\[ \FANc \implies \ASL{[0,1]} \implies \iAS \implies \FAND \ , \]
which of course raises the following question.
\begin{Qu}
Are all of the implications $\FANc \implies \ASL{[0,1]} \implies \iAS \implies \FAND$ strict?
\end{Qu}

Rather than focussing on the ``anti-Specker''-side of things we could, of course, ask  the same question about the recursive side and consider the following principle. 
\begin{principle}[iSS]{\iSS} \label{PR:iSS}
	There exists an increasing sequence
\end{principle}
Obviously we have that $\iSS \implies \SS$. So together with Proposition \ref{Pro:KT_impl_iSS} we have 
\[ \KT \implies \iSS \implies \SS \ . \]

\begin{Qu} \label{Qu:SS_iSS}
Does \SS imply \iSS? And does \iSS imply \KT?
\end{Qu}

Notice that even if we start with a Specker sequence there is no hope to construct a monotone sequence as a subsequence of the original sequence.
\begin{Pro}
If \SS holds and every sequence in $[0,1]$ that is bounded away from every point in $[0,1]$ contains a monotone subsequence, then \LPO holds.
\end{Pro}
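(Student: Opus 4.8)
The plan is to reduce \LPO to deciding an arbitrary binary sequence, and to produce that decision by feeding a carefully engineered sequence to the monotone-subsequence hypothesis. First I would fix, via \SS, a sequence $(x_n)$ in $[0,1]$ that is eventually bounded away from every point of $[0,1]$, and replace it by an affine image so that, without loss of generality, $(x_n)$ takes values in $\bracks*{\frac14,\frac13}$; the short computation needed here is of the same flavour as the normalisations in Section \ref{Sec:linking-cantor-and-unitint}, and it shows that the rescaled sequence is still eventually bounded away from every point of $[0,1]$, and in particular from $0,\frac13,\frac23,1$. Likewise $(1-x_n)$, taking values in $\bracks*{\frac23,\frac34}$, is eventually bounded away from every point of $[0,1]$.

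Next, given a binary sequence $(a_m)_{m\geqslant 1}$, I would replace it by its running maxima so that without loss of generality $(a_m)$ is non-decreasing, which does not change the statement $\fa{m\in\NN}{a_m=0}\lor\ex{m\in\NN}{a_m=1}$. Define $z_n=x_n$ as long as $a_j=0$ for all $j\leqslant n$, and $z_n=1-x_n$ once some $a_j=1$; so $z_n\in\bracks*{\frac14,\frac13}$ while only zeros have appeared and $z_n\in\bracks*{\frac23,\frac34}$ afterwards, and in particular every $z_n$ avoids the open interval $\left(\frac13,\frac23\right)$. I would then check that $(z_n)$ is eventually bounded away from every point of $[0,1]$ in all cases: if $\fa{m\in\NN}{a_m=0}$ then $(z_n)=(x_n)$; if $a_M=1$ for a least $M$, then $(z_n)_{n\geqslant M}$ is a tail of $(1-x_n)$, and prepending the finitely many terms $z_1,\dots,z_{M-1}$ cannot destroy being \emph{eventually} bounded away from a point. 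Hence the hypothesis applies and yields a monotone subsequence $(z_{n_k})_{k\geqslant 1}$.

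Now, since each $z_{n_k}$ avoids $\left(\frac13,\frac23\right)$, approximating it to within $\frac1{12}$ lets us decide, for every $k$, whether $z_{n_k}\leqslant\frac13$ (``low'', which forces $a_{n_k}=0$) or $z_{n_k}\geqslant\frac23$ (``high'', which forces $a_{n_k}=1$, hence $\ex{m\in\NN}{a_m=1}$). Using monotonicity I would argue as follows. If $(z_{n_k})$ is non-increasing, then either $z_{n_1}$ is high, in which case $\ex{m\in\NN}{a_m=1}$; or $z_{n_1}$ is low, whence every later term is $\leqslant z_{n_1}\leqslant\frac13$ and so low, which is incompatible with the existence of an $M$ with $a_M=1$ (for then $z_n$ would be high for all $n\geqslant M$, while $n_k\to\infty$), so $\fa{m\in\NN}{a_m=0}$. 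The non-decreasing case is handled by running the whole construction a second time with the mirrored sequence $\tilde z_n=1-z_n$ (which is Specker in exactly the same cases, and interchanges ``low'' and ``high''): a non-decreasing subsequence of $(\tilde z_n)$ behaves, after reflection, like a non-increasing subsequence of $(z_n)$. Patching the two runs together gives, in every case, a decision of $\fa{m\in\NN}{a_m=0}\lor\ex{m\in\NN}{a_m=1}$, i.e.\ \LPO.

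The main obstacle is the combinatorial endgame in the last paragraph: the hypothesis hands us a monotone subsequence without telling us its direction, and a subsequence that merely stays in the ``low'' band is a priori consistent both with $\fa{m}{a_m=0}$ and with an as-yet-unseen one in $(a_m)$. The real work is therefore to choose the two bands and the pair of mirrored constructions so that, whichever of the (at most four) direction combinations occurs for the two subsequences, one of the two runs pins the term at the starting index into a band whose value already decides \LPO; if a single pair of reflections turns out not to suffice, the fallback is to iterate the hypothesis on the obtained monotone subsequences (which changes nothing for a monotone sequence but may give a more convenient witness) or to interleave a third, shifted copy of the Specker sequence. Verifying that this book-keeping is actually watertight — and that the $\frac1{12}$-approximations genuinely decide band membership throughout — is the only non-routine part of the proof.
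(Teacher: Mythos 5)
The gap you flag at the end is real, and the mirrored run with $\tilde z_n=1-z_n$ does not close it. In the residual case where the hypothesis hands you a non-decreasing subsequence $(z_{n_k})$ with $z_{n_1}$ in the low band, and the mirror run yields a non-increasing subsequence of $(\tilde z_n)$ whose first term is high for $\tilde z$ (equivalently, a second non-decreasing subsequence of $(z_n)$ starting low), nothing you have observed separates $\fa{m}{a_m=0}$ from $\ex{m}{a_m=1}$: a Specker sequence confined to $\bracks*{\frac14,\frac13}$ can perfectly well contain non-decreasing subsequences even when no flip has occurred, and a non-decreasing subsequence that starts low can just as well begin among the finitely many unflipped terms and cross into the high band later. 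The hypothesis does not tell you which of these alternatives holds, nor where the subsequence eventually settles; and iterating the hypothesis on an already-monotone sequence, or interleaving further copies of a Specker sequence confined to the same band, does not change the picture.

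What the argument is missing is a thinning of the Specker sequence to a \emph{strictly} monotone one \emph{before} the twist is applied. The paper's proof applies the monotone-subsequence hypothesis to $(x_n)$ first, extracting a monotone subsequence $(x_{k_n})$, and then uses dependent choice — exploiting that the Specker sequence is eventually bounded away from each of its own terms — to pass to a strictly increasing sub-subsequence $(x_{k_{\ell_n}})$. Only then does it set $y_n = x_{k_{\ell_n}}$ when $a_n=0$ and $y_n = 1 - x_{k_{\ell_n}}$ when $a_n=1$. Now the unflipped initial segment of $(y_n)$ is strictly increasing and the flipped tail (if any) is strictly decreasing; consequently, when the hypothesis is applied a second time to $(y_n)$, the direction of the resulting monotone subsequence $(y_{m_k})$ is decisive without any band tests. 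If it is non-increasing, then $a_{m_2}=1$ — otherwise $y_{m_1}<y_{m_2}$ by strict increase — so $\ex{m}{a_m=1}$; if it is non-decreasing, it cannot survive a strictly decreasing tail, so $\lnot\ex{m}{a_m=1}$ and hence, by decidability of the $a_m$, $\fa{m}{a_m=0}$. It is this preliminary pass through the hypothesis and the strict monotonicity it produces, not confinement to a band, that makes the second monotone subsequence carry the information you need.
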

\begin{proof}
Let $(a_n)_{n \geqslant 1}$ be a binary increasing sequence and $(x_n)_{n \geqslant 1}$ be a sequence in $[0,1]$ that is eventually bounded away from every point in $[0,1]$. By our assumption there exists a monotone subsequence $(x_{k_n})_{n \geqslant 1}$. Without loss of generality this sequence is increasing. Using dependent choice we may construct a subsequence $(x_{k_{\ell_n}})_{n \geqslant 1}$ of the subsequence such that \[ x_{k_{\ell_n}} < x_{k_{\ell_{n+1}}}  \  . \]
Now define a sequence $(y_n)_{n \geqslant 1}$ by
\[ 
y_n = 
\begin{cases}
x_{k_{\ell_n}} & \text{if } a_n=0 \\
1-x_{_{k_{\ell_n}}} & \text{if } a_n=1 \ .
\end{cases}
\]
Given $x \in [0,1]$ there exists $N$ and $\varepsilon >0$ such that for all $n \geqslant N$ 
\[ \abs*{x_{k_{\ell_n}} -x}, \abs*{x_{k_{\ell_n}} - (1-x)} > \varepsilon \ ,\] which means that also $\abs*{ (1-x_{k_{\ell_n}}) -x} > \varepsilon$. In both cases $\abs*{y_n - x} > \varepsilon$, and so $(y_n)_{n \geqslant 1}$ is eventually bounded away from every point in $[0,1]$. Now assume that there exists a monotone subsequence $y_{k_n}$. If it is decreasing, then $a_{k_{2}} =1$, since otherwise $y_{k_{1}} < x_{k_{2}}$. Similarly, if it is increasing, then $a_n=0 $ for all $n \in \NN$. In other words \LPO holds.
\end{proof}

\iSS also allows for the construction of a strong counterexample to a minimal intermediate value---more precisely, a function that around any root is constant $0$.
\begin{Pro}
\iSS implies the existence of an increasing, uniformly continuous function $f:[0,1] \to \RR$, such that $f(0) = -1$, $f(1) = 0$ and such that \[ f(x) = 0 \implies \ex{\delta>0}{f_{\restriction{B_{x}(\delta)}} = 0 }  \ . \]
\end{Pro}
\begin{proof}
First, assume that $(r_n)_{n \geqslant 1}$ is an increasing sequence in $[0,1]$ that is bounded away from every point in $[0,1]$. W.l.o.g.\ we may assume that $r_n < r_{n+1}$. Define a sequence of piecewise linear  functions $g_n: [0,1] \to \RR$ connecting the points 
\[ (0,-1),  \left(r_1, -\frac{1}{2}\right), \dots, \left(r_n, -\frac{1}{2^n}\right), (1,0) \ . \]
This increasing and uniformly continuous function converge uniformly to a limit $f:[0,1] \to \RR$. Therefore $f$ is also increasing,  uniformly continuous, and such that $f(0) = -1$, $f(1) = 0$. So let $x \in [0,1]$ such that $f(x) = 0$. Then we must have $x \geqslant r_n $ for all $n \in \NN$, since the assumption that $x < r_n$ for some $n \in \NN$ implies that $g_m(x) \leqslant \frac{1}{2^n}$ for all $m \geqslant n$ and therefore $f(x) \leqslant \frac{1}{2^n}$; a contradiction. Furthermore, since $(r_n)_{n \geqslant 1}$ is eventually bounded away from $x$, we must actually have that there exists $\delta > 0$ such that $y \geqslant r_n$ for all $y \in B_{x}(\delta)$ and $n \in \NN$. Thus $f(y) = 0$ for such $y$.
\end{proof}

Notice that it is not clear, whether the converse holds. At the same time the above construction can also not be done with a Specker sequence that is not necessarily increasing. This basically leads back to Question \ref{Qu:SS_iSS}. 

\section{Dirk Gently's Principle\texorpdfstring{: $(\varphi \implies \psi) \lor (\psi \implies \varphi)$}{}} \label{Sec:PimplQveeQimplP}

One statement that is notably missing from the list of the ``paradoxes of material implication'' equivalent to \LEM (see Proposition \ref{Pro:paradoxes_of_material_impl}) is \define{Dirk Gently's Principle}.\footnote{The name is based on the guiding principle of the protagonist of Douglas Adam's novel \emph{Dirk Gently's Holistic Detective Agency} who believes in the ``fundamental interconnectedness of all things.''} It is also known under the name \define{linearity}, and is the basis of G\"odel-Dummet logic \cite{jP03}.\footnote{Readers disliking our naming of this principle are encouraged to think of \DGP as standing for ``Dummet-Gödel Principle''.}
\begin{principle}[DGP]{\DGP} \label{Pr:DGP}
If $\varphi$ and $\psi$ are any syntactically correct statements, then
\[ (\varphi \implies \psi) \lor (\psi \implies \varphi) \ . \]
\end{principle}
\DGP is listed as Proposition 5.13 in Principia Mathematica \cite{PM}.
As a matter of fact the proof there is actually skipped and Russel and Whitehead ``merely indicate the proposition [...] used in the proofs.'' 
We can be a little bit more precise here.
\begin{Pro}
\DGP is implied by \LEM and implies \WLEM.	
\end{Pro}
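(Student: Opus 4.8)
The statement to prove is: \DGP is implied by \LEM and implies \WLEM.

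\textbf{The plan.} The first direction, that \LEM implies \DGP, is immediate: given any $\varphi$ and $\psi$, apply \LEM to $\psi$. If $\psi$ holds, then $\varphi \implies \psi$ holds (a true proposition is implied by anything), so the left disjunct of \DGP holds. If $\lnot \psi$ holds, then $\psi \implies \varphi$ holds vacuously (a false proposition implies anything), so the right disjunct holds. Either way $(\varphi \implies \psi) \lor (\psi \implies \varphi)$. Symmetrically one could apply \LEM to $\varphi$; either choice works.

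\textbf{The second direction} is the one requiring a small idea, though still a routine one. To derive \WLEM from \DGP, let $\varphi$ be an arbitrary syntactically correct statement, and apply \DGP to the pair $(\varphi, \lnot \varphi)$. This yields $(\varphi \implies \lnot \varphi) \lor (\lnot \varphi \implies \varphi)$. In the first case, $\varphi \implies \lnot\varphi$ gives $\lnot\varphi$: indeed if we had $\varphi$ we would also have $\lnot\varphi$, a contradiction, so $\lnot\varphi$; that is, the left disjunct of \WLEM holds. In the second case, $\lnot\varphi \implies \varphi$ gives $\lnot\lnot\varphi$ by an entirely analogous argument (if $\lnot\varphi$ then $\varphi$, contradiction, so $\lnot\lnot\varphi$); here one recognises the shape of part \ref{LEMequiv2} of Proposition \ref{Pro:paradoxes_of_material_impl}, except we only need the doubly-negated conclusion. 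Thus in either case $\lnot\varphi \lor \lnot\lnot\varphi$, which is \WLEM.

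\textbf{Expected obstacle.} There is essentially no obstacle; both directions are short intuitionistic derivations, and the only thing to be careful about is not to accidentally prove more than \WLEM in the second direction (one is tempted to try to get $\varphi \lor \lnot\varphi$, but the disjunction in \DGP does not let one decide which of the two implications one has, so one genuinely only extracts $\lnot\varphi \lor \lnot\lnot\varphi$). I would present the proof in two or three lines, citing the vacuous-implication facts without fanfare and pointing to Proposition \ref{Pro:paradoxes_of_material_impl} for the flavour of the second case. The more interesting questions — whether \DGP is strictly between \WLEM and \LEM, and its relation to the omniscience principles — presumably belong to the subsequent discussion and not to this proof.
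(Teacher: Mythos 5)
Your proof is correct and follows exactly the same route as the paper: applying \LEM to $\psi$ (using weakening and ex falso) for the forward direction, and applying \DGP to the pair $(\varphi,\lnot\varphi)$ for the converse, extracting $\lnot\varphi$ or $\lnot\lnot\varphi$ respectively. Nothing to add.
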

\begin{proof}
The proof from \LEM is straightforward: either $\psi$ or $\lnot \psi$ holds. In the first case $\varphi \implies \psi$ (weakening) and in the second case $\psi \implies \varphi$ (ex falso quodlibet). To see that it implies \WLEM apply the principle to $\varphi$ and $\lnot \varphi$. Now either $\varphi \implies \lnot \varphi$ or $\lnot \varphi \implies \varphi$. In the first case the assumption that $\varphi$ holds leads to a contradiction, whence $\lnot \varphi$. Similarly, in the second case the assumption that $\lnot \varphi$ holds leads to a contradiction, and hence $\lnot \neg \varphi$.
\end{proof}
We can also show that it is equivalent to yet another ``paradox of material implication''.
\begin{Pro}
\DGP is equivalent to the statement that
\begin{equation}	
	((\varphi \land \psi) \implies \vartheta) \implies \left( (\varphi \implies \vartheta) \lor (\psi \implies \vartheta) \right)  \ ,
\end{equation} 
to the statement that
\begin{equation}	
	((\varphi \implies \vartheta) \land (\psi \implies \beta)) \implies \left( (\varphi \implies \beta) \lor ( \psi \implies \vartheta) \right)  \ ,
\end{equation} 
and to the statement that 
\begin{equation}	 \label{PMI19}
	(\varphi \implies \psi \lor \vartheta) \implies (\varphi \implies \psi) \lor (\varphi \implies \vartheta)  \ .
\end{equation}
\end{Pro}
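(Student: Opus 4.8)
The plan is to establish the chain of equivalences by proving that \DGP implies each of the three displayed statements, and conversely that each of them implies \DGP; every step will be carried out in intuitionistic logic, so that no use of \LEM or \WLEM slips in.

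For the forward directions I would in each case apply \DGP to a well-chosen pair of formulas and then perform a trivial case split. To obtain $((\varphi \land \psi) \implies \vartheta) \implies ((\varphi \implies \vartheta) \lor (\psi \implies \vartheta))$, assume $(\varphi\land\psi)\implies\vartheta$ and apply \DGP to $\varphi$ and $\psi$: if $\varphi\implies\psi$ then $\varphi\implies\varphi\land\psi\implies\vartheta$, and if $\psi\implies\varphi$ then $\psi\implies\varphi\land\psi\implies\vartheta$. For $((\varphi \implies \vartheta) \land (\psi \implies \beta)) \implies ((\varphi \implies \beta) \lor (\psi \implies \vartheta))$, assume the hypothesis and again apply \DGP to $\varphi,\psi$: one case gives $\varphi\implies\psi\implies\beta$, the other $\psi\implies\varphi\implies\vartheta$. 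For $(\varphi \implies \psi \lor \vartheta) \implies (\varphi \implies \psi) \lor (\varphi \implies \vartheta)$, apply \DGP to $\psi$ and $\vartheta$, and use the hypothesis together with the current case hypothesis to collapse the disjunction $\psi\lor\vartheta$ to a single disjunct behind the implication from $\varphi$.

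The converses all rest on the same device: instantiate the statement so that its own hypothesis becomes an intuitionistic tautology, whereupon the conclusion is a rewriting of \DGP. In the first statement I would take $\vartheta := \varphi\land\psi$, so the hypothesis $(\varphi\land\psi)\implies(\varphi\land\psi)$ is trivial and the conclusion reads $(\varphi\implies\varphi\land\psi)\lor(\psi\implies\varphi\land\psi)$; then I invoke the intuitionistic equivalences $(\varphi\implies\varphi\land\psi)\iff(\varphi\implies\psi)$ and $(\psi\implies\varphi\land\psi)\iff(\psi\implies\varphi)$. In the second statement I would take $\vartheta:=\varphi$ and $\beta:=\psi$, making the hypothesis $(\varphi\implies\varphi)\land(\psi\implies\psi)$ trivial and the conclusion exactly $(\varphi\implies\psi)\lor(\psi\implies\varphi)$. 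In the third statement I would replace its antecedent formula by $\varphi\lor\psi$ and its two disjunct formulas by $\varphi$ and $\psi$; the hypothesis becomes $(\varphi\lor\psi)\implies(\varphi\lor\psi)$ and the conclusion $((\varphi\lor\psi)\implies\varphi)\lor((\varphi\lor\psi)\implies\psi)$, which rewrites via $((\varphi\lor\psi)\implies\varphi)\iff(\psi\implies\varphi)$ and its mirror image to \DGP.

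The only genuine point needing attention — more bookkeeping than difficulty — is keeping the three triples of substitutions straight and checking that the handful of auxiliary equivalences (essentially $\varphi\implies(\varphi\land\psi)\iff\varphi\implies\psi$ and $(\varphi\lor\psi)\implies\varphi\iff\psi\implies\varphi$) are provable in \BISH, so that the entire web of implications stays strictly over intuitionistic logic. Since \DGP was already located strictly between \LEM and \WLEM, this discipline is exactly what makes the proposition meaningful.
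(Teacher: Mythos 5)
Your proof follows the paper's route exactly: in each forward direction you apply \DGP to a suitable pair and split, and in each converse you instantiate the schema so that the hypothesis is an intuitionistic tautology and the conclusion rewrites to \DGP. One small point in your favour: in the converse of \ref{PMI19} you correctly take the antecedent formula to be $\varphi\lor\psi$, giving the conclusion $((\varphi\lor\psi)\implies\varphi)\lor((\varphi\lor\psi)\implies\psi)$, which is intuitionistically equivalent to $(\psi\implies\varphi)\lor(\varphi\implies\psi)$; the paper's displayed substitution uses $\varphi\land\psi$ there, which makes both disjuncts of the resulting conclusion tautologies and yields nothing, so the text appears to contain a slip that your version avoids. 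The only other deviation — applying \DGP to $\varphi$ and $\psi$ rather than to $\varphi$ and $\beta$ in the forward direction of the second schema — is equally valid and equally short.
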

\begin{proof}
To see the equivalence with the first statement assume that $(\varphi \land \psi) \implies \vartheta$. Now if \DGP holds, then either $\varphi \implies \psi$ or $\psi \implies \varphi$.
	In the first case, if $\varphi$ holds, then also $\varphi \land \psi$, and hence $\vartheta$ holds. Together that means that in the first case we have $\varphi \implies \vartheta$. In the second case, similarly, we can show that $\psi \implies \vartheta$.

Conversely, apply the statement to $\vartheta \equiv \varphi \land \psi$. Then the antecedent is always satisfied, which means that $ (\varphi \implies (\varphi \land \psi)) \lor (\psi \implies (\varphi \land \psi))$. Hence the desired $(\varphi \implies \psi) \lor (\psi \implies \varphi)$ holds.

For the second statement assume that $(\varphi \implies \vartheta) \land (\psi \implies \beta)$. By \DGP either $\varphi \implies \beta$ and we are done, or $\beta \implies \varphi$. But in that second case if we assume $\psi$ also $\beta$ holds, which in turn implies $\varphi$, which in turn implies $\vartheta$. Together, in the second case, $\psi \implies \vartheta$.

Conversely we can apply the statement to $\vartheta \equiv \varphi$ and $\beta \equiv \psi$, which yields 
\[((\varphi \implies \varphi) \land (\psi \implies \psi)) \implies \left( (\varphi \implies \psi) \lor ( \psi \implies \varphi) \right)  \ . \] 
But since the antecedent is always satisfied, we get the desired $(\varphi \implies \psi) \lor (\psi \implies \varphi)$.

The last statement is implied by \DGP: For either $q \implies r$ or $r \implies q$ \dots.
	Conversely we can apply \ref{PMI19} to $\varphi \land \psi$, $\varphi$, and $\psi$ to get:
	\[ (\varphi \land \psi \implies \varphi \lor \psi) \implies ((\varphi \land \psi \implies \varphi ) \lor (\varphi \land \psi \implies \psi)) \ .\] Now clearly the antecedent is always satisfied. So we have that 
	\[ (\varphi \land \psi \implies \varphi ) \lor (\varphi \land \psi \implies \psi) \ , \] which is equivalent to the desired 
\[ (\psi \implies \varphi ) \lor (\varphi \implies \psi) \ . \]
Hence \DGP holds.
\end{proof}

What makes the principle \DGP interesting for our point of view is that we can actually show that it lies strictly between \LEM and \WLEM.
\begin{Pro} \label{Pro:topmod_biimp}
There exist topological models $S$ and $T$ such that 
\[ S \nVdash \LEM  \qquad \text{ and } \qquad S \Vdash \DGP \ , \]	
and
\[ T \nVdash \DGP  \qquad \text{ and } \qquad T \Vdash \WLEM \ . \]	
\end{Pro}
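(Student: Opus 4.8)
The plan is to exhibit two concrete finite topological spaces and verify the claimed behaviour of $\LEM$, $\WLEM$, and $\DGP$ on each by a direct computation using the semantic clauses $\ext{\varphi \to \psi} = \Interior{\Complement{\ext{\varphi}} \cup \ext{\psi}}$ and the fullness of the models (so that every open set is the extension of some proposition). Recall from Lemma \ref{Lem:topmodels_impl} that $T \Vdash \varphi \to \psi$ iff $\ext{\varphi} \subseteq \ext{\psi}$; hence $T \Vdash \DGP$ iff for every pair of opens $U, V$ in the topology we have $U \subseteq V$ or $V \subseteq U$, i.e.\ iff the lattice of opens is \emph{linearly ordered} by inclusion. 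Likewise $T \Vdash \LEM$ iff every open is clopen (Proposition \ref{Pro:disc_top_LEM}), and $T \Vdash \WLEM$ iff for every open $U$, $\Interior{\Complement{U}} \cup \Interior{\Complement{\Interior{\Complement{U}}}} = T$.

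For the space $S$ with $S \Vdash \DGP$ but $S \nVdash \LEM$, I would take the Sierpiński space $\Sigma = (\{0,1\}, \{\emptyset, \{1\}, \{0,1\}\})$ — already introduced in the excerpt. Its three opens $\emptyset \subset \{1\} \subset \{0,1\}$ form a chain, so $\Sigma \Vdash \DGP$ by the reformulation above; and $\{1\}$ is not closed, so $\Sigma \nVdash \LEM$ by Proposition \ref{Pro:disc_top_LEM}. (The excerpt already records $\Sigma \Vdash \WLEM$, which is consistent but not needed here.) More generally any chain topology on a totally ordered set with at least two proper opens works; the Sierpiński space is the minimal choice.

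For the space $T$ with $T \Vdash \WLEM$ but $T \nVdash \DGP$, I would use $T_2 = (\{1,2,3\}, \{\emptyset, \{1\}, \{2\}, \{1,2\}, \{1,2,3\}\})$, the space the excerpt already uses to refute \WLEM's converse — wait, that space does \emph{not} validate \WLEM. Instead I would take the space $T = (\{1,2,3\}, \{\emptyset, \{1\}, \{1,2\}, \{1,3\}, \{1,2,3\}\})$: here $\{1,2\}$ and $\{1,3\}$ are incomparable opens, so $T \nVdash \DGP$. For \WLEM one checks case by case: if $\ext{\varphi} = \emptyset$ or $\{1,2,3\}$ it is trivial; if $\ext{\varphi} = \{1,2\}$ then $\Interior{\Complement{\{1,2\}}} = \Interior{\{3\}} = \emptyset$, so $\ext{\lnot\neg\varphi} = \Interior{\Complement{\emptyset}} = T$; symmetrically for $\{1,3\}$; and if $\ext{\varphi} = \{1\}$ then $\Interior{\Complement{\{1\}}} = \Interior{\{2,3\}} = \emptyset$ (since no nonempty open avoids $1$), giving $\ext{\lnot\neg\varphi} = T$ again. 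So in every case $\ext{\lnot\varphi} \cup \ext{\lnot\neg\varphi} = T$, i.e.\ $T \Vdash \WLEM$.

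The main obstacle — and the only place where care is genuinely required — is the \WLEM verification for $T$: one must enumerate all five opens as possible values of $\ext{\varphi}$ and compute the double pseudocomplement in each case, checking that the union of $\ext{\lnot\varphi}$ and $\ext{\lnot\neg\varphi}$ exhausts $T$; the subtle cases are the ``small'' opens $\{1\}$, $\{1,2\}$, $\{1,3\}$ where $\Complement{\ext{\varphi}}$ has empty interior. I would also note explicitly that fullness of the models (guaranteed by the construction of the full model over any topological space) is what licenses moving from ``for all opens $U,V$'' to ``for all propositions $\varphi,\psi$'', so that the lattice-theoretic computations actually bear on $\DGP$, $\LEM$, and $\WLEM$ as logical principles. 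No continuous-choice or arithmetic subtleties enter, since these are purely propositional principles; the argument is entirely a finite computation in two three-point (resp.\ two-point) lattices.
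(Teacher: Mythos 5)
Your choice of the Sierpi\'{n}ski space for $S$ is correct and is in fact a genuine simplification: $\Sigma$ has opens $\emptyset \subset \{1\} \subset \{0,1\}$, so for any pair of opens one contains the other, hence some implication is globally forced and the disjunction $\DGP$ is forced; and $\{1\}$ is not clopen, so $\LEM$ fails. The paper uses a four-point space for $S$; two points suffice for this half.

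The $T$ half, however, contains a genuine error, and the root of it is the step where you upgrade a sufficient condition into a biconditional. Lemma~\ref{Lem:topmodels_impl} tells you that $T \Vdash \varphi \to \psi$ iff $\ext{\varphi} \subseteq \ext{\psi}$, i.e.\ iff a single implication is \emph{globally} forced. But $T \Vdash (\varphi \to \psi) \lor (\psi \to \varphi)$ means
\[
\Interior{\Complement{\ext{\varphi}} \cup \ext{\psi}} \cup \Interior{\Complement{\ext{\psi}} \cup \ext{\varphi}} = T \ ,
\]
and this can hold even when neither disjunct is globally forced. So ``opens linearly ordered'' is sufficient for $\DGP$ but not necessary, and incomparable opens alone do not refute $\DGP$. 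In your proposed three-point space $T = (\{1,2,3\}, \{\emptyset, \{1\}, \{1,2\}, \{1,3\}, \{1,2,3\}\})$, take the unique incomparable pair $U=\{1,2\}$, $V=\{1,3\}$: then $\Complement{U} \cup V = \{3\} \cup \{1,3\} = \{1,3\}$ is already open, so $\ext{\varphi \to \psi} = \{1,3\}$; symmetrically $\ext{\psi \to \varphi} = \{1,2\}$; and $\{1,3\} \cup \{1,2\} = T$. So your space in fact validates $\DGP$ (as well as $\WLEM$, which you did verify correctly), and therefore does not separate the two principles.

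The paper's $T$ adds a fourth point whose only open neighbourhood is the whole space, precisely to block this. With base $\{\{1\},\{1,2\},\{1,3\},\{1,2,3,4\}\}$ and the same $U=\{1,2\}$, $V=\{1,3\}$, one computes $\Complement{U} \cup V = \{1,3,4\}$, whose interior is $\{1,3\}$ (the point $4$ cannot be recovered because it lies in no proper open, and $2 \notin \{1,3,4\}$ rules out the whole space); symmetrically $\Interior{\Complement{V} \cup U} = \{1,2\}$; and the union $\{1,2,3\}$ omits $4$. The extra point is the witness to the failure of $\DGP$: it lives in the complement of every proper open and in the interior of none. To repair your proposal, keep $\Sigma$ for $S$ but replace the three-point $T$ by a space with such an ``inaccessible'' point, as in the paper's four-point construction.
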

\begin{proof}
Let $X = \menge{1,2,3,4}$ and let  the topological space $S = (X, \sigma)$ be given by the base \[ \menge{ \menge{1},\menge{1,2},\menge{3},\menge{3,4}} \ . \] 
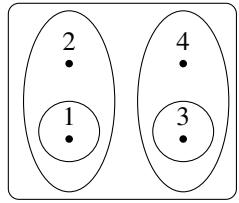
\begin{figure}[ht]
\centering
\begin{tikzpicture}
  \node [circle,fill=black,inner sep=1pt,label={above:$1$}] at (0,0) {};
  \node [circle,fill=black,inner sep=1pt,label={above:$2$}] at (0,1) {};
  \node [circle,fill=black,inner sep=1pt,label={above:$3$}] at (1.5,0) {};
  \node [circle,fill=black,inner sep=1pt,label={above:$4$}] at (1.5,1) {};
  \draw (0,0.1) circle (0.4cm);
  \draw (1.5,0.1) circle (0.4cm);
  \draw (0,0.5) ellipse (0.6cm and 1.2cm);
  \draw (1.5,0.5) ellipse (0.6cm and 1.2cm);
  \draw[rounded corners] (-0.8,-0.8) rectangle (2.2,1.8);
\end{tikzpicture}
\caption{A base of the space $S$}
\end{figure}
Then $S \nVdash \LEM$, since if $\varphi$ is such that  $\ext{\varphi} = \menge{1}$ we have 
\[ \ext{\lnot \varphi} = \Interior{\Complement{\menge{1}}} = \Interior{\menge{2,3,4}} = \menge{3,4}  \ .\]
Hence $\ext{\varphi \lor \lnot \varphi} = \menge{1,3,4} \neq X$. Conversely we can check by hand, or simply with the aid of a computer (see Appendix \ref{Appendix:sourcecode}) that $S \Vdash \DGP$.

Now consider  the topological space $T = (X, \tau)$ be given by the base $\menge{ \menge{1},\menge{1,2},\menge{1,3},\menge{1,2,3,4}}$.
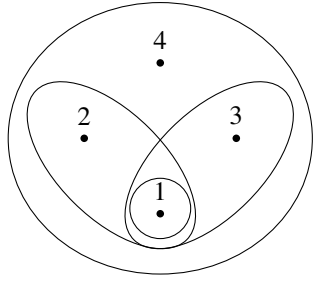
\begin{figure}[ht]
\centering
\begin{tikzpicture}
  \node [circle,fill=black,inner sep=1pt,label={above:$1$}] at (0,0) {};
  \node [circle,fill=black,inner sep=1pt,label={above:$2$}] at (-1,1) {};
  \node [circle,fill=black,inner sep=1pt,label={above:$3$}] at (1,1) {};
  \node [circle,fill=black,inner sep=1pt,label={above:$4$}] at (0,2) {};
  \draw (0,1) ellipse (2cm and 1.8cm);
  \draw (0,0.07) circle (0.4cm);
  \draw[rotate=45] (0,0.91) ellipse (0.7cm and 1.4cm);
  \draw[rotate=-45] (0,0.91) ellipse (0.7cm and 1.4cm);
\end{tikzpicture}
\caption{A base of the space $T$}
\end{figure}
 Again either through checking by hand, or simply with the aid of a computer (see Appendix \ref{Appendix:sourcecode}) that $T \Vdash \WLEM$. To see that \DGP fails consider $\varphi$ and $\psi$ such that $\ext{\varphi} = \menge{1,2}$ and $\ext{\psi} = \menge{1,3}$. Then 
\[ \ext{\varphi \implies \psi} = \Interior{(\Complement{\ext{\varphi}} \cup \ext{\psi} )} = \Interior{\menge{3,4} \cup \menge{1,3}} = \menge{1,3} \ , \]
and similarly $\ext{\psi \implies \varphi} = \menge{1,2}$. Hence 
\[ \ext{(\varphi \implies \psi) \lor (\psi \implies \varphi)} = \menge{1,2,3} \neq X \  .  \qedhere \]
\end{proof}

Thus we get the following simple hierarchy:

\begin{figure}[ht] \centering
\begin{tikzpicture}[node distance=2 cm, auto]
  \node (DNE2) {\LEM};
  \node (DGP2) [right of=DNE2] {\DGP};
  \node (WLEM2) [right of=DGP2] {\WLEM};
  \draw[double,->] (DNE2) to node {} (DGP2);
  \draw[double,->] (DGP2) to node {} (WLEM2);
\end{tikzpicture}
\end{figure}

As a final remark we would like to note that it seems that all simple propositional axiom schemata containing only few propositional symbols, and which are classically true (also see Propositions \ref{Pro:WLEM_equiv_DM1}, \ref{Pro:WLEM_equiv_DM12p}, and \ref{Pro:paradoxes_of_material_impl}) fall in one of these three categories (also see \cite{hD16cb}). 

\section{\texorpdfstring{$\Pi_1^0$ and $\Sigma_1^0$}{Pi01 and Sigma01}}
\label{Sec:Pi01andSigma01}
The purpose of this overview is to tie in some of our principles with other presentations such as \cites{Troelstra1988a,Troelstra1988,hI97}.

In the style of computability theory (of sets) we define a statement $\varphi$ to be a $\Sigma_1^0$ statement if there exists a binary sequence $(a_n)_{n \geqslant 1}$ such that 
\[ \varphi \iff \ex{n \in \NN}{a_n =1} \ , \]
and a 
a $\Pi_1^0$ statement if there exists a binary sequence $(b_n)_{n \geqslant 1}$ such that 
\[ \varphi \iff \fa{n \in \NN}{b_n =0} \ . \]
We can then identify fragments of logical principles by restricting them to $\Sigma_1^0$ statements or $\Pi_1^0$ statements respectively.
It easy, for example, to see that $\Sigma_1^0$--\LEM is just \LPO. Overall we get: 
\begin{Pro} The following equivalences hold, where $\vdash_i$ means equivalence to $\top$, i.e.\ provable in intuitionistic logic.
\newline
\begin{figure}[h] \centering
\begin{tabular}{lccc}
& $\Sigma_1^0$ & $\Pi_1^0$ & $\Sigma_1^0$--$\Pi_1^0$ \\ \hline
\LEM ($P \lor \lnot P$) & \LPO & \WLPO \\ 
\WLEM ($\lnot P \lor \lnot \neg P$)& \WLPO & \WLPO \\
Stability ($\lnot \neg P \implies P$) & \MP & $\vdash_i$ \\
\ref{DM1} ($\lnot (P \land Q) \implies \left( \lnot P \lor \lnot Q \right)$) & \LLPO & \MPv & \WLPO \\
\ref{DM2} ($\lnot (P \lor Q) \implies \left( \lnot P \land \lnot Q \right)$)  & $\vdash_i$ & $\vdash_i$ & $\vdash_i$  \\
\ref{DM1p} ($\lnot (\lnot P \land \lnot Q) \implies \left(  P \lor  Q \right)$) & \MP & \LLPO & \LPO \\
\ref{DM2p} ($\lnot (\lnot P \lor \lnot Q) \implies \left(  P \land  Q \right)$)  & \MP & $\vdash_i$ & \MP \\
\DGP $(P \implies Q) \lor (Q \implies P)$  & \LLPO & \LLPO & \WLPO
\end{tabular}
\end{figure}
\end{Pro}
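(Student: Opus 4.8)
The plan is to verify each of the twenty-four entries in the table by reducing the schematic principle, restricted to the indicated class of formulas, to one of the omniscience/Markov-type principles already characterised in the text. The overarching strategy is always the same: to show a given restricted schema implies a principle $P$, substitute for the propositional variables the canonical $\Sigma^0_1$ or $\Pi^0_1$ statements built from an arbitrary binary sequence (e.g.\ $\varphi \equiv \ex{n}{a_n = 1}$ or $\varphi \equiv \fa{n}{b_n = 0}$, possibly using a sequence with at most one $1$ when the target is \LLPO or \MPv), simplify the resulting instance using intuitionistic logic, and read off the conclusion as the relevant decision; conversely, to show $P$ implies the restricted schema, use $P$ to decide the truth (or near-truth) of the $\Sigma^0_1$/$\Pi^0_1$ antecedents and then verify the schema by cases. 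Several rows are immediate: \ref{DM2} is intuitionistically valid regardless of restriction, so all three entries are $\vdash_i$; and wherever a cell reads $\vdash_i$ (such as $\Pi^0_1$-Stability, since $\lnot\neg\fa{n}{b_n=0} \iff \fa{n}{b_n=0}$ by stability of the decidable matrix) the verification is a one-line intuitionistic derivation.

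The main body of work divides into a handful of genuinely distinct arguments. First, the $\Sigma^0_1$ column: $\Sigma^0_1$-\LEM is definitionally \LPO (modulo unique choice to get the flag sequence); $\Sigma^0_1$-\WLEM gives \WLPO by applying it to $\varphi \equiv \ex{n}{a_n=1}$, since $\lnot\varphi$ is $\fa{n}{a_n=0}$ and $\lnot\neg\varphi$ is its double negation; $\Sigma^0_1$-Stability is exactly equivalence \ref{MPequiv1} of Proposition characterising \MP; and $\Sigma^0_1$-\ref{DM1} applied to two sequences, one being the "even part" and one the "odd part" of a sequence with at most one $1$, yields Proposition \ref{Pro:LLPO_equiv_deMorg}. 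The entries \ref{DM1p}, \ref{DM2p} in the $\Sigma^0_1$ column reduce to \MP by using stability of $\Sigma^0_1$-statements under $\lnot\neg$ (after an \MP step) to strip the outer negations. For \DGP restricted to $\Sigma^0_1$, the cleanest route is via the equivalent form \eqref{PMI19}: taking $\varphi$ true and $\psi,\vartheta$ the two halves of a sequence with at most one $1$ recovers \LLPO; conversely \LLPO gives $(\varphi\implies\psi)\lor(\psi\implies\varphi)$ for $\Sigma^0_1$ formulas by deciding which of the two existential witnesses appears "first". The $\Pi^0_1$ column is handled dually, the new ingredient being that $\Pi^0_1$-\LEM is \WLPO (decide $\fa{n}{b_n=0}$ versus $\lnot\fa{n}{b_n=0}$), that $\Pi^0_1$-\ref{DM1} is precisely item \ref{MPo:8} of Proposition \ref{Pro:equivs_of_MPv} characterising \MPv, and that $\Pi^0_1$-\ref{DM1p} gives \LLPO (De Morgan turning a conjunction of negated $\Pi^0_1$'s, i.e.\ a conjunction of $\Sigma^0_1$'s stated negatively, into a disjunction). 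The mixed $\Sigma^0_1$–$\Pi^0_1$ column treats schemas with two variables, one from each class, and the recurring pattern is that such a mixed instance is equivalent to \WLPO or \LPO: e.g.\ $\Sigma^0_1$–$\Pi^0_1$-\ref{DM1p}, $\lnot(\lnot\ex{n}{a_n=1}\land\lnot\fa{m}{b_m=0})\implies(\ex{n}{a_n=1}\lor\fa{m}{b_m=0})$, unwinds — using that $\lnot\ex{n}{a_n=1}$ is $\fa{n}{a_n=0}$ — to something provably equivalent to \LPO via the argument already used for \coPFP.

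I would organise the proof as a single display (reproducing the table) followed by a short paragraph per row, each paragraph giving the two directions in one or two sentences and citing the already-established equivalences (Proposition \ref{Pro:paradoxes_of_material_impl}, \ref{Pro:LLPO_equiv_deMorg}, \ref{Pro:equivs_of_MPv}, the \MP-characterisation, and the principle definitions) rather than re-deriving them. For the handful of cells that are not literally one of those named equivalences — chiefly the mixed column and the \DGP row — I would spell out the substitution and the case analysis in full, since that is where errors could hide. The main obstacle I anticipate is bookkeeping rather than depth: keeping straight, across all twenty-four cells, exactly which canonical formula to substitute (existential vs.\ universal, with-at-most-one-$1$ vs.\ arbitrary) and which previously-proved equivalence closes each direction, and making sure the mixed-column reductions genuinely land on \WLPO resp.\ \LPO and not on something strictly weaker. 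A secondary subtlety worth a remark is the role of countable (unique) choice, which is silently needed to pass between the "sequential" form of each principle as stated here and the real-number forms used elsewhere; I would note once at the outset that all equivalences in this proposition are understood sequentially, so that no choice beyond unique choice is invoked.
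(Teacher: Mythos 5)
Your overall plan coincides with the paper's: the author notes that most cells are immediate (using stability of $\Pi^0_1$ statements to kill double negations) or are restatements of already-established equivalences (Prop.~\ref{Pro:LLPO_equiv_deMorg} for $\Sigma^0_1$-\ref{DM1}, item~\ref{MPo:8} of Prop.~\ref{Pro:equivs_of_MPv} for $\Pi^0_1$-\ref{DM1}, the \MP-characterisation for $\Sigma^0_1$-stability, etc.), and then devotes all the actual work to the \DGP row. There the paper argues directly: for $\LLPO \Rightarrow \Sigma^0_1$-\DGP (resp.\ $\Pi^0_1$-\DGP) it builds the interleaved sequence $\alpha = a_1 b_1 a_2 b_2 \dots$, passes to the modified sequence $\alpha'$ with at most one $1$, applies \LLPO to $\alpha'$, and reads off which witness occurs first; for the converse it applies the restricted schema directly to the two halves of a sequence with at most one $1$. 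This is exactly the ``which witness appears first'' and ``apply to the two halves'' strategy you describe, so for the main content you have the right argument.

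The one place where you propose something genuinely different is the detour through \eqref{PMI19} for $\Sigma^0_1$-\DGP, and as written this step has a gap: ``taking $\varphi$ true'' cannot mean picking a provably true $\Sigma^0_1$ statement, because then the antecedent $\varphi \Rightarrow \psi \lor \vartheta$ reduces \eqref{PMI19} to the tautology $\psi\lor\vartheta \Rightarrow \psi\lor\vartheta$ and you get nothing. What makes the route work is the substitution $\varphi := \psi \lor \vartheta$ (still $\Sigma^0_1$ for $\Sigma^0_1$ $\psi,\vartheta$), which makes the antecedent trivially provable and yields $(\psi\lor\vartheta\Rightarrow\psi)\lor(\psi\lor\vartheta\Rightarrow\vartheta)$; combining each disjunct with the mutual exclusivity of $\psi$ and $\vartheta$ (the at-most-one-$1$ hypothesis) then gives $\lnot\vartheta\lor\lnot\psi$, i.e.\ \LLPO. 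You would also need to note that the equivalence $\DGP \iff \eqref{PMI19}$ is class-preserving (it is, since the only auxiliary formula it introduces is $\varphi\land\psi$, which stays $\Sigma^0_1$). With that correction your \eqref{PMI19} route is sound, but it is strictly longer than the paper's direct argument, and buys nothing; the direct route (apply $\Sigma^0_1$-\DGP to the two halves) is simpler and is what the paper does. Everything else in your sketch — the stability/\MP trick for the $\Sigma^0_1$ entries of \ref{DM1p}/\ref{DM2p}, and the case analyses for the mixed column — is accurate and matches the level of detail the paper provides.
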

\begin{proof}
Most of these equivalencies are trivial. Notice that $\Pi_{1}^0$ statements are stable, that is we can eliminate preceding double negations.

The equivalencies for the restricted versions of \DGP are non-trivial, and the proofs are entertaining: First assume that \LLPO holds. And consider two binary sequences $a_n$ and $b_n$. Now let $\alpha$ be the sequence \[ a_1b_1a_2b_2 \dots  \ ,\]
and let $\alpha^\prime$ be the sequence 
\[ \alpha^\prime_n = \begin{cases}
	1 & \text{if } \sum_{i=1}^n \alpha_n = 1 \\
	0 & \text{otherwise;}
\end{cases} \]
that is  $\alpha^\prime$ has at most one term equal to $1$. Since we assume \LLPO 
\begin{equation}
	\label{Eqn:SigmaDGP}
\fa{n \in \NN}{\alpha^\prime_{2n} = 0} \lor \fa{n \in \NN}{\alpha^\prime_{2n+1} = 0}  \ . 
\end{equation} 
In the first case assume that  $n \in \NN$ is such that $b_n =1$. Without loss of generality let $n$ be minimal. Then there must be $m \leqslant n$ such that $a_m = 1$, since if $a_m  = 0$ for all $m \leqslant n$, and $b_k =0$ for all $k < n$ we have $\alpha_{k} = 0$ for all $k < 2n$ and $\alpha_{2n} = b_n = 1$, which means that $\alpha^\prime_{2n} = 1$ contradicting our assumption. Hence in the first case we have $\ex{n \in \NN}{b_n = 1} \implies \ex{n \in \NN}{a_n = 1}$. Symmetrically in the second case we have that $\ex{n \in \NN}{a_n = 1} \implies \ex{n \in \NN}{b_n = 1}$. Thus we have shown that \LLPO implies \DGP restricted to $\Sigma_1^0$-formulas.

Conversely let $a_n$ be a binary sequence with at most one term equal to $1$. By $\Sigma_1^0$-\DGP we get that 
\[ \left( \ex{n \in \NN}{a_{2n}=1} \implies \ex{n \in \NN}{a_{2n+1}=1}\right)  \lor \left( \ex{n \in \NN}{a_{2n+1}=1} \implies \ex{n \in \NN}{a_{2n}=1}\right) \ . \]
In the first case we see that $\fa{n \in \NN}{a_{2n} =0}$, since the assumption that $\ex{n \in \NN}{a_{2n} = 1}$ leads to the contradiction that there is more than one term equalling $1$. Similarly in the second case we can show that $\fa{n \in \NN}{a_{2n+1} =0}$. Thus we have shown that $\Sigma_1^0$-\DGP implies \LLPO.

Next, let $a_n$ and $b_n$ be binary sequences and construct $\alpha$ and $\alpha^\prime$ as above. Using \LLPO we can, again, make the decision in Equation \eqref{Eqn:SigmaDGP}.
Furthermore, let us assume that the first alternative is the case, and assume that $\fa{n \in \NN}{a_n = 0}$. Also assume that there exists $n \in \NN$ such that $b_n = 1$; and without loss of generality $n$ is the minimal such number. Then $\alpha^\prime_{2n} = 1$, contradicting our assumption. Together we have that in case the first alternative holds we have \[ \fa{n \in \NN}{a_n = 0} \implies \fa{n \in \NN}{b_n = 0} \ .\] Similarly, in case the second alternative holds we can show that \[ \fa{n \in \NN}{b_n = 0} \implies \fa{n \in \NN}{a_n = 0} \ .\]
That means that .

Conversely let $a_n$ be a binary sequence with at most one term equal to $1$. By $\Pi_1^0$-\DGP we get that 
\[ \left( \fa{n \in \NN}{a_{2n}=0} \implies \fa{n \in \NN}{a_{2n+1}=0}\right)  \lor \left( \fa{n \in \NN}{a_{2n+1}=0} \implies \fa{n \in \NN}{a_{2n}=0}\right) \ . \]
In the first case assume that there is $n$ such that $a_{2n+1} = 1$. Then, since there is at most one term equal to $1$ we have that $\fa{n \in \NN}{a_{2n} = 0}$, which together with the case we are in means that $\fa{n \in \NN}{a_{2n+1}=0}$; a contradiction. Hence in that case we must actually have $\fa{n \in \NN}{a_{2n+1}=0}$. Similarly in case the second alternative holds we can show that $\fa{n \in \NN}{a_{2n}=0}$. Thus  $\Pi_1^0$-\DGP implies \LLPO.

Interestingly, the mixed case is equivalent to \WLPO: let $a_n$ and $b_n$ be binary sequences (the sequence $b_n$ actually plays no role in what follows). By \WLPO either $\fa{n \in \NN}{a_n =0}$ or $\lnot \fa{n \in \NN}{a_n =0}$. In the first case, also $\ex{n \in \NN}{b_n =1} \implies \fa{n \in \NN}{a_n =0}$. In the second case $\fa{n \in \NN}{a_n =0} \implies \ex{n \in \NN}{b_n =1}$, since the antecedent contradicts our assumption. 

Conversely, let $a_n$ be an arbitrary binary sequence, and apply $\Sigma_1^0-\Pi_1^0$-\DGP to this sequence (and itself). This yields
\[  \ex{n \in \NN}{a_n =1} \implies \fa{n \in \NN}{a_n =0} \lor  \fa{n \in \NN}{a_n =0} \implies \ex{n \in \NN}{a_n =1}  \ . \] 
In case the first alternative holds we must have $a_n =0$ for all $n \in \NN$, since the assumption that there is $n$ with $a_n=1$ leads to the contradiction that $a_n =0$ for that same $n$. In case the second alternative holds the assumption that $\fa{n \in \NN}{a_n =0}$ leads to a contradiction, hence in that case $\lnot \fa{n \in \NN}{a_n =0}$.
\end{proof}

\begin{Qu}
Which principles are $\Sigma_1^0-\Sigma_1^0$--$\textrm{PP}$, $\Pi_1^0-\Pi_1^0$--$\textrm{PP}$, $\Pi_1^0-\Sigma_1^0$--$\textrm{PP}$, and $\Sigma_1^0-\Pi_1^0$--$\textrm{PP}$ equivalent to, where $\textrm{PP}$ is Peirce's law, as in Proposition \ref{Pro:paradoxes_of_material_impl}.\ref{LEMequiv1}? (It is easy to see that the $\Sigma_1^0$ and $\Pi_1^0$ versions of the special case $(\lnot \varphi \implies \varphi) \implies \varphi$ of Peirce's law is equivalent to \MP and provable in intuitionistic logic respectively.)
\end{Qu}

\section{\texorpdfstring{\nWLPO and \nLPO}{}} \label{Sec:nLPO}

Interestingly enough, not just $\WLPO$ has notable equivalences, also its negation does. The following is, just as its positive version Proposition \ref{Pro:WLPO-Equiv-disc}, very much folklore. It is worth pointing out that as a principle ``all functions are non-discontinuous'' is a very stable property under changes in the underlying spaces.
\begin{Pro} \label{Pro:nWLPO_equiv_non-disc}The following are equivalent to \nWLPO.
\begin{enumerate}
\item \label{non-dis-3} Every mapping $f:\CS \to \menge{0,1}$ is non-discontinuous.
\item \label{non-dis-2} Every mapping $f:\BS \to \NN$ is non-discontinuous. 
\item \label{non-dis-4} Every mapping $f:[0,1] \to \RR$ is non-discontinuous.
\item \label{non-dis-1} Every mapping of a complete metric space into a metric space is non-discontinuous.
\end{enumerate}
\end{Pro}
\begin{proof}
This is simply by taking the contrapositive of Proposition \ref{Pro:WLPO-Equiv-disc}, and by using the lemma following that proposition.
\end{proof}

The next proposition relies on a property proposed by M.\ Escard\'{o}. A set $X$ is called \define{searchable},\footnote{Notice that Escard\'{o}'s definition is slightly different, but is equivalent to this one in the presence of unique choice.} if we can decide any predicate on it. That is if for any $p:X \to 2$ 
\[ \fa{x\in X}{p(x)=0} \lor  \ex{x\in X}{p(x)=1} \ . \] 
\begin{Pro}
The following are equivalent:
\begin{enumerate}
\item $\nLPO$
\item Every searchable subset of $\NN$ is not unbounded.
\item Every searchable subset of $\NN$ is bounded.
\end{enumerate}
\end{Pro}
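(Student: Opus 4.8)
The plan is to prove the chain of implications $(1)\Rightarrow(3)\Rightarrow(2)\Rightarrow(1)$. The implication $(3)\Rightarrow(2)$ is immediate since a bounded set is certainly not unbounded. For $(1)\Rightarrow(3)$, suppose $S\subset\NN$ is searchable and assume $\nLPO$. Given such an $S$, first note that searchability lets us decide, for every $n\in\NN$, whether $\ex{s\in S}{s\geqslant n}$ or $\fa{s \in S}{s < n}$ (apply searchability to the predicate $p(s) = 1 \iff s \geqslant n$). Using this and unique countable choice, build a binary sequence $(a_n)_{n\geqslant 1}$, which we may take to be increasing, with $a_n = 0 \iff \fa{s\in S}{s<n}$. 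Now I would argue that $\nLPO$ forces $S$ to be bounded: the assumption that $S$ is unbounded would make $a_n = 1$ for all large enough $n$, whereas the assumption that $\fa{n\in\NN}{a_n=0}$ makes $S$ bounded by any witness; the point is to show that the negation of boundedness is contradictory. Concretely, $\lnot(S \text{ bounded})$ gives $\lnot\fa{n}{a_n = 0}$; together with searchability of $S$ one then actually \emph{finds} a term $a_n = 1$, hence a bound after all — a contradiction, so $S$ is bounded. The subtle point here is exactly that searchability upgrades $\lnot\fa{n}{a_n=0}$ to $\ex{n}{a_n=1}$ in a way that plain \MP would (searchable sets behave, in this respect, like they carry a Markov-style search), and this is what makes the full strength $(3)$ rather than only $(2)$ come out; I would double-check whether $\nLPO$ alone suffices or whether one genuinely needs to exploit searchability to perform the search, which I expect is the case.

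For $(2)\Rightarrow(1)$, the contrapositive is cleanest: assume \LPO and produce a searchable subset of $\NN$ that is unbounded, thereby contradicting $(2)$. Actually it is better to argue directly for $\lnot\lnot$-style statements: assume $(2)$ and \LPO simultaneously and derive a contradiction. Under \LPO every subset of $\NN$ that is decidable (or even just countable, by the relevant proposition in the \LPO section) is decidable, and in particular every inhabited countable subset $S$ is searchable: given $p:S\to 2$, the sequence $p(s_n)$ is a binary sequence and \LPO decides $\ex{n}{p(s_n)=1}$. Now take $S = \NN$ itself, which is plainly searchable under \LPO and plainly unbounded; this contradicts $(2)$. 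Hence $(2)$ implies $\lnot\LPO$, i.e. $\nLPO$.

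I expect the main obstacle to be the direction $(1)\Rightarrow(3)$, specifically pinning down precisely why $\nLPO$ together with searchability yields genuine boundedness and not merely non-unboundedness — one must be careful that the sequence $(a_n)$ constructed is well-defined without choice beyond unique choice, that it is legitimately increasing, and that the final contradiction really uses searchability to convert a negative statement into an existential. A secondary subtlety is that the notion of \emph{searchable} as used here (decidability of every predicate, equivalent to Escardó's definition under unique choice, per the footnote) must be the one invoked consistently; I would state this equivalence explicitly at the start of the proof. The remaining implications are routine once the framework of the \LPO section (countable subsets of $\NN$ being decidable under \LPO) is cited.
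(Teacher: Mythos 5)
Your decomposition $(1)\Rightarrow(3)\Rightarrow(2)\Rightarrow(1)$ is the natural one, and the directions $(3)\Rightarrow(2)$ and $(2)\Rightarrow(1)$ are fine: for $(2)\Rightarrow(1)$ you assume \LPO, note that it makes $\NN$ itself searchable and trivially unbounded, and contradict $(2)$, exactly as you say. The gap is in $(1)\Rightarrow(3)$, and you were right to be suspicious of it.

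With your $\NN$-indexed sequence $(a_n)$, say $a_n = 0$ exactly when $S$ is bounded by $n$, the argument runs: $\lnot(S \text{ is bounded})$ forces $\forall n\,(a_n = 1)$, i.e.\ $S$ unbounded, which by the search-along-an-enumeration argument yields \LPO, a contradiction. But all this establishes is $\lnot\lnot\,\exists n\,(a_n = 0)$. The passage to $\exists n\,(a_n = 0)$ is an instance of Markov's Principle, and \nLPO does not give you \MP (indeed \nLPO is accepted in \INT, where \MP is not). Nor does searchability of $S$ rescue this step: searchability is about predicates defined \emph{on $S$}, whereas $(a_n)$ is indexed by $\NN$, so there is nothing to search. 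Two smaller errors compound the confusion: under your stated convention $(a_n)$ is non-increasing rather than increasing, and an index with $a_n = 1$ exhibits an element of $S$ that is $\geqslant n$, i.e.\ a witness that $n$ is \emph{not} a bound.

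The fix is to let searchability \emph{produce} the disjunction you want, rather than trying to extract it afterward from a double negation. Applying searchability to the constant predicate decides whether $S$ is inhabited; if not, $S$ is vacuously bounded. If it is inhabited, define $p : S \to 2$ by $p(s) = 1$ iff $\forall t \in S\,(t \leqslant s)$; each value is decided by a further instance of searchability, and unique choice makes $p$ a function. Now apply searchability to $p$: either there is $s$ with $p(s) = 1$, so $S$ has a maximum and is bounded, or $\forall s \in S\,\exists t \in S\,(t > s)$, in which case dependent choice from the inhabitant yields a strictly increasing sequence in $S$, the resulting unbounded searchable set gives \LPO, and \nLPO supplies the contradiction. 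The second alternative being impossible, the first holds and $S$ is bounded, with no appeal to \MP. For reference, the paper's own displayed proof only carries out $(1)\Rightarrow(2)$ and leaves the strengthening to $(3)$ and the easy closing of the circle to the reader; that implicit part is exactly where your proposal went astray.
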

\begin{proof}
Assume $\nLPO$ and let $S \subset \NN$ be searchable. Assume furthermore that $S$ is unbounded, that is, with a bit of work, there exists an bijection $s: \NN \to S$. Then, if $a_n$ is an arbitrary binary sequence consider $p: S \to 2$ defined by
\[ p(n) = a_{s^{-1}(n)}  \ . \]
Since $S$ is searchable we can therefore decide, whether $\fa{n \in \NN}{a_n =0}$ or whether $\ex{n \in \NN}{a_n = 1}$. In other words \LPO holds; a contradiction and hence $S$ is not unbounded.
\end{proof}

Even though $\nLPO$ and $\nWLPO$ do not seem to have many natural equivalences they extend the recursive hierarchy nicely.

\[ \SinC \implies \KT \implies \iSS \implies \SS \implies \lnot \WLPO \implies \lnot \LPO \]

\appendix
\chapter{List of Open Questions}
\setcounter{Que}{0}
\includecollection{qu}

\chapter{Source Code}
\section{Topological Models} \label{Appendix:sourcecode}
The Python\footnote{Version 2.7 or 3.x} program used to check all possibilities in the proof of Proposition \ref{Pro:topmod_biimp} is

\inputminted[linenos,
               frame=lines,
               framesep=2mm, breaklines]{python}{topology.py}

\bibliographystyle{abbrv}
\bibliography{All}

\printindex

\end{document}